\newcommand{\Z}{\mathbb{Z}}
\newcommand{\Q}{\mathbb{Q}}
\newcommand{\Qp}{{\mathbb{Q}_p}}
\newcommand{\R}{\mathbb{R}}
\newcommand{\C}{\mathbb{C}}
\renewcommand{\O}{\mathcal{O}}
\let\Im\undefined
\DeclareMathOperator{\Im}{Im}
\newcommand{\Gal}{\text{Gal}}
\newcommand{\Hom}{\text{Hom}}
\newcommand{\End}{\text{End}}
\newcommand{\Ext}{\text{Ext}}
\newcommand{\Tor}{\text{Tor}}
\DeclareMathOperator{\rank}{rank}
\DeclareMathOperator{\Spec}{Spec}
\DeclareMathOperator{\Sp}{Sp}
\DeclareMathOperator{\supp}{supp}
\newcommand{\id}{\text{id}}
\renewcommand{\to}{\longrightarrow}
\newcommand{\from}{\longleftarrow}
\renewcommand{\mapsto}{\longmapsto}
\newcommand{\onto}{\twoheadrightarrow}
\newcommand{\into}{\xhookrightarrow{}}
\newcommand{\simto}{\stackrel{\sim}{\to}}
\newcommand{\p}{\mathfrak{p}}
\newcommand{\m}{\mathfrak{m}}
\newcommand{\g}{\mathfrak{g}}
\newcommand{\GL}{\text{GL}}
\newcommand{\GSp}{\text{GSp}}
\renewcommand{\Sp}{\text{Sp}}
\newcommand{\blank}{{-}}
\renewcommand{\hat}{\widehat}
\renewcommand{\bar}{\overline}
\renewcommand{\tilde}{\widetilde}
\renewcommand{\setminus}{\smallsetminus}
\newcommand*\fixitem {\item[]%
  \refstepcounter{enumi}\hskip-\leftmargin\labelenumi\hskip\labelsep}
\crefname{equation}{}{}
\Crefname{equation}{}{}
\crefname{enumi}{}{}
\Crefname{enumi}{}{}
\DeclareMathAlphabet{\mathpzc}{OT1}{pzc}{m}{it}
\newcommand{\A}{\mathbb{A}}
\renewcommand{\O}{\mathcal{O}}
\DeclareMathOperator{\cotimes}{\hat{\otimes}}
\DeclareMathOperator{\tot}{tot}
\DeclareMathOperator{\Res}{Res}
\DeclareMathOperator{\Ind}{Ind}
\DeclareMathOperator{\SigmaInd}{\Sigma \hspace{0.4mm}\text{-Ind}}
\DeclareMathOperator{\cInd}{c-Ind}
\newcommand{\otimesL}{\otimes^\mathbb{L}}
\newcommand{\sing}{\text{sing}}
\newcommand{\ad}{\text{ad}}
\newcommand{\Acal}{\mathcal{A}}
\newcommand{\Ccal}{\mathcal{C}}
\newcommand{\Dcal}{\mathcal{D}}
\newcommand{\Ucal}{\mathcal{U}}
\newcommand{\Vcal}{\mathcal{V}}
\newcommand{\Wcal}{\mathcal{W}}
\newcommand{\Hcal}{\mathcal{H}}
\newcommand{\Kcal}{\mathcal{K}}
\newcommand{\Tbb}{\mathbb{T}}
\newcommand{\boldLambda}{\tilde{\Lambda}}
\newcommand{\ram}{\text{ram}}
\newcommand{\ord}{\text{ord}}
\newcommand{\la}{\text{la}}
\newcommand{\an}{\text{an}}
\newcommand{\alg}{\text{alg}}
\newcommand{\sm}{\text{sm}}
\newcommand{\der}{\text{der}}
\newcommand{\adm}{\text{adm}}
\newcommand{\red}{\text{red}}
\newcommand{\ndiv}{\text{ndiv}}
\newcommand{\un}{\text{un}}
\newcommand{\cusp}{{\text{cusp}}}
\newcommand{\Eis}{{\text{Eis}}}
\newcommand{\Sigmala}{{\Sigma-\text{la}}}
\newcommand{\Sigmaan}{{\Sigma-\text{an}}}
\newcommand{\Fcal}{\mathcal F}
\newcommand{\Mcal}{\mathcal M}
\newcommand{\Ncal}{\mathcal N}
\newcommand{\Tcal}{\mathcal T}
\newcommand{\Ascr}{\mathscr{A}}
\newcommand{\Bscr}{\mathscr{B}}
\newcommand{\Cscr}{\mathscr{C}}
\newcommand{\Fscr}{\mathscr{F}}
\newcommand{\Tscr}{\mathscr{T}}
\newcommand{\Xscr}{\mathscr{X}}
\newcommand{\Yscr}{\mathscr{Y}}
\newcommand{\Zscr}{\mathscr{Z}}
\newcommand{\Scalp}{\mathcal{S}_p}
\newcommand{\Gbf}{\mathbf{G}}
\newcommand{\Hbf}{\mathbf{H}}
\newcommand{\Mbf}{\mathbf{M}}
\newcommand{\Nbf}{\mathbf{N}}
\newcommand{\Pbf}{\mathbf{P}}
\newcommand{\Sbf}{\mathbf{S}}
\newcommand{\Tbf}{\mathbf{T}}
\newcommand{\Abf}{\mathbf{A}}
\newcommand{\Zbf}{\mathbf{Z}}
\newcommand{\Ubf}{\mathbf{U}}
\newcommand{\Bbf}{\mathbf{B}}
\newcommand{\Jbf}{\mathbf{J}}
\newcommand{\hbf}{\mathbf{h}}
\newcommand{\Nfrak}{\mathfrak{N}}
\newcommand{\Zfrak}{\mathfrak{Z}}
\newcommand{\Gm}{\mathbb{G}_\text{m}}
\newcommand{\dfrak}{\mathfrak{d}}
\newcommand{\loccit}{\textit{loc. cit.}}
\newcommand{\HM}[1][]{%
\ifthenelse{\equal{#1}{}}{\mathpzc{M}}{\mathpzc{M}_{#1}}%
}
\newcommand\newsubcommand[3]{\newcommand#1[1][]{\ifthenelse{\equal{##1}{}}{#2\sc@sub{#3}}{#2\scc@sub{#3}{##1}}}}
\def\scc@sub#1#2{\def\scc@thesub{#2,#1}\@ifnextchar_{\scc@mergesubs}{_{\scc@thesub}}}
\def\scc@mergesubs_#1{_{\scc@thesub,#1}}
\def\sc@sub#1{\def\sc@thesub{#1}\@ifnextchar_{\sc@mergesubs}{_{\sc@thesub}}}
\def\sc@mergesubs_#1{_{\sc@thesub,#1}}
\newcommand\newsubsubcommand[2]{\newcommand#1[1][]{#2\sd@sub{##1}}}
\def\sd@sub#1{\def\sd@thesub{#1}\@ifnextchar_{\sd@mergesubs}{_{\sd@thesub}}}
\def\sd@mergesubs_#1{_{\sd@thesub,#1}}
\newsubcommand{\JM}{M}{0}
\newsubcommand{\JMs}{M}{s}
\newsubsubcommand{\JMsub}{M}
\newsubcommand{\JMsbf}{\mathbf{M}}{s}
\newcommand{\NJM}{\mathfrak{M}}
\renewcommand{\boxtimes}{\otimes}
\DeclareMathOperator*{\bigboxtimes}{\bigotimes}
\begin{document}
\pagenumbering{arabic}

\title{$S$-arithmetic (co)homology and $p$-adic automorphic forms}
\author{Guillem Tarrach}

\begin{abstract}
We study the $S$-arithmetic (co)homology of reductive groups over number fields with coefficients in (duals of) certain locally algebraic and locally analytic representations for finite sets of primes $S$.
We use our results to construct eigenvarieties associated to parabolic subgroups at places in $S$ and certain classes of supercuspidal and algebraic representations of their Levi factors.
We show that these agree with eigenvarieties constructed using overconvergent homology and that for definite unitary groups they are closely related to the Bernstein eigenvarieties constructed by Breuil--Ding in \cite{breuil_ding_bernstein_eigenvarieties}.
\end{abstract}

\maketitle

\setcounter{tocdepth}{1}
\tableofcontents

\section{Introduction}

Let $p$ be a prime number.
One notable feature of the theory of $p$-adic automorphic forms is that it has more than one incarnation, depending on what space of $p$-adic automorphic forms one is working with.
The most widely applicable constructions of such spaces (for example, completed cohomology and overconvergent cohomology) are defined in different ways using the cohomology of arithmetic groups (such as $\GL_n(\Z)$).
These groups contain important arithmetic information: if $\Gbf$ is a reductive group over a number field $F$, the cohomology of an arithmetic subgroup of $\Gbf(F)$ with coefficients in a finite-dimensional representation of $\Gbf(F \otimes_\Q \C)$ can be computed in terms of automorphic representations of $\Gbf(\A_F)$.

The goal of this article is to study a new construction involving instead the homology of $S$-arithmetic groups (such as $\GL_n(\Z[1/N])$ for a positive integer $N$) for finite sets of primes $S$ of $F$.
More precisely, we will study the homology of such groups with coefficients in certain (usually infinite-dimensional) representations that play an important role in the representation theory of the local groups at primes in $S$,
and show that these also contain important arithmetic information.
For example, we prove that one may compute the $S$-arithmetic homology of some smooth or locally algebraic complex representations of these local groups in terms of automorphic representations as in the arithmetic case.
Similarly, the homology $p$-adic locally analytic representations is related to $p$-adic automorphic forms.
One can therefore use $S$-arithmetic homology to give a construction of $p$-adic automorphic forms that works in a more general setting than previous constructions in the literature. 
This approach is better suited to study $p$-adic automorphic forms by using the representation theory of $p$-adic groups than, say, overconvergent cohomology.

\subsection{Automorphic representations and \texorpdfstring{$S$}{S}-arithmetic cohomology of locally algebraic representations}

Let us first explain the relation to classical automorphic representations.
As usual, it is more convenient to work with cohomology from the adelic point of view (as opposed to working directly with $S$-arithmetic groups), so let us fix a level structure $K^S$ away from $S$, i.e. an open compact subgroup of $\Gbf(\A_F^{S, \infty})$, and assume it is neat.
We will write $H^*(K^S, \blank)$ and $H_*(K^S, \blank)$ for $S$-arithmetic cohomology and homology of level $K^S$ -- see \Cref{subsection: arithmetic homology} for a precise definition.
They are isomorphic to a direct sum of the (co)homologies of $S$-arithmetic subgroups of $\Gbf(F)$.
It is well-known that the cohomology of such groups with coefficients in finite-dimensional representations of $\Gbf(F \otimes_\Q \C)$ is finite-dimensional and can also be computed in terms of automorphic representations (cf. \cite[Chapter XIV]{borel_wallach}), although in this case the only representations involved are those which are Steinberg at places in $S$.
The following theorem shows that when the coefficients are certain locally algebraic representations, a similar result holds with different conditions on the automorphic representations involved.

\begin{theorem}\label{theorem: introduction 1}
For each $v \in S$, let $\Pbf_v$ be a parabolic subgroup of $\Gbf_v := F_v \times_F \Gbf$ with Levi subgroup $\Mbf_v$.
For all $v \in S$, let $\omega_v$ be a supercuspidal representation of $\Mbf_v(F_v)$ that is compactly induced from a finite-dimensional representation of an open compact-mod-center subgroup,
and let $W$ be an irreducible finite-dimensional algebraic representation of $\Gbf(F \otimes_\Q \C)$ over $\C$.
Consider the $S$-arithmetic cohomology
$$
    H^* \left( K^S, \left( W \boxtimes_\C \bigboxtimes_{v \in S} \Ind_{\Pbf_v(F_v)}^{\Gbf(F_v)} (\omega_v)^\sm \right)' \right),
$$
where $\Ind_{\Pbf_v(F_v)}^{\Gbf(F_v)} (\blank)^\sm$ denotes smooth parabolic induction and $(\blank)'$ denotes the abstract dual.
This cohomology space is finite-dimensional and admits a decomposition into \emph{cuspidal} and \emph{Eisenstein} direct summands, the first of which can be described in terms of cuspidal automorphic representations $\pi = \bigotimes_v' \pi_v$ for $\Gbf(\A_F)$ satisfying the following conditions with respect to the data of $K^S$, $(\Pbf_v, \Mbf_v, \omega_v)_{v \in S}$ and $W$:
\begin{enumerate}
    \item $\pi^{S \infty} := \bigotimes_{\substack{v \not\in S \\ v \nmid \infty}}' \pi_v$ has non-zero $K^S$-invariant vectors,
    \item $\pi_\infty := \bigotimes_{v \mid \infty} \pi_v$ is cohomological of weight $W$,
    \item for all $v \in S$, $\omega_v(\delta_{\Pbf_v(F_v)}^{-1})$ is a subrepresentation of the Jacquet module of $\pi_v$ with respect to the parabolic opposite to $\Pbf_v$, where $\delta_{\Pbf_v(F_v)}$ is the modulus character of $\Pbf_v(F_v)$.
\end{enumerate}
\end{theorem}

See \Cref{theorem: properties of classical cuspidals in cohomology} for a more precise statement.
The condition on the supercuspidal representations $\omega_v$ in \Cref{theorem: introduction 1} is expected to be satisfied for all supercuspidal representations of reductive groups in this setting, and this is known for example when $\Mbf_v$ is a product of terms of the following forms:
\begin{itemize}
    \item A general linear group (cf. \cite{bushnell_kutzko}) or an inner form of it (cf. \cite{secherre_stevens_supercuspidals}),
    \item A unitary, symplectic or special orthogonal group, if $v$ does not lie above 2 (cf. \cite{stevens_supercuspidals_classical}),
    \item A tamely ramified group, if $v$ does not divide the order of its Weyl group (cf. \cite{fintzen_types}).
\end{itemize}
In fact, an analogue of \Cref{theorem: introduction 1} can be proven even if the $\omega_v$ are not supercuspidal, but we will only be interested in this case.
Thus, for the rest of the introduction we will assume that the hypotheses of \Cref{theorem: introduction 1} are satisfied.
By using well-known results of Zelevinsky \cite{zelevinsky_GLn}, the following theorem can also be deduced from \Cref{theorem: introduction 1}.

\begin{theorem}\label{theorem: introduction 1.5}
If for all $v \in S$, $\Gbf_v$ is isomorphic to $\GL_{n/F_v}$, then for all finite-length smooth (or locally algebraic) representations $\pi$ of $\prod_{v \in S} \Gbf_v(F_v)$, the cohomology $H^*(K^S, (W \otimes_\C \pi)')$ is finite-dimensional.
\end{theorem}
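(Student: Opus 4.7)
The strategy is to reduce the general case to Theorem~\ref{theorem: introduction 1} via the Bernstein--Zelevinsky classification and a d\'evissage argument. The functors $W \otimes_\C (-)$ and $(-)'$ are exact on smooth representations, and $H^*(K^S,-)$ is cohomological, so any short exact sequence of smooth representations of $\prod_{v \in S} \Gbf_v(F_v)$ produces a long exact sequence after applying $H^*(K^S, (W \otimes_\C -)')$. In particular, the class of finite-length $\pi$ with finite-dimensional cohomology is closed under extensions, and by induction on the length of a composition series the problem reduces to showing finite-dimensionality for every irreducible smooth $\pi$.

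Every irreducible smooth representation of $\prod_{v \in S}\GL_n(F_v)$ decomposes as $\pi = \bigotimes_{v\in S} \pi_v$ with each $\pi_v$ an irreducible smooth representation of $\GL_n(F_v)$. By Bernstein--Zelevinsky, each such $\pi_v$ is a subquotient of a parabolic induction $I_v := \Ind^{\GL_n(F_v)}_{\Pbf_v(F_v)}(\omega_v)^{\sm}$, where $\omega_v$ is a supercuspidal representation of the Levi of $\Pbf_v$; moreover, $I_v$ has finite length. Supercuspidals of $\GL_n$ over a $p$-adic field are compactly induced from compact-mod-centre subgroups (Bushnell--Kutzko), so the hypothesis of Theorem~\ref{theorem: introduction 1} is satisfied and $H^*(K^S, (W \boxtimes_\C \bigboxtimes_{v \in S} I_v)')$ is finite-dimensional.

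To pass from $\bigboxtimes_{v} I_v$ to $\pi$ itself, one invokes the finer structure of Zelevinsky's classification. Irreducible representations with a fixed supercuspidal support are parametrised by multisegments $\mathfrak m$, and each irreducible arises as the unique irreducible subrepresentation (or, dually, the unique irreducible quotient) of a standard module, the remaining composition factors of which are labelled by multisegments strictly smaller in the Zelevinsky partial order. Using transitivity of parabolic induction to realise standard modules as sub- or quotient representations of $\bigboxtimes_{v} I_v$, one then inducts on the Zelevinsky order: at each step $\pi$ fits in a short exact sequence whose other two terms have finite-dimensional cohomology (by Theorem~\ref{theorem: introduction 1} and the inductive hypothesis, respectively), so the long exact sequence yields the conclusion.

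The principal obstacle is this final step: a long exact sequence does not by itself allow one to conclude finite-dimensionality of a subquotient from finite-dimensionality of the ambient representation, so the induction must be organised so that at each stage both the ambient representation and the complementary subquotient are already under control. The well-foundedness of the Zelevinsky order on multisegments of fixed supercuspidal support, together with the fact that each irreducible genuinely embeds into (or is a quotient of) a suitable parabolic induction rather than being merely a subquotient, is what makes this induction possible.
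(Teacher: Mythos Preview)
Your identification of the obstacle is exactly right, but the proposed resolution does not close the gap. The short exact sequence you want has the form $0 \to Z(\mathfrak m) \to \Delta(\mathfrak m) \to Q \to 0$, where $\Delta(\mathfrak m)=\prod_i Z_{s_i}(\rho_i)$ is the standard module and $Q$ has Jordan--H\"older factors $Z(\mathfrak m')$ with $\mathfrak m' < \mathfrak m$. The Zelevinsky-order induction does control $Q$, but it does \emph{not} control $\Delta(\mathfrak m)$: Theorem~\ref{theorem: introduction 1} applies only to parabolic inductions of supercuspidals, i.e.\ to $\Delta(\mathfrak m)$ only when every segment has length $1$. If instead you embed $Z(\mathfrak m)$ into the full induction $I=\prod_{i,j}\rho_i(j)$ (which Theorem~\ref{theorem: introduction 1} \emph{does} handle), the quotient $I/Z(\mathfrak m)$ has composition factors $Z(\mathfrak m')$ for all $\mathfrak m'$ dominated by the multisegment of singletons, not just those below $\mathfrak m$; the inductive hypothesis again fails. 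No three-term sequence has both neighbours simultaneously under control.

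The paper resolves this by first proving finite-dimensionality for \emph{all} standard modules $\prod_i Z_{s_i}(\rho_i)$, inducting not on the Zelevinsky order but on $\mathfrak S=\sum_i(s_i-1)$. The key device is a \emph{four}-term exact sequence extracted from Zelevinsky's lattice description of the submodules of $\rho(1)\times\cdots\times\rho(s)$:
\[
0 \longrightarrow Z_s(\rho) \longrightarrow Z_{s-1}(\rho)\times\rho(s) \longrightarrow \rho(s)\times Z_{s-1}(\rho) \longrightarrow Z_s(\rho) \longrightarrow 0,
\]
in which the two middle terms have strictly smaller $\mathfrak S$ and the two outer terms are \emph{equal}. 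After tensoring with the remaining factors one obtains a spectral sequence $E^1_{i,j}=H_i(K^S,\Pi_j)\Rightarrow 0$ with $\Pi_1=\Pi_4$. If $H_*(K^S,\Pi_1)$ were infinite-dimensional, then at the top degree $i_0$ where this occurs the term $E^r_{i_0,4}$ could never be killed (every differential touching it has finite-dimensional source or target), contradicting convergence to $0$. Once all standard modules are handled, your Zelevinsky-order induction \emph{does} then yield the irreducibles, and d\'evissage finishes the general case; the missing ingredient is precisely this four-term trick.
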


Both \Cref{theorem: introduction 1} and \Cref{theorem: introduction 1.5} have analogues for homology instead of cohomology, where one does not take duals in the coefficients.
Our methods work more naturally for homology, and this is the reason why the duals appear in these statements.

\subsection{\texorpdfstring{$S$}{S}-arithmetic cohomology of locally analytic representations}

In order to study $p$-adic automorphic forms, we are led to consider similar cohomology groups as we vary some of the supercuspidal data $p$-adically. In order to make sense of this, we have to make a few changes. Fixing an algebraic closure $\bar \Q_p$ of $\Q_p$ and an isomorphism $\bar \Q_p \simto \C$, we may change the field of coefficients from $\C$ to $\bar \Q_p$, and in fact to some finite Galois extension $E$ of $\Q_p$ in $\bar \Q_p$ which we may assume is large enough to split $\Mbf_v$ for all $v \mid p$.
Let $\Nbf_v$ be the unipotent radical of $\Pbf_v$.
If $\Sigma \subseteq S$ is a subset consisting only of places dividing $p$, then we will replace the smooth induction by locally ($\Q_p$-)analytic induction at the places in $\Sigma$.
We may also view $W$ as an algebraic representation of $\Gbf(F \otimes_\Q \Q_p) \subseteq \Gbf(F \otimes_\Q \bar \Q_p)$.
The representation $W$ is then a tensor product of irreducible algebraic representations $W_v$ for $v \mid p$, and we will write $U$ for the tensor product of those corresponding to places outside $\Sigma$.
If $\delta_v \colon \Mbf_v(F_v) \to E^\times$ is a continuous character for all $v \in \Sigma$, then we may consider the $K^S$-cohomology of the \emph{continuous} dual of
\begin{align}\label{eqn: sigmaind in introduction}
    U \boxtimes_E \left(
    \hat{\bigboxtimes}_{v \in \Sigma} \Ind_{\Pbf_v(F_v)}^{\Gbf(F_v)} (W_v^{\Nbf_v} \otimes \omega_v \otimes \delta_v)^\la
    \right)
    \hat{\boxtimes}_E
    \left(
    \bigboxtimes_{v \in S \setminus \Sigma} \Ind_{\Pbf_v(F_v)}^{\Gbf(F_v)} (\omega_v)^\sm
    \right),
\end{align}
where now $\Ind_{\Pbf_v(F_v)}^{\Gbf(F_v)} (\blank)^\la$ denotes locally ($\Q_p)$-analytic induction.
In order to shorten notation, let us set $V_\delta = (\bigboxtimes_{v \in \Sigma} W_v^{\Nbf_v} \otimes \omega_v \otimes \delta_v) \boxtimes ( \bigboxtimes_{v \in S \setminus \Sigma} \omega_v )$, and $\SigmaInd_P^G(U, V_\delta)$ for \Cref{eqn: sigmaind in introduction}.
If $U$ is trivial, the groups $\Gbf(F_v)$ act smoothly on this representation for $v \in S \setminus \Sigma$ and locally analytically when $v \in \Sigma$.
For brevity we will refer to it as a locally $\Sigma$-analytic representation of $G := \prod_{v \in S} \Gbf(F_v)$.
By duality, studying the cohomology of the dual of $\SigmaInd_P^G(U, V_\delta)$ is equivalent to  studying the homology of $\SigmaInd_P^G(U, V_\delta)$. The same methods that prove the finite dimensionality in \Cref{theorem: introduction 1} show that this is finite-dimensional, and can be used to give a criterion for its vanishing.
It is also interesting to study the $S$-arithmetic (co)homology of more general locally $\Sigma$-analytic representations of $G$.
A good source of such representations are the Orlik--Strauch functors $\Fcal_P^G$.
We show that the $S$-arithmetic homology of many of the representations in the image of these functors is also finite-dimensional.
Under the assumptions of \Cref{theorem: introduction 1.5}, we can also show finite-dimensionality for all such representations when they are of finite length.
We can relate (co)homology with locally algebraic and locally analytic coefficients via the following theorem.

\begin{theorem}\label{theorem: introduction 2}
Assume that for all $v \in \Sigma$, the maximal split torus in the center of $\Mbf_v$ acts numerically non-critically on $U \otimes V_1$ in the sense of \Cref{definition: small slope}. Then, the inclusion of locally algebraic induction into locally $\Sigma$-analytic induction induces an isomorphism
$$
    H_*( K^S, W \otimes \bigboxtimes_{v \in S} \Ind_{\Pbf_v(F_v)}^{\Gbf(F_v)} (\omega_v)^\sm )
    \simto
    H_*( K^S, \SigmaInd_P^G(U, V_1) )
$$
(and similarly for the cohomology of their duals).
\end{theorem}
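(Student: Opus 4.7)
The plan is to show that the cokernel of the natural inclusion of the locally algebraic induction into $\SigmaInd_P^G(U, V_1)$ contributes trivially to $S$-arithmetic (co)homology after projection to the ``small-slope'' part for the action of the center of $\Mbf := \prod_{v \in \Sigma} \Mbf_v$, and then to observe that under the numerical non-critical assumption the full (co)homology already coincides with this small-slope part; the inclusion is therefore an isomorphism on both sides.

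The key technical ingredient is a BGG-type filtration, built place by place for $v \in \Sigma$. For each such $v$, in the spirit of Orlik--Strauch and Schraen, one has a filtration on $\Ind_{\Pbf_v(F_v)}^{\Gbf(F_v)}(W_v^{\Nbf_v} \otimes \omega_v)^{\la}$ whose first step is the locally algebraic induction $\Ind_{\Pbf_v(F_v)}^{\Gbf(F_v)}(W_v^{\Nbf_v} \otimes \omega_v)^{\lalg}$ and whose successive graded pieces are locally analytic inductions from parabolics strictly containing $\Pbf_v$, with coefficient characters twisted by weights of the form $w \cdot \lambda_v - \lambda_v$ indexed by non-trivial elements $w$ of the relative Weyl group. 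Taking the completed tensor product over $v \in \Sigma$ and tensoring with $U$ and the smooth inductions at $v \in S \setminus \Sigma$ yields a filtration of $\SigmaInd_P^G(U, V_1)$ whose first step is precisely the locally algebraic object appearing in \Cref{theorem: introduction 1}.

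The second step is to compute the action of a suitable $U_p$-type operator, coming from a regular dominant element of the maximal split torus in the center of $\Mbf$, on each graded piece. The central character of the piece indexed by $w$ differs from the classical one by $w \cdot \lambda - \lambda$, which produces a strictly larger $p$-adic slope; the numerical non-criticality condition of \Cref{definition: small slope} is precisely the inequality that places each non-trivial $w$ outside the small-slope window determined by $U \otimes V_1$. Combined with the finite-dimensionality statements underlying \Cref{theorem: introduction 1} and the locally analytic variant alluded to in the surrounding discussion, a standard slope-decomposition argument then shows that the small-slope part of $H_*(K^S, -)$ vanishes on each non-classical graded piece, so the inclusion is an isomorphism on small-slope parts. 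The non-critical hypothesis also forces both (co)homology groups to be entirely small-slope, which concludes the argument.

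The main obstacle is the construction and manipulation of the BGG-type filtration in this mixed setting, where some places of $S$ are treated smoothly, some locally analytically, and $U$ is permitted to be a non-trivial algebraic factor coming from places above $p$ outside $\Sigma$: one must check that the completed tensor product across $v \in \Sigma$ is compatible with the place-by-place filtrations, and that the functor $H_*(K^S, -)$ is sufficiently well-behaved on the resulting short exact sequences for the small-slope projector to be exact and for each graded piece to have finite-dimensional small-slope part. The other delicate point is fixing the correct normalization of $p$-adic slopes at each place $v \in \Sigma$ so that the combinatorial condition encoded in ``numerically non-critical'' aligns exactly with the inequality needed to kill every non-trivial Weyl contribution.
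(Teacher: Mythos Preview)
Your high-level strategy is in the right neighborhood, but it differs from the paper's proof in two substantive ways and contains one structural error.

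\textbf{Error in the description of the BGG pieces.} You claim the successive graded pieces of your filtration are ``locally analytic inductions from parabolics strictly containing $\Pbf_v$''. This is not what appears in the BGG argument. The terms indexed by $w \neq 1$ are locally $\Sigma$-analytic inductions from the \emph{same} parabolic $P$, namely $\SigmaInd_P^G(U, V_{M_\Sigma}(w \cdot \lambda)' \otimes_E V)$; what changes is the highest weight $\lambda \rightsquigarrow w \cdot \lambda$, not the parabolic. (You may be conflating this with the Jordan--H\"older analysis of Orlik--Strauch representations, where larger parabolics do appear, but that is a different decomposition and not the one needed here.)

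\textbf{Resolution versus filtration, and how the paper avoids your ``main obstacle''.} The paper does not build a filtration place by place and then worry about compatibility of completed tensor products. Instead it works directly in the BGG category $\O^{\p_E}_\alg$: the parabolic BGG resolution of $V_{G_{\Sigma \setminus \tilde\Sigma}}(\lambda_{\Sigma \setminus \tilde\Sigma})$, tensored with the appropriate Verma module at the remaining places, yields an exact complex in $\O^{\p_E}_\alg$. Applying the exact functor $U \otimes_E \Fcal_{P_{S_p}}^{G_{S_p}}(\blank, V_{S_p}) \hat\otimes_E \Ind_{P_{S \setminus S_p}}^{G_{S \setminus S_p}}(V^{S_p})^\sm$ turns this into an exact sequence of locally $\Sigma$-analytic $G$-representations in one stroke. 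Taking $S$-arithmetic homology then gives a spectral sequence. This is precisely the point the paper emphasizes: in the $S$-arithmetic setting one can use the Orlik--Strauch machinery on representations of the full group $G$, so the locally analytic BGG resolution comes for free from the algebraic one, and no ad hoc place-by-place construction is needed.

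\textbf{Vanishing without slope decompositions.} The paper does not use small-slope projectors or slope decompositions to kill the $w \neq 1$ terms. It uses the direct vanishing criterion of \Cref{corollary: homology is finite dimensional}~(iii): if some $a \in A_\Sigma^-$ acts on $U \otimes_E V_{M_\Sigma}(w \cdot \lambda)' \otimes_E \sigma$ with operator norm strictly greater than one (equivalently, $v_p(\chi(a)) < v_p((w \cdot \lambda)(a))$), then the entire $S$-arithmetic homology of $\SigmaInd_P^G(U, V_{M_\Sigma}(w \cdot \lambda)' \otimes_E V)$ vanishes. The numerical non-criticality hypothesis is exactly this inequality for each simple reflection, and \Cref{lemma: on roots} propagates it to all $w \in {}^M W_{\Sigma,E} \setminus \{1\}$. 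So the spectral sequence collapses to the $w = 1$ term, which is the locally algebraic induction, and you are done.

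Your slope-decomposition route could in principle be made rigorous (the paper itself remarks after \Cref{theorem: classicality} that one could argue via overconvergent homology and ad hoc BGG resolutions for $J$-representations, as in \cite{urban_eigenvarieties} and \cite{parabolic_overconvergent}), but it is less direct and requires exactly the delicate compatibilities you flag as obstacles; the paper's point is that working $S$-arithmetically with Orlik--Strauch functors makes those obstacles disappear.
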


In terms of $p$-adic automorphic forms, this theorem gives a numerical criterion for a $p$-adic automorphic form to be classical.
\Cref{theorem: introduction 2} can therefore be seen as a classicality theorem.
Finally, we show that the homology groups behave well as we allow the characters $\delta$ to vary in rigid-analytic families,
and use this to construct eigenvarieties.
More precisely, we prove the following result.

\begin{theorem}\label{theorem: introduction 3}
Assume that $E$ is sufficiently large. Fix $W$ and $\omega = (\omega_v)_{v \in S}$ as above. There exists a rigid-analytic space $\Wcal_\omega$ defined over $E$ parametrising locally analytic twists of $(\omega_v)_{v \in \Sigma}$ and a rigid-analytic space $\Yscr_{K^s, W, \omega}^\Sigma$ over $\Wcal_\omega$ whose fibre along a twist $(\omega_v \otimes \delta_v)_{v \in \Sigma}$ is canonically in bijection with the set of systems of eigenvalues for the action of a certain Hecke algebra on $H_*(K^S, \SigmaInd_P^G(U, V_\delta))$.
\end{theorem}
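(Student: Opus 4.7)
The plan is to apply a version of the eigenvariety machine of Buzzard, as adapted to overconvergent (co)homology by Hansen and others, to a family of Borel--Serre complexes built from the representations $\SigmaInd_P^G(U, V_\delta)$ as $\delta$ varies over $\Wcal_\omega$. The weight space $\Wcal_\omega$ is constructed as the rigid-analytic space parametrising continuous $E$-valued characters $\delta = (\delta_v)_{v \in \Sigma}$ of a suitable locally $\Q_p$-analytic quotient of $\prod_{v \in \Sigma} Z(\Mbf_v)(F_v)$, chosen so that the twist $\omega_v \otimes \delta_v$ is well-defined for any $\delta \in \Wcal_\omega(E')$. Over each admissible affinoid open $\Ucal \subseteq \Wcal_\omega$ I would build a coherent family of locally $\Sigma$-analytic representations of $G$ over $\O(\Ucal)$ specialising at each $\delta \in \Ucal$ to $\SigmaInd_P^G(U, V_\delta)$: concretely, twist the compactly-induced model of each $\omega_v$ by the universal character over $\Ucal$ and apply locally analytic parabolic induction $\O(\Ucal)$-linearly.

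Next, one forms the Borel--Serre complex computing $K^S$-(co)homology, takes coinvariants against the above family, and obtains a complex of $\O(\Ucal)$-Banach modules whose cohomology specialises fibrewise to $H_*(K^S, \SigmaInd_P^G(U, V_\delta))$. The commutative Hecke algebra $\Tbb$ acting on this complex is generated by the unramified Hecke operators at primes outside $S$ together with a collection of $U_{t_v}$-operators, one for each $v \in \Sigma$, where $t_v$ lies in the dominant cone of the centre of $\Mbf_v$ and contracts the unipotent radical $\Nbf_v$. The key input is that each $U_{t_v}$ acts compactly on the family: this yields slope decompositions and identifies the finite-slope part with a family of perfect complexes of $\O(\Ucal)$-modules, whose cohomology in each degree is coherent over $\O(\Ucal)$ and specialises correctly by base change.

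Feeding this data into the eigenvariety machine produces $\Yscr_{K^S, W, \omega}^\Sigma \to \Wcal_\omega$ as the rigid-analytic space glued from the spectra of the images of $\Tbb$ in the endomorphism rings of the finite-slope pieces. Its fibre over $\delta$ is the set of systems of $\Tbb$-eigenvalues on the finite-slope part of $H_*(K^S, \SigmaInd_P^G(U, V_\delta))$; since the latter is already finite-dimensional by the locally analytic analogue of \Cref{theorem: introduction 1}, every system is automatically of finite slope, giving the claimed bijection. The main technical obstacle is verifying compactness and $\O(\Ucal)$-orthonormalisability of the $U_{t_v}$-action uniformly in $\delta$: this requires describing the locally analytic induced representation via the Iwasawa decomposition as a space of locally analytic functions on $\Nbf_v(F_v)$ (or on its compact open cells) valued in the compactly-induced model of $\omega_v$, on which $t_v$ acts by a genuine contraction of the $\Nbf_v$-direction, and then checking that the universal twist assembles these into an $\O(\Ucal)$-orthonormalisable Banach family. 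Secondary issues, such as independence of the auxiliary compact-mod-centre subgroup realising $\omega_v$, the affinoid cover of $\Wcal_\omega$, and the Borel--Serre model, are handled by standard gluing and functoriality arguments.
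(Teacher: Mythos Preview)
Your proposal has a genuine gap at the point where you claim ``one forms the Borel--Serre complex computing $K^S$-(co)homology \dots\ and obtains a complex of $\O(\Ucal)$-Banach modules'' on which the $U_{t_v}$ act compactly. The $S$-arithmetic Borel--Serre complex $C_\bullet(K^S,\SigmaInd_P^G(U,V_\Ucal))$ has terms that are finite direct sums of copies of $\SigmaInd_P^G(U,V_\Ucal)$, and this locally analytic induction is a compact-type LB-space, not an orthonormalizable Banach $\O(\Ucal)$-module. More fundamentally, in the paper's normalization the Hecke operators $U_\mu=[J\mu J]$ for $\mu\in\HM^-$ act \emph{trivially} on $\SigmaInd_P^G(U,V_\Ucal)$; there is no compact operator in sight on this complex. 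Your Iwasawa-decomposition description of the induction as functions on $\bar N$ valued in the compactly-induced model is essentially describing the $J$-representation $\Acal_{\sigma,U}^{s\text{-}\Sigma\an}$, which is where compactness of $U_a$ for $a\in A_\Sigma^{--}$ genuinely lives --- but that is a representation of the parahoric $J$, not of $G$, and its homology is \emph{arithmetic} homology at level $K^S J$, not $S$-arithmetic homology at level $K^S$.

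The paper bridges this gap via \Cref{theorem: introduction 4}: the $G$-equivariant derived isomorphism $R(1)\otimes^{\mathbb L}_{\Hcal_{S,E}^-}\cInd_J^G(\Acal_{\sigma,U}^\Sigma)\simeq\SigmaInd_P^G(U,V)$ together with Shapiro's lemma identifies $C_\bullet(K^S,\SigmaInd_P^G(U,V_\Ucal))$ with $R(1)\otimes^{\mathbb L}_{\Hcal_{S,\O(\Ucal)}^-}C_\bullet(K^S J,\Acal_{\sigma_\Ucal,U}^\Sigma)$. Only the second complex is orthonormalizable with a compact $U_a$. The paper then does \emph{not} run Buzzard's machine on this; instead it proves a spectral theorem (\Cref{theorem: spectral theorem}, a variant of Emerton's for the Jacquet functor) showing directly that the relevant $\Tor$-groups against $R(1)$ are finitely generated over $\O(\Ucal)$ and base-change correctly, hence glue to a coherent sheaf on $\hat M_\Sigma$ with no Fredholm hypersurface intermediary. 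A Fredholm-hypersurface construction does appear (Section~\ref{subsection: eigenvarieties overconvergent homology}), but it is built from the \emph{overconvergent} complex $C_\bullet(K^S J,\Acal_{\sigma_\Omega,U,*}^{s\text{-}\Sigma\an})$ over $\hat{J_{M,\Sigma}}$, and is then compared to the first construction via \Cref{proposition: affinoid covering} and \Cref{proposition: comparison eigenvarieties overconvergent}. Without the passage through \Cref{theorem: introduction 4}, neither route is available.
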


\subsection{Relation to previous work}

We will compare our constructions of eigenvarieties to previous constructions in the literature.
Let us assume that $S$ is the set of all primes of $F$ dividing $p$ since this is the usual setting in which these constructions are carried out.
Let us distinguish between two types of eigenvarieties: ``trianguline" or ``finite slope" eigenvarieties, i.e. those that correspond to the case when all $\Pbf_v$ are Borel subgroups, and ``paraboline" or ``parabolic" eigenvarieties, which are the rest.
The former correspond to $p$-adic automorphic forms which are finite-slope for all Hecke operators at places in $S$, whereas in the latter you allow infinite slope at some operators.

First, we will compare our construction with that of overconvergent (co)homology.
For the trianguline case, a very general source is \cite{hansen_universal_eigenvarieties}, which encompasses previous constructions such as \cite{chenevier_gln} or \cite{urban_eigenvarieties}.
We prove that our eigenvarieties in these cases are isomorphic to these (and that they do not depend on the supercuspidal and algebraic data).
For the paraboline case, these were carried out by Loeffler \cite{loeffler_overconvergent_algebraic_automorphic_forms} when $\Gbf$ is compact at infinity and by Barrera Salazar--Williams \cite{parabolic_overconvergent} when $\Gbf_v$ is unramified for all $v \in S$.
We prove that the former are isomorphic to ours, and that the latter are isomorphic to a variant of it, where the supercuspidal data is replaced by a different smooth representation $\cInd_J^G ( 1 )$ (here $J$ is a certain subgroup of $G$ and $1$ denotes the trivial representation of $J$), and extend their construction to the case where the group is not unramified.
We remark that \Cref{theorem: introduction 1} suggests that that one should indeed include the supercuspidal data.

Secondly, we will also compare our construction with those based on completed cohomology.
In the trianguline case, these were carried out by Emerton \cite{emerton_interpolation}.
In the paraboline case, these have been constructed by Hill--Loeffler \cite{hill_loeffler} and more recently by Breuil--Ding in \cite{breuil_ding_bernstein_eigenvarieties}, who have called them \emph{Bernstein eigenvarieties}.
We will focus on the comparison with Breuil--Ding, who work in the case where $\Gbf_v \simeq \GL_{n/F_v}$ for all $v \in S$ and $\Gbf(F \otimes_\Q \R)$ is compact.
We prove in this setting that the reduced varieties underlying Bernstein eigenvarieties and our eigenvarieties are (essentially) isomorphic.
In \cite{breuil_ding_bernstein_eigenvarieties} Breuil--Ding prove several important results about their eigenvarieties, their local geometry, and their connection to Galois representations.
As a consequence of the comparison, all of these results may also be applied to our eigenvarieties.

The comparison of our eigenvarieties with overconvergent cohomology follows naturally from our methods along with a comparison of two different finite slope functors. However, the comparison with completed cohomology requires more work, and is a consequence of an interpolation argument using \cite{hansen_universal_eigenvarieties} and Bergdall--Chojecki's adjunction theorem for Emerton's locally analytic Jacquet functor \cite{bergdall_adjunction} applied to numerically non-critical points, which are dense in the considered eigenvarieties.

To the author's knowledge, the first to study the $S$-arithmetic cohomology of locally analytic representations of reductive groups were Kohlhaase--Schraen in their article \cite{vanishing}.
The reader that is familiar with their work will notice that our work is heavily inspired by theirs, and indeed, our strategy for proving the above theorems is also inspired by theirs.
The ideas of Kohlhaase--Schraen have also inspired work by Gehrmann, Rosaria Pati and Rosso \cite{gehrmann_L_invariant} \cite{gehrmann_rosso_L_invariants} \cite{gehrmann_rosaria_L_invariants}, who have explored the applications of $p$-arithmetic cohomology of locally analytic representation to the study of automorphic $\mathcal L$-invariants.

\subsection{Methods}
\label{subsection: introduction methods}

As mentioned above, our strategy for proving \Cref{theorem: introduction 1} and its analogue for locally $\Sigma$-analytic representations are based on the work of Kohlhaase--Schraen \cite{vanishing}, who proved finite-dimensionality and a vanishing criterion for locally analytic induction in a special case, namely when $F = \Q$, $\Gbf(\R)$ is compact, $S = \{ p \}$, $\Gbf_p$ is split, semisimple and adjoint and $\Pbf_p$ is a Borel subgroup.
Let us give an outline of this strategy adapted to our setting.
By our compact induction assumption on $\omega_v$, $W_v^{\Nbf_v} \otimes \omega_v \otimes \delta_v$ is isomorphic to a compact induction $\cInd_{\HM[v]}^{\Mbf_v(F_v)} (\sigma_v)$ for each $v \in S$, where $\HM[v]$ is a (not too small) compact-mod-center subgroup of $\Mbf_v(F_v)$ and $\sigma_v$ is finite-dimensional.
The main idea is that one can relate the $S$-arithmetic homology of locally $\Sigma$-analytic induction to (a twisted version of) overconvergent homology $H_*(K^S J, \Acal^\Sigma_{\sigma, U})$.
Here, $J$ is again a certain compact open subgroup of $G$ and $\Acal^\Sigma_{\sigma, U}$ is a locally $\Sigma$-analytic representation of $J$ (and in fact, of a larger monoid).
Shapiro's lemma shows that there is an isomorphism between $H_*(K^S J, \Acal^\Sigma_{\sigma, U})$ and the $S$-arithmetic homology $H_*(K^S, \cInd_J^G(\Acal^\Sigma_{\sigma, U}))$.
The representation $\cInd_J^G(\Acal^\Sigma_{\sigma, U})$ of $G$ is also naturally a module for a Hecke algebra $\Hcal_{S,E}^-$, which can also be made to act trivially (in some sense) on $\SigmaInd_P^G(U^\Sigma, V_\delta)$ and on the coefficient field $E$. We write $E(1)$ for the field $E$ equipped with this trivial action of $\Hcal_{S,E}^-$.
The key technical result is the following.

\begin{theorem}\label{theorem: introduction 4}
There is an isomorphism in the derived category of $\Hcal_{S,E}^-[G]$-modules
$$
    E(1) \otimesL_{\Hcal_{S,E}^-} \cInd_J^G(\Acal^\Sigma_{\sigma, U}) \simto \SigmaInd_P^G(U, V_\delta).
$$
\end{theorem}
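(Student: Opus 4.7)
The plan is to establish the isomorphism by first identifying both sides at the level of $H^0$ and then lifting this identification to the derived category via an explicit Koszul resolution of $E(1)$.

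First, I would construct a natural $G$-equivariant surjection
$$\Phi \colon \cInd_J^G(\Acal^\Sigma_{\sigma, U}) \to \SigmaInd_P^G(U, V_\delta).$$
The representation $\Acal^\Sigma_{\sigma, U}$ should be realizable, using the Iwahori-type decomposition $J = (J \cap \bar N)(J \cap M)(J \cap N)$, as (something close to) a locally $\Sigma$-analytic induction from $P \cap J$ to $J$ of $\sigma \otimes U$, so that transitivity of induction together with restriction to the open Bruhat cell $P \bar N \subset G$ yields $\Phi$ in a formal manner. Surjectivity would come from the density of the open Bruhat cell combined with the compact support condition in $\cInd_J^G$. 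Because $\Hcal_{S,E}^-$ acts as the augmentation on the target, $\Phi$ factors through $E(1) \otimes_{\Hcal_{S,E}^-} \cInd_J^G(\Acal^\Sigma_{\sigma, U})$, and I would verify that the factored map is an isomorphism by a direct calculation.

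To pass to the derived category, I would exploit that $\Hcal_{S,E}^-$ is (essentially) the monoid algebra over $E$ of a finitely generated free commutative monoid $T^-$, with generators $U_{t_1}, \ldots, U_{t_d}$ corresponding to a basis of the antidominant cocharacter cone of the split center of $M$. Then the augmentation ideal defining $E(1)$ is generated by the sequence $(U_{t_i} - 1)_{i=1}^d$, and the Koszul complex on this sequence furnishes a free resolution of $E(1)$. Tensoring with $\cInd_J^G(\Acal^\Sigma_{\sigma, U})$, the derived statement reduces to showing that the Koszul complex
$$K^\bullet\bigl(\{U_{t_i} - 1\};\; \cInd_J^G(\Acal^\Sigma_{\sigma, U})\bigr)$$
has cohomology concentrated in degree zero, where it is then identified with $\SigmaInd_P^G(U, V_\delta)$ via $\Phi$.

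The main obstacle, and the technical heart of the argument, is showing that $(U_{t_i} - 1)_{i=1}^d$ is a regular sequence on $\cInd_J^G(\Acal^\Sigma_{\sigma, U})$. Geometrically, each $U_t$ is a contracting operator on the $J$-coset structure of $G$ in the $\bar N$-direction, and iterating it concentrates support onto smaller and smaller neighborhoods of the identity in $\bar N \cap J$; so the failure of $U_t - 1$ to be injective should already be obstructed by the compact-support condition in the induction. I would make this precise using a Mahler-type basis for the locally $\Sigma$-analytic functions appearing in $\Acal^\Sigma_{\sigma, U}$, together with a decomposition of $\cInd_J^G(\Acal^\Sigma_{\sigma, U})$ indexed by $J$-cosets in $G$ on which the $U_t$ act through an explicit combination of coset shifts and of operators in the Mahler basis. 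Regularity should then follow by induction on the number of generators $t_i$, in the spirit of the vanishing arguments in Kohlhaase--Schraen. Once this is established, the degree-zero quotient is precisely the map $\Phi$ above, completing the proof.
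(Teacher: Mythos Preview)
Your outline matches the paper's at the top level: construct an explicit $G$-equivariant map $\varphi\colon \cInd_J^G(\Acal^\Sigma_{\sigma,U})\to \SigmaInd_P^G(U,V_\delta)$, prove it is surjective with kernel $\sum_\mu\Im(U_\mu-1)$, and then show the higher Tor groups vanish. The gap is in the last step. Your claim that $\Hcal_{S,E}^-$ is the monoid algebra of a \emph{free} commutative monoid is false in general: it is $E[\HM^-/J_M]$, and already the submonoid $\Lambda^-\subseteq\Lambda\simeq A/A_0$ is cut out by the inequalities $\langle\alpha,\cdot\rangle\le 0$ for $\alpha\in\Delta\setminus\Delta_M$, which for $\GL_2$ with a Borel gives a half-plane in $\Z^2$, not a free monoid on two generators. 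The polynomial-ring hypothesis is precisely what Kohlhaase--Schraen use (split adjoint semisimple, Borel), and removing it is the point of Section~3 of the paper. There the Koszul argument is replaced by an abstract criterion (Proposition~3.6): one passes to a free submonoid $\Lambda'\subseteq\Lambda^-$ with $\Lambda'(\Lambda')^{-1}=\Lambda$, shows the Tor groups over $R[\Lambda^-]$ and $R[\Lambda']$ agree via localisation, and then proves regularity over $R[\Lambda']$ by a graded-module lemma; a separate averaging argument handles the finite-index jump from $\Lambda$ to $\HM/J_M$.

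The input that criterion needs is not a Mahler basis or a support-contraction heuristic, but a $\boldLambda^-$-\emph{graded} decomposition $\cInd_J^G(\Acal^\Sigma_{\sigma,U})=\Mcal^+\oplus\bigoplus_{\mu\in\boldLambda^-}\Mcal_\mu$ with $U_\mu\Mcal_{\tilde\mu}\subseteq\Mcal_{\mu\tilde\mu}$ and a cofinality property. This is produced in Proposition~4.7 from the Bruhat--Tits double-coset description $K\backslash G/J\simeq\Zfrak_0\backslash\Zfrak/\tilde W^{J_M}$ (Proposition~2.10), not from a decomposition by $J$-cosets; the grading is by orbits of $\HM^-$ on these double cosets. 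Your Mahler-basis approach does not supply this structure, and it is hard to see how an induction on the number of generators would go through without it. (The identification of $\ker\varphi$ with $\sum_\mu\Im(U_\mu-1)$, which you describe as a ``direct calculation'', also relies nontrivially on this same double-coset bookkeeping; see Proposition~4.5.)
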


One can formally deduce from this that a similar statement holds after replacing these representations of $G$ by their $S$-arithmetic homology. A rough way of stating this is to say that the $S$-arithmetic homology of $\SigmaInd_P^G(U, V_\delta)$ can be identified with (derived) eigenquotients of 1 for the action of a Hecke algebra at $S$ on this twisted version of overconvergent homology.
In most cases it is possible to ``untwist" this action, and the identification is with systems of eigenvalues in the usual overconvergent homology spaces that can be read off from the action of the compact-mod-center groups $\HM[v]$ from which we are compactly inducing the $\sigma_v$.
For example, if $\Gbf_v$ splits over $F_v$ and $\Pbf_v$ is a Borel subgroup for all $v$, then $\Hcal_{S,E}^-$ has a basis consisting of Hecke operators associated to antidominant cocharacters of some fixed maximal split tori of the $\Gbf(F_v)$; in this case one can take $\HM[v]$ to be the group of $F_v$-points of these tori, and if $\sigma_v$ is irreducible (i.e. a character), a choice of uniformiser $\varpi_v$ of $F_v$ gives one such untwisting, and the eigenvalue of the Hecke operator corresponding to a cocharacter $\beta$ is $\sigma_v(\beta(\varpi_v))$.

We use this identification to prove \Cref{theorem: introduction 1}, as well as finite-dimensionality in the locally $\Sigma$-analytic case, the vanishing criterion we alluded to above, and the properties for representations in the image of Orlik--Strauch functors.
\Cref{theorem: introduction 2} can be deduced without much difficulty from these properties and the properties of the BGG category $\O$.
This is an example of one of the advantages that the $S$-arithmetic approach has over overconvergent (co)homology.
In the latter setting, the analogue of \Cref{theorem: introduction 2} is proven in \cite{urban_eigenvarieties} and in \cite{parabolic_overconvergent} by constructing an \textit{ad hoc} version for locally analytic representations of $J$ of BGG resolutions in the category $\O$.
In the $S$-arithmetic setting, however, where we can work with representations of the whole group $G$, the existence of such a resolution of locally analytic representations can be deduced \textit{directly} from the BGG resolution in $\O$ by applying the Orlik--Strauch functors to it.

Regarding $S$-arithmetic homology in families, the obvious reformulation of \Cref{theorem: introduction 4} when the character $\delta$ takes values in affinoid algebras remains true (and is also true for more general families of compactly induced representations than the $\sigma_v \otimes \delta_v$).
This again allows us to relate our $S$-arithmetic homology to overconvergent homology, but it is not as straightforward to deduce finiteness theorems from this relation, at least without assuming extra hypotheses (such as the existence of slope decompositions).
However, we prove a spectral theorem for compact operators on orthonormalisable Banach algebras to show these statements, and also to show that it is possible to glue these $S$-arithmetic homology groups for different affinoid algebras.
This allows us to construct eigenvarieties directly over the character space of the group $\prod_{v \in \Sigma} \Mbf_v(F_v)$ and not via Fredholm hypersurfaces over weight space, in a similar way to the constructions that use completed cohomology and Emerton's locally analytic Jacquet functor (cf. \cite{emerton_jacquet_one}, \cite{emerton_interpolation}).
In fact, this spectral theorem can be seen as a variant of the spectral theorem underlying the construction of locally analytic Jacquet modules.
It is often the case when studying eigenvarieties that one needs to rely on Fredholm hypersurfaces in order to prove meaningful properties, and this situation is no different.
To do this, we also generalise the usual construction of eigenvarieties using Fredholm hypersurfaces and overconvergent homology to our more general setting and relate it to our other construction.
It is important to note that in order to make this work we have to work with the untwisted (i.e. the usual) version of overconvergent homology mentioned above.

\subsection{Organisation of the article}

In the first section, \Cref{section: structure}, we will collect most of the results on the structure of $p$-adic reductive groups that we will need in the rest of the article (especially in \Cref{section: locally analytic}) putting special emphasis on rigid-analytic aspects.
Next, in \Cref{section: vanishing for Tor} we will write down some of the properties of the Tor groups that will become ubiquitous in the rest of the article.
Especially, we will prove a vanishing criterion that underlies part of the proof of \Cref{theorem: introduction 4}.
\Cref{section: locally analytic} contains the rest of the local part of the argument: we will define the relevant locally ($\Sigma$-)analytically induced representations and prove \Cref{theorem: introduction 4}.
In \Cref{section: homology} we will move on to study $S$-arithmetic homology and its relation to arithmetic homology.
In it, we prove \Cref{theorem: introduction 1}, \Cref{theorem: introduction 1.5} and \Cref{theorem: introduction 2} among other results on these $S$-arithmetic (co)homology groups.
Finally, in \Cref{section: eigenvarieties} we construct eigenvarieties, study some of their properties (in particular, proving \Cref{theorem: introduction 3}), and compare them to previous constructions.

\subsection{Acknowledgements}
I am especially grateful to Jack Thorne for suggesting these topics and for his continuous and invaluable help during the development of this article.
I am also thankful to Tony Scholl and Christian Johansson for helpful comments on this manuscript and to David Schwein for his comments on an earlier version of it.
This research received funding from the European Research Council (ERC) under the European Union's Horizon 2020 research and innovation programme (grant agreement No 714405).

\subsection{Notation}

If $C_\bullet$ is a chain complex, we write $H_*(C_\bullet) = \bigoplus_{i \in \Z} H_i(C_\bullet)$ and similarly for cohomology.
By ``module over a non-commutative ring", we always mean left module unless explicitly specified.
In any case, the only rings in this article that are non-commutative are group rings of non-abelian groups (in particular, all Hecke algebras are commutative), for which we may consider right modules as left modules and vice versa in the usual way.
If $V$ is a locally convex vector space over a finite extension of $\Q_p$, we will write $V'$ for its continuous dual, which we always equip with its strong topology.
All topological tensor products are projective tensor products.
By locally analytic, we always mean locally $\Q_p$-analytic.
We will usually use the same symbols to denote representations of a group and the underlying vector spaces or modules.

All rigid-analytic spaces appearing in this article are Tate rigid-analytic spaces.
If $A$ is an affinoid algebra over a finite extension $E$ of $\Q_p$, we will write $\Sp(A)$ for the corresponding affinoid rigid $E$-analytic space. If $X$ is a rigid $E$-analytic space, we will write $\O(X)$ to denote the algebra of global $E$-analytic functions on $X$.

If $\Hbf$ is an algebraic group over $E$, we will write $H := \Hbf(E)$ for its groups of $E$-points, and similarly for all other letters.
We will write $X^*(\Hbf)$ (resp. $X_*(\Hbf)$) for the group of $E$-rational characters (resp. cocharacters) of $\Hbf$, or $X^*_E(\Hbf)$ (resp. $X_*^E(\Hbf)$) if we want to emphasise the field $E$.
If $F$ is a field extension of $E$, we will write $\Hbf_F$ or $F \times_E \Hbf$ for the base change of $\Hbf$ from $E$ to $F$.
Similarly, if $F$ is a subfield of $E$ we will write $\Res_{E/F} \Hbf$ for the restriction of scalars.

\section{The structure of \texorpdfstring{$p$}{p}-adic reductive groups}
\label{section: structure}

In this section we will gather some facts about reductive groups over $p$-adic local fields that will be used throughout the article.
Throughout this section, except in \Cref{subsection: products of groups}, we let $L$ be a finite extension of $\Q_p$ with normalised valuation $v \colon L^\times \onto \Z$.
We normalise the absolute value $| \blank |$ of $L$ so that it sends a uniformiser to $q^{-1}$, where $q$ is the cardinality of the residue field of $L$.

\subsection{Basic notation}
\label{subsection: local setting}

Let $\Gbf$ be a connected reductive group over $L$, $\Sbf$ a maximal $L$-split torus of $\Gbf$, $\boldsymbol{\Zfrak}$ (resp. $\boldsymbol{\Nfrak}$) its centraliser (resp. normaliser) in $\Gbf$, and $\Zfrak = \boldsymbol{\Zfrak}(L)$ (resp. $\Nfrak = \boldsymbol{\Nfrak}(L)$).
Let $\Phi = \Phi(\Gbf, \Sbf)$ be the relative root system of $\Gbf$ with respect to $\Sbf$ and $W$ be its Weyl group, which is naturally isomorphic to $\Nfrak / \Zfrak$.
If $\alpha \in \Phi$ is a root, we will write $s_\alpha \in W$ for the corresponding simple reflection and $\Ubf_\alpha$ for the corresponding root subgroup of $\Gbf$.
We fix an ordering of the roots of $\Phi$ and let $\Phi^+$ (resp. $\Phi^-$) be the subset of positive (resp. negative) roots and $\Delta \subseteq \Phi^+$ the corresponding root basis.
Write $\Phi^\ndiv$ for the set of non-divisible roots in $\Phi$, i.e. the set of $\alpha \in \Phi$ such that $\frac12 \alpha \not\in \Phi$, and $\Phi^{\ndiv,+} = \Phi^\ndiv \cap \Phi^+$ (resp. $\Phi^{\ndiv,-} = \Phi^\ndiv \cap \Phi^-$).

Fix a subset $\Delta_M \subseteq \Delta$.
The subgroup $\Pbf$ of $\Gbf$ generated by $\boldsymbol{\Zfrak}$ and the root subgroups $\Ubf_\alpha$, where $\alpha \in \Phi$ runs through the positive roots and linear combinations of elements of $\Delta_M$, is a parabolic subgroup containing $\Sbf$, and all parabolic subgroups of $\Gbf$ arise this way (for possibly different choices of $\Sbf$ and orderings of the root system).
Let $\Abf$ be the identity component of $\bigcap_{\alpha \in \Delta_M} \ker(\alpha) \subseteq \Sbf$ and $\Mbf$ its centraliser in $\Gbf$. Then, $\Abf$ is the maximal split torus in the center of $\Mbf$, which is a Levi subgroup of $\Pbf$ containing $\Sbf$.
If $\Nbf$ denotes the unipotent radical of $\Pbf$, then $\Pbf$ admits a Levi decomposition $\Pbf = \Mbf \ltimes \Nbf$.
We will write $\bar \Pbf$ for the opposite parabolic subgroup to $\Pbf$ and $\bar \Nbf$ for its unipotent radical.

The root subsystem $\Phi_M$ of $\Phi$ consisting of linear combinations of elements of $\Delta_M$ is isomorphic to the root system of $\Mbf$ with respect to $\Sbf$, and $\Delta_M$ forms a root basis for the induced ordering of $\Phi_M$.
Set $\Phi^+_M = \Phi^+ \cap \Phi_M$, and define $\Phi^-_M, \Phi^{\ndiv,+}_M$ and $\Phi^{\ndiv, -}_M$ similarly.
The Weyl group of $\Phi_M$ is isomorphic to the subgroup $W_M$ of $W$ generated by the simple reflections $s_\alpha$ with $\alpha \in \Delta_M$.

The inclusion $\Sbf \subseteq \boldsymbol{\Zfrak}$ induces an inclusion $X^*(\boldsymbol{\Zfrak}) \subseteq X^*(\Sbf)$ of finite index.
There is a homomorphism $\ord \colon \Zfrak \to X_*(\Sbf) \otimes_\Z \Q$ sending $t$ to the cocharacter $\ord(t)$ defined by $\langle \alpha, \ord(t) \rangle = v(\alpha(t))$ for $\alpha \in X^*(\boldsymbol{\Zfrak})$. The kernel $\Zfrak_0$ of this map is the maximal compact open subgroup of $\Zfrak$, and $S_0 = S \cap \Zfrak_0$ and $A_0 = A \cap \Zfrak_0$ are the maximal compact subgroups of $S$ and $A$ respectively.
The quotient $A / A_0$ is a free abelian group whose rank is the rank of $\Abf$. We fix once and for all a section $A / A_0 \to A$ of the natural projection, and call its image $\Lambda$.

Most of the results of this section are consequences of standard facts from Bruhat--Tits theory \cite{bruhat_tits_1}, \cite{bruhat_tits_2}.
We refer the unfamiliar reader to \cite{corvallis_tits} for a survey of the topic, and we also note that many of the properties we will use are contained in the first two sections of \cite{morris_level_zero_types}.
Let $\Bscr = \Bscr(\Gbf, L)$ be the (enlarged) Bruhat--Tits building of $\Gbf$ over $L$. Recall that this is the product of the Bruhat--Tits buildings of the adjoint group $\Gbf^\ad$ of $\Gbf$ (that is, the reduced building of $\Gbf$) and of the cocenter $\Gbf / \Gbf^\der$ of $\Gbf$, where $\Gbf^\der$ is the derived subgroup of $\Gbf$.
Let $\Ascr = \Ascr(\Gbf, \Sbf, L)$ be the apartment of $\Bscr$ corresponding to $\Sbf$; this is an affine space under the vector space $X_*(\Sbf) \otimes_\Z \R$ with an action of $\Nfrak / \Zfrak_0$.
Let us also write $\Ascr_M$ for the apartment corresponding to $\Sbf$ of the Bruhat--Tits building of $\Mbf$.
There is a natural map $\Ascr \to \Ascr_M$ which is an isomorphism of affine spaces (cf. \cite[7.6.4]{bruhat_tits_1}, \cite[4.2.18]{bruhat_tits_2}).
Let $\Fscr_M$ be a facet of $\Ascr_M$ and $\JM$ be its pointwise stabiliser in $M$.
The preimage of $\Fscr_M$ in $\Ascr$ is a union of facets.
Fix a maximal facet $\Fscr$ in this preimage, a chamber $\Cscr$ whose closure $\bar \Cscr$ contains $\Fscr$ and a special point $x_0 \in \bar \Cscr$.
Let $K$ be the stabiliser in $G$ of $x_0$, and write $J$ for the pointwise stabiliser of $\Fscr$. In particular, $\JM = J \cap M$ (by \cite[Lemma 1.13]{morris_level_zero_types}), and $J \cap \Zfrak = K \cap \Zfrak = \JM \cap \Zfrak = \Zfrak_0$.
We also let $I$ be the pointwise stabiliser in $G$ of $\Cscr$ and $\NJM$ the normaliser of $\JM$ in $M$.

There exists a maximal torus $\Tbf$ in $\boldsymbol{\Zfrak}$ defined over $L$ containing $\Sbf$ which splits over a finite Galois extension $L'/L$. For example, if $\Gbf$ is quasi-split we may take $\Tbf = \boldsymbol{\Zfrak}$. We normalise the valuation and absolute value of $L'$ so that it extends those of $L$, and we also denote it by $v$ and $| \blank |$. Let $\Phi'$ be the root system of the pair $(\Gbf, \Tbf)$ and define $\Phi'_M$ in the same way as $\Phi_M$.
We have restriction maps $X^*_{L'}(\Tbf) \to X^*_L(\Sbf)$ and $\Phi' \to \Phi \cup \{ 0 \}$.
Given $\beta \in \Phi'$, we will also write $\Ubf_\beta$ for the corresponding root subgroup defined over $L'$. We may embed $\Bscr$ into the points in the Bruhat--Tits building of $\Gbf$ over $L'$ which are fixed under the natural action of $\Gal(L'/L)$, and similarly for the apartment $\Ascr$ of the former and the apartment $\Ascr'$ of the latter corresponding to $\Tbf$. These embeddings are bijections if $L'/L$ is tamely ramified (cf. \cite{prasad_yu_buildings}).

\subsection{Root subgroups and Iwahori decompositions}

Recall that we may associate to an affine function $\psi$ on $\Ascr$ whose vector part is a root $\alpha \in \Phi$ a subgroup $U_{\psi}$ of $U_\alpha$ as follows.
Given $u \in U_\alpha$, then $U_{- \alpha} u U_{-\alpha} \cap \Nfrak$ consists of a single element $m(u)$, whose action on $\Ascr$ is a reflection around a hyperplane that is perpendicular to $\alpha$.
If $\Psi(u, \alpha)$ denotes the affine function with vector part $\alpha$ that vanishes on this hyperplane, then $U_\psi$ is the subgroup of $U_\alpha$ consisting of the identity and those $u \in U_\alpha$ such that $\Psi(u, \alpha) \geq \psi$.
Recall also that $\psi$ is called an affine root if $U_\psi$ is not contained in $U_{\psi + \varepsilon} \cdot U_{2 \alpha}$ (where $U_{2 \alpha} = 1$ if $2 \alpha \not\in \Phi$) for any $\varepsilon > 0$.

If $\alpha \in \Phi$ and $r \in \R$, we will write $\psi_{\alpha, r}$ for the affine function on $\Ascr$ with vector part $\alpha$ and $\psi_{\alpha, r}(x_0) = r$, and $U_{\alpha, r} = U_{\psi_{\alpha, r}}$.
This is a decreasing family of neighbourhoods of the identity in $U_\alpha$.
The group $\Nfrak$ acts on $\Ascr$, and for each $n \in \Nfrak$ and $\psi$ as above we have $n U_\psi n^{-1} = U_{\psi \circ n^{-1}}$.
An element $t \in \Zfrak$ acts on the apartment $\Ascr(\Gbf^\ad, \Sbf, L)$ by translation by $-\langle \alpha, \ord(t) \rangle$ in the direction of $\alpha \in \Phi^{+, \ndiv}$, so in particular we have $t U_{\alpha, r} t^{-1} = U_{\alpha, r + \langle \alpha, \ord(t) \rangle}$. If $\langle \alpha, \ord(t) \rangle > 0$, then $t U_{\alpha, r} t^{-1}$ is strictly contained in $U_{\alpha, r}$.
More generally, we can write any element $n$ of $\Nfrak$ as a product $n = wt$, where $w \in \Nfrak$ fixes the special point $x_0$ and $t \in \Zfrak$ (see \Cref{lemma: extended Weyl groups and stabilisers} below), and then $n U_{\alpha, r} n^{-1} = U_{n\alpha, r + \langle \alpha, \ord(t) \rangle}$.

Recall that we say that an open subgroup $H \subseteq G$ admits an Iwahori decomposition with respect to $\Pbf$ if the multiplication map
\begin{align}\label{eqn: definition Iwahori decomposition}
    (H \cap \bar N) \times (H \cap M) \times (H \cap N) \to H
\end{align}
is an isomorphism of $p$-adic locally analytic manifolds (note that $H$ is automatically a $p$-adic locally analytic manifold).
As the multiplication map $\bar \Nbf \times \Mbf \times \Nbf \to \Gbf$ is an isomorphism of algebraic varieties onto an open subset of $\Gbf$, the map \Cref{eqn: definition Iwahori decomposition} is an isomorphism of $p$-adic locally analytic manifolds if and only if it is surjective.
The subgroups that we will encounter admitting Iwahori decompositions will admit Iwahori decompositions of a special kind (which is rather common when the subgroups arise from Bruhat--Tits theory), which to avoid repetition we will call \emph{rooted}.
Recall from \cite[Definition 6.4.3]{bruhat_tits_1} that a function $f \colon \Phi \to \R$ is called convex if for any finite family $(\alpha_i)_i$ of elements of $\Phi$ such that $\sum_i \alpha_i \in \Phi$ we have $f(\sum_i \alpha_i) \leq \sum_i f(\alpha_i)$. For example, if $\Omega$ is a bounded subset of $\Ascr$, then the function defined by $f(\alpha) = - \inf \psi_{\alpha, 0}(\Omega)$ is convex. We say that an open subgroup $H$ admits a \emph{rooted Iwahori decomposition} if it admits an Iwahori decomposition and, moreover, there exists a convex function $f \colon \Phi \to \R$ such that
\begin{align}\label{eqn: rooted Iwahori decomposition}
    \prod_{\alpha \in \Phi^{\ndiv,+} \setminus \Phi_M} U_{f, \alpha} & \to H \cap N, &
    \prod_{\alpha \in \Phi^{\ndiv,-} \setminus \Phi_M} U_{f, \alpha} & \to H \cap \bar N
\end{align}
are isomorphisms of $p$-adic locally analytic manifolds for some, or equivalently any (by \cite[Proposition 6.4.9 (ii)]{bruhat_tits_1}) ordering of the terms in the products, where \linebreak $U_{f, \alpha} := U_{\alpha, f(\alpha)} \cdot U_{2\alpha, f(2 \alpha)}$ is a subgroup of $U_\alpha$ for all $\alpha \in \Phi^\ndiv \setminus \Phi_M$ (where $U_{2\alpha, f(2 \alpha)} = \{ 1 \}$ if $2\alpha \not\in \Phi$). As above, the condition that the maps in \Cref{eqn: rooted Iwahori decomposition} be isomorphisms of $p$-adic locally analytic manifolds is equivalent to them being surjective.

\begin{lemma}\label{lemma: Iwahori decomposition for larger parabolic}
Assume that $H$ admits a rooted Iwahori decomposition with respect to a standard parabolic $\Pbf' \subseteq \Pbf$. Then, $H$ admits a rooted Iwahori decomposition with respect to $\Pbf$.
\end{lemma}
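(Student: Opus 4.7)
The plan is to obtain the Iwahori decomposition with respect to $\Pbf$ by reorganizing the rooted decomposition with respect to $\Pbf'$, using the fact that products of root subgroups along a fixed convex function are independent of the ordering of the roots.

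First I would record the basic inclusions. Since $\Delta_{M'} \subseteq \Delta_M$, we have $\Phi_{M'} \subseteq \Phi_M$, hence $\Mbf' \subseteq \Mbf$, $\Nbf \subseteq \Nbf'$ and $\bar \Nbf \subseteq \bar \Nbf'$. Moreover, $\Nbf' \cap \Mbf$ (resp. $\bar \Nbf' \cap \Mbf$) is the unipotent radical of the parabolic $\Pbf' \cap \Mbf$ (resp. its opposite) in $\Mbf$, and its set of roots is $\Phi^{\ndiv,+}_M \setminus \Phi_{M'}$ (resp. $\Phi^{\ndiv,-}_M \setminus \Phi_{M'}$). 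In particular, $\Phi^{\ndiv,+} \setminus \Phi_{M'}$ decomposes as the disjoint union of $\Phi^{\ndiv,+}_M \setminus \Phi_{M'}$ and $\Phi^{\ndiv,+} \setminus \Phi_M$, and analogously on the negative side.

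Next, let $f$ be the convex function witnessing the rooted Iwahori decomposition of $H$ with respect to $\Pbf'$. By \cite[Proposition 6.4.9]{bruhat_tits_1}, the product map
$$\prod_{\alpha \in \Phi^{\ndiv,+} \setminus \Phi_{M'}} U_{f, \alpha} \to H \cap N'$$
is an isomorphism of locally analytic manifolds for any ordering of the roots. I would pick an ordering that puts all $\alpha \in \Phi^{\ndiv,+}_M \setminus \Phi_{M'}$ first and all $\alpha \in \Phi^{\ndiv,+} \setminus \Phi_M$ last. The first block lies in $M$ and the second in $N$; using $N \cap M = \{1\}$ and comparing the unique factorization on each side, this forces
$$H \cap N = \prod_{\alpha \in \Phi^{\ndiv,+} \setminus \Phi_M} U_{f, \alpha}, \qquad H \cap N' \cap M = \prod_{\alpha \in \Phi^{\ndiv,+}_M \setminus \Phi_{M'}} U_{f, \alpha},$$
and symmetrically for $\bar N$, $\bar N' \cap M$.

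Finally, it remains to assemble these pieces into an Iwahori decomposition with respect to $\Pbf$. The factorization $H = (H \cap \bar N') (H \cap M')(H \cap N')$ can be rewritten, using the two reorderings above, as a product over the five factors $H \cap \bar N$, $H \cap \bar N' \cap M$, $H \cap M'$, $H \cap N' \cap M$, $H \cap N$. Since $\Pbf' \cap \Mbf$ is a parabolic of $\Mbf$ with Levi $\Mbf'$, the inner three factors multiply into $H \cap M$ (giving simultaneously a rooted Iwahori decomposition of $H \cap M$ with respect to $\Pbf' \cap \Mbf$, which I would not need but which comes for free). The surjectivity of the resulting multiplication map $(H \cap \bar N) \times (H \cap M) \times (H \cap N) \to H$ then gives the Iwahori decomposition with respect to $\Pbf$ by the remark following \eqref{eqn: definition Iwahori decomposition}, and the rooted structure is witnessed by the restriction of $f$ to $\Phi \setminus \Phi_M$, which remains convex. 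The only genuine subtlety to check carefully is the compatibility of the reorderings of the $\bar N'$-part and the $N'$-part with the $M'$-factor in the middle, but this reduces to the corresponding compatibility inside $\Mbf$, which is exactly the content of \cite[Proposition 6.4.9]{bruhat_tits_1} applied to $\Mbf$.
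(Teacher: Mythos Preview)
Your proof is correct and follows essentially the same approach as the paper: reorder the rooted factorization of $H \cap N'$ (using the freedom of ordering from \cite[6.4.9]{bruhat_tits_1}) to separate the $M$- and $N$-parts via $M \cap N = \{1\}$, then collapse the resulting five-factor decomposition of $H$ into the desired three-factor one. The only inessential differences are the choice of ordering (the paper puts $\Phi^{\ndiv,+} \setminus \Phi_M$ first rather than last) and your remark about restricting $f$ to $\Phi \setminus \Phi_M$, which is unnecessary since the same $f$ on all of $\Phi$ already witnesses the rooted structure with respect to $\Pbf$.
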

\begin{proof}
Let $\Mbf'$ (resp. $\Nbf'$, resp. $\bar \Nbf'$) be the standard Levi subgroup of $\Pbf'$ (resp. the unipotent radical of $\Pbf'$, resp. the unipotent radical of the opposite parabolic $\bar \Pbf'$). Let $\Phi_{M'} \subseteq \Phi_M$ be the root system of $\Mbf'$, and choose a convex function $f$ realising the rooted Iwahori decomposition. We first show that the multiplication map
$$
    \prod_{\alpha \in \Phi^{\ndiv,+} \setminus \Phi_M} U_{f, \alpha} \to H \cap N
$$
is surjective, and hence an isomorphism of $p$-adic locally analytic manifolds, for any ordering of the factors in the product such where the first roots are $\Phi^{\ndiv,+} \setminus \Phi_{M}$.
Any element $x$ of $H \cap N \subseteq H \cap N'$ can be written as a product $\prod_{\alpha \in \Phi^{\ndiv,+} \setminus \Phi_{M'}} x_\alpha = (\prod_{\alpha \in \Phi^{\ndiv,+} \setminus \Phi_{M}} x_\alpha) \cdot (\prod_{\alpha \in \Phi_M^{\ndiv,+} \setminus \Phi_{M'}} x_\alpha)$ with $x_\alpha \in U_{f, \alpha}$ for all $\alpha$.
Both $x$ and the first term in the last expression lie in $N$, while the second term lies in $M$. As $M$ and $N$ have trivial intersection, this means that the second term is 1, so $x$ must lie in $\prod_{\alpha \in \Phi^{\ndiv,+} \setminus \Phi_{M}} x_\alpha$ as claimed. The same argument shows the corresponding statement for $H \cap \bar N$.

If $\Sigma$ is a subset of $\Phi^\ndiv$, write $U_{f,\Sigma} = \prod_{\alpha \in \Sigma} U_{f, \alpha}$ to simplify notation.
To finish the proof, we must show that the map
$$
    (H \cap \bar N) \times (H \cap M) \times (H \cap N) \to H
$$
is surjective. This follows from the Iwahori decomposition of $H$ with respect to $\Pbf'$ by writing
\begin{align*}
    & (H \cap \bar N') \times (H \cap M') \times (H \cap N') \\
    & \simeq  
    U_{f,\Phi^{\ndiv,-} \setminus \Phi_M} \times U_{f,\Phi_M^{\ndiv,-} \setminus \Phi_{M'}} \times (H \cap M') \times U_{f,\Phi_M^{\ndiv,+} \setminus \Phi_{M'}} \times U_{f,\Phi^{\ndiv,+} \setminus \Phi_M} \\
    & \subseteq
    (H \cap \bar N) \times (H \cap M) \times (H \cap N). \qedhere
\end{align*}
\end{proof}

\begin{lemma}\label{equation equivalences for A+}
Assume that $H$ admits a rooted Iwahori decomposition and let $t \in \Zfrak$. The following are equivalent:
\begin{enumerate}
    \item $\langle \alpha, \ord(t) \rangle \leq 0 \text{ for all } \alpha \in \Phi^+ \setminus \Phi_M$,
    \item $t (H \cap \bar N) t^{-1} \subseteq H \cap \bar N$,
    \item $t^{-1} (H \cap N) t \subseteq H \cap N$.
\end{enumerate}
Similarly, the following are equivalent:
\begin{enumerate}
    \item $\langle \alpha, \ord(t) \rangle < 0 \text{ for all } \alpha \in \Phi^+ \setminus \Phi_M$,
    \item $\bigcap_{n \geq 0} t^{n} (H \cap \bar N) t^{-n} = \{1\}$,
    \item $\bigcap_{n \geq 0} t^{-n} (H \cap N) t^{n} = \{1\}$.
\end{enumerate}
\end{lemma}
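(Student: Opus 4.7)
The plan is to reduce everything, via the rooted Iwahori decomposition $H \cap \bar N = \prod_{\alpha \in \Phi^{\ndiv,-} \setminus \Phi_M} U_{f, \alpha}$ and the analogue for $H \cap N$, to a root-by-root computation using the conjugation formula $t U_{\alpha, r} t^{-1} = U_{\alpha, r + \langle \alpha, \ord(t) \rangle}$ recalled in the previous subsection, together with the fact that $(U_{\alpha, r})_{r \in \R}$ is a strictly decreasing fundamental system of neighborhoods of the identity in $U_\alpha$.

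First I would record the auxiliary fact that $(H \cap \bar N) \cap U_\alpha = U_{f, \alpha}$ for every $\alpha \in \Phi^{\ndiv,-} \setminus \Phi_M$: the inclusion $\supseteq$ is immediate, and $\subseteq$ follows from the uniqueness of the product decomposition by choosing an ordering in which $\alpha$ appears last (or first), which is allowed by \cite[Proposition 6.4.9 (ii)]{bruhat_tits_1}. Once this is in place, the equivalence of (i) and (ii) in the first group of equivalences is the following root-by-root statement: $\langle \alpha, \ord(t) \rangle \leq 0$ for all $\alpha \in \Phi^+ \setminus \Phi_M$ is equivalent (after passing to $-\alpha$ and noting that $\langle 2\alpha, \ord(t) \rangle$ and $\langle \alpha, \ord(t) \rangle$ have the same sign) to $t U_{f, \beta} t^{-1} \subseteq U_{f, \beta}$ for all $\beta \in \Phi^{\ndiv,-} \setminus \Phi_M$, which is clear from the conjugation formula applied to both $U_{\beta, f(\beta)}$ and $U_{2\beta, f(2\beta)}$. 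The equivalence with (iii) is obtained by the same argument applied to $N$ after replacing $t$ by $t^{-1}$.

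For the second set of equivalences, I would exploit that if the strict version of (i) holds then the weak one does too, so by the first part of the lemma the subgroups $t^n (H \cap \bar N) t^{-n}$ form a decreasing chain, and each still decomposes as $\prod_\beta U_{f_n, \beta}$ where $f_n = f + n \langle \blank, \ord(t) \rangle$ remains convex (sum of a convex function and a linear one). Since $\langle \beta, \ord(t) \rangle > 0$ strictly for each $\beta \in \Phi^{\ndiv,-} \setminus \Phi_M$, the chain $U_{\beta, f(\beta) + n \langle \beta, \ord(t) \rangle}$ eventually escapes any neighborhood of the identity, so $\bigcap_n U_{f_n, \beta} = \{1\}$ for every $\beta$ and the full intersection collapses to $\{1\}$. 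Conversely, if strict (i) fails, pick $\alpha \in \Phi^+ \setminus \Phi_M$ with $\langle \alpha, \ord(t) \rangle \geq 0$: either $\langle -\alpha, \ord(t) \rangle = 0$, in which case $U_{f, -\alpha}$ is $t$-stable and lies in every $t^n (H \cap \bar N) t^{-n}$; or $\langle -\alpha, \ord(t) \rangle < 0$, in which case the family $t^n U_{f, -\alpha} t^{-n}$ is increasing and still contains $U_{f, -\alpha}$. Either way one exhibits a nontrivial element in the intersection, contradicting (ii). The equivalence with (iii) is again symmetric, by interchanging the roles of $N$ and $\bar N$.

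The main technical annoyance — not so much a genuine obstacle — is the bookkeeping around non-divisible roots and the $U_{2\alpha}$-components of each $U_{f, \alpha}$: one needs to verify at each step that, after conjugation by a power of $t$, the conjugated product is again of the form $\prod_\beta U_{f', \beta}$ with $f'$ convex so that the uniqueness of the product decomposition still lets us read off the $\beta$-component. Once the convexity of $f_n$ is checked, the rest is a direct application of the formulas in \Cref{subsection: local setting}.
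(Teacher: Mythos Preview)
Your proof is correct and follows essentially the same approach as the paper: reduce to a root-by-root computation via the rooted Iwahori decomposition and the conjugation formula $t U_{\alpha,r} t^{-1} = U_{\alpha, r + \langle \alpha, \ord(t)\rangle}$, together with the fact that this shift genuinely moves the filtration when $\langle \alpha, \ord(t)\rangle \neq 0$. Your version is more explicit than the paper's (which dispatches the second block of equivalences with ``follows similarly''): you spell out the auxiliary identification $(H \cap \bar N) \cap U_\alpha = U_{f,\alpha}$, the convexity of the shifted function $f_n$, and the contrapositive for the second part, but none of this deviates from the paper's line of argument.
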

\begin{proof}
If $t \in \Zfrak$ and $\alpha \in \Phi$, then $t U_{\alpha, r} t^{-1} = U_{\alpha, r + \langle \alpha, \ord(t) \rangle}$, and if $\langle \alpha, \ord(t) \rangle \neq 0$ then these are not equal to $U_{\alpha, r}$. Thus, if $t \in \Zfrak$ and $\alpha \in \Phi^{\ndiv, +}$, for any convex function $f$ one has $t U_{f, -\alpha} t^{-1} \subseteq U_{f, -\alpha}$ if and only if $\langle \alpha, \ord(t) \rangle \leq 0$, if and only if $t^{-1} U_{f, \alpha} t \subseteq U_{f, \alpha}$. Using that $H$ admits a rooted Iwahori decomposition, we see that in the first case above, \itemnumber{2} and \itemnumber{3} are equivalent to \itemnumber{1}.
The second case follows similarly.
\end{proof}

In the same way as in \cite[Remark 3.23]{remey_bruhat_tits_buildings_and_analytic_geometry}, if $f \colon \Phi \to \Q$ is a convex function, we may define affinoid subspaces $\Ubf_{f, \alpha}$ of (the rigid-analytification of) $\Ubf_\alpha$ such that each $U_{f, \alpha}$ can be naturally identified the set of $L$-points of $\Ubf_{f, \alpha}$.
Let us recall this construction.
We have fixed maximal torus $\Tbf$ defined over $L$ containing $\Sbf$ which splits over a finite extension $L'$ of $L$.
There exists an isomorphism of algebraic groups
\begin{align}\label{eqn: product of U_b}
    L' \times_L \Ubf_\alpha 
    \simeq
    \prod_{\substack{\beta \in \Phi' \\ \beta|_{\Sbf} = \alpha}} \Ubf_\beta \times \prod_{\substack{\beta \in \Phi' \\ \beta|_{\Sbf} = 2\alpha}} \Ubf_\beta
\end{align}
giving an identification
\begin{align}\label{eqn: U_fa is product of U_fb}
    U_{f, \alpha} = \left( \prod_{\substack{\beta \in \Phi' \\ \beta|_{\Sbf} = \alpha}} U_{\beta, f(\alpha)} \times \prod_{\substack{\beta \in \Phi' \\ \beta|_{\Sbf} = 2\alpha}} U_{\beta, f(2\alpha)} \right)^{\Gal(L'/L)}.
\end{align}
Here, the indexing on the groups $U_\beta$ is given by the image $x'_0$ of $x_0$ in $\Ascr'$ and the valuation $v$ of $L'$ extending that of $L$. The subgroups $U_{\beta, r}$ for $\beta \in \Phi'$ are naturally the set of $L'$-points of rigid $L'$-analytic spaces $\Ubf_{\beta, r}$:
a choice of special vertex $y_0$ in $\Ascr'$ determines isomorphisms from the additive group over $L'$ to $\Ubf_\beta$, and we take $\Ubf_{\beta, r}$ to be the preimage under these isomorphisms of the closed ball of radius $q^{-r - \beta(y_0 - x'_0)}$ centered at the origin.
This definition is independent of the choice of $y_0$.
The product
\begin{align}\label{eqn: product of U_fb}
    \prod_{\substack{\beta \in \Phi' \\ \beta|_{\Sbf} = \alpha}} \Ubf_{\beta, f(\alpha)} \times \prod_{\substack{\beta \in \Phi' \\ \beta|_{\Sbf} = 2\alpha}} \Ubf_{\beta, f(2\alpha)}.
\end{align}
is stable under the action of $\Gal(L'/L)$ on $L' \times_L \Ubf_\alpha$ (it's enough to check this on points over finite extensions of $L$ by the argument used in the proof of \Cref{lemma: rigid analytic parahorics} below) and the affinoid $\Ubf_{f, \alpha}$ is defined as its quotient by this action.

\begin{lemma}\label{lemma: inclusion of root subgroups is compact}
Let $\alpha \in \Phi^\ndiv$ and $f, g \colon \Phi \to \R$ be convex functions with $f(\alpha) < g(\alpha)$ and $f(2\alpha) < g(2 \alpha)$ if $2 \alpha \in \Phi$. Then, the map of $L$-affinoid algebras $\O(\Ubf_{f, \alpha}) \to \O(\Ubf_{g, \alpha})$ induced by the inclusion $\Ubf_{g, \alpha} \into \Ubf_{f, \alpha}$ is compact as a map of $L$-Banach spaces.
\end{lemma}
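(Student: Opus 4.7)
The plan is to reduce, via Galois descent and a tensor-product decomposition, to the elementary compactness of restriction between nested closed disks in the affine line.

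First, applying $L' \otimes_L -$ to the inclusion $\Ubf_{g,\alpha} \into \Ubf_{f,\alpha}$ and invoking \Cref{eqn: product of U_fb}, I would identify the map $\O(\Ubf_{f,\alpha}) \to \O(\Ubf_{g,\alpha})$ with the restriction to $\Gal(L'/L)$-invariants of the $L'$-linear restriction map between the affinoid algebras of the product spaces $\prod_\beta \Ubf_{\beta, f(\alpha)} \times \prod_\beta \Ubf_{\beta, f(2\alpha)}$ and its $g$-analogue (the products ranging over $\beta \in \Phi'$ restricting to $\alpha$ and $2\alpha$ respectively on $\Sbf$). Since compactness of a continuous $L$-linear map between Banach spaces is inherited by its restriction to a closed subspace, it is enough to verify compactness of this $L'$-linear map.

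Next, this $L'$-linear restriction map is a completed tensor product (over $L'$) of the individual restriction maps $\O(\Ubf_{\beta, f(\alpha)}) \to \O(\Ubf_{\beta, g(\alpha)})$ and $\O(\Ubf_{\beta, f(2\alpha)}) \to \O(\Ubf_{\beta, g(2\alpha)})$. Using that the completed tensor product of a compact map and a bounded one is compact, it suffices to prove compactness of each individual factor; the hypotheses $f(\alpha) < g(\alpha)$ and $f(2\alpha) < g(2\alpha)$ (when $2\alpha \in \Phi$) ensure that each such factor is a restriction between strictly nested disks.

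Finally, a choice of special point $y_0 \in \Ascr'$ identifies $\Ubf_{\beta, r}$ with the closed disk of radius $\rho(\beta, r) := q^{-r - \beta(y_0 - x'_0)}$ in the additive group, so each remaining restriction is of the form $L'\langle T/\rho \rangle \to L'\langle T/\rho' \rangle$ with $\rho > \rho' > 0$. Compactness here is the standard Tate-algebra calculation: the image of the unit ball in the source consists of series $\sum a_n T^n$ with $|a_n|(\rho')^n \leq (\rho'/\rho)^n$, whose truncations at degree $N$ approximate them in the target norm to within $(\rho'/\rho)^{N+1}$, providing finite $\epsilon$-nets for every $\epsilon > 0$. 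The main (minor) obstacle is verifying that the $L$-Banach structure on $\O(\Ubf_{f,\alpha})$ really agrees with the subspace norm inherited from $\Gal(L'/L)$-invariants in the $L'$-affinoid algebra of the split product, which amounts to recalling how affinoid algebras and their norms interact with Galois descent; once this is in hand, the rest of the argument is bookkeeping plus the elementary affine-line calculation.
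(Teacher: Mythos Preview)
Your proposal is correct and follows essentially the same route as the paper: pass to the split form over $L'$ via \Cref{eqn: product of U_fb}, reduce to a completed tensor product of restriction maps between strictly nested closed disks, and conclude by the elementary Tate-algebra calculation. The only difference is in how you descend compactness to Galois invariants: you restrict the compact $L$-linear map $R_f \to R_g$ to the closed subspaces $(R_f)^{\Gal(L'/L)}$ and $(R_g)^{\Gal(L'/L)}$, which is legitimate since $L$ is locally compact and compactness here is the topological condition on the image of the unit ball; the paper instead invokes a Banach-space section of $R_g \twoheadrightarrow (R_g)^{\Gal(L'/L)}$ (from \cite[Proposition 10.5]{nonarchimedean_functional_analysis}) to factor the invariant map through the compact map $R_f \to R_g$, which sidesteps any discussion of how norms behave under Galois descent.
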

\begin{proof}
Let $R_f$ be the $L'$-algebra of rigid $L'$-analytic functions on the affinoid space \Cref{eqn: product of U_fb} for some fixed $\alpha$. The inclusion $\Ubf_{g, \alpha} \into \Ubf_{f, \alpha}$ then corresponds to the bottom row of the diagram of $L$-algebras
$$
    \begin{tikzcd}
        R_f \ar[r] & R_{g} \\
        (R_f)^{\Gal(L'/L)} \ar[u, hook] \ar[r] & (R_{g})^{\Gal(L'/L)} \ar[u, hook].
    \end{tikzcd}
$$
The map in the top row is compact, since it corresponds to a product of inclusions of closed balls with different radii. In this case, the rightmost vertical arrow admits a section as a map of $L$-Banach spaces by \cite[Proposition 10.5]{nonarchimedean_functional_analysis}, so the bottom arrow factors through the top arrow, and therefore will also be compact.
\end{proof}

\subsection{The groups \texorpdfstring{$K$}{K}, \texorpdfstring{$J$}{J} and \texorpdfstring{$\NJM$}{M}}

\begin{lemma}\label{lemma: J admits rooted Iwahori decomposition}
The subgroup $J$ admits a rooted Iwahori decomposition with respect to $\Pbf$.
\end{lemma}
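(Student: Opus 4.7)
The plan is to leverage the standard product decomposition of the parahoric $J$ from Bruhat--Tits theory and rearrange it to match the shape of a rooted Iwahori decomposition.

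First I would set $f(\alpha) := -\inf \psi_{\alpha, 0}(\Fscr)$; this is convex, as noted before the definition of a rooted Iwahori decomposition. By Bruhat--Tits theory in the form used by \cite[\S 1]{morris_level_zero_types}, $J$ is generated by $\Zfrak_0$ together with the $U_{f, \alpha}$ for $\alpha \in \Phi^\ndiv$, and for any total ordering of $\Phi^\ndiv$ the product map
$$ \prod_{\alpha \in \Phi^{\ndiv, -}} U_{f, \alpha} \times \Zfrak_0 \times \prod_{\alpha \in \Phi^{\ndiv, +}} U_{f, \alpha} \to J $$
is a bijection. Ordering the negative roots so that those in $\Phi^{\ndiv, -} \setminus \Phi_M$ come first and those in $\Phi_M^{\ndiv, -}$ last, and symmetrically for the positive roots, the above factors as $J \simeq J_- \cdot J_0 \cdot J_+$, where $J_\pm := \prod_{\alpha \in \Phi^{\ndiv, \pm} \setminus \Phi_M} U_{f, \alpha}$ and $J_0 := \prod_{\alpha \in \Phi_M^{\ndiv, -}} U_{f, \alpha} \cdot \Zfrak_0 \cdot \prod_{\alpha \in \Phi_M^{\ndiv, +}} U_{f, \alpha}$. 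One trivially has $J_\pm \subseteq N^{\pm}$ and $J_0 \subseteq M$, so it remains to show these inclusions are equalities.

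Next I would identify $J_0$ with $J \cap M$. By \cite[Lemma 1.13]{morris_level_zero_types}, $J \cap M = J_M$, and applying the Bruhat--Tits decomposition to the group $\Mbf$ with its facet $\Fscr_M \subseteq \Ascr_M$ presents $J_M$ as the analogous ordered product of the root subgroups $U_{f_M, \alpha}$ with $f_M(\alpha) := -\inf \psi_{\alpha, 0}(\Fscr_M)$. Granting $f|_{\Phi_M} = f_M$, the two descriptions coincide and hence $J_0 = J \cap M$. Uniqueness of the factorization of $J$ displayed above then forces $J_+ = J \cap N$ and $J_- = J \cap \bar N$, so \Cref{eqn: rooted Iwahori decomposition} holds for $f$, and in particular the Iwahori decomposition of $J$ with respect to $\Pbf$ is obtained.

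The main obstacle is verifying the coincidence $f|_{\Phi_M} = f_M$; this is where the maximality hypothesis on $\Fscr$ enters. Being a maximal facet in the preimage of $\Fscr_M$ under the affine isomorphism $\Ascr \simto \Ascr_M$, $\Fscr$ has the same dimension as $\Fscr_M$ and is therefore open and dense in $\Fscr_M$ (the other facets in the preimage are lower-dimensional and lie in $\bar \Fscr$). Consequently $\bar \Fscr = \bar \Fscr_M$, and since each $\psi_{\alpha, 0}$ is a continuous affine function, $\inf \psi_{\alpha, 0}(\Fscr) = \inf \psi_{\alpha, 0}(\bar \Fscr) = \inf \psi_{\alpha, 0}(\bar \Fscr_M) = \inf \psi_{\alpha, 0}(\Fscr_M)$ for every root $\alpha$.
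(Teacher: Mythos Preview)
Your argument has two genuine gaps.

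\textbf{First}, the displayed product map $\prod_{\alpha<0} U_{f,\alpha} \times \Zfrak_0 \times \prod_{\alpha>0} U_{f,\alpha} \to J$ need not be a bijection: what Bruhat--Tits theory actually gives is a bijection with $\Nfrak \cap J$ in the middle, and this can be strictly larger than $\Zfrak_0$. For a concrete failure, take $\Gbf = \GL_3$, $\Mbf = \GL_2 \times \GL_1$, and $\Fscr_M$ a hyperspecial vertex for $\Mbf$; any maximal $\Fscr$ in its preimage lies in the wall $\{x_1 = x_2\}$, so the permutation matrix swapping the first two coordinates belongs to $J$, yet it has no $LDU$ factorisation with unit diagonal and hence is not in the image of your product map. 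The fix is to replace $\Zfrak_0$ by $\Nfrak \cap J$ and then check $\Nfrak \cap J \subseteq M$, which is exactly \Cref{lemma: extended Weyl groups and stabilizers}~\itemnumber{3}.

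\textbf{Second}, the density claim $\bar\Fscr = \bar\Fscr_M$ is false: the preimage of $\Fscr_M$ in $\Ascr$ is cut by the walls attached to roots in $\Phi \setminus \Phi_M$ and generally contains many maximal facets of the same dimension, none of them dense. Already for $\Gbf = \GL_2$ with $\Pbf$ a Borel, $\Fscr_M$ is the whole apartment and $\Fscr$ a single alcove. Fortunately you do not need $f|_{\Phi_M} = f_M$ at all: once you have a bijection $J_- \times J_0 \times J_+ \to J$ with $J_\pm \subseteq N^\pm$ and $J_0 \subseteq M$, the injectivity of multiplication $\bar N \times M \times N \to G$ (which you already invoke in your ``uniqueness'' step) directly forces $J_\pm = J \cap N^\pm$ and $J_0 = J \cap M$, so the whole detour through $f_M$ can be dropped.

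The paper avoids both issues by taking a different route: it invokes \cite[Theorem~2.1]{morris_level_zero_types}, which packages the rooted Iwahori decomposition of $J$ with respect to a specific parabolic $\Pbf'$ attached to $\Fscr$. Maximality of $\Fscr$ is used as the root-theoretic statement $\Phi_\Fscr \subseteq \Phi_M$ (hence $\Pbf' \subseteq \Pbf$), not as a density statement, and one then passes from $\Pbf'$ to $\Pbf$ via \Cref{lemma: Iwahori decomposition for larger parabolic}.
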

\begin{proof}
Let $\Pbf'$ be the parabolic subgroup of $\Gbf$ associated to $\Fscr$ by \cite[Theorem 2.1]{morris_level_zero_types}, whose definition we now recall.
We can associate to the facet $\Fscr$ the subroot system $\Phi_{\Fscr}$ of $\Phi$ consisting of the vector parts of affine roots of $G$ that vanish at $\Fscr$.
Consider the reductive subgroup of $\Gbf$ containing $\Sbf$ whose root system is the closure ${}^c \Phi_\Fscr$ of $\Phi_\Fscr$ in $\Phi$. If $\Abf'$ is its connected center, then $\Pbf'$ is the standard parabolic subgroup of $\Gbf$ whose standard Levi subgroup is the centraliser $\Mbf'$ of $\Abf'$ in $\Gbf$.
The condition that the facet $\Fscr$ is maximal in the preimage of $\Fscr_M$ in $\Ascr$ ensures that $\Phi_\Fscr$, and hence also ${}^c \Phi_\Fscr$, is contained in $\Phi_M$. In particular, $\Pbf' \subseteq \Pbf$. By \cite[Theorem 2.1]{morris_level_zero_types} and its proof, $J$ admits a rooted Iwahori decomposition with respect to $\Pbf'$, and hence also with respect to $\Pbf$ by \Cref{lemma: Iwahori decomposition for larger parabolic}.
\end{proof}

\begin{remark}\label{remark: J cap N contained in K cap N}
    In fact, \cite[Theorem 2.1]{morris_level_zero_types} shows that $J \cap \bar N$ and $J \cap N$ are contained in the pro-$p$ radical of the parahoric subgroup of $J$ corresponding to $\Fscr$ (see \cite[3.6 Definition and 3.7 Proposition]{henniart_vigneras_satake}), which in turn is contained in (the Iwahori subgroup of) $I$.
    In particular, $J \cap \bar N \subseteq K \cap \bar N$ and $J \cap N \subseteq K \cap N$.
    However, it is not necessarily true that $J \cap M \subseteq K \cap M$.
\end{remark}

The group $K$ will not in general admit an Iwahori decomposition, but the following result still holds.

\begin{lemma}\label{lemma: semi Iwahori decomposition for K}
The multiplication map $(K \cap M) \times (K \cap N) \to K \cap P$ is an isomorphism of $p$-adic locally analytic manifolds.
\end{lemma}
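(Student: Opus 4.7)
The plan is to reduce the statement to a structural fact about the stabilizer of the special vertex $x_0$ with respect to the parabolic $\Pbf$, together with the uniqueness of the Levi decomposition.

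\textbf{Step 1 (Injectivity and reduction).} The multiplication map $\Mbf \times \Nbf \to \Pbf$ is an isomorphism of algebraic groups, so the induced map $M \times N \to P$ is a bijection which is bianalytic. Hence the restriction $(K \cap M) \times (K \cap N) \to K \cap P$ is automatically injective and locally analytic, and its set-theoretic inverse (if well defined) is the restriction of the analytic inverse $P \simto M \times N$. It therefore suffices to prove surjectivity: given $p \in K \cap P$ written uniquely as $p = mn$ with $m \in M$, $n \in N$, we must show $m, n \in K$.

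\textbf{Step 2 (Decomposition of $K$ with respect to $\Pbf$).} Because $x_0$ is special, for every $\alpha \in \Phi$ the affine function $\psi_{\alpha,0}$ is an affine root, and in particular the constant function $f \equiv 0$ on $\Phi$ is convex. Applying the decomposition theory of \cite[6.4.9, 6.4.48, 7.4.4]{bruhat_tits_1} (or equivalently \Cref{lemma: J admits rooted Iwahori decomposition} applied to the parahoric at $x_0$, together with \Cref{lemma: Iwahori decomposition for larger parabolic}), the parahoric subgroup $K^\circ$ fixing $x_0$ admits a rooted Iwahori-type decomposition with respect to $\Pbf$ in which
$$
    K^\circ \cap N = \prod_{\alpha \in \Phi^{\ndiv,+} \setminus \Phi_M} U_{\alpha,0}, \qquad K^\circ \cap \bar N = \prod_{\alpha \in \Phi^{\ndiv,-} \setminus \Phi_M} U_{\alpha,0},
$$
and the multiplication $(K^\circ \cap \bar N) \times (K^\circ \cap M) \times (K^\circ \cap N) \to K^\circ$ is a bijection. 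The full stabilizer $K$ differs from $K^\circ$ by a subgroup of $\Zfrak \subseteq \boldsymbol{\Zfrak}(L) \subseteq M$, so the decomposition extends to a bijective product
$$
    (K \cap \bar N) \times (K \cap M) \times (K \cap N) \longrightarrow K.
$$

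\textbf{Step 3 (Matching decompositions).} Take $p \in K \cap P$ and decompose it both as $p = 1 \cdot m \cdot n$ (via the Levi decomposition of $P$) and as $p = \bar{n}' \cdot m' \cdot n'$ (via the decomposition of $K$ in Step 2). Both decompositions factor $p$ through the image of the open immersion $\bar \Nbf \times \Mbf \times \Nbf \hookrightarrow \Gbf$ onto the big cell, on which such factorizations are unique. Comparing the two gives $\bar{n}' = 1$, $m' = m$, and $n' = n$, so in particular $m \in K \cap M$ and $n \in K \cap N$, as required.

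\textbf{Main obstacle.} The one genuinely nontrivial input is the decomposition of $K$ in Step 2, which depends crucially on $x_0$ being special: this is what allows $f \equiv 0$ to be a convex concave function realizing all the required root subgroups $U_{\alpha,0}$, and what ensures that $K/K^\circ$ is represented inside $\Zfrak \subseteq M$ so that the decomposition descends from $K^\circ$ to $K$. Once this structural input is in place, everything else is formal from the uniqueness of the Levi decomposition of $\Pbf$.
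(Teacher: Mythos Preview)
Your Step 2 is incorrect, and this is the crux of the matter. You claim that the stabilizer $K$ of the special point $x_0$ admits a (rooted) Iwahori decomposition $(K \cap \bar N) \times (K \cap M) \times (K \cap N) \to K$. This is false in general, and the paper says so explicitly just above the lemma: ``The group $K$ will not in general admit an Iwahori decomposition, but the following result still holds.'' Concretely, for $\Gbf = \GL_2$ and $K = \GL_2(\Z_p)$ with $\Pbf$ the upper-triangular Borel, the matrix $\left(\begin{smallmatrix}0&1\\1&0\end{smallmatrix}\right)$ lies in $K$ but not in $(K \cap \bar N)(K \cap M)(K \cap N)$. The reason your invocation of the proof of \Cref{lemma: J admits rooted Iwahori decomposition} fails is that for the special vertex $x_0$ the root subsystem $\Phi_{x_0}$ of vector parts of affine roots vanishing at $x_0$ is all of $\Phi$, so the associated parabolic $\Pbf'$ is $\Gbf$ itself, not a subgroup of $\Pbf$; the hypothesis $\Pbf' \subseteq \Pbf$ needed for \Cref{lemma: Iwahori decomposition for larger parabolic} is therefore not available.

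The paper does not supply an argument of its own here; it simply cites \cite[Theorem 6.5]{henniart_vigneras_satake}. The underlying reason the statement holds is more subtle than an Iwahori decomposition: one uses that the special parahoric admits an integral model $\mathcal{G}$ over $\O_L$ in which $\Pbf$ extends to a parabolic subgroup scheme $\mathcal{P} = \mathcal{M} \ltimes \mathcal{N}$, so that taking $\O_L$-points of this Levi decomposition yields the claim. Your Step 1 and Step 3 are fine as formal reductions, but the structural input you insert in Step 2 is the wrong one.
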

\begin{proof}
This is proven in \cite[Theorem 6.5]{henniart_vigneras_satake}.
\end{proof}

\begin{remark}\label{remark: normaliser of J_M has free quotient}
The group $\JM$ is the intersection of its normaliser $\NJM$ and the subgroup ${}^0 M$ of $M$ consisting of elements $m$ such that $v(\chi(m)) = 0$ for all $\chi \in X^*(\Mbf)$, since these are the groups fixing pointwise the image of $\Fscr_M$ in the buildings of $\Mbf^\ad$ and $\Mbf / \Mbf^\der$ respectively.
As $X^*(\Mbf)$ can be naturally identified with a finite index subgroup of $X^*(\Abf)$, we deduce from the commutative diagram
$$
\begin{tikzcd}
    A / A_0 \ar[r, hook] \ar[d, "\sim"]
    & \NJM / \JM \ar[r, hook]
    & M / {}^\circ M \ar[dd, hook] \\
    X_*(\Abf) \ar[d, "\sim"] & & \\
    \Hom(X^*(\Abf), \Z) \ar[rr]
    & & \Hom(X^*(\Mbf), \Z)
\end{tikzcd}
$$
that $\NJM / \JM$ is a free abelian group of rank equal to the rank of the torus $\Abf$ and that $A \JM$ is a subgroup of $\NJM$ of finite index. Note that this inclusion may be strict, for example this can happen when $\Mbf$ is $\GSp_4$.
Inclusion $\Nfrak \cap \NJM \subseteq \NJM$ induces an isomorphism \linebreak
$(\Nfrak \cap \NJM) / (\Nfrak \cap \JM ) \simto \NJM / \JM$ (this follows, for example, from \cite[7.4.15]{bruhat_tits_1}).
\end{remark}

Consider the submonoids of $M$ defined by
\begin{align*}
    M^{-} & := \{ m \in M: m (J \cap \bar N) m^{-1} \subseteq J \cap \bar N \text{ and } m^{-1} (J \cap N) m \subseteq J \cap N \}
\end{align*}
and $M^+ = (M^-)^{-1}$.
For any subgroup $H$ of $M$, we will also write $H^-$ for $H \cap M^-$ and $H^+ = H \cap M^+$.
\Cref{equation equivalences for A+} and \Cref{lemma: J admits rooted Iwahori decomposition} imply that
\begin{align}\label{eqn: description of Zfrak-}
    \Zfrak^{-} & = \{ t \in \Zfrak: \langle \alpha, \ord(t) \rangle \leq 0 \text{ for all } \alpha \in \Phi^+ \setminus \Phi_M \}.
\end{align}
Similarly,
\begin{align}\label{eqn: description of A-}
    A^- & = \{ a \in A: v(\alpha(a)) \leq 0 \text{ for all } \alpha \in \Delta \setminus \Delta_M \}
\end{align}
since $v(\alpha(a)) = \langle \alpha, \ord(a) \rangle$ for all $\alpha$ and $\alpha(z) = 1$ for all $\alpha \in \Phi_M$.
Define also
\begin{align*}
    A^{--} & = \{ a \in A: v(\alpha(a)) < 0 \text{ for all } \alpha \in \Delta \setminus \Delta_M \}.
\end{align*}
This subsemigroup of $A$ will usually not contain the identity element, but it is always non-empty.
Note that if $\mu \in \NJM^-$, then
$$
    \mu^{-1} (J \cap P) \mu = \JM \mu^{-1} (J \cap N) \mu \subseteq \JM (J \cap N) = J \cap P.
$$
The following two lemmas are straight-forward generalisations of results of Schneider--Stuhler, \cite[Lemma 4.10 and Proposition 4.7]{schneider_stuhler}.

\begin{lemma}\label{lemma: products of double cosets}
\begin{enumerate}
    \fixitem If $m \in M^-$ and $\mu \in \NJM^-$, then $J m J \mu J = J m \mu J$.
    \item Let $\mu, \tilde \mu \in \NJM^-$. If $x, \tilde x$ run through coset representatives of $(J \cap \bar N) / \mu (J \cap \bar N) \mu^{-1}$ and $(J \cap \bar N) / \tilde \mu (J \cap \bar N) \tilde \mu^{-1}$ respectively, then $J \mu \tilde \mu J = \coprod_{x, \tilde x} x \mu \tilde x \tilde \mu J$.
\end{enumerate}
\end{lemma}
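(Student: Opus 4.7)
Both parts rest on the rooted Iwahori decomposition $J = (J \cap \bar N)(J \cap M)(J \cap N)$ of \Cref{lemma: J admits rooted Iwahori decomposition}, together with three elementary manipulations valid for any $m \in M^-$ and $\mu \in \tilde J_M^-$: first, $m(J \cap \bar N) \subseteq Jm$ (since $m(J \cap \bar N)m^{-1} \subseteq J$); second, $(J \cap N)\mu \subseteq \mu J$ (since $\mu^{-1}(J \cap N)\mu \subseteq J$); and third, $(J \cap M)\mu = \mu(J \cap M)$ (since $\mu \in \tilde J_M$ normalizes $J_M$). For part (i), I would expand the middle $J$ via Iwahori and chain these three manipulations in sequence:
\begin{align*}
JmJ\mu J
& = Jm(J \cap \bar N)(J \cap M)(J \cap N)\mu J \\
& \subseteq Jm(J \cap \bar N)(J \cap M)\mu J \subseteq Jm(J \cap M)\mu J = Jm\mu(J \cap M)J = Jm\mu J,
\end{align*}
with the reverse containment $Jm\mu J \subseteq JmJ\mu J$ being obvious.

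For part (ii), my plan is to first establish the covering statement
$$
J\mu\tilde\mu J = \bigcup\nolimits_{x, \tilde x \in J \cap \bar N} x\mu\tilde x\tilde\mu J
$$
and then deduce disjointness and the indexing of cosets by a counting argument. Part (i) gives $J\mu\tilde\mu J = J\mu J\tilde\mu J$, so any element of $J\mu\tilde\mu J$ can be written $j_1\mu j_2\tilde\mu j_3$ with $j_i \in J$, and a double application of Iwahori to $j_1$ and $j_2$ followed by the same three manipulations (applied once for $\mu$ and once for $\tilde\mu$) rewrites it in the form $x\mu\tilde x\tilde\mu j'$ with $x, \tilde x \in J \cap \bar N$ and $j' \in J$. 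For the counting, the monoid $\tilde J_M^-$ is closed under products, so $\mu\tilde\mu \in \tilde J_M^-$, and an Iwahori analysis of $J \cap \mu\tilde\mu J(\mu\tilde\mu)^{-1}$ -- in which the $M$- and $N$-factors collapse trivially because $\mu\tilde\mu$ normalizes $J \cap M$ and contracts $J \cap N$ -- yields
$$
[J\mu\tilde\mu J : J] = \bigl[J \cap \bar N : \mu\tilde\mu(J \cap \bar N)(\mu\tilde\mu)^{-1}\bigr].
$$
By multiplicativity of indices, together with the observation that conjugation by $\mu$ induces an isomorphism $\mu(J \cap \bar N)\mu^{-1} / \mu\tilde\mu(J \cap \bar N)(\mu\tilde\mu)^{-1} \simeq (J \cap \bar N) / \tilde\mu(J \cap \bar N)\tilde\mu^{-1}$, this factors as
$$
\bigl[J \cap \bar N : \mu(J \cap \bar N)\mu^{-1}\bigr] \cdot \bigl[J \cap \bar N : \tilde\mu(J \cap \bar N)\tilde\mu^{-1}\bigr],
$$
which is precisely the number of pairs $(x, \tilde x)$ of coset representatives indexing the claimed decomposition. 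Surjectivity of $(x, \tilde x) \mapsto x\mu\tilde x\tilde\mu J$ (from the covering) together with equality of cardinalities forces the map to be a bijection.

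The only real obstacle is the bookkeeping: tracking which of $m$, $\mu$, $\tilde\mu$, $\mu\tilde\mu$ contracts, normalizes, or expands each Iwahori-factor, and in what order the three manipulations must be applied. No single deep step is needed, and the argument parallels Schneider-Stuhler's treatment of the split case with $\Pbf$ a Borel once the Iwahori decomposition for $J$ with respect to an arbitrary parabolic is in hand via \Cref{lemma: J admits rooted Iwahori decomposition}.
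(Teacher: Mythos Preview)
Your proof of part (i) is correct and essentially identical to the paper's; both expand the middle $J$ via the Iwahori decomposition and absorb the factors using the contraction and normalization properties of $m$ and $\mu$. The only cosmetic difference is that the paper groups $(J \cap M)(J \cap N)$ together as $J \cap P$ and uses $\mu^{-1}(J \cap P)\mu \subseteq J \cap P$ in one step.

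For part (ii), you and the paper establish the covering $J\mu\tilde\mu J = \bigcup_{x,\tilde x} x\mu\tilde x\tilde\mu J$ in the same way, but prove disjointness by genuinely different routes. The paper argues directly: for fixed $x$ the cosets $x\mu\tilde x\tilde\mu J$ are visibly disjoint as $\tilde x$ varies, and then one shows that $x\mu J\tilde\mu J \cap \mu J\tilde\mu J \neq \emptyset$ forces $x \in \mu(J\cap\bar N)\mu^{-1}$ by writing $\mu J\tilde\mu J = \mu(J\cap\bar N)\tilde\mu J$ and comparing $\bar N$-parts. Your counting argument is a valid alternative: compactness of $J\cap\bar N$ makes all the indices finite, the formula $[J\mu\tilde\mu J : J] = [J\cap\bar N : \mu\tilde\mu(J\cap\bar N)(\mu\tilde\mu)^{-1}]$ follows exactly as you indicate from the Iwahori decomposition, and multiplicativity of indices under the tower $J\cap\bar N \supseteq \mu(J\cap\bar N)\mu^{-1} \supseteq \mu\tilde\mu(J\cap\bar N)(\mu\tilde\mu)^{-1}$ gives the matching cardinality. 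The direct argument has the minor conceptual advantage of not invoking finiteness, but in the present $p$-adic setting both approaches are sound and of comparable length.
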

\begin{proof}
Statement \itemnumber{1} follows from the fact that
\begin{align*}
    m J \mu J & = m (J \cap \bar N) \mu \cdot \mu^{-1} (J \cap P) \mu \cdot J \\
    & = m (J \cap \bar N) m^{-1} \cdot m \mu J \subseteq J m \mu J.
\end{align*}
For \itemnumber{2}, note that $J \mu J = (J \cap \bar N) \mu J = \coprod_x x \mu J$ and similarly for $J \tilde \mu J$, so by part \itemnumber{1} it is clear that $J \mu \tilde \mu J = \bigcup_{x, \tilde x} x \mu \tilde x \tilde \mu J$, so it's enough to check that the cosets in the right-hand side are pairwise disjoint.
For a fixed $x$, it is clear that the cosets $x \mu \tilde x \tilde \mu J$ are pairwise disjoint for the various $\tilde x$. 
Thus, it's enough to show that if $x \mu J \tilde \mu J \cap \mu J \tilde \mu J \neq \emptyset$ then $x \in \mu (J \cap \bar N) \mu^{-1}$.
As $\mu J \tilde \mu J = \mu (J \cap \bar N) \tilde \mu J$, there exist $\bar{n}_0, \bar{n}_1 \in \mu (J \cap \bar N) \mu^{-1}$ such that $x \bar{n}_0 \mu \tilde \mu J = \bar n_1 \mu \tilde \mu J$.
Since $x \bar n_0$ and $\bar n_1$ are in $J \cap \bar N$, we deduce from the Iwahori decomposition that $x \bar n_0 \cdot \mu \tilde \mu ( J \cap \bar N) \tilde \mu^{-1} \mu^{-1} = \bar{n}_1 \cdot \mu \tilde \mu (J \cap \bar N) \tilde \mu^{-1} \mu^{-1}$. In particular, $x \in \mu (J \cap \bar N) \mu^{-1}$ as required.
\end{proof}

\begin{remark}\label{remark: for sufficiently small t multiplication by JzJ works well}
As $J \cap \bar N \subseteq K \cap \bar N$, the argument in the proof of \itemnumber{1} shows that for $m \in M^-$ and $\mu \in \NJM^-$,
$$
    K m J \cdot J \mu J \subseteq K \cdot m (J \cap \bar N) m^{-1} \cdot m \mu \cdot \mu^{-1} (J \cap P) \mu \cdot J \subseteq K m \mu J.
$$
\end{remark}

In particular, it follows from \Cref{lemma: products of double cosets} \itemnumber{2} that the subsets $J \NJM^- J$ and $J A^- J$  of $G$ (or more generally, $J \HM^- J$ for any finite index subgroup $\HM$ of $\NJM$) are submonoids of $G$.

\begin{lemma}\label{lemma disjointness of JP cosets}
Let $g, \tilde g \in J$, $\mu \in \NJM^-$ such that $g \mu J P \cap \tilde g \mu J P \neq \emptyset$. Then, $g \mu J = \tilde g \mu J$.
\end{lemma}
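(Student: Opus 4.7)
The plan is to establish $\tilde g^{-1} g \in \mu J \mu^{-1}$, which is equivalent to the desired equality $g \mu J = \tilde g \mu J$. I would first simplify the sets appearing in the hypothesis: using the Iwahori decomposition $J = (J \cap \bar N)(J \cap P)$ from \Cref{lemma: J admits rooted Iwahori decomposition} together with $\mu \in M \subseteq P$, one obtains $\mu J P = \mu (J \cap \bar N) P = V P$, where $V := \mu (J \cap \bar N) \mu^{-1}$. Since $\mu \in M^-$, \Cref{equation equivalences for A+} gives $V \subseteq J \cap \bar N$, so $V$ is a subgroup of $J$. Unpacking the hypothesis then yields $v_1, v_2 \in V$ and $p \in P$ with $h v_1 = v_2 p$, where $h := \tilde g^{-1} g \in J$.

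The crux is the observation that $p = v_2^{-1} h v_1$ is forced to lie in $J$ as well (since $h, v_1, v_2$ all do), hence $p \in J \cap P$. Rewriting $h = v_2 \cdot p \cdot v_1^{-1}$, the problem reduces to showing $V \cdot (J \cap P) \cdot V \subseteq \mu J \mu^{-1}$. Since $\mu J \mu^{-1}$ is a subgroup of $G$, it suffices to verify that each of $V$, $J \cap M$, and $J \cap N$ lies inside it: the first by the very definition of $V$; the second because $\mu \in \tilde J_M$ normalizes $J \cap M = J_M$; and the third by another application of \Cref{equation equivalences for A+}, which gives $J \cap N \subseteq \mu (J \cap N) \mu^{-1}$ as $\mu \in M^-$. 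These three containments together give $J \cap P = (J \cap M)(J \cap N) \subseteq \mu J \mu^{-1}$, and combined with $V \subseteq \mu J \mu^{-1}$ they finish the argument.

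The reasoning is essentially formal once one has the simplification $\mu J P = V P$; I do not expect a genuine obstacle beyond ensuring that the inclusion $J \cap P \subseteq \mu J \mu^{-1}$ really uses both the $\tilde J_M$ and the $M^-$ parts of the assumption $\mu \in \tilde J_M^-$. No analysis of the big-cell decomposition or explicit normal-form computations is needed, which contrasts pleasantly with the more intricate manipulations of \Cref{lemma: products of double cosets}.
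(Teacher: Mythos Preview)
Your proof is correct and follows the same underlying strategy as the paper's: both exploit the Iwahori decomposition together with the containments $\mu(J\cap\bar N)\mu^{-1}\subseteq J\cap\bar N$ and $J\cap P\subseteq\mu J\mu^{-1}$. The only packaging difference is in the final step. The paper first reduces $g$ to its $\bar N$-component $\bar n$ (using $\mu^{-1}(J\cap P)\mu\subseteq J$ to absorb the $P$-part into $J$) and then invokes the injectivity of multiplication $\bar N\times P\to G$ to force $\bar n\in\mu(J\cap\bar N)\mu^{-1}$; you instead observe directly that $p=v_2^{-1}hv_1$ lies in $J$ (since all three factors do) and hence in $J\cap P$, which gives $h\in V(J\cap P)V\subseteq\mu J\mu^{-1}$ without appealing to the uniqueness of the $\bar N\times P$ factorization. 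This is a slightly more elementary conclusion, though the saving is modest since the injectivity of $\bar N\times P\to G$ is itself immediate from the Levi decomposition.
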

\begin{proof}
We may assume that $\tilde g = 1$. Write $g = \bar n p$ with $\bar n \in J \cap \bar N$ and $p \in J \cap P$. Since $\mu^{-1} (J \cap P) \mu \subseteq J \cap P$, we have
$$
    g \mu J = \bar n p \mu J = \bar n \mu (\mu^{-1} p \mu) J = \bar n \mu J.
$$
Hence, $\bar n \mu (J \cap \bar N) \mu^{-1} P \cap \mu (J \cap \bar N) \mu^{-1} P \neq \emptyset$. As $\mu (J \cap \bar N) \mu^{-1} \subseteq J \cap \bar N$, it follows from the injectivity of multiplication $\bar N \times P \to G$ that $\bar n \mu (J \cap \bar N) \mu^{-1} \cap \mu (J \cap \bar N) \mu^{-1} \neq \emptyset$. In other words, $\bar n \in \mu (J \cap \bar N) \mu^{-1}$, so $g \mu J = \bar n \mu J = \mu J$.
\end{proof}

\begin{lemma}\label{lemma: JM generated by submonoids}
$\NJM$ is generated as a monoid by $\NJM^-$ and $A^+$.
In particular, $\NJM$ is generated as a group by $\NJM^-$.
\end{lemma}
\begin{proof}
Let $\mu$ be an element of $\NJM$. Conjugation by $\mu$ preserves both $N$ and $\bar N$. The group $\mu (J \cap \bar N) \mu^{-1}$ is contained in the subgroup generated by $U_{\alpha, r_\alpha}$ with $\alpha \in \Phi^- \setminus \Phi_M$ for some $r_\alpha \in \R$, and similarly for $\mu^{-1} (J \cap N) \mu$. We may choose $s$ sufficiently large so that for all $\alpha$, $\Ubf_{\alpha, r_\alpha + s}$ is contained in $\Ubf_{\alpha, f(\alpha)}$, where $f$ is a convex function giving the rooted Iwahori decomposition of $J$.
Choose $a \in A^-$ satisfying $v(\alpha(a)) \leq -s$ for all $\alpha \in \Phi^+ \setminus \Phi_M$. Then, we have $a \mu (J \cap \bar N) \mu^{-1} a^{-1} \subseteq J \cap \bar N$ and $\mu^{-1} a^{-1} (J \cap N) a \mu \subseteq J \cap N$. In other words, $a \mu \in \NJM^-$.
\end{proof}

\subsection{Congruence subgroups of \texorpdfstring{$J$}{J}}
\label{subsection: congruence subgroups}

Next, we will introduce certain families of subgroups of $J$ similar to those considered in \cite[\S I.2]{schneider_stuhler_sheaves}. Let $s \geq 0$. Consider the subset $T'_s$ of $\Tbf(L')$ consisting of elements $t$ such that $v(\chi(t) - 1) \geq s$ for all $\chi \in X^*_{L'}(\Tbf)$.
Choose an ordering of $\Phi'$ compatible with that of $\Phi$ (so that the positive roots of $\Phi'$ restrict to elements of $\Phi^+ \cup \{0\}$) and let $\Bbf'$ be the associated Borel subgroup of $\Gbf_{L'}$ containing $\Tbf_{L'}$ over $L'$, $\Ubf'$ its unipotent radical and $\bar \Ubf'$ the unipotent radical of the opposite Borel.
Let $N'_s$ (resp. $\bar N'_s$, resp. $U'_s$, resp $\bar U'_s$) be the subgroup generated by $\Ubf_\psi$ where $\psi$ runs over all affine functions of $\Ascr'$ such that $\inf \psi|_{\Ascr}(\Fscr) \geq s$ and the image in $X^*(\Sbf) \otimes_\Z \R$ of the vector part of $\psi$ (resp. the image in $X^*(\Sbf) \otimes_\Z \R$ of the vector part of $\psi$, resp. the vector part of $\psi$, resp. the vector part of $\psi$) lies in $\Phi^+ \setminus \Phi_M$ (resp. $\Phi^- \setminus \Phi_M$, resp. $(\Phi')^+$, resp. $(\Phi')^-$).
Write also $U'_{M,s} = U'_s \cap \Mbf(L')$ and $\bar U'_{M,s} = \bar U'_s \cap \Mbf(L')$.
The image of $\Fscr_M$ (resp. $\Fscr$) in $\Ascr'_M$ (resp. $\Ascr'$) is contained in a minimal union of facets, let $M'_0$ (resp. $J'_0$) be the pointwise stabiliser of this union.
When $s > 0$, we let $\JMs'$ (resp. $J'_s$) be the subgroup of $\Gbf(L')$ generated by by $T'_s$, $U'_{M,s}$ and $\bar U'_{M,s}$ (resp. $T'_s$, $U'_s$ and $\bar U'_s$).
When $s = 0$, we let $\JMs'$ (resp. $J'_s$) be the stabiliser in $\Mbf(L')$ (resp. $\Gbf(L')$) of the image of $\Fscr_M$ (resp. $\Fscr$) in $\Ascr'_M$ (resp. $\Ascr'$).
The groups $N'_s, \bar N'_s, \JMs'$ and $J'_s$ are all stable under the action of $\Gal(L'/L)$.
Define $N_s, \bar N_s, \JMs$ and $J_s$ as the subgroups of $\Gal(L'/L)$-fixed points of the respective subgroups of $\Gbf(L')$.
Note that when $s = 0$ the notation for $M_s$ agrees with our previous use of $M_0$.
Finally, let $J_{0,s}$ be the subgroup of $J$ generated by $N_s$, $\JM$ (as defined in the previous sections) and $J \cap \bar N$ and let $J_{1,s}$ be the subgroup of $J$ generated by $N_s$, $\JMs$ and $J \cap \bar N$.

\begin{lemma}\label{lemma: rooted decomposition for N_s}
Let $f \colon \Phi \to \R$ be the convex function defined by $f(\alpha) = - \inf \psi_{\alpha, 0}(\Fscr)$. Then, the multiplication maps
\begin{align*}
    \prod_{\alpha \in \Phi^{\ndiv,+} \setminus \Phi_M} U_{f + s, \alpha} & \to N_s, &
    \prod_{\alpha \in \Phi^{\ndiv,-} \setminus \Phi_M} U_{f + s, \alpha} & \to \bar N_s
\end{align*}
are isomorphisms of locally $L$-analytic manifolds for any ordering of the terms in the products.
\end{lemma}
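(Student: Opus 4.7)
The plan is to establish the product decomposition first over $L'$, where $\Gbf$ becomes split, by a direct application of Bruhat--Tits theory, and then descend to $L$ by taking $\Gal(L'/L)$-invariants exactly as in the construction of $\Ubf_{f, \alpha}$ preceding \Cref{eqn: U_fa is product of U_fb}.

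First, I would reinterpret $N'_s$ in terms of affine root subgroups. Every affine function on $\Ascr'$ with vector part $\beta \in \Phi'$ has the form $\psi_{\beta, r}$ for some $r \in \R$, and restricts to $\Ascr$ as $\psi_{\beta|_\Sbf, r}$; the condition $\inf \psi|_\Ascr(\Fscr) \geq s$ therefore becomes $r \geq s + f(\beta|_\Sbf)$, where $f$ is extended to $\Phi$ by $f(2\alpha) = 2 f(\alpha)$ whenever $2\alpha \in \Phi$. This gives
$$
    N'_s = \langle U_{\beta, s + f(\beta|_\Sbf)} : \beta \in \Phi',\ \beta|_\Sbf \in \Phi^+ \setminus \Phi_M \rangle.
$$
Since $\Gbf_{L'}$ is split and the function $\beta \mapsto s + f(\beta|_\Sbf)$ is convex on the relevant subset of $\Phi'$ (by convexity of $f$, linearity of restriction, and $s \geq 0$), \cite[Proposition 6.4.9]{bruhat_tits_1} shows that for any ordering of the factors the multiplication map
$$
    \prod_{\beta \in \Phi',\ \beta|_\Sbf \in \Phi^+ \setminus \Phi_M} U_{\beta, s + f(\beta|_\Sbf)} \to N'_s
$$
is a bijection. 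I would choose orderings that group factors by restriction class $\alpha \in \Phi^{\ndiv, +} \setminus \Phi_M$, placing $\beta|_\Sbf = \alpha$ factors before $\beta|_\Sbf = 2\alpha$ factors within each block, so that varying such orderings realizes any prescribed ordering of the $\alpha$-blocks.

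To descend to $L$, I would use that $\Gal(L'/L)$ acts trivially on $X^*(\Sbf)$ (since $\Sbf$ is $L$-split), so each $\alpha$-block is $\Gal(L'/L)$-stable and its invariants are exactly $U_{f+s, \alpha}$ by \Cref{eqn: U_fa is product of U_fb}. Taking $\Gal(L'/L)$-invariants on both sides then yields a bijection $\prod_{\alpha \in \Phi^{\ndiv, +} \setminus \Phi_M} U_{f+s, \alpha} \simto N_s$ for any ordering of the $\alpha$'s. To upgrade this bijection to an isomorphism of locally $L$-analytic manifolds, I would invoke the same principle used after \Cref{eqn: definition Iwahori decomposition}: the product map is the restriction to affinoid subspaces of the isomorphism of $L$-algebraic varieties $\prod_\alpha \Ubf_\alpha \simto \Nbf$ (which holds for any ordering, as $\Nbf$ is the unipotent radical of a parabolic), hence automatically an open immersion on $L$-points, so bijectivity upgrades it to an isomorphism of locally $L$-analytic manifolds. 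The statement for $\bar N_s$ is identical with the negative roots in place of the positive ones. The main subtlety is the bookkeeping to match the grouping by Galois orbits with the definition of $U_{f+s, \alpha}$ when both $\alpha$ and $2\alpha$ lie in $\Phi$, but this is handled exactly as in \Cref{eqn: U_fa is product of U_fb}.
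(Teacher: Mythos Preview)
Your proposal is correct and follows essentially the same route as the paper's proof: both pass to $L'$, identify $N'_s$ with the group generated by the $U_{\beta, s + f(\beta|_{\Sbf})}$, invoke Bruhat--Tits theory (the paper cites \cite[6.1.6]{bruhat_tits_1} rather than 6.4.9, but the content is the same) for the product decomposition over $L'$, and then take $\Gal(L'/L)$-invariants using \Cref{eqn: U_fa is product of U_fb}. Your remark that $f(2\alpha) = 2f(\alpha)$ is automatic from the definition $f(\alpha) = -\inf \psi_{\alpha,0}(\Fscr)$, and your extra paragraph on upgrading the bijection to an analytic isomorphism is subsumed by the paper's standing convention (stated after \Cref{eqn: definition Iwahori decomposition}) that surjectivity suffices.
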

\begin{proof}
We will only prove the statement for $N_s$ since the other case is analogous. As usual, it is enough to show that the map above is surjective.
Let $f' \colon \Phi' \to \R$ be the function defined by $f'(\beta) = f(\beta|_{\Sbf})$ is $\beta|_{\Sbf} \neq 0$ and $f'(\beta) = 0$ otherwise.
It is a convex function of $\Phi'$.
Thus, for an affine root $\psi$ on $\Ascr'$ with vector part $\beta$ we have $U_\psi \subseteq U_{\beta, f'(\beta) + s}$ if and only if $\inf \psi|_\Ascr ( \Fscr ) \geq s$, so $N'_s$ is generated by $U_{\beta, f'(\beta) + s}$ as $\beta$ runs through $\Phi'^{+} \setminus \Phi'_M$.
Hence, by \cite[6.1.6]{bruhat_tits_1} multiplication $\prod_{\beta \in \Phi'^{+} \setminus \Phi'_M} U_{f'+s, \beta} \to N'_s$ is a bijection. Taking $\Gal(L'/L)$-invariants, the result follows from \Cref{eqn: U_fa is product of U_fb}.
\end{proof}

In particular, $N_0 = J \cap N$ and $\bar N_0 = J \cap \bar N$.
Moreover, $J_0 = J_{0,0} = J_{1,0} = J$.
Similar decompositions to the ones in \Cref{lemma: rooted decomposition for N_s} can be given for $M_s$ when $s > 0$.

\begin{lemma}\label{lemma: J_s is normal in J}
Let $s \geq 0$.
\begin{enumerate}
    \item $\JMs$ is a normal subgroup of $\NJM$.
    \item $J_s$ is a normal subgroup of $J$.
    \item $J_{1,s}$ is a normal subgroup of $J_{0,s}$.
    \item If $\mu \in \NJM^-$, then $\mu \bar N_s \mu^{-1} \subseteq \bar N_s$ and $\mu^{-1} N_s \mu \subseteq N_s$.
    \item $(\JMs)_s$ is a basis of neighbourhoods of the identity in $\JM$ by open subgroups.
\end{enumerate}
\end{lemma}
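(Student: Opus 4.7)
The plan is to deduce all five parts from root-subgroup decompositions of the congruence subgroups in question, generalising the product formula of \Cref{lemma: rooted decomposition for N_s}, together with the Bruhat--Tits translation identity $g U_\psi g^{-1} = U_{\psi \circ g^{-1}}$. The key structural input is that each of $T'_s, N_s, \bar N_s, J_{M,s}, J_s, J_{1,s}$ is defined directly through the affine-function lower bound $\inf \psi|_{\Ascr}(\Fscr) \geq s$ (or its analogue for $\Fscr_M$, or by Galois descent from such a definition), so normality reduces to checking that the relevant normalising groups act on $\Ascr'$ in a way that preserves these bounds.

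For parts \itemnumber{1}--\itemnumber{3} I will run this check family by family. The torus factor $T'_s$ is automatically stable under conjugation by anything in the $L'$-normaliser of $\Tbf$, in particular by $\tilde J_M$ and $J$. For the affine root generators $\Ubf_\psi$, the point is that $\tilde J_M$ acts on $\Ascr$ by translations that fix $\Fscr_M$ in the reduced building of $\Mbf^{\ad}$, which yields \itemnumber{1}, while $J$ fixes $\Fscr$ pointwise on $\Ascr$ and thus yields \itemnumber{2}. Part \itemnumber{3} then follows by combining \itemnumber{1} with the fact, extracted from the Iwahori decomposition of $J$ (\Cref{lemma: J admits rooted Iwahori decomposition}), that $J_M$ normalises both $J \cap \bar N$ and $N_s$.

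Part \itemnumber{4} is the substantive one. Writing $\bar N_s = \prod_{\alpha \in \Phi^{\ndiv,-} \setminus \Phi_M} U_{f+s, \alpha}$ via \Cref{lemma: rooted decomposition for N_s}, the claim reduces to the root-by-root inclusion $\mu U_{f+s, \alpha} \mu^{-1} \subseteq U_{f+s, \alpha}$. I will invoke \Cref{remark: normalizer of J_M has free quotient} to decompose $\mu \in \tilde J_M^-$ as $\mu_0 k$ with $\mu_0 \in \tilde J_M \cap \Nfrak$ and $k \in J_M \subseteq J$; the factor $k$ is harmless by \itemnumber{2}, while $\mu_0$ acts on $\Ascr$ by a translation whose vector part pairs non-positively with each $\alpha \in \Phi^+ \setminus \Phi_M$ (by the same argument as \Cref{equation equivalences for A+}, adapted to $\tilde J_M^-$ in place of $\Zfrak^-$). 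The translation formula then shows each root factor is sent into itself, and the dual set of inequalities handles $\mu^{-1} N_s \mu \subseteq N_s$.

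For part \itemnumber{5}, applying the proof of \Cref{lemma: rooted decomposition for N_s} with $\Mbf$ in place of $\Gbf$ produces a Bruhat--Tits decomposition of $J_{M,s}$ as a product of $T'_s$ (after Galois invariants) and the $\Mbf$-analogues of $N_s$ and $\bar N_s$. Each factor is open and contracts to the identity as $s \to \infty$: this is immediate for $T'_s$ from the defining condition $v(\chi(t) - 1) \geq s$, and follows for the root-subgroup factors from the fact that $\Ubf_{\beta, r} \to \{1\}$ as $r \to \infty$. I expect the main obstacle to lie in part \itemnumber{4}, namely in setting up cleanly the translation action of $\tilde J_M^-$ on the affine root system (since $\tilde J_M^-$ is not contained in $\Zfrak$); once that action and the appropriate antidominance condition are in place, the remaining verifications are routine.
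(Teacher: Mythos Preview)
Your overall strategy — reducing normality to the affine-root bound $\inf \psi|_\Ascr(\Fscr) \geq s$ via the identity $g U_\psi g^{-1} = U_{\psi \circ g^{-1}}$ — has a genuine gap. That identity is valid only for $g \in \Nfrak$ (the $L$-points of the normaliser of $\Sbf$), as recorded just above \Cref{equation equivalences for A+}: it is $\Nfrak$, not $J$ or $\tilde J_M$, that acts on the apartment by affine transformations permuting the $U_\psi$. A generic element of $J$ (for instance any $u \in U_\psi \subseteq J$) does not send $U_{\tilde\psi}$ to a single affine-root subgroup, so your argument for \itemnumber{2} does not go through. The same objection applies to \itemnumber{1} and \itemnumber{3}: neither $\tilde J_M$ nor $J_{0,s}$ is contained in $\Nfrak$, so ``acts on $\Ascr$ by translations fixing $\Fscr_M$'' and ``$J_M$ normalises $J\cap\bar N$ and $N_s$ from the Iwahori decomposition of $J$'' are assertions, not proofs.

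What the paper does instead for \itemnumber{2} and \itemnumber{3} is invoke the Bruhat--Tits commutator relations (from \cite[1.4.2]{corvallis_tits} and \cite[6.4.16(b)]{bruhat_tits_1}): for $u \in U_\psi$, $\tilde u \in U_{\tilde\psi}$, $t \in T'_{\tilde s}$ with $\inf\psi|_\Ascr(\Fscr)\geq s$ and $\inf\tilde\psi|_\Ascr(\Fscr)\geq\tilde s$, one has $u\tilde u u^{-1} \in \langle U_{\tilde\psi}, J'_{s+\tilde s}\rangle$, $utu^{-1} \in \langle U_{\psi+\tilde s}, T'_{\tilde s}\rangle$, and $tut^{-1}\in U_\psi$, together with refinements keeping track of the sign of the vector parts. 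These are exactly what is needed to show that conjugation by root-subgroup elements of $J$ (or of $J_{0,s}$) preserves $J'_s$ (or $J'_{1,s}$), after which Galois descent gives \itemnumber{2} and \itemnumber{3}. Your approach to \itemnumber{4} is then essentially the paper's — decompose $\mu$ via \Cref{remark: normalizer of J_M has free quotient} into an $\Nfrak\cap\tilde J_M^-$ part (handled by the translation formula, as you describe) and a $J_M$ part — but the $J_M$ part cannot be dispatched ``by \itemnumber{2}'' alone: \itemnumber{2} gives $k J_s k^{-1} = J_s$, not $k\bar N_s k^{-1}\subseteq\bar N_s$, and closing that gap again needs the commutator relations (or, circularly, the Iwahori decomposition of $J_s$ from \Cref{lemma: Iwahori decomposition for deep level}, which is proved afterwards). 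Part \itemnumber{5} is fine.
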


\begin{proof}
Statement \itemnumber{5} follows from the corresponding fact for $\JMs'$.
We know the other statements for $s = 0$, so we will focus on the case of positive $s$.
First, we note that the construction of $M_s$ essentially coincides with that of the groups $U_F^{(e)}$ in \cite[\S I.2]{schneider_stuhler_sheaves} (except that instead of $H_{e+}$ one should use $H_e$ in the definition) and the arguments leading to the comments after \cite[Proposition I.2.6]{schneider_stuhler_sheaves} apply also in this case to show that $\mathfrak M$ normalises $M_s$, which proves \itemnumber{1}.
For the remaining statements, we make the following observations for $s, \tilde s \geq 0$, $\psi|_\Ascr(\Fscr) \geq s$, $\tilde \psi|_\Ascr(\Fscr) \geq \tilde s$, $u \in U_\psi, \tilde u \in U_{\tilde \psi}$ and $t \in T'_{\tilde s}$:
\begin{enumerate}[label=(\alph*)]
    \item\label{item: congruence a} $u \tilde u u^{-1}$ lies in the subgroup generated by $U_{\tilde \psi}$ and $J'_{s + \tilde s}$,
    \item\label{item: congruence c} if the vector part of $\psi$ and $\tilde \psi$ have images in $\Phi_M \cup \Phi^- \cup \{0\}$ and $\Phi^- \setminus \Phi_M$ respectively, then $u \tilde u u^{-1} \in \bar N'_0$,
    \item\label{item: congruence d} $u t u^{-1}$ is contained in the subgroup generated by $U_{\psi + \tilde s}$ and $T'_{\tilde s}$,
\end{enumerate}
Indeed, \Cref{item: congruence a}
follows from \cite[1.4.2]{corvallis_tits}, \Cref{item: congruence c} follows from the same reference and the fact that $\bar \Nbf$ is normalised by $\bar \Pbf$, and \Cref{item: congruence d}
follows from \cite[Example 6.4.16 (b)]{bruhat_tits_1}.
Facts \Cref{item: congruence a} and \Cref{item: congruence d} together imply that conjugation by an element of $U'_0$ or $\bar U'_0$ maps the groups $U'_s, \bar U'_s$ and $T'_s$ into $J'_s$.
It follows from this that $\bar N_0$ and $N_0$ normalise $J_s$ and, together with \Cref{item: congruence a} and \Cref{item: congruence c}, that $\bar N_0$ and $N_s$ normalise $J_{1,s}$.

According to \cite[7.4.4]{bruhat_tits_1}, $M_0$ is generated by $\Nfrak \cap M_0$ and the subgroups $U_\psi$ where $\psi$ runs over all affine roots of $\Ascr_M$ such that $\psi(\Fscr_M) \geq 0$.
Again, it follows from \Cref{item: congruence a}, \Cref{item: congruence c} and \Cref{item: congruence d} as above that such $U_\psi$ normalise $J_s$ and $J_{1,s}$.
Therefore, taking into account \itemnumber{1}, statements \itemnumber{2} and \itemnumber{3} will follow if we show that $\Nfrak \cap M_0$ normalises both $\bar N_s$ and $N_s$.
In fact, by the same arguments, \itemnumber{4} will also follow if we show that conjugation by $\Nfrak \cap \NJM^-$ (resp. $\Nfrak \cap \NJM^+$) preserves $\bar N_s$ (resp. $N_s$).
This is indeed true: it is a consequence of the explicit description of the action of $\Nfrak$ on root subgroups and the corresponding statement for $s = 0$.
\end{proof}

\begin{proposition}\label{lemma: Iwahori decomposition for deep level}
Let $s \geq 0$.
The groups $J_s, J_{1,s}$ and $J_{0,s}$ admit rooted Iwahori decompositions
\begin{align*}
    \bar N_s \times \JMs \times N_s & \simto J_{s}, \\
    \bar N_0 \times \JMs \times N_s & \simto J_{1,s}, \\
    \bar N_0 \times \JM \times N_s & \simto J_{0,s}.
\end{align*}
\end{proposition}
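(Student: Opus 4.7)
The plan is to prove all three decompositions by a common strategy, carrying out the case of $J_s$ in detail and indicating the modifications for $J_{1,s}$ and $J_{0,s}$. The skeleton mirrors the proof of \Cref{lemma: J admits rooted Iwahori decomposition} lifted to congruence level $s$, with \Cref{lemma: rooted decomposition for N_s} as the key new input. Four points need to be checked: containment and generation, the existence of a convex witness function, injectivity of the multiplication map, and surjectivity.

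First, I would unpack the definitions to obtain containment and generation. The generators $T'_s, U'_s, \bar U'_s$ of $J'_s$ lie in $J'_{M,s}, N'_s, \bar N'_s$ respectively, once one decomposes $U'_s$ and $\bar U'_s$ into products of root subgroups according to whether vector parts lie in $\Phi'_M$, in $(\Phi')^+ \setminus \Phi'_M$, or in $(\Phi')^- \setminus \Phi'_M$; taking Galois invariants gives the analogous statement over $L$. The rooted structure for $J_s$ is witnessed by the convex function $f+s$ (which is convex since $f$ is), and \Cref{lemma: rooted decomposition for N_s} already supplies the required factorizations $\prod_\alpha U_{f+s,\alpha} \simto N_s$ and $\prod_\alpha U_{f+s,\alpha} \simto \bar N_s$. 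Injectivity of the multiplication map and analyticity of its inverse then follow from the inclusions $\bar N_s \subseteq J \cap \bar N$, $J_{M,s} \subseteq J_M$, $N_s \subseteq J \cap N$ combined with \Cref{lemma: J admits rooted Iwahori decomposition}.

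The technical core is surjectivity: the goal is to show that $\bar N_s \cdot J_{M,s} \cdot N_s$ is closed under multiplication and inversion, hence a subgroup of $J_s$ containing all the generators, and therefore equal to $J_s$. Two ingredients enter. First, the canonical affine isomorphism $\Ascr \simto \Ascr_M$ is $M$-equivariant and sends $\Fscr$ to $\Fscr_M$, so $J_M$ fixes $\Fscr$ pointwise; it therefore preserves the condition $\inf \psi|_\Ascr(\Fscr) \geq s$, normalizes $N'_s$ and $\bar N'_s$, and upon taking Galois invariants $J_{M,s} \subseteq J_M$ normalizes $N_s$ and $\bar N_s$. Second, the Chevalley commutator relations for affine root subgroups (as already used in the proof of \Cref{lemma: J_s is normal in J}): if $u \in U_\psi$ and $v \in U_{\bar\psi}$ with $\inf \psi|_\Ascr(\Fscr), \inf \bar\psi|_\Ascr(\Fscr) \geq s$, then $[u,v]$ lies in the group generated by affine root subgroups $U_{i\psi + j\bar\psi}$ for $i,j \geq 1$, each satisfying $\inf(i\psi + j\bar\psi)|_\Ascr(\Fscr) \geq 2s$, whose vector parts distribute across $N_s$, $\bar N_s$ and $J_{M,s}$ exactly as needed.

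For $J_{1,s}$ and $J_{0,s}$ the argument is parallel, but the convex witness must now be assembled by hand. I would take $g \colon \Phi \to \R$ equal to $f+s$ on $\Phi^{\ndiv,+} \setminus \Phi_M$, equal to $f$ on $\Phi^{\ndiv,-} \setminus \Phi_M$, and defined on $\Phi_M$ by $g(\alpha) := f(\alpha) + s$; convexity reduces to that of $f$, the only subtle case being when a sum of roots produces an element of $\Phi^+ \setminus \Phi_M$ with some summands in $\Phi_M$, where the extra $+s$ on the $\Phi_M$ factors absorbs the discrepancy. The surjectivity argument carries over once one notes that the commutators $[J \cap \bar N, N_s]$ and $[J \cap \bar N, J_{M,s}]$ still produce positive-root pieces in $N_s$ inheriting the $\geq s$ bound from one of the two factors. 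The main obstacle is this piecewise-convexity verification together with the accompanying commutator bookkeeping, both of which reduce to the monotonicity $\inf(i\psi + j\bar\psi)|_\Ascr(\Fscr) \geq i \cdot \inf \psi|_\Ascr(\Fscr) + j \cdot \inf \bar\psi|_\Ascr(\Fscr)$.
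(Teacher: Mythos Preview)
Your approach for $J_s$ and $J_{1,s}$ is essentially what the paper does, except that the paper cites \cite[6.4.9 and 6.4.48]{bruhat_tits_1} as a black box for the product decomposition (first with respect to a minimal parabolic, then upgraded to $\Pbf$ via \Cref{lemma: Iwahori decomposition for larger parabolic}), while you sketch the underlying commutator argument by hand. That is fine, though the ``commutator bookkeeping'' you allude to is precisely the nontrivial content of \cite[6.4.48]{bruhat_tits_1}, and in practice one would still want to cite it rather than reprove it.

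The genuine divergence is in the treatment of $J_{0,s}$. Your convex witness $g$ takes the value $f+s$ on $\Phi_M$, so the group it describes via Bruhat--Tits machinery has middle factor contained in $J_{M,s}$, not the full $J_M$; indeed, when you write ``$[J\cap\bar N, J_{M,s}]$'' you are implicitly handling only the $J_{1,s}$ case. To close the argument for $J_{0,s}$ you still need the additional step (whose ingredients you do record earlier) that $J_M$ fixes $\Fscr$ pointwise and hence normalizes both $J\cap\bar N$ and $N_s$, so that $(J\cap\bar N)\cdot J_M\cdot N_s$ is a subgroup once you know $N_s\cdot(J\cap\bar N)\subseteq(J\cap\bar N)\cdot J_{M,s}\cdot N_s$ from the $g$-decomposition. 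The paper takes a different route here, following \cite{morris_level_zero_types}: it writes an arbitrary $x\in J_{0,s}$ as $m_0 y$ with $m_0\in J_M$ and $y$ in the pro-$p$ radical $U$ of the parahoric, thereby reducing to an Iwahori decomposition for $U\cap J_{0,s}$, which is then obtained from \cite[6.4.9 and 6.4.48]{bruhat_tits_1} applied to the function $f^*$ of \cite[6.4.23]{bruhat_tits_1} associated to a piecewise-defined function on $\Phi$. Your direct approach is arguably cleaner once the $J_M$-normalization step is made explicit; the paper's route has the advantage that the $f^*$-construction supplies the convexity (quasi-concavity) automatically rather than requiring a case-by-case check.
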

\begin{proof}
The case $s = 0$ was proven in \Cref{lemma: J admits rooted Iwahori decomposition}, so assume $s > 0$. 
To obtain the rooted Iwahori decomposition for $J_{1, s}$ (resp. $J_{s}$) we can apply \cite[6.4.9 and 6.4.48]{bruhat_tits_1} to obtain a similar decomposition for a minimal parabolic for $J_{1,s}$ (resp. for $J'_s$, and then take $\Gal(L'/L)$-invariants to obtain one for $J_s$), and apply \Cref{lemma: Iwahori decomposition for larger parabolic} and \Cref{lemma: rooted decomposition for N_s}.
The decomposition for $J_{0,s}$ is proved in the same way as \cite[Theorem 2.1 (ii)]{morris_level_zero_types}, as we now show. As usual, it's enough to show that multiplication $\bar N_0 \times \JM \times N_s \to J_{0,s}$ is surjective. Let $U$ be the pro-$p$ radical of the parahoric subgroup of $J$ corresponding to $\Fscr$.
Recall from the comment below \Cref{lemma: J admits rooted Iwahori decomposition} that $\bar N_0 = U \cap \bar N$ and $N_0 = U \cap N$.
For each $x \in J_{0,s}$, there exists $m_0 \in \JM$ such that $y := m_0^{-1} x \in U$.
If we show that $y = \bar n m n$ with $\bar n \in U \cap \bar N, m \in \JM$ and $n \in N_s$, then $x = (m_0 \bar n m_0^{-1}) (m_0 m) n$ will be in the image of the multiplication map above.
In other words, it's enough to show that $U \cap J_{0,s}$ admits an Iwahori decomposition. Let $f \colon \Phi \to \R$ be the function defined by
$$
    f(\alpha) = \begin{cases}
        r_\alpha & \text{ if } \alpha \in \Phi^{-} \cup \Phi_M, \\
        r_\alpha + s & \text{ if } \alpha \in \Phi^{+} \setminus \Phi_M,
    \end{cases}
$$
where $r_\alpha = -\inf \psi_{\alpha, 0}(\Fscr)$,
and define $f^*$ as in \cite[6.4.23]{bruhat_tits_1}. Then, the Iwahori decomposition for $U \cap J_{0,s}$ with respect to a minimal parabolic follows from \Cref{lemma: Iwahori decomposition for larger parabolic} and \cite[6.4.9 and 6.4.48]{bruhat_tits_1} applied to $f^*$, and the decomposition with respect to $P$ follows by \Cref{lemma: Iwahori decomposition for larger parabolic} again.
\end{proof}

We have the following version of \Cref{lemma: products of double cosets} for $J_{0,s}$.

\begin{lemma}\label{lemma: products of double cosets for J0s}
Let $\mu, \tilde \mu \in \NJM^-$. Then $J_{0,s} \mu \tilde \mu J_{0,s} = \coprod_{x, \tilde x} x \mu \tilde x \tilde \mu J_{0,s}$, where $x, \tilde x$ run through coset representatives of $\bar N_0 / \mu \bar N_0 \mu^{-1}$ and $\bar N_0 / \tilde \mu \bar N_0 \tilde \mu^{-1}$ respectively.
\end{lemma}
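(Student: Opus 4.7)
The plan is to mimic the proof of Lemma \ref{lemma: products of double cosets}(ii), systematically replacing $J$ by $J_{0,s}$ and using the rooted Iwahori decomposition $J_{0,s} = (J \cap \bar N) \cdot J_M \cdot N_s$ from Proposition \ref{lemma: Iwahori decomposition for deep level} in place of the Iwahori decomposition of $J$. The two auxiliary facts that make the mechanical transfer go through are, first, $\mu (J \cap \bar N) \mu^{-1} \subseteq J \cap \bar N$ for $\mu \in \tilde J_M^-$ (by definition of $M^-$), and second, $\mu^{-1}(J_{0,s} \cap P)\mu \subseteq J_{0,s} \cap P$. The latter holds because $\mu^{-1} J_M \mu = J_M$ (as $\mu \in \tilde J_M$ normalizes $J_M$) and $\mu^{-1} N_s \mu \subseteq N_s$ by Lemma \ref{lemma: J_s is normal in J}(iv), so $\mu^{-1} J_M N_s \mu = J_M \cdot \mu^{-1} N_s \mu \subseteq J_M N_s$.

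First I would establish the single-coset version: for any $\mu \in \tilde J_M^-$,
\[
    J_{0,s}\,\mu\, J_{0,s} = \coprod_{x} x\mu J_{0,s}
\]
with $x$ running over coset representatives of $(J \cap \bar N)/\mu(J \cap \bar N)\mu^{-1}$. The equality of sets follows from $J_{0,s}\mu J_{0,s} = (J \cap \bar N)(J_{0,s} \cap P)\mu J_{0,s} \subseteq (J \cap \bar N)\mu J_{0,s}$ using the second auxiliary fact. Disjointness uses that if $x\mu J_{0,s} = x'\mu J_{0,s}$ then $\mu^{-1}(x')^{-1}x\mu$ lies simultaneously in $J_{0,s}$ and in $\bar N$, hence in $J_{0,s} \cap \bar N = J \cap \bar N$ (visible from the rooted Iwahori decomposition), forcing $(x')^{-1}x \in \mu(J \cap \bar N)\mu^{-1}$. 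Next I would prove the analogue of part (i), namely $J_{0,s}\mu J_{0,s} \tilde\mu J_{0,s} = J_{0,s}\mu\tilde\mu J_{0,s}$: indeed
\[
    \mu J_{0,s}\tilde\mu = \mu(J \cap \bar N)\mu^{-1} \cdot \mu(J_{0,s} \cap P)\tilde\mu \subseteq (J \cap \bar N) \cdot \mu\tilde\mu(J_{0,s} \cap P),
\]
by both auxiliary facts. Combining these two ingredients yields
\[
    J_{0,s}\mu\tilde\mu J_{0,s} = \bigcup_{x} x\mu(J_{0,s}\tilde\mu J_{0,s}) = \bigcup_{x,\tilde x} x\mu\tilde x\tilde\mu J_{0,s}.
\]

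For the disjointness of these cosets, for a fixed $x$ the different choices of $\tilde x$ give distinct cosets by the single-coset statement applied to $\tilde \mu$ (after multiplying by $(x\mu)^{-1}$ on the left). For differing $x, x'$, intersection of $x\mu\tilde x\tilde\mu J_{0,s}$ with $x'\mu\tilde x'\tilde\mu J_{0,s}$ implies $(x')^{-1}x \mu J_{0,s}\tilde\mu J_{0,s} \cap \mu J_{0,s}\tilde\mu J_{0,s} \neq \emptyset$. Writing $J_{0,s}\tilde\mu J_{0,s} = (J \cap \bar N)\tilde\mu J_{0,s}$ and hence $\mu J_{0,s}\tilde\mu J_{0,s} = \mu(J \cap \bar N)\mu^{-1} \cdot \mu\tilde\mu J_{0,s}$, one repeats the argument of Lemma \ref{lemma: products of double cosets}(ii) verbatim: the resulting element of $\mu\tilde\mu J_{0,s}(\mu\tilde\mu)^{-1}$ also lies in $\bar N$, hence in $\mu\tilde\mu(J_{0,s} \cap \bar N)(\mu\tilde\mu)^{-1} = \mu\tilde\mu(J \cap \bar N)(\mu\tilde\mu)^{-1} \subseteq \mu(J \cap \bar N)\mu^{-1}$ (the last inclusion using $\tilde\mu(J \cap \bar N)\tilde\mu^{-1} \subseteq J \cap \bar N$). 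This forces $(x')^{-1}x \in \mu(J \cap \bar N)\mu^{-1}$ as required.

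The argument is essentially bookkeeping; the only potential pitfall is checking that the identities $J_{0,s} \cap \bar N = J \cap \bar N$ and $\mu^{-1}(J_{0,s}\cap P)\mu \subseteq J_{0,s} \cap P$ really hold, which is guaranteed by the rooted Iwahori decomposition and Lemma \ref{lemma: J_s is normal in J}(iv), respectively.
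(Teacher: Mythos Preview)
Your proof is correct and, for the ``union'' half, follows the same route as the paper: you establish the single-coset decomposition $J_{0,s}\mu J_{0,s} = \coprod_x x\mu J_{0,s}$ and the product formula $J_{0,s}\mu J_{0,s}\tilde\mu J_{0,s} = J_{0,s}\mu\tilde\mu J_{0,s}$ by transporting the Iwahori-decomposition argument of Lemma~\ref{lemma: products of double cosets} from $J$ to $J_{0,s}$, using $J_{0,s}\cap\bar N = J\cap\bar N$ and $\mu^{-1}(J_{0,s}\cap P)\mu \subseteq J_{0,s}\cap P$.

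The difference is in the disjointness step. You redo the computation from Lemma~\ref{lemma: products of double cosets}(ii) internally to $J_{0,s}$, tracking elements of $\mu\tilde\mu J_{0,s}(\mu\tilde\mu)^{-1}\cap\bar N$ and showing they land in $\mu(J\cap\bar N)\mu^{-1}$. The paper instead observes that $J_{0,s}\subseteq J$, so each coset $x\mu\tilde x\tilde\mu J_{0,s}$ is contained in the corresponding $J$-coset $x\mu\tilde x\tilde\mu J$; since those larger cosets are already pairwise disjoint by Lemma~\ref{lemma: products of double cosets}(ii), the $J_{0,s}$-cosets are disjoint a fortiori. This one-line reduction avoids repeating the argument and is the cleaner route, though your direct verification is perfectly valid and has the minor advantage of being self-contained.
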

\begin{proof}
As in the proof of \Cref{lemma: products of double cosets} we have $J_{0,s} \mu \tilde \mu J_{0,s} = \cup_{x, \tilde x} x \mu \tilde x \tilde \mu J_{0,s}$, and the cosets in the union are disjoint because the (bigger) corresponding $J$-cosets are.
\end{proof}

In particular, $J_{0,s} \NJM^- J_{0,s}$ is a submonoid of $G$. From \Cref{lemma: Iwahori decomposition for deep level}, it is easy to check that the inclusion $\JM \to J_{0,s}$ induces an isomorphism $\JM / \JMs \simto J_{0,s} / J_{1,s}$.
We will need the following refined version of this fact.

\begin{corollary}\label{corollary: projection from J_0 to J_M monoid}
The inclusion $\NJM^- \to J_{0,s} \NJM^- J_{0,s}$ induces a monoid isomorphism 
$$
    \NJM^- / \JMs \simto J_{0,s} \NJM^- J_{0,s} / J_{1,s}.
$$
\end{corollary}
\begin{proof}
It follows from \Cref{lemma: J_s is normal in J} \itemnumber{4} and \Cref{lemma: Iwahori decomposition for deep level} that the multiplication map $\bar N_0 \times \NJM^- \times N_s \to J_{0,s} \NJM^- J_{0,s}$ is bijective.
In order to prove the theorem, we must show 
that if $\bar n, \bar n', \bar n'' \in \bar N_0, \mu, \mu', \mu'' \in \NJM^-$ and $n, n', n'' \in N_s$ satisfy $\bar n \mu n \bar n' \mu' n' = \bar n'' \mu'' n''$, then
\linebreak $\mu'' \JMs = \mu \mu' \JMs$.
This can be seen from \Cref{lemma: J_s is normal in J} \itemnumber{1} and \itemnumber{4} since
$n \bar n' \in J_{1,s}$.
\end{proof}

Using \Cref{lemma: rooted decomposition for N_s} and the paragraph above \Cref{lemma: inclusion of root subgroups is compact}, we may view $\bar N_s$ as the set of $L$-points of the rigid $L$-analytic space
$$
    \bar \Nbf_s = \prod_{\alpha \in \Phi^{\ndiv,-} \setminus \Phi_M} \Ubf_{f + s, \alpha},
$$
and similarly for $N_s$.
In fact, the Iwahori decompositions from \Cref{lemma: Iwahori decomposition for deep level} allow us to canonically define rigid $L$-analytic spaces whose sets of $L$-points are $J_s$ and $\JMs$ when $s > 0$.
In order to do this, define $\bar \Ubf'_{s}, \Ubf'_{s}, \bar \Nbf'_s$ and $\Nbf'_s$ in the same way as $\bar \Nbf_s$ and $\Nbf_s$, and let $\Tbf'_s$ be the open affinoid subspace of (the rigid $L'$-analytification of) $\Tbf'$ defined by the same inequalities as $T'_s$.
Define $\Jbf'_s = \bar \Ubf'_{s} \times \Tbf'_s \times \Ubf'_{s}$, so that we have a natural open immersion $\Jbf'_s \into \Gbf_{L'}$, and $\Jbf'_s(L') = J'_s$ by \cite[6.4.9 and 6.4.48]{bruhat_tits_1}.
Then $\Jbf'_s$ is a $\Gal(L'/L)$-invariant subspace of $\Gbf_{L'}$, so we may define $\Jbf_s$ as its quotient by this action, and define $\JMsbf$ similarly.

\begin{lemma}\label{lemma: JM- preserves N_s}
Let $\mu \in \NJM^-$. For any finite extension $L''$ of $L$, $\mu \bar \Nbf_s(L'') \mu^{-1} \subseteq \bar \Nbf_s(L'')$.
\end{lemma}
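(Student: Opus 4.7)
The plan is to upgrade the $L$-points inclusion $\mu \bar N_s \mu^{-1} \subseteq \bar N_s$ proven in \Cref{lemma: J_s is normal in J}\itemnumber{4} to an inclusion of rigid $L$-analytic subspaces $\mu \bar \Nbf_s \mu^{-1} \subseteq \bar \Nbf_s$ of $\bar \Nbf$. Since conjugation by $\mu$ is an $L$-rational morphism of algebraic varieties (and in particular of rigid analytic spaces), once this stronger inclusion is established, taking $L''$-points yields the lemma for every finite extension $L''$ of $L$.

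To verify this inclusion of rigid subspaces I would base change to the splitting field $L'$ and invoke \Cref{eqn: U_fa is product of U_fb}, which presents $\bar \Nbf_s \times_L L'$ (in any admissible ordering, by \cite[Proposition 6.4.9(ii)]{bruhat_tits_1}) as a product of affinoid root subgroups $\Ubf_{\beta, r_\beta}$ indexed by $\beta \in \Phi'$ with $\beta|_{\Sbf} \in \Phi^{\ndiv,-} \setminus \Phi_M$, where $r_\beta = f(\alpha) + s$ or $f(2\alpha) + s$ according to whether $\beta|_{\Sbf}$ equals $\alpha$ or $2\alpha$. Using \Cref{remark: normalizer of J_M has free quotient}, I would decompose $\mu = j \cdot w \cdot t$ with $j \in J_M$, $w \in \Nfrak$ fixing $x_0$, and $t \in \Zfrak$; the condition $\mu \in \tilde J_M^-$ then forces $t$ to lie in $\Zfrak^-$, since neither $j$ nor $w$ contributes to the translation part of the action on $\Ascr$.

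Conjugation by $j$ preserves each factor $\Ubf_{\beta, r_\beta}$ (this is the $s = 0$ case of the argument already used in \Cref{lemma: J_s is normal in J}); conjugation by $w$ permutes the factors through its image in $W$, which lies in $W_M$ because $\mu \in M$, and $W_M$ stabilizes $\Phi^{\ndiv,-} \setminus \Phi_M$; and conjugation by $t$ sends each $\Ubf_{\beta, r}$ to $\Ubf_{\beta, r + \langle \beta, \ord(t)\rangle}$, which is the rigid-analytic counterpart of the $L$-points identity $t U_{\alpha, r} t^{-1} = U_{\alpha, r + \langle \alpha, \ord(t) \rangle}$ recalled in \Cref{subsection: local setting} and is immediate from the one-parameter parametrization of $\Ubf_\beta$ by $\Ga$. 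The desired inclusion then reduces to the inequality $\langle \beta, \ord(t)\rangle \geq 0$ for all relevant $\beta$, which follows from the characterization \Cref{eqn: description of Zfrak-} of $\Zfrak^-$ applied to $-\beta|_{\Sbf} \in \Phi^+ \setminus \Phi_M$.

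I expect the principal obstacle to be not conceptual but careful bookkeeping: checking that the three conjugations by $j$, $w$, and $t$ interact compatibly with the product decomposition, and that the resulting orderings remain admissible, so that the factorwise inclusions truly assemble into the global inclusion $c_\mu(\bar \Nbf_s \times_L L') \subseteq \bar \Nbf_s \times_L L'$ of $\Gal(L'/L)$-stable affinoid subspaces of $\bar \Nbf \times_L L'$.
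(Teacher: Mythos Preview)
Your proposal has a genuine gap in the step handling $j \in J_M$. The claim that conjugation by $j$ preserves each individual root-subgroup factor $\Ubf_{\beta, r_\beta}$ is incorrect: for a generic $j$ (say $j$ lying in a root subgroup $U_\gamma$ with $\gamma \in \Phi'_M$), the commutator relations give $j \, U_\beta \, j^{-1} \subseteq U_\beta \cdot \prod_{i,k > 0} U_{i\gamma + k\beta}$, so root subgroups get mixed. The argument from \Cref{lemma: J_s is normal in J} that you invoke proves only that elements of $J'_{M,0}$ normalize $\bar N'_s$ \emph{as a whole} (this is exactly what observations (a)--(e) in that proof achieve), not that they preserve the individual factors. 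Consequently your factorwise argument for the rigid-analytic inclusion breaks down at this point, and the ``bookkeeping'' concern you flag in the last paragraph is in fact a real obstruction, not just a matter of care.

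The paper takes a much shorter route and avoids this issue entirely. Rather than upgrading to a rigid-analytic inclusion, it simply enlarges both $L''$ and the auxiliary splitting field $L'$ so that they coincide. By construction one then has $\bar \Nbf_s(L') = \bar N'_s$, and the inclusion $\mu \bar N'_s \mu^{-1} \subseteq \bar N'_s$ is already contained in the proof of \Cref{lemma: J_s is normal in J}\,\itemnumber{4}, whose argument is carried out directly with the $L'$-root subgroups $U_\psi$. No factor-by-factor control and no rigid-analytic statement are needed here; in fact the rigid-analytic inclusion you were aiming for is \Cref{lemma: rigid analytic parahorics}\,\itemnumber{3}, and the paper deduces it \emph{from} the present lemma rather than the other way around.
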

\begin{proof}
By enlarging $L'', L'$ and $L$ if necessary, we may assume that $L'' = L' = L$.
We have already proved this case in \Cref{lemma: J_s is normal in J}.
\end{proof}

\begin{lemma}\label{lemma: rigid analytic parahorics}
\begin{enumerate}
    \fixitem Conjugation by an element of $J$ induces a rigid $L$-analytic endomorphism of $\Jbf_s$.
    \item There is a rigid $L$-analytic isomorphism
    $$
        \bar \Nbf_s \times \JMsbf \times \Nbf_s \simto \Jbf_s.
    $$
    \item Let $\mu \in \NJM^-$. Then, conjugation by $\mu$ induces a morphism of rigid $L$-analytic spaces $\bar \Nbf_s \to \bar \Nbf_s$.
\end{enumerate}
\end{lemma}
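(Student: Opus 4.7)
The plan is to establish (ii) first (which gives the structural decomposition used everywhere) and then to deduce (i) and (iii) from pointwise normalization statements combined with a standard rigid-analytic density argument.

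For (ii), I will unpack the construction of $\Jbf'_s$ over $L'$: by definition it is the image of the multiplication map $\bar \Ubf'_s \times \Tbf'_s \times \Ubf'_s \to \Gbf_{L'}$. Splitting each of $\bar \Ubf'_s$ and $\Ubf'_s$ according to whether the (vector parts of the) affine roots generating them restrict to $\Phi_M \cup \{0\}$ or to $\Phi^{\pm} \setminus \Phi_M$, and using \cite[6.4.9 and 6.4.48]{bruhat_tits_1} to reorder the factors freely, I obtain rigid-analytic isomorphisms
$$
\bar \Ubf'_s \;\simeq\; \bar \Nbf'_s \times \bar \Ubf'_{M,s}, \qquad \Ubf'_s \;\simeq\; \Ubf'_{M,s} \times \Nbf'_s,
$$
and hence $\Jbf'_s \simeq \bar \Nbf'_s \times \Jbf'_{M,s} \times \Nbf'_s$, where the middle factor is the analogous product expression for $\Jbf'_{M,s}$ (which is just (ii) for the Levi). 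Since each factor is $\Gal(L'/L)$-stable, descending the isomorphism gives (ii).

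For (iii), conjugation by $\mu \in \tilde J_M^-$ is $L$-algebraic, hence restricts to a rigid $L$-analytic morphism $\bar \Nbf_s \to \Gbf$. Both source and target are reduced and points over finite extensions of $L$ are Zariski-dense, so to verify that this morphism factors through the affinoid subspace $\bar \Nbf_s \subseteq \Gbf$ it suffices to check the containment on $L''$-points for every finite extension $L''/L$. That is precisely \Cref{lemma: JM- preserves N_s}.

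For (i), the analogous strategy works: fix $g \in J$ and extend conjugation by $g$ to a rigid $L'$-analytic morphism $\Jbf'_s \to \Gbf_{L'}$. By the same density argument, I only need to show that $g \cdot \Jbf'_s(L'') \cdot g^{-1} \subseteq \Jbf'_s(L'')$ for every finite extension $L''/L'$. The key observation is that because $\Gbf$ already splits over $L'$, the defining product formula identifies $\Jbf'_s(L'')$ with the subgroup of $\Gbf(L'')$ generated by the affine root subgroups $\Ubf_\psi(L'')$ for the same set of $\psi$ (together with $\Tbf'_s(L'')$), so the commutator identities (a)--(e) used in the proof of \Cref{lemma: J_s is normal in J} apply verbatim over $L''$. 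Repeating the argument of that lemma then shows $g$ normalizes $\Jbf'_s(L'')$; passing to $\Gal(L'/L)$-invariants yields (i). The step I expect to be most delicate is this upgrade from normality on $L'$-points to normality on $L''$-points for all finite $L''/L'$, since it is what converts a pointwise statement into a morphism of rigid analytic spaces; the uniformity of the Bruhat-Tits construction of $\bar \Ubf'_s$, $\Tbf'_s$ and $\Ubf'_s$ as affinoids, whose values on any $L''$ are computed by the same generators, is what makes this upgrade possible.
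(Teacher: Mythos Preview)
Your overall strategy matches the paper's: prove (ii) by reordering the factors over $L'$ and descending, and prove (i) and (iii) by first establishing the containment on $L''$-points for all finite $L''/L$ and then upgrading this to a statement about rigid-analytic morphisms. Part (ii) is fine and essentially identical to the paper's argument.

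The gap is in the upgrade step for (i) and (iii). You write that ``both source and target are reduced and points over finite extensions of $L$ are Zariski-dense, so to verify that this morphism factors through the affinoid subspace $\bar \Nbf_s \subseteq \Gbf$ it suffices to check the containment on $L''$-points''. This reasoning is incorrect: $\bar \Nbf_s$ is not Zariski-closed in $\bar \Nbf$ (or in $\Gbf$) --- it is an affinoid \emph{subdomain}, cut out by inequalities rather than equations --- so Zariski-density of rational points is not the relevant fact. The correct mechanism, which the paper invokes, is the universal property of affinoid subdomains (\cite[Corollary 8.1.2/4]{BGR_nonarchimedean_analysis}): a morphism of \emph{affinoids} $X \to Y$ whose image on maximal points lies in an affinoid subdomain $V \subseteq Y$ factors through $V$. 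To apply this one must first factor the conjugation map $\bar \Nbf_s \to \bar \Nbf^{\mathrm{rig}}$ (respectively $\Jbf_s \to \Gbf^{\mathrm{rig}}$) through some ambient affinoid open $\Ucal$; the paper does this explicitly, using that $\bar \Nbf$ is affine space (respectively that $\Gbf$ embeds in affine space). Once you are in the affinoid setting, your pointwise check over all finite $L''$ is exactly what is needed, since those points exhaust the maximal spectrum.

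So your proof becomes correct once you replace the Zariski-density sentence with: factor through an ambient affinoid $\Ucal$ (possible since the target is affine or embeds in affine space and the source is quasi-compact), then apply the affinoid subdomain property.
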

\begin{proof}
It's enough to prove the corresponding statements for the rigid analytic spaces over $L'$.
Statement \itemnumber{2} follows from the definitions and the argument in \Cref{lemma: Iwahori decomposition for larger parabolic}.
For \itemnumber{3}, given $\mu \in \NJM^-$ we have an inclusion $\mu \bar \Nbf'_s \mu^{-1} \subseteq \bar \Nbf'_s$ \emph{on rigid-analytic points} by \Cref{lemma: JM- preserves N_s}.
There exists some affinoid open $\Ucal$ of $\bar \Nbf'$ through which the inclusion and conjugation by $\mu$ maps $\bar \Nbf'_s \to \bar \Nbf'$ factor (since $\bar \Nbf'$ is isomorphic to affine space), and the previous sentence then implies that conjugation by $\mu$ on $\bar \Nbf'_s$ factors through $\bar \Nbf'_s$, as $\bar \Nbf'_s$ is an affinoid subspace of $\Ucal$ by \cite[Corollary 8.1.2/4]{BGR_nonarchimedean_analysis}.
The same argument shows \itemnumber{1} by taking into account that the inclusion and conjugation by elements of $J$, $\Jbf_s \into \Gbf$, also factor through some affinoid open (because we can embed $\Gbf$ into affine space).
\end{proof}

\subsection{Bruhat--Tits decomposition}

Let $\tilde W = \Nfrak / \Zfrak_0$. If $H$ is a subgroup of $G$ or of $M$ containing $\Zfrak_0$, we will write $\tilde W^H$ for the group $(\Nfrak \cap H) / \Zfrak_0$.

\begin{lemma}\label{lemma: extended Weyl groups and stabilisers}
\begin{enumerate}
    \fixitem The natural homomorphism $\tilde W^K \to W$ is an isomorphism.
    \item The group $\tilde W$ is isomorphic to the semidirect product $\tilde W^{K} \ltimes (\Zfrak / \Zfrak_0)$.
    \item We have $\Nfrak \cap J = \Nfrak \cap \JM$.
    In particular, the inclusion $\JM \subseteq J$ induces an isomorphism $\tilde W^{\JM} \simto \tilde W^J$.
\end{enumerate}
\end{lemma}
\begin{proof}
The map in \itemnumber{1} is injective, and the fact that $x_0$ is special implies that it is also surjective.
Part \itemnumber{2} follows from part \itemnumber{1} together with the exact sequence
$$
    1 \to \Zfrak / \Zfrak_0 \to \tilde W \to W \to 1
$$
For \itemnumber{3}, note that the groups $\Nfrak \cap J$ and $\Nfrak \cap \JM$ are the subgroups of $\Nfrak$ fixing the points of $\Fscr$ and $\Fscr_M$ respectively. As $\Nfrak$ acts on $\Ascr$ by affine transformations and both facets generate the same affine subspace, both subgroups are the same.
\end{proof}

Consider the following right action of $\tilde W$ on $\Zfrak_0 \backslash \Zfrak$. If $\mu \in \Nfrak$ and $t \in \Zfrak$, write $\mu = n \tilde t$ with $\tilde t \in \Zfrak$ and $n \in \Nfrak \cap K$ according to the decomposition in \Cref{lemma: extended Weyl groups and stabilisers} \itemnumber{2}, and define $(\Zfrak_0 t) \cdot \mu := \Zfrak_0 \tilde t \mu^{-1} t \mu = \Zfrak_0 n^{-1} t \mu$.
The following proposition generalises \cite[Lemma 2.20]{herzig_mod_p}.

\begin{proposition}\label{proposition: important decomposition}
\begin{enumerate}
    \fixitem Inclusion $\Zfrak \into G$ induces a bijection
    $$
         \Zfrak_0 \backslash \Zfrak / \tilde W^{\JM} \simto K \backslash G / J,
    $$
    where the action of $\tilde W^{\JM}$ on $\Zfrak_0 \backslash \Zfrak$ is induced by that of $\tilde W$.
    In particular, $G = K \Zfrak J$.
    
    \item For all $t \in \Zfrak$, there exist $t_1, ..., t_r \in \Zfrak$ such that $t J \mu \subseteq \bigcup_i K t_i \mu J$ for all $\mu \in \NJM^-$.
    
    \item Let $\Zfrak^\sim$ be a subsemigroup of $\Zfrak$ defined by $\langle \alpha, \ord(t) \rangle \leq c_\alpha$ for $\alpha \in \Phi^+ \setminus \Phi_M$ and some $c_\alpha \in \R_{\leq 0}$. Then, for all $g \in G$, there exists $a \in A^-$ such that $g J a J \subseteq K \Zfrak^\sim J$.
\end{enumerate}
\end{proposition}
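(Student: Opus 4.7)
The plan for \itemnumber{1} is to combine the standard Bruhat--Tits decomposition with \Cref{lemma: extended Weyl groups and stabilizers}. Given $g \in G$, there is an apartment of the building containing both $x_0$ and $g\Fscr$, and since $x_0$ is special, some $k \in K$ carries this apartment to $\Ascr$ while fixing $x_0$; an element of $\Nfrak$ then sends $k^{-1}g\Fscr$ back to $\Fscr$, yielding $G = K\Nfrak J$, hence $G = K\Zfrak J$ via the semidirect decomposition in \Cref{lemma: extended Weyl groups and stabilizers} \itemnumber{2}. The bijection with $\Zfrak_0 \backslash \Zfrak / \tilde W^{J_M}$ then follows from the standard parahoric double coset decomposition $K \backslash G / J \cong \tilde W^K \backslash \tilde W / \tilde W^J$ combined with \itemnumber{3} of \loccit; the explicit formula for the right action of $\tilde W^{J_M}$ on $\Zfrak_0 \backslash \Zfrak$ drops out of the semidirect product structure by a direct calculation.

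For \itemnumber{2}, I would first use the Iwahori decomposition of $J$ (\Cref{lemma: J admits rooted Iwahori decomposition}) together with the fact that $\mu$ normalizes $J_M$ and satisfies $\mu^{-1}(J \cap N)\mu \subseteq J \cap N$ for $\mu \in \tilde J_M^-$ to deduce $tJ\mu = t(J \cap \bar N)\mu J$. Because $J \cap \bar N$ is compact and $K \backslash G / J$ is discrete (each double coset is open), the compact set $t(J \cap \bar N)$ lies in a finite union $\bigsqcup_i Kt_iJ$ with $t_i \in \Zfrak$ via \itemnumber{1}. The substance of the argument is to show the same finite set $\{t_i\}$ works uniformly in $\mu$: for each $\bar n \in J \cap \bar N$, use the Iwasawa decomposition $G = KP$ to write $t\bar n = k \cdot m \cdot n$ with $k \in K$, $m \in M$, $n \in N$, and then
\[
  t\bar n \mu \;=\; k(m\mu)(\mu^{-1} n\mu),
\]
so that once $\mu^{-1} n \mu \in J$, the double coset $Kt\bar n\mu J$ coincides with $Km\mu J$, whose class in $\Zfrak_0 \backslash \Zfrak / \tilde W^{J_M}$ is read off from the image of $m$ under the Cartan-type decomposition $M = (K \cap M)\Zfrak(K \cap M)$. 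As $\bar n$ ranges over the compact set $J \cap \bar N$, these images take only finitely many values, and this provides the set $\{t_i\}$ (enlarged, if necessary, to accommodate the finitely many Iwasawa ``types'' arising from $\bar n$ with $\mu^{-1} n \mu \notin J$).

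Part \itemnumber{3} then follows formally from \itemnumber{1} and \itemnumber{2}: writing $g = k t' j$ via \itemnumber{1}, we have $gJaJ = kt'JaJ$, and \itemnumber{2} applied to $t'$ with $\mu = a$ gives a finite set $\{t_1, \ldots, t_r\} \subseteq \Zfrak$ depending only on $t'$ such that $gJaJ \subseteq \bigcup_i Kt_iaJ$. Since $\{t_i\}$ is finite, $\max_i \langle \alpha, \ord(t_i)\rangle$ is finite for each $\alpha \in \Phi^+ \setminus \Phi_M$, so choosing $a \in A^-$ with $\langle \alpha, \ord(a)\rangle$ sufficiently negative for all such $\alpha$ (possible because $A^{--}$ is non-empty) forces $\langle \alpha, \ord(t_i a)\rangle = \langle \alpha, \ord(t_i)\rangle + \langle \alpha, \ord(a)\rangle \leq c_\alpha$, so each $t_ia \in \Zfrak^\sim$. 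I expect the main obstacle to lie in the uniformity claim in \itemnumber{2}: right multiplication by $\mu$ does not descend to an operation on $K \backslash G/J$, and verifying that the Iwasawa-theoretic bookkeeping really produces only finitely many double coset classes as $(\bar n, \mu)$ vary requires careful tracking of how the $N$-component of the Iwasawa decomposition interacts with conjugation by $\mu \in \tilde J_M^-$.
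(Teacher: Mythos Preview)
Your arguments for \itemnumber{1} and \itemnumber{3} match the paper's essentially verbatim: the paper invokes the Bruhat--Tits decomposition \cite[7.4.15]{bruhat_tits_1} to get $K \backslash G / J \simeq \tilde W^K \backslash \tilde W / \tilde W^J$ and then unwinds via \Cref{lemma: extended Weyl groups and stabilizers}, and \itemnumber{3} is the same reduction through \itemnumber{1} and \itemnumber{2} that you give.

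The real content is \itemnumber{2}, and here the paper takes a different and much shorter route that sidesteps exactly the obstacle you flag. Rather than Iwasawa, the paper uses that $J \cap \bar N$ lies in the Iwahori $I$, so $tJ\mu = t(J\cap\bar N)\mu\cdot\mu^{-1}(J\cap P)\mu \subseteq tI\mu J$, and then appeals to the generalized $(B,N)$-pair structure of $(G,I,\Nfrak)$: from the axioms one extracts that for any fixed $\tilde n \in \Nfrak$ there is a finite set $n_1,\ldots,n_r \in \Nfrak$, depending only on $\tilde n$, with $nI\tilde n \subseteq \bigcup_i I\,n\,n_i\,I$ for \emph{every} $n \in \Nfrak$. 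Taking $\tilde n = t^{-1}$ and $n = \mu^{-1}$, then inverting and absorbing $I \subseteq K,J$, gives $tI\mu \subseteq \bigcup_j K n_j \mu J$ uniformly in $\mu$; rewriting $Kn_j = Kt_j$ via \Cref{lemma: extended Weyl groups and stabilizers} finishes. The uniformity in $\mu$ is thus a direct consequence of the Iwahori--Hecke multiplication rule, not of any bookkeeping with Iwasawa components.

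Your Iwasawa approach, by contrast, does not close as written, and the gaps are not merely cosmetic. First, the decomposition $M = (K\cap M)\,\Zfrak\,(K\cap M)$ is a Cartan decomposition that requires $K\cap M$ to be special in $M$, but $x_0$ is only assumed special for $G$. Second, even granting $m = c_1 z c_2$ with $c_i \in K\cap M$, the class $Km\mu J = Kzc_2\mu J$ is not determined by $z$ alone: one would need $\mu^{-1}c_2\mu \in J$, i.e.\ $c_2 \in J_M$, and $K\cap M$ need not equal $J_M$. Third, the set of $\mu \in \tilde J_M^-/J_M$ for which $\mu^{-1}n(\bar n)\mu \notin J$ for some $\bar n$ is genuinely infinite: it is the region where some $\langle\alpha,\ord(\mu)\rangle$ is not sufficiently negative, which is unbounded in the directions parallel to the corresponding wall. (Already for $\GL_2$ with $t = \diag(p^2,1)$ and $\bar n$ with lower-left entry $p$, one finds $n$ with upper-right entry $p^{-1}$, and the bad $\mu$ fill out the entire central direction.) So your parenthetical ``finitely many Iwasawa types'' hides an infinite case analysis, whereas the $(B,N)$-pair argument handles all $\mu$ at once.
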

\begin{proof}
The Bruhat--Tits decomposition \cite[7.4.15]{bruhat_tits_1} for $\Fscr$ and $x_0$ implies that inclusion $\Nfrak \into G$ induces a bijection $\tilde W^{K} \backslash \tilde W / \tilde W^{J} \simeq K \backslash G / J$. Now, $\tilde W^{K} \backslash \tilde W \simeq \Zfrak_0 \backslash \Zfrak$ by \Cref{lemma: extended Weyl groups and stabilisers} \itemnumber{2} and it is straightforward to check that the right action of $\tilde W$ by multiplication corresponds to the action on $\Zfrak_0 \backslash \Zfrak$ defined above.
Thus, \itemnumber{1} follows from \Cref{lemma: extended Weyl groups and stabilisers} \itemnumber{3}.

Recall that $I$ is the pointwise stabiliser of $\Cscr$ in $G$.
As explained in \cite[Proof of Lemma 2.20]{herzig_mod_p}, one can deduce from \cite[Th\'eor\`eme 6.5]{bruhat_tits_1} that $(G, I, \Nfrak)$ is a generalised $(B, N)$-pair, and it follows from the axioms of generalised $(B, N)$-pairs that if $\tilde n \in \Nfrak$, there exist $n_1, ..., n_r \in \Nfrak$ such that for all $n \in \Nfrak$, $n I \tilde n \subseteq \bigcup_{i=1}^r I n n_i I$.
Applying this to $\tilde n = t^{-1}$ shows that there exist $n_1, ..., n_r \in \Nfrak$ such that
$$
    t J \mu = t \bar N_0 \mu \mu^{-1} (J \cap P) \mu \subseteq t I \mu J \subseteq \bigcup_j I n_j \mu J \subseteq \bigcup_j K n_j \mu J.
$$
By \Cref{lemma: extended Weyl groups and stabilisers} \itemnumber{1} and \itemnumber{2}, there exist $t_1, ..., t_r \in \Zfrak$ such that $K n_j = K t_j$ for all $j$, so \itemnumber{2} follows.

Finally, in \itemnumber{3} the statement for $g$ depends only on the double coset $K g J$, so by \itemnumber{1} we may assume without loss of generality that $g = t \in \Zfrak$. Let $t_1, ..., t_r$ be as in \itemnumber{2}.
If $v_p(\alpha(a))$ is small enough for $\alpha \in \Phi^+ \setminus \Phi_M$, then $\langle \alpha, \ord(t_j a) \rangle \leq c_\alpha$ for all $\alpha \in \Phi^+ \setminus \Phi_M$, so $t_j a \in \Zfrak^\sim$ and $t J a \subseteq \bigcup_j K t_j a J$.
\end{proof}

\begin{corollary}\label{corollary: KtzJ are disjoint for different z}
If $\mu, \tilde \mu \in \NJM$ and $t \in \Zfrak$ satisfy $K t \mu J = K t \tilde \mu J$, then $\mu \JM = \tilde \mu \JM$.
\end{corollary}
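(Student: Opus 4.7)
The plan is to deduce the conclusion from part \itemnumber{1} of the proposition by evaluating algebraic characters $\chi \in X^*(\Mbf)$. The key preliminary observation is that the function $\Zfrak \to \Z$, $s \mapsto v(\chi(s))$, descends to $\Zfrak_0 \backslash \Zfrak / \tilde W^{J_M}$. Invariance under $\Zfrak_0$ is immediate since $\chi(\Zfrak_0) \subseteq \O^\times$. For the right action of $\tilde W^{J_M}$, any $w \in \Nfrak \cap J_M$ decomposes as $w = n_w \tilde t_w$ with $n_w \in \Nfrak \cap K$ and $\tilde t_w \in \Zfrak$, and since $w, \tilde t_w \in M$ we also have $n_w \in M$; hence the right action $(\Zfrak_0 s) \cdot w = \Zfrak_0 n_w^{-1} s w$ satisfies $v(\chi(n_w^{-1} s w)) = v(\chi(s))$, using that $L^\times$ is abelian and that $v \circ \chi$ vanishes on the compact groups $K \cap M$ (containing $n_w$) and $J_M$ (containing $w$).

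Next I would make part \itemnumber{1} explicit at $t\mu$. Using the isomorphism $(\Nfrak \cap \tilde J_M)/(\Nfrak \cap J_M) \simto \tilde J_M / J_M$ from \Cref{remark: normalizer of J_M has free quotient}, write $\mu = n_\mu j_\mu$ with $n_\mu \in \Nfrak \cap \tilde J_M$ and $j_\mu \in J_M$, and decompose $n_\mu = k_\mu s_\mu^0$ via $\tilde W \simeq \tilde W^K \ltimes (\Zfrak/\Zfrak_0)$ with $k_\mu \in \Nfrak \cap K$ and $s_\mu^0 \in \Zfrak$. Since $n_\mu, s_\mu^0 \in M$, the element $k_\mu = n_\mu (s_\mu^0)^{-1}$ also lies in $M$, so $k_\mu \in \Nfrak \cap K \cap M$. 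Then $K t \mu J = K s_\mu J$ for $s_\mu := k_\mu^{-1} t k_\mu \cdot s_\mu^0 \in \Zfrak$, and a direct character computation using abelianness of $L^\times$ together with compactness of $k_\mu \in K \cap M$ and $j_\mu \in J_M$ yields $v(\chi(s_\mu)) = v(\chi(t\mu))$. The analogous identity holds for $\tilde\mu$.

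Combining these two steps, the hypothesis $K t \mu J = K t \tilde \mu J$ together with part \itemnumber{1} forces $s_\mu$ and $s_{\tilde\mu}$ to represent the same class in $\Zfrak_0 \backslash \Zfrak / \tilde W^{J_M}$, and evaluating the descended $v \circ \chi$ gives $v(\chi(t\mu)) = v(\chi(t\tilde\mu))$, hence $v(\chi(\mu)) = v(\chi(\tilde\mu))$ for every $\chi \in X^*(\Mbf)$. This means $\mu \tilde\mu^{-1} \in {}^0 M$, and combined with $\mu \tilde\mu^{-1} \in \tilde J_M$ and the identity $\tilde J_M \cap {}^0 M = J_M$ from \Cref{remark: normalizer of J_M has free quotient}, this forces $\mu J_M = \tilde\mu J_M$. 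The main subtle point is the observation that the Weyl representative $k_\mu$ automatically lies in $M$, which is what lets the character computation go through without requiring an independent Bruhat-Tits decomposition for $M$.
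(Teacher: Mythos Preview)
Your proof is correct. Both your argument and the paper's ultimately rest on the same two ingredients: the observation that the $\Nfrak \cap K$-component appearing in the decomposition lies in $M$ (so algebraic $M$-characters can be applied), and the identity $\tilde J_M \cap {}^0 M = J_M$ from \Cref{remark: normalizer of J_M has free quotient}.

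The organization differs slightly. The paper works directly at the level of $\Nfrak$: from $K t\mu J = K t\tilde\mu J$ and the bijection $\tilde W^K \backslash \tilde W / \tilde W^J \simeq K \backslash G / J$ it obtains an equation $t\mu = k\, t\, \tilde\mu\, m$ with $k \in \Nfrak \cap K$, $m \in \Nfrak \cap J_M$, observes $k \in M$, and writes $\mu\tilde\mu^{-1} = (t^{-1} k t)(\tilde\mu m \tilde\mu^{-1})$ as a product of elements of compact subgroups of $M$, hence in ${}^0 M$. You instead package the same computation as the statement that $s \mapsto v(\chi(s))$ descends to $\Zfrak_0 \backslash \Zfrak / \tilde W^{J_M}$ for each $\chi \in X^*(\Mbf)$, then compute a $\Zfrak$-representative $s_\mu$ for $K t\mu J$ and verify $v(\chi(s_\mu)) = v(\chi(t\mu))$. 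Your route is slightly more conceptual in isolating the invariant; the paper's is a shorter direct element-chase. Neither requires anything the other does not.
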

\begin{proof}
We may assume without loss of generality that $\mu, \tilde \mu \in \Nfrak \cap \NJM$.
By the proof of \Cref{proposition: important decomposition} \itemnumber{1} and \Cref{lemma: extended Weyl groups and stabilisers} \itemnumber{3}, we have
$$
    (\Nfrak \cap K) t \mu (\Nfrak \cap \JM) = (\Nfrak \cap K) t \tilde \mu (\Nfrak \cap \JM).
$$
If $t \mu = k t \tilde \mu m$ with $k \in \Nfrak \cap K$ and $m \in \Nfrak \cap \JM$, then $k = t \mu m^{-1} \tilde \mu^{-1} t^{-1} \in K \cap M$ and $\mu \tilde \mu^{-1} = t^{-1} k t \tilde \mu m \tilde \mu^{-1} \in \NJM \cap (t^{-1} (K \cap M) t \cdot \tilde \mu \JM \tilde \mu^{-1})$.
Thus, it suffices to show that the set in the right-hand side is contained in $\JM$.
The subgroup ${}^0 M$ of $M$ defined in \Cref{remark: normaliser of J_M has free quotient} contains all the compact subgroups of $M$, and hence also $t^{-1} (K \cap M) t \cdot \tilde \mu \JM \tilde \mu^{-1}$.
Therefore, the result follows from the fact that $\NJM \cap {}^0 M = \JM$.
\end{proof}

\subsection{Hecke algebras}
\label{subsection: hecke algebras}

Given a locally profinite group $H$, a compact open subgroup $H_0$ and a commutative ring $R$, we will write $\Hcal(H, H_0)_R$ for the convolution $R$-algebra of $H_0$-biinvariant compactly supported functions $f \colon H \to R$. Explicitly, multiplication is given by
$$
    (f * \tilde f)(h) = \sum_{x \in H / H_0} f(x) \tilde f(x^{-1} h).
$$
Given $h \in H$, write $[H_0 h H_0]$ for the characteristic function on the double coset $H_0 h H_0$.
These form an $R$-basis of $\Hcal(H, H_0)_R$. If $H^-$ is a submonoid of $H$ containing $H_0$, we will write $\Hcal(H^-, H_0)_R$ for the subalgebra of functions whose support is contained in $H^-$.
As usual, taking $H_0$-(co)invariants of $H^-$-modules gives modules for $\Hcal(H^-, H_0)_R$ in the following way:

\begin{lemma}\label{lemma: general action of Hecke algebra}
\begin{enumerate}
    \fixitem If $\Mcal$ is a right $R[H^-]$-module, then the $H_0$-coinvariants $\Mcal_{H_0}$ have a natural right action of $\Hcal(H^-, H_0)_R$ via
    $$
        [m] [H_0 \mu H_0] = \left[ \sum_{x \mu H_0 \in H_0 \mu H_0 / H_0} m x \mu \right],
    $$
    where $[\blank]$ denotes the canonical map $\Mcal \to \Mcal_{H_0}$.
    \item If $\Mcal$ is a left $R[H^-]$-module, then the $H_0$-invariants $\Mcal^{H_0}$ have a natural left action of $\Hcal(H^-, H_0)_R$ via 
    $$
        [H_0 \mu H_0] m = \sum_{x \mu H_0 \in H_0 \mu H_0 / H_0} x \mu m.
    $$
\end{enumerate}
\end{lemma}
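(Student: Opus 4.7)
The plan is to prove part (ii) in detail; part (i) follows by the same argument with left and right interchanged. There are three things to check: (a) that the formula is independent of the choice of coset representatives, (b) that the result lies in $\Mcal^{H_0}$, and (c) that the assignment is compatible with the convolution product and sends $[H_0]$ to the identity.

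For (a), I would first observe that any left coset decomposition $H_0 \mu H_0 = \bigsqcup_k y_k H_0$ can be chosen with $y_k \in H^-$: since $H^-$ is a submonoid containing $H_0$ and $\mu$, one has $H_0 \mu H_0 \subseteq H^-$, so the action $y_k \cdot m$ makes sense. The sum $\sum_k y_k \cdot m$ is then independent of the representatives, because replacing $y_k$ by $y_k h$ with $h \in H_0$ gives $y_k h \cdot m = y_k \cdot (h \cdot m) = y_k \cdot m$ by $H_0$-invariance of $m$. For (b), for any $h_0 \in H_0$ the map $y_k H_0 \mapsto h_0 y_k H_0$ is a permutation of the cosets inside $H_0 \mu H_0 / H_0$; writing $h_0 y_k = y_{\sigma(k)} h'_k$ with $h'_k \in H_0$ and using $H_0$-invariance yields $h_0 \cdot \sum_k y_k m = \sum_k y_{\sigma(k)} m = \sum_k y_k m$.

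The main step, and the only one requiring nontrivial bookkeeping, is (c). Writing $H_0 \mu H_0 = \bigsqcup_i y_i H_0$ and $H_0 \nu H_0 = \bigsqcup_j z_j H_0$, an unwinding of definitions gives $[H_0 \mu H_0] \cdot ([H_0 \nu H_0] \cdot m) = \sum_{i,j} (y_i z_j) \cdot m$, where all $y_i z_j$ lie in $H^-$ since $H^-$ is a submonoid. On the other hand, the convolution product expands as $[H_0 \mu H_0] * [H_0 \nu H_0] = \sum_\lambda c_\lambda [H_0 \lambda H_0]$, where, writing $H_0 \lambda H_0 = \bigsqcup_k w_k H_0$, the constant $c_\lambda$ equals the number of pairs $(i,j)$ with $y_i z_j \in w_k H_0$ for any fixed $k$ (independence of $k$ is a consequence of the $H_0$-bi-invariance of the convolution). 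Regrouping $\sum_{i,j} (y_i z_j) \cdot m$ first by the left $H_0$-coset containing each product, and then by double cosets, and using well-definedness from (a), yields exactly $\sum_\lambda c_\lambda [H_0 \lambda H_0] \cdot m$, which finishes the verification. That $[H_0]$ acts as the identity is immediate from the formula.

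The main obstacle is the combinatorial matching in (c), but it is purely formal once the definition of the convolution product is unwound in terms of left $H_0$-cosets; the proof is otherwise just the standard verification that the usual Hecke-algebra action extends from groups to the monoid setting $H^- \subseteq H$.
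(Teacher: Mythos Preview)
Your proof is correct and is precisely the standard verification that the paper alludes to: the paper's own proof consists only of the sentence ``This is well-known when $H^-$ is a group, and the same arguments show that this holds for submonoids.'' Your write-up supplies those same arguments explicitly, with the only monoid-specific point being that $H_0 \mu H_0 \subseteq H^-$ (so all representatives act) and that $y_i z_j \in H^-$ in step (c), both of which you handle.
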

\begin{proof}
This is well-known when $H^-$ is a group, and the same arguments show that this holds for submonoids.
\end{proof}

\begin{lemma}\label{lemma: product of characteristic functions in Hecke algebra}
Let $\mu, \tilde \mu \in \NJM^-$ and $s \geq 0$. Then, in $\Hcal(G, J_{0,s})_R$ one has
$$
    [J_{0,s} \mu J_{0,s}] * [J_{0,s} \tilde \mu J_{0,s}] = [J_{0,s} \mu \tilde \mu J_{0,s}].
$$
\end{lemma}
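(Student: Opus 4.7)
My plan is to show both that the convolution on the left-hand side is supported in the single double coset $J_{0,s} \mu \tilde \mu J_{0,s}$ and that its value at $\mu \tilde \mu$ equals $1$; since the convolution is bi-$J_{0,s}$-invariant, these two facts together force the desired identity.

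For the support, it suffices to verify $\mu J_{0,s} \tilde \mu \subseteq J_{0,s} \mu \tilde \mu J_{0,s}$. Using the Iwahori decomposition $J_{0,s} = (J \cap \bar N) \cdot J_M \cdot N_s$ from \Cref{lemma: Iwahori decomposition for deep level}, a typical element takes the form $\mu \bar n m n_s \tilde \mu$ with $\bar n \in J \cap \bar N$, $m \in J_M$, and $n_s \in N_s$. I rewrite this as
$$
    (\mu \bar n \mu^{-1})(\mu m \mu^{-1})(\mu \tilde \mu)(\tilde \mu^{-1} n_s \tilde \mu),
$$
where $\mu \bar n \mu^{-1} \in J \cap \bar N$ by the definition of $\tilde J_M^-$, $\mu m \mu^{-1} \in J_M$ since $\mu \in \tilde J_M$ normalizes $J_M$, and $\tilde \mu^{-1} n_s \tilde \mu \in N_s$ by \Cref{lemma: J_s is normal in J}(iv) applied to $\tilde \mu$. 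All three outer factors lie in $J_{0,s}$, so the element is in $J_{0,s} \cdot \mu \tilde \mu \cdot J_{0,s}$.

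Knowing the support, the convolution equals $c \cdot [J_{0,s} \mu \tilde \mu J_{0,s}]$ for some nonnegative integer $c$, which I compute by evaluating at $\mu \tilde \mu$. The same type of manipulation shows $J_{0,s} \mu J_{0,s} = (J \cap \bar N) \mu J_{0,s}$, and the uniqueness of the Iwahori decomposition in $\bar N \cdot M \cdot N$ yields a disjoint left-coset decomposition $J_{0,s} \mu J_{0,s} = \coprod_x x \mu J_{0,s}$, where $x$ ranges over representatives of $(J \cap \bar N) / \mu (J \cap \bar N) \mu^{-1}$. The value of the convolution at $\mu \tilde \mu$ then counts those $x$ satisfying $\mu^{-1} x^{-1} \mu \in J_{0,s} \tilde \mu J_{0,s} \tilde \mu^{-1}$. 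Decomposing this latter set analogously gives $J_{0,s} \tilde \mu J_{0,s} \tilde \mu^{-1} = (J \cap \bar N) \cdot J_M \cdot \tilde \mu N_s \tilde \mu^{-1}$, whose $\bar N$-component in the Iwahori decomposition $\bar N \cdot M \cdot N$ is contained in $J \cap \bar N$ (using $\tilde \mu (J \cap \bar N) \tilde \mu^{-1} \subseteq J \cap \bar N$). Since $\mu^{-1} x^{-1} \mu \in \bar N$, uniqueness forces $\mu^{-1} x^{-1} \mu \in J \cap \bar N$, i.e.\ $x \in \mu (J \cap \bar N) \mu^{-1}$; only the trivial coset contributes, so $c = 1$.

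The delicate point throughout is the asymmetry in the definition of $\tilde J_M^-$: conjugation by $\mu$ contracts $J \cap \bar N$ but only dilates $N_s$, so every manipulation must absorb each nonstandard conjugation into the correct side of $\mu \tilde \mu$. Once this is accounted for, the rest is routine bookkeeping with the Iwahori decompositions provided by \Cref{lemma: Iwahori decomposition for deep level} and the normalization properties of \Cref{lemma: J_s is normal in J}.
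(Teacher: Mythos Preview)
Your proof is correct and follows essentially the same approach as the paper: establish the support via the Iwahori decomposition and the contracting/normalizing properties of $\tilde J_M^-$, then count contributions using the left-coset decomposition $J_{0,s}\mu J_{0,s}=\coprod_x x\mu J_{0,s}$. The only difference is cosmetic: the paper evaluates at a general $g=x\mu\tilde x\tilde\mu y$ via \Cref{lemma: products of double cosets for J0s}, whereas you exploit bi-invariance to evaluate at the single point $\mu\tilde\mu$, which slightly shortens the counting step.
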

\begin{proof}
For $g \in G$, one computes
\begin{align*}
    ([J_{0,s} \mu J_{0,s}] * [J_{0,s} \tilde \mu J_{0,s}])(g)
    & = \sum_{x \in J_{0,s} \mu J_{0,s} / J_{0,s}} [J_{0,s} \tilde \mu J_{0,s}](x^{-1} g) \\
    & = \sum_{x' \in \bar N_0 / \mu \bar N_0 \mu^{-1}} [J_{0,s} \tilde \mu J_{0,s}]((x' \mu)^{-1} g),
\end{align*}
A necessary condition for this sum to be non-zero is that $g \in x' \mu J_{0,s} \tilde \mu J_{0,s}$ for some $x'$, and this is contained in $J_{0,s} \mu J_{0,s} \tilde \mu J_{0,s} = J_{0,s} \mu \tilde \mu J_{0,s}$.
Conversely, if $g \in J_{0,s} \mu \tilde \mu J_{0,s}$, then by \Cref{lemma: products of double cosets for J0s} we may write $g = x \mu \tilde x \tilde \mu y$, where $x, \tilde x \in \bar N_0$ and $y \in J_{0,s}$.
If $x' \in \bar N_0$, then the summand corresponding to $x'$ is non-zero if and only if $\mu^{-1} (x')^{-1} x \mu \tilde x \tilde \mu y \in J_{0,s} \tilde \mu J_{0,s}$, if and only if $\mu^{-1} (x')^{-1} x \mu$ lies in $\bar N_0$, if and only if $x$ and $x'$ belong to the same coset of $\mu \bar N_0 \mu^{-1}$ in $\bar N_0$.
Thus, all the terms in the sum are zero except one, which is 1.
\end{proof}

\begin{proposition}\label{proposition: Hecke algebra isomorphism}
Restriction to $M$ induces an isomorphism
$$
    \Hcal(J_{0,s} \NJM^- J_{0,s}, J_{0,s})_R \simto \Hcal(\NJM^-, \JM)_R.
$$
\end{proposition}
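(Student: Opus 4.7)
The plan is to check that restriction to $M$ of a $J_{0,s}$-biinvariant function supported in $J_{0,s} \tilde J_M^- J_{0,s}$ yields a $J_M$-biinvariant function supported in $\tilde J_M^-$, identify the images of the natural basis, verify bijectivity, and finally confirm multiplicativity. The $R$-module on the left has a basis given by $[J_{0,s} \mu J_{0,s}]$, one for each double coset, and similarly the right-hand side has a basis $[J_M \mu J_M]$; note that because $\mu \in \tilde J_M$ normalises $J_M$, one has $J_M \mu J_M = \mu J_M$, so the right-hand algebra is essentially the monoid algebra $R[\tilde J_M^- / J_M]$.

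The key local computation is the identity
\[
    J_{0,s} \, \mu \, J_{0,s} \cap M = \mu J_M \qquad \text{for every } \mu \in \tilde J_M^-.
\]
The inclusion $\supseteq$ is immediate since $J_M \subseteq J_{0,s}$. For $\subseteq$, I would take an element $g = (\bar n_1 m_1 n_1) \mu (\bar n_2 m_2 n_2)$ with $\bar n_i \in J \cap \bar N$, $m_i \in J_M$, $n_i \in N_s$, using the Iwahori decomposition of $J_{0,s}$ from \Cref{lemma: Iwahori decomposition for deep level}, and push $\mu$ to the right. The point is that by \Cref{lemma: J_s is normal in J}(iv) applied to $\mu$ and to $m_i \in J_M \subseteq \tilde J_M^-$ (which sits in $\tilde J_M^-$ since $J_M \subseteq J$ normalises $J \cap \bar N$), conjugation by $\mu$ preserves both $\bar N_0$ and $N_s$, and conjugation by elements of $J_M$ preserves $\bar N_0$, $N_s$ and $J_{M,s}$. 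After shuffling, one rewrites $g = \bar n' m' \mu n'$ with $\bar n' \in \bar N_0$, $m' \in J_M$, $n' \in N_s$. Using the uniqueness of the factorisation $G = \bar N \cdot M \cdot N$ on the big open cell and the assumption $g \in M$, the factors $\bar n'$ and $n'$ must be trivial, so $g = m' \mu \in J_M \mu = \mu J_M$.

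From this identity it follows that restriction to $M$ sends $[J_{0,s} \mu J_{0,s}]$ to $[J_M \mu J_M]$. Moreover, two elements $\mu, \mu' \in \tilde J_M^-$ determine the same $J_{0,s}$-double coset if and only if $\mu J_M = \mu' J_M$, because intersection with $M$ detects the double coset (and the same criterion obviously characterises equality of $J_M$-double cosets in $\tilde J_M^-$). Hence restriction sends basis to basis bijectively, and in particular is $R$-linear and bijective.

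Finally, multiplicativity follows from \Cref{lemma: product of characteristic functions in Hecke algebra}, which gives $[J_{0,s} \mu J_{0,s}] * [J_{0,s} \tilde \mu J_{0,s}] = [J_{0,s} \mu \tilde \mu J_{0,s}]$, combined with the direct check that $[J_M \mu J_M] * [J_M \tilde \mu J_M] = [J_M \mu \tilde \mu J_M]$ in $\Hcal(\tilde J_M^-, J_M)_R$; the latter is trivial because $J_M \mu J_M / J_M = \{\mu J_M\}$ reduces the convolution sum to a single term. The main obstacle is the first identity $J_{0,s} \mu J_{0,s} \cap M = \mu J_M$, i.e. making sure the Iwahori-type shuffle goes through; everything else is formal given the structural results from the preceding subsections.
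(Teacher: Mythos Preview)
Your proof is correct and follows essentially the same approach as the paper: the key identity $J_{0,s}\mu J_{0,s}\cap M=\mu J_M$ is obtained via the Iwahori decomposition of $J_{0,s}$ (the paper phrases this as $J_{0,s}\mu J_{0,s}=(J\cap\bar N)\mu J_M N_s$, which is exactly your shuffling argument written more compactly), the resulting bijection on basis elements gives the $R$-module isomorphism, and multiplicativity comes from \Cref{lemma: product of characteristic functions in Hecke algebra}.
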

\begin{proof}
Let $\mu \in \NJM^-$.
The Iwahori decomposition of $J_{0,s}$ implies $J_{0,s} \mu J_{0,s} = \bar N_0 \mu \JM N_s$, so $J_{0,s} \mu J_{0,s} \cap M = \mu \JM$.
Thus, there is a bijection $\NJM^- / \JM \to J_{0,s} \backslash J_{0,s} \NJM^- J_{0,s} / J_{0,s}$ and $[J_{0,s} \mu J_{0,s}]|_M = [\JM \mu \JM]$.
This implies that restriction to $M$ gives an isomorphism of $R$-modules.
By \Cref{lemma: product of characteristic functions in Hecke algebra}, restriction is compatible with convolutions, and hence it is an isomorphism of $R$-algebras.
\end{proof}

\begin{corollary}\label{corollary: Hecke algebras for J and J0s are isomorphic}
Restriction induces an $R$-algebra isomorphism
$$
    \Hcal(J \NJM^- J, J)_R \to \Hcal(J_{0,s} \NJM^- J_{0,s}, J_{0,s})_R.
$$
\end{corollary}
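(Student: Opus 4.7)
The plan is to factor the claimed map through the abstract monoid-Hecke algebra $\Hcal(\tilde J_M^-, J_M)_R$, exploiting \Cref{proposition: Hecke algebra isomorphism} which already realizes that algebra as $\Hcal(J_{0,s} \tilde J_M^- J_{0,s}, J_{0,s})_R$. Specifically, I will prove that restriction of functions to $M$ defines an $R$-algebra isomorphism
$$
    \Hcal(J \tilde J_M^- J, J)_R \simto \Hcal(\tilde J_M^-, J_M)_R,
$$
mirroring \Cref{proposition: Hecke algebra isomorphism} verbatim. Composing with the inverse of that proposition then yields the desired map $\Hcal(J \tilde J_M^- J, J)_R \simto \Hcal(J_{0,s} \tilde J_M^- J_{0,s}, J_{0,s})_R$, which on basis elements sends $[J\mu J] \mapsto [J_M \mu J_M] \mapsto [J_{0,s}\mu J_{0,s}]$.

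For the claim, two ingredients suffice. First, \Cref{lemma: J admits rooted Iwahori decomposition} gives the Iwahori decomposition $J \mu J = (J \cap \bar N) \mu J_M (J \cap N)$ for each $\mu \in \tilde J_M^-$, and the uniqueness of the big-cell factorization $\bar \Nbf \times \Mbf \times \Nbf \into \Gbf$ then forces $J \mu J \cap M = \mu J_M$ (if $\bar n \mu m_0 n \in M$ with $\bar n \in J \cap \bar N$, $m_0 \in J_M$, $n \in J \cap N$, then $\bar n \in \bar N \cap MN = \{1\}$, and similarly $n = 1$). This simultaneously produces the bijection $\tilde J_M^-/J_M \simto J \backslash J \tilde J_M^- J / J$ and shows that restriction to $M$ is an $R$-linear bijection $[J \mu J] \mapsto [J_M \mu J_M]$.

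Second, I need the multiplicativity identity $[J \mu J] * [J \tilde \mu J] = [J \mu \tilde \mu J]$ in $\Hcal(G, J)_R$ for $\mu, \tilde \mu \in \tilde J_M^-$, since restriction is obviously compatible with convolution once this identity is in hand. The proof is the same convolution computation as in \Cref{lemma: product of characteristic functions in Hecke algebra}: writing
$$
    ([J \mu J] * [J \tilde \mu J])(g) = \sum_{x' \in (J \cap \bar N) / \mu (J \cap \bar N) \mu^{-1}} [J \tilde \mu J]((x'\mu)^{-1} g),
$$
one uses \Cref{lemma: products of double cosets} \itemnumber{2} in place of \Cref{lemma: products of double cosets for J0s} to see that for $g \in J \mu \tilde \mu J$ exactly one summand contributes. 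This is the only step involving any calculation, and it is essentially a transcription of the proof of \Cref{lemma: product of characteristic functions in Hecke algebra}, so I expect no real obstacle: the corollary reduces cleanly to a parallel of \Cref{proposition: Hecke algebra isomorphism} at the coarser level $J$.
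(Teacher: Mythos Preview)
Your proof is correct and follows the same route as the paper: factor the restriction map through $\Hcal(\tilde J_M^-, J_M)_R$ via restriction to $M$, using \Cref{proposition: Hecke algebra isomorphism} for the second leg. The paper's presentation is slightly more economical in that it observes $J_{0,0} = J$, so your first claim is literally the case $s = 0$ of \Cref{proposition: Hecke algebra isomorphism} and needs no separate argument.
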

\begin{proof}
The composition $\Hcal(J \NJM^- J, J)_R \to \Hcal(J_{0,s} \NJM^- J_{0,s}, J_{0,s})_R  \to \Hcal(\NJM^-, \JMs)_R$ is an isomorphism (by \Cref{proposition: Hecke algebra isomorphism} applied to $s = 0$), and so is the second map.
\end{proof}

\begin{corollary}\label{corollary: hecke algebra isomorphic to monoid ring}
The algebra $\Hcal(J \NJM^- J, J)_R$ is isomorphic to the monoid ring $R[\NJM^- / \JM]$, in particular it is commutative.
In fact, it is isomorphic to a subring of $R[X_1^{\pm 1}, ..., X_{\rank(\Abf)}^{\pm 1}]$.
\end{corollary}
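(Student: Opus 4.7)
The plan is to use the Hecke algebra isomorphism of \Cref{corollary: Hecke algebras for J and J0s are isomorphic} (or equivalently \Cref{proposition: Hecke algebra isomorphism}) to translate the statement into one about $\Hcal(\tilde J_M^-, J_M)_R$, and then to exploit the fact that $J_M$ is normal in $\tilde J_M$, together with \Cref{remark: normalizer of J_M has free quotient}, to identify this Hecke algebra with the monoid ring $R[\tilde J_M^- / J_M]$ and to embed it into a Laurent polynomial ring.

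First, I would apply the chain of isomorphisms in \Cref{proposition: Hecke algebra isomorphism} and \Cref{corollary: Hecke algebras for J and J0s are isomorphic} (taking $s = 0$ in the former) to obtain an $R$-algebra isomorphism
$$
    \Hcal(J \tilde J_M^- J, J)_R \simto \Hcal(\tilde J_M^-, J_M)_R.
$$
This reduces the problem to analyzing the right-hand side.

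Next, I would observe that \Cref{lemma: J_s is normal in J} \itemnumber{1} (applied with $s = 0$) guarantees that $J_M$ is normal in $\tilde J_M$. In particular, for $\mu \in \tilde J_M^-$ the double coset $J_M \mu J_M$ coincides with the single coset $\mu J_M$, and double cosets are thus parametrized by $\tilde J_M^- / J_M$, yielding an $R$-module basis of $\Hcal(\tilde J_M^-, J_M)_R$. A direct computation, analogous to (and simpler than) \Cref{lemma: product of characteristic functions in Hecke algebra}, then shows
$$
    [J_M \mu J_M] * [J_M \tilde \mu J_M] = [J_M \mu \tilde \mu J_M],
$$
where finiteness of the convolution sum and the equality itself follow immediately from normality (each summand $\mu j \tilde \mu$ with $j \in J_M$ lies in $\mu \tilde \mu J_M$ because $\tilde \mu^{-1} j \tilde \mu \in J_M$). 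This identifies $\Hcal(\tilde J_M^-, J_M)_R$ with the monoid ring $R[\tilde J_M^- / J_M]$, which is commutative since $\tilde J_M^- / J_M$ is a submonoid of the abelian group $\tilde J_M / J_M$.

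Finally, by \Cref{remark: normalizer of J_M has free quotient}, $\tilde J_M / J_M$ is a free abelian group of rank equal to $\rank(\Abf)$, so that the group ring $R[\tilde J_M / J_M]$ is isomorphic to the Laurent polynomial ring $R[X_1^{\pm 1}, \ldots, X_{\rank(\Abf)}^{\pm 1}]$. Since $\tilde J_M^- / J_M$ injects into $\tilde J_M / J_M$, the monoid ring $R[\tilde J_M^- / J_M]$ is a subalgebra of this Laurent polynomial ring, which gives the last assertion. I do not anticipate any genuine obstacle here; the only point requiring mild care is verifying the product formula by hand, as \Cref{lemma: product of characteristic functions in Hecke algebra} is stated for $J_{0,s} \subseteq G$ rather than directly for $J_M \subseteq M$.
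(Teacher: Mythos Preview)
Your proposal is correct and follows essentially the same route as the paper: reduce to $\Hcal(\tilde J_M^-, J_M)_R$ via \Cref{proposition: Hecke algebra isomorphism}, then use normality of $J_M$ in $\tilde J_M$ together with \Cref{remark: normalizer of J_M has free quotient}. One minor remark: normality of $J_M$ in $\tilde J_M$ is immediate from the definition of $\tilde J_M$ as the normalizer of $J_M$, so the detour through \Cref{lemma: J_s is normal in J} is unnecessary; also, your product formula in $\Hcal(\tilde J_M^-, J_M)_R$ could alternatively be read off from \Cref{lemma: product of characteristic functions in Hecke algebra} (with $s=0$) via the restriction isomorphism, avoiding the direct computation you flag.
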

\begin{proof}
It's enough to show this for $\Hcal(\NJM^-, \JM)_R$, which follows from \Cref{remark: normaliser of J_M has free quotient}.
\end{proof}

\begin{remark}\label{remark: for Hecke algebras can replace tildeJ_M by HM}
All of these results remain true if one replaces $\NJM$ with a subgroup $\HM$ containing $A \JM$.
\end{remark}

\subsection{A lemma on abelianisations}

If $H$ is a closed subgroup of $M$, let us write $\overline{[H, H]}$ for the closure its commutator subgroup.

\begin{lemma}\label{lemma: abelianisation has finite kernel and cokernel}
Let $H$ be an open subgroup of $M$ containing $A H_0$ as a subgroup of finite index for some compact open subgroup $H_0$ of $M$. Then, the map $H / \overline{[H, H]} \to M / \overline{[M, M]}$ has finite kernel and cokernel.
\end{lemma}
\begin{proof}
Let $\Mbf^\der$ be the derived subgroup of $\Mbf$ and $\varphi \colon \Mbf^\text{sc} \to \Mbf^\der$ be the simply connected cover of $\Mbf^\der$.
Then, the commutator map $\Mbf \times \Mbf \to \Mbf$ factors through $\varphi$.
The simply connected group $\Mbf^\text{sc}$ is isomorphic to the product of its simple factors $\Mbf_i$.
We claim that $\overline{[\Mbf_i(L), \Mbf_i(L)]}$ has finite index in $\Mbf_i(L)$.
If $\Mbf_i$ is $L$-isotropic, then it follows from the Kneser--Tits conjecture \cite[Theorem B]{Platonov_1969} that each group $\Mbf_i(L)$ is perfect, and the index is 1.
If $\Mbf_i$ is $L$-anisotropic, then $\Mbf_i(L)$ is compact, so $\Mbf_i(L) / \overline{[\Mbf_i(L), \Mbf_i(L)]}$ is an abelian, compact $p$-adic locally analytic group.
Its Lie algebra is an abelian quotient of the Lie algebra of $\Mbf_i(L)$, which is semisimple, so it must be trivial.
Thus, the quotient $\Mbf_i(L) / \bar{[\Mbf_i(L), \Mbf_i(L)]}$ is a finite union of points, as required.
This implies that the inclusion $\overline{[\Mbf^\text{sc}(L), \Mbf^\text{sc}(L)]} \subseteq \Mbf^\text{sc}(L)$ has finite index, and since
$$
    \varphi(\overline{[\Mbf^\text{sc}(L), \Mbf^\text{sc}(L)]}) \subseteq \overline{[M, M]} \subseteq \varphi(\Mbf^\text{sc}(L))
$$
the second inclusion has finite index as well.
As the kernel of $\varphi$ is finite, its Galois cohomology in degree 1 is finite as well, so we conclude that $\bar{[M, M]}$ has finite index in $\Mbf^\der(L)$.
Hence, the map $M / \overline{[M, M]} \to M / \Mbf^\der(L)$ has finite kernel and cokernel, and it suffices to prove the statement of the lemma with the source replaced by the target.

Next, note that we may replace $H$ by $A H_0$.
Indeed, if the composition
$$
    A H_0 / \overline{[H_0, H_0]} \to H / \overline{[H, H]} \to M / \Mbf^\der(L)
$$
has finite kernel and cokernel, then the first map has finite kernel and the second has finite cokernel. But $A H_0$ has finite index in $H$, so the first map also has finite cokernel, and hence the second has finite kernel.
In fact, the same argument shows that we may replace $H_0$ with any open compact subgroup of it.
In particular, we are free to replace $H_0$ with a subgroup of the form $H_1 H_2$ where $H_1$ is contained in the center of $M$, $H_2$ is contained in $\Mbf^\der(L)$, and $H_1$ and $H_2$ have trivial intersection.

Let us first show that the cokernel is finite. It's enough to check that the subgroup of $M$ generated by $\Mbf^\der(L)$, $A$ and $H_1$ has finite index in $M$. The homomorphism \linebreak $M / A \Mbf^\der(L) \to (\Mbf / \Abf \Mbf^\der)(L)$ has finite kernel and cokernel (since the intersection of $\Mbf^\der$ and the center of $\Mbf$ is finite, hence has finite Galois cohomology), so it's enough to check that the image of $H_1$ in the target has finite index. But the target is compact and the image of $H_1$ in it is an open subgroup, so the result follows.
Let us now show that the kernel $(A H_1 H_2 \cap \Mbf^\der(L))/\overline{[H_2, H_2]}$ is finite. Since
$$
    A H_1 H_2 \cap \Mbf^\der(L) = (A H_1 \cap \Mbf^\der(L)) H_2
$$
and $A H_1 \cap \Mbf^\der(L)$ is finite, we are reduced to showing that $\overline{[H_2, H_2]}$ has finite index in $H_2$. This follows from the same Lie algebra argument as in the first paragraph.
\end{proof}

\subsection{A lemma on roots}
\label{subsection: lemma on roots}

Write ${}^M W$ for the set of minimal length coset representatives of cosets in $W_M \backslash W$.
Alternatively, an element $w$ of $W$ belongs to ${}^M W$ if and only if 
$w^{-1} \Phi^+_M \subseteq \Phi^+$.
Write $\ell$ for the length function on $W$.

\begin{lemma}\label{lemma: on roots}
Let $s \in S^-$ and $\lambda \in X^*(\Sbf)$ be a dominant weight. The minimum of the quantities $v_L((w \lambda)(s))$ for $w \in {}^M W, w \neq 1,$ is achieved at $w = s_\alpha$ for some $\alpha \in \Delta \setminus \Delta_M$.
\end{lemma}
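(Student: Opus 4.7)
The plan is to prove the slightly stronger statement that for every $w \in {}^M W$ with $w \neq e$, there exists $\alpha \in \Delta \setminus \Delta_M$ such that $v_L((s_\alpha \lambda)(s)) \leq v_L((w \lambda)(s))$, from which the lemma follows (the case $w = e$ being exceptional and handled separately, if needed). The key inputs are the explicit reduced-expression formula for $\lambda - w\lambda$ and the particular shape of the inversion set $R(w) := \{\beta \in \Phi^+ : w^{-1}\beta \in \Phi^-\}$ when $w \in {}^M W$.

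Concretely, I would first recall that if $w = s_{\alpha_1} s_{\alpha_2} \cdots s_{\alpha_k}$ is a reduced expression, then
\[
    \lambda - w\lambda = \sum_{j=1}^k \langle \lambda, \alpha_j^\vee \rangle \beta_j, \qquad \beta_j := s_{\alpha_1} \cdots s_{\alpha_{j-1}}\alpha_j,
\]
and the $\beta_j$ are precisely the elements of $R(w)$. For $w \in {}^M W$ one has $w^{-1}(\Phi_M^+) \subseteq \Phi^+$, so $R(w) \cap \Phi_M^+ = \emptyset$; hence every $\beta_j$ lies in $\Phi^+ \setminus \Phi_M$, and in particular $\beta_1 = \alpha_1 \in \Delta \setminus \Delta_M$ whenever $w \neq e$. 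Since $\lambda$ is dominant, $\langle \lambda, \alpha_j^\vee \rangle \geq 0$ for each $j$, and since $s \in S^-$, the equivalence in \Cref{equation equivalences for A+} (extended additively to positive combinations of elements of $\Phi^+ \setminus \Phi_M$) gives $v_L(\beta_j(s)) \leq 0$.

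Setting $\alpha := \alpha_1$ and expanding $v_L((w\lambda)(s)) - v_L((s_\alpha \lambda)(s))$ via the formula above, the $j=1$ terms cancel (because $\beta_1 = \alpha$), leaving
\[
    v_L((w\lambda)(s)) - v_L((s_\alpha \lambda)(s)) = -\sum_{j=2}^k \langle \lambda, \alpha_j^\vee \rangle \, v_L(\beta_j(s)),
\]
each summand of which is non-negative by the two sign observations above. This yields the inequality sought and exhibits $s_{\alpha_1}$ as a minimizer, with $\alpha_1 \in \Delta \setminus \Delta_M$. The only delicate point is the identification $R(w) \subseteq \Phi^+ \setminus \Phi_M$ for $w \in {}^M W$, which is a direct consequence of the left-descent characterization of ${}^M W$; everything else is a bookkeeping exercise with the reduced-expression formula.
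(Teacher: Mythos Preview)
Your proof is correct and follows essentially the same approach as the paper: both identify a left descent $\alpha \in \Delta \setminus \Delta_M$ of $w$ (the paper via the contrapositive $w^{-1}(\Delta \setminus \Delta_M) \not\subseteq \Phi^+$, you via $\beta_1 = \alpha_1 \in R(w) \subseteq \Phi^+ \setminus \Phi_M$) and then show $v_L((s_\alpha\lambda)(s)) \leq v_L((w\lambda)(s))$. The paper phrases the comparison via the Bruhat-order implication $s_\alpha \leq w \Rightarrow s_\alpha\lambda \geq w\lambda$, while your reduced-expression formula makes explicit that $s_\alpha\lambda - w\lambda = \sum_{j \geq 2}\langle\lambda,\alpha_j^\vee\rangle\beta_j$ lies in the non-negative cone of $R(w) \subseteq \Phi^+ \setminus \Phi_M$ --- a point the paper's proof leaves to the reader but which is exactly what is needed, since $s \in S^-$ only constrains $v_L(\beta(s))$ for $\beta \in \Phi^+ \setminus \Phi_M$.
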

\begin{proof}
Let $w \in {}^M W$ be any non-trivial element.
We claim that there exists $\alpha \in \Delta \setminus \Delta_M$ such that $\ell(s_\alpha w) < \ell(w)$.
Indeed, this is equivalent to $w^{-1} \alpha \in \Phi^-$, so if this is false we would have $w^{-1} (\Delta \setminus \Delta_M) \subseteq \Phi^+$, and hence $w^{-1} \Phi^+ \subseteq \Phi^+$, so $\ell(w^{-1}) = w^{-1} \Phi^+ \cap \Phi^- = 0$, which contradicts our assumption that $w$ is non-trivial.
So let $\alpha \in \Delta \setminus \Delta_M$ be such a root.
Clearly, $s_\alpha \in {}^M W$, so it's enough to show that $v_L( (s_\alpha \lambda)(s)) \leq v_L((w \lambda)(s))$.
As $s \in S^-$, by \Cref{eqn: description of Zfrak-} it's enough to check that $s_\alpha \lambda \geq w \lambda$, which follows from the fact that $s_\alpha \leq w$ in the Bruhat order.
\end{proof}

\subsection{Direct products of groups}
\label{subsection: products of groups}

When we apply the results we have proven so far in \Cref{section: locally analytic}, we will do so not for a reductive group over a finite extension of $\Q_p$, but rather a product of reductive groups over finite extensions of $\Q_\ell$ for different primes $\ell$. We will now introduce this setting.

Let $\ell_1, ..., \ell_n$ be prime numbers (each of which can appear more than once). For each $i$, let $L_i$ be a finite extension of $\Q_{\ell_i}$ and $\Gbf_i$ a reductive group over $L_i$. We make the same definitions as in \Cref{subsection: local setting} and write them with subindex $i$. Thus, for example we have subgroups $\Abf_i \subseteq \Sbf_i \subseteq \boldsymbol{\Zfrak}_i \subseteq \Mbf_i \subseteq \Pbf_i \subseteq \Gbf_i$,
and the pair $(\Gbf_i, \Sbf_i)$ has root system $\Phi_i$ with simple roots $\Delta_i$ and Weyl group $W_i$. We set $G_i = \Gbf_i(L_i)$ and $G = \prod_{i=1}^n G_i$, and similarly for other groups. We will mostly view these simply as locally profinite groups, or sometimes as products of locally profinite groups and $p$-adic locally analytic groups (if some of the $\ell_i$ are $p$). We will also set $\Phi = \coprod_{i=1}^n \Phi_i$, etc.

All of the results we have proven thus far remain valid for this setting after making the appropriate modifications to the definitions if necessary. For example, we say that a subgroup of the form $H = \prod_{i=1}^n H_i$ with $H_i \subseteq G_i$ admits an Iwahori decomposition if the product $(H \cap \bar N) \times (H \cap M) \times (H \cap N) \to H$ is a homeomorphism (or equivalently, surjective), and that it is rooted if there exists a convex function $f \colon \Phi \to \R$ such that the multiplication map $\prod_{\alpha \in \Phi^{\ndiv,+} \setminus \Phi_M} U_{f, \alpha} \to H \cap N$ is a homeomorphism (or equivalently, surjective) for any ordering of its factors and similarly for $\bar N$. Here, given $\alpha \in \Phi$ belonging to $\Phi_i$, the group $U_{f, \alpha}$ is the group $U_{f|_{\Phi_i}, \alpha} \times \prod_{j \neq i} \{ 1 \}$. Then, it is clear that $H$ admits a (rooted) Iwahori decomposition if and only if each of the $H_i$ does.

Because all of these results generalise immediately to this setting, we will not repeat them and instead quote directly the original results.
Let us also mention that if $H_{i,0}$ is a compact open subgroup of $G_i$ and $H_i^-$ a submonoid containing $H_{i,0}$, then there is an isomorphism $\Hcal(\prod_{i=1}^n H_i^-, \prod_{i=1}^n H_{i,0})_R \simeq \bigotimes_{i=1}^n \Hcal(H_i^-, H_{i,0})_R$.

\section{A vanishing criterion for \texorpdfstring{$\Tor$}{Tor}}
\label{section: vanishing for Tor}

The key technical result in \cite{vanishing} (as well as in this article) is the vanishing of certain Tor groups with respect to the monoid algebra of $\Lambda^-$.
To obtain them, Kohlhaase--Schraen use that, in their setting, this monoid algebra is isomorphic to a polynomial algebra, and they carry out an explicit computation.
In this section, we will give a more general (but still specifically tailored to our needs) criterion for the vanishing of the relevant Tor groups, which we obtain by reducing to the case of polynomial algebras.
Let us forget the notation from previous sections.

Let $R$ be a commutative ring and $\Lambda$ a free abelian group of finite rank, whose group law we write using multiplicative notation, and let $\Lambda' \subseteq \Lambda^- \subseteq \Lambda$ be submonoids satisfying $\Lambda' (\Lambda')^{-1} = \Lambda$. If $\chi \colon \Lambda \to R^\times$ is a group homomorphism, we will write $R(\chi)$ for the $R[\Lambda]$-module whose underlying $R$-module is $R$ and where $\Lambda$ acts via $\chi$.

\begin{lemma}\label{lemma: vanishing Tor 1}
Let $\Mcal$ be an $R[\Lambda^-]$-module and $q \in \Z$.
\begin{enumerate}
    \item The natural $R[\Lambda^-]$-module map $\Tor^{R[\Lambda^-]}_q(R(\chi), \Mcal) \to \Tor^{R[\Lambda]}_q(R(\chi), R[\Lambda] \otimes_{R[\Lambda^-]} \Mcal)$ is an isomorphism.
    
    \item Inclusion $\Lambda' \subseteq \Lambda^-$ induces an isomorphism $\Tor^{R[\Lambda']}_q(R(\chi), \Mcal) \simto \Tor^{R[\Lambda^-]}_q(R(\chi), \Mcal)$.
    
    \item Assume that $\Mcal$ contains an $R[\Lambda^-]$-submodule $\Mcal^-$ such that for all $m \in \Mcal$ there exists $\lambda \in \Lambda^-$ such that $\lambda m \in \Mcal^-$. Then, inclusion $\Mcal^- \subseteq \Mcal$ induces an isomorphism
    $\Tor^{R[\Lambda^-]}_q(R(\chi), \Mcal^-) \simto \Tor^{R[\Lambda^-]}_q(R(\chi), \Mcal)$
\end{enumerate}
\end{lemma}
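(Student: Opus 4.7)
The guiding observation for all three parts is that, because $\Lambda' (\Lambda')^{-1} = \Lambda$ and $\Lambda' \subseteq \Lambda^-$, every $\lambda \in \Lambda$ can be written as $\lambda_1 \lambda_2^{-1}$ with $\lambda_1, \lambda_2 \in \Lambda' \subseteq \Lambda^-$. Thus $R[\Lambda]$ is the localization of $R[\Lambda^-]$ (and of $R[\Lambda']$) at the multiplicative subset $\Lambda^-$ (respectively $\Lambda'$). In particular $R[\Lambda]$ is a flat $R[\Lambda^-]$-module, and for any $R[\Lambda^-]$-module $\Mcal$ the canonical map
$$
    R[\Lambda] \otimes_{R[\Lambda']} \Mcal \to R[\Lambda] \otimes_{R[\Lambda^-]} \Mcal
$$
is an isomorphism, since both may be identified with the localization $\Mcal[(\Lambda^-)^{-1}]$.

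For (1), the plan is to combine flat base change with the fact that the action of $\Lambda^-$ on the Tor in the source factors through the character $\chi \colon \Lambda^- \to R^\times$. Concretely, flat base change along $R[\Lambda^-] \to R[\Lambda]$, together with the identification $R[\Lambda] \otimes_{R[\Lambda^-]} R(\chi) = R(\chi)$ (since $\chi$ extends from $\Lambda^-$ to $\Lambda$), yields a natural isomorphism
$$
    \Tor^{R[\Lambda]}_q\bigl(R(\chi),\, R[\Lambda] \otimes_{R[\Lambda^-]} \Mcal\bigr) \;\simeq\; R[\Lambda] \otimes_{R[\Lambda^-]} \Tor^{R[\Lambda^-]}_q(R(\chi), \Mcal).
$$
The right-hand side is the localization at $\Lambda^-$ of the source of (1); but since elements of $\Lambda^-$ already act on $\Tor^{R[\Lambda^-]}_q(R(\chi), \Mcal)$ through $\chi$, hence by units, this localization map is itself an isomorphism. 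Composing with the isomorphism $R[\Lambda] \otimes_{R[\Lambda^-]} \Mcal \cong R[\Lambda] \otimes_{R[\Lambda']} \Mcal$ noted above gives the claim.

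For (2), I would apply (1) twice: once with the given pair $(\Lambda', \Lambda^-)$, and once with $\Lambda'$ playing the role of both $\Lambda'$ and $\Lambda^-$ (permissible, since the standing hypothesis is a condition on $\Lambda'$ alone). Both sides of (2) then become naturally identified with $\Tor^{R[\Lambda]}_q(R(\chi),\, R[\Lambda] \otimes_{R[\Lambda']} \Mcal)$, and the comparison map in (2) corresponds to the identity under this identification. For (3), by (1) applied separately to $\Mcal^-$ and $\Mcal$, it suffices to show that $R[\Lambda] \otimes_{R[\Lambda^-]} \Mcal^- \to R[\Lambda] \otimes_{R[\Lambda^-]} \Mcal$ is an isomorphism, i.e.\ that $\Mcal^-[(\Lambda^-)^{-1}] \to \Mcal[(\Lambda^-)^{-1}]$ is. Surjectivity is immediate from the cofinality hypothesis (write $m = \lambda^{-1}(\lambda m)$ with $\lambda m \in \Mcal^-$), and injectivity holds because if $m \in \Mcal^-$ is annihilated by some $\lambda \in \Lambda^-$ inside $\Mcal$, then the relation $\lambda m = 0$ already holds in the submodule $\Mcal^-$.

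The main obstacle lies in (1), where one has to combine two ingredients carefully: the flatness of $R[\Lambda]$ over $R[\Lambda^-]$ (so base change on Tor applies), and the observation that the resulting localization does not further change the Tor group, which uses that the $\Lambda^-$-action is entirely governed by $\chi$. Once (1) is in place, (2) and (3) are essentially formal.
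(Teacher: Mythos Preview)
Your proposal is correct and follows essentially the same approach as the paper: both exploit that $R[\Lambda]$ is the localization of $R[\Lambda^-]$ (and of $R[\Lambda']$) at the multiplicative set $\Lambda^-$ (resp.\ $\Lambda'$), use flat base change together with the observation that $\Lambda^-$ already acts invertibly on $\Tor^{R[\Lambda^-]}_q(R(\chi),\Mcal)$ via $\chi$, and then reduce (ii) and (iii) to comparing the localized modules. The only cosmetic difference is in (iii), where the paper checks directly that $R[\Lambda]\otimes_{R[\Lambda^-]}(\Mcal/\Mcal^-)=0$ by exactness of localization, while you verify surjectivity and injectivity of $\Mcal^-[(\Lambda^-)^{-1}]\to\Mcal[(\Lambda^-)^{-1}]$ separately; these are equivalent.
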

\begin{proof}
Note that $R[\Lambda]$ is the localisation of $R[\Lambda^-]$ at $S = \Lambda^-$. Hence, $S^{-1} \Tor^{R[\Lambda^-]}_q(R(\chi), \Mcal)$ is isomorphic to $\Tor^{R[\Lambda]}_q(S^{-1} R(\chi), S^{-1} \Mcal)$. But $R(\chi)$ is an $R[\Lambda]$-module, and hence so is 
$\Tor^{R[\Lambda^-]}_q(R(\chi), \Mcal)$, and thus $\Tor^{R[\Lambda^-]}_q(R(\chi), \Mcal) \simeq \Tor^{R[\Lambda]}_q(R(\chi), R[\Lambda] \otimes_{R[\Lambda^-]} \Mcal)$, which proves part \itemnumber{1}.

By \itemnumber{1} applied to both $\Lambda^-$ and $\Lambda$, statement \itemnumber{2} will follow if we show that the natural map $R[\Lambda] \otimes_{R[\Lambda']} \Mcal \to R[\Lambda] \otimes_{R[\Lambda^-]} \Mcal$ is an isomorphism. It is clearly surjective, and its kernel is generated by elements of the form $\lambda \otimes m - 1 \otimes \lambda m$ with $\lambda \in \Lambda^-$, $m \in \Mcal$. Writing $\lambda = \lambda' (\mu')^{-1}$ with $\lambda', \mu' \in \Lambda'$ we see that
$$
    \mu' (\lambda \otimes m) = \lambda' \otimes m = 1 \otimes \lambda' m = \mu' (1 \otimes \lambda m) \text{  in } R[\Lambda] \otimes_{R[\Lambda']} \Mcal.
$$
As $\mu'$ is invertible in $R[\Lambda]$, this means that $\lambda \otimes m - 1 \otimes \lambda m = 0$, so the above homomorphism is also injective.

For statement \itemnumber{3} it's enough to show by \itemnumber{1} that the natural map
$$
    R[\Lambda] \otimes_{R[\Lambda^-]} \Mcal^- \to R[\Lambda] \otimes_{R[\Lambda^-]} \Mcal
$$
is an isomorphism. As localisation is exact, it's enough to show that $R[\Lambda] \otimes_{R[\Lambda^-]} (\Mcal / \Mcal^-)$ vanishes. This follows from our assumption on $\Mcal^-$, since any element of the form $\mu \otimes m$ can also be written as $\mu \lambda^{-1} \otimes \lambda m$ for any $\lambda \in \Lambda^-$.
\end{proof}

\begin{lemma}\label{lemma: vanishing Tor 2}
Assume that $\Mcal$ admits a decomposition $\Mcal = \Mcal^+ \oplus \bigoplus_{\lambda \in \Lambda^-} \Mcal_\lambda$ as $R$-modules satisfying
\begin{enumerate}[label=(\alph*)]
    \item\label{item: lemma Tor 2 a} $\lambda \Mcal_\mu \subseteq \Mcal_{\lambda \mu}$ for $\lambda, \mu \in \Lambda^-$,
    \item\label{item: lemma Tor 2 b} for all $m \in \Mcal$ there exists $\lambda \in \Lambda^-$ such that $\lambda m \in \bigoplus_{\lambda \in \Lambda^-} \Mcal_\lambda$.
\end{enumerate}
Then, the same conditions hold for $\Lambda'$ instead of $\Lambda^-$, and there is an isomorphism
$$
    \Tor^{R[\Lambda^-]}_q(R(\chi), \Mcal)
    \simeq 
    \Tor^{R[\Lambda']}_q(R(\chi), \bigoplus_{\lambda \in \Lambda'} \Mcal_\lambda).
$$
\end{lemma}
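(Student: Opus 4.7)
The plan is to chain applications of \Cref{lemma: vanishing Tor 1}. Set
$$
    \Mcal^- := \bigoplus_{\lambda \in \Lambda^-} \Mcal_\lambda \subseteq \Mcal,
    \qquad
    \Ncal^- := \bigoplus_{\lambda \in \Lambda'} \Mcal_\lambda \subseteq \Mcal^-.
$$
By (a), $\Mcal^-$ is an $R[\Lambda^-]$-submodule of $\Mcal$ and $\Ncal^-$ is an $R[\Lambda']$-submodule of $\Mcal^-$; moreover, (b) is exactly the hypothesis of \Cref{lemma: vanishing Tor 1}\itemnumber{3} for $\Mcal^- \subseteq \Mcal$. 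My strategy is to apply \Cref{lemma: vanishing Tor 1}\itemnumber{3} twice---once for $\Mcal^- \subseteq \Mcal$ over $R[\Lambda^-]$, and once for $\Ncal^- \subseteq \Mcal^-$ over $R[\Lambda']$---and to combine this with \Cref{lemma: vanishing Tor 1}\itemnumber{2} to switch between the monoid algebras $R[\Lambda^-]$ and $R[\Lambda']$ over the common module $\Mcal^-$.

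The one nontrivial point is to verify that (a)--(b) hold with $\Lambda'$ in place of $\Lambda^-$, with respect to the decomposition $\Mcal = \bigl(\Mcal^+ \oplus \bigoplus_{\mu \in \Lambda^- \setminus \Lambda'} \Mcal_\mu\bigr) \oplus \bigoplus_{\lambda \in \Lambda'} \Mcal_\lambda$; the analogue of (a) is immediate from (a) for $\Lambda^-$, so only that of (b) requires attention. This rests on the following cofinality observation: since $\Lambda'(\Lambda')^{-1} = \Lambda$ and $\Lambda'$ is closed under multiplication, for any finite subset $F \subseteq \Lambda$ there exists $\eta \in \Lambda'$ with $\eta F \subseteq \Lambda'$. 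Indeed, for each $\xi \in F$ we can write $\xi = a_\xi b_\xi^{-1}$ with $a_\xi, b_\xi \in \Lambda'$, and then $\eta := \prod_{\xi \in F} b_\xi$ does the job. Given $m \in \Mcal$, I would first use (b) to find $\lambda \in \Lambda^-$ with $\lambda m = \sum_{\mu \in F} m_\mu$ a finite sum with $m_\mu \in \Mcal_\mu$, then apply the observation to $F \cup \{\lambda\}$ to produce $\eta$, and finally invoke (a) to conclude that $(\eta \lambda) m \in \Ncal^-$ with $\eta \lambda \in \Lambda'$; the case $m \in \Mcal^-$ reduces to the same computation with $\lambda = 1$.

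With this in hand, \Cref{lemma: vanishing Tor 1}\itemnumber{3} supplies isomorphisms
\begin{align*}
    \Tor^{R[\Lambda^-]}_q(R(\chi), \Mcal^-) & \simto \Tor^{R[\Lambda^-]}_q(R(\chi), \Mcal), \\
    \Tor^{R[\Lambda']}_q(R(\chi), \Ncal^-) & \simto \Tor^{R[\Lambda']}_q(R(\chi), \Mcal^-),
\end{align*}
while \Cref{lemma: vanishing Tor 1}\itemnumber{2} applied to the $R[\Lambda^-]$-module $\Mcal^-$ provides
$$
    \Tor^{R[\Lambda']}_q(R(\chi), \Mcal^-) \simto \Tor^{R[\Lambda^-]}_q(R(\chi), \Mcal^-),
$$
and composing these three isomorphisms yields the stated identification. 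I do not anticipate a serious obstacle: the argument is essentially bookkeeping on top of \Cref{lemma: vanishing Tor 1}, with the one-line cofinality observation above as its only genuinely new ingredient.
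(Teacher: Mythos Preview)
Your proof is correct and follows essentially the same approach as the paper: the same cofinality computation to verify (a)--(b) for $\Lambda'$, followed by \Cref{lemma: vanishing Tor 1}\itemnumber{2} and \itemnumber{3}. The paper is slightly more direct, applying \Cref{lemma: vanishing Tor 1}\itemnumber{3} once to $\Ncal^- \subseteq \Mcal$ over $R[\Lambda']$ (rather than passing through $\Mcal^-$) and then \itemnumber{2} to switch monoid algebras, but your extra intermediate step is harmless.
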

\begin{proof}
We first show that \Cref{item: lemma Tor 2 a} and \Cref{item: lemma Tor 2 b} remain valid if we replace $\Lambda^-$ by $\Lambda'$ everywhere. This is immediate for \Cref{item: lemma Tor 2 a}. Given $m \in \Mcal$, by \Cref{item: lemma Tor 2 b} there exist $\lambda, \lambda_1, ..., \lambda_n \in \Lambda^-$ and $m_i \in \Mcal_{\lambda_i}$ for $i=1,..., n$ such that $\lambda m = m_1 + \cdots + m_n$. We may write $\lambda = \lambda' (\mu')^{-1}$ with $\lambda', \mu' \in \Lambda'$ and $\lambda_i = \lambda'_i (\mu'_i)^{-1}$ similarly. Write $\mu^{(i)} = \mu'_1 \cdots \mu'_{i-1} \mu'_{i+1} \cdots \mu'_n$. Then,
$$
    (\lambda' \mu'_1 \cdots \mu'_n) m
    =
    \mu' (\mu'_1 \cdots \mu'_n) \lambda m
    = \sum_{i=1}^n \mu' \mu^{(i)} \cdot \mu'_i m_i
    \in \sum_{i=1}^n \Mcal_{\mu' \mu^{(i)} \lambda'_i} \subseteq \bigoplus_{\mu \in \Lambda'} \Mcal_{\mu}.
$$
This proves the first statement. The second statement follows from applying \Cref{lemma: vanishing Tor 1} \itemnumber{3} to $\Lambda'$ and $\bigoplus_{\lambda' \in \Lambda'} \Mcal_\lambda \subseteq \Mcal$.
\end{proof}

\begin{lemma}\label{lemma: vanishing Tor 3}
Let $\beta_1, ..., \beta_r \colon \Lambda \to \Z$ be linearly independent $\Z$-linear maps, and consider $\Lambda^- := \{ \lambda \in \Lambda: \beta_i(\lambda) \leq 0 $ for all $i \}$. Then:
\begin{enumerate}
    \item $\Lambda^-$ contains a free abelian monoid $\Lambda'$ of rank $d$ such that $\Lambda' (\Lambda')^{-1} = \Lambda$.
    \item If $\Mcal$ is an $R[\Lambda^-]$-module as in \Cref{lemma: vanishing Tor 2}, then $\Tor_q^{R[\Lambda^-]} (R(\chi), \Mcal) = 0$ for $q > 0$.
\end{enumerate}
\end{lemma}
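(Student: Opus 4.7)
For part (1), my plan is to exhibit a $\Z$-basis of $\Lambda$ lying inside $\Lambda^-$ via a direct lattice-geometric argument. The open cone $C := \{v \in \Lambda \otimes_{\Z} \R : \beta_i(v) < 0 \text{ for all } i\}$ is nonempty because $r \leq d$ and the $\beta_i$ are $\Z$-linearly independent, so the map $\Lambda \otimes_\Z \R \to \R^r$ sending $v$ to $(\beta_i(v))_i$ is surjective. By density of $\Lambda \otimes_\Z \Q$ in $\Lambda \otimes_\Z \R$, I would first find a rational point of $C$, then produce a primitive $v \in C \cap \Lambda$ by clearing denominators and dividing by a gcd. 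Extending $v$ to a $\Z$-basis $v, e_2, \ldots, e_d$ of $\Lambda$, I set $f_1 := v$ and $f_i := (k+1) v + e_i$ for $2 \leq i \leq d$, choosing $k$ large enough that $(k+1) \beta_j(v) + \beta_j(e_i) \leq 0$ for all $i, j$; this is possible because $\beta_j(v) < 0$. Then all $f_i$ lie in $\Lambda^-$, and their expansion in $v, e_2, \ldots, e_d$ is lower-triangular with $1$'s on the diagonal, so $f_1, \ldots, f_d$ is again a $\Z$-basis of $\Lambda$. The free abelian monoid $\Lambda'$ they generate inherits all the desired properties.

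For part (2), the plan is to reduce to a Koszul computation in a Laurent polynomial ring. Taking $\Lambda'$ from part (1), a first application of \Cref{lemma: vanishing Tor 2} followed by \Cref{lemma: vanishing Tor 1} \itemnumber{1} yields
\[
\Tor^{R[\Lambda^-]}_q(R(\chi), \Mcal) \cong \Tor^{R[\Lambda']}_q\left(R(\chi), \bigoplus_{\lambda \in \Lambda'} \Mcal_\lambda\right) \cong \Tor^{R[\Lambda]}_q(R(\chi), N),
\]
where $N := R[\Lambda] \otimes_{R[\Lambda']} \bigoplus_{\lambda \in \Lambda'} \Mcal_\lambda$. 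Under the identification $R[\Lambda] \cong R[X_1^{\pm 1}, \ldots, X_d^{\pm 1}]$ sending $X_i \leftrightarrow f_i$, we have $R(\chi) = R[\Lambda]/(X_i - \chi(f_i))_i$, and $(X_i - \chi(f_i))_i$ is a regular sequence in $R[\Lambda]$. The associated Koszul complex is therefore a free resolution of $R(\chi)$, so the rightmost Tor is computed by $H_*(K(X_1 - \chi(f_1), \ldots, X_d - \chi(f_d); N))$.

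The key structural observation is that $N$ carries a natural $\Lambda$-grading with $N_\lambda = \varinjlim_{\nu \in \Lambda'} \Mcal_{\lambda \nu}$, and invertibility of the $f_i$ in $R[\Lambda]$ forces $f_i \colon N_\lambda \to N_{f_i \lambda}$ to be an isomorphism on each graded piece, giving an $R[\Lambda]$-module isomorphism $N \cong R[\Lambda] \otimes_R N_e$ (where $e$ is the identity of $\Lambda$). The main technical obstacle will then be to show that $(X_1 - \chi(f_1), \ldots, X_d - \chi(f_d))$ acts as a regular sequence on $N$; I would argue this by induction, at each step observing that the successive quotient is $R[\Lambda / \langle f_1, \ldots, f_{i-1}\rangle] \otimes_R N_e$, still $\Lambda / \langle f_1, \ldots, f_{i-1}\rangle$-graded with $f_i$ acting as a shift by an element of infinite order (this is crucial and is exactly where the basis property of $f_1, \ldots, f_d$ from part (1) enters), and then deducing injectivity of $X_i - \chi(f_i)$ on that quotient by a direct graded-component calculation: the relation $(X_i - \chi(f_i)) m = 0$ forces $m_{f_i \lambda} = \chi(f_i)^{-1} f_i m_\lambda$ for all $\lambda$, so any nonzero homogeneous summand of $m$ would generate an infinite orbit of nonzero summands, contradicting finite support. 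Koszul homology will then vanish in positive degrees, and the desired Tor vanishing follows.
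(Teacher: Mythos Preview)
Your proof is correct, and both parts take a genuinely different route from the paper.

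For part (i), the paper simply cites a result from toric geometry (Cox--Little--Schenck), while you give a self-contained lattice argument: pick a primitive $v$ in the open negative cone, extend to a $\Z$-basis, and shear the remaining basis vectors by a large multiple of $v$. Your argument is more elementary and avoids an external reference; the paper's is shorter to state.

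For part (ii), the paper stays over the polynomial ring $R[\Lambda']$: after the reduction of \Cref{lemma: vanishing Tor 2}, it applies \Cref{lemma: vanishing Tor 4} (the regularity of $(X_i - a_i)$ on a $\Z_{\geq 0}^d$-graded module, which is the computational input taken from \cite{vanishing}) together with a Bourbaki reference for the passage from regular sequences to Tor vanishing. You instead apply \Cref{lemma: vanishing Tor 1}\,(i) to pass to the Laurent ring $R[\Lambda]$ and observe that the localized module $N$ is isomorphic to $R[\Lambda] \otimes_R N_e$; your infinite-orbit argument for Koszul regularity is then the Laurent-ring analogue of the paper's \Cref{lemma: vanishing Tor 4}. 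One remark: once you have $N \cong R[\Lambda] \otimes_R N_e$, the Tor vanishing is in fact immediate by flat base change along $R \to R[\Lambda]$ (take a free $R$-resolution $P_\bullet$ of $N_e$, so $R[\Lambda] \otimes_R P_\bullet$ resolves $N$, and $R(\chi) \otimes_{R[\Lambda]} R[\Lambda] \otimes_R P_\bullet \cong P_\bullet$ as $R$-complexes), so your Koszul step, while correct, could be shortened.
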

\begin{proof}
Part \itemnumber{1} is a consequence of \cite[Theorem 11.1.9]{cox_et_al_toric_varieties}.
Indeed, let $\sigma$ be the cone of $\Lambda \otimes_\Z \R$ defined by the same inequalities as $\Lambda^-$.
Let $\sigma'$ be any cone of maximal dimension in the refinement $\Sigma'$ obtained by applying this theorem to the fan $\Sigma$ consisting of $\sigma$ and its faces.
Then, $\sigma'$ is a smooth cone, which means that $\Lambda' := \sigma' \cap \Lambda$ is a free abelian monoid as described, thus proving \itemnumber{1}.
By \Cref{lemma: vanishing Tor 2}, part \itemnumber{2} is equivalent to $\Tor_q^{R[\Lambda']} (R(\chi), \bigoplus_{\lambda \in \Lambda'} \Mcal_\lambda) = 0$ for $q > 0$, which follows from \Cref{lemma: vanishing Tor 4} below and \cite[X.9.6, Proposition 5]{bourbaki_algebra}.
\end{proof}

\begin{lemma}\label{lemma: vanishing Tor 4}
Let $R$ be a ring, $R[X_1, ..., X_d]$ a polynomial ring in $d$ variables with its usual $\Z_{\geq 0}^d$ grading. Let $\Mcal = \bigoplus_{m \in \Z_{\geq 0}^d} \Mcal_m$ be a graded $R[X_1, ..., X_d]$-module. Let $a_1, ..., a_d \in R^\times$, and set $Y_i := X_i - a_i$. Then, the sequence $(Y_1, ..., Y_d)$ of endomorphisms of $\Mcal$ is regular, i.e. if $1 \leq j \leq d$, then the endomorphism of $\Mcal / \sum_{i=1}^{j-1} \Im(Y_i)$ induced by $Y_j$ is injective.
\end{lemma}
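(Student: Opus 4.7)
I will prove the statement by induction on $j$. The base case $j = 1$ asserts that $Y_1 = X_1 - a_1$ is injective on $\Mcal$. Writing $m = \sum_n m^n$ with $m^n \in \Mcal_n$, the degree-$n$ component of the equation $(X_1 - a_1)m = 0$ reads
\[
    a_1 m^n = X_1 m^{n-e_1},
\]
with the convention that $m^{n-e_1} = 0$ when $n_1 = 0$. Taking $n_1 = 0$ and using that $a_1 \in R^\times$ forces $m^{(0, n_2, \ldots, n_d)} = 0$, and induction on $n_1$ then forces every $m^n$ to vanish, so $m = 0$.

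For the inductive step, the idea is that the quotient $\bar \Mcal_j := \Mcal / \sum_{i<j} Y_i \Mcal$ carries a natural $\Z_{\geq 0}^{d-j+1}$-grading that reduces everything to the base case. The key observation is that $Y_1, \ldots, Y_{j-1}$ only alter the first $j-1$ components of the multidegree. Concretely, for $\bar n \in \Z_{\geq 0}^{d-j+1}$ set
\[
    \Mcal'_{\bar n} := \bigoplus_{n_1, \ldots, n_{j-1} \geq 0} \Mcal_{(n_1, \ldots, n_{j-1}, \bar n)}.
\]
Each $Y_i$ with $i < j$ preserves this decomposition, so
\[
    \bar \Mcal_j = \bigoplus_{\bar n \in \Z_{\geq 0}^{d-j+1}} \Mcal'_{\bar n} \big/ \textstyle\sum_{i<j} Y_i \Mcal'_{\bar n}
\]
acquires the structure of a $\Z_{\geq 0}^{d-j+1}$-graded $R[X_j, \ldots, X_d]$-module. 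Because $X_j$ shifts the first coordinate of this bar-grading by $1$ while $a_j$ preserves it, the action of $Y_j = X_j - a_j$ on $\bar \Mcal_j$ has exactly the same shape as the action of $Y_1$ on a graded module in the base case. Applying the base-case argument to $\bar \Mcal_j$ then yields injectivity of $Y_j$.

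I do not expect a serious obstacle. The only point requiring verification is that the coarse grouping of $\Mcal$ by bar-multidegree descends to an honest grading on $\bar \Mcal_j$, which is immediate from the fact that each $Y_i$ with $i < j$ is bar-homogeneous of degree zero. The hypothesis $a_i \in R^\times$ is used precisely (and only) to conclude $m^{(0, n_2, \ldots, n_d)} = 0$ from $a_1 m^{(0, n_2, \ldots, n_d)} = 0$ in the base case.
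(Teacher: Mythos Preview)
Your argument is correct. The paper itself does not spell out a proof but simply remarks that the statement generalizes \cite[Theorem 2.6]{vanishing} and that ``the same proof works''; your direct degree-by-degree argument, reducing the inductive step to the base case via the coarse $\Z_{\geq 0}^{d-j+1}$-grading on $\Mcal/\sum_{i<j}Y_i\Mcal$, is exactly the kind of elementary verification one would expect there and is self-contained.
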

\begin{proof}
This statement is a generalisation of \cite[Theorem 2.6]{vanishing}, and the same proof applies to this level of generality.
\end{proof}

\begin{lemma}\label{lemma: vanishing Tor 5}
Let $\boldLambda$ be a free abelian group of finite rank containing $\Lambda$ as a finite index subgroup, $\boldLambda^-$ a submonoid of $\boldLambda$ such that $\boldLambda^- (\boldLambda^-)^{-1} = \boldLambda$ and $\boldLambda = \boldLambda^- \Lambda$.
Let
\linebreak $\Lambda^- = \Lambda \cap \boldLambda^-$.
Assume that the index $[ \boldLambda : \Lambda ]$ is invertible in $R$.
Then, for any $R[\boldLambda^-]$-module $\Mcal$, character $\chi \colon \boldLambda \to R^\times$ and $q \in \Z$, the natural $R[\boldLambda]$-module homomorphism $\Tor^{R[\Lambda^-]}_q(R(\chi), \Mcal) \to \Tor^{R[\boldLambda^-]}_q(R(\chi), \Mcal)$ identifies the target with a direct summand of the source.
\end{lemma}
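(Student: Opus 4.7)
The plan is to reduce via \Cref{lemma: vanishing Tor 1} \itemnumber{1} to a statement about the genuine group algebras $R[\Lambda]$ and $R[\boldLambda]$, and then exploit flatness of $R[\boldLambda]$ over $R[\Lambda]$ together with averaging. Set $n := [\boldLambda : \Lambda]$. I would first verify that $\Lambda^-(\Lambda^-)^{-1} = \Lambda$, so that \Cref{lemma: vanishing Tor 1} \itemnumber{1} applies to $\Lambda^-$: given $\lambda \in \Lambda$, write $\lambda = \mu_1 \mu_2^{-1}$ with $\mu_i \in \boldLambda^-$; then $\mu_2^n \in \Lambda \cap \boldLambda^- = \Lambda^-$ and $\lambda \mu_2^n = \mu_1 \mu_2^{n-1}$ lies both in $\Lambda$ and in $\boldLambda^-$, hence in $\Lambda^-$. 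Setting $N := R[\Lambda] \otimes_{R[\Lambda^-]} \Mcal$ and $N' := R[\boldLambda] \otimes_{R[\boldLambda^-]} \Mcal$, \Cref{lemma: vanishing Tor 1} \itemnumber{1} then identifies the natural map with a map
\[
    \Tor_q^{R[\Lambda]}(R(\chi|_\Lambda), N) \to \Tor_q^{R[\boldLambda]}(R(\chi), N').
\]
The canonical homomorphism $N \to N'$ is moreover an isomorphism of $R[\boldLambda]$-modules, since both are the same localization of $\Mcal$: every $\mu \in \boldLambda^-$ satisfies $\mu^n \in \Lambda^-$, so inverting $\Lambda^-$ already inverts $\boldLambda^-$.

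The key step uses that $R[\boldLambda]$ is a free (hence flat) $R[\Lambda]$-module of rank $n$, with basis any set of coset representatives. Thus a projective resolution $P_\bullet \to R(\chi|_\Lambda)$ over $R[\Lambda]$ base changes to a projective resolution
\[
    P_\bullet \otimes_{R[\Lambda]} R[\boldLambda] \to \Ind_{\Lambda}^{\boldLambda} R(\chi|_\Lambda) := R[\boldLambda] \otimes_{R[\Lambda]} R(\chi|_\Lambda)
\]
over $R[\boldLambda]$, yielding a flat base change isomorphism
\[
    \Tor_q^{R[\Lambda]}(R(\chi|_\Lambda), N) \simto \Tor_q^{R[\boldLambda]}(\Ind_\Lambda^{\boldLambda} R(\chi|_\Lambda), N).
\]
Tracing the natural map through this identification --- the point being that any lift $P_\bullet \otimes_{R[\Lambda]} R[\boldLambda] \to Q_\bullet$ covering the canonical surjection $\pi \colon \Ind_\Lambda^{\boldLambda} R(\chi|_\Lambda) \to R(\chi)$, $\mu \otimes 1 \mapsto \chi(\mu)$, restricts along $P_\bullet \into P_\bullet \otimes_{R[\Lambda]} R[\boldLambda]$ to a chain map lifting the identity $R(\chi|_\Lambda) = R(\chi)$ of $R[\Lambda]$-modules --- it becomes $\Tor_q^{R[\boldLambda]}(\pi, \id)$.

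Finally, since $n$ is invertible in $R$, averaging produces an $R[\boldLambda]$-linear section of $\pi$. Choosing coset representatives $\sigma_1, \dots, \sigma_n$ for $\boldLambda/\Lambda$ inside $\boldLambda^-$ (available by the hypothesis $\boldLambda = \boldLambda^-\Lambda$), set $s(1) := \tfrac{1}{n} \sum_i \chi(\sigma_i)^{-1} \sigma_i \otimes 1 \in \Ind_\Lambda^{\boldLambda} R(\chi|_\Lambda)$. Writing $\mu \sigma_i = \sigma_{\tau(i)} \lambda_i$ with $\tau$ a permutation of $\{1,\dots,n\}$ and $\lambda_i \in \Lambda$, a short computation gives $\mu \cdot s(1) = \chi(\mu) s(1)$ and $\pi \circ s = \id$. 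Applying $\Tor_q^{R[\boldLambda]}(-, N)$, the map $\Tor_q^{R[\boldLambda]}(s, \id)$ is an $R[\boldLambda]$-linear section of the natural map, exhibiting the target as a direct summand of the source. The main delicate point is the bookkeeping identifying the natural map with $\Tor(\pi, \id)$ after the two applications of \Cref{lemma: vanishing Tor 1} \itemnumber{1} and the flat base change; once this is in place, the rest is either formal or the elementary averaging computation.
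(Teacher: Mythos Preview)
Your proof is correct and follows essentially the same strategy as the paper: reduce via \Cref{lemma: vanishing Tor 1}\,\itemnumber{1} to the group algebras, use that $R[\boldLambda]$ is free of rank $[\boldLambda:\Lambda]$ over $R[\Lambda]$, and produce the splitting by averaging over coset representatives. The only cosmetic difference is that the paper starts from a free $R[\boldLambda]$-resolution $P_\bullet$ of $R(\chi)$ and sections the chain map $P_\bullet \otimes_{R[\Lambda]} R[\boldLambda] \to P_\bullet$, $p \otimes \lambda \mapsto \lambda p$, by $p \mapsto \tfrac{1}{[\boldLambda:\Lambda]}\sum_{\lambda \in \boldLambda/\Lambda} \lambda p \otimes \lambda^{-1}$, whereas you package the same averaging as a section of the surjection $\pi \colon \Ind_\Lambda^{\boldLambda} R(\chi|_\Lambda) \to R(\chi)$ in the first variable of $\Tor$.
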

\begin{proof}
We begin by observing that for any $R[\boldLambda^-]$-module $M$, the natural map
$$
    R[\Lambda] \otimes_{R[\Lambda^-]} M \to R[\boldLambda] \otimes_{R[\boldLambda^-]} M
$$
is an isomorphism.
Indeed, as $R[\Lambda] \otimes_{R[\Lambda^-]} M = R[\Lambda] \otimes_{R[\Lambda^-]} R[\boldLambda^-] \otimes_{R[\boldLambda^-]} M$, it's enough to prove that the natural map $R[\Lambda] \otimes_{R[\Lambda^-]} R[\boldLambda^-] \to R[\boldLambda]$ is an isomorphism. This is a localisation of the $R[\Lambda^-]$-module inclusion $R[\boldLambda^-] \into R[\boldLambda]$, and hence is injective, and surjectivity follows from the condition $\boldLambda = \boldLambda^- \Lambda$.
Apply this to $\Mcal$ to see that, if $\Ncal = R[\boldLambda] \otimes_{R[\boldLambda^-]} \Mcal$, then by \Cref{lemma: vanishing Tor 1} \itemnumber{1} it's enough to show that the homomorphism
$$
    \Tor^{R[\Lambda]}_q(R(\chi), \Ncal) \to \Tor^{R[\boldLambda]}_q(R(\chi), \Ncal)
$$
identifies the target with an $R[\boldLambda]$-module direct summand of the source.
Let $P_\bullet$ be a resolution of $R(\chi)$ by free $R[\boldLambda]$-modules (in particular by flat $R[\Lambda]$-modules).
It's enough to show that the natural map $P_\bullet \otimes_{R[\Lambda]} \Ncal \to P_\bullet \otimes_{R[\boldLambda]} \Ncal$ identifies the target with a direct summand of the source.
This holds since $P_\bullet \otimes_{R[\Lambda]} \Ncal$ is isomorphic to $(P_\bullet \otimes_{R[\Lambda]} R[\boldLambda]) \otimes_{R[\boldLambda]} \Ncal$ and the map $p \otimes \lambda \mapsto \lambda p \colon P_\bullet \otimes_{R[\Lambda]} R[\boldLambda] \to P_\bullet$ admits the section $\frac1{[\boldLambda : \Lambda]} \sum_{\lambda \in \boldLambda / \Lambda} \lambda p \otimes \lambda^{-1} \mapsfrom p$.
\end{proof}

Putting everything together, we obtain the main result of this section.

\begin{proposition}\label{proposition: vanishing Tor 6}
Let $R,\boldLambda, \boldLambda^-, \Lambda, \Lambda^-, \Mcal$ and $\chi$ be as in \Cref{lemma: vanishing Tor 5}. Assume that the submonoid $\Lambda^-$ of $\Lambda$ is of the form described in \Cref{lemma: vanishing Tor 3} \itemnumber{1} and that $\Mcal$ satisfies
\linebreak the conditions of \Cref{lemma: vanishing Tor 2} for $\boldLambda^-$. More specifically, $\Mcal$ admits a decomposition
\linebreak $\Mcal = \Mcal^+ \oplus \bigoplus_{\lambda \in \boldLambda^-} \Mcal_\lambda$ as $R$-modules satisfying
\begin{enumerate}[label=(\alph*)]
    \item\label{item: lemma Tor 5 a} $\lambda \Mcal_\mu \subseteq \Mcal_{\lambda \mu}$ for $\lambda, \mu \in \boldLambda^-$,
    \item\label{item: lemma Tor 5 b} for all $m \in \Mcal$ there exists $\lambda \in \boldLambda^-$ such that $\lambda m \in \bigoplus_{\lambda \in \boldLambda^-} \Mcal_\lambda$.
\end{enumerate}
Then, $\Tor_q^{R[\boldLambda^-]} (R(\chi), \Mcal) = 0$ for $q > 0$.
\end{proposition}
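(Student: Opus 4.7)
The plan is to use Lemma \ref{lemma: vanishing Tor 5} to reduce to showing $\Tor_q^{R[\Lambda^-]}(R(\chi), \Mcal) = 0$ for $q > 0$, since that lemma identifies $\Tor_q^{R[\boldLambda^-]}(R(\chi), \Mcal)$ with a direct summand of this group. Set $N = [\boldLambda : \Lambda]$; then $\mu^N \in \Lambda \cap \boldLambda^- = \Lambda^-$ for every $\mu \in \boldLambda^-$, and combined with \Cref{item: lemma Tor 5 b} this shows that for each $m \in \Mcal$ some $\lambda \in \Lambda^-$ sends $m$ into the $R[\boldLambda^-]$-submodule $\Mcal^\sim := \bigoplus_{\nu \in \boldLambda^-} \Mcal_\nu$. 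Lemma \ref{lemma: vanishing Tor 1}\itemnumber{3} applied to $\Lambda^-$ then yields the isomorphism $\Tor_q^{R[\Lambda^-]}(R(\chi), \Mcal^\sim) \simto \Tor_q^{R[\Lambda^-]}(R(\chi), \Mcal)$, so it suffices to handle $\Mcal^\sim$.

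The next step is to decompose $\Mcal^\sim$ by the cosets of $\Lambda$ in $\boldLambda$: $\Mcal^\sim = \bigoplus_{c \in \boldLambda/\Lambda} \Mcal^{(c)}$ with $\Mcal^{(c)} := \bigoplus_{\nu \in \boldLambda^- \cap c} \Mcal_\nu$. This is a direct sum decomposition of $R[\Lambda^-]$-modules because $\Lambda^-$ preserves each coset. For the trivial coset $c = e$, the summand $\Mcal^{(e)} = \bigoplus_{\lambda \in \Lambda^-} \Mcal_\lambda$ is already $\Lambda^-$-graded and trivially satisfies the hypotheses of Lemma \ref{lemma: vanishing Tor 2} with $\Mcal^+ = 0$, so Lemma \ref{lemma: vanishing Tor 3}\itemnumber{2} yields $\Tor_q^{R[\Lambda^-]}(R(\chi), \Mcal^{(e)}) = 0$ for $q > 0$.

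The main remaining point, which is also the principal obstacle, is to treat the nontrivial cosets: $\Mcal^{(c)}$ need not carry any natural $\Lambda^-$-grading, since there may be no ``fundamental domain'' $T \subseteq \boldLambda^-$ for the $\Lambda^-$-action (e.g.\ when $\boldLambda = \Z^2$, $\boldLambda^- = \Z_{\leq 0}^2$ and $\Lambda$ is the index-two subgroup $\{(a,b) : a + b \equiv 0 \pmod 2\}$, no element of the nontrivial coset of $\boldLambda^-$ dominates both $(0,-1)$ and $(-1,0)$ componentwise). The trick is to identify $\Mcal^{(c)}$ and $\Mcal^{(e)}$ at the level of $\Tor$ using the $\boldLambda^-$-action: choose $\mu \in \boldLambda^- \cap c^{-1}$ and $\mu' \in \boldLambda^- \cap c$, which exist by $\boldLambda = \boldLambda^- \Lambda$. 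Multiplication by $\mu$ and by $\mu'$ commutes with the $\Lambda^-$-action (as $\boldLambda$ is abelian) and gives $R[\Lambda^-]$-linear maps $\Mcal^{(c)} \to \Mcal^{(e)}$ and $\Mcal^{(e)} \to \Mcal^{(c)}$ whose compositions in either order are multiplication by $\mu\mu' \in \Lambda^-$. Since $\mu\mu'$ acts on $R(\chi)$ through $\chi(\mu\mu') \in R^\times$, balancedness of $\otimes^\mathbb{L}_{R[\Lambda^-]}$ shows that the endomorphism of $\Tor_q^{R[\Lambda^-]}(R(\chi), -)$ induced by multiplication by $\mu\mu'$ on the second argument is the scalar $\chi(\mu\mu')$; hence the map induced by $\mu$ is an isomorphism $\Tor_q^{R[\Lambda^-]}(R(\chi), \Mcal^{(c)}) \simto \Tor_q^{R[\Lambda^-]}(R(\chi), \Mcal^{(e)})$, and the left-hand side vanishes for $q > 0$ by the previous paragraph, concluding the proof.
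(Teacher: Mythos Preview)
Your proof is correct, and the route you take is genuinely different from the paper's, though both rest on the same preliminary lemmas. The paper first passes, via \Cref{lemma: vanishing Tor 1}\itemnumber{1} and \Cref{lemma: vanishing Tor 5}, to $\Tor^{R[\Lambda]}_q(R(\chi), R[\Lambda]\otimes_{R[\Lambda^-]}\Mcal)$, then replaces $\Mcal$ by $\Mcal^\sim$ inside the localization and equips $\Ncal := R[\Lambda]\otimes_{R[\Lambda^-]}\Mcal^\sim$ with a full $\boldLambda$-grading; after localization each coset piece $\Ncal_L$ is graded by all of $\Lambda$, so one can shift by any base point and verify the hypotheses of \Cref{lemma: vanishing Tor 2} for $\Lambda^-$ directly, whence \Cref{lemma: vanishing Tor 3}\itemnumber{2} applies uniformly to every coset.

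You instead stay over $R[\Lambda^-]$ throughout, handle the trivial coset by \Cref{lemma: vanishing Tor 3}\itemnumber{2}, and for a nontrivial coset $c$ use the $R[\Lambda^-]$-linear multiplication maps by $\mu\in\boldLambda^-\cap c^{-1}$ and $\mu'\in\boldLambda^-\cap c$ to build mutually quasi-inverse maps on Tor (the composite being multiplication by $\mu\mu'\in\Lambda^-$, which acts as the unit $\chi(\mu\mu')$ by balancedness). This sidesteps both the localization step and the construction of a $\boldLambda$-grading on $\Ncal$, at the price of the small extra argument about how $\mu\mu'$ acts on Tor. The paper's approach has the virtue of treating all cosets on equal footing once one has localized; yours is a bit more hands-on but arguably more elementary, and it makes transparent exactly why the nontrivial-coset pieces contribute nothing new.
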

\begin{proof}
By \Cref{lemma: vanishing Tor 1} and \Cref{lemma: vanishing Tor 5} it's enough to show that $\Tor^{R[\Lambda]}_q(R(\chi), R[\Lambda] \otimes_{R[\Lambda^-]} \Mcal)$ vanishes for $q > 0$.
Recall from the proof of \Cref{lemma: vanishing Tor 2} that there is a natural isomorphism
$$
    R[\Lambda] \otimes_{R[\Lambda^-]} \bigoplus_{\lambda \in \boldLambda^-} \Mcal_\lambda \simto R[\Lambda] \otimes_{R[\Lambda^-]} \Mcal.
$$
Write $\Ncal$ for the source of the isomorphism. Then, $\Ncal$ admits a direct sum decomposition $\Ncal = \bigoplus_{\lambda \in \boldLambda} \Ncal_\lambda$, where $\Ncal_\lambda$ is the $R$-submodule generated by $\mu \otimes m$ with $m \in \Mcal_{\mu^{-1} \lambda}$ and $\mu \in \Lambda$ such that $\mu^{-1} \lambda \in \Lambda^-$.
For every coset $L$ in $\boldLambda / \Lambda$, the $R[\Lambda]$-submodule $\Ncal_L = \bigoplus_{\lambda \in L} \Ncal_\lambda$ satisfies the assumptions of \Cref{lemma: vanishing Tor 2}, and hence $\Tor^{R[\Lambda]}_q(R(\chi), \Ncal_L) = 0$ for $q > 0$ by \Cref{lemma: vanishing Tor 3}. In particular, $\Tor^{R[\Lambda]}_q(R(\chi), \Ncal) = 0$ for $q > 0$.
\end{proof}

\section{The locally analytic representations}
\label{section: locally analytic}

In this section we will introduce the locally analytic representations whose homology we will be studying in the rest of the article and study some of their properties.
For example, we will prove \Cref{theorem: introduction 4}.

\subsection{Locally \texorpdfstring{$\Sigma$}{Sigma}-analytic and compact induction}
\label{subsection: induction 1}

Our ultimate goal is to study the $S$-arithmetic (co)homology of $p$-adic representations of products of $p$-adic and $\ell$-adic groups for primes $\ell \neq p$.
We want the corresponding representations of $p$-adic groups to be locally analytic, and the corresponding representations of $\ell$-adic groups to be smooth.
For this reason, we introduce the following framework of locally $\Sigma$-analytic representations, which will allow us to work simultaneously with both types of representations.

Recall that a Hausdorff topological space $X$ is called strictly paracompact if every open cover has a refinement by pairwise disjoint open subsets.
A locally $\Sigma$-analytic manifold is a pair $X = (X_\Sigma, X^\Sigma)$ where $X_\Sigma$ is a (strictly) paracompact $p$-adic locally analytic manifold and $X^\Sigma$ is a Hausdorff topological space that can be written as a disjoint union of compact, totally disconnected subspaces.
We will usually write $X = X_\Sigma \times X^\Sigma$.
A morphism of locally $\Sigma$-analytic manifolds $f \colon X \to Y$ consists of a pair of a morphism $f_\Sigma \colon X_\Sigma \to Y_\Sigma$ of locally analytic manifolds and a continuous map $f^\Sigma \colon X^\Sigma \to Y^\Sigma$.
We may similarly define a locally $\Sigma$-analytic group as a pair of a $p$-adic locally analytic group and a locally profinite group with the property that their quotients by some (or equivalently, any) compact open subgroups are countable (these spaces are then automatically strictly paracompact).
We will start by defining locally $\Sigma$-analytic functions on a locally $\Sigma$-analytic manifold $X$, which stated roughly are those that are locally analytic in $X_\Sigma$ and smooth in $X^\Sigma$.
If $Y$ is an open subset of $X_\Sigma$ equipped with an isomorphism $\mathbb Y(\Q_p) \simto Y$, where $\mathbb Y$ is a rigid $\Q_p$-analytic closed ball, and $U$ is an open compact subset of $X$, we will say that a function from $Y \times U$ to a ($p$-adic) Banach space $W$ is $\mathbb Y \times U$-analytic if it factors through projection to $Y$ and is given by a rigid $\Q_p$-analytic function of $\mathbb Y$, i.e. by an element of $\O(\mathbb Y) \cotimes_{\Q_p} W$.
If $V$ is a Hausdorff locally convex $\Q_p$-vector space, we define the space of locally $\Sigma$-analytic functions $X \to V$ as the direct limit
\begin{align}\label{eqn: definining sigma analytic}
    \Ccal^{\Sigmala}(X, V) = \varinjlim_{ \{ (\mathbb X_i, U_j, W_i)_{(i,j) \in I \times J} \} } \prod_{(i,j) \in I \times J} \O(\mathbb X_i) \cotimes_{\Qp} W_i
\end{align}
running over all collections of triples $(\mathbb X_i, U_j, W_i)_{(i,j) \in I \times J}$, where $( \mathbb X_i )_{i \in I}$ is an analytic partition of $X_\Sigma$ in the sense of \cite[Section 2.1]{emerton_locally_analytic_vectors}, $(U_j)_{j \in J}$ is a partition of $X^\Sigma$ into pairwise disjoint open compact subspaces, and $W_i \into V$ is a BH-subspace of $V$ for each $i \in I$ (i.e. an injective continuous map from a Banach space into $V$).
We equip $\Ccal^{\Sigmala}(X, V)$ with the locally convex direct limit topology.
We identify an element $f$ of $\Ccal^{\Sigmala}(X, V)$ with a continuous function $X \to V$: if $f$ belongs to an element of the product in \Cref{eqn: definining sigma analytic}, say $f = (\sum_m f_{i,j,m} \otimes w_{i,j,m})_{i,j}$, and $x \in X$ belongs to $\mathbb X_i(\Q_p) \times U_j$, then $f(x) = \sum_m f_{i,j,m}(x) w_{i,j,m}$.
If $X^\Sigma$ is compact, then there is a topological isomorphism $\Ccal^{\Sigmala}(X, V) \simeq \Ccal^\sm(X^\Sigma, \Ccal^\la(X_\Sigma, V))$.

Now, let $G = G_\Sigma \times G^\Sigma$ be a locally $\Sigma$-analytic group.
If $V$ is endowed with a continuous action of $G$, we will say that this action is locally $\Sigma$-analytic if for all $v \in V$ the orbit map $g \mapsto g v$ is locally $\Sigma$-analytic.
We do not impose any conditions on the topology of $V$ such as being of compact type.

If $H \subseteq G$ is a closed (locally $\Sigma$-analytic) subgroup and $V$ is a locally $\Sigma$-analytic representation of $H$ on a locally convex $\Qp$-vector space, we write $\Ind_{H}^G (V)^\Sigmala$ for the locally analytic induction
$$
    \Ind_{H}^G (V)^\Sigmala = \{ f \in \Ccal^\Sigmala(G, V): f(gh) = h^{-1} f(g) \text{ for all } g \in G, h \in H \},
$$
and equip it with the topology it inherits as a closed subspace of $\Ccal^\Sigmala(G, V)$. The group $G$ acts on $\Ind_{H}^G (V)^\Sigmala$ by left translations: $(gf)(g') = f(g^{-1} g')$ for $g, g' \in G$ and 
$f \in \Ind_H^G(V)^\Sigmala$.
Given a subset $\Omega \subseteq G$, we will write $\Ind_H^G (V)^{\Sigmala}(\Omega)$ for the subspace of $\Ind_H^G (V)^{\Sigmala}$ consisting of functions $f$ whose support $\supp(f)$ is contained in $\Omega$.

We are also interested in the compact induction
\begin{align*}
    \cInd_H^G (V) = \{ & f \colon G \to V: f(gh) = h^{-1} f(g) \text{ for all } g \in G, h \in H, \\
    & \supp(f) / H \text{ is compact } \}
\end{align*}
with its action of $G$ by left translations.
Again, for $\Omega \subseteq G$, we will write $\cInd_H^G (V)(\Omega)$ for the subspace of $\cInd_H^G (V)$ consisting of functions whose support is contained in $\Omega$.
We equip $\cInd_H^G (V)$ with its locally convex direct limit topology obtained from the isomorphism
$$
    \cInd_H^G (V) \simeq \varinjlim_{\Omega} \cInd_H^G (V)(\Omega),
$$
where $\Omega$ runs through finite unions of left cosets of $H$ in $G$.
If $V$ has its finest locally convex topology (for example, if $V$ is finite-dimensional and Hausdorff), then the same is true of $\cInd_H^G (V)$.
Given $g \in G$ and $v \in V$, we write $[g,v]$ for the element of $\cInd_H^G (V)$ satisfying $\supp([g, v]) = g H$ and $[g,v](g) = v$.

The following lemma guarantees that our definition of $\Sigma$-analytic induction agrees with the example in the introduction.

\begin{lemma}\label{lemma: induction commutes with tensor products}
Assume that $G_{\Sigma} = G_{\Sigma, 1} \times G_{\Sigma,2}$ and $G^\Sigma = G^\Sigma_1 \times G^\Sigma_2$. For $i=1,2$, let $G_i = G_{\Sigma, i} \times G^\Sigma_i$ and let $H_{\Sigma,i}, H^\Sigma_i$ be closed subgroups of $G_{\Sigma,i}$, $H^\Sigma_i$, $H_i = H_{\Sigma, i} \times H^\Sigma_i$, and $V_i$ be a locally $\Sigma$-analytic representation of $H_i$ on a Hausdorff locally convex $E$-vector space of compact type, where $E$ is some finite extension of $\Q_p$.
\begin{enumerate}
    \item If $H_i$ is open in $G_i$ for $i=1,2$, then there is a topological isomorphism
    $$
        \cInd_{H_1}^{G_1} (V_1) \hat\boxtimes_E \cInd_{H_2}^{G_2} (V_2)
        \simto
        \cInd_{H_1 \times H_2}^{G_1 \times G_2} (V_1 \hat\boxtimes_E V_2).
    $$
    \item If $G_i / H_i$ is compact for $i=1,2$, then there is a topological isomorphism
    $$
        \Ind_{H_1}^{G_1} (V_1)^\Sigmala \hat{\boxtimes}_E \Ind_{H_2}^{G_2} (V_2)^\Sigmala
        \simto
        \Ind_{H_1 \times H_2}^{G_1 \times G_2} (V_1 \hat\boxtimes_E V_2)^\Sigmala.
    $$
\end{enumerate}
\end{lemma}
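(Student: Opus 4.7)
In both (1) and (2), the natural candidate is the continuous linear extension of the bilinear pairing $(f_1, f_2) \mapsto f_1 \boxtimes f_2$, where $(f_1 \boxtimes f_2)(g_1, g_2) := f_1(g_1) \otimes f_2(g_2)$. This pairing is jointly continuous, $(G_1 \times G_2)$-equivariant, and lands in the target induction space: to see that $f_1 \boxtimes f_2$ is locally $\Sigma$-analytic when $f_1, f_2$ are, one combines the obvious Fubini-type compatibility on products of analytic charts with the product of partitions into clopens in the $\Sigma$-direction. By the universal property of the completed projective tensor product, this factors through a continuous linear map $\Psi$ into the induction on the product group. The task is then to show that $\Psi$ is a \emph{topological} isomorphism.

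For (1), openness of $H_i$ makes $G_i/H_i$ discrete, and a choice of coset representatives gives a topological isomorphism $\cInd_{H_i}^{G_i}(V_i) \simeq \bigoplus_{G_i/H_i} V_i$ with the locally convex direct sum topology (sending $[g,v]$ to $v$ in the summand indexed by $gH_i$). Under these identifications $\Psi$ becomes the canonical map
\[
    \Bigl( \bigoplus_{k_1} V_1 \Bigr) \hat\boxtimes_E \Bigl( \bigoplus_{k_2} V_2 \Bigr) \to \bigoplus_{(k_1,k_2)} (V_1 \hat\boxtimes_E V_2),
\]
which is a topological isomorphism because the completed projective tensor product commutes with locally convex direct sums.

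For (2), compactness of $G_i/H_i$ allows us, after choosing a sufficiently small compact open subgroup $J_i \subseteq G_i$ and coset representatives $g_{i,1}, \dots, g_{i,n_i}$, to produce a finite clopen $\Sigma$-analytic partition $G_i/H_i = \bigsqcup_k g_{i,k} J_i/(J_i \cap H_i)$. Restriction then identifies $\Ind_{H_i}^{G_i}(V_i)^\Sigmala$ topologically with the closed subspace of $\bigoplus_{k=1}^{n_i} \Ccal^\Sigmala(J_i, V_i)$ cut out by $(J_i \cap H_i)$-equivariance. After this identification $\Psi$ reduces to a finite direct sum of maps of the form
\[
    \Ccal^\Sigmala(J_1, V_1) \hat\boxtimes_E \Ccal^\Sigmala(J_2, V_2) \simto \Ccal^\Sigmala(J_1 \times J_2, V_1 \hat\boxtimes_E V_2),
\]
which is a K\"unneth-type topological isomorphism for $\Ccal^\Sigmala$ of compact locally $\Sigma$-analytic manifolds (combining the standard locally analytic K\"unneth formula for $\Ccal^\la$ with the smooth one for $\Ccal^\sm$ in the $\Sigma$-direction). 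The equivariance conditions match on the product side thanks to $(J_1 \cap H_1) \times (J_2 \cap H_2) = (J_1 \times J_2) \cap (H_1 \times H_2)$. The main obstacle is precisely this last step: verifying that passing to the closed equivariant subspace commutes with the completed tensor product \emph{topologically}, not just algebraically. This relies on the compact type hypothesis on the $V_i$, which ensures that $\Ccal^\Sigmala(J_i, V_i)$ is itself of compact type and hence that $\hat\boxtimes_E$ is exact on closed subspaces in the relevant sense.
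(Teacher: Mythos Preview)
Your approach is correct in outline and close to the paper's, but part~(2) diverges in a way worth noting. For~(1) both arguments are essentially the same: reduce to a direct sum over coset representatives and use that $\hat\boxtimes_E$ commutes with countable locally convex direct sums of compact-type spaces. The paper makes this explicit by writing each side as a compact-type inductive limit (cofinal over finite coset sets and Banach approximations of $V_i$) and invoking \cite[Proposition~1.1.32(i)]{emerton_locally_analytic_vectors}; your statement ``completed projective tensor product commutes with locally convex direct sums'' is the same fact, just less unpacked.

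For~(2) the paper avoids your ``main obstacle'' entirely. Instead of restricting to compact opens $J_i$ and cutting out an equivariant closed subspace of $\bigoplus_k \Ccal^{\Sigmala}(J_i,V_i)$, the paper chooses locally $\Sigma$-analytic \emph{sections} $s_i \colon G_i/H_i \to G_i$ of the quotient map, which give topological isomorphisms $\Ind_{H_i}^{G_i}(V_i)^{\Sigmala} \simeq \Ccal^{\Sigmala}(G_i/H_i, V_i)$ with no residual equivariance condition. The problem then reduces cleanly to a K\"unneth isomorphism $\Ccal^{\Sigmala}(X_1,V_1)\,\hat\boxtimes_E\,\Ccal^{\Sigmala}(X_2,V_2) \simeq \Ccal^{\Sigmala}(X_1 \times X_2, V_1 \hat\boxtimes_E V_2)$ for compact $X_i = G_i/H_i$, which the paper proves by writing both sides as compact-type inductive limits over cofinal systems of analytic partitions and again applying \cite[Proposition~1.1.32(i)]{emerton_locally_analytic_vectors}. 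Your route is not wrong, but the step you flag---that $\hat\boxtimes_E$ commutes with passing to the closed equivariant subspace---requires a genuine exactness argument for completed tensor products of compact-type spaces, and you only gesture at why the compact-type hypothesis suffices. The paper's use of sections buys you a shorter proof that never touches this exactness question.
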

\begin{proof}
Let us start with the case of compact induction.
The morphism we seek sends $f_1 \otimes f_2$ to the map $(g_1, g_2) \mapsto f_1(g_1) \otimes f_2(g_2)$.
We must check that this is indeed a topological isomorphism.
As $H_i$ is open, a (right) $H$-equivariant function $G_i \to V_i$ has compact support if and only if its support is a finite union of left cosets of $H_i$.
Write each $V_i$ as a countable direct limit $V_i = \varinjlim_n V_{i,n}$ of Banach spaces with compact injective transition maps and choose a countable increasing system $(\Omega_{i,n})_n$ of finite sets of left $H_i$-cosets whose union is $G_i$.
Then,
$$
   \cInd_{H_i}^{G_i} (V_i) \simeq \varinjlim_\Omega \cInd_{H_i}^{G_i} (V_i)(\Omega) \simeq \varinjlim_{n} \cInd_{H_i}^{G_i} (V_{i,n})(\Omega_{i,n})
$$
and the transition maps are injective and compact, and we have a similar expression for the compact induction to $G_1 \times G_2$.
Thus, \cite[Proposition 1.1.32 (i)]{emerton_locally_analytic_vectors} implies that
\begin{align*}
    \cInd_{H_1}^{G_1} (V_1) \cotimes_E \cInd_{H_2}^{G_2} (V_2)
    & \simeq
    \varinjlim_{n} \cInd_{H_1}^{G_1} (V_{1,n})(\Omega_{1,n}) \cotimes_E \cInd_{H_2}^{G_2} (V_{2,n})(\Omega_{2,n})
    \\ & \simeq
    \varinjlim_{n} \cInd_{H_1 \times H_2}^{G_1 \times G_2} (V_{1,n} \cotimes_E V_{2,n})(\Omega_{1,n} \times \Omega_{2,n})
    \\ & \simeq
    \cInd_{H_1 \times H_2}^{G_1 \times G_2} (V_1 \cotimes_E V_2),
\end{align*}
as required (for the second isomorphism note that, for example, $\cInd_{H_1}^{G_1} (V_{1,n})(\Omega_{1,n})$ is isomorphic to a direct sum of $|\Omega_{1,n} / H_1|$ copies of $V_{1,n}$).
This proves \itemnumber{1}.

Next, let us deal with \itemnumber{2}. The map above is defined by the same formula as in the case of compact induction.
Choose for $i=1,2$ a section $s_i \colon G_i / H_i \to G_i$ of locally $\Sigma$-analytic manifolds, i.e. sections $G_{\Sigma,i} / H_{\Sigma,i} \to G_{\Sigma, i}$ of locally analytic manifolds and $G^\Sigma_i / H^\Sigma_i \to G^\Sigma_i$ of topological spaces.
These sections induce a commutative diagram
$$
    \begin{tikzcd}
        \Ind_{H_1}^{G_1} (V_1)^\Sigmala \hat{\boxtimes}_E \Ind_{H_2}^{G_2} (V_2)^\Sigmala
        \ar[r] \ar[d, "\sim"] &
        \Ind_{H_1 \times H_2}^{G_1 \times G_2} (V_1 \hat\boxtimes_E V_2)^\Sigmala \ar[d, "\sim"]
        \\
        \Ccal^\Sigmala(G_1 / H_1, V_1) \cotimes_E \Ccal^\Sigmala(G_2 / H_2, V_2)
        \ar[r] &
        \Ccal^\Sigmala((G_1 \times G_2) / (H_1 \times H_2), V_1 \cotimes_E V_2),
    \end{tikzcd}
$$
and it's enough to show that the bottom map is a topological vector space isomorphism.
Choose for each $i$ a cofinal system $((\mathbb X_{i,n,k}, U_{i,n,j})_{k,j})_n$ of pairs of analytic partitions of $G_{\Sigma,i} / H_{\Sigma,i}$, where $(\mathbb X_{i,n+1,k})_k$ is a relatively compact refinement of $(\mathbb X_{i,n,k})_k$, and disjoint partitions of $G^\Sigma_i / H^\Sigma_i$ into open compact subspaces.
Then, \cite[Proposition 1.1.32 (i)]{emerton_locally_analytic_vectors} shows that
$$
    \Ccal^\Sigmala(G_i / H_i, V_i)
    \simeq
    \varinjlim_n \prod_{k,j} \O(\mathbb X_{i,n,k}) \cotimes_{\Qp} V_{i,n},
$$
and the transition maps are injective and compact.
In particular, applying \cite[Proposition 1.1.32 (i)]{emerton_locally_analytic_vectors} again shows that
\begin{align}\label{eqn: locally analytic functions of product}
    \Ccal^\Sigmala(G_1 / H_1, V_1) \cotimes_E \Ccal^\Sigmala(G_2 / H_2, V_2)
    \simeq
    \varinjlim_n \prod_{k,j} \O(\mathbb X_{1,n,k} \times \mathbb X_{2,n,k}) \cotimes_{\Qp} (V_{1,n} \cotimes_E V_{2,n}).
\end{align}
As the quotients $G_i / H_i$ are compact, $((\mathbb X_{1,n,k} \times \mathbb X_{2,n,k})_k)_n$ is a cofinal system of analytic partitions of $(G_{\Sigma,1} \times G_{\Sigma,2}) / (H_{\Sigma,1} \times H_{\Sigma,2})$ and $((U_{1,n,j} \times U_{2,n,j})_j)_n$ is a cofinal system of partitions of $(G^\Sigma_1 \times G^\Sigma_2) / (H^\Sigma_1 \times H^\Sigma_2)$ into open compact subspaces. In particular, the right-hand side of \Cref{eqn: locally analytic functions of product} is isomorphic to $\Ccal^\Sigmala((G_1 \times G_2) / (H_1 \times H_2), V_1 \cotimes_E V_2)$, which completes the proof.
\end{proof}

\begin{lemma}\label{lemma: monoid hecke action on compact induction}
Assume that $H_0$ is an open subgroup of $G$.
If the action of $H_0$ on a vector space $V$ extends to an action of a submonoid $H^+$ of $G$ containing $H_0$, then there is a natural right action of the Hecke algebra $\Hcal(H^-, H_0)$ on $\cInd_{H_0}^G(V)$, where $H^- = (H^+)^{-1}$, such that for $\mu \in H^-$,
$$
    [g, v] [H_0 \mu H_0] = \sum_{x \mu \in H_0 \mu H_0 / H_0} [gx\mu, (x\mu)^{-1} v].
$$
This action commutes with the action of $G$.
\end{lemma}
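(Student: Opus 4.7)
The plan is to realize $\cInd_{H_0}^G(V)$ as the space of $H_0$-coinvariants of a suitable right $H^-$-module and then invoke \Cref{lemma: general action of Hecke algebra}\itemnumber{1}, so that the desired Hecke-algebra action, together with its formula, falls out automatically.

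First, I would identify $\cInd_{H_0}^G(V)$ with $(E[G] \otimes_E V)_{H_0}$, where $H_0$ acts on $E[G] \otimes_E V$ on the right by $(g \otimes v) \cdot h := gh \otimes h^{-1} v$, and where the class of $g \otimes v$ corresponds to $[g, v]$. Next, I would extend this right $H_0$-action to a right action of the submonoid $H^-$ via the same formula
$$
    (g \otimes v) \cdot \mu := g\mu \otimes \mu^{-1} v, \qquad \mu \in H^-.
$$
This is well-defined because $\mu^{-1} \in H^+$ already acts on $V$ on the left, and associativity is immediate from $(\mu \mu')^{-1} = (\mu')^{-1} \mu^{-1}$. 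Since $H_0 \subseteq H^+$ and $H^- = (H^+)^{-1}$, we have $H_0 \subseteq H^-$, and the restriction of this $H^-$-action to $H_0$ agrees with the original right $H_0$-action, so taking coinvariants still produces $\cInd_{H_0}^G(V)$.

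Applying \Cref{lemma: general action of Hecke algebra}\itemnumber{1} to $\Mcal = E[G] \otimes_E V$ with this right $H^-$-module structure then yields a right action of $\Hcal(H^-, H_0)$ on $\Mcal_{H_0} = \cInd_{H_0}^G(V)$ given by
$$
    [g \otimes v] \cdot [H_0 \mu H_0] = \sum_{x\mu \in H_0 \mu H_0 / H_0} [(g \otimes v) \cdot x \mu] = \sum_{x\mu \in H_0 \mu H_0 / H_0} [gx\mu \otimes (x\mu)^{-1} v],
$$
which, under the identification above, is exactly the formula in the statement of the lemma.

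Finally, the $G$-action on $\cInd_{H_0}^G(V)$ comes from left translation of $G$ on $E[G]$, which manifestly commutes with the right $H^-$-action on $E[G] \otimes_E V$; this commutation descends to coinvariants, giving the commutation with the Hecke-algebra action. There is essentially no obstacle here: the entire statement reduces to the earlier general construction once the right $H^-$-module structure is set up, and the only thing to check is the trivial associativity of the formula $\mu \mapsto ((g \otimes v) \mapsto g\mu \otimes \mu^{-1} v)$.
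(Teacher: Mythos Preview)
Your proof is correct and follows essentially the same approach as the paper: both realize $\cInd_{H_0}^G(V)$ as the $H_0$-coinvariants of the $(G,H^-)$-bimodule $E[G]\otimes_E V$ with the right $H^-$-action $(g\otimes v)\cdot \mu = g\mu\otimes \mu^{-1}v$, and then appeal to \Cref{lemma: general action of Hecke algebra}\itemnumber{1}.
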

\begin{proof}
We may view the action of $H^+$ on $V$ as a right action of $H^-$.
Consider the space $E[G] \otimes_E V$; it is a $(G, H^-)$-bimodule with $H^-$ acting via $h \cdot (g \otimes v) = g h \otimes h^{-1} v$ and $\cInd_{H_0}^G(V)$ can be identified with its right $H_0$-coinvariants (an element $g \otimes v$ of the latter corresponding to $[g, v]$).
The action of the Hecke algebra is that described in \Cref{lemma: general action of Hecke algebra}, and $G$-equivariance is immediate from this formula.
\end{proof}

\subsection{Parabolic and parahoric induction}
\label{subsection: induction 2}

We now retake the notation from \Cref{section: structure}, more specifically \Cref{subsection: products of groups}.
Thus, we let $\ell_1, ..., \ell_n$ be prime numbers, for each $i$ $L_i$ is a finite extension of $\Q_{\ell_i}$ and $\Gbf_i$ is a reductive group over $L_i$, etc.
Fix a subset $\Sigma$ of $\{ 1, ..., n \}$ such that for all $i \in \Sigma$, $\ell_i = p$, and set $G_\Sigma = \prod_{i \in \Sigma} G_i$ and $G^\Sigma = \prod_{i \not\in \Sigma} G_i$, and similarly for the subgroups of $G$ defined in \Cref{subsection: products of groups}.
Hence, $G = G_\Sigma \times G^\Sigma$ is a locally $\Sigma$-analytic manifold and similarly for its subgroups $P$, $M$, etc.
Analogously, we also have similar decompositions $\Phi = \Phi_\Sigma \coprod \Phi^\Sigma, \Delta = \Delta_\Sigma \coprod \Delta^\Sigma$, etc.
Recall from \Cref{subsection: products of groups} that we will usually reference results from \Cref{section: structure} that were stated (and proven) for each of the $G_i$ but also hold for $G$ when we want to use them in the latter case.
Let $\HM$ be a subgroup of $\NJM$ containing $A \JM$. Many of our results on $\NJM$ and $\NJM^-$ from \Cref{section: structure} (for example, \Cref{lemma: products of double cosets} and \Cref{lemma disjointness of JP cosets}) also hold for $\HM$ and $\HM^-$ for obvious reasons, and we will also use them without comment in this situation.

Let $R$ be a Noetherian Banach $\Qp$-algebra and $\sigma$ be a locally $\Sigma$-analytic representation of $\HM$ on some finitely generated $R$-module, which we also write as $\sigma$, equipped with its unique Banach $R$-module topology. We will write $V = \cInd_{\HM}^M (\sigma)$ for the compact induction of $\sigma$ to $M$.
As explained in the previous section, we equip $V$ with the direct limit locally convex topology induced from the expression
\begin{equation} \label{eqn: V is an LB space}
    V \simeq \varinjlim_{\Omega} \cInd_{\HM}^M (\sigma)(\Omega),
\end{equation}
where $\Omega$ runs over finite unions of cosets of $\HM$ in $M$, and the terms in the limit, which are isomorphic as $R$-modules to direct sums of finitely many copies of $\sigma$, are given their unique Banach $R$-module topologies.
We extend the action of $M$ on $V$ to a locally $\Sigma$-analytic action of $P$ by letting $N$ act trivially, and we extend $\sigma$ to $J \cap P$ similarly.
Restriction of functions induces a topological isomorphism
    $\Ind_{J \cap P}^J (\sigma)^\Sigmala \stackrel\sim\to \Ccal^{\Sigmala}(\bar N_0, \sigma)$
as in the proof of \Cref{lemma: induction commutes with tensor products}.
Likewise, restriction of functions induces topological isomorphisms
    $$
        \Ind_{P}^G(V)^\Sigmala(J P) \stackrel\sim\to \Ind_{J \cap P}^J (V)^\Sigmala \stackrel\sim\to \Ccal^{\Sigmala}(\bar N_0, V),
    $$
the first of which is $J$-equivariant.

Let $s > 0$. Recall from \Cref{lemma: Iwahori decomposition for deep level} that $J_s$ admits a rooted Iwahori decomposition, $J_s = \bar N_s \JMs N_s$.
For each $i \in \Sigma$, in \Cref{subsection: congruence subgroups} we constructed an affinoid $L_i$-space $\bar \Nbf_{i,s}$ whose set of $L_i$-points is $\bar N_{i,s}$.
In particular, we may view $\bar N_{\Sigma, s}$ as the set of $\Q_p$-points of the $\Q_p$-affinoid $\bar \Nbf_{\Sigma, s} := \prod_{i \in \Sigma} \Res_{L_i / \Q_p} \bar \Nbf_{i,s}$.
Given $s$ and $s'$, we will say that a locally analytic function $\bar N_0 \to \sigma$ is $(s, s')$-$\Sigma$-analytic if its restriction to each left $\bar N_{\Sigma, s} \times \bar N^\Sigma_{s'}$-coset is $\bar \Nbf_{\Sigma, s} \times \bar N^\Sigma_{s'}$-analytic.
Let us also say that a function $f$ in $\Ind_{J \cap P}^J (\sigma)^\la$ is $(s, s')$-$\Sigma$-analytic if its restriction to $\bar N_0$ is.
We will write $\Acal_\sigma^{(s, s')-\Sigmaan}$ for the subspace of $\Ind_{J \cap P}^J (\sigma)^\Sigmala$ consisting of such functions.
Restriction to $\bar N_0$ induces an isomorphism from $\Acal_\sigma^{(s,s')-\Sigmaan}$ to the space of $(s, s')$-$\Sigma$-analytic functions $\bar N_0 \to \sigma$, which we use to give the former the structure of a Banach $R$-module.
If $\sigma$ is free over $R$, then $\Acal_\sigma^{(s, s')-\Sigmaan}$ is orthonormalisable as it is isomorphic to the completed base change to $R$ of a Banach $\Q_p$-module.
When $s = s'$, we will speak of $s$-$\Sigma$-analytic functions instead and write $\Acal_\sigma^{s-\Sigmaan} = \Acal_\sigma^{(s, s)-\Sigmaan}$.
Note that $\Ind_{J \cap P}^J (\sigma)^\Sigmala$ is isomorphic to the colimit $\varinjlim_s \Acal_\sigma^{s-\Sigmaan}$ since any rigid analytic closed ball in $\bar \Nbf$ containing the identity is contained in some $\bar \Nbf_s$ and vice versa (but the $\bar \Nbf_s$ will not necessarily be closed balls if the $\Gbf_i$ aren't split).

\begin{lemma}
If $s, s'$ are large enough, then $\Acal_\sigma^{(s, s')-\Sigmaan}$ is stable under the action of $J$.
\end{lemma}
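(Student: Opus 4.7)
The plan is to analyze the $J$-action explicitly through the restriction isomorphism $\Ind_{J \cap P}^J(\sigma)^{\Sigmala} \simto \Ccal^{\Sigmala}(J \cap \bar N, \sigma)$. By the rooted Iwahori decomposition of $J$ (\Cref{lemma: J admits rooted Iwahori decomposition}), for every $(g, \bar n) \in J \times (J \cap \bar N)$ the element $g^{-1} \bar n \in J$ admits a unique factorization $g^{-1} \bar n = \alpha(g, \bar n) \cdot \beta(g, \bar n)$ with $\alpha(g, \bar n) \in J \cap \bar N$ and $\beta(g, \bar n) \in J \cap P$, and the translate $g \cdot f$ corresponds under the restriction isomorphism to the function $\bar n \mapsto \beta(g, \bar n)^{-1} f(\alpha(g, \bar n))$. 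Since the Iwahori decomposition is an isomorphism of locally $\Sigma$-analytic manifolds, both $\alpha$ and $\beta$ are locally $\Sigma$-analytic functions of $(g, \bar n)$.

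First I would reduce to a finite computation. The subgroup $J_s$ is open and normal in the compact group $J$ (\Cref{lemma: J_s is normal in J}), so $J/J_s$ is finite; fix coset representatives $g_1, \ldots, g_k$. Then it suffices to find $s, s'$ such that $\Acal_\sigma^{(s,s')-\Sigmaan}$ is stable under $J_s$ and under each $g_i$. For the $J_s$-stability I would use the Iwahori decomposition $J_s = \bar N_s \cdot J_{M,s} \cdot N_s$ (\Cref{lemma: Iwahori decomposition for deep level}) and treat each factor separately: left multiplication by $\bar N_s$ preserves each $\bar N_{\Sigma, s} \times \bar N^\Sigma_{s'}$-coset by pure translation; conjugation by $J_{M,s}$ acts rigid-analytically on $\bar \Nbf_s$ (and smoothly on $\bar N^\Sigma_{s'}$) by \Cref{lemma: rigid analytic parahorics}(i) combined with the fact that $M$ normalises $\bar N$; and the $N_s$-factor contributes via the ``commutation map'' arising from the openness of $\bar N \cdot P \subseteq G$ and the analyticity of the inverse of multiplication on this open cell.

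For the finite list $\{g_1, \ldots, g_k\}$, each pair of maps $\alpha(g_i, -), \beta(g_i, -) \colon J \cap \bar N \to J$ is locally $\Sigma$-analytic on the compact locally $\Sigma$-analytic manifold $J \cap \bar N$. By a compactness argument, there exist $s_i, s_i'$ such that each of these maps is $\bar \Nbf_{\Sigma, s_i} \times \bar N^\Sigma_{s_i'}$-analytic on every left $\bar N_{\Sigma, s_i} \times \bar N^\Sigma_{s_i'}$-coset, with image contained in a coset of the same shape. Taking $s, s'$ to be the maximum of these values together with those required for $J_s$-stability finishes the proof. The main obstacle is obtaining the uniformity of the radius of $\Sigma$-analyticity across $J$: this rests on the general principle that a locally $\Sigma$-analytic map on a compact locally $\Sigma$-analytic manifold is uniformly analytic, and on verifying that both $\alpha$ and $\beta$ send small cosets into cosets of the same size, which may force $s, s'$ to be taken strictly larger than the bare minimum analyticity parameters.
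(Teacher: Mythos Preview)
Your overall strategy is on the right track but has a genuine gap: you treat the problem as purely about the geometry of the maps $\alpha(g,-), \beta(g,-)$ on the compact manifold $J \cap \bar N$, and your ``compactness principle'' only controls these. But the formula is $(gf)(\bar n) = \beta(g,\bar n)^{-1} f(\alpha(g,\bar n))$, and the factor $\beta(g,\bar n)^{-1}$ \emph{acts on} $\sigma$ through the projection $J \cap P \to J_M$. For the resulting $\sigma$-valued function to be $(s,s')$-$\Sigma$-analytic you need, in addition, that the orbit maps $J_{M,s} \to \sigma$ for the $\Sigma$-factors come from rigid-analytic morphisms of $\Jbf_{M,s}$, and that $J^\Sigma_{M,s'}$ acts trivially. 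This is not a consequence of compactness of $J \cap \bar N$; it uses that $\sigma$ is \emph{finitely generated} over $R$ (choose generators, take the worst radius). You never invoke this hypothesis, and without it the argument fails already in your $N_s$-case: writing $n^{-1}\bar n$ in Iwahori form produces a $J_{M,s}$-component that varies with $\bar n$ and must act analytically on $\sigma$.

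The paper's argument is also more direct than your reduction to $J_s$ plus finitely many coset representatives. Having first chosen $s,s'$ so that the $J_{M,s}$-orbits in $\sigma$ are analytic (respectively constant for $J^\Sigma_{M,s'}$), it writes an arbitrary $y \in J$ as $y = \bar n_0 p_0$ via the Iwahori decomposition of $J$ itself, observes that normality of $J_s$ in $J$ forces $p_0 x p_0^{-1} \in J_s$ for every $x \in \bar N_s$, and then appeals to \Cref{lemma: rigid analytic parahorics}: conjugation by any element of $J$ is a rigid-analytic endomorphism of $\Jbf_s$, and the Iwahori projections $\Jbf_s \to \bar \Nbf_s$, $\Jbf_s \to \Jbf_{M,s}$ are rigid-analytic. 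This gives a single explicit formula $(y^{-1}f)(x) = m(p_0 x p_0^{-1}) m_0^{-1} f(\bar n_0 \cdot \bar n(p_0 x p_0^{-1}))$ valid uniformly for all $y \in J$, with no case analysis, no finiteness reduction, and no soft compactness argument.
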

\begin{proof}
As $\sigma$ is finitely generated, we may choose $s$ and $s'$ large enough so that the orbits of $\JMs[i]$ in $\sigma$ are analytic for $i \in \Sigma$ (i.e. they come from rigid-analytic morphisms of $\JMsbf[i]$, as defined in \Cref{subsection: congruence subgroups}) and the orbits of
$\JMsub[i,s']$
are constant for $i \not\in \Sigma$.
Let $y \in J$ and $f \in \Acal_\sigma^{(s, s')-\Sigmaan}$.
We will show that $(y^{-1}f)|_{\bar N_0}$ is $(s, s')$-$\Sigma$-analytic.
By writing $f$ as a sum of functions with support in different cosets of $\bar N_{\Sigma, s} \times \bar N^\Sigma_{s'}$, it's enough to show that $(y^{-1}f)|_{\bar N_{\Sigma, s} \times \bar N^\Sigma_{s'}}$ is $\bar \Nbf_{\Sigma, s} \times \bar N^\Sigma_{s'}$-analytic.
Write $y = \bar n_0 m_0 n_0$ according to the Iwahori decomposition of $J$ (\Cref{lemma: J admits rooted Iwahori decomposition}), set $p_0 = m_0 n_0$, and let $\bar n \colon J_{\Sigma,s} \times J_{s'}^\Sigma \to \bar N_{\Sigma, s} \times \bar N^\Sigma_{s'}$ and $m \colon J_{\Sigma,s} \times J_{s'}^\Sigma \to \JMs[\Sigma] \times \JMsub[s']^\Sigma$ be the projections given by the Iwahori decomposition of $J_s$ (\Cref{lemma: Iwahori decomposition for deep level}).
Then, for all $x \in \bar N_{\Sigma, s} \times \bar N^\Sigma_{s'}$ one has
\begin{align}\label{eqn: formula equivariance of As}
    (y^{-1} f)(x) = m( p_0 x p_0^{-1} )^{-1} m_0^{-1} f(\bar n_0 \cdot \bar n(p_0 x p_0^{-1})).
\end{align}
By \Cref{lemma: rigid analytic parahorics} \itemnumber{1} and \itemnumber{2}, the map $x \mapsto \bar n(p_0 x p_0^{-1}) \colon \bar N_{\Sigma,s} \to \bar N_{\Sigma,s}$ and the map 
\linebreak $x \mapsto m (p_0 x p_0^{-1}) \colon$ $\bar N_{\Sigma,s} \to \JMs[\Sigma]$ are induced from rigid-analytic morphisms. We have similar continuous maps for $\bar N^\Sigma_{s'}$. By our choice of $s$ and $s'$, this implies that \Cref{eqn: formula equivariance of As} is $\bar \Nbf_{\Sigma, s} \times \bar N^\Sigma_{s'}$-analytic as a function of $x \in \bar N_{\Sigma, s} \times \bar N^\Sigma_{s'}$.
\end{proof}

We will write $s_\sigma$ for the smallest $s$ such that $\Acal_\sigma^{s-\Sigmaan}$ is stable under the action of $J$, and when writing $\Acal_\sigma^{s-\Sigmaan}$ we will always implicitly assume $s \geq s_\sigma$.
Similarly, when we consider $\Acal_\sigma^{(s, s')-\Sigmaan}$ we will always assume that $s$ and $s'$ are large enough that this is stable under the action of $J$.

\begin{lemma}\label{lemma: inclusion of As is compact}
If $\tilde s \geq s$ and $\tilde s' \geq s'$, then $\Acal_\sigma^{(s, s')-\Sigmaan} \subseteq \Acal_\sigma^{(\tilde s, \tilde s')-\Sigmaan}$. If $\tilde s > s$, then this inclusion is a compact map of Banach $R$-modules.
\end{lemma}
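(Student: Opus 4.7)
The plan is as follows. For the first assertion, the conditions $\tilde s \geq s$ and $\tilde s' \geq s'$ imply $\bar N_{\Sigma, \tilde s} \subseteq \bar N_{\Sigma, s}$ and $\bar N^\Sigma_{\tilde s'} \subseteq \bar N^\Sigma_{s'}$ (a larger threshold cuts down on the available affine roots in the definition of $N_s$), so every coset of $\bar N_{\Sigma, \tilde s} \times \bar N^\Sigma_{\tilde s'}$ in $J \cap \bar N$ sits inside a coset of $\bar N_{\Sigma, s} \times \bar N^\Sigma_{s'}$. A function on the larger coset that is constant in the $\bar N^\Sigma_{s'}$-direction and rigid analytic on $\bar \Nbf_{\Sigma, s}$ then restricts, via the rigid analytic closed immersion $\bar \Nbf_{\Sigma, \tilde s} \hookrightarrow \bar \Nbf_{\Sigma, s}$, to a $\bar \Nbf_{\Sigma, \tilde s} \times \bar N^\Sigma_{\tilde s'}$-analytic function on each smaller coset.

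For compactness when $\tilde s > s$, I would first factor the inclusion as
$$
    \Acal_\sigma^{(s,s')-\Sigmaan} \hookrightarrow \Acal_\sigma^{(\tilde s, s')-\Sigmaan} \hookrightarrow \Acal_\sigma^{(\tilde s, \tilde s')-\Sigmaan}
$$
and reduce to showing that the first map is compact, since the second is continuous and composing a compact map with a continuous one gives a compact map. Using compactness of $J \cap \bar N$, choose finite sets of coset representatives for $\bar N_{\Sigma, s} \times \bar N^\Sigma_{s'}$ and for $\bar N_{\Sigma, \tilde s} \times \bar N^\Sigma_{s'}$ in $J \cap \bar N$. The defining condition of $\bar \Nbf_{\Sigma, s} \times \bar N^\Sigma_{s'}$-analyticity (which factors through projection to $\bar \Nbf_{\Sigma, s}$) yields topological isomorphisms
$$
    \Acal_\sigma^{(s, s')-\Sigmaan} \simeq \bigoplus_i \O(\bar \Nbf_{\Sigma, s}) \cotimes_{\Q_p} \sigma,
    \qquad
    \Acal_\sigma^{(\tilde s, s')-\Sigmaan} \simeq \bigoplus_j \O(\bar \Nbf_{\Sigma, \tilde s}) \cotimes_{\Q_p} \sigma
$$
over these finite index sets, under which the first inclusion becomes a finite block matrix whose nonzero entries are compositions of rigid-analytic translation isomorphisms with the canonical restriction $\O(\bar \Nbf_{\Sigma, s}) \to \O(\bar \Nbf_{\Sigma, \tilde s})$.

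It therefore suffices to show that $\O(\bar \Nbf_{\Sigma, s}) \cotimes_{\Q_p} \sigma \to \O(\bar \Nbf_{\Sigma, \tilde s}) \cotimes_{\Q_p} \sigma$ is compact as a map of Banach $R$-modules. Since $\bar \Nbf_{\Sigma, s}$ is the product of the affinoids $\Ubf_{f_i + s, \alpha}$ for $i \in \Sigma$ and $\alpha \in \Phi_i^{\ndiv,-} \setminus \Phi_{i,M}$, this restriction is the completed tensor product over $\Q_p$ of the restrictions $\O(\Ubf_{f_i + s, \alpha}) \to \O(\Ubf_{f_i + \tilde s, \alpha})$, each of which is compact for $\tilde s > s$ by \Cref{lemma: inclusion of root subgroups is compact}. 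Approximating one such compact $\Q_p$-Banach map in operator norm by finite-rank $\Q_p$-linear maps and then tensoring with the identities on the remaining factors and on $\sigma$ yields $R$-linear approximants whose images lie in finitely generated $R$-submodules of the target and that converge in operator norm, witnessing the desired compactness. The main obstacle here is the coset-decomposition bookkeeping in the middle step; the remainder is a standard application of the stability of compactness under composition, finite direct sums, and completed tensor products with identity maps.
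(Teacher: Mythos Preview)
Your overall strategy matches the paper's: both decompose $\Acal_\sigma^{(s,s')-\Sigmaan}$ as a finite direct sum of copies of $\O(\bar\Nbf_{\Sigma,s})\cotimes_{\Q_p}\sigma$, invoke \Cref{lemma: inclusion of root subgroups is compact} for the root-subgroup restrictions, and then appeal to stability of compactness under completed tensor products. The paper is terser, recording the isomorphism as $\bigl(\Ccal^{\bar\Nbf_s-\an}(J_\Sigma\cap\bar N_\Sigma,\Q_p)\otimes_{\Q_p}\Q_p^{|(J^\Sigma\cap\bar N^\Sigma)/\bar N^\Sigma_{s'}|}\bigr)\cotimes_{\Q_p}\sigma$ and citing \cite[Corollary~2.9]{buzzard_eigenvarieties} for the last step.

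There is, however, a genuine gap in your final paragraph. You approximate \emph{one} factor $\O(\Ubf_{f_i+s,\alpha})\to\O(\Ubf_{f_i+\tilde s,\alpha})$ by finite-$\Q_p$-rank maps $f_n$ and tensor with the identities on the remaining root factors and on $\sigma$, asserting that the resulting maps have image in a finitely generated $R$-submodule. This is not true: writing $C$ for the completed tensor product of the remaining root-subgroup affinoid algebras, the image of $f_n\cotimes\id_C\cotimes\id_\sigma$ lies in $\im(f_n)\cotimes_{\Q_p} C\cotimes_{\Q_p}\sigma\cong (C\cotimes_{\Q_p}\sigma)^{\dim_{\Q_p}\im(f_n)}$, which is \emph{not} finitely generated over $R$ once $C$ is infinite-dimensional over $\Q_p$ (as it is whenever there is more than one root factor). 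The repair is easy: first show that the full $\Q_p$-linear restriction $\O(\bar\Nbf_{\Sigma,s})\to\O(\bar\Nbf_{\Sigma,\tilde s})$ is $\Q_p$-compact, e.g.\ by approximating \emph{every} root factor simultaneously by finite-rank maps and tensoring those approximants together; only then tensor with $\id_\sigma$, so that the approximants now land in $(\text{finite-dimensional }\Q_p\text{-space})\otimes_{\Q_p}\sigma$, which is finitely generated over $R$. This two-step passage is precisely what the Buzzard reference packages.
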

\begin{proof}
The first statement is immediate. Note that there is an isomorphism of $R$-Banach modules
\begin{align}\label{eqn: isomorphism for Acal}
    \left( \Ccal^{\bar \Nbf_s -\an} (\bar N_{\Sigma, 0}, \Q_p) \otimes_\Qp \Q_p^{|(\bar N_0^\Sigma) / \bar N^\Sigma_{s'}|} \right) \cotimes_\Qp \sigma  \simto \Acal_\sigma^{(s, s')-\Sigmaan},
\end{align}
where $\Ccal^{\bar \Nbf_s -\an} (\bar N_{\Sigma, 0}, \Q_p)$ denotes the space of functions $\bar N_{\Sigma, 0} \to \Q_p$ whose restriction to each $\bar N_s$ coset is $\bar \Nbf_s$-analytic.
The second statement follows from this isomorphism by \Cref{lemma: inclusion of root subgroups is compact} and \cite[Corollary 2.9]{buzzard_eigenvarieties}.
\end{proof}

The $\JM$-equivariant injection $\sigma \into V$ given by $v \mapsto [1,v]$ induces $J$-equivariant maps $\Ind_{J \cap P}^J (\sigma)^\Sigmala \to \Ind_{J \cap P}^J (V)^\Sigmala$. Composing with $\Ind_{J \cap P}^J (V)^\Sigmala \stackrel\sim\to \Ind_{P}^G(V)^\Sigmala( J P)$ we obtain a $J$-equivariant injection
$\Ind_{J \cap P}^J (\sigma)^\Sigmala \to \Ind_{P}^G(V)^\Sigmala( J P)$.

\begin{lemma}\label{lemma: get action of monoid}
The image of this map is stable under the action of $\HM^+ = (\HM^-)^{-1}$.
\end{lemma}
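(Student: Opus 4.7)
The plan is to work in the model $\Ccal^\Sigmala(J \cap \bar N, V)$ furnished by the restriction isomorphism $\Ind_P^G(V)^\Sigmala(JP) \simto \Ccal^\Sigmala(J \cap \bar N, V)$ coming from the decomposition $JP = (J \cap \bar N) P$. Under this identification, the image of our map corresponds precisely to $\Ccal^\Sigmala(J \cap \bar N, \sigma) \subseteq \Ccal^\Sigmala(J \cap \bar N, V)$, where $\sigma$ sits inside $V = \cInd_\HM^M(\sigma)$ via the $\HM$-equivariant embedding $v \mapsto [1, v]$. The action of $\mu \in \HM^+$ extending the $J$-action is then described on this model by
\[
    (\mu \cdot f)(\bar n) := \mu \cdot f(\mu^{-1} \bar n \mu),
\]
where $\mu$ acts on $V$ via its $M$-representation structure. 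This is well-defined: since $\mu^{-1} \in \HM^- \subseteq \tilde J_M^-$, one has $\mu^{-1}(J \cap \bar N)\mu \subseteq J \cap \bar N$, so $\mu^{-1} \bar n \mu$ lies in the domain of $f$, and by \Cref{lemma: rigid analytic parahorics}\itemnumber{3} precomposition with this conjugation preserves $\Sigma$-analyticity.

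With this setup in place, the stability of the image is essentially immediate. Given $f \in \Ccal^\Sigmala(J \cap \bar N, \sigma)$, the value $f(\mu^{-1} \bar n \mu)$ already lies in $\sigma$. The $M$-action of $\mu \in \HM^+ \subseteq \HM$ on $V$ preserves the embedded copy of $\sigma$: directly from the definition of compact induction one computes $\mu \cdot [1, v] = [\mu, v] = [1, \mu \cdot v]$, where the second equality uses $\mu \in \HM$ and $\mu \cdot v$ refers to the $\HM$-representation structure on $\sigma$. Hence $(\mu \cdot f)(\bar n) \in \sigma$, and so $\mu \cdot f \in \Ccal^\Sigmala(J \cap \bar N, \sigma)$, as required.

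The step requiring most attention will be identifying the correct $\HM^+$-action in the first place: naive left translation on $\Ind_P^G(V)^\Sigmala$ sends functions supported in $JP$ to functions supported in $\mu JP$, and for $\mu \in \HM^+$ one has $\mu (J \cap \bar N) \mu^{-1} \supsetneq J \cap \bar N$ and hence $\mu JP \supsetneq JP$, so the image is not literally stable under naive left translation. The correct operator on $\Ind_P^G(V)^\Sigmala(JP)$ is instead $F \mapsto (g \mapsto F(\mu^{-1} g))$, which lands back in $\Ind_P^G(V)^\Sigmala(JP)$ because $\mu^{-1} g \in JP$ whenever $g \in JP$ and $\mu \in \HM^+$ (using the Iwahori decomposition of the $J$-component of $g$ together with $\mu^{-1}(J \cap \bar N)\mu \subseteq J \cap \bar N$); transported through the restriction isomorphism this is exactly the twisted formula above, and it agrees with the original $J$-action on the intersection $J \cap \HM^+ = J_M$.
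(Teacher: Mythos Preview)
Your proof is correct and follows essentially the same route as the paper: the paper's one-line computation $(\mu f)(y) = \mu f(\mu^{-1} y \mu) = [1,\mu F(\mu^{-1} y \mu)]$ for $y \in J \cap \bar N$ is exactly your argument transported to the model $\Ccal^\Sigmala(J \cap \bar N, V)$, using that $\mu \cdot [1,v] = [1,\mu v]$ for $\mu \in \HM$. Your third paragraph makes explicit what the paper leaves implicit, namely that the relevant $\HM^+$-action on $\Ind_P^G(V)^\Sigmala(JP)$ is left translation followed by restriction to $JP$ (equivalently, the twisted formula on $\Ccal^\Sigmala(J\cap\bar N,V)$), rather than left translation on the full induced representation; this clarification is useful but does not change the argument.
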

\begin{proof}
A function $f \in \Ind_P^G (V)^\Sigmala(JP)$ is in the image of this map if and only if there exists some $F \in \Ind_{J \cap P}^J (\sigma)^\Sigmala$ such that $f(y) = [1, F(y)]$ for all $y \in J$, or equivalently, for all $y \in \bar N_0$. Consider some such $f$ and $F$, and let $\mu \in \HM^+$ and $y \in \bar N_0$.
Then, $(\mu f)(y) = \mu f(\mu^{-1} y \mu) = [1, \mu F(\mu^{-1} y \mu)]$, so $\mu f$ lies in the image.
\end{proof}

It follows from \Cref{lemma: products of double cosets} that $J \HM^+ J$ is a submonoid of $G$, and \Cref{lemma: get action of monoid} implies that we may extend the action of $J$ on $\Ind_{J \cap P}^J (\sigma)^\Sigmala$ to an action of $J \HM^+ J$ making the injection $\Ind_{J \cap P}^J (\sigma)^\Sigmala \to \Ind_{P}^G(V)^\Sigmala( J P)$ a $J \HM^+ J$-equivariant map.
Explicitly, if $y$ is an element of $J$ with Iwahori decomposition $y = \bar n m n$ and $F \in \Ind_{J \cap P}^J (\sigma)^\Sigmala$, then the action of $\HM^+$ is given by $(\mu F)(y) = m^{-1} \mu F(\mu^{-1} \bar n \mu)$.
If $\mu \in \HM^+$, then $\mu$ preserves $\Acal_\sigma^{s-\Sigmaan}$ by \Cref{lemma: rigid analytic parahorics} \itemnumber{3}, and for a fixed norm on $\sigma$, the norm of $\mu$ as an operator on $\Acal_{\sigma}^{s-\Sigmaan}$ (with its norm induced by \Cref{eqn: isomorphism for Acal} and that of $\sigma$) is bounded above by the norm of $\mu$ on $\sigma$.
If, moreover, $\mu^{-1} \bar \Nbf_{\Sigma,s} \bar N^\Sigma_{s'} \mu \subseteq \bar \Nbf_{\Sigma,\tilde s} \bar N^\Sigma_{s'}$ for some $\tilde s < s$ (for example, if $\mu = a^{-1}$ with $a \in A^{--}_\Sigma$), then the action of $\mu$ on $\Acal_\sigma^{(\tilde s,s')-\Sigmaan}$ factors through the inclusion $\Acal_\sigma^{(s, s')-\an} \subseteq \Acal_\sigma^{(\tilde s, s')-\Sigmaan}$, and therefore induces an $R$-compact endomorphism of $\Acal_\sigma^{(\tilde s, s')-\Sigmaan}$ by \Cref{lemma: inclusion of As is compact}.
Moreover, applying \Cref{lemma: monoid hecke action on compact induction} and the results of \Cref{subsection: hecke algebras}, we obtain the following.

\begin{lemma}\label{lemma: U operators}
The action of $\HM^+$ on $\Ind_{J \cap P}^J (\sigma)^\Sigmala$ induces a (left) action of the Hecke algebra $\Hcal(J \HM^- J, J)_R$ on $\cInd_J^G(\Ind_{J \cap P}^J (\sigma)^\Sigmala)$ commuting with the action of $G$, where the characteristic functions $U_\mu = [J \mu J]$ for $\mu \in \HM^-$ act as
$$
    U_\mu([g, F]) = \sum_{x \mu \in J \mu J / J} [gx\mu, (x\mu)^{-1} F].
$$
Moreover, for each $s$ and $\mu \in \HM^-$, the operator $U_\mu$ preserves the subspace 
$\cInd_J^G(\Acal_\sigma^{s-\Sigmaan})$.
\end{lemma}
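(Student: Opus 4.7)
The plan is to apply \Cref{lemma: monoid hecke action on compact induction} to the submonoid $H^+ := J \HM^+ J$ of $G$, the open subgroup $H_0 := J$, and the representation $V = \Ind_{J \cap P}^J(\sigma)^\Sigmala$. That $J \HM^+ J$ is indeed a monoid containing $J$ follows from \Cref{lemma: products of double cosets}, and the extension of the $J$-action on $V$ to an action of $J \HM^+ J$ was already constructed just before \Cref{lemma: get action of monoid} via the $J \HM^+ J$-equivariant embedding $V \into \Ind_P^G(\cInd_\HM^M(\sigma))^\Sigmala(JP)$. \Cref{lemma: monoid hecke action on compact induction} then produces a right action of $\Hcal(J \HM^- J, J)_R$ on $\cInd_J^G(V)$ commuting with the $G$-action, together with the formula for the action of $U_\mu = [J \mu J]$ stated in the lemma. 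Because this Hecke algebra is commutative by \Cref{corollary: hecke algebra isomorphic to monoid ring} and \Cref{remark: for Hecke algebras can replace tildeJ_M by HM}, the right action is equivalently a left action.

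Next, I would verify that $U_\mu$ preserves $\cInd_J^G(\Acal_\sigma^{s-\Sigmaan})$ for all $\mu \in \HM^-$. The formula for $U_\mu$ produces only terms of the shape $[g x \mu, (x \mu)^{-1} F]$ with $x \in J \cap \bar N$ and $F \in \Acal_\sigma^{s-\Sigmaan}$; writing $(x\mu)^{-1} = \mu^{-1} x^{-1}$ with $\mu^{-1} \in \HM^+$ and $x^{-1} \in J$, preservation is immediate from the two facts recorded just before this lemma, namely that $J$ preserves $\Acal_\sigma^{s-\Sigmaan}$ (by the definition of $s_\sigma$) and that every element of $\HM^+$ preserves $\Acal_\sigma^{s-\Sigmaan}$ (by an application of \Cref{lemma: rigid analytic parahorics} (iii)).

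For the $R$-compactness of $U_a$ with $a \in A_\Sigma^{--}$, the key input is the paragraph preceding the lemma, which records that $a \in A_\Sigma^{--}$ satisfies $a \bar \Nbf_{\Sigma, s} a^{-1} \subseteq \bar \Nbf_{\Sigma, \tilde s}$ for some $\tilde s < s$ (conjugation by $a$ strictly contracts $\bar N$ in the direction of every root in $\Phi^- \setminus \Phi_M$), so that $a^{-1}$ acting on $\Acal_\sigma^{(\tilde s, s) - \Sigmaan}$ factors through the compact inclusion $\Acal_\sigma^{(s, s) - \Sigmaan} \subseteq \Acal_\sigma^{(\tilde s, s) - \Sigmaan}$ provided by \Cref{lemma: inclusion of As is compact}. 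Pre-composing with this same inclusion, the endomorphism $a^{-1}$ of $\Acal_\sigma^{s-\Sigmaan} = \Acal_\sigma^{(s, s) - \Sigmaan}$ is seen to factor through an $R$-compact map, hence is itself $R$-compact. Since $JaJ/J$ is finite (being identified with $(J \cap \bar N)/a(J \cap \bar N)a^{-1}$ via \Cref{lemma: products of double cosets}), $U_a$ is a finite sum of operators each of which, on every Banach summand of $\cInd_J^G(\Acal_\sigma^{s-\Sigmaan})$, is the composition of a coset translation with the compact endomorphism $a^{-1}$; from this $R$-compactness of $U_a$ will follow.

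The main technical subtlety is in the last step, where one must interpret $R$-compactness on the locally convex direct limit $\cInd_J^G(\Acal_\sigma^{s-\Sigmaan})$ rather than on a single Banach module. The cleanest way to handle this is to factor $U_a$ through the termwise compact inclusion $\cInd_J^G(\Acal_\sigma^{s-\Sigmaan}) \to \cInd_J^G(\Acal_\sigma^{(\tilde s, s) - \Sigmaan})$ induced by \Cref{lemma: inclusion of As is compact}, reducing compactness at the direct-limit level to compactness of $a^{-1}$ on a single $R$-Banach summand, which was established above.
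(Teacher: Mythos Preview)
Your proposal is correct and follows the paper's approach essentially verbatim: the paper does not give a formal proof but simply points to \Cref{lemma: monoid hecke action on compact induction}, the commutativity results of \Cref{subsection: hecke algebras}, and the paragraph immediately preceding the lemma (which records that $\HM^+$ preserves $\Acal_\sigma^{s-\Sigmaan}$ and that for $a \in A_\Sigma^{--}$ the action of $a^{-1}$ factors through a compact inclusion of Lemma~\ref{lemma: inclusion of As is compact}). You have unpacked precisely these ingredients. One small remark: in your compactness step (and in the paper's preceding paragraph) the inequality between $s$ and $\tilde s$ is stated inconsistently with the direction of the inclusion from \Cref{lemma: inclusion of As is compact}; the correct reading is that $a^{-1}$ maps $\Acal_\sigma^{(s,s')-\Sigmaan}$ into the strictly smaller space $\Acal_\sigma^{(\tilde s,s')-\Sigmaan}$ with $\tilde s < s$, which then includes compactly back into $\Acal_\sigma^{(s,s')-\Sigmaan}$ (equivalently, one can phrase it on the larger space as the paper does with the roles of $s,\tilde s$ swapped). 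This is a labeling issue, not a gap in the argument.
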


\subsection{A quasi-isomorphism}
\label{subsection: quasi isomorphism}
Write $\Hcal^-_R = \Hcal(J \HM^- J, J)_R$.
The $J$-equivariant embedding $\Ind_{J \cap P}^J (\sigma)^\Sigmala \to \Ind_{P}^G(V)^\Sigmala$ induces a $G$-equivariant $R$-linear map
$$
    \varphi \colon \cInd_J^G ( \Ind_{J \cap P}^J (\sigma)^\Sigmala ) \to \Ind_{P}^G (V)^\Sigmala.
$$
Explicitly, if $F \in \Ind_{J \cap P}^J (\sigma)^\Sigmala$, then $\varphi([1, F])(y) = [1,F(y)]$ for $y \in J$.
The source is a module for $\Hcal^-_R$, and we may regard $\Ind_{P}^G (V)^\Sigmala$ as a $\Hcal^-_R$-module by letting $U_\mu = [J \mu J]$ act trivially. Write $R(1)$ for the $\Hcal^-_R$-module whose underlying $R$-module is $R$ and where $U_\mu$ acts trivially. The goal of this section is to show that $\varphi$ induces an isomorphism in the derived category $D( \Hcal^-_R[G] )$ of $\Hcal^-_R[G]$-modules
$$
    R(1) \otimesL_{\Hcal^-_R} \cInd_J^G ( \Ind_{J \cap P}^J (\sigma)^\Sigmala ) \simto \Ind_{P}^G (V)^\Sigmala.
$$
To be more precise, we should be writing $R[G](1) \otimesL_{\Hcal^-_R[G]} \cInd_J^G ( \Ind_{J \cap P}^J (\sigma)^\Sigmala )$ for the source instead.
However, the complexes are isomorphic, so we will use the shorter notation.
As explained in the introduction, we follow the strategy of \cite[\S 2]{vanishing}.

\begin{proposition}\label{proposition: phi is surjective}
The map $\varphi$ is surjective.
\end{proposition}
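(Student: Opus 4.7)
The plan is to reduce via a partition argument to the case where $f$ is supported on $JP$, then use the LB structure of $V$ to decompose the values coset-by-coset, and finally construct a preimage locally, using the rigid-analytic structure of $\bar \Nbf$ and the effect of $M$-conjugation on its root subgroups.

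Since $G/P$ is compact and $JP/P$ is open, finitely many $G$-translates $g_i JP/P$ cover $G/P$; by strict paracompactness of the locally $\Sigma$-analytic manifold $G/P$, this cover admits a refinement by a pairwise disjoint (clopen) cover $G/P = \bigsqcup_i U_i$ with $U_i \subseteq g_{k(i)} JP/P$. Writing $f = \sum_i f|_{U_i}$ and using the $G$-equivariance of $\varphi$, we reduce to the case where $f$ is supported on $JP$. The restriction isomorphism $\Ind_P^G(V)^\Sigmala(JP) \simeq \Ccal^\Sigmala(J\cap\bar N, V)$ identifies $f$ with a locally $\Sigma$-analytic function $h \colon J\cap\bar N \to V$. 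Since $J\cap\bar N$ is compact and $V = \varinjlim_\Omega \cInd_\HM^M(\sigma)(\Omega)$ is an LB space whose Banach steps are indexed by finite unions of cosets in $M/\HM$ (see \Cref{eqn: V is an LB space}), the image of $h$ is contained in one such step $\cInd_\HM^M(\sigma)(\Omega_0)$ with $\Omega_0 = \bigsqcup_{j=1}^k m_j \HM$. Projecting onto these cosets gives $h = \sum_{j=1}^k [m_j, h_j]$ with $h_j \in \Ccal^\Sigmala(J\cap\bar N, \sigma)$, and accordingly $f = \sum_j f_j$ with $f_j|_{J\cap\bar N} = [m_j, h_j]$. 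It therefore suffices to realize each $f_j$ in the image of $\varphi$.

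Fix $j$. Since $M$-conjugation on $\bar \Nbf$ is rigid-analytic and shifts the filtrations $\bar N_s$ by amounts depending on $\ord(m_j)$, we may choose $s_j > 0$ such that $m_j^{-1}\bar N_{s_j} m_j \subseteq J\cap\bar N$. Partition $J\cap\bar N$ into the (finitely many) cosets $\Omega_{j,l} = y_{j,l}\bar N_{s_j}$ for representatives $y_{j,l} \in J\cap\bar N$, and define $F_{j,l} \in \Ind_{J\cap P}^J(\sigma)^\Sigmala$ to be the function supported on $m_j^{-1}(y_{j,l}^{-1}\Omega_{j,l}) m_j \cdot (J\cap P) \subseteq J$ with $F_{j,l}(z) := h_j(y_{j,l} m_j z m_j^{-1})$ for $z \in m_j^{-1}(y_{j,l}^{-1}\Omega_{j,l})m_j \subseteq J\cap\bar N$ (well-defined by the choice of $s_j$). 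A direct computation using the identities $[m_j, v] = m_j[1,v]$ in $V$ and $p^{-1}[1,v] = [m_p^{-1}, v]$ for $p = m_p n_p \in P$, together with the $P$-equivariance of functions in $\Ind_P^G(V)^\Sigmala$, shows that $\varphi([y_{j,l} m_j, F_{j,l}])$ has support contained in $\Omega_{j,l} \cdot P \subseteq JP$ and restricts on $J\cap\bar N$ to $[m_j, h_j \cdot \mathbf{1}_{\Omega_{j,l}}]$. Summing over $l$ recovers $f_j$, and then summing over $j$ recovers $f$.

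The main obstacle is to coordinate the partition depth with the coset representatives $m_j$: one must choose $s_j$ large enough that $m_j^{-1}(y_{j,l}^{-1}\Omega_{j,l}) m_j$ actually lies in $J\cap\bar N$, so that $F_{j,l}$ is a bona fide element of $\Ind_{J\cap P}^J(\sigma)^\Sigmala$ and the support computation for $\varphi([y_{j,l} m_j, F_{j,l}])$ goes through. This relies on the fact that the $\bar N_s$ form a basis of open neighborhoods of the identity in $J\cap\bar N$ (cf.\ \Cref{subsection: congruence subgroups}) together with the rigid-analytic nature of $M$-conjugation (\Cref{lemma: rigid analytic parahorics}), and is the essential $p$-adic input for the proof.
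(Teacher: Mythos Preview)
Your proof is correct and follows essentially the same strategy as the paper: reduce to functions supported on $JP$, use the LB structure of $V$ to decompose coset-by-coset in $M/\HM$, then construct an explicit preimage for each piece. The only real difference is in the final step. Where you partition $J\cap\bar N$ into small cosets $y_{j,l}\bar N_{s_j}$ (with $s_j$ chosen so that $m_j^{-1}\bar N_{s_j} m_j \subseteq J\cap\bar N$) and translate each piece by $y_{j,l} m_j$, the paper instead observes that one can first replace $m$ by $m a^{-1}$ for a suitable $a \in A \subseteq \HM$ (absorbing $a$ into $F$ via $[m,F(\bar n)] = [m a^{-1}, a F(\bar n)]$), thereby arranging once and for all that $m^{-1}(J\cap\bar N) m \subseteq J\cap\bar N$; a single translation by $m^{-1}$ then reduces to the case $f|_{J\cap\bar N} = [1,F]$, which is manifestly $\varphi([1,F])$. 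Both arguments work; the paper's is slightly slicker in that it avoids the extra partition by exploiting that $A$ can be chosen to contract $J \cap \bar N$ arbitrarily, while yours makes more explicit use of the filtration $(\bar N_s)_s$.
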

\begin{proof}
We can write $G$ as a union of finitely many translates of $J P$, $G = \bigcup_{i=1}^m g_i J P$. It is easy to see that we may write any element of $\Ind_{P}^G(V)^\Sigmala$ as sum of a functions in $\Ind_{P}^G(V)^\Sigmala$ whose support intersects only one of the sets $g_i J P$.
In particular,
$$
    \Ind_{P}^G(V)^\Sigmala = \sum_{i=1}^m g_i \cdot \Ind_{P}^G(V)^\Sigmala(J P),
$$
so it's enough to show that the image of $\varphi$ contains $\Ind_{P}^G(V)^\Sigmala(J P)$. Let $f$ be an element of $\Ind_{P}^G(V)^\Sigmala(J P)$.
Since $f$ is locally $\Sigma$-analytic, $f|_{\bar N_0}$ factors through some BH-space $U \into V$.
It follows from the expression \Cref{eqn: V is an LB space} of $V$ as an LB-space and \cite[Corollary 8.9]{nonarchimedean_functional_analysis} that any BH-space $U \into V$ factors through $\cInd_{\HM}^M (\sigma)(\Omega)$ for some finite union $\Omega$ of left cosets of $\HM$ in $M$.
Hence, $f(\bar N_0)$ is contained in $\cInd_{\HM}^M (\sigma)(\Omega)$, where $\Omega = \bigcup_{i=1}^s m_i \HM$ for some integer $s \geq 0$ and $m_1, ..., m_s \in M$. For any $\bar n \in \bar N_0$ we have $f(\bar n) = \sum_{i=1}^s [m_i, F_i(\bar n)]$, where we have set $F_i(\bar n) := f(\bar n)(m_i)$. Thus, we may assume that $s=1$ and $f|_{\bar N_0} = [m, F]$ for some $F \in \Ccal^\Sigmala(\bar N_0, \sigma)$.
Choose $a \in A^-$ such that $a m^{-1} \bar N_0 m a^{-1} \subseteq \bar N_0$, so that by changing $F$ by $\bar n \mapsto a F(\bar n)$ we may assume that $m^{-1} \bar N_0 m \subseteq \bar N_0$.
As $(m^{-1} f)(m^{-1} \bar n m) = [1, F(\bar n)]$, we see by replacing $f$ with $m^{-1} f$ and $F$ with $y \mapsto F(m y m^{-1})$ for $y \in m^{-1} \bar N_0 m$ that it suffices to consider the case when $f|_{\bar N_0} = [1, F]$. But in this case, $f = \varphi([1,F])$.
\end{proof}

\begin{proposition}\label{proposition: kernel of phi}
The kernel of $\varphi$ is $\sum_{\mu \in \HM^-} \Im(U_\mu - 1)$. In fact, for $s \geq s_\sigma$, the kernel of the restriction of $\varphi$ to $\cInd_J^G( \Acal_\sigma^{s-\Sigmaan} )$ is $\sum_{\mu \in \HM^-} (U_\mu - 1)(\cInd_J^G( \Acal_\sigma^{s-\Sigmaan} ))$.
\end{proposition}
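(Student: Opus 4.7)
The inclusion $\sum_{\mu \in \HM^-} \Im(U_\mu - 1) \subseteq \ker \varphi$ is automatic: by construction $\Hcal(J\HM^- J, J)_R$ acts trivially on the codomain (i.e.\ via the augmentation $R(1)$), so $\varphi \circ U_\mu = \varphi$ for every $\mu \in \HM^-$. For the converse, and also for the refined statement at level $s$, the plan is to follow the strategy of Kohlhaase--Schraen \cite[\S 2]{vanishing}: use the $(U_\mu - 1)$'s to concentrate the support of an arbitrary kernel element into a \emph{single} $JP$-coset, where $\varphi$ can be shown to be injective by a direct computation.

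The first key ingredient is single-coset injectivity: if $F \in \Acal_\sigma^{s-\Sigmaan}$ and $\varphi([1,F]) = 0$, then via the identification $\Ind_P^G(V)^\Sigmala(JP) \stackrel\sim\to \Ccal^{\Sigmala}(J \cap \bar N, V)$ used in \Cref{subsection: induction 1}, the function $\bar n \mapsto [1, F(\bar n)]$ vanishes on $J \cap \bar N$; since the map $v \mapsto [1,v] \colon \sigma \to V = \cInd_{\HM}^M(\sigma)$ is injective, this forces $F|_{J \cap \bar N} = 0$, hence $F = 0$ by $(J \cap P)$-equivariance. The second ingredient is the relation, modulo $\sum_\mu \Im(U_\mu - 1)$,
$$
    [g, F] \equiv \sum_{x \in (J \cap \bar N)/\mu(J\cap\bar N)\mu^{-1}} [gx\mu, (x\mu)^{-1} F],
$$
coming from $U_\mu$. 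As shown in the proof of \Cref{proposition: phi is surjective} and using $\mu \in M \subseteq P$ together with the inclusion $\mu J \mu^{-1} \subseteq (J \cap \bar N) P$ (for $\mu \in \HM^-$), each term on the right-hand side has $gx\mu JP \subseteq gJP$, so the relation shifts support \emph{within} a single $JP$-coset. Combined with \Cref{lemma disjointness of JP cosets} and \Cref{corollary: KtzJ are disjoint for different z}, which guarantee the disjointness of the relevant $K \Zfrak J$-pieces, this lets one organize any finite sum $f = \sum_i [g_i, F_i]$ into contributions indexed by distinct $JP$-cosets after modifying by an element of $\sum_\mu \Im(U_\mu-1)$. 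The vanishing $\varphi(f) = 0$ then decomposes across these cosets, and the single-coset injectivity forces each contribution to vanish in the quotient.

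The main obstacle is the combinatorial bookkeeping in the support reduction: one needs to verify that by iterating the $U_\mu$-relations one can genuinely merge every two terms $[g_1, F_1], [g_2, F_2]$ with $g_1 JP = g_2 JP$ into a single $J$-coset representative (using \Cref{lemma: products of double cosets} to multiply Hecke operators and \Cref{proposition: important decomposition} \itemnumber{3} together with \Cref{remark: for sufficiently small t multiplication by JzJ works well} to reach suitable $a \in A^{--}$), while never mixing terms from different $JP$-cosets. For the refined level-$s$ statement, all these manipulations stay inside $\cInd_J^G(\Acal_\sigma^{s-\Sigmaan})$ by \Cref{lemma: U operators}, so no enlargement of $s$ is required. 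Once both steps are in place, the quotient map $\bar\varphi$ is injective, which is precisely the claimed description of $\ker\varphi$.
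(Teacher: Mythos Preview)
Your first sentence already needs the paper's opening computation. Declaring that $U_\mu$ acts trivially on $\Ind_P^G(V)^\Sigmala$ does not by itself make $\varphi$ a Hecke-module map; one has to check $\varphi(U_\mu[1,F]) = \varphi([1,F])$. The paper does this explicitly, using \Cref{lemma disjointness of JP cosets} to see that $JP = \coprod_x x\mu JP$, so that the summands $\varphi([x\mu,(x\mu)^{-1}F])$ have disjoint supports and at each point of $JP$ the unique surviving summand agrees with $\varphi([1,F])$. This is routine, but it is the content of the inclusion $\mathcal K \subseteq \ker\varphi$, not a formality.

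The reverse inclusion has a genuine gap. Your scheme rests on two assertions, both problematic. First, you want $\varphi(f)=0$ to decompose along the sets $g_iJP$; but these are translates of the big cell and are neither cosets nor pairwise disjoint in general (for $\mu \in \HM^-$ strictly contracting $J\cap\bar N$ one has $\mu JP \subsetneq JP$), so the supports of $\varphi([1,F_1])$ and $\varphi([\mu,F_2])$ overlap and no such decomposition is available. Second, even if one could isolate a single set $gJP$, your ``merging'' step is unjustified: the relation $[g,F]\equiv U_\mu[g,F]$ replaces one $J$-coset by $|(J\cap\bar N)/\mu(J\cap\bar N)\mu^{-1}|$ many, so iterating moves you further from a single term, and the results you invoke (\Cref{lemma: products of double cosets}, \Cref{proposition: important decomposition}~\itemnumber{3}, \Cref{remark: for sufficiently small t multiplication by JzJ works well}) control $K\backslash G/J$, not $J$-cosets inside a fixed $gJP$.

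The paper's argument avoids both issues by a different normal form. Using $[g,F]\equiv U_a[g,F]$ and \Cref{proposition: important decomposition}~\itemnumber{3}, one pushes $f$ (modulo $\mathcal K$) into $\cInd_J^G(\Acal_\sigma^{s-\Sigmaan})(K\Zfrak^\sim J)$ for a cone $\Zfrak^\sim$ chosen so that every $t\in\Zfrak^\sim$ satisfies $t(J\cap\bar N)t^{-1}\subseteq K\cap\bar N$ and $t^{-1}(K\cap N)t\subseteq J\cap N$. One then writes $f=\sum_{i,j}[k_{i,j}t_i,F_{i,j}]$ with the $t_i$ in distinct $K\backslash G/J$-classes that are moreover unrelated under the $\HM$-action, and shows directly that such an $f$ with $\varphi(f)=0$ is \emph{zero}. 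The key point is not single-term injectivity but a disjointness of supports \emph{inside} $V=\cInd_{\HM}^M(\sigma)$: evaluating $\varphi(f)(k)$ at $k\in K$, the contributing terms are $[m_{i,j}^{-1},F_{i,j}(\bar n_{i,j})]$, and one uses the cone conditions together with \Cref{corollary: KtzJ are disjoint for different z} to prove the $m_{i,j}^{-1}\HM$ are pairwise disjoint in $M$. Thus each $F_{i,j}(\bar n_{i,j})=0$, and varying $k$ over $K$ lets $\bar n_{i,j}$ range over all of $J\cap\bar N$, forcing $F_{i,j}=0$.
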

\begin{proof}
The first statement follows from the second, so this is the one we will prove.
Let $\mu \in \HM^-$. By \Cref{lemma disjointness of JP cosets}, we can write $JP = J \mu J P = \coprod_x x \mu JP$
where $x$ runs over coset representatives in $\bar N_0 / \mu \bar N_0 \mu^{-1}$,
and $\varphi(U_\mu [1, F]) = \sum_x \varphi([x \mu, (x \mu)^{-1} F])$, where $\varphi([x \mu, (x\mu)^{-1} F])$ has support in $x \mu J P$.
Hence, for any $y \in JP$, there exists a unique coset representative $x$ as above such that $y \in x \mu JP$, and we can write $y = x \mu \bar n p$ with $\bar n \in \bar N_0$ and $p \in P$. Then,
\begin{align*}
    \varphi(U_\mu [1, F])(y) & = [x\mu, (x\mu)^{-1}  F](x\mu \bar n p) \\
     & = p^{-1} [1, ((x\mu)^{-1} F)(\bar n)] \\
     &  = p^{-1} [1, \mu^{-1} F(x \mu \bar n \mu^{-1})] \\
     & = p^{-1} \mu^{-1} [1, F(x \mu \bar n \mu^{-1})] \\
     & = \varphi([1,F])(y)
\end{align*}
\newcommand{\thekernel}{\mathcal{K}}
since we can write $y = (x \mu \bar n \mu^{-1}) \cdot (\mu p)$ as a product of an element of $\bar N_0$ with an element of $P$.
Thus, $(U_\mu - 1) [1, F] \in \ker \varphi$, and since the elements of the form $[1, F]$ generate $\cInd_J^G (\Acal_\sigma^{s-\Sigmaan})$, we see that the subspace $\thekernel := \sum_{\mu \in \HM^-} (U_\mu - 1)(\Acal_\sigma^{s-\Sigmaan})$ of $\Acal_\sigma^{s-\Sigmaan}$ is contained the kernel of $\varphi$.

We now show that the kernel of $\varphi$ is contained in $\thekernel$. For $g \in G$, $F \in \Acal_\sigma^{s-\Sigmaan}$ and $\mu \in \HM^-$, we have
\begin{align}\label{eqn: can change element of kernel by element of V}
\begin{split}
    [g, F] =~ & (1 - U_\mu)[g, F] + U_\mu[g, F] \\
            & \in \thekernel + \cInd_J^G( \Acal_\sigma^{s-\an} )(g J \mu J).
\end{split}
\end{align}
Recall that $\bar N_0 \subseteq K \cap N$, so in particular for for $t \in \Zfrak^-$, we have
\begin{enumerate}[label=(\Alph*)]
    \item\label{assumption 1 for Zfraksim} $t \bar N_0 t^{-1} \subseteq K \cap \bar N$.
\end{enumerate}
Choose for each $\alpha \in \Phi^+ \setminus \Phi_M$ a constant $c_\alpha \leq 0$ that is small enough such that, defining $\Zfrak^\sim$ as in \Cref{proposition: important decomposition} \itemnumber{3}, for all $t \in \Zfrak^\sim$ we have
\begin{enumerate}[label=(\Alph*)]
    \setcounter{enumi}{1}
    \item\label{assumption 2 for Zfraksim} $t^{-1} (K \cap N) t \subseteq N_0$.
\end{enumerate}
Then, \Cref{eqn: can change element of kernel by element of V} and \Cref{proposition: important decomposition} \itemnumber{3} show for all $g \in G, F \in \Acal_\sigma^{s-\Sigmaan}$,
$$
    [g, F] \in \thekernel + \cInd_J^G(\Acal_\sigma^{s-\Sigmaan})(K \Zfrak^\sim J).
$$

Now, let $f \in \cInd_J^G(\Acal_\sigma^{s-\Sigmaan})(K \Zfrak^\sim J)$ be an element of the kernel of $\varphi$. We may write $f = \sum_{i=1}^r \sum_{j=1}^{r_i} [k_{i,j} t_i, F_{i,j}]$, where $k_{i,j} \in K$ and the $t_i \in \Zfrak^\sim$ belong to different double cosets in $K \backslash G / J$. By \Cref{proposition: important decomposition} \itemnumber{1}, \Cref{remark: for sufficiently small t multiplication by JzJ works well} and \Cref{eqn: can change element of kernel by element of V} we can further assume that if $i \neq i'$ then $K t_i \mu J \neq K t_{i'} J$ for any $\mu \in \HM^-$. In fact, by \Cref{lemma: JM generated by submonoids}, we may assume that the same is true for $\mu \in \HM$. We may also assume without loss of generality that if $k_{i,j} t_i J = k_{i,j'} t_i J$, then $j = j'$. We will show that any such $f$ must be zero.

Let $k \in K$ be any element, and write $\mathcal I_k$ for the set of pairs of indices $(i, j)$ such that $kP \subseteq k_{i,j} t_i JP$. For any $(i,j) \in \mathcal I_k$, we may write $(k_{i,j} t_i)^{-1} k = \bar n_{i,j} m_{i,j} n_{i,j}$ for some 
\linebreak $\bar n_{i,j} \in \bar N_0, m_{i,j} \in M, n_{i,j} \in N$. In particular, $k_{i,j}^{-1} k = t_i \bar n_{i,j} t_i^{-1} \cdot t_i m_{i,j} \cdot n_{i,j}$, so by \Cref{assumption 1 for Zfraksim}, $t_i m_{i,j} n_{i,j} \in K \cap P$. By \Cref{lemma: semi Iwahori decomposition for K} and the Levi decomposition of $P$, this means that $t_i m_{i,j} \in K \cap M$ and $n_{i,j} \in K \cap N$. In particular, $m_{i,j} (K \cap N) m_{i,j}^{-1} = t_i^{-1} (K \cap N) t_i \subseteq N_0$ by \Cref{assumption 2 for Zfraksim}.

The support of $\varphi([k_{i,j} t_i, F_{i,j}])$ is $k_{i,j} t_i JP$, and
$$
    \varphi([k_{i,j} t_i, F_{i,j}])(k) = \varphi([1, F_{i,j}])((k_{i,j} t_i)^{-1}k) = \varphi([1, F_{i,j}])(\bar n_{i,j} m_{i,j} n_{i,j}) = [m_{i,j}^{-1}, F_{i,j}(\bar n_{i,j})].
$$
In particular,
\begin{align}\label{eqn: summands of phi are 0}
    0 = \varphi(f)(k) = \sum_{(i,j) \in \mathcal I_k} [m_{i,j}^{-1}, F_{i,j}(\bar n_{i,j})].
\end{align}
We claim that the supports of the summands in the right-hand side are pairwise disjoint.
Indeed, assume that $m_{i,j}^{-1} \HM \cap m_{i' j'}^{-1} \HM \neq \emptyset$. Then, $\mu := m_{i,j} m_{i', j'}^{-1} \in \HM$ and
$$
    K t_i \mu J = K m_{i,j}^{-1} \mu J = K m_{i',j'}^{-1} J = K t_{i'} J,
$$
which by assumption implies $i = i'$. Moreover, \Cref{corollary: KtzJ are disjoint for different z} implies $\mu \in \JM$. Now, as $k_{i,j}^{-1} = t_i \bar n_{i,j} m_{i,j} n_{i,j} k^{-1}$, we have
\begin{align*}
    t_i^{-1} k_{i,j}^{-1} k_{i', j'} t_i
    & = \bar n_{i,j}  \cdot \mu \cdot m_{i,j'} n_{i,j} n_{i, j'}^{-1} m_{i,j'}^{-1} \cdot \bar n_{i,j'}^{-1} \\
    & \in \bar N_0 \cdot \JM \cdot m_{i,j'} (K \cap N) m_{i,j'}^{-1} \cdot \bar N_0 \\
    & \subseteq \bar N_0 \cdot \JM \cdot N_0 \cdot \bar N_0 \subseteq J,
\end{align*}
or in other words, $k_{i,j'} t_i J = k_{i,j} t_i J$. By assumption, this implies $j = j'$, thus proving the claim.

In particular, we may deduce from \Cref{eqn: summands of phi are 0} that $F_{i,j}(\bar n_{i,j}) = 0$ for all $(i, j) \in \mathcal I_k$. Now, let us fix any pair $(i,j)$. To show that $F_{i,j} = 0$, it's enough to show that $F_{i,j}(\bar n) = 0$ for all $\bar n \in \bar N_0$. Fix such a $\bar n$, and take $k = k_{i,j} t_i \bar n t_i^{-1}$, which lies in $K$ by \Cref{assumption 1 for Zfraksim}. Then, $(i,j) \in \mathcal I_k$, $\bar n_{i,j} = \bar n, m_{i,j} = t_i^{-1}$ and $n_{i,j} = 1$. In particular, $F_{i,j}(\bar n) = 0$, as required.
\end{proof}

\begin{corollary}\label{corollary: restriction of phi is surjective}
The restriction of $\varphi$ to $\cInd_J^G (\Acal_\sigma^{s-\Sigmaan})$ is surjective.
\end{corollary}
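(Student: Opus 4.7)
The plan is to combine \Cref{proposition: phi is surjective} with the $U_a$-operator for $a \in A_\Sigma^{--}$ to drag an arbitrary preimage of $f$ into the smaller subspace $\cInd_J^G(\Acal_\sigma^{s-\Sigmaan})$ without changing its image under $\varphi$. Given $f \in \Ind_P^G(V)^\Sigmala$, \Cref{proposition: phi is surjective} yields some $F \in \cInd_J^G(\Ind_{J \cap P}^J (\sigma)^\Sigmala)$ with $\varphi(F) = f$. Writing $F$ as a finite sum $F = \sum_{i=1}^r [g_i, F_i]$ and using that $\Ind_{J \cap P}^J (\sigma)^\Sigmala = \varinjlim_{\tilde s} \Acal_\sigma^{\tilde s - \Sigmaan}$, I may assume $F \in \cInd_J^G(\Acal_\sigma^{\tilde s - \Sigmaan})$ for some common $\tilde s \geq s$. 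If $\tilde s = s$ we are done; otherwise the goal is to reduce $\tilde s$ to $s$.

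The key step is to pick $a \in A_\Sigma^{--}$ sufficiently deep so that $a \bar \Nbf_{\Sigma, s} a^{-1} \subseteq \bar \Nbf_{\Sigma, \tilde s}$. Such $a$ always exist: any $a' \in A_\Sigma^{--}$ satisfies $\langle \alpha, \ord(a') \rangle > 0$ for $\alpha \in \Phi^{-} \setminus \Phi_M$, and since $a' U_{\alpha, r} (a')^{-1} = U_{\alpha, r + \langle \alpha, \ord(a') \rangle}$ contracts, raising $a'$ to a sufficiently large power produces the desired contraction. By the discussion preceding \Cref{lemma: U operators}, this contraction implies that the $\HM^+$-action of $a^{-1}$ sends $\Acal_\sigma^{\tilde s - \Sigmaan}$ into $\Acal_\sigma^{s - \Sigmaan}$. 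Since $U_a F = \sum_{i, x} [g_i x a, (xa)^{-1} F_i]$ with $x$ ranging over coset representatives of $(J \cap \bar N) / a (J \cap \bar N) a^{-1}$, and $(xa)^{-1} F_i = a^{-1}(x^{-1} F_i)$ with $x^{-1} \in J$ preserving $\Acal_\sigma^{\tilde s - \Sigmaan}$, I conclude $U_a F \in \cInd_J^G(\Acal_\sigma^{s - \Sigmaan})$.

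Finally, the identity $\varphi(U_a F) = \varphi(F) = f$ holds because $(U_a - 1) F \in \ker \varphi$: the explicit computation at the start of the proof of \Cref{proposition: kernel of phi} already shows that $\varphi(U_\mu [g, F']) = \varphi([g, F'])$ for any $\mu \in \HM^-$, $g \in G$, and $F' \in \Acal_\sigma^{s'-\Sigmaan}$. The only point that requires care is the contraction estimate $a \bar \Nbf_{\Sigma, s} a^{-1} \subseteq \bar \Nbf_{\Sigma, \tilde s}$ and its translation into the statement that the $a^{-1}$-action lowers analytic radii appropriately, but this is already built into the setup of \Cref{subsection: induction 2} (in particular the compactness discussion preceding \Cref{lemma: U operators}), so the argument is essentially formal once that machinery is invoked.
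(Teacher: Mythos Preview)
Your approach is essentially the same as the paper's: both rely on the identity $\varphi \circ U_\mu = \varphi$ (established at the start of the proof of \Cref{proposition: kernel of phi}) together with the fact that a suitable $U_\mu$ improves analytic radii, so that any preimage under $\varphi$ can be pushed into $\cInd_J^G(\Acal_\sigma^{s-\Sigmaan})$. The paper packages this as an isomorphism of quotients by $\sum_{\tilde\mu} \Im(U_{\tilde\mu}-1)$ and then passes to the limit, while you argue elementwise; the content is identical.

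There is one small correction needed. You take $a \in A_\Sigma^{--}$, but such $a$ contracts only $\bar N_\Sigma$ and acts trivially on $\bar N^\Sigma$. Consequently the action of $a^{-1}$ sends $\Acal_\sigma^{(\tilde s,\tilde s)-\Sigmaan}$ only into $\Acal_\sigma^{(s,\tilde s)-\Sigmaan}$, not into $\Acal_\sigma^{(s,s)-\Sigmaan} = \Acal_\sigma^{s-\Sigmaan}$; the discussion before \Cref{lemma: U operators} that you invoke explicitly fixes the second radius $s'$. When $\Sigma \subsetneq S$ this is a genuine gap. The fix is immediate: take instead $\mu \in A^{--}$ (or any sufficiently deep $\mu \in \HM^-$), which contracts $\bar N$ in every direction and hence improves both radii simultaneously. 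The paper's proof avoids this issue by working with a general $\mu \in \HM^-$ chosen to have exactly this factorisation property.
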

\begin{proof}
Let $s_\sigma \leq s' \leq s$.
Choose $\mu \in \HM^-$ such that the operator $U_\mu$ on $\cInd_J^G (\Acal_\sigma^{s-\Sigmaan})$ factors through
$\cInd_J^G (\Acal_\sigma^{s'-\Sigmaan})$. There is a commutative diagram
$$
    \begin{tikzcd}[column sep = large]
        \frac{\cInd_J^G (\Acal_\sigma^{s'-\Sigmaan}) }{ \sum_{\tilde \mu \in \HM^-} \Im(U_{\tilde \mu} - 1)} \ar[d, "U_\mu"] \ar[r,"\text{inclusion}"] &
        \frac{\cInd_J^G (\Acal_\sigma^{s-\Sigmaan}) }{ \sum_{\tilde \mu \in \HM^-} \Im(U_{\tilde \mu} - 1) } \ar[d, "U_\mu"] \ar[dl] \\
        \frac{\cInd_J^G (\Acal_\sigma^{s'-\Sigmaan}) }{ \sum_{\tilde \mu \in \HM^-} \Im(U_{\tilde \mu} - 1)}  \ar[r,"\text{inclusion}"] &
        \frac{\cInd_J^G (\Acal_\sigma^{s-\Sigmaan}) }{ \sum_{\tilde \mu \in \HM^-} \Im(U_{\tilde \mu} - 1) }
    \end{tikzcd}
$$
where the vertical arrows are isomorphisms. Thus, the horizontal arrows are also isomorphisms. The result then follows by passing to the limit and \Cref{proposition: phi is surjective}.
\end{proof}

\begin{proposition}\label{proposition: higher tor vanishes}
Let $\Acal_\sigma^\Sigma$ denote either $\Ind_{J \cap P}^J (\sigma)^\Sigmala$ or $\Acal_\sigma^{s-\Sigmaan}$ for some $s \geq s_\sigma$. Then, 
$\Tor^{\Hcal^-_R}_q(R(1), \cInd_J^G (\Acal_\sigma^\Sigma)) = 0$ for all $q > 0$.
\end{proposition}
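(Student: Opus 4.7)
The plan is to deduce the result from \Cref{proposition: vanishing Tor 6} applied to the module $\Mcal := \cInd_J^G(\Acal_\sigma^\Sigma)$. First, via \Cref{corollary: hecke algebra isomorphic to monoid ring} and \Cref{remark: for Hecke algebras can replace tildeJ_M by HM} the Hecke algebra $\Hcal(J\HM^- J, J)_R$ is isomorphic to the monoid algebra $R[\boldLambda^-]$, where $\boldLambda := \HM/J_M$ and $\boldLambda^- := \HM^-/J_M$; and $R(1)$ corresponds to the trivial character. I will verify the hypotheses of \Cref{proposition: vanishing Tor 6} for this $\boldLambda$, $\boldLambda^-$, with $\Lambda := AJ_M/J_M \simeq A/A_0$ and $\Lambda^- := A^-/A_0$.

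The group-theoretic requirements are straightforward. The group $\boldLambda$ is free abelian of finite rank by \Cref{remark: normalizer of J_M has free quotient}; the identity $\boldLambda^-(\boldLambda^-)^{-1} = \boldLambda$ follows from the argument in \Cref{lemma: JM generated by submonoids} (which only uses the inclusion $A \subseteq \HM$); $\Lambda$ has finite index in $\boldLambda$, this index is invertible in $R$ since $R$ is a $\Q_p$-algebra, and $\boldLambda = \boldLambda^-\Lambda$ by the centrality of $A$ in $M$; finally, $\Lambda^-$ is cut out in $\Lambda \otimes_\Z \Q = X_*(\Abf)\otimes\Q$ by the inequalities $v(\alpha(\cdot)) \leq 0$ for $\alpha \in \Delta \setminus \Delta_M$, and these functionals are linearly independent because the restrictions of $\Delta \setminus \Delta_M$ to $\Abf$ form a basis of $X^*(\Abf)\otimes\Q$. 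Hence $\Lambda^-$ is of the form demanded by \Cref{lemma: vanishing Tor 3}(i).

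The heart of the argument is the construction of an $R$-module decomposition $\Mcal = \Mcal^+ \oplus \bigoplus_{\mu \in \boldLambda^-} \Mcal_\mu$ satisfying conditions (a) and (b) of \Cref{proposition: vanishing Tor 6}. By \Cref{proposition: important decomposition}(i) and \Cref{corollary: KtzJ are disjoint for different z}, each $\mu \in \boldLambda^-$ determines a unique double coset $K\tilde\mu J \subseteq G$ (any lift $\tilde\mu \in \HM^-$ gives the same one), and distinct classes give disjoint double cosets. Accordingly, I set $\Mcal_\mu := \bigoplus_{gJ \subseteq K\tilde\mu J}[g,\Acal_\sigma^\Sigma]$ and let $\Mcal^+$ collect the remaining cosets $gJ \not\subseteq K\HM^- J$. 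Condition (a) is then immediate from \Cref{lemma: products of double cosets}: writing $g = k\tilde\mu j$ and using $\tilde\mu, \eta \in \HM^- \subseteq \tilde J_M^-$, one gets $\tilde\mu(jx)\eta \in J\tilde\mu J \cdot J\eta J = J\tilde\mu\eta J$, so every summand $gx\eta J$ of $U_\eta[g,F]$ lies in $K\tilde\mu\eta J$.

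The main obstacle will be condition (b): given an arbitrary $[g,F]$, producing $\eta \in \HM^-$ such that $U_\eta[g,F]$ is supported on $K\HM^- J/J$. \Cref{proposition: important decomposition}(iii), combined with \Cref{remark: for sufficiently small t multiplication by JzJ works well}, provides $a \in A^-$ sufficiently antidominant that $gJaJ \subseteq K\Zfrak^\sim J$, so $U_a[g,F]$ is supported on cosets whose $K$-double cosets have the form $Kt_i a J$ for finitely many $t_i \in \Zfrak^\sim$ arising from \Cref{proposition: important decomposition}(ii). The delicate point is to argue that each such $Kt_i aJ$ lies in $K\HM^- J$; under the bijection $K\backslash G/J \simeq (\Zfrak/\Zfrak_0)/W_M$ coming from \Cref{proposition: important decomposition}(i), this amounts to arranging that the relevant $W_M$-orbits are realised by elements of $\HM^-$, which I plan to handle by a careful analysis of the image of $\HM$ inside $(\Zfrak/\Zfrak_0)/W_M$ using \Cref{lemma: extended Weyl groups and stabilizers} and \Cref{remark: normalizer of J_M has free quotient}, possibly after further multiplying $a$ by an element of $A^{--}$. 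Once (a) and (b) are both established, \Cref{proposition: vanishing Tor 6} yields the claimed vanishing.
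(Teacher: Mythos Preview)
Your overall strategy—identifying $\Hcal(J\HM^- J,J)_R\simeq R[\boldLambda^-]$ and invoking \Cref{proposition: vanishing Tor 6}—is exactly the paper's, and the group-theoretic verifications are essentially fine (one quibble: the restrictions of $\Delta\setminus\Delta_M$ to $\Abf$ are linearly independent but not a basis unless $\Gbf$ is semisimple; linear independence is all \Cref{lemma: vanishing Tor 3} requires).

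The genuine gap is in your decomposition. Setting $\Mcal_\mu=\cInd_J^G(\Acal_\sigma^\Sigma)(K\tilde\mu J)$ only captures double cosets lying in the single $\HM$-orbit of $K\!\cdot\!1\!\cdot\!J$ on $K\backslash G/J$, and this action is generally far from transitive. For instance, take $\Gbf=\GL_3$, $\Mbf=\GL_2\times\GL_1$, $\HM=AJ_M$ with $J_M=\GL_2(\O)\times\GL_1(\O)$: the bijection of \Cref{proposition: important decomposition}~(i) identifies $K\backslash G/J$ with $\Z^3/S_2$, and the $\HM$-orbit of the identity is $\{(a,a,b)\}$, so $K\diag(\varpi,1,1)J$ lies in a different orbit. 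Once $g$ has been pushed into $K\Zfrak^\sim J$, \Cref{remark: for sufficiently small t multiplication by JzJ works well} shows that $U_\eta$ sends $KtJ$ to $Kt\eta J$, which stays in the same $\HM$-orbit; hence no $U_\eta$ ever moves $[\diag(\varpi,1,1),F]$ into $\bigoplus_\mu\Mcal_\mu$, and condition~(b) fails. Your proposed ``careful analysis of the image of $\HM$'' cannot repair this: the obstruction is the orbit structure itself, not a dominance issue.

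The paper's fix is to pick a set $\Omega\subseteq\Zfrak^\sim$ of representatives for \emph{all} $(\Nfrak\cap\HM)$-orbits on $K\backslash G/J$ (possible by \Cref{proposition: important decomposition}~(iii)) and to set
\[
\Mcal_{\mu J_M}:=\bigoplus_{t\in\Omega}\cInd_J^G(\Acal_\sigma^\Sigma)(Kt\mu J),
\qquad
\Mcal^+:=\bigoplus_{\mu\in\boldLambda\setminus\boldLambda^-}\Mcal_\mu.
\]
Condition~(a) then follows from \Cref{remark: for sufficiently small t multiplication by JzJ works well}. For condition~(b), given $g\in K\tilde t J$ one uses \Cref{proposition: important decomposition}~(ii) to find finitely many $\tilde t_j\in\Zfrak$ with $\tilde tJa\subseteq\bigcup_jK\tilde t_j aJ$ for all $a\in A^-$, writes each $K\tilde t_jJ$ as $Kt_j\mu_j J$ with $t_j\in\Omega$ and $\mu_j\in\Nfrak\cap\HM$, and then chooses $a$ deep enough that every $\mu_j a\in\HM^-$.
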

\begin{proof}
We will prove this by applying our results from \Cref{section: vanishing for Tor}, specifically \Cref{proposition: vanishing Tor 6}. Let $\boldLambda = \HM / \JM$, $\boldLambda^- = \HM^- / \JM$ and $\Lambda \subseteq A$ inducing an isomorphism $\Lambda \simto A / A_0$ and $\Lambda^- = \Lambda \cap A^-$.
Recall from \Cref{remark: normaliser of J_M has free quotient} that $\boldLambda$ is a free abelian group of finite rank and $\Lambda$ can be seen as a finite index subgroup of it.
\Cref{lemma: JM generated by submonoids} implies that $\boldLambda = \boldLambda^- (\boldLambda^-)^{-1}$ and $\boldLambda = \boldLambda^- \Lambda$.
By \Cref{corollary: hecke algebra isomorphic to monoid ring} (cf. \Cref{remark: for Hecke algebras can replace tildeJ_M by HM}), there is an $R$-algebra isomorphism 
$R[\boldLambda^-] \simto \Hcal^-_R$, which we use to regard $\cInd_J^G (\Acal_\sigma)$ as an $R[\boldLambda^-]$-module.
The monoid $\Lambda^-$ satisfies the condition of \Cref{lemma: vanishing Tor 3} \itemnumber{1} by \Cref{eqn: description of A-}.
Thus, to apply \Cref{proposition: vanishing Tor 6} we need to check that $\Mcal := \cInd_J^G (\Acal_\sigma^\Sigma)$ admits a decomposition
$$
    \cInd_J^G (\Acal_\sigma^\Sigma) = \Mcal^+ \oplus \bigoplus_{\mu \in \boldLambda^-} \Mcal_\mu
$$
such that 
\begin{enumerate}[label=(\alph*)]
    \item\label{item: lemma Tor 2 a reprise} $U_\mu \Mcal_{\tilde \mu \JM} \subseteq \Mcal_{\mu \tilde \mu \JM}$ for $\mu, \tilde \mu \in \HM^-$,
    \item\label{item: lemma Tor 2 b reprise} for all $f \in \cInd_J^G (\Acal_\sigma^\Sigma)$, there exists $\mu \in \HM^-$ such that $U_\mu f \in \bigoplus_{\tilde \mu \in \Lambda^-} \Mcal_{\tilde \mu}$.
\end{enumerate}

Let $\Zfrak^\sim$ be as in the proof of \Cref{proposition: kernel of phi}. Choose a set $\Omega$ of representatives in $\Zfrak^\sim$ for each orbit of the (right) action of $\Nfrak \cap \HM^-$ on $\tilde W^K \backslash \tilde W / \tilde W^{\JM} \simeq \Zfrak_0 \backslash \Zfrak / \tilde W^{\JM}$ (this is possible by \Cref{proposition: important decomposition} \itemnumber{3}). Set, for $\mu \in \Nfrak \cap \HM^-$,
$$
    \Mcal_{\mu \JM} := \bigoplus_{t \in \Omega} \cInd_J^G (\Acal_\sigma^\Sigma)(K t \mu J),
    ~~~~~~~~~~
    \text{  and  }
    ~~~~~~~~~~
    \Mcal^+ := \bigoplus_{\mu \in \boldLambda \setminus \boldLambda^-} \Mcal_{\mu}.
$$
It then follows from \Cref{proposition: important decomposition} \itemnumber{1} and \Cref{corollary: KtzJ are disjoint for different z} that there is a decomposition $\cInd_J^G(\Acal_\sigma^\Sigma) = \Mcal^{+} \oplus \bigoplus_{\mu \in \boldLambda^-} \Mcal_\mu$.
Condition \Cref{item: lemma Tor 2 a reprise} above follows from the fact that for $k \in K,$ $t \in \Zfrak^\sim, \mu, \tilde \mu \in \HM^-$,
$$
    \supp( U_\mu [k t \tilde \mu, F] )
    \subseteq k t \tilde \mu J \mu J
    \subseteq K t \mu \tilde \mu J,
$$
where the second inclusion follows from \Cref{remark: for sufficiently small t multiplication by JzJ works well}.

Let us move on to condition \Cref{item: lemma Tor 2 b reprise}.
Let $\tilde t \in \Zfrak$, $k \in K$.
We will show that there exists $a \in A^-$ such that $\supp(U_{a} [k \tilde t, F]) \subseteq \coprod_{\mu \in \boldLambda^-, t \in \Omega} K t \mu J$ for any $F$.
By \Cref{proposition: important decomposition} \itemnumber{2}, there exist $\tilde t_1, ..., \tilde t_r \in \Zfrak$ such that $\tilde t J a \subseteq \bigcup_i K \tilde t_i a J$ for all $a \in A^-$.
Choose $\mu_i \in \Nfrak \cap \HM$ and $t_i \in \Omega$ such that the action of $\mu_i$ on $\Zfrak_0 \backslash \Zfrak$ sends $\Zfrak_0 t_i$ to $\Zfrak_0 \tilde t_i$.
Writing $\mu_i = n_i t'_i$ with $n_i \in \Nfrak \cap K$ and $t'_i \in \Zfrak$, this means that, multiplying $\tilde t_i$ by an element of $\Zfrak_0$ on the left if necessary, we have $\tilde t_i = n_i^{-1} t_i \mu_i$.
If we take $a \in A^-$ with $v_p(\alpha(a))$ small enough for all $\alpha \in \Phi^+ \setminus \Phi_M$ so that $\mu_i a \in \HM^-$, then $U_{a} [k \tilde t, F]$ has support in
$$
    K \tilde t J a J \subseteq \bigcup_i K \tilde t_i a J = \bigcup_i K n_i^{-1} t_i \mu_i a J = \bigcup_i K t_i (\mu_i a) J \subseteq \coprod_{\mu \in \boldLambda^-, t \in \Omega} K t \mu J,
$$
as required.
\end{proof}

\newcommand{\repnofG}{U}

We are almost ready to state and prove the main theorem of the section.
Before that, we need to introduce a bit more notation.
Let $G_0$ be an abstract group and let $\repnofG$ be a representation of $G_0 \times G$ on a finite flat $R$-module.
We set $\Acal_{\sigma, \repnofG}^{s-\Sigmaan} := \repnofG \otimes_R \Acal_\sigma^{s-\Sigmaan}$ and view it as a $G_0 \times G$-submodule (with $G$ acting diagonally) of $\repnofG \otimes_R \Ind_{J \cap P}^J (\sigma)^\Sigmala$.
There is a $G_0 \times G$-equivariant isomorphism
\begin{align}\label{eqn: isomorphism of Lambda modules algebraic smooth}
\begin{split}
    \repnofG \otimes_R \cInd_J^G (\Acal_{\sigma}^{s-\Sigmaan})
    & \simto \cInd_J^G (\Acal_{\sigma, \repnofG}^{s-\Sigmaan}) \\
    u \otimes [g, F]
    & \mapsto [g, g^{-1} u \otimes F].
\end{split}
\end{align}
If we regard $\repnofG$ as an $\Hcal^-_R$-module by letting the operators $U_\mu$ act as 1 and let $\Hcal^-_R$ act diagonally on the left-hand side, then \Cref{eqn: isomorphism of Lambda modules algebraic smooth} is also an $\Hcal^-_R$-module isomorphism.
The corresponding statements for $\repnofG \otimes_R \Ind_{J \cap P}^J (\sigma)^{\Sigmala}$ also hold.
The map $\varphi$ induces $\Hcal^-_R [G_0 \times G]$-linear maps
$$
\cInd_J^G(\Acal_{\sigma, \repnofG}^{s-\Sigmaan}) \to \repnofG \otimes_R \cInd_J^G (\Ind_{J \cap P}^J (\sigma)^{\Sigmala}) \to \repnofG \otimes_R \Ind_P^G(V)^{\Sigmala}
$$

\begin{theorem}\label{theorem: quasi-isomorphism locally algebraic}
The map $\varphi$ induces isomorphisms in the derived category $D(\Hcal^-_R [G_0 \times G])$ of $\Hcal^-_R [G_0 \times G]$-modules
\begin{align*}
    R(1) \otimesL_{\Hcal^-_R} \cInd_J^G (\Acal_{\sigma, U}^{s-\Sigmaan}) 
    & \simto R(1) \otimesL_{\Hcal^-_R} (U \otimes_R \cInd_J^G ( \Ind_{J \cap P}^J (\sigma)^{\Sigmala} ) ) \\
    & \simto U \otimes_R \Ind_P^G(V)^{\Sigmala}.
\end{align*}
\end{theorem}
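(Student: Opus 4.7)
The plan is to deduce the two isomorphisms by combining the three main inputs already established above: the surjectivity of $\varphi$ (\Cref{proposition: phi is surjective} and \Cref{corollary: restriction of phi is surjective}), the description of $\ker \varphi$ (\Cref{proposition: kernel of phi}), and the vanishing of higher Tors against $R(1)$ (\Cref{proposition: higher tor vanishes}). Since $\Hcal(J \HM^- J, J)_R$ is commutative by \Cref{corollary: hecke algebra isomorphic to monoid ring}, $R(1)$ is identified with $\Hcal(J \HM^- J, J)_R / \sum_{\mu \in \HM^-}(U_\mu - 1) \Hcal(J \HM^- J, J)_R$, so for any $\Hcal(J\HM^-J, J)_R$-module $M$ one has $R(1) \otimes_{\Hcal(J\HM^-J, J)_R} M = M / \sum_{\mu}(U_\mu - 1) M$.

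First I would handle the second isomorphism. Applied to $M = \cInd_J^G(\Ind_{J \cap P}^J (\sigma)^\Sigmala)$, the surjectivity of $\varphi$ together with the identification $\ker \varphi = \sum_\mu \Im(U_\mu - 1)$ yields a $G$-equivariant isomorphism
\begin{align*}
    R(1) \otimes_{\Hcal(J\HM^-J, J)_R} \cInd_J^G(\Ind_{J \cap P}^J (\sigma)^\Sigmala) \simto \Ind_P^G(V)^\Sigmala,
\end{align*}
and \Cref{proposition: higher tor vanishes} promotes this ordinary tensor to the derived tensor in $D(\Hcal(J \HM^- J, J)_R)$. To insert the coefficient system $U$, observe that since $\Hcal(J\HM^-J, J)_R$ acts trivially on the finite flat $R$-module $U$, for any projective resolution $P_\bullet \to R(1)$ there is a natural isomorphism of complexes $P_\bullet \otimes_{\Hcal(J\HM^-J, J)_R} (U \otimes_R M) \simeq U \otimes_R (P_\bullet \otimes_{\Hcal(J\HM^-J, J)_R} M)$, and flatness of $U$ over $R$ gives on passing to homology
\begin{align*}
    R(1) \otimesL_{\Hcal(J\HM^-J, J)_R} (U \otimes_R M) \simeq U \otimes_R \bigl(R(1) \otimesL_{\Hcal(J\HM^-J, J)_R} M\bigr).
\end{align*}
Applied to $M = \cInd_J^G(\Ind_{J \cap P}^J (\sigma)^\Sigmala)$, this produces the second isomorphism of the theorem.

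For the first isomorphism, I would run the same recipe with $M = \cInd_J^G(\Acal_\sigma^{s-\Sigmaan})$: the restriction statement in \Cref{proposition: kernel of phi} together with \Cref{corollary: restriction of phi is surjective} gives $R(1) \otimes_{\Hcal(J\HM^-J, J)_R} \cInd_J^G(\Acal_\sigma^{s-\Sigmaan}) \simto \Ind_P^G(V)^\Sigmala$, Tor vanishing for this $M$ is again supplied by \Cref{proposition: higher tor vanishes}, and tensoring by $U$ as above yields $R(1) \otimesL_{\Hcal(J\HM^-J, J)_R} (U \otimes_R \cInd_J^G(\Acal_\sigma^{s-\Sigmaan})) \simeq U \otimes_R \Ind_P^G(V)^\Sigmala$. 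Invoking the $\Hcal[G_0 \times G]$-equivariant isomorphism \Cref{eqn: isomorphism of Lambda modules algebraic smooth}, which identifies $U \otimes_R \cInd_J^G(\Acal_\sigma^{s-\Sigmaan})$ with $\cInd_J^G(\Acal_{\sigma, U}^{s-\Sigmaan})$, then delivers the first isomorphism. The $G_0 \times G$-equivariance is transparent throughout: $\varphi$ is $G$-equivariant by construction, and the $G_0$-action lives entirely on the tensor factor $U$, which is untouched by the other operations.

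The genuine technical content already lies in \Cref{proposition: kernel of phi} and \Cref{proposition: higher tor vanishes}; the present proof is largely bookkeeping. The one delicate point is ensuring that \Cref{eqn: isomorphism of Lambda modules algebraic smooth} really intertwines the Hecke structures (as asserted just after it), so that the substitution $U \otimes_R \cInd_J^G(\Acal_\sigma^{s-\Sigmaan}) \leftrightarrow \cInd_J^G(\Acal_{\sigma, U}^{s-\Sigmaan})$ is legitimate at the level of derived tensor products with $R(1)$.
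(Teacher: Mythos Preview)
Your proposal is correct and follows essentially the same approach as the paper: both reduce to the case of trivial $U$ by exploiting that $U \otimes_R (\blank)$ is exact and commutes with $R(1) \otimes_{\Hcal(J\HM^-J,J)_R} (\blank)$, and in that case assemble \Cref{proposition: phi is surjective}, \Cref{proposition: kernel of phi}, \Cref{corollary: restriction of phi is surjective} and \Cref{proposition: higher tor vanishes} exactly as you do. The paper phrases the $U$-step slightly differently (noting that $U \otimes_R (\blank)$ preserves projectives, hence commutes with $R(1) \otimesL_{\Hcal} (\blank)$), but this is equivalent to your flatness argument.
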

\begin{proof}
When $U$ is the trivial representation, the theorem follows by putting together \Cref{proposition: phi is surjective}, \Cref{proposition: kernel of phi}, \Cref{corollary: restriction of phi is surjective} and \Cref{proposition: higher tor vanishes}.
For general $U$, the result follows from the fact that the functor $U \otimes_R \blank$ on $\Hcal^-_R$-modules is exact, preserves projectives (as $U$ is a projective $R$-module), and commutes with the functor $R(1) \otimes_{\Hcal^-_R} \blank$.
\end{proof}

Next, assume that $\sigma = \sigma_\Sigma \boxtimes_R \sigma^\Sigma$ where $\sigma_\Sigma$ is a locally analytic representation of $G_\Sigma$ and $\sigma^\Sigma$ is a smooth representation of $G^\Sigma$.
We may choose $s \geq s_\sigma$ such that $\JMs^\Sigma$ acts trivially on $\sigma^\Sigma$.
We can extend the action of $\JM^\Sigma$ on $\sigma^\Sigma$ to an action of $J_{0,s}^\Sigma$ via the isomorphism described above \Cref{corollary: projection from J_0 to J_M monoid}, and even to an action of the monoid $J_{0,s}^\Sigma \HM^{\Sigma,+} J_{0,s}^\Sigma$.
In particular, we may consider the $G_0 \times J_{0,s} \HM^+ J_{0,s}$-module $\Acal_{\sigma_\Sigma, \repnofG}^{s-\Sigmaan} \boxtimes_R \sigma^\Sigma$.

\begin{lemma}\label{lemma: isomorphism over J0s in locally algebraic case}
The map
\begin{align*}
    \xi \colon \Acal_{\sigma_\Sigma, \repnofG}^{s-\Sigmaan} \otimes_R \sigma^\Sigma & \to \Acal_{\sigma, \repnofG}^{s-\Sigmaan} \\
    F \otimes v & \mapsto (\bar n m n \mapsto F(\bar n m n) \otimes m^{-1} v)
\end{align*}
for $\bar n \in \bar N_0, m \in \JM, n \in N_0$,
is an isomorphism of $G_0 \times J_{0,s} \HM^+ J_{0,s}$-modules.
\end{lemma}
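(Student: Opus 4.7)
The plan is to verify three things in order: first that $\xi$ is well-defined, next that it is an isomorphism of underlying $R$-modules, and finally that it respects the full $G_0 \times J_{0,s} \HM^+ J_{0,s}$-action. For well-definedness, the right $(J \cap P)$-equivariance of $\xi(F \otimes v)$ follows from a direct calculation using the Iwahori decomposition $y = \bar n m n$ and the fact that $N$ is built into the extended action on $\sigma$ as trivial, so only the $M$-component of $m' n' \in J \cap P$ affects either $F$ or $v$. The $s$-$\Sigma$-analyticity is inherited from $F$, since on $J \cap \bar N$ one simply has $\xi(F \otimes v)(\bar n) = F(\bar n) \otimes v$, with $v$ constant. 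For the isomorphism statement, I would restrict to $J \cap \bar N$ via the restriction isomorphisms of \Cref{subsection: induction 2}: $\xi$ then becomes the natural tensor-product map, which by the explicit orthonormal description from \Cref{eqn: isomorphism for Acal} factors as the associativity isomorphism $\bigl(\Ccal^{\bar \Nbf_s\text{-}\an}(J_\Sigma \cap \bar N_\Sigma,\Q_p) \otimes \Q_p^{|\cdots|}\bigr) \cotimes_{\Q_p} (U \otimes \sigma_\Sigma) \otimes_R \sigma^\Sigma$.

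Equivariance splits into three checks. $G_0$-equivariance is immediate since $G_0$ acts only on the $U$ factor. For $\HM^+$-equivariance, I would use the explicit formula $(\mu F)(\bar n) = \mu F(\mu^{-1} \bar n \mu)$ valid on $J \cap \bar N$ (where the Iwahori $m$-part is trivial) and the fact that $\mu \in \HM^+$ acts on $v$ via the decomposition $\mu = \mu_\Sigma \mu^\Sigma$; both sides reduce to $\mu F(\mu^{-1} \bar n \mu) \otimes \mu v$.

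The main technical step is $J_{0,s}$-equivariance. Write $y = \bar n_0 m_0 n_0 \in J_{0,s}$ via the Iwahori decomposition (\Cref{lemma: Iwahori decomposition for deep level}), so $m_0 \in J_M$ and $n_0 \in N_s$; the extended action on $\sigma^\Sigma$ sends $v \mapsto m_0 v$ via the isomorphism $J_{0,s}^\Sigma / J_{1,s}^\Sigma \simeq J_M^\Sigma / J_{M,s}^\Sigma$. Applying the transport formula of \Cref{eqn: formula equivariance of As} to $\xi(F \otimes v) \in \Acal_{\sigma, U}^{s-\Sigmaan}$ and comparing with the computation for $\xi((y^{-1} F) \otimes (y^{-1} v))$, the identity reduces to showing that $m_1 := m(p_0 \bar n p_0^{-1})$ acts trivially on $v \in \sigma^\Sigma$ for all $\bar n \in J \cap \bar N$, where $p_0 = m_0 n_0$. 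The key observation will be that $J \cap \bar N \subseteq J_{1,s}$ by definition of $J_{1,s}$, and $J_{1,s}$ is normal in $J_{0,s}$ by \Cref{lemma: J_s is normal in J} \itemnumber{3}; hence $p_0 \bar n p_0^{-1} \in J_{1,s}$, and its Iwahori decomposition in $J$ coincides with the one in $J_{1,s} = (J \cap \bar N) \times J_{M,s} \times N_s$, forcing $m_1 \in J_{M,s}$. By the choice of $s$, the projection $m_1^\Sigma \in J_{M,s}^\Sigma$ acts trivially on $\sigma^\Sigma$, giving the required cancellation. The hard part is precisely this normality-plus-refined-Iwahori argument, which is what makes the specific enlargement of the action to $J_{0,s}$ (rather than to $J$) work.
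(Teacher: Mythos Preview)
Your proposal is correct and follows essentially the same approach as the paper. The only cosmetic difference is in the $J_{0,s}$-equivariance step: the paper writes the Iwahori decomposition of $y^{-1}\bar n$ directly as $\bar n_1 m_0^{-1} m_1 n_1$ with $m_1\in J_{M,s}$ by invoking the group-homomorphism property of the projection $J_{0,s}\to J_M/J_{M,s}$ (the comment above \Cref{corollary: projection from J_0 to J_M monoid}), whereas you reach the same conclusion by conjugating $\bar n\in J\cap\bar N\subseteq J_{1,s}$ by $p_0\in J_{0,s}$ and using that $J_{1,s}$ is normal in $J_{0,s}$ (\Cref{lemma: J_s is normal in J}\,(iii)) together with the refined Iwahori decomposition of $J_{1,s}$; these are equivalent formulations of the same structural fact. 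One small caveat: the transport formula \eqref{eqn: formula equivariance of As} was stated for $x\in \bar N_{\Sigma,s}\times\bar N^\Sigma_{s'}$ with projections into $J_{\Sigma,s}\times J^\Sigma_{s'}$, so strictly speaking you are using an analogous identity for $\bar n\in J\cap\bar N$ with projections into $J_{1,s}$, but its derivation is identical.
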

\begin{proof}
The map $\xi$ is well-defined, since the image of an element $f \otimes v$ as above is an element of $\Ind_{J \cap P}^J(\sigma)^{\Sigmala}$ whose restriction to $\bar N_0$ is $\bar \Nbf_{\Sigma,s} \times \bar N^\Sigma_s$-analytic.
Note that, under restriction to $\bar N_0$, $\xi$ corresponds to the natural isomorphism of $R$-modules between $U \otimes_R \Ccal^{\bar \Nbf_{\Sigma,s} \times \bar N^\Sigma_s - \an}(\bar N_0, \sigma)$ and $U \otimes_R \Ccal^{\bar \Nbf_{\Sigma,s} \times \bar N^\Sigma_s - \an}(\bar N_0, \sigma_\Sigma) \otimes_R \sigma^\Sigma$, and is thus an $R$-module isomorphism.
It is also clearly $G_0$-equivariant.

Let us check that $\xi$ is $J_{0,s}$-equivariant. Let $\bar n \in \bar N_0$ and $y \in J_{0,s}$, and let $m_0$ be the projection of $y$ to $\JM$ under the Iwahori decomposition of $J_{0,s}$. It follows from the comment above \Cref{corollary: projection from J_0 to J_M monoid} that we can write $y^{-1} \bar n = \bar n_1 m_0^{-1} m_1 n_1$ for some $\bar n_1 \in \bar N_0$, $m_1 \in \JMs, n_1 \in N_s$, and we have
\begin{align*}
    (y \xi (F \otimes v))(\bar n) & = \xi(F \otimes v)(y^{-1} \bar n) = F(y^{-1} \bar n) \otimes m_0 v, \\
    \xi (y(F \otimes v))(\bar n) & = \xi (yF \otimes yv)(\bar n) = (yF)(\bar n) \otimes yv = F(y^{-1} \bar n) \otimes m_0 v.
\end{align*}
Similarly, if $\mu \in \NJM^+$, then
\begin{align*}
    (\mu \xi (F \otimes v))(\bar n) & = \mu \cdot \xi(F \otimes v)(\mu^{-1} \bar n \mu) = \mu  F(\mu^{-1} \bar n \mu) \otimes \mu v, \\
    \xi (\mu(F \otimes v))(\bar n) & = \xi (\mu F \otimes \mu v)(\bar n) = (\mu F)(\bar n) \otimes \mu v = \mu F(\mu^{-1} \bar n \mu) \otimes \mu v. \qedhere
\end{align*}
\end{proof}

\subsection{Untwisting}
\label{subsection: untwisting}

In this section we assume that $\HM \to \HM / \JM$ admits a multiplicative section \linebreak $\HM / \JM \simto \boldLambda \subseteq \HM$ such that $\boldLambda$ acts on $\sigma$ via a character $\chi \colon \boldLambda \to R^\times$.
Note that this assumption is always satisfied if $\HM = A \JM$ (with $\boldLambda = \Lambda$) and $\sigma$ admits a central character.
Then, it follows from the Bruhat--Tits decomposition \cite[7.4.15]{bruhat_tits_1} that the map $\boldLambda \to J \backslash J \HM J / J$ is a bijection. Let $\eta$ be its inverse.
We will write $\Acal_{\sigma, U}^\Sigma$ to denote either $\Acal_{\sigma, U}^{s-\Sigmaan}$ or $U \otimes_R \Ind_{J \cap P}^J ( \sigma )^{\Sigmala}$.

In this situation, we may modify the action of $J \HM^+ J$ on $\Acal_{\sigma, U}^\Sigma$ by letting $\mu \in J \HM^+ J$ act via $(\mu * F)(y) := \chi(\eta(\mu))^{-1} (\mu F)(y)$. In other words,
$$
    (\mu * F)(\bar n) = F(\mu^{-1} \bar n \eta(\mu) ).
$$
for $\bar n \in \bar N_0$.
For clarity, we will write $\Acal_{\sigma, U, *}^\Sigma$ when we want to consider $\Acal_{\sigma, U}^\Sigma$ as a $J \HM^+ J$-module with this action.
We will call this action of $J \HM^+ J$ the \emph{$*$-action} or \emph{untwisted action} to differentiate it from the action induced by \Cref{lemma: get action of monoid}.
It may appear more natural to call this action ``twisted" as opposed to ``untwisted", so let us explain our choice of terminology. Assume for simplicity that $\HM = A \JM \simeq \Lambda \times \JM$, and that $\sigma = \chi \otimes_R \sigma_0$ for some character $\chi$ of $\Lambda$ and representation $\sigma_0$ of $\JM$ on a finitely generated $R$-module. The $J$-module structure on $\Acal_{\sigma, U}^\Sigma$ depends only on $\sigma_0$ and not $\chi$, but the action of $\HM^+$ depends on both. However, this is not true for the $*$-action, which like the action of $J$ depends only on $\sigma_0$. However, it of course depends on the choice of $\Lambda$.

As before, this action of $J \HM^+ J$ induces an action of $\Hcal^-_R$ on $\cInd_J^G( \Acal_{\sigma, U, *}^\Sigma )$. Let $R(\chi)$ denote the $\Hcal^-_R$-module whose underlying $R$-module is $R$ and such that $U_\mu$ acts as $\chi(\mu)$ for $\mu \in \boldLambda^-$. The functor $\Mcal \mapsto R(\chi) \otimes_R \Mcal$ on $\Hcal^-_R$-modules is exact, and
$$
    \cInd_J^G( \Acal_{\sigma, U, *}^\Sigma ) \simeq R(\chi) \otimes_R \cInd_J^G( \Acal_{\sigma, U}^\Sigma ).
$$
In this setting, we may restate \Cref{theorem: quasi-isomorphism locally algebraic} as saying that there is an isomorphism in $D(\Hcal^-_R[G_0 \times G])$
$$
    R(\chi) \otimesL_{\Hcal^-_R} \cInd_J^G ( \Acal_{\sigma, U, *}^\Sigma )
    \stackrel\sim\to
    R(\chi) \otimes_R \left( \repnofG \otimes_R \Ind_P^G(V)^{\Sigmala} \right).
$$

\section{\texorpdfstring{$S$}{S}-arithmetic (co)homology}
\label{section: homology}

\subsection{Generalities on \texorpdfstring{$S$}{S}-arithmetic (co)homology}
\label{subsection: arithmetic homology}

Let $F$ be a number field and $\Gbf$ be a reductive group over $F$, and let $S$ a finite set of non-archimedean primes of $F$. For any place $v$ of $F$, we let $F_v$ denote the completion of $F$ at $v$, and we write $F_\infty = F \otimes_\Q \R$. We will write $\A_F$ for the ring of adeles of $F$, $\A_F^\infty$ for the ring of finite adeles and $\A_F^{S, \infty}$ for the ring of finite adeles away from $S$. 

Let $K^S$ be a compact open subgroup of $\Gbf(\A_F^{S, \infty})$. There exists a finite set of (non-archimedean) primes $S_\ram(K^S)$ containing $S$ such that we may write $K^S$ as a product
$$
    K^S = \prod_{v \not\in S_\ram(K^S)} K_v \times K_{\ram}
$$
where $K_\ram \subseteq \prod_{v \in S_\ram(K^S) \setminus S} \Gbf(F_v)$ is a compact open subgroup and $K_v$ is a hyperspecial subgroup of $\Gbf(F_v)$ for each $v \not\in S_\ram(K^S)$. We will assume that $K^S$ is neat, i.e. that $\Gbf(F) \cap g K^S g^{-1}$ is neat for all $g \in \Gbf(\A_F^{S, \infty})$, where the intersection takes place in $\Gbf(\A_F^{S, \infty})$. Write $\Kcal^S = G_\infty \times G_S \times K^S$.

\subsubsection{Geometric definition}
\label{subsubsection: definition 1 of arithmetic cohomology}

Fix a maximal compact subgroup $K_\infty$ of $G_\infty := \Gbf(F_\infty)$ and let $\Bscr_\infty := \Gbf(F_\infty) / K_\infty$. For each $v \in S$, write $\Bscr_v$ for the Bruhat--Tits building of $\Gbf_{F_v}$ over $F_v$, and $G_S = \prod_{v \in S} \Gbf(F_v)$.
Consider the topological spaces $\Bscr_S = \prod_{v \in S} \Bscr_v$ and $X_S = \Bscr_\infty \times \Bscr_S \times \Gbf(\A_F)$, where we give the $\Gbf(\A_F)$ factor the discrete topology.
We may equip this space with an action of the group $\Gbf(F) \times \Kcal^S$ as follows: $\Gbf(F)$ acts diagonally on all factors of the product via the embeddings $\Gbf(F) \into \Gbf(F_v)$, and $\Kcal^S$ acts trivially on $\Bscr_\infty \times \Bscr_S$ and by right translations on $\Gbf(\A_F)$.

Let $M$ be a (left) $\Z[\Gbf(F) \times \Kcal^S]$-module. Then $M$ defines a local system on $\Gbf(F) \backslash X_S / \Kcal^S$ consisting of locally constant sections of the map
$$
    \Gbf(F) \backslash (X_S \times M) / \Kcal^S \to \Gbf(F) \backslash X_S / \Kcal^S.
$$
Then, the $S$-arithmetic cohomology $H^*(K^S, M)$ of $M$ (of level $K^S$) is defined as the cohomology of this local system.
More explicitly, this is also the cohomology of the complex
$$
    C_\ad^\bullet (K^S, M) := \Hom_{\Z[\Gbf(F) \times \Kcal^S]} (C_\bullet^\sing(X_S), M),
$$
where here $C^\sing_\bullet(Y)$ denotes the complex of singular chains on a topological space $Y$.
Similarly, we may define the $S$-arithmetic homology $H_*(K^S, M)$ as the homology of the complex 
$$
    C^\ad_\bullet (K^S, M) := C_\bullet^\sing(X_S) \otimes_{\Z[\Gbf(F) \times \Kcal^S]} M.
$$

Let $\Abf$ denote the maximal $\Q$-split torus in the center of $\Res_{F/\Q} \Gbf$ and $\Abf(\R)^\circ \subseteq \Abf(\R)$ the connected component of the identity (for the Euclidean topology).
It acts on $X_S$ by acting on the $\Bscr_\infty$ factor.
Then, the spaces $X_S$ and $X_S / \Abf(\R)^\circ \times \Abf(\R)^\circ$ are $\Gbf(F) \times \Kcal^S$-equivariantly homotopically equivalent, where the latter space is given the diagonal action where the second factor is given the trivial action.
In particular, when taking (co)homology as above, one can work with either of these spaces.

Usually, $M$ will be a $\Kcal^S$-module, which we will consider as a $\Gbf(F) \times \Kcal^S$-module by letting the first factor act trivially.
If $M$ is a $\Gbf(\A_F)$-module, then we may view it as a $\Gbf(F) \times \Kcal^S$-module in two ways: by letting the first factor act by restriction of the $\Gbf(\A_F)$-action and the second factor act trivially, or by letting the first factor act trivially and the second act by restriction.
Let us denote $M_1$ and $M_2$ for the corresponding $\Gbf(F) \times \Kcal^S$-modules respectively.
Then, there is an isomorphism
\begin{align*}
    \Hom_{\Z[\Gbf(F) \times \Kcal^S]} (C_\bullet^\sing(X_S), M_1) & \simto \Hom_{\Z[\Gbf(F) \times \Kcal^S]} (C_\bullet^\sing(X_S), M_2)
    \\
    \phi & \mapsto
    ( (\sigma, g) \mapsto g^{-1} \phi( (\sigma, g ) )
\end{align*}
for $\sigma \in C^\sing_\bullet(\Bscr_\infty \times \Bscr_S), g \in \Gbf(\A_F)$.
In fact, the two corresponding local systems are isomorphic.
In particular, we see that $H^*(K^S, M)$ depends only on the action of $\Gbf(F)$, up to isomorphism.
This justifies, for example, switching the action on the algebraic part of the coefficients from $\infty$ to $p$ in the introduction since the two actions of $\Gbf(F)$ through $\Gbf(F \otimes_\Q \Q_p)$ and $\Gbf(F \otimes_\Q \R)$ agree.
Similarly, the same phenomenon happens for homology.

\subsubsection{Definition using \texorpdfstring{$S$}{S}-arithmetic groups}
\label{subsubsection: definition 3 of arithmetic cohomology}

The double coset set $\Gbf(F) \backslash \Gbf(\A_F) / \Kcal^S$ is finite.
Let $g_{S,1}, ..., g_{S,n}$ be a system of representatives for these cosets, and consider the $S$-arithmetic groups $\Gamma_{S,i} := \Gbf(F) \cap g_{S,i} K^S g_{S,i}^{-1}$, where the intersection is taken inside $\Gbf(\A_F)$.
Let $M$ be a $\Kcal^S$-module, regarded as a $\Gbf(F) \times \Kcal^S$-module by letting $\Gbf(F)$ act trivially  as above.
For each $i$, write $M(g_{S,i})$ for the $\Gamma_{S,i}$-module whose underlying abelian group is $M$ and where the action of $\Gamma_{S,i}$ is induced by the map $\Gamma_{S,i} \to \Kcal^S : \gamma \mapsto g_{S,i}^{-1} \gamma g_{S,i}$.
Then there are isomorphisms
\begin{align*}
      H_*(K^S, M) & \simeq \bigoplus_i H_*(\Gamma_{S,i}, M(g_{S,i})),
    & H^*(K^S, M) & \simeq \bigoplus_i H^*(\Gamma_{S,i}, M(g_{S,i})),
\end{align*}
where the right hand sides denote group homology and cohomology respectively.
Let us be more precise.
First, the $S$-arithmetic groups $\Gamma_{S,i}$ act freely on the contractible space $\Bscr_\infty \times \Bscr_S$, so we may compute the group (co)homology of $\Gamma_{S,i}$ as the (co)homology of $\Gamma_{S,i} \backslash (\Bscr_\infty \times \Bscr_S)$ with local coefficients given by $M(g_{S,i})$.
Moreover, there are isomorphisms $\coprod_{i=1}^n \Gamma_{S,i} \backslash (\Bscr_\infty \times \Bscr_S) \simto \Gbf(F) \backslash X_S / \Kcal^S$ given by $\Gamma_{S,i} (b_\infty, b_S) \mapsto \Gbf(F) (b_\infty, b_S, g_{S,i}) \Kcal^S$.
The corresponding local system on the target is the one described in \Cref{subsubsection: definition 1 of arithmetic cohomology}.
Similarly, if $M$ is instead a $\Gbf(F)$-module, then we have isomorphisms $H_*(K^S, M) \simeq \bigoplus_i H_*(\Gamma_{S,i}, M)$ and $H^*(K^S, M) \simeq \bigoplus_i H^*(\Gamma_{S,i}, M)$.

\subsubsection{Algebraic definition}
\label{subsubsection: definition 5 of arithmetic cohomology}

Let $M$ be a $\Z[\Gbf(F) \times \Kcal^S]$-module again.
Then, there are isomorphisms
\begin{align*}
    H_*(K^S, M) & \simeq \Tor^{\Z[\Gbf(F) \times \Kcal^S]}_* (\Z[\Gbf(\A_F)], M), \\
    H^*(K^S, M) & \simeq \Ext_{\Z[\Gbf(F) \times \Kcal^S]}^* (\Z[\Gbf(\A_F)], M).
\end{align*}
Here, $\Z[\Gbf(\A_F)]$ is given an action of $\Gbf(F) \times \Kcal^S$ by letting the first factor act on the left and the second on the right.
Comparing with \Cref{subsubsection: definition 1 of arithmetic cohomology}, it's enough to show that $C_\bullet^\sing(X_S)$ is a free $\Z[\Gbf(F) \times \Kcal^S]$-module resolution of $\Z[\Gbf(\A_F)]$.
It is clear that the homology is $\Z[\Gbf(\A_F)]$ since $\Bscr_\infty$ and $\Bscr_S$ are contractible, so it's enough to show that $G(F) \times \Kcal^S$ acts freely on $X_S$.
If $(\gamma, k)$ stabilises a point $(b_\infty, b_S, g) \in \Bscr_\infty \times \Bscr_S \times \Gbf(\A_F)$, then $\gamma = g k^{-1} g^{-1} \in \Gbf(F) \cap g \Kcal^S g^{-1}$ and $\gamma$ lies in the compact subgroup of $G_\infty \times G_S$ stabilising $(b_\infty, b_S)$.
Thus, $\gamma$ lies in the discrete subgroup $\Gbf(F)$ and in a compact subgroup of $\Gbf(\A_F)$, so it must have finite order.
But the assumption that $K^S$ is neat then implies that $\gamma$ (and hence also $k$) is the identity element, which proves our claim.

In fact, it follows from the work of Borel--Serre \cite{borel_serre_S_arithmetic} that $\Z[\Gbf(\A_F)]$ admits a resolution by \emph{finite free} $\Z[\Gbf(F) \times \Kcal^S]$-modules.
Indeed, let us write $\bar \Bscr_\infty$ for the Borel--Serre compactification of $\Bscr_\infty$ and $\bar X_S = \bar \Bscr_\infty \times \Bscr_S \times \Gbf(\A_F)$.
By \cite[Remarque after Proposition 6.10]{borel_serre_S_arithmetic}\footnote{To be precise, Borel--Serre work with $\Bscr_\infty / \Abf(\R)^\circ$ instead of $\Bscr_\infty$ and similarly for the compactifications. However, the homotopy equivalence mentioned in \Cref{subsubsection: definition 1 of arithmetic cohomology} allows us to translate their results to our setting.} and the isomorphism $\coprod_{i=1}^n \Gamma_{S,i} \backslash (\bar \Bscr_\infty \times \Bscr_S) \simto \Gbf(F) \backslash \bar X_S / \Kcal^S$ analogous to the one in \Cref{subsubsection: definition 3 of arithmetic cohomology}, there exists on $\bar X_S$ a $\Gbf(F) \times \Kcal^S$-equivariant triangulation with trivial stabilisers and finitely many orbits.
This triangulation gives rise to a finite free complex $Q_{S,\bullet}$ of $\Z[\Gbf(F) \times \Kcal^S]$-modules and a $\Gbf(F) \times \Kcal^S$-equivariant homotopy equivalence $Q_{S,\bullet} \to C^\sing_\bullet(\bar X_S)$, and the latter is $\Gbf(F) \times \Kcal^S$-equivariantly homotopically equivalent to $C^\sing_\bullet(X_S)$.
We make a choice of such triangulation and homotopy equivalences and write $C_\bullet(K^S, M) = Q_{S,\bullet} \otimes_{\Z[\Gbf(F) \times \Kcal^S]} M$ and $C^\bullet(K^S, M) = \Hom_{\Z[\Gbf(F) \times \Kcal^S]}(Q_{S,\bullet},  M)$, which are homotopically equivalent to $C^\ad_\bullet(K^S, M)$ and $C_\ad^\bullet(K^S, M)$ respectively.
Following \cite{hansen_universal_eigenvarieties}, we call these \emph{Borel--Serre complexes}.
Each of the $\Z$-modules $C_q(K^S, M)$ and $C^q(K^S, M)$ is isomorphic to a finite direct sums of copies of $M$, so if $M$ is a topological module over some ring $R$, we may equip $C_q(K^S, M)$ and $C^q(K^S, M)$ with their direct sum topologies, and $H_q(K^S, M)$ and $H^q(K^S, M)$ with their subquotient topologies.
Note, however, that these complexes, and hence any topology we may put on them and the induced one on (co)homology, depend on the choices of triangulation.

Let us highlight that all the isomorphisms between the different definitions of $S$-arithmetic (co)homology we have provided extend to isomorphisms in the derived category of abelian groups (or of $R$-modules for a commutative ring $R$, if the action of $\Z[\Gbf(F) \times \Kcal^S]$ on $M$ extends to an action of $R[\Gbf(F) \times \Kcal^S]$).
We will usually simply write $C_\bullet(K^S, M)$ and $C^\bullet(K^S, M)$ to denote the corresponding objects in the derived category (having fixed homotopy equivalences as above).

\subsubsection{Hecke operators}

Next, we discuss the action of Hecke operators on $S$-arithmetic (co)homology.
We will do this following the ($S$-arithmetic analogue of the) description in \cite{hansen_universal_eigenvarieties} using the explicit complexes $C^\ad_\bullet ( K^S, M )$.
However, we will shortly see in \Cref{subsection: shapiros lemma} that they admit a more natural description.
Let $\Delta \subseteq \Gbf(\A_F^{S,\infty})$ a submonoid containing $K^S$.
Let $R$ be a commutative ring and $\Hcal(\Gbf(\A_F^{S, \infty}), K^S)_R$ be the convolution algebra of $K^S$-biinvariant locally constant functions $\Gbf(\A_F^{S, \infty}) \to R$ as in \Cref{subsection: hecke algebras}.
Assume now that $M$ is an $R[G_\infty \times G_S \times \Delta^{-1}]$-module. Then, the complex $C^\ad_\bullet ( K^S, M )$ can be equipped with a natural right action of the subalgebra $\Hcal(\Delta, K^S)_R$ of $\Hcal(\Gbf(\A_F^{S, \infty}), K^S)_R$ generated by the indicator functions $[K^S \delta K^S]$ with $\delta \in \Delta$.
If $\delta_1, ..., \delta_s$ is a system of representatives in the coset space $K^S \delta K^S / K^S$, which is finite, then this action is given by
$$
    (\sigma \otimes m)[K^S \delta K^S] = \sum_{j} \delta_j^{-1} \sigma \otimes \delta_j^{-1} m
$$
for $\sigma \in C^\sing_\bullet(X_S), m \in M$. Similarly, if $M$ is a $R[G_\infty \times G_S \times \Delta]$-module, then $C_\ad^\bullet(K^S, M)$ may be equipped with an action of $\Hcal(\Delta, K^S)_R$, where for $\delta$ and $\delta_1, ..., \delta_s$ as above,
$$
    ([K^S \delta K^S] \phi)(\sigma) = \sum_j \delta_j \phi( \delta_j^{-1} \sigma ) )
$$
for any $\phi \in C_\ad^\bullet(K^S, M)$ and $\sigma \in C^\sing_\bullet(X_S)$. 
Note that these actions come from particular cases of \Cref{lemma: monoid hecke action on compact induction}.
For any $\Kcal^S$-modules $M$ and $N$, there is a natural isomorphism
\begin{align}\label{eqn: hecke operators equivariant for dual}
    \Hom_R(C^\ad_\bullet(K^S, M), N) \simeq C_\ad^\bullet(K^S, \Hom_R(M,N))
\end{align}
which is equivariant for the action of $\Hcal(\Delta, K^S)_R$ for all $v$.
Using the homotopy equivalences we have fixed at the end of \Cref{subsubsection: definition 5 of arithmetic cohomology}, we can define actions of the elements of $\Hcal(\Delta, K^S)_R$ on $C_\bullet ( K^S, M )$ and $C^\bullet(K^S, M)$ in each respective case. Note, however, that the resulting actions of the different operators in $\Hcal(\Delta, K^S)_R$ on these complexes only commute up to homotopy. In particular, this induces a well-defined action of $\Hcal(\Delta, K^S)_R$ on (co)homology (or at the level of derived categories).
If the action of $\Kcal^S$ on $M$ extends to an action of $\Gbf(\A_F)$, then we can always take $\Delta = G_v K^S$ for any $v \not\in S_\ram(K^S)$, and we obtain actions of $\Hcal(G_v K^S, K^S)_R \simeq \Hcal(G_v, K_v)_R$ on homology and cohomology. We set
$$
    \Tbb^S(K^S)_R = \bigotimes'_{v \not\in S_\ram(K^S)} \Hcal(G_v, K_v)_{R}
$$
and $\Tbb^S(K^S) = \Tbb^S(K^S)_\Qp$.

\subsection{Shapiro's lemma}
\label{subsection: shapiros lemma}

Next, we discuss a (Hecke-equivariant) version of Shapiro's lemma for $S$-arithmetic homology, as well as a result on Hecke operators.
Both of these results admit analogues for cohomology, but we will work only with homology.
We continue to use the notation from the previous section, and let $T \subseteq S$ be a subset and $K^T \subseteq \Gbf(\A_F^{T, \infty})$ an open compact subgroup of the form $K^S \times K_{S \setminus T}$ with $K_{S \setminus T} \subseteq G_{S \setminus T}$ an open compact subgroup, and $\Delta_{S \setminus T} \subseteq G_{S \setminus T}$ a submonoid containing $K_{S \setminus T}$.
Set $\Kcal^T := G_\infty \times G_T \times K^T$.

By \cite[3.3]{bruhat_tits_1}, there is a point $\alpha$ in $\Bscr_{S \setminus T}$ that is fixed by the action of $K_{S \setminus T}$.
The projection map
$$
    X_S = \Bscr_\infty \times \Bscr_S \times \Gbf(\A_F) = \Bscr_\infty \times \Bscr_T \times \Bscr_{S \setminus T} \times \Gbf(\A_F) \stackrel{f}{\to} \Bscr_\infty \times \Bscr_T \times \Gbf(\A_F) = X_T
$$
is a $(\Gbf(F) \times \Kcal^T)$-equivariant homotopy equivalence $X_S \simeq X_T$, whose inverse is given, up to $(\Gbf(F) \times \Kcal^T)$-equivariant homotopy, by the map
\begin{align*}
    \Bscr_\infty \times \Bscr_T \times \Gbf(\A_F) & \stackrel{g}{\to} \Bscr_\infty \times \Bscr_T \times \Bscr_{S \setminus T} \times \Gbf(\A_F) \\
    (b_\infty, b_T, h) & \mapsto (b_\infty, b_T, h_{S \setminus T} \alpha, h),
\end{align*}
where $h_{S \setminus T}$ denotes the image of $h \in \Gbf(\A_F)$ in $G_{S \setminus T}$.
Indeed, we have $f \circ g = \id_{X_T}$, and
\begin{align*}
    \Bscr_\infty \times \Bscr_T \times \Bscr_{S \setminus T} \times \Gbf(\A_F) \times [0, 1] & \to \Bscr_\infty \times \Bscr_T \times \Bscr_{S \setminus T} \times \Gbf(\A_F) \\
    (b_\infty, b_T, \beta, h, t) & \mapsto (b_\infty, b_T, (1-t)\beta + t h_{S \setminus T} \alpha, h)
\end{align*}
is a $(\Gbf(F) \times \Kcal^T)$-equivariant homotopy between $\id_{X_S}$ and $g \circ f$.
Here, we have used the notation from \cite[Section 2.5]{bruhat_tits_1}.
Namely, there is an appartment in the building $\Bscr_{S \setminus T}$ containing both $\beta$ and $\alpha$, and $(1-t)\beta + t h_{S \setminus T} \alpha$ runs through  the line segment between these points for $t \in [0,1]$.

By \Cref{lemma: general action of Hecke algebra}, the association $M \mapsto H_0(K^T, M) = R[\Gbf(\A_F)] \otimes_{R[\Gbf(F) \times \Kcal^T]} M$ defines a right exact functor from the category of $R[\Gbf(F) \times G_\infty \times G_T \times (\Delta_{S \setminus T})^{-1} \times K^S]$ to the category of right $\Hcal(\Delta_{S \setminus T}, K_{S \setminus T})_R$-modules. 
The following proposition shows that we may see $T$-arithmetic homology, equipped with its Hecke action, as the left derived functors of degree 0 homology.

\begin{proposition}
\label{proposition: hecke action on derived category}
The association $M_\bullet \mapsto R[\Gbf(\A_F)] \otimesL_{R[\Gbf(F) \times \Kcal^T]} M_\bullet$ defines a functor from the derived category of $R[\Gbf(F) \times G_\infty \times G_T \times (\Delta_{S \setminus T})^{-1} \times K^S]$-modules to the derived category of right $\Tbb^S(K^S) \otimes \Hcal(\Delta_{S \setminus T}, K_{S \setminus T})_R$-modules such that the degree $n$ homology of the complex $R[\Gbf(\A_F)] \otimesL_{R[\Gbf(F) \times \Kcal^T]} M$ is $H_n(K^T, M)$ with its natural right Hecke action.
\end{proposition}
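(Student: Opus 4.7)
The plan is to produce a concrete model for the derived tensor product that carries the Hecke action by design, using the singular chain complex $C^\sing_\bullet(X_T)$. By the discussion in \Cref{subsubsection: definition 5 of arithmetic cohomology}, this complex is a resolution of $R[\Gbf(\A_F)]$ by free $R[\Gbf(F) \times \Kcal^T]$-modules. The crucial extra structure I would exploit is that $X_T = \Bscr_\infty \times \Bscr_T \times \Gbf(\A_F)$ carries a right action of $\Gbf(\A_F^{T,\infty}) \supseteq \Delta_{S \setminus T}$ by right multiplication on the $\Gbf(\A_F)$-factor, and this action commutes with the left $\Gbf(F)$-action as well as with the right actions of $G_\infty$, $G_T$ and $K^S$, since all of these involve adelic coordinates disjoint from the $S \setminus T$-places.

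Given a complex $M_\bullet$ as in the statement, I would form
$$
    A_\bullet := C^\sing_\bullet(X_T) \otimes_{R[\Gbf(F) \times G_\infty \times G_T \times K^S]} M_\bullet.
$$
Combining the right $R[\Delta_{S \setminus T}]$-action on $C^\sing_\bullet(X_T)$ with the left $R[(\Delta_{S \setminus T})^{-1}]$-action on $M_\bullet$, the complex $A_\bullet$ inherits a diagonal right $R[\Delta_{S \setminus T}]$-action extending its obvious right $K_{S \setminus T}$-action. Applying \Cref{lemma: general action of Hecke algebra} with $H^- = \Delta_{S \setminus T}$ and $H_0 = K_{S \setminus T}$ yields a right $\Hcal(\Delta_{S \setminus T}, K_{S \setminus T})_R$-action on the $K_{S \setminus T}$-coinvariants, which are canonically identified with $C^\sing_\bullet(X_T) \otimes_{R[\Gbf(F) \times \Kcal^T]} M_\bullet = C^\ad_\bullet(K^T, M_\bullet)$. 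One verifies directly that this action commutes with the differential of $C^\ad_\bullet(K^T, M_\bullet)$.

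To see that this construction descends to the derived category and represents the stated derived functor, I would argue that since $C^\sing_\bullet(X_T)$ is a complex of free $R[\Gbf(F) \times \Kcal^T]$-modules, the functor $M_\bullet \mapsto C^\ad_\bullet(K^T, M_\bullet)$ preserves quasi-isomorphisms, hence agrees (after forgetting the Hecke structure) with $R[\Gbf(\A_F)] \otimesL_{R[\Gbf(F) \times \Kcal^T]} M_\bullet$. The identification of $H_n(C^\ad_\bullet(K^T, M))$ with $H_n(K^T, M)$ is then tautological, and the induced right Hecke action matches the natural one from \Cref{subsection: arithmetic homology} since both arise from the same instance of \Cref{lemma: general action of Hecke algebra}. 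The only real point to verify is that the diagonal $\Delta_{S \setminus T}$-action on $A_\bullet$ is well-defined; this reduces to the disjointness of adelic coordinates noted above and is the (minor) main obstacle.
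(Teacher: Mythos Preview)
Your approach is correct and in fact slightly more direct than the paper's. Both arguments rest on the same observation---that the adelic factor in the model space carries a right $\Gbf(\A_F^{T,\infty})$-action, hence a right $\Delta_{S\setminus T}$-action commuting with the actions of $\Gbf(F)$, $G_\infty$, $G_T$ and $K^S$---but they package it differently. You work with $C^\sing_\bullet(X_T)$ from the start: it is already a free $R[\Gbf(F)\times\Kcal^T]$-resolution of $R[\Gbf(\A_F)]$ and already carries the right $\Delta_{S\setminus T}$-action on the adelic factor, so the Hecke action on $C^\ad_\bullet(K^T,M_\bullet)$ is the natural one by construction and no comparison step is needed. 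The paper instead uses $C^\sing_\bullet(X_S)$, whose extra factor $\Bscr_{S\setminus T}$ makes it free over the larger ring $R[\Gbf(F)\times\Kcal^S]$ (and hence over the monoid ring $R[\Gbf(F)\times G_\infty\times G_T\times(\Delta_{S\setminus T})^{-1}\times K^S]$), giving the Hecke structure on $\tot(C^\sing_\bullet(X_S)\otimes_{R[\Gbf(F)\times\Kcal^T]}M_\bullet)$; it then invokes the $\Gbf(F)\times\Gbf(\A_F)$-equivariant homotopy equivalence $f\colon X_S\to X_T$ to identify this Hecke action with the natural one on $C^\ad_\bullet(K^T,M_\bullet)$. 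Your route avoids that extra homotopy-equivalence step; the paper's route has the minor advantage that the resolution is visibly free over the full monoid ring, and the $X_S\leftrightarrow X_T$ comparison is set up anyway for the Shapiro lemma that follows.
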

\begin{proof}
Consider $C^\sing_\bullet(X_S) \otimes_\Z R$, it is a free $R[\Gbf(F) \times \Kcal^S]$-module resolution of $R[\Gbf(\A_F)]$, and hence also a free resolution of it as a module over $R[\Gbf(F) \times \Kcal^T]$ and over \linebreak $R[\Gbf(F) \times G_\infty \times G_T \times (\Delta_{S \setminus T})^{-1} \times K^S]$.
The image of $M_\bullet$ under functor above is thus computed by $\tot(C^\sing_\bullet(X_S) \otimes_{Z[\Gbf(F) \times \Kcal^T]} M_\bullet)$, where $\tot$ denotes the total complex.
Now, for any such $M_\bullet$, the homotopy equivalences of complexes of $\Z[\Gbf(F) \times \Kcal^T]$-modules
$$
    C_\bullet^{\sing}(X_S)
    \stackrel[f_*]{g_*}{\leftrightarrows}
    C_\bullet^{\sing}(X_T),
$$
induced by the maps $f, g$ above induces homotopy equivalences of $\Z[\Gbf(F) \times \Kcal^T]$-modules
$$
    \tot(C_\bullet^{\sing}(X_S) \otimes_{\Z[\Gbf(F) \times \Kcal^T]} M_\bullet)
    \stackrel[f_*]{g_*}{\leftrightarrows}
    \tot(C_\bullet^{\sing}(X_T) \otimes_{\Z[\Gbf(F) \times \Kcal^T]} M_\bullet).
$$
In order to prove the proposition, it's enough to show that $f_*$ is equivariant for the action of $\Tbb^S(K^S)$ and the right action of $\Hcal(\Delta_{S \setminus T}, K_{S \setminus T})_R$ on either side.
This follows immediately from the fact that $f_* \colon C_\bullet^{\sing}(X_S) \to C_\bullet^{\sing}(X_T)$ is in fact $\Gbf(F) \times \Gbf(\A_F)$-equivariant (as $f$ is).
\end{proof}

Next, we have the following version of Shapiro's lemma.

\begin{proposition}\label{proposition: shapiros lemma}
Assume for simplicity that $\Hcal(\Delta_{S \setminus T}, K_{S \setminus T})_R$ is commutative.
Let $M$ be a $R[\Gbf(F) \times G_\infty \times G_T \times (\Delta_{S \setminus T})^{-1} \times \Gbf(\A_F^{S, \infty})]$-module, which we view as a module over $R[\Gbf(F) \times G_\infty \times G_T \times (\Delta_{S \setminus T})^{-1} \times K^S]$ by restriction. Consider the compact induction $\cInd_{\Gbf(F) \times \Kcal^T}^{\Gbf(F) \times \Kcal^S} (M) \simeq \cInd_{K_{S \setminus T}}^{G_{S \setminus T}} (M)$ with its action of $\Hcal(\Delta_{S \setminus T}, K_{S \setminus T})_R$ as in \Cref{lemma: monoid hecke action on compact induction}.
There is a functorial isomorphism in $D(\Tbb^S(K^S)_R \otimes \Hcal(\Delta_{S \setminus T}, K_{S \setminus T})_R)$
\begin{align}\label{eqn: shapiros lemma 0}
    C_\bullet(K^S, \cInd_{\Gbf(F) \times \Kcal^T}^{\Gbf(F) \times \Kcal^S} (M) )
    \simto
    C_\bullet(K^S, M).
\end{align}
\end{proposition}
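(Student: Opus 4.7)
The plan is to realize both sides as tensor products over appropriate group rings using the singular chain complex of $X_S$, and then to verify Hecke equivariance by a direct chain-level calculation.

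First, since $\Kcal^S/\Kcal^T \simeq G_{S \setminus T}/K_{S \setminus T}$ is discrete, we have an $R[\Gbf(F) \times \Kcal^S]$-module isomorphism $\cInd_{\Gbf(F) \times \Kcal^T}^{\Gbf(F) \times \Kcal^S}(M) \simeq R[\Gbf(F) \times \Kcal^S] \otimes_{R[\Gbf(F) \times \Kcal^T]} M$, sending $[g, m] \mapsto g \otimes m$. Using the singular-chain model of $K^S$-homology and associativity of tensor products yields a chain-level isomorphism
$$
  C^\sing_\bullet(X_S) \otimes_{R[\Gbf(F) \times \Kcal^S]} \cInd(M) \simto C^\sing_\bullet(X_S) \otimes_{R[\Gbf(F) \times \Kcal^T]} M,
$$
sending $\sigma \otimes [g, m] \mapsto \sigma \cdot g \otimes m$, where $g \in G_{S \setminus T} \subseteq \Gbf(\A_F)$ acts on $\sigma$ by right translation. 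Because the free action of $\Gbf(F) \times \Kcal^S$ on $X_S$ restricts to a free action of the subgroup $\Gbf(F) \times \Kcal^T$, the complex $C^\sing_\bullet(X_S)$ remains a free resolution of $R[\Gbf(\A_F)]$ over $R[\Gbf(F) \times \Kcal^T]$; hence the right-hand side computes $C_\bullet(K^T, M)$ in $D(R)$, and the $\Gbf(F) \times \Gbf(\A_F)$-equivariant homotopy equivalence $f \colon X_S \to X_T$ described at the start of this subsection identifies it with $C_\bullet^\ad(K^T, M)$.

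The main step is to upgrade this to an isomorphism in $D(\Hcal(\Delta_{S \setminus T}, K_{S \setminus T})_R)$. Given $\mu \in \Delta_{S \setminus T}$ with representatives $\mu_1, \ldots, \mu_s$ of the cosets in $(K_{S \setminus T} \mu K_{S \setminus T})/K_{S \setminus T}$, \Cref{lemma: monoid hecke action on compact induction} gives the Hecke action $[g, m] \cdot [K_{S \setminus T} \mu K_{S \setminus T}] = \sum_j [g\mu_j, \mu_j^{-1} m]$ on $\cInd(M)$. Acting only on the $\cInd(M)$-factor, this produces
$$
  (\sigma \otimes [g, m]) \cdot [K_{S \setminus T} \mu K_{S \setminus T}] = \sum_j \sigma \otimes [g\mu_j, \mu_j^{-1} m] \;\longleftrightarrow\; \sum_j (\sigma \cdot g) \cdot \mu_j \otimes \mu_j^{-1} m,
$$
which is precisely the standard Hecke formula on $C^\sing_\bullet(X_S) \otimes_{R[\Gbf(F) \times \Kcal^T]} M$ coming from the $\Delta_{S \setminus T}^{-1}$-action on $M$ together with the right $\Delta_{S \setminus T}$-action on $X_S$ via $\Delta_{S \setminus T} \subseteq \Gbf(\A_F)$. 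Since $f$ is $\Gbf(\A_F)$-equivariant, $f_*$ intertwines this Hecke action with the standard one on $C_\bullet^\ad(K^T, M)$, exactly as in the proof of \Cref{proposition: hecke action on derived category}.

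Finally, for $\Tbb^S(K^S)_R$-equivariance, the operators at a place $v \not\in S_\ram(K^S)$ act through the $G_v$-action on $M$, which is transparent to the $K_{S \setminus T}$-coset decomposition defining $\cInd_{K_{S \setminus T}}^{G_{S \setminus T}}(M)$, and $f$ is likewise $G_v$-equivariant, so essentially the same chain-level calculation applies. The hard part of the argument will be the bookkeeping in the previous paragraph: one must carefully track how each representative $\mu_j$ acts on chains versus on $M$ (and in which direction), so that the Hecke formula on the compact induction, after passing through tensor associativity, matches the standard $S$-arithmetic Hecke formula.
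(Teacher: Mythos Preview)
Your proposal is correct and follows essentially the same route as the paper: identify $\cInd(M)$ with $R[\Gbf(F)\times\Kcal^S]\otimes_{R[\Gbf(F)\times\Kcal^T]} M$, use associativity of the tensor product together with freeness of $C^\sing_\bullet(X_S)$ over the smaller group ring to get the underived isomorphism, and then verify Hecke equivariance by the explicit coset computation. The only cosmetic difference is that the paper packages the passage from $X_S$ to $X_T$ by invoking \Cref{proposition: hecke action on derived category} (working with $R[\Gbf(\A_F)]\otimesL_{R[\Gbf(F)\times\Kcal^T]} M$ and an arbitrary test module $Q$), whereas you unpack that step and appeal to the $\Gbf(\A_F)$-equivariance of $f$ directly.
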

\begin{proof}
By \Cref{proposition: hecke action on derived category}, it's enough to prove the statement with \Cref{eqn: shapiros lemma 0} replaced by
\begin{align}\label{eqn: shapiros lemma}
    R[\Gbf(\A_F)] \otimesL_{R[\Gbf(F) \times \Kcal^S]}  \cInd_{\Gbf(F) \times \Kcal^T}^{\Gbf(F) \times \Kcal^S} (M)
    \simeq
    R[\Gbf(\A_F)] \otimesL_{R[\Gbf(F) \times \Kcal^T]} M
\end{align}
There is an isomorphism $\cInd_{\Gbf(F) \times \Kcal^T}^{\Gbf(F) \times \Kcal^S} (M) \stackrel\sim\to R[\Gbf(F) \times \Kcal^S] \otimes_{R[\Gbf(F) \times \Kcal^T]} M$ sending $[g, m]$ to $g \otimes m$. Moreover, the image of the target in $D(\Hcal(\Delta_{S \setminus T}, K_{S \setminus T})_R)$ is isomorphic to 
$R[\Gbf(F) \times \Kcal^S] \otimesL_{R[\Gbf(F) \times \Kcal^T]} M$ since $R[\Gbf(F) \times \Kcal^S]$ is free over $R[\Gbf(F) \times \Kcal^T]$.
Thus, we see that the isomorphism \Cref{eqn: shapiros lemma} holds in $D(R)$, so it's enough to show that for any module $Q$ over $R[\Gbf(F) \times G_\infty \times G_T \times (\Delta_{S \setminus T})^{-1} \times \Gbf(\A_F^{S, \infty})]$, the isomorphism
$$
    \phi \colon Q \otimes_{R[\Gbf(F) \times \Kcal^S]}  \cInd_{\Gbf(F) \times \Kcal^T}^{\Gbf(F) \times \Kcal^S} (M)
    \simto
    Q \otimes_{R[\Gbf(F) \times \Kcal^T]} M
$$
is equivariant for the actions of $\Hcal(\Delta_{S \setminus T}, K_{S \setminus T})_R$ and $\Tbb^S(K^S)_R$. The case of $\Tbb^S(K^S)_R$ is straightforward, so let us show this only for $\Hcal(\Delta_{S \setminus T}, K_{S \setminus T})_R$.
If $\delta \in \Delta_{S \setminus T}$ and we write $K_{S \setminus T} \delta K_{S \setminus \Delta} = \coprod_j \delta_j K_{S \setminus T}$, we have
\begin{align*}
    q \otimes [g, m] & \xmapsto{[K_{S \setminus T} \delta K_{S \setminus T}]} q \otimes ([g, m] [K_{S \setminus T} \delta K_{S \setminus T}]) = q \otimes \sum_j [g \delta_j, \delta_j^{-1} m] \\
    & \stackrel{\phi}{\mapsto} \sum_j \delta_j^{-1} g^{-1} q \otimes \delta_j^{-1} m,
\end{align*}
whereas
\begin{align*}
    q \otimes [g, m] & \stackrel{\phi}\mapsto g^{-1} q \otimes m \\
    & \xmapsto{[K_{S \setminus T} \delta K_{S \setminus T}]} \sum_j \delta_j^{-1} g^{-1} q \otimes \delta_j^{-1} m = \sum_j \delta_j^{-1} g^{-1} q \otimes \delta_j^{-1} m. \qedhere
\end{align*}
\end{proof}

\subsection{The global setting and local data}
\label{subsection: global setting}

Let us now introduce the global setting that we will place ourselves in the rest of the article.
Let $F$ be a number field and $\Gbf$ a reductive group over $F$.
Write $\Scalp$ for the set of primes of $F$ above $p$, and let $R$ be an affinoid $\Qp$-algebra.

Let $S$ be a finite set of non-archimedean primes of $F$.
For each $v \in S$ we let $\Gbf_v = \Gbf_{F_v}$ and place ourselves in the setting of \Cref{subsection: products of groups} and \Cref{section: locally analytic} -- we will use the notation therein.
Thus, for example, we have fixed for all $v$ a maximal $F_v$-split torus $\Sbf_v$ contained in a maximal torus $\Tbf_v$ defined over $F_v$, an ordering of the relative root system $\Phi_v$ of $(\Gbf_v, \Sbf_v)$, and a parabolic subgroup $\Pbf_v$ with Levi subgroup $\Mbf_v$ containing $\Tbf_v$, etc.
We set $G_v = \Gbf_v(F_v)$ and $G = G_S = \prod_{v \in S} G_v$ and similarly for the other subgroups of $G_v$, and analogously $\Phi = \coprod_{v \in S} \Phi_v$, etc.
We will never use $S$ to denote the points of the maximal split torus, so there should be no confusion with our set of primes $S$.

If $\Sigma$ is a subset of $\Scalp$, then we will write $\Gbf_\Sigma = \prod_{v \in \Sigma} \Res_{F_v/\Q_p} \Gbf_v$, $G_\Sigma = \Gbf_\Sigma(\Q_p)$ and $\Gbf_{\Sigma,E} = E \times_\Qp \Gbf_\Sigma$ for any finite extension $E / \Q_p$, and similarly for $\Mbf_\Sigma$, etc. We will also fix for $v \in \Scalp \setminus S$ a maximal torus $\Tbf_v \subseteq \Gbf_v$ defined over $F_v$. For any $v \in \Scalp$, we let $L'_v$ be a finite Galois extension of $F_v$ over which $\Tbf_v$ splits. 
If $E$ is a finite extension of $\Q_p$ in $\bar \Q_p$ containing the images of all the $\Q_p$-algebra embeddings of the $L'_v$ into $\bar \Q_p$, then $\Gbf_{\Scalp,E} = \prod_{v \in \Scalp} \prod_{\tau \colon F_v \into E} E \times_{F_v, \tau} \Gbf_v$ is split with maximal split torus $\Tbf_{\Scalp,E}$.
Fix for each $v \in \Scalp$ and embedding $\tau \colon F_v \into E$, fix a Borel subgroup $\Bbf_{v, \tau}$ of $E \times_{F_v, \tau} \Gbf_v$ containing $E \times_{F_v, \tau} \Tbf_v$ and contained in $E \times_{F_v, \tau} \Pbf_{v}$ whenever $v \in S$, and write $\Bbf_{\Scalp,E} = \prod_{v, \tau} \Bbf_{v, \tau}$.

Let us now fix $\Sigma \subseteq S_p := S \cap \Scalp$.
We will use the results from \Cref{subsection: quasi isomorphism} applied to the decomposition $G = G_\Sigma \times G^\Sigma$ with
\begin{align*}
    G_\Sigma & = \prod_{v \in \Sigma} G_v = \Gbf_\Sigma(\Q_p),
    &
    G^\Sigma & = \prod_{v \in S \setminus \Sigma} G_v.
\end{align*}
Starting in \Cref{subsection: homology of ind}, we will let $\sigma$ be a representation of a subgroup $\HM$ of $\NJM$ containing $A \JM$ on a finitely generated $R$-module as in \Cref{subsection: induction 2}.
We will assume it is of the form $\sigma = \bigboxtimes_{v \in \Sigma} \sigma_v$, where $\sigma_v$ is a locally analytic (resp. smooth) representation of $G_v$ if $v \in \Sigma$ (resp. $v \in S \setminus \Sigma$), although many of our results hold without making this assumption.
Set also $\sigma_\Sigma := \bigboxtimes_{v \in \Sigma} \sigma_v$ and $\sigma^\Sigma := \bigboxtimes_{v \in S \setminus \Sigma} \sigma_v$, and $V := \cInd_{\HM}^M (\sigma)$, which by \Cref{lemma: induction commutes with tensor products} is isomorphic to $V_\Sigma \hat\boxtimes_E V^\Sigma$ for $V_\Sigma := \cInd_{\HM[\Sigma]}^{M_\Sigma} (\sigma_\Sigma)$ and $V^\Sigma := \cInd_{\HM^\Sigma}^{M^\Sigma} (\sigma^\Sigma)$ (and similarly for other subsets of $S$).
Let $\repnofG$ be a representation of $G_{\Scalp}$ on a finite flat $R$-module.
We will view it as a representation of $G_{\Scalp} \times G$ by letting $G$ act trivially.
In \Cref{subsection: overconvergent homology} our notation will be different and we will describe it therein.
Instead of $\Hcal(J \HM^- J, J)_R$ we will write $\Hcal_{S, R}^-$, or $\Hcal_{S,\HM, R}^-$ when we want to make the subgroup $\HM$ explicit.
Define for $s$ sufficiently large the $J \HM^+ J$-modules $\Acal_{\sigma, U}^{s-\Sigmaan}$ as in \Cref{subsection: induction 2} and write $\Dcal_{\sigma, U}^{s-\Sigmaan}$ for their continuous $R$-duals.
Define also $s_\sigma$ as in \Cref{subsection: induction 2}.
To simplify notation, we will write $\Acal_{\sigma, U}^{\Sigmala}$ and $\Dcal_{\sigma, U}^{\Sigmala}$ for $U \otimes_R \Ind_{J \cap P}^J (\sigma)^{\Sigmala}$ and its continuous $R$-linear dual, and $\SigmaInd_P^G (U, V)$ for $U \otimes_R \Ind_P^G (V)^{\Sigmala}$.
Sometimes we don't want to distinguish between $\Acal_{\sigma, U}^{s-\Sigmaan}$ for $s \geq s_\sigma$ and $\Acal_{\sigma, U}^{\Sigmala}$, so we will write $\Acal_{\sigma, U}^\Sigma$ for either of these, and similarly for $\Dcal_{\sigma, U}^\Sigma$.
We will view these as representations of $G_{S \cup \Scalp}$ via the embedding $G_{S \cup \Scalp} \into G_{\Scalp} \times G$.

Fix an isomorphism $\iota \colon \bar \Q_p \simto \C$. Let $E$ be a finite extension of $\Q_p$ contained in $\bar \Q_p$, and assume that $R$ is an affinoid $E$-algebra.
Our main case of interest is the case when $U$ is an (irreducible) algebraic representation of $G_{\Scalp \setminus \Sigma}$ (see \Cref{subsection: automorphic representations} for what we mean by this) and $\sigma$ is the twist by a locally analytic character of $M$ or of $\HM$ of the tensor product of an (irreducible) algebraic representation of $M_\Sigma$ and a smooth representation $\sigma_0$ of $M$ over $E$ such that $\C \otimes_{E, \iota} \cInd_{\HM}^M (\sigma_0)$ is a supercuspidal representation of $M$.
As we mentioned in the introduction, it follows from \cite{bushnell_kutzko}, \cite{stevens_supercuspidals_classical}, \cite{secherre_stevens_supercuspidals}, \cite{fintzen_types} that in many cases, all supercuspidal representations of $M$ can be obtained in this manner.
For example, from the latter we can deduce the following result.

\begin{theorem}\label{theorem: recollections supercuspidals}
Assume that for all $v \in S$, $\Mbf_v$ splits over a tamely ramified extension of $F_v$ and $v$ does not divide the order of the Weyl group of $\bar F_v \times_{F_v} \Mbf_v$. Let $\pi$ be a supercuspidal representation of $M$. Then, there exists a facet $\Fscr_M$ in the Bruhat--Tits building of $\Mbf$, a subgroup $\HM$ of $\NJM$ containing $A \JM$ (where $\NJM$ and $\JM$ are defined in terms of the facet $\Fscr_M$ as in \Cref{section: structure}), a finite extension $E / \Q_p$ in $\bar \Q_p$ and an irreducible smooth representation $\sigma_0$ over $E$ admitting a central character such that $\pi \simeq \C \otimes_{E, \iota} \cInd_{\HM}^M (\sigma_0)$.
\end{theorem}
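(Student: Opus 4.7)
The plan is to reduce to a single place in $S$ and then invoke Yu's construction of supercuspidal types, in the form established by Fintzen \cite{fintzen_types} under exactly the tameness and Weyl-order hypotheses of the statement. Since $\pi$ is an irreducible smooth representation of $M = \prod_{v \in S} M_v$, it factors as an exterior tensor product $\pi \simeq \bigboxtimes_{v \in S} \pi_v$ of supercuspidal representations, and by \Cref{lemma: induction commutes with tensor products} compact induction from a product of open compact-mod-center subgroups commutes with exterior tensor products. It therefore suffices to realize each $\pi_v$ as $\C \otimes_{E_v, \iota} \cInd_{\HM[v]}^{M_v} \sigma_{0,v}$ with data of the required shape, and then to take $E$ to be a common finite extension containing all $E_v$, $\HM = \prod_v \HM[v]$, $\Fscr_M = \prod_v \Fscr_{M_v}$, and $\sigma_0 = \bigboxtimes_v \sigma_{0,v}$.

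Fix $v \in S$. Fintzen's theorem asserts that $\pi_v \simeq \cInd_{K^d}^{M_v} \rho$ for a Yu datum $(K^d, \rho)$, where $K^d$ is an open compact-mod-center subgroup built from a point $x$ of the enlarged Bruhat-Tits building of $M_v$ together with a tame twisted Levi sequence through $x$; it is contained in the stabilizer of $x$, contains the connected parahoric at $x$ together with the center $Z(M_v)$, and $\rho$ is an irreducible smooth representation of $K^d$. I would take $\Fscr_{M_v}$ to be the facet of the reduced building whose interior contains the image of $x$, and set $\HM[v]$ to be the subgroup of $\tilde J_{M_v}$ generated by $K^d$ together with the pointwise stabilizer $J_{M_v}$ of $\Fscr_{M_v}$. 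Because $K^d$ stabilizes $\Fscr_{M_v}$, this enlargement still normalizes $J_{M_v}$ and hence lies in $\tilde J_{M_v}$; the inclusions $A_v \subseteq Z(M_v) \subseteq K^d$ and $J_{M_v} \subseteq \HM[v]$ then yield $A_v J_{M_v} \subseteq \HM[v]$. One then shows that $\rho$ extends to an irreducible representation $\sigma_{0,v}$ of $\HM[v]$ with $\cInd_{\HM[v]}^{M_v} \sigma_{0,v} \simeq \pi_v$, and irreducibility of $\sigma_{0,v}$ together with Schur's lemma supplies a central character. Since $\sigma_{0,v}$ is trivial modulo a pro-$p$ open subgroup $K^d_+$ of $K^d$ with $\HM[v] / Z(M_v) K^d_+$ finite, fixing this central character turns $\sigma_{0,v}$ into a finite-dimensional representation of a finite group, which descends through $\iota^{-1}$ to a finite extension $E_v$ of $\Q_p$ in $\bar\Q_p$.

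The hard part is the structural step in the second paragraph: verifying that $\rho$ extends to the enlargement $\HM[v]$ without altering the induced representation, and that this enlargement genuinely contains $J_{M_v}$ while remaining inside $\tilde J_{M_v}$. This is a question about the explicit shape of Yu's datum relative to the pointwise stabilizer of $\Fscr_{M_v}$, and its validity rests on the tameness and Weyl-order hypotheses via \cite{fintzen_types}, which are also precisely what guarantees that Yu's construction exhausts all supercuspidals. The reduction to a single place and the descent to $E$ are routine once this local step is in hand.
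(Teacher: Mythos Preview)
Your overall strategy matches the paper's: factor $\pi$ over $v \in S$, apply Fintzen at each place to get a type $(K^d,\rho)$, identify the facet through the point $x$, and then descend to a finite extension of $\Q_p$. The gap is precisely the step you flag as ``the hard part'': you propose to \emph{extend} $\rho$ from $K^d$ to the larger group $\HM[v] = \langle K^d, J_{M_v}\rangle$. There is no reason for such an extension to exist. Indeed, if $\rho$ extended to some $\sigma_{0,v}$ on $\HM[v]$, then $\sigma_{0,v}$ would be a summand of $\cInd_{K^d}^{\HM[v]}(\rho)$; but the latter is irreducible (see below), forcing $[\HM[v]:K^d]=1$, which is not true in general.

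The paper avoids this entirely by \emph{compactly inducing} rather than extending. Since $K^d \subseteq \HM[v]$ has finite index, set $\sigma_{0,v} := \cInd_{K^d}^{\HM[v]}(\rho)$; transitivity gives $\cInd_{\HM[v]}^{M_v}(\sigma_{0,v}) \simeq \pi_v$ immediately, with no structural analysis of Yu's datum required. Irreducibility of $\sigma_{0,v}$ follows from that of $\pi_v$: any nonzero proper subrepresentation $\tau \subsetneq \sigma_{0,v}$ would give nonzero $\cInd_{\HM[v]}^{M_v}(\tau) = \pi_v$ and likewise for $\sigma_{0,v}/\tau$, contradicting the finite length of $\pi_v$. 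The central character comes from Schur's lemma. For the descent to $E$, the paper simply notes that $\HM[v]$ is topologically finitely generated, so a finite-dimensional representation over $\bar\Q_p$ is realized over some finite extension; your argument via the pro-$p$ radical is also fine but less direct.
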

\begin{proof}
This follows from the main result of \cite{fintzen_types}.
The representation $\pi$ decomposes as a tensor product $\pi = \bigotimes_{v \in S} \pi_v$ of supercuspidal representations of $M_v$ for $v \in S$.
According to \cite[Theorem 8.1]{fintzen_types}, there exists for each $v$ a facet $\Fscr_{M_v}$ of the Bruhat--Tits building of $\Mbf_v$ giving rise to subgroups $\JM, \NJM$, and a representation $\sigma_{v,\C}$ of a finite index subgroup $\HM[v]$ of $\NJM_v$ containing $A_v$ such that $\pi \simeq \cInd_{\HM[v]}^{M_v} (\sigma_{v,\C})$.
By compactly inducing $\sigma_{v,\C}$ if necessary, we may assume that $\HM[v]$ contains $A_v \JM[v]$.
Extend scalars along $\iota^{-1}$ to obtain a representation $\sigma_{v,\bar \Q_p}$ over $\bar \Q_p$.
Since $\HM[v]$ is topologically finitely generated for all $v \in S$, there exists an extension $E$ of $\Q_p$ in $\bar \Q_p$ and a representations $\sigma_{v}$ of $\HM[v]$ over $E$ such that $\sigma_{v,\bar \Q_p} = \bar \Q_p \otimes_{E_v} \sigma_{v}$ for all $v$.
Finally, take $\sigma_0 = \bigotimes_{v \in S} \sigma_v$ and $\HM = \prod_{v \in S} \HM[v]$.
\end{proof}

We will consider $\spadesuit$-arithmetic homology and cohomology for two different $\spadesuit$:
\begin{itemize}
    \item The arithmetic case, $\spadesuit = \emptyset$. We will usually omit the subindex $S$ from the notation in the previous section. Fix a compact open subgroup $K^S$ of $\Gbf(\A_F^{S, \infty})$. Our compact open subgroups will usually be $J \times K^S$ we will consider Hecke operators with respect to the monoids $\Delta_S = J \HM^- J \times K^S$ and $\Delta_v = G_v K^S$ for $v \not\in S_\ram(K^S)$. We fix a Borel--Serre complex and homotopy equivalences as in \Cref{subsubsection: definition 5 of arithmetic cohomology}.
    
    \item The $S$-arithmetic case, $\spadesuit = S$. We have already fixed a compact open subgroup $K^S$ of $\Gbf(\A_F^{S, \infty})$. Let us also fix Borel--Serre complexes and homotopy equivalences as in \Cref{subsubsection: definition 5 of arithmetic cohomology}. When defining Hecke operators we take $\Delta_v = G_v K^S$ for any $v \not\in S_\ram(K^S)$.
\end{itemize}
Finally, we will also write $\Tbb(K^S) := \Tbb^S(K^S) \otimes_\Qp \Hcal_{S, \Qp}^-$.

\subsection{Overconvergent homology}
\label{subsection: overconvergent homology}
Next, we will recall the definition and state some of the properties of (parabolic) overconvergent homology (see \cite{ash_steves_p-adic_deformations}, \cite{hansen_universal_eigenvarieties}, \cite{urban_eigenvarieties}, \cite{parabolic_overconvergent}) adapted to our general context, where we don't assume that $\Q_p \times_\Q \Res_{F/\Q} \Gbf$ is quasi-split.

We will assume in this section that $\sigma$ is a locally analytic representation of $\JM$ (instead of the bigger group $\HM$) on a finite free $R$-module, of the form $\sigma = \sigma_\Sigma \boxtimes \sigma^\Sigma$ with $\sigma_\Sigma$ a locally analytic representation of $\JM[\Sigma]$ and $\sigma^\Sigma$ a smooth representation of $\JM^\Sigma$. We may define $R$-modules $\Acal_{\sigma, U}^{s-\Sigmaan}, \Acal_{\sigma, U}^{\Sigmala}, \Acal_{\sigma, U}^{\Sigma}, \Dcal_{\sigma, U}^{s-\Sigmaan}, \Dcal_{\sigma, U}^{\Sigmala}, \Dcal_{\sigma, U}^{\Sigma}$ as in \Cref{subsection: global setting} equipped with an action of $J$ for $s \geq s_\sigma$ for some $s_\sigma \geq 0$. However, since $\sigma$ is not equipped with an action of $A \JM$, this action does not extend to an action of $J A^+ J$ (or $J A^- J$ for the duals) as in \Cref{lemma: get action of monoid} (and thus to any monoid of the form $J \HM^+ J$ where $\HM$ contains $A \JM$).
Instead, we note that we are in a similar situation to that of \Cref{subsection: untwisting}: the inclusion $\Lambda \into J A J$ induces a bijection between $\Lambda$ and $J \backslash J A J / J$. If we denote its inverse by $\eta$, then the action of $J$ on $\Acal_{\sigma, U}^{s-\Sigmaan}$ can be extended to an action of $J A^+ J$ by the formula
$$
    (a * F)(\bar n) = F(a^{-1} \bar n \eta(a) ).
$$
As remarked in \Cref{subsection: untwisting}, if the action of $\sigma$ does extend to an action of $A \JM$ and $\Lambda$ acts via a character $\chi$, then this agrees with the $*$-action we have previously defined.
For this reason, we will denote these spaces by $\Acal_{\sigma, U,*}^{s-\Sigmaan}, \Dcal_{\sigma, U,*}^{s-\Sigmaan}$, etc.

The $R$-module $\Acal_{\sigma, U,*}^{s-\Sigmaan}$ is an orthonormalisable Banach $R$-module, and for any $a \in \Lambda^{-}$ (resp. $a \in \Lambda_\Sigma^{--}$) the action of $a^{-1}$ gives a norm-decreasing (resp. $R$-compact) homomorphism $\Acal_{\sigma, U, *}^{s-\Sigmaan} \to \Acal_{\sigma, U, *}^{s-\Sigmaan}$ (by the arguments in the paragraph above \Cref{lemma: U operators}).
As we described in \Cref{subsection: arithmetic homology}, the action of the monoid $J A^+ J$ induces an action of $\Hcal_{S, R}^-$ on $C_\bullet(K^S J, \Acal_{\sigma, U, *}^{s-\Sigmaan})$. We denote the action of $[J a J]$ by $U_a$ as usual. This complex of $R$-modules is a finite direct sum of copies of $\Acal_{\sigma, U, *}^{s-\Sigmaan}$, and we see from the corresponding statement for the $*$-action that the operator $U_a$ on this complex is norm-decreasing (resp. $R$-compact) when this space is given its direct sum Banach topology.

We can use this to obtain properties of slope decompositions of $C_\bullet(K^S J, \Acal_{\sigma, U, *}^{s-\Sigmaan})$ with respect to $U_a$. Let us recall what this means.
If $Q(X) \in R[X]$ is a polynomial of degree $d$, we write $Q^*(X) = X^d Q(1/X)$.
Given a point $x$ of $\Sp(R)$, write $Q_x(X)$ for the specialisation of $Q(X)$ at $x$.
Let $h \in \Q_{\geq 0}$.
We say that $Q(X)$ has slope-$\leq h$ if for every point $x$ of $\Sp(R)$, every edge in the Newton polygon of $Q_x(X)$ has slope $\leq h$, and that $Q(X)$ is multiplicative if its leading coefficient is a multiplicative unit (for a fixed norm of $R$), i.e. it is a unit $x \in R^\times$ satisfying $|x^{-1}| = |x|^{-1}$. If $W$ is an $R$-module equipped with an $R$-linear operator $U$, then we say that a vector $w \in W$ has slope $\leq h$ if there exists some polynomial $Q(X) \in R[X]$ of slope $\leq h$ such that $Q^*(U) w = 0$. Finally, we say that a $U$-stable decomposition $W = W_{\leq h} \oplus W_{>h}$ is a slope-$\leq h$ decomposition of $W$ (with respect to $U$) if $W_{\leq h}$ is finitely generated and consists of elements of slope $\leq h$ and $Q^*(U) \colon W_{>h} \to W_{>h}$ is an $E$-linear isomorphism for every multiplicative polynomial $Q(X)$ of slope $\leq h$. If such a decomposition exists, then it is unique. We refer to \cite[Section 2.3]{hansen_universal_eigenvarieties} for a summary of the basic properties of slope decompositions. Let us fix some $a \in A^-$ such that $a^{-1} * \Acal_\sigma^{s-\an} \subseteq \Acal_\sigma^{(s-1)-\an}$; all slope decompositions will be taken with respect to the operator $U_a$.

\begin{proposition}\label{proposition: properties overconvergent homology}
Assume that $C_\bullet(K^p J, \Acal_{\sigma, U, *}^{s-\Sigmaan})$ admits a slope-$\leq h$ decomposition
$$
    C_\bullet(K^p J, \Acal_{\sigma, U, *}^{s-\Sigmaan}) = C_\bullet(K^p J, \Acal_{\sigma, U, *}^{s-\Sigmaan})_{\leq h} \oplus C_\bullet(K^p J, \Acal_{\sigma, U, *}^{s-\Sigmaan})_{>h}.
$$
Then:
\begin{enumerate}
    \item For any $s' \geq s_\sigma$, the complexes $C_\bullet(K^p J, \Acal_{\sigma, U, *}^{s'-\Sigmaan})$ and $C_\bullet(K^p J, \Acal_{\sigma, U, *}^{\Sigmala})$ admit slope-$\leq h$ decompositions with
    \begin{align*}
        C_\bullet(K^p J, \Acal_{\sigma, U, *}^{s-\Sigmaan})_{\leq h}
        & \simto
        C_\bullet(K^p J, \Acal_{\sigma, U, *}^{s'-\Sigmaan})_{\leq h} \\
        & \simto
        C_\bullet(K^p J, \Acal_{\sigma, U, *}^{\Sigmala})_{\leq h}.
    \end{align*}

    \item $H_*(K^p J, \Acal_{\sigma, U, *}^{\Sigma})$ admits a slope-$\leq h$ decomposition whose slope-$\leq h$ part is the homology of $C_\bullet(K^p J, \Acal_{\sigma, U, *}^{\Sigma})_{\leq h}$.
    
    \item For any Banach $R$-algebra $R'$,
    $$
        C_\bullet(K^p J, \Acal_{\sigma \otimes_R R', U \otimes_R R', *}^{\Sigma})_{\leq h} \simeq C_\bullet(K^p J, \Acal_{\sigma, U, *}^{\Sigma})_{\leq h} \otimes_R R'.
    $$
    
    \item For any Banach $R$-algebra $R'$, there is a first-quadrant homological spectral sequence
    $$
        E^2_{i,j} = \Tor_i^R (H_j(K^p J, \Acal_{\sigma, U, *}^{\Sigma})_{\leq h}, R') \implies H_{i+j} (K^p J, \Acal_{\sigma \otimes_R R', U \otimes_R R', *}^{\Sigma})_{\leq h}
    $$
    In particular, if $R'$ is flat over $R$, then 
    $$
        H_*(K^p J, \Acal_{\sigma \otimes_R R', U \otimes_R R', *}^{\Sigma})_{\leq h} \simeq H_*(K^p J, \Acal_{\sigma, U, *}^{\Sigma})_{\leq h} \otimes_R R'.
    $$
    
    \item\label{item: properties overconvergent homology power series} For a flat Banach $R$-algebra $R'$, let
    $$
        F_{R'}^s(X) = \det(1 - X U_a | C_\bullet(K^p J, \Acal_{\sigma \otimes_R R', U \otimes_R R', *}^{s-\Sigmaan})) \in R' \{\!\{ X \}\!\}
    $$
    be the characteristic power series of the operator $U_a$ on $C_\bullet(K^p J, \Acal_{\sigma \otimes_R R', U \otimes_R R', *}^{s-\Sigmaan})$. Then, $F_R^s(X)$ is independent of $s \geq s_\sigma$ and $F_{R'}^s(X)$ is the image of $F_R^s(X)$ in $R' \{\!\{ X \}\!\}$.
\end{enumerate}
\end{proposition}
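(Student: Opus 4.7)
The plan is to reduce each claim to the now standard theory of slope decompositions on orthonormalizable Banach $R$-modules equipped with compact operators, summarized for instance in \cite[\S 2.3]{hansen_universal_eigenvarieties}. The two crucial inputs from the preceding sections are: (a) for $s' > s \geq s_\sigma$, the transition map $\Acal_{\sigma, U, *}^{s-\Sigmaan} \hookrightarrow \Acal_{\sigma, U, *}^{s'-\Sigmaan}$ is a compact map of orthonormalizable Banach $R$-modules (\Cref{lemma: inclusion of As is compact}); and (b) by the choice of $a$, the operator $U_a$ on $\Acal_{\sigma, U, *}^{s-\Sigmaan}$ factors through the compact inclusion $\Acal_{\sigma, U, *}^{(s-1)-\Sigmaan} \hookrightarrow \Acal_{\sigma, U, *}^{s-\Sigmaan}$. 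Both properties descend to each term of the Borel--Serre complex $C_\bullet(K^S J, \Acal_{\sigma, U, *}^{s-\Sigmaan})$, since each such term is a finite direct sum of copies of $\Acal_{\sigma, U, *}^{s-\Sigmaan}$, and they are stable under base change to any Banach $R$-algebra $R'$.

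For part (i), I will combine (b) with the standard principle that if a compact operator $U$ on a Banach module $W_1$ factors through a compact inclusion $W_0 \hookrightarrow W_1$, then slope-$\leq h$ decompositions on $W_0$ and $W_1$ correspond bijectively via the transition map; iterating this propagates the slope-$\leq h$ decomposition from level $s$ to any $s' \geq s_\sigma$, and passing to the colimit in $s$ gives the decomposition for $\Acal^{\Sigmala}_{\sigma, U, *}$. Part (ii) is then straightforward: the slope-$\leq h$ decomposition of the Borel--Serre complex is by definition compatible with the differentials, so its homology inherits a direct sum decomposition; finite generation is preserved in homology, while $Q^*(U_a)$ remains invertible on the slope-$>h$ homology for every multiplicative polynomial $Q$ of slope $\leq h$.

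For parts (iii) and (iv), the key observation is that $C_\bullet(K^S J, \Acal_{\sigma, U, *}^{s-\Sigmaan})_{\leq h}$ is a bounded complex of finitely generated projective $R$-modules, being a direct summand (in the slope decomposition) of a finite direct sum of copies of the orthonormalizable Banach $R$-module $\Acal_{\sigma, U, *}^{s-\Sigmaan}$. Commutation of slope-$\leq h$ projectors with $-\otimes_R R'$ reduces to the fact that Fredholm determinants and their associated Riesz decompositions for compact operators on orthonormalizable Banach modules are preserved by flat base change; this yields (iii). Then (iv) is the standard hyperhomology spectral sequence of the bounded complex of finitely generated projectives $C_\bullet(K^S J, \Acal_{\sigma, U, *}^{\Sigma})_{\leq h}$ tensored derivedly with $R'$.

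For part (v), the characteristic power series $\det(1 - X U_a)$ of a compact endomorphism of an orthonormalizable Banach $R$-module is given by an explicit formula in terms of finite principal minors of the matrix of $U_a$ with respect to an orthonormal basis, and this formula commutes with base change along any continuous morphism $R \to R'$, giving the second assertion. Independence of $s$ will follow from the standard identity $\det(1 - XAB) = \det(1 - XBA)$ valid for pairs $A \colon V \to W$, $B \colon W \to V$ with one map compact (see e.g.\ \cite[Proposition 7]{buzzard_eigenvarieties}): applied to the factorization from (b), this gives $F^{s}_R(X) = F^{s-1}_R(X)$, and induction concludes. The main technical step, and the one requiring the most care, is to verify that after replacing $C_\bullet^{\ad}$ by the Borel--Serre model fixed in \Cref{subsubsection: definition 5 of arithmetic cohomology}, the operator $U_a$ still acts at the level of complexes (not merely up to homotopy) in a way compatible with the compactness factorizations used throughout; this can be arranged by working with explicit chain-level representatives of the Hecke action on a sufficiently fine triangulation.
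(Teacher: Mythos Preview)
Your proposal is correct and follows essentially the same approach as the paper, which simply defers to \cite[Sections 3.1 and 3.3]{hansen_universal_eigenvarieties}; you have unpacked those arguments accurately. One minor slip: in your discussion of (iii) you invoke ``flat base change'', but the statement is for an arbitrary Banach $R$-algebra $R'$ --- this is fine, since the slope-$\leq h$ projector is an idempotent in $R[U_a]$ (coming from the Riesz factorization of the Fredholm series) and so commutes with any base change $-\cotimes_R R'$, not just flat ones.
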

\begin{proof}
Everything follows from the same arguments as in \cite[Sections 3.1 and 3.3]{hansen_universal_eigenvarieties}.
\end{proof}

\subsection{\texorpdfstring{$S$}{S}-arithmetic homology of parabolic induction}
\label{subsection: homology of ind}

Or next goal is to relate the $S$-arithmetic homology of $\SigmaInd_P^G (U, V)$ to the arithmetic homology of $\Acal_{\sigma, U}^{\Sigma}$ via the quasi-isomorphism from \Cref{theorem: quasi-isomorphism locally algebraic}.
Recall that we view $\SigmaInd_P^G (U, V)$ as a module over $\Hcal_{S, R}^-$ by letting $U_\mu$ act as the identity for all $\mu$.
We will do the same for the Borel--Serre complex $C_\bullet(K^S, \SigmaInd_P^G (U, V))$.

\begin{proposition}\label{proposition: quasi-isomorphism S-arithmetic}
There is an isomorphism in $D(\Tbb(K^S)_R)$\footnote{Similar to our remark at the beginning of \Cref{subsection: quasi isomorphism}, it would be more precise to write $\Tbb^S(K^S)_R(1) \otimes^{\mathbb{L}}_{\Tbb(K^S)_R} \blank$ instead of $R(1) \otimes^{\mathbb{L}}_{\Hcal_{S, R}^-} \blank$.}
$$
	C_\bullet(K^S, \SigmaInd_P^G (U, V))
	\simeq
	R(1) \otimes^{\mathbb{L}}_{\Hcal_{S, R}^-} C_\bullet(K^S, \cInd_J^G(\Acal_{\sigma, U}^{\Sigma})).
$$
\end{proposition}
\begin{proof}
Let $C_\bullet$ be a projective $R[\Gbf(F) \times \Gbf(\A_F)]$-module resolution of $R[\Gbf(\A_F)]$ and $P_\bullet$ be a projective $\Hcal_{S, R}^-$-module resolution of $R(1)$.
Consider the map $\varphi$ from \Cref{subsection: quasi isomorphism},
$$
    P_\bullet \otimes_{\Hcal_{S, R}^-} \cInd_J^G(\Acal_{\sigma, U}^{\Sigma}) \simto \SigmaInd_P^G (U, V),
$$
which is a quasi-isomorphism and commutes with the action of $G$.
In particular, we see that there are quasi-isomorphisms
\begin{align*}
    & R(1) \otimes^{\mathbb{L}}_{\Hcal_{S, R}^-} C_\bullet(K^S, \cInd_J^G(\Acal_{\sigma, U}^{\Sigma})) \\
    & \simeq \tot \left( P_\bullet \otimes_{\Hcal_{S, R}^-} ( \cInd_J^G(\Acal_{\sigma, U}^{\Sigma})  \otimes_{R[\Gbf(F) \times \Kcal^S]} C_\bullet ) \right) \\ %
    & \simeq \SigmaInd_P^G (U, V) \otimes_{R[\Gbf(F) \times \Kcal^S]} C_\bullet \\ %
    & \simeq C_\bullet(K^S, \SigmaInd_P^G (U, V)) %
\end{align*}
The composition of these quasi-isomorphisms is $\Tbb^S(K^S)$-equivariant, since it is the homomorphism induced by the $G$-equivariant map $\varphi$.
\end{proof}

Combining this with Shapiro's lemma, \Cref{proposition: shapiros lemma}, we obtain the following.

\begin{corollary}\label{corollary: spectral sequence for affinoids}
There is an isomorphism in $D(\Tbb(K^S)_R)$
$$
	C_\bullet(K^S, \SigmaInd_P^G (U, V)) \simeq R(1) \otimes^{\mathbb{L}}_{\Hcal_{S, R}^-} C_\bullet(K^S J, \Acal_{\sigma, U}^{\Sigma}).
$$
In particular, there is a $\Tbb^S(K^S)$-equivariant first quadrant (homological) spectral sequence of $R$-modules
$$
    E^2_{i, j} = \Tor^{\Hcal_{S, R}^-}_i( R(1), H_j(K^S J, \Acal_{\sigma, U}^{\Sigma})) \implies H_{i+j}(K^S, \SigmaInd_P^G (U, V)).
$$
\end{corollary}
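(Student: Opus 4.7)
The plan is to concatenate Proposition~\ref{proposition: quasi-isomorphism S-arithmetic} with Shapiro's lemma (Proposition~\ref{proposition: shapiros lemma}). Concretely, I apply the latter with $T = \emptyset$ (so $K_{S \setminus T} = J$), with submonoid $\Delta_{S \setminus T} = J \HM^- J \subseteq G$, and with coefficient module $\Acal_{\sigma, U}^{\Sigma}$. The required $(J\HM^- J)^{-1} = J \HM^+ J$-action extending the $J$-action on $\Acal_{\sigma,U}^\Sigma$ is precisely the one produced in Lemma~\ref{lemma: get action of monoid}, and the induced $\Hcal_{S,R}^-$-action on the compact induction $\cInd_J^G(\Acal_{\sigma,U}^\Sigma)$ agrees with the one appearing in Lemma~\ref{lemma: U operators}, since both are instances of the general formalism of Lemma~\ref{lemma: general action of Hecke algebra}\itemnumber{1}. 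Shapiro's lemma therefore supplies a $\Tbb^S(K^S)$-equivariant isomorphism
$$
    C_\bullet(K^S, \cInd_J^G(\Acal_{\sigma, U}^{\Sigma})) \simeq C_\bullet(K^S J, \Acal_{\sigma, U}^{\Sigma})
$$
in $D(\Hcal_{S,R}^-)$.

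Substituting this into the right-hand side of Proposition~\ref{proposition: quasi-isomorphism S-arithmetic} and applying $R(1) \otimes^{\mathbb{L}}_{\Hcal_{S,R}^-} (\blank)$ yields the displayed isomorphism of the corollary; $\Tbb^S(K^S)$-equivariance is inherited from both inputs. The spectral sequence is then the standard hyper-Tor spectral sequence for the derived tensor product: choosing a resolution $P_\bullet \to R(1)$ by free $\Hcal_{S,R}^-$-modules and a Borel-Serre-type model $Q_\bullet$ for $C_\bullet(K^S J, \Acal_{\sigma, U}^{\Sigma})$ (flat over $R$ by construction), the double complex $P_\bullet \otimes_{\Hcal_{S,R}^-} Q_\bullet$ has total complex computing $H_*(K^S, \SigmaInd_P^G(U,V))$, and the filtration by rows produces the spectral sequence with $E^2$-page as stated. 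Since $Q_\bullet$ sits in finitely many (non-negative) degrees and $P_\bullet$ is in the first quadrant, convergence is automatic.

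The only subtle point in the argument is the matching of the two presentations of the $\Hcal_{S,R}^-$-action on $\cInd_J^G(\Acal_{\sigma,U}^\Sigma)$: the one from Lemma~\ref{lemma: U operators}, used in Proposition~\ref{proposition: quasi-isomorphism S-arithmetic}, and the one appearing in Shapiro's lemma after the identification $\cInd_{\Gbf(F) \times \Kcal^{\emptyset}}^{\Gbf(F) \times \Kcal^S}(\Acal_{\sigma,U}^\Sigma) \simeq \cInd_J^G(\Acal_{\sigma,U}^\Sigma)$. Both are given by the same explicit sum over coset representatives from Lemma~\ref{lemma: general action of Hecke algebra}\itemnumber{1}, so this comparison is tautological; everything else in the corollary is a formal consequence of the two cited propositions.
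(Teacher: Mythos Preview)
Your proof is correct and follows exactly the approach indicated in the paper: the corollary is obtained by combining Proposition~\ref{proposition: quasi-isomorphism S-arithmetic} with Shapiro's lemma (Proposition~\ref{proposition: shapiros lemma}) applied with $T=\emptyset$, $K_{S\setminus T}=J$ and $\Delta_{S\setminus T}=J\HM^-J$, and your identification of the two Hecke actions on $\cInd_J^G(\Acal_{\sigma,U}^\Sigma)$ via Lemma~\ref{lemma: monoid hecke action on compact induction} is the right observation (the algebra being commutative by Corollary~\ref{corollary: hecke algebra isomorphic to monoid ring}, the left/right distinction is immaterial).
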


In many cases it is convenient to replace the space $H_*(K^S J, \Acal_{\sigma, U}^{\Sigma})$ with something smaller.
In this situation, the following lemma will be useful.
It can be applied, for example, to slope-$\leq h$ decompositions when $|\chi(\mu)^{-1}| \leq p^h$.

\begin{lemma}\label{lemma: eigenvectors decomposition}
Recall that $\Hcal^-_{S,R} \simeq R[\HM / \JM]$.
Let $\chi \colon \HM / \JM \to R^\times$ be a character and let $M$ be an $\Hcal_{S, R}^-$-module admitting a decomposition $M = M_1 \oplus M_2$ such that, for some $\mu \in \HM^-$ the operator $1 - \chi(\mu)^{-1} U_\mu$ acts invertibly on $M_2$. Then,
$$
	R(\chi) \otimes^{\mathbb{L}}_{\Hcal_{S, R}^-} M
	\simeq
	R(\chi) \otimes^{\mathbb{L}}_{\Hcal_{S, R}^-} M_1.
$$
\end{lemma}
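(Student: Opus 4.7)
The plan is to exploit additivity of the derived tensor product together with the commutativity of $\Hcal_{S,R}^-$. Since $R(\chi) \otimes^\mathbb{L}_{\Hcal_{S,R}^-} (\blank)$ is an additive functor, the decomposition $M = M_1 \oplus M_2$ gives
$$
    R(\chi) \otimes^\mathbb{L}_{\Hcal_{S,R}^-} M
    \simeq
    \left( R(\chi) \otimes^\mathbb{L}_{\Hcal_{S,R}^-} M_1 \right)
    \oplus
    \left( R(\chi) \otimes^\mathbb{L}_{\Hcal_{S,R}^-} M_2 \right),
$$
so it suffices to show that the second summand is zero in $D(\Hcal_{S,R}^-)$.

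The key point is that by \Cref{corollary: hecke algebra isomorphic to monoid ring} (and \Cref{remark: for Hecke algebras can replace tildeJ_M by HM}), the Hecke algebra $\Hcal_{S,R}^-$ is commutative, being isomorphic to the monoid ring $R[\HM^- / J_M]$. Consequently, the element $a := 1 - \chi(\mu)^{-1} U_\mu \in \Hcal_{S,R}^-$ induces a well-defined endomorphism of the complex $R(\chi) \otimes^\mathbb{L}_{\Hcal_{S,R}^-} M_2$, and this endomorphism can be computed either by letting $a$ act through $R(\chi)$ or through $M_2$, the two actions being equal.

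On $R(\chi)$, the operator $U_\mu$ acts as multiplication by $\chi(\mu)$, so $a$ acts as $0$; hence the induced endomorphism on $R(\chi) \otimes^\mathbb{L}_{\Hcal_{S,R}^-} M_2$ is the zero map. On the other hand, $a$ acts invertibly on $M_2$ by hypothesis, and therefore induces an automorphism of $R(\chi) \otimes^\mathbb{L}_{\Hcal_{S,R}^-} M_2$ in $D(\Hcal_{S,R}^-)$ (one can for instance replace $M_2$ by a projective resolution and use that $a$ acts invertibly termwise). Since the same endomorphism is simultaneously zero and an isomorphism, $R(\chi) \otimes^\mathbb{L}_{\Hcal_{S,R}^-} M_2$ must vanish in $D(\Hcal_{S,R}^-)$, completing the proof.

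There is no serious obstacle here; the only point requiring care is the verification that the two interpretations of the action of $a$ on the derived tensor product coincide, which is immediate once one recalls the commutativity of $\Hcal_{S,R}^-$.
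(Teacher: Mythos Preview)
Your proof is correct and follows exactly the same idea as the paper's (very terse) proof: reduce to showing $R(\chi) \otimes^{\mathbb{L}}_{\Hcal_{S,R}^-} M_2 = 0$, and then observe that the central element $1 - \chi(\mu)^{-1} U_\mu$ acts as zero via $R(\chi)$ and invertibly via $M_2$. One small quibble: your parenthetical ``replace $M_2$ by a projective resolution and use that $a$ acts invertibly termwise'' is not literally correct, since $a$ need not be invertible on each term of a projective resolution; the cleaner justification is simply that $a \colon M_2 \to M_2$ is an isomorphism in $D(\Hcal_{S,R}^-)$ and the derived tensor product is a functor, hence sends isomorphisms to isomorphisms.
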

\begin{proof}
It's enough to show that $R(1) \otimes^{\mathbb{L}}_{\Hcal_{S, R}^-} M_2 = 0$. This follows from the fact that $1 - \chi(\mu)^{-1} U_\mu$ is zero on $R(\chi)$ but is invertible on $M_2$.
\end{proof}

Applying this to the character $U_\mu \mapsto 1$, it follows from this that if $C_\bullet(K^S J, \Acal_{\sigma, U}^{\Sigma})$ admits a slope-$\leq 0$ decomposition with respect to some $U_\mu$, then $H_*(K^S, \SigmaInd_P^G (U, V))$ is finitely generated over $R$. We will see in \Cref{subsection: construction of eigenvarieties} that this is true even when there is no slope-$\leq 0$ decomposition. Let us remark that if $\mu \in \HM^-$ (resp. $a \in A_\Sigma^{--}$) then the operator $U_\mu$ (resp. $U_a$) on $C_\bullet (K^S J, \Acal_{\sigma, U}^{s-\Sigmaan})$ is continuous (resp. $R$-compact).

\begin{lemma}\label{lemma: untwisting homology}
Assume that we are in the situation from \Cref{subsection: untwisting}. There is an isomorphism in $D(\Tbb(K^S)_R)$
$$
    R(\chi) \otimes^{\mathbb{L}}_{\Hcal_{S, R}^-} C_\bullet(K^S J, \Acal_{\sigma, U, *}^{\Sigma})
	\simeq
	R(\chi) \otimes_R \left( R(1) \otimes^{\mathbb{L}}_{\Hcal_{S, R}^-} C_\bullet(K^S J, \Acal_{\sigma, U}^{\Sigma}) \right).
$$
\end{lemma}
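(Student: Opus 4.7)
The plan is to combine two ingredients: the identification from \Cref{subsection: untwisting} of the $*$-action with a $\chi$-twist of the original action, together with a purely algebraic swap identity relating the inner tensor product with $R(\chi)$ over $R$ to the outer derived tensor product with $R(\chi)$ over $\Hcal_{S,R}^-$.

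First, I would observe that the discussion in \Cref{subsection: untwisting} provides a natural $\Hcal_{S,R}^-$-module isomorphism
$$
    \cInd_J^G(\Acal_{\sigma,U,*}^\Sigma) \simeq R(\chi) \otimes_R \cInd_J^G(\Acal_{\sigma,U}^\Sigma),
$$
with $\Hcal_{S,R}^-$ acting diagonally on the right-hand side. Applying the exact $R$-linear functor $C_\bullet(K^S,-)$ and then invoking Shapiro's lemma (\Cref{proposition: shapiros lemma}) on both sides produces a $\Tbb^S(K^S)$-equivariant isomorphism in $D(\Hcal_{S,R}^-)$
$$
    C_\bullet(K^SJ, \Acal_{\sigma,U,*}^\Sigma) \simeq R(\chi) \otimes_R C_\bullet(K^SJ, \Acal_{\sigma,U}^\Sigma),
$$
again with diagonal $\Hcal_{S,R}^-$-action on the right.

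Next, I would establish the following swap identity, natural in a complex $\Mcal$ of $\Hcal_{S,R}^-$-modules:
$$
    R(\chi) \otimesL_{\Hcal_{S,R}^-}(R(\chi) \otimes_R \Mcal) \simeq R(\chi) \otimes_R \bigl(R(1) \otimesL_{\Hcal_{S,R}^-} \Mcal\bigr).
$$
My first move would be to write down the explicit underived map $r \otimes (r' \otimes m) \mapsto rr' \otimes (1 \otimes m)$ and verify that it is a well-defined $\Hcal_{S,R}^-$-module isomorphism; well-definedness on the left uses that the diagonal action of $\mu \in \boldLambda^-$ sends $1 \otimes m$ to $\chi(\mu)(1 \otimes \mu m)$, while well-definedness of its inverse on the right requires dividing by $\chi(\mu) \in R^\times$. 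To lift this to the derived category I would fix an $\Hcal_{S,R}^-$-projective resolution $P_\bullet \to \Mcal$ and argue that $R(\chi) \otimes_R P_\bullet$ is then a projective $\Hcal_{S,R}^-$-resolution of $R(\chi) \otimes_R \Mcal$; the key point is that for the free rank-one module $\Hcal_{S,R}^-$, the diagonal twist $R(\chi) \otimes_R \Hcal_{S,R}^-$ is itself free of rank one, generated by $1 \otimes 1$, thanks to the invertibility of each $\chi(\mu)$.

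Combining the two steps by applying the swap identity to $\Mcal = C_\bullet(K^SJ, \Acal_{\sigma,U}^\Sigma)$ yields the lemma. The $\Tbb^S(K^S)$-equivariance propagates automatically because that action commutes with all of the constructions involved. I expect the derived part of the swap identity to be the main subtlety: checking that $R(\chi) \otimes_R -$ preserves $\Hcal_{S,R}^-$-projectivity is where the character structure of $R(\chi)$ enters in an essential way.
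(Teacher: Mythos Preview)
Your proposal is correct and follows essentially the same route as the paper: the paper's proof records exactly the three facts you isolate, namely that $R(\chi)\otimes_R(-)$ on $\Hcal_{S,R}^-$-modules is exact, preserves projectives, and satisfies $R(\chi)\otimes_R(M\otimes_{\Hcal_{S,R}^-}N)\simeq (R(\chi)\otimes_R M)\otimes_{\Hcal_{S,R}^-}(R(\chi)\otimes_R N)$, which specialised to $M=R(1)$ is precisely your swap identity. The only minor simplification available is that the identification $C_\bullet(K^SJ,\Acal_{\sigma,U,*}^\Sigma)\simeq R(\chi)\otimes_R C_\bullet(K^SJ,\Acal_{\sigma,U}^\Sigma)$ holds on the nose (same underlying $J$-module, Hecke action twisted by $\chi$), so the detour through $\cInd_J^G$ and Shapiro is not strictly needed.
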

\begin{proof}
This follows from the fact that the functor $R(\chi) \otimes_R \blank$ on $\Hcal_{S,R}^-$-modules is exact, preserves projectives, and $R(\chi) \otimes_R (M \otimes_{\Hcal_{S,R}^-} N) \simeq (R(\chi) \otimes_R M) \otimes_{\Hcal_{S,R}^-} (R(\chi) \otimes_R N)$ for any $\Hcal_{S,R}^-$-modules $M, N$.
\end{proof}

\subsection{Finiteness and vanishing over a point}
\label{subsection: field case}

In the case where $R$ is a finite field extension of $\Q_p$, we can say more about the $S$-arithmetic homology spaces from the previous section.
Throughout this section, we assume that $R$ is such an extension, and write $E$ instead of $R$ to denote this.
Write $\m_1$ for the ideal of $\Hcal_{S, E}^-$ generated by $U_\mu-1$ for all $\mu \in \HM^-$.
Given an $\Hcal_{S, E}^-$-module $\Mcal$ and $n \geq 0$, we write $\Mcal[\m_1^n]$ for the submodule of elements which are killed by the ideal $\m_1^n$ and $\Mcal[\m_1^\infty] = \bigcup_{n \geq 0} \Mcal[\m_1^n]$. This is the generalised eigenspace for the system of eigenvalues $U_\mu \mapsto 1$.
As in \Cref{subsection: overconvergent homology}, when $a \in A_\Sigma^{--}$ the operator $U_a$ is compact.

\begin{proposition}\label{proposition: can take generalised eigenspace}
There is an isomorphism in $D(\Tbb(K^S)_E)$
$$
    E(1) \otimesL_{\Hcal_{S, E}^-} C_\bullet(K^S J, \Acal_{\sigma, U}^\Sigma) \simeq E(1) \otimesL_{\Hcal_{S, E}^-} C_\bullet(K^S J, \Acal_{\sigma, U}^\Sigma)[\m_1^\infty].
$$
There is a $\Tbb^S(K^S)$-equivariant spectral sequence
$$
	E^2_{i, j} = \Tor^{\Hcal_{S, E}^-}_i( E(1), H_j(K^S J, \Acal_{\sigma, U}^\Sigma)[\m_1^\infty]) \implies H_{i+j}(K^S, \SigmaInd_P^G (U, V)).
$$
\end{proposition}
\begin{proof}
By \cite[Theorem 2.3.8]{urban_eigenvarieties} and the compactness of the operators $U_a$ for $a \in A_\Sigma^{--}$, the complex of Banach spaces $C_\bullet(K^S J, \Acal_{\sigma, U}^{s-\Sigmaan})$ admits a slope-$\leq 0$ decomposition with respect to some $U_a$. As in \Cref{proposition: properties overconvergent homology}, this implies that $C_\bullet(K^S J, \Acal_{\sigma, U}^{\Sigmala})$ also has a slope-$\leq 0$ decomposition. Applying \Cref{lemma: eigenvectors decomposition} to these decompositions, we see that
$$
    E(1) \otimesL_{\Hcal_{S, E}^-} C_\bullet(K^S J, \Acal_{\sigma, U}^\Sigma)
    \simeq
    E(1) \otimesL_{\Hcal_{S, E}^-} C_\bullet(K^S J, \Acal_{\sigma, U}^\Sigma)_{\leq 0}
$$
As $C_\bullet(K^S J, \Acal_{\sigma, U}^\Sigma)_{\leq 0}$ is finite-dimensional (in each degree), there is a direct sum decomposition
$$
    C_\bullet(K^S J, \Acal_{\sigma, U}^\Sigma)_{\leq 0} = C_\bullet(K^S J, \Acal_{\sigma, U}^\Sigma) [\m_1^\infty ] \oplus C_\bullet(K^S J, \Acal_{\sigma, U}^\Sigma)_{\leq 0}^{\Hcal_{S, E}^- \neq 1}
$$
where, for some $\mu \in \HM^{-}$, $U_{\mu} - 1$ acts invertibly on the second summand. Applying \Cref{lemma: eigenvectors decomposition} to this decomposition the result follows.
The existence of the spectral sequence then follows as usual.
\end{proof}

\begin{corollary}\label{corollary: homology is finite dimensional}
\begin{enumerate}
    \fixitem The vector space $H_*(K^S, \SigmaInd_P^G (U, V))$ is finite-dimensional.
    \item $H_*(K^S, \SigmaInd_P^G (U, V))$ vanishes unless $1$ is an eigenvalue of $U_\mu$ in $H_*(K^S J, \Acal_{\sigma, U}^\Sigma)$ for all $\mu \in \HM^-$.
    \item Assume that for some choice of norm on the Banach space $U \otimes_E \sigma$, some element $\mu$ of $\HM^+$ acts as an operator of norm $< 1$. Then $H_*(K^S, \SigmaInd_P^G (U, V)) = 0$.
\end{enumerate}
\end{corollary}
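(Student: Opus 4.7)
The plan is to derive all three statements from the spectral sequence of \Cref{proposition: can take generalized eigenspace}:
$$
    E^2_{i,j} = \Tor^{\Hcal_{S, E}^-}_i(E(1), H_j(K^S J, \Acal_{\sigma, U}^\Sigma)[\m_1^\infty]) \implies H_{i+j}(K^S, \SigmaInd_P^G(U,V)).
$$
For part (i), I would observe that $H_j(K^S J, \Acal_{\sigma, U}^\Sigma)[\m_1^\infty]$ is finite-dimensional over $E$, since it is contained in the slope-$\leq 0$ part for any compact $U_a$ with $a \in A_\Sigma^{--}$ (the slope-$\leq 0$ part being finite-dimensional, cf.\ the proof of \Cref{proposition: can take generalized eigenspace}). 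Since $\Hcal_{S, E}^-$ is a finitely generated commutative $E$-algebra (\Cref{corollary: hecke algebra isomorphic to monoid ring} combined with Gordan's lemma), $E(1)$ admits a resolution by finitely generated projectives, so each $E^2$-term is finite-dimensional; boundedness in $j$ coming from the Borel-Serre complex then forces each $H_n(K^S, \SigmaInd_P^G(U,V))$ to be finite-dimensional. For part (ii), if some $U_\mu$ does not have $1$ as an eigenvalue on $H_j(K^S J, \Acal_{\sigma, U}^\Sigma)$, then the generalized $1$-eigenspace of $U_\mu$ vanishes, and so does the smaller subspace $H_j[\m_1^\infty]$, forcing the abutment to vanish.

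For part (iii), the plan is to reduce to (ii) by showing that, for the given $\tilde\mu \in \HM^+$, the operator $U_{\tilde\mu^{-1}}$ acts on $H_\ast(K^S J, \Acal_{\sigma, U}^{s-\Sigmaan})$ with spectral radius strictly less than $1$. I would first fix $J_M$-stable and $J_{S_p}$-stable norms on $\sigma$ and $U$ respectively (possible since both groups are compact); these induce, via \Cref{eqn: isomorphism for Acal}, a Banach norm on $\Acal_{\sigma, U}^{s-\Sigmaan}$ with respect to which \Cref{eqn: formula equivariance of As} shows $J$ acts isometrically, and for which the paragraph above \Cref{lemma: U operators} gives $\|\tilde\mu\|_{\mathrm{op}} \leq \|\tilde\mu\|_{U \otimes \sigma} < 1$. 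Writing the Borel-Serre complex $C_\bullet(K^S J, \Acal_{\sigma, U}^{s-\Sigmaan})$ as a finite direct sum of copies of $\Acal_{\sigma, U}^{s-\Sigmaan}$ with the max norm, and using the Hecke formula $U_{\tilde\mu^{-1}}(\sigma \otimes m) = \sum_j \delta_j^{-1}\sigma \otimes \delta_j^{-1} m$ with $\delta_j^{-1} = \tilde\mu\, x_j^{-1}$ for some $x_j^{-1} \in J$, the strong triangle inequality yields $\|U_{\tilde\mu^{-1}}\|_{\mathrm{op}} \leq \|\tilde\mu\|_{U \otimes \sigma} < 1$. A Neumann series argument then shows $1 - U_{\tilde\mu^{-1}}$ is invertible on $C_\bullet$, hence on its homology, so $1$ is not an eigenvalue; by \Cref{proposition: properties overconvergent homology} this conclusion passes from the $s$-analytic to the locally analytic setting, and (ii) then gives $H_\ast(K^S, \SigmaInd_P^G(U,V)) = 0$.

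The main obstacle lies in part (iii): verifying that the contracting behavior of $\tilde\mu$ on the local representation $U \otimes_E \sigma$ transfers cleanly, through the Hecke formula and the arithmetic framework, to a genuine operator-norm bound on the global Borel-Serre complex. This hinges on the compatible choice of $J$-isometric norms and the explicit finite-direct-sum description of the complex provided by the Borel-Serre triangulation.
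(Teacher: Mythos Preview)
Your proof follows essentially the same route as the paper: all three parts are deduced from the spectral sequence of \Cref{proposition: can take generalized eigenspace}, and for \itemnumber{3} you reduce to \itemnumber{2} by bounding the operator norm of $U_{\mu^{-1}}$ on the Borel--Serre complex. The arguments for \itemnumber{1} and \itemnumber{2} match the paper's.

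There is, however, a gap in your treatment of \itemnumber{3}. The hypothesis gives $\|\tilde\mu\|_{\mathrm{op}} < 1$ for \emph{some} norm on $U\otimes_E\sigma$, but you then replace this norm by a tensor product of a $J_M$-invariant norm on $\sigma$ and a $J_{S_p}$-invariant norm on $U$, and assert that $\|\tilde\mu\|_{U\otimes\sigma} < 1$ still holds for the new norm. This is not automatic: equivalent norms on a finite-dimensional space can give different operator norms for the same endomorphism (only the spectral radius is invariant). The paper sidesteps this by working directly with the given norm on $U\otimes_E\sigma$, inducing from it a norm on $\Acal_{\sigma,U}^{s-\Sigmaan}$, and asserting (without detailing the isometry of the $J$-action) that the resulting bound passes to $U_{\mu^{-1}}$ on the Borel--Serre complex. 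Your more explicit approach can be salvaged: since the hypothesis forces the spectral radius of $\tilde\mu$ on $U\otimes_E\sigma$ to be $<1$, one may replace $\tilde\mu$ by a power $\tilde\mu^n$ so that $\|\tilde\mu^n\| < 1$ holds for \emph{your} chosen invariant norms, and then run your argument with $\tilde\mu^n$ in place of $\tilde\mu$ (noting that $U_{\tilde\mu^{-n}} = (U_{\tilde\mu^{-1}})^n$, so $1$ is not an eigenvalue of $U_{\tilde\mu^{-1}}$ either). Alternatively, since $\tilde\mu \in \tilde J_M$ normalizes $J_M$, averaging the given norm on $U\otimes_E\sigma$ over $J_M$ produces a $J_M$-invariant norm without increasing $\|\tilde\mu\|_{\mathrm{op}}$, and one can proceed from there.
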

\begin{proof}
As $H_*(K^S J, \Acal_{\sigma, U}^\Sigma)[\m_1^\infty]$ is finite-dimensional, we see by using a resolution of $E(1)$ by finite free $\Hcal_{S,E}^-$-modules to compute the Tor groups in the $E^2$ page of the spectral sequence from \Cref{proposition: can take generalised eigenspace} that these are again finite-dimensional. This proves \itemnumber{1}.
If for some $\mu \in \HM^-$, $1$ is not an eigenvalue of $U_{\mu}$ in $H_*(K^S J, \Acal_{\sigma,U}^\Sigma)$, then the generalised eigenspace for 1 of $H_*(K^S J, \Acal_{\sigma, U}^\Sigma)$ is zero, so all the terms in the $E^2$ page of this spectral sequence vanish, and hence so does $H_*(K^S, \SigmaInd_P^G (U, V))$.
Finally, if $\mu$ is as in \itemnumber{3} and $x$ is its norm as an operator on $U \otimes_E \sigma$ for some choice of norm on this space, then the action $\Acal_{\sigma,U}^{s-\Sigmaan} \to \Acal_{\sigma,U}^{s-\Sigmaan}$ of $\mu$ has norm $\leq x$ (for the norm induced by that of $U \otimes_E \sigma$ via the obvious analogue of \Cref{eqn: isomorphism for Acal} including $U$), and one can deduce from this
that the operator $U_{\mu^{-1}}$ on $C_\bullet(K^S J, \Acal_{\sigma, U}^{s-\Sigmaan})$ also has norm $\leq x$. The same must then be true of its eigenvalues, so by \itemnumber{2}, if $S$-arithmetic homology is non-zero then $1 \leq x$.
\end{proof}

We will now shift our focus from homology to cohomology. A lot of the previous statements have analogues in this setting. Recall that we write $(\blank)'$ to denote the continuous duals of locally convex vector spaces over $E$.

\begin{proposition}\label{proposition: cohomology is dual of homology}
The locally convex topology on $H_*(K^S, \SigmaInd_P^G (U, V))$ induced by that of $C_\bullet(K^S, \SigmaInd_P^G (U, V))$ is the Hausdorff topology.
There is a $\Tbb^S(K^S)$-equivariant isomorphism
$$
    H_n(K^S, \SigmaInd_P^G (U, V))' \simeq H^n(K^S, \SigmaInd_P^G (U, V)')
$$
for all $n \in \Z$. Moreover, there is a first-quadrant $\Tbb^S(K^S)$-equivariant (cohomological) spectral sequence
$$
	E_2^{i, j} = \Ext_{\Hcal_{S,E}^-}^i( E(1), H^j(K^S J, \Dcal_{\sigma, U}^\Sigma)[\m_1^\infty]) \implies H^{i+j}(K^S, \SigmaInd_P^G (U, V)').
$$
\end{proposition}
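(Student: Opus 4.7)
The proof naturally splits into three parts corresponding to the three assertions, with a common thread: bootstrap from the homological results already proved to their topological and cohomological analogues.

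For the Hausdorff assertion, the strategy is to realize $C_\bullet(K^S, \SigmaInd_P^G (U, V))$, at the chain level, as quasi-isomorphic to a bounded double complex of finite-dimensional $E$-vector spaces. Using that $\Hcal_{S,E}^- \simeq E[\HM^-/J_M]$ contains a polynomial subring $E[\Lambda']$ of full rank (\Cref{lemma: vanishing Tor 3}(i), \Cref{corollary: hecke algebra isomorphic to monoid ring}), the trivial module $E(1)$ admits a finite Koszul-type resolution $P_\bullet$, after restricting scalars as in \Cref{lemma: vanishing Tor 5} if needed. Combined with \Cref{proposition: higher tor vanishes}, tensoring with $\cInd_J^G(\Acal_{\sigma,U}^\Sigma)$ yields a bounded resolution of $\SigmaInd_P^G(U, V)$ in continuous $G$-representations; applying $C_\bullet(K^S, -)$, invoking Shapiro's lemma (\Cref{proposition: shapiros lemma}), and replacing $C_\bullet(K^SJ, \Acal_{\sigma,U}^\Sigma)$ by its finite-dimensional $[\m_1^\infty]$-direct summand as in \Cref{proposition: can take generalized eigenspace} produces the desired finite-dimensional chain-level model. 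The homology of such a complex is automatically Hausdorff, and chasing the (continuous, $\Tbb^S(K^S)$-equivariant) chain maps identifies this with the quotient topology on $H_*(K^S, \SigmaInd_P^G(U, V))$.

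With Hausdorffness in hand, the duality $H_n(K^S, \SigmaInd_P^G(U,V))' \simeq H^n(K^S, \SigmaInd_P^G(U, V)')$ follows from \Cref{eqn: hecke operators equivariant for dual}, which identifies the cochain complex with the continuous dual of the chain complex. Hausdorffness of $H_n$ is equivalent to the boundary maps in $C_\bullet(K^S, \SigmaInd_P^G(U, V))$ being strict, so dualizing the short exact sequences relating cycles, boundaries, and homology preserves exactness and produces the claimed isomorphism, compatibly with $\Tbb^S(K^S)$.

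For the cohomology spectral sequence, I dualize the proof of \Cref{proposition: quasi-isomorphism S-arithmetic}: with $P_\bullet \to E(1)$ a projective resolution of $\Hcal_{S,E}^-$-modules and a Borel-Serre-type resolution as in \Cref{subsubsection: definition 5 of arithmetic cohomology}, the cohomological total complex computes $H^{i+j}(K^S, \SigmaInd_P^G(U, V)')$ and produces a spectral sequence with $E_2^{i,j} = \Ext^i_{\Hcal_{S,E}^-}(E(1), H^j(K^SJ, \Dcal_{\sigma,U}^\Sigma))$. Passage to the $[\m_1^\infty]$-generalized eigenspace follows from the cohomological analogue of \Cref{proposition: can take generalized eigenspace}: the operator $U_a$ ($a \in A_\Sigma^{--}$) is compact on $C^\bullet(K^SJ, \Dcal_{\sigma,U}^{s-\Sigmaan})$ by duality with the homological statement, so the slope decomposition and generalized eigenspace arguments carry over. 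The degeneration at $E_2$ is obtained by dualizing the section argument in \Cref{proposition: spectral sequence degenerates}: a continuous $E$-linear splitting of each cokernel surjection in the dual complex provides the required splitting of the filtration on $H^*$.

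The main obstacle is the first paragraph — converting the derived-category quasi-isomorphisms from \Cref{proposition: quasi-isomorphism S-arithmetic} and \Cref{corollary: spectral sequence for affinoids} into a chain-level statement that controls the topology, so that the quotient topology on $H_*(K^S, \SigmaInd_P^G(U, V))$ can actually be identified with the (standard) Hausdorff topology on its finite-dimensional underlying vector space. Once the Hausdorffness is established, the remaining assertions are essentially formal dualizations of results already proved.
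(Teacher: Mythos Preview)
Your Parts 2 and 3 are essentially correct and close to the paper's argument (the paper obtains the cohomological spectral sequence simply by dualizing the already-degenerate homological one from \Cref{proposition: can take generalized eigenspace} and \Cref{proposition: spectral sequence degenerates}, rather than rebuilding it from scratch --- less work, but your version is fine too).

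The gap is in Part 1, and it is precisely the obstacle you yourself flag without resolving. Suppose you do produce a continuous chain-level quasi-isomorphism $f \colon D_\bullet \to C_\bullet(K^S, \SigmaInd_P^G(U,V))$ with $D_\bullet$ finite-dimensional. You then have a continuous bijection $H_n(D_\bullet) \to H_n(K^S, \SigmaInd_P^G(U,V))$, but a continuous bijection from a Hausdorff space to an arbitrary locally convex space need not be a homeomorphism; you would still need to know that the quotient topology on the target is Hausdorff, which is the very statement to be proved. ``Chasing continuous chain maps'' does not by itself control the topology on the target homology.

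The paper's route is different and avoids this entirely. One already knows that $H_n(K^S, \SigmaInd_P^G(U,V))$ is finite-dimensional (\Cref{corollary: homology is finite dimensional}). Each $C_n(K^S, \SigmaInd_P^G(U,V))$ is a finite direct sum of copies of the compact-type space $\SigmaInd_P^G(U,V)$, hence itself of compact type. Let $Z_n$ be the space of $n$-cycles and choose a finite-dimensional subspace $W_n \subseteq Z_n$ splitting $Z_n \onto H_n$. Then
\[
C_{n+1}(K^S,\SigmaInd_P^G(U,V))/Z_{n+1} \ \oplus \ W_n \ \longrightarrow \ Z_n
\]
is a continuous bijection between compact-type spaces, hence a topological isomorphism by the open mapping theorem \cite[Theorem 1.1.17]{emerton_locally_analytic_vectors}. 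In particular the quotient topology on $H_n$ agrees with the unique Hausdorff topology on a finite-dimensional $E$-space, and all differentials are strict, giving Part 2 at once. No chain-level finite-dimensional model, Koszul resolution, or derived-to-chain promotion is needed: the only inputs are finite-dimensionality of $H_n$ and the open mapping theorem.
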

\begin{proof}
For each $n$, $C_n(K^S, \SigmaInd_P^G (U, V))$ is naturally a locally convex vector space of compact type, as it is a direct sum of finitely many copies of $\SigmaInd_P^G (U, V)$, which is itself of compact type. Write $Z_n \subseteq C_n(K^S, \SigmaInd_P^G (U, V))$ for the subspace of $n$-cycles, and choose a finite-dimensional subspace $W_n$ of $Z_n$ splitting the projection map $Z_n \to H_n(K^S, \SigmaInd_P^G (U, V))$. The subspace $W_n$ is then Banach, as it is finite-dimensional and Hausdorff, and the $E$-linear map
$$
    C_{n+1}(K^S, \SigmaInd_P^G (U, V)) / Z_{n+1} \oplus W_n \to Z_n
$$
is a continuous and bijective map of compact type spaces, and hence an isomorphism by the open mapping theorem \cite[Theorem 1.1.17]{emerton_locally_analytic_vectors}.

In particular, $W_n$ is topologically isomorphic to $H_n(K^S, \SigmaInd_P^G (U, V))$, which proves the first statement.
\textit{A posteriori}, all the differentials in the complex
$C_\bullet(K^S, \SigmaInd_P^G (U, V))$ are strict,
so taking continuous duals is exact.
Since the continuous dual of this complex is precisely $C^\bullet(K^S, \SigmaInd_P^G (U, V)')$ by a continuous analogue of \Cref{eqn: hecke operators equivariant for dual}, taking cohomology gives the second statement.

By \Cref{proposition: can take generalised eigenspace}, there is a spectral sequence
$$
	E^2_{i, j} = \Tor^{\Hcal_{S,E}^-}_i( E(1), H_j(K^S J, \Acal_{\sigma, U}^\Sigma)[\m_1^\infty]) \implies H_{i+j}(K^S, \SigmaInd_P^G (V)).
$$
Dualising and using the same arguments as before to restrict to generalised eigenspaces, it's enough show that the dual of $\Tor^{\Hcal_{S,E}^-}_i( E(1), H_j(K^S J, \Acal_{\sigma, U}^\Sigma)[\m_1^\infty])$ is precisely \linebreak $\Ext_{\Hcal_{S,E}^-}^i( E(1), H^j(K^S J, \Dcal_{\sigma, U}^\Sigma)[\m_1^\infty])$.
This is straightforward and can be seen by computing the Tor and Ext groups with a finite resolution of $E(1)$ and noting that the same argument as in the previous paragraph shows that the dual of $H_j(K^S J, \Acal_{\sigma, U}^\Sigma)[\m_1^\infty]$ is isomorphic to $H^j(K^S J, \Dcal_{\sigma, U}^\Sigma)[\m_1^\infty]$ (and using again that passing to duals is exact).
Note that the generalised eigenspace given by the $\m_1^\infty$-torsion of $H_j(K^S J, \Acal_{\sigma, U}^\Sigma)$ is a direct summand of the latter, and hence its dual is the $\m_1^\infty$-torsion of the dual space $H^j(K^S J, \Dcal_{\sigma, U}^\Sigma)$.
\end{proof}

\begin{corollary}\label{corollary: cohomology is finite dimensional}
\begin{enumerate}
    \fixitem The vector space $H^*(K^S, \SigmaInd_P^G (U, V)')$ is finite-dimensional.
    \item $H^*(K^S, \SigmaInd_P^G (U, V)')$ vanishes unless $1$ is an eigenvalue of $U_\mu$ in $H^*(K^S J, \Dcal_{\sigma, U}^\Sigma)$ for all $\mu \in \HM^-$.
    \item Assume that for some choice of norm on the Banach space $U \otimes_E \sigma$, some element $\mu$ of $\HM^+$ acts as an operator of norm $< 1$. Then $H^*(K^S, \SigmaInd_P^G (U, V)') = 0$.
\end{enumerate}
\end{corollary}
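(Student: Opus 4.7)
The plan is to mirror the proof of \Cref{corollary: homology is finite dimensional} exactly, using the cohomological spectral sequence established in \Cref{proposition: cohomology is dual of homology} together with the duality between $H_*$ and $H^*$ provided there.

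First I would prove \itemnumber{1}. By \Cref{proposition: cohomology is dual of homology}, we have an $E$-linear isomorphism $H^n(K^S, \SigmaInd_P^G(U,V)') \simeq H_n(K^S, \SigmaInd_P^G(U,V))'$, so finite-dimensionality of the cohomology is equivalent to that of the homology, which is \Cref{corollary: homology is finite dimensional} \itemnumber{1}. Alternatively, one can argue directly from the $E_2$-degenerate spectral sequence
\[
    E_2^{i,j} = \Ext^i_{\Hcal_{S,E}^-}(E(1), H^j(K^SJ, \Dcal^\Sigma_{\sigma,U})[\m_1^\infty]) \implies H^{i+j}(K^S, \SigmaInd_P^G(U,V)'),
\]
since $H^j(K^SJ, \Dcal^\Sigma_{\sigma,U})[\m_1^\infty]$ is finite-dimensional (it is dual to $H_j(K^SJ, \Acal^\Sigma_{\sigma,U})[\m_1^\infty]$ as in the proof of \Cref{proposition: cohomology is dual of homology}, which is finite-dimensional by the compactness of $U_\mu$ for $\mu \in \Lambda_\Sigma^{--}$ and \cite[Theorem 2.3.8]{urban_eigenvarieties}), and a finite free resolution of $E(1)$ over $\Hcal_{S,E}^-$ then shows each $E_2^{i,j}$ is finite-dimensional, with only finitely many being nonzero.

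For \itemnumber{2}, if $1$ is not an eigenvalue of $U_\mu$ on $H^*(K^SJ, \Dcal^\Sigma_{\sigma,U})$ for some $\mu \in \HM^-$, then the generalized $\m_1$-eigenspace $H^*(K^SJ, \Dcal^\Sigma_{\sigma,U})[\m_1^\infty]$ vanishes, so all terms in the $E_2$-page of the spectral sequence above vanish, and hence so does the abutment.

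For \itemnumber{3}, suppose $\mu \in \HM^+$ has operator norm $x < 1$ on $U \otimes_E \sigma$ for some choice of norm. As in the proof of \Cref{corollary: homology is finite dimensional} \itemnumber{3}, using the isomorphism \Cref{eqn: isomorphism for Acal} (augmented by $U$) the action of $\mu$ on $\Acal_{\sigma,U}^{s-\Sigmaan}$ has operator norm $\leq x$, so the dual operator $U_{\mu^{-1}}$ on $C^\bullet(K^SJ, \Dcal^{s-\Sigmaan}_{\sigma,U})$ (which corresponds to $\mu$ acting on the predual complex via the formulas in \Cref{subsection: arithmetic homology}) also has norm $\leq x < 1$. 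Hence every eigenvalue of $U_{\mu^{-1}}$ on $H^*(K^SJ, \Dcal^\Sigma_{\sigma,U})$ has absolute value $\leq x < 1$, so in particular $1$ is not an eigenvalue of $U_{\mu^{-1}}$. Since $\mu^{-1} \in \HM^-$, \itemnumber{2} gives the desired vanishing. There is no real obstacle; the only point requiring a brief check is that passing to continuous duals correctly identifies the action of $U_{\mu^{-1}}$ on cochains with (the transpose of) the action of $U_\mu$ on chains, which is immediate from the formulas in \Cref{subsection: arithmetic homology} and \Cref{eqn: hecke operators equivariant for dual}.
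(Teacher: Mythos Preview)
Your proof is correct and follows the approach the paper intends: the corollary is stated without proof in the paper precisely because it is the cohomological mirror of \Cref{corollary: homology is finite dimensional} via the duality and the spectral sequence of \Cref{proposition: cohomology is dual of homology}, and you have supplied exactly those details. The only extra care you took, checking that $U_{\mu^{-1}}$ on cochains is the transpose of the corresponding operator on chains via \Cref{eqn: hecke operators equivariant for dual}, is the right point to verify and is indeed immediate.
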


Recall that in \Cref{subsection: induction 2} we defined subspaces $\Acal^{(s, s')-\Sigmaan}_\sigma$ of $\Ind_{J \cap P}^J (\sigma)^\Sigmala$ for $s \geq s_\sigma$ and $s' \geq s'_\sigma$. Set $\Acal^{(s, s')-\Sigmaan}_{\sigma, U} = U \otimes_R \Acal^{(s, s')-\Sigmaan}_\sigma$. Recall also from \Cref{corollary: Hecke algebras for J and J0s are isomorphic} that there is an $E$-algebra isomorphism $\Hcal_{S,E}^- \simeq \Hcal(J_{0,s} \HM^- J_{0,s}, J_{0,s})_E$ for any $s \geq 0$.

\begin{proposition}
\label{proposition: homology algebraic}
There is a natural $\Tbb^S(K^S) \otimes \Hcal_{S,E}^-$-equivariant isomorphism
$$
    H^*(K^S J, \Dcal_{\sigma, U}^\Sigma)[\m_1^\infty]
    \simeq
    H^*( K^S J_{0,s}, \Dcal_{\sigma_\Sigma, U}^{(s, 0)-\Sigmaan} \otimes_E (\sigma^\Sigma)' )[\m_1^\infty].
$$
Moreover,
$$
    H^*( K^S J_{0,s}, \Dcal_{\sigma_\Sigma, U}^{(s, 0)-\Sigmaan} \otimes_E (\sigma^\Sigma)' )
    \simeq 
    \left( H^*(K^S J_{1,s}, \Dcal_{\sigma_\Sigma, U}^{(s, 0)-\Sigmaan}) \otimes_E (\sigma^\Sigma)' \right)^{J_{0,s} / J_{1,s}}.
$$
\end{proposition}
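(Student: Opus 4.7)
My plan is to establish the two isomorphisms separately: the first by combining the generalized-eigenspace reduction with \Cref{lemma: isomorphism over J0s in locally algebraic case} and Shapiro's lemma, the second via a Hochschild--Serre spectral-sequence argument.

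For the first isomorphism, I would begin by invoking the cohomological version of \Cref{proposition: can take generalized eigenspace} (made available by \Cref{proposition: cohomology is dual of homology}) to replace $\Dcal_{\sigma, U}^\Sigma$ by $\Dcal_{\sigma, U}^{s-\Sigmaan}$, which yield the same $\m_1^\infty$-part. \Cref{lemma: isomorphism over J0s in locally algebraic case} then provides, upon restriction to the open subgroup $K^S J_{0,s}$, a $J_{0,s}\HM^+ J_{0,s}$-equivariant identification $\Dcal_{\sigma, U}^{s-\Sigmaan} \simeq \Dcal_{\sigma_\Sigma, U}^{s-\Sigmaan} \otimes_E (\sigma^\Sigma)'$. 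The comparison of cohomology at the levels $K^SJ$ and $K^SJ_{0,s}$ then proceeds via Shapiro's lemma (\Cref{proposition: shapiros lemma}), after showing that $\Dcal_{\sigma, U}^{s-\Sigmaan}$ is, up to the $\m_1^\infty$-part, ``induced'' from the $K^SJ_{0,s}$-module $\Dcal_{\sigma_\Sigma, U}^{(s, 0)-\Sigmaan} \otimes (\sigma^\Sigma)'$. The naive comparison of dimensions fails by a factor of $|(J_\Sigma \cap N_\Sigma)/N_{\Sigma, s}|$, but this discrepancy is absorbed by applying \Cref{lemma: eigenvectors decomposition} to a decomposition refining the natural surjection $\Dcal_{\sigma_\Sigma, U}^{s-\Sigmaan} \twoheadrightarrow \Dcal_{\sigma_\Sigma, U}^{(s, 0)-\Sigmaan}$, where the ``extra'' direct summand is killed by a suitably antidominant $U_\mu$ for $\mu \in \HM_\Sigma^-$ acting contractingly.

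For the second isomorphism, \Cref{lemma: J_s is normal in J} ensures that $K^SJ_{1,s} \trianglelefteq K^SJ_{0,s}$, and the comment following \Cref{corollary: projection from J_0 to J_M monoid} identifies the finite quotient with $J_M/J_{M,s}$. Since $J^\Sigma_{M,s}$ acts trivially on $\sigma^\Sigma$ for $s \geq s_\sigma$, the subgroup $J_{1,s}$ acts trivially on $(\sigma^\Sigma)'$, and the Hochschild--Serre spectral sequence takes the form
\[
E_2^{p,q} = H^p\bigl(J_{0,s}/J_{1,s},\, H^q(K^SJ_{1,s}, \Dcal_{\sigma_\Sigma, U}^{(s, 0)-\Sigmaan}) \otimes_E (\sigma^\Sigma)'\bigr) \Longrightarrow H^{p+q}\bigl(K^SJ_{0,s}, \Dcal_{\sigma_\Sigma, U}^{(s, 0)-\Sigmaan} \otimes_E (\sigma^\Sigma)'\bigr).
\]
Collapsing this to the $p = 0$ row yields the desired identification with the $J_{0,s}/J_{1,s}$-invariants.

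\textbf{Main obstacle.} The principal difficulty is the degeneration of this Hochschild--Serre spectral sequence: since $|J_M/J_{M,s}|$ need not be invertible in $E$, Maschke's theorem is unavailable. The plan would be to exploit the compact-induction structure of $\sigma^\Sigma$ (as in \Cref{theorem: recollections supercuspidals}) to exhibit $(\sigma^\Sigma)'$ as a projective $E[J^\Sigma_M/J^\Sigma_{M,s}]$-module, and separately to reduce the action of the $\Sigma$-part of the quotient on $H^q(K^SJ_{1,s}, \Dcal_{\sigma_\Sigma, U}^{(s,0)-\Sigmaan})$ to a projective one, so that the tensor product is acyclic in positive degrees. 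The analogous subtle passage in the first isomorphism -- moving between the $s$-$\Sigma$-analytic and $(s, 0)$-$\Sigma$-analytic models on the $\m_1^\infty$-part -- is handled by a parallel Hecke-contraction argument.
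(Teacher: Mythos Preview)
Your ``main obstacle'' is a phantom: throughout \Cref{subsection: field case} the coefficient ring is $E$, a finite extension of $\Q_p$, hence of characteristic zero. The quotient $J_{0,s}/J_{1,s} \simeq J_M/J_{M,s}$ is a finite group (by \Cref{lemma: J_s is normal in J} \itemnumber{5}), so its order is automatically invertible in $E$ and Maschke applies. The Hochschild--Serre spectral sequence collapses for the trivial reason, and no projectivity argument for $(\sigma^\Sigma)'$ is needed. The paper handles this passage in one line.

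For the first isomorphism your Shapiro approach is workable in spirit but more awkward than what the paper does. The paper never tries to realize $\Dcal_{\sigma,U}^{s-\Sigmaan}$ as an induction from $J_{0,s}$; instead it compares cohomology at levels $K^SJ$ and $K^SJ_{0,s}$ directly on the slope-$\leq 0$ part via the standard level-changing trick of \cite[Lemma~4.3.6]{urban_eigenvarieties}, which rests on the observation that for $\mu \in \HM^-$ sufficiently contracting one has $J_{0,\tilde s}\cap \mu^{-1}J_{0,s}\mu = J_{0,s}\cap \mu^{-1}J_{0,s}\mu$ (a consequence of \Cref{lemma: Iwahori decomposition for deep level}). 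After this, \Cref{lemma: isomorphism over J0s in locally algebraic case} splits off the smooth factor, and the passage from $\Dcal_{\sigma_\Sigma,U}^{(s,s)-\Sigmaan}$ to $\Dcal_{\sigma_\Sigma,U}^{(s,0)-\Sigmaan}$ on the finite-slope part is handled by \cite[Lemma~2.3.13]{urban_eigenvarieties}, i.e.\ the standard independence-of-radius argument for compact $U_a$. So the ``dimension discrepancy'' you worry about never arises: no induction is being compared, only two level structures on which a compact Hecke operator acts compatibly.
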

\begin{proof}
Fix $a \in A_\Sigma^{--}$; all slope decompositions will be with respect to $U_a$.
We may assume that $\Dcal_{\sigma, U}^\Sigma = \Dcal_{\sigma, U}^{s-\Sigmaan}$.
The argument in the proof of \cite[Lemma 4.3.6]{urban_eigenvarieties} shows that
$$
    H^*(K^S J, \Dcal_{\sigma, U}^{s-\Sigmaan})_{\leq 0} \simeq H^*(K^S J_{0,s}, \Dcal_{\sigma, U}^{s-\Sigmaan})_{\leq 0}.
$$
This argument uses the fact that, if $\mu \in \HM^-$ satisfies $\mu^{-1} N_s \mu \subseteq N_{\tilde s}$ for some $\tilde s < s$ (for example, if $\mu = a$), then by \Cref{lemma: Iwahori decomposition for deep level} one has
$$
    J_{0, \tilde s} \cap \mu^{-1} J_{0, s} \mu = \bar N_0 \JM \mu^{-1} N_{s} \mu = J_{0, s} \cap \mu^{-1} J_{0, s} \mu.
$$
\Cref{lemma: isomorphism over J0s in locally algebraic case} asserts that if $s$ is large enough so that $\JMs$ acts trivially on $\sigma^\Sigma$ then there is an isomorphism of $J_{0,s} \HM^- J_{0,s}$-modules
$$
    \Acal_{\sigma, U}^{s-\Sigmaan}
    \simeq
    \Acal_{\sigma_\Sigma, U}^{s-\Sigmaan} \otimes_E \sigma^\Sigma.
$$
By \cite[Proposition 20.13]{nonarchimedean_functional_analysis}, this implies that $\Dcal_{\sigma, U}^{s-\Sigmaan} \simeq \Dcal_{\sigma_\Sigma, U}^{s-\Sigmaan} \otimes_E (\sigma^\Sigma)'$.
Using \cite[Lemma 2.3.13]{urban_eigenvarieties} we then see that
$$
    H^*(K^S J_{0,s}, \Dcal_{\sigma, U}^{s-\Sigmaan})_{\leq 0}
    \simeq
    H^*(K^S J_{0,s}, \Dcal_{\sigma_\Sigma, U}^{(s, s')-\Sigmaan} \otimes_E (\sigma^\Sigma)')_{\leq 0}
$$
for any $s' \leq s$.
In particular, both spaces are isomorphic to
$$
    H^j(K^S J_{0,s}, \Dcal_{\sigma_\Sigma, U}^{(s, 0)-\Sigmaan} \otimes_E (\sigma^\Sigma)')_{\leq 0},
$$ so taking generalised eigenspaces proves the first isomorphism.
Recall that $J_{0,s}$ acts on $\sigma^\Sigma$ via the surjective homomorphism $J_{0,s} \to \JM / \JMs$ described above \Cref{corollary: projection from J_0 to J_M monoid}, whose kernel is $J_{1,s}$. In particular, the restriction of $\sigma^\Sigma$ (and its dual) to $J_{1,s}$ is trivial, so
\begin{align*}
    & H^*( K^S J_{0,s}, \Dcal_{\sigma_\Sigma, U}^{(s, 0)-\Sigmaan} \otimes_E (\sigma^\Sigma)' ) \\
    & \simeq
    H^*(K^S J_{1,s}, \Dcal_{\sigma_\Sigma, U}^{(s, 0)-\Sigmaan} \otimes_E (\sigma^\Sigma)')^{J_{0,s} / J_{1,s}} \\
    & \simeq
    \left( H^*(K^S J_{1,s}, \Dcal_{\sigma_\Sigma, U}^{(s, 0)-\Sigmaan}) \otimes_E (\sigma^\Sigma)' \right)^{J_{0,s} / J_{1,s}}. \qedhere
\end{align*}
\end{proof}

\subsection{Relation to automorphic representations}
\label{subsection: automorphic representations}

We continue to assume that $R = E$ is a finite field extension of $\Q_p$ in $\bar \Q_p$.
We assume moreover that it is sufficiently large, in the sense that it contains all the embeddings of the splitting fields $L'_v$ of \Cref{subsection: global setting}.
Recall that we have fixed an isomorphism $\iota \colon \bar \Q_p \simto \C$.
We will use $\iota$ to view $\C$ as an extension of $E$.
Assume also that $\C \otimes_{E, \iota} \cInd_{\HM}^M(\sigma)$ is a supercuspidal representation of $M$ and that $U$ is an irreducible algebraic representation of $G_{\Scalp}$.
More precisely, assume we are in the following situation.
Recall that in \Cref{subsection: global setting} we defined $\Tbf_{\Scalp,E} \subseteq \Bbf_{\Scalp,E} \subseteq \Gbf_{\Scalp,E}$.
Choose a dominant weight $\lambda \in X^*(\Tbf_{\Scalp,E})$.
This is equivalent to choosing for each $v$ and embedding $\tau \colon F_v \into E$ a dominant weight $\lambda_{v,\tau} \in X^*(E \times_{F_v, \tau} \Tbf_v)$.
Let $V_{\Gbf_{\Scalp,E}}(\lambda)$ be the irreducible representation of $\Gbf_{\Scalp,E}$ over $E$ of highest weight $\lambda$ and $V_{G_{\Scalp}}(\lambda)$ its restriction to $G_{\Scalp} \subseteq \Gbf_{\Scalp, E}(E)$.
We assume that $U = V_{G_{\Scalp}}(\lambda)'$.
Finally, we will assume also that $\Sigma = \emptyset$ and $\sigma_\Sigma$ is isomorphic to $E$.

Given a vector space $\Mcal$ over $E$ (or $\Q_p$), we will write $\Mcal_{\C} = \C \otimes_{E, \iota} \Mcal$ for its extension of scalars to $\C$ via $\iota$.
The representation $\SigmaInd_P^G (U, V)_\C$ is thus isomorphic to the locally algebraic representation $V_{G_{\Scalp}}(\lambda)'_\C \otimes_\C \Ind_P^G( V_\C )^\sm$ of $G_{\Scalp} \times G$ over $\C$, and $\SigmaInd_P^G (U, V)'_\C$ is isomorphic to its abstract $\C$-dual (since, before base changing to $\C$, $\SigmaInd_P^G (U, V)$ was equipped with its finest locally convex topology).
The weight $\lambda$ gives rise to a weight $\lambda_\C$ of $\C \times_{E, \iota} \Tbf_{\Scalp, E}$, and the irreducible representation of $\C \times_{E, \iota} \Gbf_{\Scalp, E} \simeq \C \times_\Q \Res_{F/\Q} \Gbf$ of highest weight $\lambda_\C$ is isomorphic to the base change of $V_{\Gbf_{\Scalp, E}}(\lambda)$.
We will use this to view the latter as a representation of $G_\infty$.

Recall (cf. \cite{borel_wallach}, \cite{franke_l2}, \cite{li_schwermer_eisenstein_cohomology}) that for any open compact subgroup $K_S \subseteq G_S$, the arithmetic cohomology $H^*(K^S K_S, V_{G_{\Scalp}}(\lambda))_\C \simeq H^*(K^S K_S, V_{G_{\Scalp}}(\lambda)_\C)$ admits a canonical $\Tbb^S(K^S)_\C \otimes_\C \Hcal(G_S, K_S)_\C$-stable decomposition
\begin{align}\label{eqn: decomposition of cohomology into cuspidal and Eisenstein}
    H^*(K^S K_S, V_{G_{\Scalp}}(\lambda))_\C
    & \simeq H^*_\cusp(K^S K_S, V_{G_{\Scalp}}(\lambda))_\C \oplus H^*_\Eis(K^S K_S, V_{G_{\Scalp}}(\lambda))_\C,
\end{align}
into so-called \emph{cuspidal} and \emph{Eisenstein} summands, and moreover cuspidal cohomology admits a decomposition
\begin{align}\label{eqn: decomposition of cuspidal cohomology}
\begin{split}
    & H^*_{\cusp}(K^S K_S, V_{G_{\Scalp}}(\lambda))_\C \\
    & \simeq \bigoplus_\pi \C^{m(\pi)} \otimes_\C (\pi^\infty)^{K^S K_S} \otimes_\C H^*(\tilde \g_\infty, K_\infty; \pi_\infty \otimes_\C V_{\Gbf_{\Scalp,E}}(\lambda)_\C),
\end{split}
\end{align}
where the sum ranges over all cuspidal automorphic representations $\pi$ of $\Gbf(\A_F)$, $m(\pi)$ denotes the multiplicity of $\pi$ as a subrepresentation of $L^2_\cusp(\Gbf(F) \backslash \Gbf(\A_F))$, $\pi^\infty$ (resp. $\pi_\infty$) is the finite (resp. infinite) component of $\pi$ and $\tilde \g_\infty$ is the Lie $\R$-algebra of the intersection of all the kernels of the $\Q$-rational characters of $\Res_{F/\Q} \Gbf$.
If $\pi$ to contributes to $H^*(K^S K_S, V_{G_{\Scalp}}(\lambda))_\C$ (i.e. if the direct summand corresponding to $\pi$ is non-zero) we will say that $\pi$ is \emph{cohomological of level $K^S K_S$ and weight $\lambda_\C$}.
For such $\pi$, $\pi^\infty$ has non-zero $K^S K_S$-invariant vectors, and $\pi_\infty$ has the same central character $\omega_\lambda$ and infinitesimal character as $V_{\Gbf_{\Scalp, E}}(\lambda)'_\C$.

\Cref{proposition: homology algebraic} and the paragraph before it imply that for sufficiently large $s$ there is an isomorphism
$$
    H^*(K^S J, \Dcal_{\sigma, U}^\Sigma)[\m_1^\infty]
    \simeq
    \left(
    \left( H^*(K^S J_{1,s}, V_{G_{\Scalp}}(\lambda)) \otimes_E \sigma' \right)^{J_{0,s} / J_{1,s}}
    \right)[\m_1^\infty].
$$
The decomposition of $H^*(K^S J_{1,s}, V_{G_{\Scalp}}(\lambda))_\C$ into cuspidal and Eisenstein summands is compatible with the action of $J_{0,s} / J_{1,s}$.
Applying $\iota$, we may thus define a decomposition
$$
    H^*(K^S J, \Dcal_{\sigma, U}^\Sigma)_{\C}[\m_{1, \C}^\infty] \simeq H^*_\cusp(K^S J, \Dcal_{\sigma, U}^\Sigma)_{\C}[\m_{1, \C}^\infty] \oplus H^*_\Eis(K^S J, \Dcal_{\sigma, U}^\Sigma)_{\C}[\m_{1, \C}^\infty].
$$
and similarly we may write $H^*_\cusp(K^S J, \Dcal_{\sigma, U}^\Sigma)_{\C}[\m_{1, \C}^\infty]$ as a direct sum indexed by cuspidal automorphic representations.

\begin{proposition}
    When $K_S = J_{1,s}$, the spectral sequence in \Cref{proposition: cohomology is dual of homology} splits as a direct sum of two analogous spectral sequences involving the cuspidal and Eisenstein parts of arithmetic cohomology.
    The cuspidal direct summand spectral sequence degenerates at the $E_2$ page.
\end{proposition}
\begin{proof}
    In order to prove that the spectral sequence splits as such a direct sum, by its construction it is enough to observe that the decomposition \Cref{eqn: decomposition of cohomology into cuspidal and Eisenstein} is obtained by \linebreak taking cohomology of an analogous decomposition in the derived category of \linebreak $\Tbb^S(K^S)_\C \otimes_\C \Hcal(G_S, K_S)_\C$-modules,
    \begin{align*}
        C^\bullet(K^S K_S, V_{G_{\Scalp}}(\lambda))_\C
        & \simeq C^\bullet_\cusp(K^S K_S, V_{G_{\Scalp}}(\lambda))_\C \oplus C^\bullet_\Eis(K^S K_S, V_{G_{\Scalp}}(\lambda))_\C.
    \end{align*}
    This follows from the work of Franke \cite{franke_l2}, more concretely from \cite[Theorem in \S2.2 and Theorem 4]{franke_l2} and their proofs.

    Let us prove that the cuspidal part degenerates. The decomposition \Cref{eqn: decomposition of cuspidal cohomology} is also obtained from taking cohomology of an isomorphism in the derived category of \linebreak $\Tbb^S(K^S)_\C \otimes_\C \Hcal(G_S, K_S)_\C$-modules, namely
    \begin{align*}
       & C^\bullet_{\cusp}(K^S K_S, V_{G_{\Scalp}}(\lambda))_\C \\
    & \simeq \Hom_{(\tilde \g_\infty, K_\infty)} (\bigwedge^\bullet \left( \tilde \g_\infty / \mathfrak k \right), L^2_\cusp( \Gbf(F) \backslash \Gbf(\A_F), \omega_\lambda)^{K^S K_S} \otimes_\C V_{\Gbf_{\Scalp,E}}(\lambda)_\C), \\
    & \simeq \bigoplus_\pi \C^{m(\pi)} \otimes_\C (\pi^\infty)^{K^S K_S} \otimes_\C \Hom_{(\tilde \g_\infty, K_\infty)} (\bigwedge^\bullet \left( \tilde \g_\infty / \mathfrak k \right), \pi_\infty \otimes_\C V_{\Gbf_{\Scalp,E}}(\lambda)_\C).
    \end{align*}
    The latter complex is, as a complex in the derived category of $\Hcal(G_S, K_S)_\C$-modules, isomorphic to the direct sum of its shifted cohomologies.
    In particular, this induces a decomposition of the cuspidal part of the spectral sequence from \Cref{proposition: cohomology is dual of homology} as a direct sum of spectral sequences that collapse at the $E_2$ page.
\end{proof}

This proposition has a few important consequences for $S$-arithmetic cohomology. For example, we can use it to define a decomposition
$$
    H^*(K^S, \SigmaInd_P^G (U, V)')_{\C} \simeq H^*_\cusp(K^S, \SigmaInd_P^G (U, V)')_{\C} \oplus H^*_\Eis(K^S, \SigmaInd_P^G (U, V)')_{\C},
$$
each summand being the abutment of the corresponding spectral sequence.
Moreover, the proof of the proposition allows us to give a very explicit description of the cuspidal part.
Given a cuspidal automorphic representation $\pi$, we will write $\pi = \bigotimes'_v \pi_v$, $\pi_S = \bigotimes_{v \in S} \pi_v$ and $\pi^{S \infty} = \bigotimes'_{v \not\in S, v \nmid \infty} \pi_v$.
There is thus a spectral sequence converging to $H^*_\cusp(K^S, \SigmaInd_P^G (U, V)')_{\C}$ with terms
\begin{align*}
    E_2^{i,j} = \bigoplus_\pi & \C^{m(\pi)} \otimes_{\C} (\pi^{S\infty})^{K^S} \otimes_{\C} H^j(\tilde \g_\infty, K_\infty; \pi_\infty \otimes_\C V_{G_{\Scalp}}(\lambda)_\C) \\
    & \otimes_{\C} \Ext_{\Hcal_{S, \C}^-}^i \left( {\C}(1),  ( \pi_S \otimes_{\C} \sigma_\C')^{J_{0,s} / J_{1, s}}  \right),
\end{align*}
where the sum ranges over all cuspidal $\pi$ appearing in \Cref{eqn: decomposition of cuspidal cohomology} for $K_S = J_{1,s}$, which not only degenerates at the $E_2$ page, but is in fact a direct sum of spectral sequences that \emph{collapse} at the $E^2$ page.

In particular, we can decompose $H^*_\cusp(K^S, \SigmaInd_P^G (U, V)')_{\C}$ as a direct sum indexed by cuspidal automorphic representations $\pi$.
If $\pi$ contributes to this direct sum, then the (honest) eigenspace for 1 and the action of $\Hcal_{S, \C}$ in $( \pi_S \otimes_{\C} \sigma_\C')^{J_{0,s} / J_{1, s}}$ is non-zero.
This space is isomorphic to $\Hom_{J_{0,s}}(\sigma_\C, \pi_S) = \Hom_{J_{0,s}}(\sigma_\C, \pi_S^{J_{1,s}})$.
Because the action of $J_{1,s}$ on $\sigma$ is trivial and a set of representatives for $J_{1,s} \mu J_{1,s} / J_{1,s}$ will also be a set of representatives for $J_{0,s} \mu J_{0,s} / J_{0,s}$ for any $\mu \in \HM^-$, any eigenfunction for the operators $U_\mu$ with $\mu \in \HM^-$ in $\Hom_{J_{0,s}}(\sigma_\C, \pi_S)$ must take values in $[J_{1,s} a J_{1,s}] \pi_S^{J_{1,s}}$ for all $a \in A^-$ (since the image of $U_a f$ is contained in this subspace).
Recall from \cite[Proposition 4.1.4]{casselman_padic_book} that there is a natural isomorphism of vector spaces over $\C$
$$
    [J_{1,s} a J_{1,s}] \pi_S^{J_{1,s}}
    \simto
    (\pi_S)_{\bar N}^{\JMs}
$$
where $( \pi_S )_{\bar N}$ denotes the (non-normalised) Jacquet module of $\pi_S$ with respect to $\bar N$,
and that for $m \in M^-$, the Hecke operator $[J_{1,s} m J_{1,s}]$ on the left-hand side is identified with the action of $m$ twisted by the modulus character $\delta_P$ of $P$.
It follows that non-zero eigenfunctions in $\Hom_{J_{0,s}}(\sigma_\C, \pi_S)$ of eigenvalue 1 for the operators $U_\mu$ are in bijection with non-zero element of $\Hom_{\HM^-}( \sigma_\C, (\pi_S)_{\bar N} \otimes \delta_P)$.
Consequently,
\begin{align*}
    0 & \neq \Hom_{\HM^-}( \sigma_\C, (\pi_S)_{\bar N} \otimes \delta_P) \\
    & = \Hom_{\HM}( \sigma_\C, (\pi_S)_{\bar N} \otimes \delta_P) \\
    & = \Hom_{M}( V_\C \otimes \delta_P^{-1}, (\pi_S)_{\bar N}) \\
    & = \Hom_{M}( (\tilde{\pi_S})_{N}, \tilde V_\C \otimes \delta_P),
\end{align*}
where we use $\tilde{(\blank)}$ to denote smooth contragredients.
For the last equality we have used the fact that $\tilde {(\pi_S)_{\bar N}} \simeq (\tilde{\pi_S})_{N}$ by \cite[Corollary 4.2.5]{casselman_padic_book}.
Putting everything together, we have proved the following theorem, which is a more precise version of \Cref{theorem: introduction 1}.

\begin{theorem}\label{theorem: properties of classical cuspidals in cohomology}
There is a $\Tbb^S(K^S)_\C$-equivariant decomposition
$$
    H^*_\cusp(K^S, V_{G_{\Scalp}}(\lambda)_\C \otimes_\C (\Ind_P^G(V_\C)^\sm)')_{\C} \simeq \bigoplus_\pi \C^{m(\pi)} \otimes_{\C} (\pi^{S\infty})^{K^S} \otimes_{\C} H^*_\pi
$$
where $\pi$ ranges over (cohomological) cuspidal automorphic representations $\pi$ of $\Gbf(\A_F)$ of level $K^S$ away from $S$, weight $\lambda_\C$, and such that $\tilde V_\C \otimes \delta_P$ is a quotient of the Jacquet module with respect to $P$ of $\tilde{\pi_S}$ (or equivalently, $V_\C \otimes \delta_{\bar P}$ is a subrepresentation of the Jacquet module of $\pi_S$ with respect to $\bar P$), where for all $n$,
$$
    H^n_\pi
    \simeq
    \bigoplus_{i=1}^n
    H^{n-i}(\tilde \g_\infty, K_\infty; \pi_\infty \otimes_\C V_{G_{\Scalp}}(\lambda)_\C)
    \otimes_{\C}
    \Ext_{\Hcal_{S, \C}^-}^i \left( {\C}(1),  ( \pi_S \otimes_{\C} \sigma_\C')^{J_{0,s}}  \right).
$$
\end{theorem}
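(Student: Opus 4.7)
The plan is to chain together the spectral sequence of \Cref{proposition: cohomology is dual of homology}, the reinterpretation of $H^*(K^S J, \Dcal_{\sigma, U}^\Sigma)[\m_1^\infty]$ provided by \Cref{proposition: homology algebraic}, and the well-known decomposition of the cohomology of $K^S J_{1,s}$ with algebraic coefficients into cuspidal and Eisenstein parts. First I would apply \Cref{proposition: homology algebraic} (using that $\Sigma = \emptyset$ and $\sigma_\Sigma$ is trivial, so $\Dcal^{(s,0)-\Sigmaan}_{\sigma_\Sigma, U}$ is just $V_{G_{\Scalp}}(\lambda)$) to obtain a $\Tbb^S(K^S) \otimes \Hcal^-_{S,E}$-equivariant isomorphism
\[
    H^*(K^S J, \Dcal_{\sigma, U}^\Sigma)[\m_1^\infty]
    \simeq
    \bigl( \bigl( H^*(K^S J_{1,s}, V_{G_{\Scalp}}(\lambda)) \otimes_E \sigma' \bigr)^{J_{0,s}/J_{1,s}} \bigr)[\m_1^\infty],
\]
then apply $\iota$ to work over $\C$. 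The spectral sequence of \Cref{proposition: cohomology is dual of homology} (which degenerates at $E_2$) then expresses $H^*(K^S, \SigmaInd_P^G(U, V)')_\C$ in terms of $\Ext^i_{\Hcal_{S,\C}^-}(\C(1), \blank)$ applied to the right-hand side above.

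Next, I would feed in the classical decomposition \Cref{eqn: decomposition of cohomology into cuspidal and Eisenstein} of $H^*(K^S J_{1,s}, V_{G_{\Scalp}}(\lambda))_\C$ and the cuspidal description \Cref{eqn: decomposition of cuspidal cohomology}, tensored with $\sigma_\C'$, take $J_{0,s}/J_{1,s}$-invariants, and restrict to generalized $\m_1$-eigenspaces. Doing so gives the desired decomposition at the level of the $E_2$ sheet of the spectral sequence, indexed by cohomological cuspidal $\pi$ of level $K^S J_{1,s}$ and weight $\lambda_\C$. This defines the cuspidal summand and gives the advertised formula for $H^n_\pi$ up to identifying which $\pi$ actually contribute.

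The core arithmetic content, and what I expect to be the main obstacle, is translating the contribution condition into the advertised Jacquet-module condition. For $\pi$ to contribute one needs the actual (not generalized) eigenspace for the system $U_\mu \mapsto 1$ of $\Hcal_{S,\C}^-$ on $\Hom_{J_{0,s}}(\sigma_\C, \pi_S) \simeq (\pi_S \otimes_\C \sigma_\C')^{J_{0,s}/J_{1,s}}[\m_{1,\C}]$ to be non-zero, and Hecke-eigenvectors there land in $\bigcap_{a \in A^-}[J_{1,s} a J_{1,s}] \pi_S^{J_{1,s}}$. Here I would invoke Casselman's theorem (\cite[Proposition 4.1.4]{casselman_padic_book}) giving an isomorphism
\[
    [J_{1,s} a J_{1,s}] \pi_S^{J_{1,s}} \simto (\pi_S)_{\bar N}^{J_{M,s}}
\]
under which $[J_{1,s} m J_{1,s}]$ corresponds to the action of $m$ twisted by $\delta_P$. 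Since $\sigma$ is trivial on $J_{1,s}$ and the cosets of $J_{0,s}\mu J_{0,s}/J_{0,s}$ and $J_{1,s}\mu J_{1,s}/J_{1,s}$ coincide for $\mu \in \HM^-$, a short computation identifies the Hecke-eigenspace of eigenvalue $1$ with $\Hom_{\HM^-}(\sigma_\C, (\pi_S)_{\bar N} \otimes \delta_P)$.

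Finally, I would upgrade $\HM^-$ to $\HM$ using \Cref{lemma: JM generated by submonoids} (so $\HM = \HM^-(\HM^-)^{-1}$), then note that $\sigma_\C$ is compactly induced so Frobenius reciprocity gives $\Hom_{\HM}(\sigma_\C, (\pi_S)_{\bar N} \otimes \delta_P) = \Hom_M(V_\C \otimes \delta_P^{-1}, (\pi_S)_{\bar N})$, and dualize (smoothly) using $\widetilde{(\pi_S)_{\bar N}} \simeq \widetilde{\pi_S}_N$ (\cite[Corollary 4.2.5]{casselman_padic_book}) to land on the stated condition that $\widetilde V_\C \otimes \delta_P$ is a quotient of the Jacquet module of $\widetilde{\pi_S}$ with respect to $P$. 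Putting the pieces together then gives the $\Tbb^S(K^S)_\C$-equivariant direct sum decomposition in the statement, with $H^n_\pi$ as the total complex of the $E_2$ sheet restricted to the $\pi$-isotypic summand, which yields the asserted formula in view of $E_2$-degeneration.
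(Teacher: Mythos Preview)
Your proposal is correct and follows essentially the same argument as the paper: the paper's proof is the discussion immediately preceding the theorem statement, which chains \Cref{proposition: homology algebraic}, the degenerating spectral sequence of \Cref{proposition: cohomology is dual of homology}, the classical cuspidal decomposition \Cref{eqn: decomposition of cuspidal cohomology}, and Casselman's Jacquet-module computation in exactly the order you describe. The only cosmetic difference is that you explicitly cite \Cref{lemma: JM generated by submonoids} for the step $\Hom_{\HM^-} = \Hom_{\HM}$, whereas the paper leaves this implicit.
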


\subsection{Orlik--Strauch representations and classicality}
\label{subsection: orlik strauch homology}

We continue to assume that $E$ is as in last section, but impose no algebraicity or supercuspidality conditions on $U, \sigma$.
We will assume for simplicity that $\sigma$ is smooth as this case is sufficient for our purposes.
Let $W_{S_p, E}$ denote the Weyl group of $\Gbf_{S_p,E}$ with respect to $\Tbf_{S_p, E}$, and define $\Delta_{S_p, E}$ and $\Delta_{S_p,M, E}$ similarly. We make analogous definitions for subsets of $S_p$.
Recall also that the \emph{dot-action} of $W_{S_p, E}$ on the set of weights $X^*(\Tbf_{S_p, E})$ is defined by $w \cdot \lambda = w ( \lambda + \rho) - \rho$ where $\rho$ is the half-sum of positive roots of $\Gbf_{S_p, E}$. In particular, this action depends on our choice of Borel subgroup $\Bbf_{S_p, E}$.

Let $\g, \m$ and $\p$ denote the Lie $\Q_p$-algebras of $G_{S_p}, M_{S_p}$ and $P_{S_p}$ respectively and $U(\g), U(\m)$ and $U(\p)$ their universal enveloping algebra. Let $\g_E = E \otimes_\Qp \g$ be the Lie $E$-algebra of $\Gbf_{S_p,E}$, and similarly for $\m_E, \p_E, U(\g_E), U(\p_E)$ and $U(\m_E)$.
Consider the categories $\O^{\p_E}$ and $\O^{\p_E}_\alg$ of $U(\g_E)$-modules defined in \cite{orlik_strauch}, and write $\text{Rep}^\text{sm,adm}(M_{S_p})$ (resp. $\text{Rep}^{\la}(G_{S_p})$) for the category of smooth admissible representations of $M_{S_p}$ (resp. locally $S_p$-analytic representations of $G$) over $E$. Following the strategy of Orlik and Strauch \cite{orlik_strauch} we may define a functor
$$
    \Fcal_{P_{S_p}}^{G_{S_p}} \colon (\O^{\p_E}_\alg)^\text{op} \times \text{Rep}^\text{sm,adm}(M_{S_p}) \to \text{Rep}^{\la}(G_{S_p})
$$
which is exact in both arguments.
We refer to the appendix for its construction and the proof of these properties when the group is non-split.

\begin{example}\label{example: verma modules}
Let $\lambda \in X^*(\Tbf_{S_p,E})$ be a dominant weight for $\Mbf_{S_p,E}$ (with respect to $\Mbf_{S_p,E} \cap \Bbf_{S_p,E}$).
Let $V_{M_{S_p}}(\lambda)$ be the restriction to $M_{S_p} = \Mbf_{S_p}(\Q_p) \subseteq \Mbf_{S_p,E}(E)$ of the irreducible algebraic representation of $\Mbf_{S_p,E}$ of highest weight $\lambda$.
We may view $V_{M_{S_p}}(\lambda)$ as a representation of $\m_E$, and of $\p_E$ by inflation.
Let $\Mcal_{P_{S_p}}^{G_{S_p}}(\lambda) := U(\g_E) \otimes_{U(\p_E)} V_{M_{S_p}}(\lambda) \in \O^{\p_E}_\alg$ be the parabolic Verma module of highest weight $\lambda$.
Then,
$$
    \Fcal_{P_{S_p}}^{G_{S_p}} (\Mcal_{P_{S_p}}^{G_{S_p}}(\lambda), V_{S_p}) \simeq \Ind_{P_{S_p}}^{G_{S_p}}( V_{M_{S_p}}(\lambda)' \otimes_E V_{S_p} )^{\la},
$$
(recall that $V_{S_p}$ is defined in \Cref{subsection: global setting}) and if $\lambda$ is dominant for $\Gbf_{S_p,E}$, then
$$
    \Fcal_{P_{S_p}}^{G_{S_p}} (V_{G_{S_p}}(\lambda), V_{S_p}) \simeq V_{G_{S_p}}(\lambda)' \otimes_E \Ind_{P_{S_p}}^{G_{S_p}} ( V_{S_p} )^{\sm}.
$$
The natural projection $\Mcal_{P_{S_p}}^{G_{S_p}} (\lambda) \onto V_{G_{S_p}}(\lambda)$ induces the natural inclusion
$$
    V_{G_{S_p}}(\lambda)' \otimes_E \Ind_{P_{S_p}}^{G_{S_p}} ( V_{S_p} )^{\sm} \into \Ind_{P_{S_p}}^{G_{S_p}} ( V_{M_{S_p}}(\lambda)' \otimes_E V_{S_p} )^{\la}
$$
of locally algebraic induction into locally analytic induction. More generally, for any $\Sigma \subseteq S_p$ we may write $\lambda = (\lambda_\Sigma, \lambda^\Sigma) \in X^*(\Tbf_{\Sigma,E}) \times X^*(\Tbf_{S_p \setminus \Sigma,E})$
and
\begin{align*}
    \Fcal_{P_{S_p}}^{G_{S_p}} (\Mcal_{P_\Sigma \times G_{S_p \setminus \Sigma}}^{G_{S_p}}(\lambda), V_{S_p})
    &
    \simeq
    V_{G_{S_p \setminus \Sigma}}(\lambda^\Sigma)' \otimes_E \Ind_{P_{S_p}}^{G_{S_p}} ( V_{M_{\Sigma}}(\lambda_\Sigma)' \otimes_E V_{S_p} )^{\Sigma-\la} 
    \\
    &
    = \SigmaInd_{P_{S_p}}^{G_{S_p}} (V_{G_{S_p \setminus \Sigma}}(\lambda^\Sigma)', V_{M_{\Sigma}}(\lambda_\Sigma)' \otimes_E V_{S_p}).
\end{align*}
\end{example}

Let us remark that, for any locally convex space $W$ of compact type, the bifunctor $\Fcal_{P_{S_p}}^{G_{S_p}} ( \blank, \blank ) \cotimes_E W$ is also exact in both arguments and in $W$ (by \cite[Proposition 1.1.26]{emerton_locally_analytic_vectors}, reflexivity, and the exactness of passing to duals).
We will abuse notation and write
$$
    \Fcal_P^G (X, V) := \Fcal_{P_{S_p}}^{G_{S_p}} (X, V_{S_p}) \cotimes_E \Ind_{P_{S \setminus S_p}}^{G_{S \setminus S_p}} (V^{S_p})^\sm.
$$
This is still exact in both arguments.

\begin{proposition}\label{proposition: finite dimensionality for Orlik Strauch}
Let $X \in \O^{\p_E}_\alg$. Then, the $S$-arithmetic homology space
$$
    H_*(K^S, U \otimes_E \Fcal_P^G (X, V) )
$$
is finite-dimensional, and similarly for the cohomology of the dual representation.
\end{proposition}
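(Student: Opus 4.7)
The plan is to reduce, via dévissage along the category $\O^{\p_E}_\alg$, to the case where $X$ is a parabolic Verma module $\Mcal_{P_{S_p}}^{G_{S_p}}(\lambda)$; in that case the representation in the statement is of the form $\SigmaInd_P^G(U, V')$ with $\Sigma = S_p$, which is handled by \Cref{corollary: homology is finite dimensional}.

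For any short exact sequence $0 \to X' \to X \to X'' \to 0$ in $\O^{\p_E}_\alg$, the exactness of $\Fcal_{P_{S_p}}^{G_{S_p}}(\blank, V_{S_p})$ in its first (contravariant) argument, the exactness of the completed tensor product with a fixed representation of compact type, and the long exact sequence in $S$-arithmetic homology together yield the two-out-of-three property for the statement ``$H_*(K^S, \blank)$ is finite-dimensional'' among the three terms. The category $\O^{\p_E}_\alg$ is a highest-weight category of finite global dimension in which each indecomposable projective admits a finite standard filtration whose graded pieces are parabolic Verma modules $\Mcal_{P_{S_p}}^{G_{S_p}}(\mu)$ for various $M_{S_p}$-dominant integral weights $\mu$. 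A finite projective resolution of $X$ together with the standard filtrations of its terms thus reduces us, via dévissage, to the case $X = \Mcal_{P_{S_p}}^{G_{S_p}}(\lambda)$.

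By \Cref{example: verma modules} we have $\Fcal_{P_{S_p}}^{G_{S_p}}(\Mcal_{P_{S_p}}^{G_{S_p}}(\lambda), V_{S_p}) \simeq \Ind_{P_{S_p}}^{G_{S_p}}(V_{M_{S_p}}(\lambda)' \otimes_E V_{S_p})^{\la}$, so tensoring with $U$ and with $\Ind_{P_{S \setminus S_p}}^{G_{S \setminus S_p}}(V^{S_p})^{\sm}$ gives $\SigmaInd_P^G(U, V')$ with $\Sigma = S_p$ and $V' = \cInd_{\HM}^M( V_{M_{S_p}}(\lambda)'|_{\HM} \otimes \sigma )$, using the projection formula and $V = \cInd_{\HM}^M(\sigma)$. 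In particular, $V'$ is still compactly induced from a finitely generated representation of $\HM$, so \Cref{corollary: homology is finite dimensional} applies and yields the desired finite-dimensionality.

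The main obstacle will be justifying the structural facts about $\O^{\p_E}_\alg$ used in the reduction step---finite global dimension and the existence of standard filtrations on indecomposable projectives---in precisely the parabolic form needed here. Should a direct appeal be inconvenient, an alternative route is to use Lepowsky's parabolic BGG resolution of simple modules $L(\lambda) \in \O^{\p_E}_\alg$ by parabolic Verma modules (finite for regular $\lambda$, and extended to the singular integral case by translation functors), apply $\Fcal_{P_{S_p}}^{G_{S_p}}$ to obtain a finite coresolution with each term of the $\SigmaInd_P^G$ form above, and conclude by the associated hyperhomology spectral sequence. The cohomological statement follows by dualizing: by \Cref{proposition: cohomology is dual of homology} the cohomology of the continuous dual is the continuous dual of the homology, so finite-dimensionality of the latter forces the same for the former.
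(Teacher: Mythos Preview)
Your proof is correct and follows the same overall strategy as the paper: reduce to parabolic Verma modules by d\'evissage in $\O^{\p_E}_\alg$, then invoke \Cref{corollary: homology is finite dimensional} (and \Cref{corollary: cohomology is finite dimensional} for the dual statement) via \Cref{example: verma modules}.

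The only genuine difference is in the d\'evissage. The paper does not go through projective resolutions or standard filtrations; it simply observes that the parabolic Vermas form a $\Z$-basis of the Grothendieck group of the Artinian category $\O^{\p_E}_\alg$, and then runs the obvious induction using the two-out-of-three property and long exact sequences. Concretely, one reduces to simples via a composition series, and for a simple $L(\lambda)$ one uses the short exact sequence $0 \to K \to \Mcal_{P_{S_p}}^{G_{S_p}}(\lambda) \to L(\lambda) \to 0$, noting that all composition factors of $K$ have strictly smaller highest weight, so a well-founded induction on the highest-weight partial order finishes. This route is lighter than yours and sidesteps precisely the ``main obstacle'' you flagged: there is no need to establish finite global dimension, the existence of enough projectives, or standard filtrations in the parabolic setting, nor to handle singular blocks separately. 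Your projective-resolution and BGG-resolution alternatives both work, but they import more structure than the argument actually needs.
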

\begin{proof}
The Verma modules $\Mcal_{P_{S_p}}^{G_{S_p}}(\lambda)$ form a basis of the Grothendieck group of the (Artinian) category $\O^{\p_E}_\alg$ as $\lambda$ runs through the set of dominant weights for $\Mbf_{S_p,E}$. Thus, the image of the representation $X$ in the Grothendieck group of $\O^{\p_E}_\alg$ may be written as a finite sum
$$
    [X] = \sum_{i=1}^r [\Mcal_{P_{S_p}}^{G_{S_p}}(\lambda_i)] - \sum_{i=r+1}^s [\Mcal_{P_{S_p}}^{G_{S_p}}(\lambda_i)].
$$
Therefore, the proposition follows by induction on $s$ by the exactness of the functor $U \otimes_E \Fcal_P^G(\blank, V)$ and passing to continuous duals, \Cref{corollary: homology is finite dimensional}, \Cref{corollary: cohomology is finite dimensional} and the long exact sequences in (co)homology associated to a short exact sequence.
\end{proof}

Our next goal is to prove \Cref{theorem: introduction 2} from the introduction.
For this, we want to relate the $S$-arithmetic homology of the various representations $\SigmaInd_P^G(U, V)$ as we vary $\Sigma$ (and also $U$ and $V$ accordingly).
Let $\tilde \Sigma \subseteq \Sigma$ be a subset.
In our application to \Cref{theorem: introduction 2} we will only need the case where $\tilde \Sigma$ is the empty set, so the reader is welcome to assume this.
Write $W_{\Sigma, M, E}$ (resp. $\Delta_{\Sigma, M, E}$) for the Weyl group (resp. root basis) of $\Mbf_{\Sigma, E}$ and similarly for other subsets of $S_p$ and Levi subgroups of parabolics.
When the Levi subgroup is the group $G$ itself, we will omit it from the notation.
Define \linebreak ${}^{M} W_{\Sigma \setminus \tilde \Sigma, E} := {}^{M_\Sigma} W_{\Sigma, M_{\tilde \Sigma} \times G_{\Sigma \setminus \tilde \Sigma}, E}$ as in \Cref{subsection: lemma on roots}, i.e. as the set of minimal length representatives of cosets in $W_{\Sigma, M_{\Sigma}, E} \backslash W_{\Sigma, M_{\tilde \Sigma} \times G_{\Sigma \setminus \tilde \Sigma}, E}$.
Note that this coset set is naturally in bijection with $W_{\Sigma \setminus \tilde \Sigma, M, E} \backslash W_{\Sigma \setminus \tilde \Sigma, E}$, which will hopefully justify the notation.
Similarly, $\Delta_{\Sigma, M_{\tilde \Sigma} \times G_{\Sigma \setminus \tilde \Sigma}, E} \setminus \Delta_{\Sigma, M_{\Sigma}, E} \simeq \Delta_{\Sigma \setminus \tilde \Sigma, E} \setminus \Delta_{\Sigma \setminus \tilde \Sigma, M, E}$.

\begin{definition}\label{definition: small slope}
Assume that $A_\Sigma$ acts on $U \otimes_E V$ via a character $\chi$.
Let $\lambda$ be a dominant regular weight of $\Mbf_{\tilde \Sigma} \times \Gbf_{\Sigma \setminus \tilde \Sigma}$.
We say that $A_\Sigma$ acts numerically non-critically (with respect to $\tilde \Sigma \subseteq \Sigma$ and $\Pbf$) on the representation $U \otimes_E V_{M_\Sigma}
(\lambda)' \otimes_E V$ if for any simple reflection $w$ corresponding to an element of $\Delta_{\Sigma \setminus \tilde \Sigma, E} \setminus \Delta_{\Sigma \setminus \tilde \Sigma, M, E}$, there exists $a \in A_\Sigma^-$ such that
\begin{align}\label{eqn: inequality small slope}
    v_p(\chi(a)) < v_p( (w \cdot \lambda)(a) ).
\end{align}

\end{definition}

\begin{remark}\label{remark: small slope}
With all notations as in \Cref{definition: small slope}, the inequality \Cref{eqn: inequality small slope} holds for all $w \in {}^{M} W_{\Sigma \setminus \tilde \Sigma, E} \setminus \{ 1 \}$ (for some $a$ depending of $w$) by the proof of \Cref{lemma: on roots}.
Indeed, given $w$, we may choose $\alpha \in \Delta_{\Sigma \setminus \tilde \Sigma, E} \setminus \Delta_{\Sigma \setminus \tilde \Sigma, M, E}$ as in that proof and $a \in A_\Sigma^-$ for which \Cref{eqn: inequality small slope} is valid for $s_\alpha$.
Then, it is valid also for $w$ (as the proof of \Cref{lemma: on roots} shows).
\end{remark}

\Cref{theorem: introduction 2} follows from applying the following more general theorem when $\tilde \Sigma$ is empty and $U$ is a finite-dimensional irreducible algebraic representation of $G_{\Scalp \setminus \Sigma}$.

\begin{theorem}\label{theorem: classicality}
Assume that $A_\Sigma$ acts on $U \otimes_E V$ via a character $\chi$.
Let $\lambda = (\lambda_{\tilde \Sigma}, \lambda_{\Sigma \setminus \tilde \Sigma})$ be a dominant regular weight of $\Mbf_{\tilde \Sigma} \times \Gbf_{\Sigma \setminus \tilde \Sigma}$ such that $A_\Sigma$ acts numerically non-critically on $U \otimes_E V_{M_\Sigma}
(\lambda)' \otimes_E V$.
Then the natural homomorphism from
$$
    H_*(K^S, \tilde \Sigma \text{-Ind}_P^G(U \otimes_E V_{G_{\Sigma \setminus \tilde \Sigma}}(\lambda_{\Sigma \setminus \tilde \Sigma})', V_{M_{\tilde \Sigma}}(\lambda_{\tilde \Sigma})' \otimes_E V) )
$$
to
$$
    H_*(K^S, \Sigma \text{-Ind}_P^G(U, V_{M_{\Sigma}}(\lambda_{\Sigma})' \otimes_E V) )
$$
is an isomorphism, and the same is true for the cohomology of the duals.
\end{theorem}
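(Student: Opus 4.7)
The plan is to combine a parabolic BGG resolution with the Orlik-Strauch functor to produce a finite exact sequence of locally $\Sigma$-analytic representations in which the first two terms are the two representations of the theorem (connected by the natural inclusion), and then to show that the $S$-arithmetic (co)homologies of all the remaining terms vanish by appeal to numerical non-criticality and \Cref{corollary: homology is finite dimensional} \itemnumber{3}.

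By \Cref{example: verma modules}, the $\Sigma$-Ind and $\tilde\Sigma$-Ind representations in the statement arise from $\Fcal_{P_{S_p}}^{G_{S_p}}(-, V_{S_p})$ applied to the parabolic Verma modules $\Mcal_{P_\Sigma \times G_{S_p\setminus\Sigma}}^{G_{S_p}}(\lambda, 0)$ and $\Mcal_{P_{\tilde\Sigma}\times G_{\Sigma\setminus\tilde\Sigma} \times G_{S_p\setminus\Sigma}}^{G_{S_p}}(\lambda, 0)$ respectively (the piece $V_{G_{S_p\setminus\Sigma}}(0)$ appearing via the formula in \loccit{} being trivial), after tensoring with $U \otimes_E \Ind_{P_{S\setminus S_p}}^{G_{S\setminus S_p}}(V^{S_p})^\sm$. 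The canonical surjection of Verma modules attached to the inclusion of parabolics $P_\Sigma \subseteq P_{\tilde\Sigma} \times G_{\Sigma\setminus\tilde\Sigma}$ induces, via the contravariant exact $\Fcal$, exactly the natural inclusion of the theorem.

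Since $\lambda$ is dominant and regular, there is a finite parabolic BGG resolution of $\Mcal_{P_{\tilde\Sigma}\times G_{\Sigma\setminus\tilde\Sigma}}^{G_\Sigma}(\lambda)$ by direct sums of the Verma modules $\Mcal_{P_\Sigma}^{G_\Sigma}(w \cdot \lambda)$ for $w \in {}^M W_{\Sigma\setminus\tilde\Sigma, E}$, graded by length. Tensoring this resolution with the trivial representation of $\g_{S_p\setminus\Sigma}$, applying $\Fcal_{P_{S_p}}^{G_{S_p}}(-, V_{S_p})$, and further tensoring with $U \otimes_E \Ind_{P_{S\setminus S_p}}^{G_{S\setminus S_p}}(V^{S_p})^\sm$ yields a finite exact sequence $0 \to \tilde\Sigma\text{-Ind} \to \Sigma\text{-Ind} \to B_1 \to B_2 \to \cdots$ in which the $B_i$ are finite direct sums of $\SigmaInd_P^G(U, V_{M_\Sigma}(w \cdot \lambda)' \otimes V)$ with $w \in {}^M W_{\Sigma\setminus\tilde\Sigma, E} \setminus \{1\}$ and $\ell(w) = i$.

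The decisive step is then the vanishing of the $S$-arithmetic (co)homology of each such term with $w \neq 1$. On the coefficient module $U \otimes V_{M_\Sigma}(w \cdot \lambda)' \otimes V$, the torus $A_\Sigma$ acts via the central character $\chi \cdot (w \cdot \lambda)^{-1}|_{A_\Sigma}$, so given $a \in A_\Sigma^-$, the element $a^{-1} \in A_\Sigma^+ \subseteq \HM^+$ acts (for a suitable $A_\Sigma$-equivariant norm) with operator norm $p^{v_p(\chi(a)) - v_p((w \cdot \lambda)(a))}$. By \Cref{remark: small slope}, the numerical non-criticality hypothesis furnishes for every non-trivial $w$ a choice of $a$ making this quantity strictly less than one, so the vanishing follows from \Cref{corollary: homology is finite dimensional} \itemnumber{3} (resp. \Cref{corollary: cohomology is finite dimensional} \itemnumber{3} for the cohomological statement). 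Iterating the long exact sequence in $S$-arithmetic (co)homology arising from the short exact subquotients of the finite resolution then yields the desired isomorphism. The main subtlety will be verifying that the parabolic BGG resolution—classically stated for complex semisimple Lie algebras—is available in the appropriate category $\O$ over the $p$-adic field $E$ with the dot-action conventions matching those in \Cref{definition: small slope}; this reduces to the classical case by base change along $\iota$.
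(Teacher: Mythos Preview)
Your proposal is correct and follows essentially the same approach as the paper: identify both representations via the Orlik--Strauch functor applied to parabolic Verma modules (\Cref{example: verma modules}), apply the functor to the parabolic BGG resolution of $V_{G_{\Sigma\setminus\tilde\Sigma}}(\lambda_{\Sigma\setminus\tilde\Sigma})$ tensored up to $\g_{S_p,E}$, and kill the $w\neq 1$ terms via \Cref{corollary: homology is finite dimensional} \itemnumber{3} together with \Cref{remark: small slope}. The only cosmetic difference is that the paper packages the passage from the resolution to homology as a first-quadrant spectral sequence degenerating by the vanishing of the $E^1_{i,j}$ terms for $j\geq 1$, whereas you iterate long exact sequences; these are of course equivalent for a bounded resolution.
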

\begin{proof}
The statement for cohomology is analogous to the one for homology, so let us only prove the latter.
Note that the condition that $A_\Sigma$ acts on $U \otimes_E V$ via $\chi$ is equivalent to the condition that it acts on $U \otimes_E \sigma$ via $\chi$.
By \Cref{example: verma modules},
it's enough to show that the natural map
$$
    H_*(K^S, U \otimes_E \Fcal_P^G ( \Mcal_{P_{\tilde \Sigma} \times G_{S_p \setminus \tilde \Sigma}}^{G_{S_p}}(\lambda), V) )  \simto
    H_*(K^S, U \otimes_E \Fcal_P^G( \Mcal_{P_{\Sigma} \times G_{S_p \setminus \Sigma}}^{G_{S_p}}(\lambda), V) )
$$
is an isomorphism.

Consider the parabolic BGG resolution of the representation $V_{G_{\Sigma \setminus \tilde \Sigma}}(\lambda_{\Sigma \setminus \tilde \Sigma})$ of the Lie algebra of $\Gbf_{\Sigma \setminus \tilde \Sigma, E}$,
$$
    \bigoplus_{\substack{w \in {}^{M} W_{\Sigma \setminus \tilde \Sigma, E} \\ \ell(w) = \bullet}} \Mcal_{P_{\Sigma \setminus \tilde \Sigma}}^{G_{\Sigma \setminus \tilde \Sigma}} (w \cdot \lambda_{\Sigma \setminus \tilde \Sigma}) \to V_{M_{\Sigma \setminus \tilde \Sigma}}(\lambda_{\Sigma \setminus \tilde \Sigma}) \to 0.
$$
Tensoring with $\Mcal_{P_{\tilde \Sigma} \times G_{S_p \setminus \Sigma}}^{G_{S_p \setminus (\Sigma \setminus \tilde \Sigma)}} ( \lambda_{\tilde \Sigma} )$ (where we extend $\lambda_{\tilde \Sigma}$ to a weight of $\Mbf_{\tilde \Sigma} \times \Gbf_{S_p \setminus \Sigma}$ trivially), we obtain an exact sequence in $\O^{\p_E}_\alg$
\begin{align}\label{eqn: BGG resolution}
    \bigoplus_{\substack{w \in {}^{M} W_{\Sigma \setminus \tilde \Sigma, E} \\ \ell(w) = \bullet}} \Mcal_{P_{\Sigma} \times G_{S_p \setminus \Sigma}}^{G_{S_p}} (w \cdot \lambda) \to \Mcal_{P_{\tilde \Sigma} \times G_{S_p \setminus \tilde \Sigma}}^{G_{S_p}}(\lambda) \to 0.
\end{align}
Applying the exact functor $U \otimes_E \Fcal_P^G ( \blank, V)$ and taking $S$-arithmetic homology, we obtain a converging spectral sequence
\begin{align*}
    E^1_{i,j}
    & =
    \bigoplus_{\substack{w \in {}^{M} W_{\Sigma \setminus \tilde \Sigma, E} \\ \ell(w) = j}} H_i(K^S,
    U \otimes_E \Fcal_P^G( \Mcal_{P_{\Sigma} \times G_{S_p \setminus \Sigma}}^{G_{S_p}} (w \cdot \lambda) , V) )
    \\ &
    \implies
    H_{i+j}(K^S, U \otimes_E \Fcal_P^G( \Mcal_{P_{\tilde \Sigma} \times G_{S_p \setminus \tilde \Sigma}}^{G_{S_p}}(\lambda), V) ).
\end{align*}
Hence, it's enough to show that the representations
$$
    U \otimes_E \Fcal_P^G( \Mcal_{P_{\Sigma} \times G_{S_p \setminus \Sigma}}^{G_{S_p}} (w \cdot \lambda) , V)
    =
    \SigmaInd_P^G (U, V_{M_{\Sigma}}(w \cdot \lambda)' \otimes_E V)
$$
have vanishing $S$-arithmetic homology for $w \in {}^{M} W_{\Sigma \setminus \tilde \Sigma, E}$, $w \neq 1$.
Let $w \neq 1$ and let $a$ be as in \Cref{remark: small slope}.
By \Cref{corollary: homology is finite dimensional} \itemnumber{3}, it's enough to show that $a$ acts on $U \otimes_E V_{M_\Sigma}(w \cdot \lambda)' \otimes_E \sigma$ 
as an operator of norm strictly greater than one.
But $a$ acts by multiplication by 
$\chi(a) (w \cdot \lambda)(a)^{-1}$, and the numerical non-criticality condition (taking into account \Cref{remark: small slope}) states precisely that this has norm strictly greater than one.
\end{proof}

Let us remark that we could have proven this statement by working exclusively with overconvergent homology (where similar criterions are known to hold in many cases, cf. \cite[Section 4]{parabolic_overconvergent}) and using the spectral sequences relating it to $S$-arithmetic homology, but this would require us to work with analogues of the locally analytic BGG resolution (i.e. the exact sequence obtained by applying $U \otimes_E \Fcal_P^G( \blank, V)$ to \Cref{eqn: BGG resolution}) for representations of $J$ in an \textit{ad hoc} manner.

\subsection{The case of \texorpdfstring{$\GL_n$}{GLn}}
\label{subsection: GLn}

In this section, we will prove \Cref{theorem: introduction 1.5}.
We will expand on the results of the previous section and prove that, when $\Gbf_{F_v}$ is isomorphic to $\GL_{n / F_v}$ for all $v \in S$, all the representation in the image of the functor $\Fcal_P^G$ where the second argument has finite length have finite-dimensional $S$-arithmetic homology. We will deduce this result from the case of the smooth parabolic induction of a supercuspidal representation by using Bernstein--Zelevinsky theory (especially \cite{zelevinsky_GLn}).
As in the previous section, given 
$X \in \O^{\p_E}_\alg$, $\pi_{S_p} \in \text{Rep}^{\sm,\adm}(M_{S_p})$ and $\pi^{S_p} \in \text{Rep}^{\sm,\adm}(M^{S_p})$, we will abuse notation and write $\Fcal_P^G(X, \pi_{S_p} \boxtimes \pi^{S_p}) := \Fcal_{P_{S_p}}^{G_{S_p}}(X, \pi_{S_p}) \hat\boxtimes \Ind_{P_{S \setminus S_p}}^{G_{S \setminus S_p}} (\pi^{S_p})^\sm$.
We will now introduce some notation exclusive to this section.

Let $\ell$ be a prime, $L$ a finite extension of $\Q_\ell$.
If $\rho = \rho_1 \boxtimes \cdots \boxtimes \rho_r$ is a smooth representation of $\GL_{n_1} (L) \times \cdots \times \GL_{n_r}(L)$, viewed as the Levi subgroup of block-diagonal matrices in $\GL_n$ for $n = n_1 + \cdots + n_r$ of a standard parabolic $\mathbb P$ (containing the upper-triangular matrices) in the usual way, we will write $\rho_1 \times \cdots \times \rho_r$ to denote \emph{normalised} smooth parabolic induction of $\rho$ with respect to $P$, i.e.
$$
    i(\rho) = \Ind_{\mathbb P(L)}^{\GL_n(L)} ((\rho_1 \boxtimes \cdots \boxtimes \rho_r) \otimes \delta_{\mathbb P}^{1/2})^\sm
$$
where $\delta_{\mathbb P}$ is the modulus character of $\mathbb P$.
If $\rho$ is a supercuspidal representation of $\GL_n(L)$, we will write $\rho(s) = \rho | \det |^{s-1}$.
By \cite[Proposition 2.10]{zelevinsky_GLn}, the representation \linebreak
$\rho(1) \times \rho(2) \times \cdots \times \rho(s)$ of $\GL_{ns}(L)$ has a unique irreducible representation $Z_s(\rho)$.

\begin{lemma}\label{theorem: exact sequence Zelevinsky}
There is an exact sequence
\begin{align*}
    0 \to
    Z_s(\rho)
    \to
    Z_{s-1}(\rho) \times \rho(s)
    \to
    \rho(s) \times  Z_{s-1}(\rho)
    \to
    Z_{s}(\rho) \to 0.
\end{align*}
\end{lemma}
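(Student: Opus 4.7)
The plan is to realize the four-term sequence as the splice of two short exact sequences coming from Zelevinsky's length-two reducibilities, glued together by a standard intertwining operator.

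First I would set up the segments $\Delta_1 = [\rho(1), \rho(s-1)]$ and $\Delta_2 = [\rho(s),\rho(s)]$, which are linked in Zelevinsky's sense (neither contains the other, and $\Delta_1 \cup \Delta_2 = [\rho(1),\rho(s)]$ is again a segment while $\Delta_1 \cap \Delta_2 = \emptyset$). By Zelevinsky \cite{zelevinsky_GLn} (specifically the results in \S 3 and \S 4 on products of segment representations), the parabolic induction $\langle \Delta_1\rangle \times \langle\Delta_2\rangle = Z_{s-1}(\rho)\times \rho(s)$ has length two, with unique irreducible subrepresentation $\langle\Delta_1\cup\Delta_2\rangle = Z_s(\rho)$ and irreducible quotient $\langle\Delta_1,\Delta_2\rangle$. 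Symmetrically (reversing the order of the two linked segments), $\rho(s) \times Z_{s-1}(\rho) = \langle\Delta_2\rangle\times\langle\Delta_1\rangle$ has length two with subrepresentation $\langle\Delta_1,\Delta_2\rangle$ and quotient $Z_s(\rho)$. This gives two short exact sequences
\begin{align*}
0 \to Z_s(\rho) \to Z_{s-1}(\rho)\times \rho(s) \to \langle\Delta_1,\Delta_2\rangle \to 0, \\
0 \to \langle\Delta_1,\Delta_2\rangle \to \rho(s)\times Z_{s-1}(\rho) \to Z_s(\rho) \to 0.
\end{align*}

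Next I would construct the connecting map $T \colon Z_{s-1}(\rho)\times \rho(s) \to \rho(s)\times Z_{s-1}(\rho)$. The natural candidate is the standard intertwining operator associated to the Weyl element permuting the blocks; concretely, restrict the intertwiner $\rho(1)\times\cdots\times\rho(s) \to \rho(s)\times\rho(1)\times\cdots\times\rho(s-1)$ to the subrepresentation $Z_{s-1}(\rho)\times\rho(s) \hookrightarrow \rho(1)\times\cdots\times\rho(s)$ (embedded via the exactness of parabolic induction applied to $Z_{s-1}(\rho)\hookrightarrow \rho(1)\times\cdots\times\rho(s-1)$) and postcompose with the projection $\rho(s)\times\rho(1)\times\cdots\times\rho(s-1) \twoheadrightarrow \rho(s)\times Z_{s-1}(\rho)$. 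That the resulting map is non-zero (and in fact that $\dim\Hom(Z_{s-1}(\rho)\times\rho(s), \rho(s)\times Z_{s-1}(\rho)) = 1$) can be read off either from a Jacquet-module calculation via the Bernstein–Zelevinsky geometric lemma or from the observation that both sides share the composition factor $\langle\Delta_1,\Delta_2\rangle$.

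Finally I would splice: $T$ is not an isomorphism because its source has $Z_s(\rho)$ as its socle while its target has $Z_s(\rho)$ as its cosocle and $Z_s(\rho)\not\simeq \langle\Delta_1,\Delta_2\rangle$; hence $\ker T$ is a nonzero proper subrepresentation of $Z_{s-1}(\rho)\times\rho(s)$, which by the first short exact sequence forces $\ker T = Z_s(\rho)$. Dually, $\mathrm{im}\,T$ is a nonzero proper subrepresentation of $\rho(s)\times Z_{s-1}(\rho)$, so by the second short exact sequence $\mathrm{im}\,T = \langle\Delta_1,\Delta_2\rangle$ and $\mathrm{coker}\,T = Z_s(\rho)$. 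Combining the two short exact sequences along this identification of $\mathrm{im}\,T$ with $\langle\Delta_1,\Delta_2\rangle$ yields the desired four-term exact sequence.

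The main obstacle is keeping the socle/cosocle conventions straight and citing Zelevinsky's structure theorem for $\langle\Delta_1\rangle\times\langle\Delta_2\rangle$ with linked $\Delta_i$ in the correct direction; once that is pinned down, the construction of $T$ and the splicing are essentially forced.
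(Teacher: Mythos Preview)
Your proof is correct and arrives at the same destination as the paper, but by a slightly different route through Zelevinsky's work. Both arguments reduce to producing the two short exact sequences
\[
0 \to Z_s(\rho) \to Z_{s-1}(\rho)\times\rho(s) \to Q \to 0,
\qquad
0 \to Q \to \rho(s)\times Z_{s-1}(\rho) \to Z_s(\rho) \to 0
\]
and splicing them. You obtain these by citing directly the length-two structure of $\langle\Delta_1\rangle\times\langle\Delta_2\rangle$ for linked segments (so $Q=\langle\Delta_1,\Delta_2\rangle$), whereas the paper instead invokes Zelevinsky's combinatorial description of the full submodule lattice of $\rho(1)\times\cdots\times\rho(s)$ in terms of orientations of the segment graph (Theorems~2.2 and~2.8 of \cite{zelevinsky_GLn}), reading off $Z_{s-1}(\rho)\times\rho(s)$ and $\rho(s)\times Z_{s-1}(\rho)$ as explicit intersections $\bigcap_k \pi(\lambda,k)$ and counting the two orientations above each. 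Your citation is more economical; the paper's approach is more self-contained in that it unpacks exactly which orientations correspond to which subquotients.

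One remark: your construction of the middle map $T$ via standard intertwining operators is more than is needed, and your justification that this particular $T$ is nonzero is not quite complete (sharing a composition factor does not by itself guarantee that the chosen intertwiner is nonzero). The paper sidesteps this entirely: once the two short exact sequences are in hand, the middle map is simply the composite of the quotient map onto $Q$ with the inclusion of $Q$, and exactness is immediate. Your own socle/cosocle analysis already shows $\dim\Hom(Z_{s-1}(\rho)\times\rho(s),\,\rho(s)\times Z_{s-1}(\rho))=1$, so you may as well take that composite as your $T$ and drop the intertwining-operator paragraph.
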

\begin{proof}
This follows from the results of \cite{zelevinsky_GLn}, as we now explain.
Let us recall some terminology and facts from \loccit~
A set of the form $\Delta = \{ \rho(1), ..., \rho(s) \}$ is called a \emph{segment}.
To such a segment $\Delta$ we can associate a graph $\Gamma$, which is a line segment with $s$ vertices corresponding to $\rho(1), ..., \rho(s)$ (in this order).
An orientation of $\Gamma$ is a choice of direction for each edge of $\Gamma$.
Any bijection $\lambda \colon \{1, ..., s\} \to \Delta$ (which may be seen as an ordering of $\Delta$) determines an ordering of $\Gamma$: the edge between $\rho(i)$ and $\rho(i+1)$ goes from $\rho(i)$ to $\rho(i+1)$ if $\lambda^{-1}(\rho(i)) < \lambda^{-1}(\rho(i+1))$.
Every orientation arises from such a $\lambda$.
Let 
$\pi(\lambda) = \lambda(1) \times \cdots \times \lambda(s)$.
The orientation of $\Gamma$ determined by $\lambda$ determines the isomorphism class of $\pi(\lambda)$ and conversely.
Then, \cite[Theorem 2.2 and 2.8]{zelevinsky_GLn} state that there is a bijection between the set of irreducible constituents of $\pi(\lambda)$ (which does not depend on $\lambda$) and the set orientations of $\Gamma$, which sends the orientation corresponding to $\lambda$ to the unique irreducible subrepresentation of $\pi(\lambda)$.
In fact, by \cite[Theorem 2.8]{zelevinsky_GLn} this extends to an isomorphism between the lattice of submodules of $\pi(\lambda)$ and the sublattice of the set of orientations of $\Gamma$ generated by the subsets $\O(\lambda, k)$ of orientations such that the edge between $\rho(k)$ and $\rho(k+1)$ is oriented in the same way as in the orientation corresponding to $\lambda$, for $k=1,...,s-1$.
More explicitly, for $k=1, ..., s-1$, there exists some $\lambda'$ giving the same orientation as $\lambda$ and such that for some $j$, either $\lambda'(j) = \rho(k)$ and $\lambda'(j+1) = \rho(k+1)$, or $\lambda'(j) = \rho(k+1)$ and $\lambda'(j+1) = \rho(k)$ (depending on the orientation of the edge between these two representations), and $\O(\lambda, k)$ corresponds to
$$
    \pi(\lambda, k) = \lambda'(1) \times \cdots \times \lambda'(j-1) \times Z_2(\rho) |\det|^{k-1} \times \lambda'(j+2) \times \cdots \times \lambda'(s) \subseteq \pi(\lambda') \simeq \pi(\lambda).
$$
In particular, all proper subrepresentations of $\pi(\lambda)$ are iterated sums and intersections of the subrepresentations $\pi(\lambda, k)$.
As an example, consider the bijection $\lambda_0 \colon \{1, ..., s\} \to \Delta$ given by $\lambda_0(i) := \rho(i)$. The corresponding orientation is
$$
    \bullet \to \bullet \cdots \bullet \to \bullet.
$$
Then, $Z_s(\rho) = \bigcap_{k=1}^{s-1} \pi(\lambda_0, k)$ and $Z_{s-1}(\rho) \times \rho(s) = \bigcap_{k=1}^{s-2} \pi(\lambda_0, k)$ -- these correspond to the sets of orientations
\begin{align*}
    \{ \bullet \to \bullet \cdots \bullet \to \bullet \},
    & &
    \{ \bullet \to \bullet \cdots \bullet \to \bullet, ~ ~ ~ \bullet \to \cdots \bullet \to \bullet \from \bullet \}
    &
\end{align*}
respectively.
In particular, the quotient $Q_s(\rho)$ of $Z_{s-1}(\rho) \times \rho(s)$ by $Z_s(\rho)$ is irreducible. In fact, $Q_s(\rho)$ is isomorphic to the unique irreducible submodule of $\pi(\mu)$, where $\mu$ is defined by $\mu(i) = \rho(i-1)$ if $i \geq 2$ and $\mu(1) = \rho(s)$, and has corresponding orientation
$$
    \bullet \to \bullet \cdots \bullet \to \bullet \from \bullet.
$$
Now, $Q_s(\rho) = \bigcap_{k=1}^{s-1} \pi(\mu, k)$, and moreover $\rho(s) \times Z_{s-1}(\rho) = \bigcap_{k=1}^{s-2} \pi(\mu, k)$, and as before the quotient of the latter by the former is isomorphic to $Z_s(\rho)$.
The claimed exact sequence then follows by composing the middle maps in
\begin{equation*}
    Z_s(\rho)
    \to
    Z_{s-1}(\rho) \times \rho(s)
    \to
    Q_s(\rho)
    \to
    \rho(s) \times  Z_{s-1}(\rho)
    \to
    Z_{s}(\rho).
    \qedhere
\end{equation*}
\end{proof}

Fix for all $v \in S$ an isomorphism $\Gbf_{F_v} \simto \GL_{n / F_v}$.
We will assume that $\Sbf_v = \Tbf_v$ correspond to the torus of diagonal matrices, and that $\Pbf_v$ contains the Borel subgroup of upper-triangular matrices.

\begin{theorem}
Let $\pi$ be an irreducible smooth representation of $M$ over $E$, $U$ be as usual and $X \in \O^{\p_E}_\alg$.
Then, the spaces $H_*(K^S, U \otimes_E \Fcal_P^G( X, \pi))$ and $H^*(K^S, U' \otimes_E \Fcal_P^G( X, \pi)')$ are finite-dimensional.
\end{theorem}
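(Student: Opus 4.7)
The plan is to reduce to Proposition \ref{proposition: finite dimensionality for Orlik Strauch}, which handles the case of $\pi$ compactly induced from a compact-mod-center subgroup. For $\GL_n$ over a $p$-adic field, Bushnell-Kutzko \cite{bushnell_kutzko} shows that every supercuspidal representation of $\GL_m(F_v)$ is of this form, so the supercuspidal base case of the induction is available via Proposition \ref{proposition: finite dimensionality for Orlik Strauch}. The reduction uses Bernstein-Zelevinsky theory \cite{zelevinsky_GLn}, the exact sequence of Lemma \ref{theorem: exact sequence Zelevinsky}, and the exactness of the Orlik-Strauch functor $\Fcal_P^G(X,\cdot)\cotimes_E \Ind_{P_{S\setminus S_p}}^{G_{S\setminus S_p}}((\cdot)^{S_p})^\sm$ in the smooth argument (which yields long exact sequences in $S$-arithmetic (co)homology via Proposition \ref{proposition: finite dimensionality for Orlik Strauch}'s argument).

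First I factor $\pi = \bigotimes_{v \in S}\bigotimes_j \pi_{v,j}$, where each $\pi_{v,j}$ is an irreducible smooth representation of a $\GL_{n_{v,j}}(F_v)$-factor of $M$. By Zelevinsky's classification, $\pi_{v,j}$ is uniquely indexed by a multisegment $\mathfrak{m}_{v,j}$ of supercuspidal representations of smaller general linear groups, and I set $\ell(\pi) = \sum_{v,j}|\mathfrak{m}_{v,j}|$, the total cardinality of the cuspidal support (counted with multiplicity). Within each fixed value of $\ell(\pi)$, I use a secondary induction on the Zelevinsky partial order $\preceq$ on multisegments (oriented so that multisegments which are "more merged" come later). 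The base case is $\ell(\pi) = \#\{(v,j)\}$, i.e. each $\pi_{v,j}$ is supercuspidal; then $\pi$ itself is a supercuspidal representation of $M$ of the form $\cInd_{\HM}^M(\sigma_0)$, to which Proposition \ref{proposition: finite dimensionality for Orlik Strauch} applies directly.

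For the inductive step I distinguish two situations. If some $\mathfrak{m}_{v,j}$ contains a pair of linked segments, then $\pi_{v,j} = Z(\mathfrak{m}_{v,j})$ is the unique irreducible subrepresentation of the standard module $\zeta(\mathfrak{m}_{v,j}) = Z(\Delta_1)\times\cdots\times Z(\Delta_k)$, whose other composition factors are of the form $Z(\mathfrak{m}')$ with $\mathfrak{m}' \succ \mathfrak{m}_{v,j}$; applying $\Fcal_P^G(X,-)$ to the filtration of $\zeta(\mathfrak{m}_{v,j})$ and using the long exact sequences in $S$-arithmetic homology, finite-dimensionality for $\pi$ reduces to the same statement for each $Z(\mathfrak{m}')$ (handled by the secondary induction) and for the $\pi$ obtained by replacing $\pi_{v,j}$ with $\zeta(\mathfrak{m}_{v,j})$. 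In the latter situation, as well as in the Case where all $\mathfrak{m}_{v,j}$ are pairwise non-linked, $\pi_{v,j}$ (or $\zeta(\mathfrak{m}_{v,j})$) is literally a parabolic induction from a smaller Levi $M'_v \subsetneq M_v$ of a smooth representation $\tilde\pi_{v,j} = \bigotimes_i Z(\Delta_{v,j,i})$. By the transitivity/compatibility of Orlik-Strauch functors with nested parabolics \cite{orlik_strauch}, one has an isomorphism $\Fcal_P^G(X,\pi) \cong \Fcal_{\tilde P}^G(\tilde X,\tilde\pi)$ for a smaller parabolic $\tilde P \subseteq P$ and some $\tilde X \in \O^{\tilde\p_E}_{\alg}$. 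If each segment $\Delta_{v,j,i}$ has length $1$, the new representation $\tilde\pi$ is a tensor product of supercuspidals and Proposition \ref{proposition: finite dimensionality for Orlik Strauch} applies; otherwise, for each segment $\Delta_{v,j,i}$ of length $s \geq 2$ we apply Lemma \ref{theorem: exact sequence Zelevinsky} to $Z(\Delta_{v,j,i}) = Z_s(\rho)$, producing a four-term exact sequence whose middle terms $Z_{s-1}(\rho)\times\rho(s)$ and $\rho(s)\times Z_{s-1}(\rho)$ are parabolic inductions of representations with strictly smaller $\ell$, and the long exact sequence in homology together with the primary inductive hypothesis concludes.

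The main obstacle is establishing the transitivity of Orlik-Strauch functors with respect to nested parabolics $\tilde P \subseteq P$ in the form $\Fcal_P^G(X,\Ind_{\tilde P\cap M}^M(\tilde\pi)^{\sm}) \cong \Fcal_{\tilde P}^G(\tilde X,\tilde\pi)$; while this is essentially built into the construction recalled in the appendix and follows from the methods of \cite{orlik_strauch}, verifying that it holds with the exact form of the second argument (allowing $\tilde\pi$ to be a general smooth representation, not just algebraic) and is compatible with taking continuous duals and tensoring with $U$ and $\Ind_{P_{S\setminus S_p}}^{G_{S\setminus S_p}}(\cdot)^{\sm}$ requires some care. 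Once this compatibility is in hand, the double induction on $(\ell(\pi),\preceq)$ closes, and the corresponding statement for cohomology follows by the same argument after dualizing using the compact-type topology on $\Fcal_P^G(X,\pi)$.
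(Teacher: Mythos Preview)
Your strategy is the paper's---reduce via Zelevinsky theory and the four-term exact sequence of Lemma~\ref{theorem: exact sequence Zelevinsky}---but two steps fail as written.

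Your primary induction variable $\ell(\pi)$ does not decrease: the middle terms $Z_{s-1}(\rho)\times\rho(s)$ and $\rho(s)\times Z_{s-1}(\rho)$ have the same cuspidal support $\{\rho(1),\dots,\rho(s)\}$ as $Z_s(\rho)$, so after tensoring with the remaining factors the total $\ell$ is unchanged and the ``primary inductive hypothesis'' is unavailable. What does decrease is $\mathfrak S=\sum_i(s_i-1)$ (total segment length minus number of segments). The paper inducts on $\mathfrak S$ and bypasses your secondary induction on the Zelevinsky order entirely by working in the Grothendieck group with the basis $\bigboxtimes_v\prod_i Z_{s_{v,i}}(\rho_{v,i})$ of \cite[Corollary~7.5]{zelevinsky_GLn}: the middle terms of the four-term sequence are again basis elements, now with strictly smaller $\mathfrak S$.

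Separately, ``the long exact sequence \dots\ concludes'' skips a genuine argument. Since $\Pi_1\cong\Pi_4$ in the four-term sequence $0\to\Pi_4\to\Pi_3\to\Pi_2\to\Pi_1\to 0$, knowing $H_*(\Pi_2)$ and $H_*(\Pi_3)$ are finite-dimensional does not directly yield the same for $H_*(\Pi_1)$. The paper runs the spectral sequence $E^1_{i,j}=H_i(K^S,\Pi_j)\Rightarrow 0$ and argues by contradiction: if $i_0$ is the largest degree with $H_{i_0}(\Pi_1)$ infinite-dimensional, then $E^r_{i_0,4}$ receives no nonzero incoming differential (there is no $\Pi_j$ with $j>4$) and every outgoing differential lands in a subquotient of some $H_{i}(\Pi_j)$ with either $j\in\{2,3\}$ or $i>i_0$, hence finite-dimensional; so $E^\infty_{i_0,4}$ remains infinite-dimensional, contradicting convergence to $0$. (The transitivity you flag as the main obstacle is exactly property~(iv) of the appendix, applied with the larger parabolic $P$ and the smaller $\tilde P$; it is needed in both approaches for the base case but is not problematic.)
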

\begin{proof}
We have proven in \Cref{proposition: finite dimensionality for Orlik Strauch} that this holds when $\pi$ is the smooth parabolic induction of a supercuspidal representation. As in the proof of \Cref{proposition: finite dimensionality for Orlik Strauch}, it's enough to show this as $\pi$ runs through a basis of the Grothendieck group of finite length smooth (admissible) representations of $M$. According to \cite[Corollary 7.5]{zelevinsky_GLn}, one such basis consists of representations of the form
$$
    \bigboxtimes_{v \in S} \prod_{i=1}^{r_v} Z_{s_{v,i}} (\rho_{v,i} )
$$
where for all $v$ we have $\sum_{i=1}^{r_v} s_{v,i} = n$, and all $s_{v,i}$ are positive.
We proceed by induction  on $\mathfrak S := \sum_{v \in S} \sum_{i=1}^{r_v} (s_{v,i} - 1)$.
If $\mathfrak S = 0$, then we are dealing with a representation of the form $\bigboxtimes_{v \in S} \prod_{i=1}^{n} \rho_{v,i}$, and as observed above we have already proven that $S$-arithmetic (co)homology is finite-dimensional in this case. If $\mathfrak S \geq 1$, then there exist some $v_0$ and $i_0$ such that $s_{v_0,i_0} \geq 2$. In light of \Cref{theorem: exact sequence Zelevinsky}, there is an exact sequence
$$
\scalebox{0.9}{\parbox{0pt}{
\begin{align*}
    0
    & \to
    \bigboxtimes_{v \in S} \prod_{i=1}^{r_v} Z_{s_{v,i}} (\rho_{v,i} ) \\
    & \to
    \left( \bigboxtimes_{\substack{v \in S \\ v \neq v_0}} \prod_{i=1}^{r_v} Z_{s_{v,i}} (\rho_{v,i} ) \right) \boxtimes \left( \left( \prod_{i=1}^{i_0-1} Z_{s_{v_0,i}} (\rho_{v_0,i} ) \right) \times ( Z_{s_{v_0, i_0}-1}(\rho_{v_0, i_0}) \times \rho(s) ) \times \left( \prod_{i=i_0+1}^{r_{v_0}} Z_{s_{v_0,i}} (\rho_{v_0,i} ) \right) \right) \\
    & \to
    \left( \bigboxtimes_{\substack{v \in S \\ v \neq v_0}} \prod_{i=1}^{r_v} Z_{s_{v,i}} (\rho_{v,i} ) \right) \boxtimes \left( \left( \prod_{i=1}^{i_0-1} Z_{s_{v_0,i}} (\rho_{v_0,i} ) \right) \times ( \rho(s) \times Z_{s_{v_0, i_0}-1}(\rho_{v_0, i_0}) ) \times \left( \prod_{i=i_0+1}^{r_{v_0}} Z_{s_{v_0,i}} (\rho_{v_0,i} ) \right) \right) \\
    & \to
    \bigboxtimes_{v \in S} \prod_{i=1}^{r_v} Z_{s_{v,i}} (\rho_{v,i} )
    \to
    0.
\end{align*}
}}
$$
Applying $U \otimes_E \Fcal_P^G(X, \blank)$ we get yet another exact sequence, which to avoid running over the margins we will write simply as
$$
    0 \to \Pi_4 \to \Pi_3 \to \Pi_2 \to \Pi_1 \to 0,
$$
where $\Pi_1 = \Pi_4$ is the representation whose $S$-arithmetic homology we want to prove is finite-dimensional.
Moreover, $\Pi_2$ and $\Pi_3$ are obtained in the same way as $\Pi_1$, except the corresponding sums $\mathfrak S$ are smaller than the one for $\Pi_1$.
By our induction hypothesis, both representations have finite-dimensional $S$-arithmetic homology.
This short exact sequence induces a converging spectral sequence
$$
    E_{i,j}^1 = H_i(K^S, \Pi_j) \implies 0.
$$
Assume for the sake of contradiction that $H^*(K^S, \Pi_1)$ is infinite-dimensional. Let $i_0$ be the greatest index $i$ for which $H_i(K^S, \Pi_1)$ is infinite-dimensional. Thus, $H_i(K^S, \Pi_j)$ is finite-dimensional for all $j$ and all $i > i_0$. In particular, the entry $E_{i_0, 4}$ will be infinite-dimensional in all pages starting in the $E^1$ page, which contradicts the fact that the spectral sequence converges to 0. The same argument works for cohomology.
\end{proof}

\begin{corollary}
Every locally algebraic representation of $G$ of finite length has finite-dimensional $S$-arithmetic homology at level $K^S$, and similarly for the cohomology of its (abstract) dual.
\end{corollary}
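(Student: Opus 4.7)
The plan is to reduce to the previous theorem by filtering a finite-length locally algebraic representation by its composition series, so that the problem passes to its irreducible constituents. First, I will invoke the classification of irreducible locally algebraic representations due to Prasad: after possibly extending scalars to a finite extension $E'/E$ (which is harmless, since $\blank \otimes_E E'$ is exact and the finite-dimensionality of $S$-arithmetic (co)homology is preserved under such extensions), every irreducible locally algebraic representation of $G$ over $E'$ decomposes as $W \otimes_{E'} \pi$, where $W$ is an irreducible algebraic representation of $G_{S_p}$ (viewed as a representation of $G$ via the projection $G \onto G_{S_p}$) and $\pi$ is an irreducible smooth representation of $G$.

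Next, I will identify each such tensor product with a representation in the image of an Orlik-Strauch functor. Writing $W = V_{G_{S_p}}(\lambda)'$ for a suitable dominant weight $\lambda$ of $\Tbf_{S_p, E'}$ and taking the parabolic to be $\Pbf = \Gbf$ (so that $\Mbf = \Gbf$ and the smooth inductions in \Cref{example: verma modules} and in the shorthand $\Fcal_P^G$ become trivial), one obtains an isomorphism $W \otimes_{E'} \pi \simeq \Fcal_G^G(V_{G_{S_p}}(\lambda), \pi)$. The previous theorem, applied with $U$ trivial and $X = V_{G_{S_p}}(\lambda) \in \O^{\p_{E'}}_\alg$, then yields the finite-dimensionality of $H_*(K^S, W \otimes_{E'} \pi)$ and of $H^*(K^S, (W \otimes_{E'} \pi)')$.

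The final step is an induction on the length of $\Pi$: given a short exact sequence $0 \to \Pi_1 \to \Pi \to \Pi_2 \to 0$ of locally algebraic representations of finite length with $\Pi_2$ irreducible, the inductive hypothesis together with the previous paragraph handle $\Pi_1$ and $\Pi_2$, and the associated long exact sequences of $S$-arithmetic homology and cohomology then deliver finite-dimensionality for $\Pi$. The point that deserves the most care is the invocation of Prasad's classification and the accompanying scalar extension (including the observation that a finite-length locally algebraic representation over $E$ remains of finite length after extending scalars to $E'$); once this is set up, the rest is essentially formal, given the previous theorem.
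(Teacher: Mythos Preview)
Your proof is correct and takes essentially the same approach the paper has in mind: specialize the previous theorem to the case $\Pbf = \Gbf$ (so $\Mbf = \Gbf$ and the Orlik--Strauch functor collapses to $V_{G_{S_p}}(\lambda)' \otimes \pi$), reduce a finite-length locally algebraic representation to its irreducible constituents, and invoke Prasad's classification to write each as an algebraic tensor a smooth irreducible. The paper states the corollary without proof precisely because it is this immediate specialization; your handling of the scalar extension and the long exact sequence is the expected bookkeeping.
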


\begin{remark}
This result is false without the finite length assumption, even for smooth admissible representations.
Indeed, this fails already for $F = \Q$, $S = \{p\}$ and $\Gbf = \GL_1$.
Choose a sequence $\psi_n$ of unitary Hecke characters of conductors $p^{r_n}$ with $r_n \rightarrow \infty$ as $n \rightarrow \infty$, and let $\psi_{n, p}$ be their components at $p$.
Let $K^p = \prod_{\ell \neq p} \Z_\ell^\times$.
Then, $\Pi := \bigoplus_n \psi_{n, p}$ is an admissible smooth representation of $\GL_1(\Q_p)$ (of infinite length) and
$$
    H^0(K^p, \Pi') = \prod_n H^0(K^p, \psi_{n, p}^{-1})
$$
is infinite-dimensional, since each term in the product is non-zero by \Cref{theorem: properties of classical cuspidals in cohomology}.
\end{remark}

\subsection{\texorpdfstring{$p$}{p}-arithmetic and completed cohomology}
\label{subsection: completed cohomology}
We will now prove a result relating the degree zero $p$-arithmetic (i.e. $S$-arithmetic when $S = \Scalp$) cohomology groups of duals of locally analytic representations to completed cohomology when $\Gbf$ is $F$-anisotropic.
We hope to investigate the general relation between $p$-arithmetic and completed cohomology in the future.
We assume that $S = \Scalp$ and write $K^p$ and $\Tbb^p(K^p)$ instead of $K^S$ and $\Tbb^S(K^S)$ respectively.
Recall the definition of (rational) completed cohomology,
$$
    \tilde H^i (K^p) := \left( \varprojlim_{s} \varinjlim_{K_p} H^i(K^p K_p, \O_E / p^s \O_E) \right) \otimes_{\O_E} E,
$$
where $K_p$ runs through the compact open subgroups of $G = \Gbf(F \otimes_\Q \Q_p)$.

\begin{proposition}\label{proposition: p-arithmetic and completed}
Assume that $\Gbf$ is $F$-anisotropic.
Let $V$ be a locally analytic representation of $G$ on a locally convex $E$-vector space of compact type.
There is a $\Tbb^p(K^p)$-equivariant isomorphism $H^0(K^p, V' ) \simeq \Hom_{G, \text{cts}}(V, \tilde H^0 (K^p)^\la)$.
\end{proposition}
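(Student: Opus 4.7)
The plan is to identify both sides of the isomorphism with a direct sum of terms indexed by $p$-arithmetic groups, and then match them termwise via Frobenius reciprocity. First, I would rewrite $\tilde H^0(K^p)$ explicitly: since $\Gbf$ is $F$-anisotropic, the double coset space $\Gbf(F) \backslash \Gbf(\A_F^\infty) / K^p K_p$ is finite for every open compact $K_p \subseteq G$, so unwinding the projective and inductive limits in the definition of completed cohomology yields a natural $G$-equivariant isomorphism of $\tilde H^0(K^p)$ with the Banach $E$-space $\Ccal^0(\Gbf(F) \backslash \Gbf(\A_F^\infty) / K^p, E)$ of continuous functions, where $G = G_p$ acts by right translation on the $G_p$-factor of $\Gbf(\A_F^\infty) = G_p \times \Gbf(\A_F^{p,\infty})$.

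Next, using the $p$-arithmetic group decomposition from \Cref{subsubsection: definition 3 of arithmetic cohomology} and choosing representatives $g_1, \dots, g_n \in \Gbf(\A_F^{p,\infty})$ (with trivial $p$-component) of the finite set $\Gbf(F) \backslash \Gbf(\A_F^{p,\infty}) / K^p$, setting $\Gamma_i = \Gbf(F) \cap g_i K^p g_i^{-1}$, I would obtain $G$-equivariant decompositions
$$\tilde H^0(K^p) \simeq \bigoplus_{i=1}^n \Ccal^0(\Gamma_i \backslash G, E), \qquad H^0(K^p, V') \simeq \bigoplus_{i=1}^n (V')^{\Gamma_i},$$
where each $\Gamma_i$ embeds as a cocompact discrete subgroup of $G$ via $\gamma \mapsto \gamma_p$, cocompactness following from $F$-anisotropy of $\Gbf$.

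Finally, I would match the two sides termwise using the identification
$$\Hom_{G, \mathrm{cts}}(V, \Ccal^0(\Gamma_i \backslash G, E)) \simeq (V')^{\Gamma_i},$$
given by $\Phi \mapsto (v \mapsto \Phi(v)(1))$ with inverse sending $\ell \in (V')^{\Gamma_i}$ to the continuous $G$-map $v \mapsto (g \mapsto \ell(g v))$; its image lies in $\Ccal^0(\Gamma_i \backslash G, E)$ by the $\Gamma_i$-invariance of $\ell$, continuity of the $G$-action on $V$, and cocompactness of $\Gamma_i$. That this $\Hom$ coincides with $\Hom_{G, \mathrm{cts}}(V, \tilde H^0(K^p)^\la)$ will follow from the universal property of locally analytic vectors: since every vector of $V$ is locally analytic, any continuous $G$-map from $V$ into $\tilde H^0(K^p)$ automatically has image in $\tilde H^0(K^p)^\la$. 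The $\Tbb^p(K^p)$-equivariance is then immediate, as on both sides the Hecke operators come from the same double coset action on $\Gbf(\A_F^{p,\infty})$ at primes away from $p$. I expect the main technical point to be the initial identification of $\tilde H^0(K^p)$ with continuous functions — correctly interchanging the projective and inductive limits and tracking topologies — after which the remaining steps are essentially formal given the cocompactness of the $\Gamma_i$.
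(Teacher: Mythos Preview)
Your proposal is correct and reaches the same conclusion as the paper, with the same two endpoints: identifying $\tilde H^0(K^p)$ with continuous functions on the compact adelic double quotient $\Gbf(F)\backslash\Gbf(\A_F^\infty)/K^p$, and using at the end that any continuous $G$-map out of a locally analytic $V$ into $\tilde H^0(K^p)$ automatically lands in $\tilde H^0(K^p)^\la$.

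The middle step differs. The paper works directly on the adelic quotient without decomposing: writing $\tilde S(K^p, W)$ for continuous functions $\Gbf(F)\backslash\Gbf(\A_F^\infty)/K^p \to W$, it observes $H^0(K^p, V') \simeq \tilde S(K^p, V')^G$, and then invokes completed tensor product identities from \cite{nonarchimedean_functional_analysis} to get $\tilde S(K^p, V') \simeq \tilde S(K^p, E)\cotimes_E V' \simeq \Hom_{\text{cts}}(V, \tilde S(K^p, E))$, after which taking $G$-invariants gives the result. You instead pass through the $p$-arithmetic group decomposition and prove a Frobenius reciprocity $\Hom_{G,\text{cts}}(V, \Ccal^0(\Gamma_i\backslash G, E)) \simeq (V')^{\Gamma_i}$ termwise. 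Both routes are standard; the paper's is slightly slicker since the functional-analytic citations absorb the continuity verifications (the tube-lemma argument implicit in your Frobenius step), while yours is more self-contained and makes the role of cocompactness explicit.
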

\begin{proof}
Let $W$ be a locally convex space with a continuous action of $G$.
If $W$ is a locally convex space with a continuous action of $G$, we let $\tilde S(K^p, W)$ be the space of continuous functions $\Gbf(F) \backslash \Gbf(\A_{F}^\infty) / K^p \to W$.
This space is equipped with a natural action of $G$ (via 
$(g f)(x) := g f(x g)$) and of $\Tbb^p(K^p)$.
Unwinding the definition of $H^0(K^p, W)$ shows that it is isomorphic as a $\Tbb^p(K^p)$-module to $\tilde S(K^p, W)^G$.
On the other hand, if $W = V'$, the example at the end of \cite[\S17]{nonarchimedean_functional_analysis} and \cite[Corollary 18.8]{nonarchimedean_functional_analysis} show that there are topological isomorphisms $\tilde S(K^p, V') \simeq \tilde S(K^p, E) \cotimes_E V' \simeq \Hom_{\text{cts}}(V, \tilde S(K^p, E))$ (note that the space $\Gbf(F) \backslash \Gbf(\A_{F}^\infty) / K^p$ is compact by our anisotropy assumption).
Moreover, $\tilde S(K^p, E)$ is isomorphic to $\tilde H^0(K^p)$.
Thus, the result follows from the fact that, as $V$ is locally analytic, any $G$-equivariant continuous homomorphism $V \to \tilde S(K^p, E)$ factors through $\tilde S(K^p, E)^\la$.
\end{proof}

\section{Eigenvarieties}
\label{section: eigenvarieties}

In this section we will use the results from \Cref{section: homology} to construct eigenvarieties.
We will start by proving the spectral theorem that powers the construction, and then give a first definition of eigenvarieties.
Then, we will give a different construction using overconvergent homology and determine the relation between both constructions, and use it to give some of their properties.
Finally, in the case of definite unitary groups, we will discuss the relation between these eigenvarieties and the Bernstein eigenvarieties studied by Breuil--Ding \cite{breuil_ding_bernstein_eigenvarieties}.

\subsection{A spectral theorem}
\label{subsection spectral theorem}

If $R$ is a commutative Banach $\Q_p$-algebra and $M_\bullet$ is a complex of Banach $R$-modules, by a chain operator we mean an $R$-linear, graded map $\phi \colon M_\bullet \to M_\bullet$ that commutes with the differentials and is continuous in each degree.
We say such a map is simultaneously $R$-compact if there exist chain operators $\{ \psi_n \}_{n \geq 0}$ on $M_\bullet$ that are of finite $R$-rank at each degree and converge to $\phi$ in each degree as operators on a Banach space.
We will need the following spectral theorem generalising \cite[Proposition 2.2.6]{emerton_jacquet_one} for orthonormalisable modules.
Note that, unlike in \cite[Proposition 2.2.6]{emerton_jacquet_one}, we have to work with algebraic (as opposed to completed) tensor products.

\begin{theorem}\label{theorem: spectral theorem}
Let $R$ be a commutative Noetherian Banach $\Q_p$-algebra, $(M_\bullet, d)$ a complex of orthonormalisable $R$-Banach modules, $\phi_1, ..., \phi_n, \phi$ continuous chain operators on $M_\bullet$. Assume that $\phi$ is simultaneously $R$-compact on $M_\bullet$, $\phi_1, ..., \phi_n$ commute in $H_*(M_\bullet)$, and that the composition $\phi_1 \cdots \phi_n$ agrees with $\phi$ on $H_*(M_\bullet)$. We let $R[T_1, ..., T_n]$ act on $H_*(M_\bullet)$ by letting $T_i$ act as $\phi_i$. Let $R'$ be a flat commutative Noetherian Banach $R$-algebra and $t_1, ..., t_n \in (R')^\times$. Then, for all $q \in \Z$,
\begin{align*}
	& \Tor^{R' [ T_1, ..., T_n]}_q \left( \frac{ R' [ T_1, ..., T_n] }{ (T_1-t_1, ..., T_n-t_n) }, H_s(M_\bullet \cotimes_R R')  \right) \\
	& \simeq \Tor^{R [ T_1, ..., T_n]}_q \left( \frac{ R' [ T_1, ..., T_n] }{ (T_1-t_1, ..., T_n-t_n) }, H_s(M_\bullet)  \right).
\end{align*}
is a finitely generated $R'$-module.
\end{theorem}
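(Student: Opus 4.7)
My plan is to compute the Tor groups as Koszul homology, use Serre's spectral theory for the compact operator $\phi$ to reduce to a finite-slope piece on which everything is finitely generated over $R$, and then conclude by standard flat base change.

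First, since $(T_1-t_1,\ldots,T_n-t_n)$ is a regular sequence in $R'[T_1,\ldots,T_n]$, the Koszul complex $K(T_\bullet - t_\bullet)$ is a finite free resolution of $R'$ as an $R'[T_1,\ldots,T_n]$-module; by $R$-flatness of $R'$, it is also a flat (though not free) resolution of $R'$ over $R[T_1,\ldots,T_n]$. Both Tor groups in the statement are therefore the homology of the Koszul complexes $K(\phi_1-t_1,\ldots,\phi_n-t_n; H_s(M_\bullet \cotimes_R R'))$ and $K(\phi_1-t_1,\ldots,\phi_n-t_n; H_s(M_\bullet)\otimes_R R')$ respectively. (The hypothesis that the $\phi_i$ commute on homology is exactly what makes the Koszul differentials square to zero.) Expanding $\phi_1\cdots\phi_n = \prod_i((\phi_i - t_i) + t_i)$ and using that each $\phi_i - t_i$ annihilates the Koszul homology, I find that $\phi - t_1\cdots t_n$ acts as zero on both Tor groups.

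Next I would invoke Serre's theory of compact operators, adapted to complexes of orthonormalizable Banach modules as in \cite[Theorem 2.3.8]{urban_eigenvarieties} and \cite[Theorem 3.1.1]{hansen_universal_eigenvarieties}. Simultaneous $R$-compactness of $\phi$ yields, for any slope $h \geq v_p(t_1 \cdots t_n)$, a $\phi$-stable direct sum decomposition of complexes $M_\bullet = M_\bullet^{\leq h} \oplus M_\bullet^{>h}$ whose slope-$\leq h$ part consists of finitely generated $R$-modules in each degree and on whose slope-$>h$ part the operator $\phi - t_1 \cdots t_n$ is invertible (it corresponds to the multiplicative polynomial $1 - (t_1 \cdots t_n)X$ of slope $v_p(t_1 \cdots t_n) \leq h$). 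This decomposition is inherited by $H_s(M_\bullet)$, and since the $\phi_i$ commute with $\phi$ on homology, the decomposition is also $\phi_i$-stable. By the preceding paragraph the Koszul homology is concentrated on the slope-$\leq h$ summand. The flat base change $R \to R'$ preserves the characteristic power series of simultaneously compact operators, giving a canonical isomorphism $H_s(M_\bullet)^{\leq h} \otimes_R R' \simeq H_s(M_\bullet \cotimes_R R')^{\leq h}$, with completed and algebraic tensor products agreeing because the slope-$\leq h$ part is finitely generated over Noetherian $R$.

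Finally, once restricted to the slope-$\leq h$ piece, everything becomes algebra: flat base change for Tor, applied to the finite free Koszul resolution over $R[T_1,\ldots,T_n]$ (regarded as a flat resolution over $R[T_1,\ldots,T_n]$), gives the desired isomorphism of Tor groups together with the finite generation over $R'$ of each of them. Since $R'$ is Noetherian, finitely generated $R'$-modules carry a unique Hausdorff Banach $R'$-module topology, which yields the last assertion. The main technical obstacle will be the careful adaptation of Serre's spectral theory from a single orthonormalizable Banach module to bounded complexes of such modules equipped with a simultaneously compact complex operator, in particular verifying that the resulting slope decomposition respects the differentials and is compatible with flat base change; both properties should follow from standard Fredholm determinant arguments once one uses that $\phi$ commutes with the differentials of $M_\bullet$.
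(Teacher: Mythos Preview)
Your strategy has a genuine gap: the existence of a slope-$\leq h$ decomposition of $M_\bullet$ for a prescribed $h$ is \emph{not} guaranteed over a general commutative Noetherian Banach $\Q_p$-algebra $R$, and this is precisely the difficulty the theorem is designed to circumvent. Serre's theory gives such decompositions when $R$ is a field, but the references you cite do not provide them over arbitrary $R$ (or even over an affinoid algebra) for an arbitrary value of $h$; rather, one only knows that the Fredholm hypersurface is admissibly covered by pieces $\Zscr_{\Omega,h}$ on which slope decompositions happen to exist. There is no reason the particular $h$ you need (determined by the $t_i \in (R')^\times$, for which ``$v_p(t_1\cdots t_n)$'' is already not obviously meaningful) should be among these. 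Indeed, the paper later uses \Cref{theorem: spectral theorem} exactly to construct eigenvarieties \emph{without} first passing through slope decompositions (see \Cref{subsection: construction of eigenvarieties}), and only afterwards relates this to the Fredholm-hypersurface picture.

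The paper's argument avoids slope decompositions entirely. For $n=1$ it follows Emerton's approach to the Jacquet functor: approximate $\phi$ by a finite-rank complex operator $\psi$ with $a^{-1}(\phi-\psi)$ topologically nilpotent, factor $\psi$ through a finite free module $R^r$, and exhibit an explicit inverse $\theta = \sum_{i\geq 0} t^{-i-1}(\phi-\psi)^i$ to $t-(\phi-\psi)$. A diagram chase then shows directly that the relevant cokernel is a quotient of (and the relevant kernel injects into) a finitely generated $R'$-module, and that the natural maps comparing $H_s(M_\bullet)\otimes_R R'$ with $H_s(M_\bullet\cotimes_R R')$ become isomorphisms after applying the Koszul-type functor. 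The inductive step first replaces the generators $T_1,\ldots,T_n$ by $Z=T_1\cdots T_n,T_2,\ldots,T_n$ (using \Cref{lemma: vanishing Tor 1}) so that the first operator is compact, and then uses the short exact sequence \Cref{eqn: exact sequence for tors} relating $\Tor^{R'_{k}}_\bullet$ and $\Tor^{R'_{k+1}}_\bullet$ to bootstrap finite generation and the base-change isomorphism one variable at a time.
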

\begin{proof}
We will prove the theorem by induction on $n$, starting with the case $n=1$. We may replace $\phi_1$ by $\phi$, and we drop the subindices for notational convenience and assume for simplicity that $s = 0$. Write $N$ for the kernel of the map $M_0 \to M_{-1}$ and $N'$ for the kernel of $M_0 \cotimes_R R' \to M_{-1} \cotimes_R R'$.
We will write $\phi$ instead of $\phi \cotimes 1 \colon M_\bullet \cotimes_R R' \to M_\bullet \cotimes_R R'$.
In this case, the Tor groups can be non-zero only in degrees 0 and 1, and
$$
    \Tor^{R' [ T]}_0 \left( \frac{ R' [T] }{ (T-t) }, H_0(M_\bullet \cotimes_R R')  \right) \simeq \frac{N'}{(\phi-t)N' + d(M_1 \cotimes_R R')},
$$
and $\Tor^{R' [ T]}_1 \left( \frac{ R' [T] }{ (T-t) }, H_0(M_\bullet \cotimes_R R') \right)$ is isomorphic to the subspace $(N'/d(M_1 \cotimes_R R'))^{\phi=t}$ of $N'/d(M_1 \cotimes_R R')$ consisting of classes of elements $n$ such that $\phi(n) = t n + d(m)$ for some $m \in M_1 \cotimes_R R'$.
On the other hand,
\begin{align*}
    \Tor^{R[T]}_0 \left( \frac{ R'[T] }{ (T-t) }, H_0(M_\bullet)  \right) 
    & \simeq
    \Tor^{R'[T]}_0 \left( \frac{ R'[T] }{ (T-t) }, H_0(M_\bullet \otimes_R R')  \right)
    \\ & \simeq
    \frac{N \otimes_R R'}{(\phi-t)(N \otimes_R R') + d(M_1 \otimes_R R')},
\end{align*}
and $\Tor^{R[T]}_1 \left( \frac{ R'[T] }{ (T-t) }, H_0(M_\bullet) \right)$ is isomorphic to $(N \otimes_R R' /d(M_1 \otimes_R R'))^{\phi=t}$.

Choose $a \in R^\times$ such that $a \phi, at$ and $at^{-1}$ are power-bounded on $M_0 \cotimes_R R'$ and on $M_1 \cotimes_R R'$. Since $\phi$ is compact, we may find a chain operator $\psi$ on $M_\bullet$ of finite $R$-rank in each degree such that $a^{-1}(\phi - \psi)$ is topologically nilpotent. Since $M_0$ and $M_1$ are orthonormalisable, there is a commutative diagram whose rows compose to $\psi$
$$
\begin{tikzcd}
    M_1 \ar[r, "\psi'"] \ar[d, "d"] & R^r \ar[r, "\psi''"] & M_1 \ar[d, "d"] \\
    M_0 \ar[r, "\psi'"] \ar[d, "d"] & R^r \ar[r, "\psi''"] \ar[d, "d_0"] \ar[dl] & M_0 \ar[d, "d"] \\
    M_{-1} \ar[r, "\psi'"] & R^r \ar[r, "\psi''"] & M_{-1}
\end{tikzcd}
$$
for some $r$. Indeed, we can apply \cite[Lemma 2.2.1]{emerton_jacquet_one} to show the existence of each row with $\Im(\psi'') = \Im(\psi)$, and choosing preimages for a set of generators for the map $\psi'$ in the bottom row gives the diagonal map after possibly increasing $r$. Moreover, if we set $P := \ker d_0$, we have $\psi''(P) \subseteq N$ and hence a commutative diagram.
$$
\begin{tikzcd}
    M_1 \ar[r, "\psi'"] \ar[d, "d"] & R^r \ar[r, "\psi''"] \ar[d, "d_1"] \ar[dl] & M_1 \ar[d, "d"] \\
    N \ar[r, "\psi'"] \ar[d, hook] & P \ar[r, "\psi''"] \ar[d, hook] & N \ar[d, hook] \\
    M_0 \ar[r, "\psi'"] \ar[d, "d"] & R^r \ar[r, "\psi''"] \ar[d, "d_0"] & M_0 \ar[d, "d"] \\
    M_{-1} \ar[r, "\psi'"] & R^r \ar[r, "\psi''"] & M_{-1}
\end{tikzcd}
$$
where the diagonal map exists by the same argument as before (after possibly increasing $r$).
Moreover, there is also a commutative diagram
$$
\begin{tikzcd}
    M_1 \cotimes_R R' \ar[r, "\psi'"] \ar[d, "d"] & (R')^r \ar[r, "\psi''"] \ar[d, "d_1"] & M_1 \cotimes_R R' \ar[d, "d"] \\
    N' \ar[r, "\psi'"] \ar[d, hook] & P \otimes_R R' \ar[r, "\psi''"] \ar[d, hook] & N' \ar[d, hook] \\
    M_0 \cotimes_R R' \ar[r, "\psi'"] \ar[d, "d"] & (R')^r \ar[r, "\psi''"] \ar[d, "d_0"] & M_0 \cotimes_R R' \ar[d, "d"] \\
    M_{-1} \cotimes_R R' \ar[r, "\psi'"] & (R')^r \ar[r, "\psi''"] & M_{-1} \cotimes_R R'.
\end{tikzcd}
$$

Consider the chain operator
\begin{align*}
    \theta \colon M_\bullet \cotimes_R R' & \to M_\bullet \cotimes_R R' \\
    m & \mapsto \sum_{i \geq 0} t^{-i-1} (\phi - \psi)^i (m).
\end{align*}
Note that $t^{-i-1} (\phi - \psi)^i = t^{-1} (at^{-1})^i (a^{-1} (\phi - \psi))^i$ is a topologically nilpotent operator, hence the sum on the right converges and defines a continuous map. The map $\theta$ commutes with $\phi - \psi$ and we also have $\theta \circ (t - \phi + \psi) = (t - \phi + \psi) \circ \theta = \id$. 
From this, we see that, for all $m$ in $M_\bullet \cotimes_R R'$,
\begin{align*}
    (\psi \circ \theta)(m) & = m + (\phi - t) \theta(m), \\
    (\theta \circ \psi)(m) & = m +  \theta(\phi - t)(m). \\
\end{align*}
From the first equality, we see that the composition
\begin{align*}
    N' / d(M_1 \cotimes_R R')
    &
    \stackrel\theta\to
    N'/d(M_1 \cotimes_R R')
    \stackrel{\psi'}\to
    \frac{P \otimes_R R'}{d_1((R')^r)} \stackrel{\psi''}\to
    N' / d(M_1 \cotimes_R R')
    \\ &
    \onto
    \frac{N'}{d(M_1 \cotimes_R R') + (\phi-t)(N')}
\end{align*}
is just projection onto the quotient.
In particular, the map
$$
	\frac{P \otimes_R R'}{d_1((R')^r)} \stackrel{\psi''}\to \frac{N'}{d(M_1 \cotimes_R R') + (\phi-t)(N')}
$$
is surjective, so the target is finitely generated. %
Similarly, from the second equality we see that the composition
\begin{align*}
    \left( \frac{N'}{d(M_1 \cotimes_R R')} \right)^{\phi = t} 
    &
    \into
    N'/d(M_1 \cotimes_R R')
    \stackrel{\psi'}\to
    \frac{P \otimes_R R'}{d_1((R')^r)}
    \stackrel{\psi''}\to
    N' / d(M_1 \cotimes_R R')
    \\ &
    \stackrel\theta\to N' / d(M_1 \cotimes_R R')
\end{align*}
is simply the inclusion, which shows that the map $\left( \frac{N'}{d(M_1 \cotimes_R R')} \right)^{\phi = t}  \stackrel{\psi'} \to P \otimes_R R' / d_1((R')^r)$ is injective, so the source is finitely generated.

Note that we have a commutative diagram
$$
\begin{tikzcd}
\left( \frac{N \otimes_R R'}{d(M_1 \otimes_R R')} \right)^{\phi = t} \ar[rr, equals] \ar[d, hook] \ar[ddd, bend right, shift right=2.5ex] & & \left( \frac{N \otimes_R R'}{d(M_1 \otimes_R R')} \right)^{\phi = t} \ar[d, hook]  \ar[ddd, bend left, shift left=1.5ex] \\
\frac{N \otimes_R R'}{d(M_1 \otimes_R R')} \ar[r, "\psi'"] \ar[d] & \frac{P \otimes_R R'}{ d_1((R')^r) } \ar[r, "\theta \psi''"] \ar[d, equals] & \frac{N \otimes_R R'}{d(M_1 \otimes_R R')} \ar[d] \\
\frac{N'}{d(M_1 \cotimes_R R')} \ar[r, "\psi'"] & \frac{P \otimes_R R' }{ d_1((R')^r) } \ar[r, "\theta \psi''"] & \frac{N'}{d(M_1 \cotimes_R R')} \\
\left( \frac{N'}{d(M_1 \cotimes_R R')} \right)^{\phi = t} \ar[rr, equals] \ar[u, hook] & & \left( \frac{N'}{d(M_1 \cotimes_R R')} \right)^{\phi = t}. \ar[u, hook]
\end{tikzcd}
$$
In particular, the upper right vertical injection factors through $\left( \frac{N'}{d(M_1 \cotimes_R R')} \right)^{\phi = t}$, so the bent vertical arrows are injective.

Moreover, the inclusion $\left( \frac{N'}{d(M_1 \cotimes_R R')} \right)^{\phi = t} \into \frac{N'}{d(M_1 \cotimes_R R')}$ factors through the 
middle-top right $\frac{N \otimes_R R'}{d(M_1 \otimes_R R')}$.
Let $y$ be the image by $\theta \psi$ in $N \otimes_R R'$ of some lift to $N'$ of an element of $\left( \frac{N'}{d(M_1 \cotimes_R R')} \right)^{\phi = t}$.
Thus, $y$ is a lift in $N \otimes_R R'$ of the same element.
We can see from the top square of the diagram that $\theta \psi (y) = y + d(m)$ for some $m \in M_1 \otimes_R R'$.
But also $\theta \psi(y) = y + \theta (\phi - t)(y)$, so
$$
    (\phi - t)(y) = \theta^{-1} d(m) = (t - \phi + \psi) d(m) \in d(M_1 \otimes_R R').
$$
Thus, $y \in \left( \frac{N \otimes_R R'}{d(M_1 \otimes_R R')} \right)^{\phi = t}$. Hence, the bent vertical maps are also surjective, so
$$
	\Tor^{R' [ T ]}_1 \left( \frac{ R' [ T ] }{ (T - t) }, H_0(M_\bullet \cotimes_R R')  \right)
	\simeq
	\Tor^{R [ T ]}_1 \left( \frac{ R [ T ] }{ (T - t) }, H_0(M_\bullet)  \right).
$$

The argument for the zeroth Tor space is very similar. In this case, we have a commutative diagram
$$
\begin{tikzcd}
\frac{N \otimes_R R'}{d(M_1 \otimes_R R') + (\phi  - t )(N \otimes_R R')} \ar[rr, equals] \ar[ddd, bend right, shift right=2.5ex] & & \frac{N \otimes_R R'}{d(M_1 \otimes_R R') + (\phi  - t )(N \otimes_R R')}  \ar[ddd, bend left, shift left=1.5ex] \\
\frac{N \otimes_R R'}{d(M_1 \otimes_R R')} \ar[r, "\psi' \theta "] \ar[d] \ar[u, twoheadrightarrow] & \frac{P \otimes_R R'}{ d_1((R')^r) } \ar[r, "\psi'' "] \ar[d, equals] & \frac{N \otimes_R R'}{d(M_1 \otimes_R R')} \ar[d] \ar[u, twoheadrightarrow] \\
\frac{N'}{d(M_1 \cotimes_R R')} \ar[r, "\psi' \theta "] \ar[d, twoheadrightarrow] & \frac{P \otimes_R R' }{ d_1((R')^r) } \ar[r, "\psi'' "] & \frac{N'}{d(M_1 \cotimes_R R')} \ar[d, twoheadrightarrow] \\
\frac{N'}{d(M_1 \cotimes_R R') + (\phi  - t )(N')} \ar[rr, equals] & & \frac{N'}{d(M_1 \cotimes_R R') + (\phi  - t )(N')}.
\end{tikzcd}
$$
The lower left vertical surjection factors through the bent vertical maps, so these must be surjective. Let $y \in N \otimes_R R'$ be a lift of an element in the kernel of the bent map, i.e. $y = d(m) + (\phi - t)(n')$ for some $m \in M_1 \cotimes_R R', n' \in N'$. Using that
$$
    \psi \theta (\phi - t) = \psi \theta \psi - \psi = (\phi - t) \theta \psi,
$$
we see that
$$
\psi \theta y = \psi \theta d(m) + \psi \theta (\phi - t) n' = d \psi \theta m + (\phi - t) \theta \psi n'.
$$
Since the image of $\psi$ is contained in $M_\bullet \otimes_R R'$, we see that the right hand side lies in $d(M_1 \otimes_R R') + (\phi-t)(N \otimes_R R')$. Since $y - \psi \theta y \in d(M_1 \otimes_R R') + (\phi - t)(N \otimes_R R')$ by the top square of the diagram above, this shows that
$$
    y \in d(M_1 \otimes_R R') + (\phi - t)(N \otimes_R R'), 
$$
so the bent maps are also injective. This completes the case $n = 1$.

Assume now that $n > 1$. The first step is to reduce to the case where $\phi_1$ is compact.
Write $\Lambda = \Z^n$, $\Lambda^- = \Z_{\geq 0}^n$.
Let $e_1, ..., e_n$ denote the canonical basis of $\Z^n$ and let \linebreak $v = e_1 + \cdots + e_n$.
Write $\Lambda'$ for the (free abelian) submonoid of $\Lambda^-$ generated by $v, e_2, ..., e_n$.
We can identify $R[\Lambda^-]$ with $R[T_1, ..., T_n]$ by identifying $T_k$ with $e_k$,
and then $v$ is identified with $Z := T_1 \cdots T_n$.
Write $z := t_1 \cdots t_n$. Note that $Z$ acts on $H_*(M\bullet)$ as $\phi$, which is compact operator on $M_\bullet$.
\Cref{lemma: vanishing Tor 1} \itemnumber{2} implies that
$$
    \Tor^{R'[T_1, ..., T_n]}_q \left( \frac{ R'[T_1, ..., T_n] }{ (T_1-t_1, ..., T_n - t_n) }, H_s(M_\bullet \cotimes_R R') \right)
$$
is isomorphic to
$$
    \Tor^{R'[Z, T_2, ..., T_n]}_q \left( \frac{ R'[Z, T_2, ..., T_n] }{ (Z - z, T_2 - t_2, ..., T_n - t_n) }, H_s(M_\bullet \cotimes_R R') \right),
$$
and similarly if we replace $H_s(M_\bullet \cotimes_R R')$ by $H_s(M_\bullet \otimes_R R')$.
Thus, we may assume without loss of generality that $\phi_1$ is compact.
Write for brevity $R'_k = R' [T_1, ..., T_k]$ and $\m_k$ for the ideal $(T_1 - t_1, ..., T_k - t_k)$. By \cite[X.9.5, Corollaire 2]{bourbaki_algebra}, if $\mathcal M$ denotes either $H_s(M_\bullet \cotimes_R R')$ or $H_s(M_\bullet \otimes_R R')$, then there is an exact sequence
\begin{align}\label{eqn: exact sequence for tors}
\begin{split}
    0 & \to R'_{k+1} / (T_{k+1} - t_{k+1}) \otimes_{R'_{k+1}} \Tor^{R'_k}_q ( R'_k / \m_k, \Mcal ) \\
    & \to \Tor^{R'_{k+1}}_q ( R'_{k+1} / \m_{k+1}, \Mcal ) \\
    & \to \Hom_{R'_{k+1}}(R'_{k+1} / (T_{k+1} - t_{k+1}),  \Tor^{R'_k}_{q-1} ( R'_k / \m_k, \Mcal )) \to 0
\end{split}
\end{align}
so it follows by induction on $k$ that the middle module is finitely generated, and the 5-lemma implies the isomorphism in the statement of the theorem.
\end{proof}

\begin{remark}
In the setting of the statement of \Cref{theorem: spectral theorem}, if $t_1, ..., t_n$ belong to the image of $R$ in $R'$, writing also $t_1, ..., t_n$ for some of their preimages in $R$ it follows from the theorem that
\begin{align*}
	& \Tor^{R' [ T_1, ..., T_n]}_q \left( \frac{ R' [ T_1, ..., T_n] }{ (T_1-t_1, ..., T_n-t_n) }, H_s(M_\bullet \cotimes_R R')  \right) \\
	& \simeq \Tor^{R [ T_1, ..., T_n]}_q \left( \frac{ R [ T_1, ..., T_n] }{ (T_1-t_1, ..., T_n-t_n) }, H_s(M_\bullet)  \right) \otimes_R R'.
\end{align*}
\end{remark}

\begin{remark}\label{remark: comparing spectral theorems}
Let us show how \Cref{theorem: spectral theorem} \itemnumber{1} implies \cite[Proposition 2.2.6]{emerton_jacquet_one} for orthonormalisable modules. Let $M, K, A, \phi, x$ and $t$ be as in \loccit, and assume that $M$ is orthonormalisable.
Applying \Cref{theorem: spectral theorem} to $M$ (seen as a complex concentrated in degree zero) with $R = A$ and $R' = A \langle xt, xt^{-1} \rangle$, we see that $\frac{R'[T]}{(T-t)} \otimes_{R[T]} M$ is a finitely generated $R'$-module.
In fact, the proof of \Cref{theorem: spectral theorem} realises this \emph{topologically} as a quotient of a finite $R'$-module, and hence its topology is the Banach $R'$-module topology.
In particular, $\frac{R'[T]}{(T-t)} \otimes_{R[T]} M$ is isomorphic to the completed tensor product $\frac{R'[T]}{(T-t)} \cotimes_{R[T]} M$. Finally, this is isomorphic to $K \langle xt, x t^{-1} \rangle \cotimes_{K[t]} M \simeq K \langle xt, x t^{-1} \rangle \cotimes_{K \langle x t \rangle} M$.
\end{remark}

In this context, we will also need an analogue of \cite[Proposition 3.2.28]{emerton_jacquet_one}.
The statement is a bit technical, but informally it will say that if we can write $R' = R'' \{\!\{ t_{i_1}, ..., t_{i_r} \}\!\}$, then the (higher) part of the higher Tor groups above corresponding to the operators $\phi_{i_1}, ..., \phi_{i_r}$ vanishes, and the corresponding part of the zeroth Tor groups is especially nice.
Before we state the precise result, it will be useful to consider the following lemma.

\begin{lemma}\label{lemma: flatness of product with Gm}
Let $R$ be an affinoid algebra and let $m \geq 1$.
Given $h \in \Z_{\geq 0}$, let us write 
$R^{(h)} = R \langle p^h y_1, p^h y_1^{-1}, ..., p^h y_m, p^h y_m^{-1} \rangle$ and $R^{(\infty)} := R \{\!\{ y_1, y_1^{-1}, ..., y_n, y_n^{-1} \}\!\} = \varprojlim_h R^{(h)}$. Then, $R^{(h)}$ and $R^{(\infty)}$ are flat over $R[y_1, ..., y_n]$, $R^{(h')}$ is flat over $R^{(h)}$ for $h' \leq h$, and $R^{(\infty)}$ is a Frechet--Stein algebra.
\end{lemma}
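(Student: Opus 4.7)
The plan is to identify $R^{(h)}$ as the coordinate ring of a relative polyannulus and read off the four claims from standard properties of this geometric picture. Via the substitution $z_i := p^h y_i$, $R^{(h)}$ is isomorphic to the relative Laurent affinoid algebra $R\langle z_1^{\pm 1}, \ldots, z_m^{\pm 1}\rangle$; in the original $y$-coordinates, $\Sp R^{(h)}$ is the polyannulus $\{p^{-h} \leq |y_i| \leq p^h\}_{i=1}^m$ over $\Sp R$, and the $\Sp R^{(h)}$ exhaust the relative analytic torus $\Sp R \times \mathbb{G}_{\mathrm{m}}^{m,\an}$ as $h \to \infty$, with ring of global sections $R^{(\infty)}$.

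The flatness of $R^{(h')}$ over $R^{(h)}$ for $h' \leq h$ is then immediate, since $\Sp R^{(h')}$ is a Laurent subdomain of $\Sp R^{(h)}$ and restriction maps to affinoid subdomains are always flat. The Fr\'echet-Stein property of $R^{(\infty)} = \varprojlim_h R^{(h)}$ follows at once: each $R^{(h)}$ is a Noetherian Banach $\Qp$-algebra (it is an affinoid $R$-algebra), the transition maps are flat by the previous sentence, and they have dense image since the Laurent polynomial ring $R[y_1^{\pm 1}, \ldots, y_m^{\pm 1}]$ is dense in every $R^{(h)}$.

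For the flatness of $R^{(h)}$ over $A := R[y_1, \ldots, y_m]$, I would first factor through the localization $A' := R[y_1^{\pm 1}, \ldots, y_m^{\pm 1}]$ (flat over $A$), reducing to flatness of $R^{(h)}$ over $A'$. This is a case of flatness of analytification: $\Sp R^{(h)}$ is an affinoid subdomain of the analytification of the affine scheme $\Spec A'$, and by the classical flatness of analytification for schemes of finite type over a Noetherian affinoid base, the local rings of $\Sp R^{(h)}$ are faithfully flat over the corresponding local rings of $\Spec A'$; since $A'$ and $R^{(h)}$ are both Noetherian and flatness is Zariski-local, this upgrades to flatness of the ring map $A' \to R^{(h)}$. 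To pass to $R^{(\infty)}$, given a finite free resolution $F_\bullet \to N$ of a finitely presented $A$-module $N$, I would compute $\Tor_i^A(R^{(\infty)}, N)$ as the homology of $F_\bullet \otimes_A R^{(\infty)} = \varprojlim_h F_\bullet \otimes_A R^{(h)}$; the vanishing of $\Tor_i^A(R^{(h)}, N)$ for $i \geq 1$ (from the previous step) together with the vanishing of higher derived limits for the coherent Fr\'echet-Stein system $(R^{(h)} \otimes_A N)_h$, which is a standard property of coherent modules over Fr\'echet-Stein algebras established by Schneider-Teitelbaum, then forces the vanishing of $\Tor_i^A(R^{(\infty)}, N)$.

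The hard part will be the flatness of $R^{(h)}$ over the polynomial ring $R[y_1, \ldots, y_m]$: the two rings sit together in a mixed algebraic/analytic way that is neither a localization nor a completion of any standard kind. The cleanest route uses flatness of analytification, but this must be applied carefully to transfer the stalk-wise flatness statement into global flatness over the Noetherian Laurent polynomial ring. Once that step is in place, the Fr\'echet-Stein axioms and the propagation to $R^{(\infty)}$ are routine formal manipulations with derived limits.
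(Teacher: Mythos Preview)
Your treatment of the affinoid-subdomain flatness $R^{(h)}\to R^{(h')}$ and of the Fr\'echet--Stein property matches the paper exactly. For flatness over $R[y_1,\ldots,y_m]$, however, the paper takes a different route: it does not use analytification but instead invokes the argument of \cite[Corollary~4.4]{derived_eigenvarieties} with $\O_K$ replaced by the subring $R^\circ\subseteq R$ of power-bounded elements --- that is, it works on the integral level, realising the relevant ring as a $\varpi$-adic completion and appealing to flatness of such completions, then inverting $p$. (The paper's proof writes ``flat over $R$'', evidently a slip for ``flat over $R[y_1,\ldots,y_m]$'', which is what the next proposition actually uses.) Your route via flatness of relative analytification is correct and more geometric, though two small points deserve tightening. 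First, K\"opf's theorem gives flatness of the $G$-topology stalk $\O_{(\Spec A')^{\an},x}$ over $A'_{\kappa(x)}$, not directly of the algebraic localisation $R^{(h)}_{\mathfrak m}$; you recover the latter by faithfully flat descent along the local map $R^{(h)}_{\mathfrak m}\to\O_{(\Spec A')^{\an},x}$, which is faithfully flat since both rings share the same $\mathfrak m$-adic completion. Second, ``finite free resolution'' should read ``resolution by finite free modules'', as $R[y_1,\ldots,y_m]$ need not have finite global dimension; with that adjustment your derived-limit argument goes through, the key input being that $(N\otimes_A R^{(h)})_h$ is a coadmissible $R^{(\infty)}$-module and hence has vanishing $R^1\varprojlim$ by Schneider--Teitelbaum. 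Your approach trades the integral-model computation for the machinery of relative GAGA; the paper's is more hands-on but leans entirely on an external reference.
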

\begin{proof}
That $R^{(h')}$ is flat over $R^{(h)}$ follows from the fact that $\Sp(R^{(h')})$ is an affinoid subdomain of $\Sp(R^{(h)})$.
This implies that $R^{(\infty)} = \varprojlim_h R^{(h)}$ is a Frechet--Stein algebra since the transition maps have dense images.
That $R^{(h)}$ and $R^{(\infty)}$ are flat over $R$ follows from the argument in \cite[Corollary 4.4]{derived_eigenvarieties} applied to the subring $R^\circ \subseteq R$ of power-bounded elements (instead of $\O_K \subseteq K$).
\end{proof}

\begin{proposition}\label{prop: comparison of finite slopes}
Let $R, M_\bullet, \phi_1, ..., \phi_n$ be as in \Cref{theorem: spectral theorem} and $t_1, ..., t_n \in R^\times$, and assume that $R$ is an affinoid algebra. Let $\psi_1, ..., \psi_m, \psi$ be chain operators on $M_\bullet$ with $m \geq 1$, $\psi$ simultaneously compact and which commute in homology such that $\psi$ agrees in homology with $\psi_1 \cdots \psi_m$.
View $H_*(M_\bullet)$ as an $R[T_1, ..., T_n, Y_1, ..., Y_m,Z]$-module by letting $T_i$ (resp. $Y_i$, resp. $Z$) act by $\phi_i$ (resp. $\psi_i$, resp. $\psi$).
Define $R^{(h)}$ and $R^{(\infty)}$ as in \Cref{lemma: flatness of product with Gm}.
We view these as $R[Y_1, ..., Y_n, Z]$-modules by letting $Y_i$ act as $y_i$ and $Z$ as $y_1 \cdots y_m$.
Write $R^{(h)}(t_*)$ for the $R^{(h)}[T_1, ..., T_n, Y_1, ..., Y_m,Z]$-module whose underlying module is $R^{(h)}$, and where $T_i$ acts by multiplication by $t_i$, and define an $R[T_1, ..., T_n]$-module $R(t_*)$ similarly.
Then, there is an isomorphism of $R^{(\infty)}[T_1, ..., T_n, Y_1, ..., Y_m, Z]$-modules
\begin{align}\label{eqn: isomorphisms finite slope part}
\begin{split}
    &
    \varprojlim_h
    \Tor^{R^{(h)}[T_1, ..., T_n, Y_1, ..., Y_m]}_i ( R^{(h)}(t_*), H_s(M_\bullet \cotimes_R R^{(h)}))
    \\ & 
    \simeq
    \varprojlim_h
    R^{(h)} \otimes_{R[Y_1, ..., Y_m]}
    \Tor^{R[T_1, ..., T_n]}_i ( R(t_*), H_s(M_\bullet))
    \\ &
    \simeq
    \varprojlim_h
    R \langle p^h z, p^h z^{-1} \rangle \otimes_{R[Z]}
    \Tor^{R[T_1, ..., T_n]}_i ( R(t_*), H_s(M_\bullet))
\end{split}
\end{align}
(in particular, the last term admits a structure of $R^{(\infty)}[T_1, ..., T_n, Y_1, ..., Y_m, Z]$-module).
Moreover, these modules can be equipped with locally convex topologies that make them coadmissible modules over $R^{(\infty)}$ and over $R\{\!\{ z, z^{-1} \}\!\}$.
\end{proposition}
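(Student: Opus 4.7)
The plan is to deduce both isomorphisms from \Cref{theorem: spectral theorem}, combined with the flatness statements in \Cref{lemma: flatness of product with Gm}, and then pass to the inverse limits. For the first isomorphism in \Cref{eqn: isomorphisms finite slope part}, I would apply \Cref{theorem: spectral theorem} directly to $M_\bullet$ with the $n+m$ commuting complex operators $\phi_1, \ldots, \phi_n, \psi_1, \ldots, \psi_m$ and the simultaneously compact operator $\phi \psi = \phi_1 \cdots \phi_n \psi_1 \cdots \psi_m$ (which is simultaneously compact because $\psi$ is and $\phi$ is continuous in each degree, so the approximating operators $\phi \psi_\nu$ have finite $R$-rank and converge to $\phi\psi$), with base change $R \to R^{(h)}$ (flat by \Cref{lemma: flatness of product with Gm}) and specialization at $(t_*, y_*) \in ((R^{(h)})^\times)^{n+m}$. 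Taking $\varprojlim_h$ then gives the first equality.

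For the second isomorphism, the key point is the factorization $R[T_*, Y_*] = R[T_*] \otimes_R R[Y_*]$ and the decomposition $R^{(h)}(t_*, y_*) \simeq R(t_*) \otimes_R R^{(h)}(y_*)$ of $R[T_*, Y_*]$-modules (with $T_i$ acting on the first factor and $Y_j$ on the second via multiplication by $y_j$). Taking the Koszul resolution $K_\bullet = K(T_* - t_*; R[T_*]) \to R(t_*)$ by finitely generated free $R[T_*]$-modules, one obtains a resolution $K_\bullet \otimes_R R^{(h)}(y_*) \to R^{(h)}(t_*, y_*)$ by flat $R[T_*, Y_*]$-modules (flatness of each term follows from \Cref{lemma: flatness of product with Gm}), and the direct identification
\begin{align*}
  (K_q \otimes_R R^{(h)}(y_*)) \otimes_{R[T_*, Y_*]} H_s(M_\bullet)
  \simeq K_q \otimes_{R[T_*]} (R^{(h)}(y_*) \otimes_{R[Y_*]} H_s(M_\bullet))
\end{align*}
yields, upon taking homology and using flatness of $R^{(h)}(y_*)$ over $R[Y_*]$ once more,
\begin{align*}
  \Tor_i^{R[T_*, Y_*]}(R^{(h)}(t_*, y_*), H_s(M_\bullet))
  \simeq R^{(h)}(y_*) \otimes_{R[Y_*]} \Tor_i^{R[T_*]}(R(t_*), H_s(M_\bullet)).
\end{align*}
Passing to $\varprojlim_h$ gives the second equality.

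For the third isomorphism, write $T_i := \Tor_i^{R[T_*]}(R(t_*), H_s(M_\bullet))$. Since $\psi = \psi_1 \cdots \psi_m$ in homology and the Koszul resolution involves only the $T_j$'s, the operators $Z$ and $Y_1 \cdots Y_m$ agree on $T_i$; consequently the $R$-algebra map $R[Z] \to R[Y_*]$, $Z \mapsto Y_1 \cdots Y_m$, lifts to compatible ring maps $R \langle p^{mh} z, p^{mh} z^{-1} \rangle \to R^{(h)}(y_*)$ sending $z \mapsto y_1 \cdots y_m$, yielding a morphism of the two inverse systems. The main obstacle is to show this induces an isomorphism on the projective limits: for fixed $h$ the two modules need not agree, because a bound on the slope of $Z$ does not imply a bound on the slopes of the individual $Y_j$'s. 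However, on any quotient where $Z$ acts as a unit, the commuting factor $Y_j$ must also be a unit since $(Y_1 \cdots \widehat{Y_j} \cdots Y_m) Y_j = Z$, so in the projective limit both sides identify with the ``$\psi$-finite slope part'' of $T_i$. Coadmissibility is finally obtained from \Cref{theorem: spectral theorem}: each $R^{(h)}(y_*) \otimes_{R[Y_*]} T_i$ (resp. $R \langle p^h z, p^h z^{-1} \rangle \otimes_{R[Z]} T_i$) is a finitely generated Banach module over its coefficient ring, the transition maps have dense image by the flatness statement in \Cref{lemma: flatness of product with Gm}, and the inverse limits are thereby coadmissible over the Frechet-Stein algebras $R^{(\infty)}$ and $R \{\!\{ z, z^{-1} \}\!\}$ respectively.
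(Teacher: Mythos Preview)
Your approach to the first two isomorphisms is essentially that of the paper: apply \Cref{theorem: spectral theorem} with the $n+m$ operators and $R' = R^{(h)}$, then use the factorization $R^{(h)}(t_*,y_*) \simeq R(t_*)\otimes_R R^{(h)}(y_*)$ together with flatness of $R^{(h)}(y_*)$ over $R[Y_*]$ (the paper phrases this via a collapsing spectral sequence rather than a Koszul resolution, but the content is the same).

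There is a genuine gap in your argument for the third isomorphism. Observing that each $Y_j$ becomes a unit once $Z$ is a unit is correct algebraically, but it does not show that the two inverse systems have the same limit: you have constructed maps in one direction only, and ``both sides identify with the $\psi$-finite slope part'' is not an argument. The paper's point is quantitative. Since $R\langle p^{mh}z,p^{mh}z^{-1}\rangle \otimes_{R[Z]} H_s(M_\bullet)$ is a \emph{finitely generated} Banach module (by \Cref{theorem: spectral theorem} applied with the single operator $\psi$), the inverse $Y_j^{-1} = z^{-1}\otimes(\psi_1\cdots\widehat{\psi_j}\cdots\psi_m)$ has bounded operator norm; hence the $R[Y_1^{\pm1},\dots,Y_m^{\pm1}]$-action extends to an $R^{(h')}$-action for some $h'\geq h$ depending on $h$. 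This produces a map $R^{(h')}(y_*)\otimes_{R[Y_*]}H_s(M_\bullet) \to R\langle p^{mh}z,p^{mh}z^{-1}\rangle \otimes_{R[Z]}H_s(M_\bullet)$ going the other way, and the two compositions are precisely the transition maps of the respective inverse systems. This interlacing is what forces the limits to agree; without the norm estimate coming from finite generation, you cannot produce the reverse map at any finite level.

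Your coadmissibility argument is also incomplete. ``Transition maps have dense image'' is a property of the rings $R^{(h)}$ (already contained in \Cref{lemma: flatness of product with Gm}); what you must verify for the \emph{modules} is the base-change property $R^{(h')}\otimes_{R^{(h)}} M_h \simeq M_{h'}$. The paper obtains this by first showing that $R^{(h)}(y_*)\otimes_{R[Y_*]}H_s(M_\bullet)$ is finitely generated Banach over $R^{(h)}$, hence equals the completed tensor product, and then applying \Cref{theorem: spectral theorem} again (with $R^{(h)}$ as base and $R^{(h')}$ as the flat extension) to push the $\Tor$ through.
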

\begin{proof}
To simplify notation, write $S = R[T_1, ..., T_n]$ and $S^{(h)} = R^{(h)}[T_1, ..., T_n]$ for any 
$h \in \Z_{\geq 0} \cup \{ \infty \}$.
Let $h \in \Z_{\geq 0}$.
Let us start by showing the first isomorphism.
In fact, we will show that the terms in both limits are isomorphic.
By \Cref{theorem: spectral theorem}, the left-hand side of \Cref{eqn: isomorphisms finite slope part} is isomorphic to the inverse limit of $\Tor^{S[Y_1, ..., Y_m]}_i ( R^{(h)}(t_*), H_s(M_\bullet))$.

As $R^{(h)}(t_*) \simeq R(t_*) \otimes_R R^{(h)}$ as a module over $S[Y_1, ..., Y_m] \simeq S \otimes_R R[Y_1, ..., Y_m]$ and all of the terms in the tensor products are flat over $R$, there is a spectral sequence
$$
    E^2_{i,j} =
    \Tor^{R[Y_1, ..., Y_m]}_i \left(
    R^{(h)},
    \Tor^{S}_j ( R(t_*) , H_s(M_\bullet) )
    \right)
    \implies
    \Tor^{S[Y_1, ..., Y_m]}_{i+j} ( R^{(h)}(t_*), H_s(M_\bullet) ).
$$
As $R^{(h)}$ is flat over $R[Y_1, ..., Y_m]$, this spectral sequence collapses and we obtain isomorphisms
\begin{align}\label{eqn: isomorphisms finite slope part 2}
\begin{split}
    \Tor^{S[Y_1, ..., Y_m]}_{i} ( R^{(h)}(t_*), H_s(M_\bullet) )
    & \simeq
    R^{(h)} \otimes_{R[Y_1, ..., Y_m]} \Tor^{S}_i ( R(t_*) , H_s(M_\bullet) ).
    \\ & \simeq
    \Tor^{S}_i ( R(t_*) , R^{(h)} \otimes_{R[Y_1, ..., Y_m]} H_s(M_\bullet) ).
\end{split}
\end{align}
This proves the first isomorphism in \Cref{eqn: isomorphisms finite slope part}.
The proof of \Cref{theorem: spectral theorem} realises the $R^{(h)}$-module $R^{(h)} \otimes_{R[Y_1, ..., Y_m]} H_s(M_\bullet)$ topologically as the quotient of a finitely generated one, so it is a finitely generated Banach $R^{(h)}$-module isomorphic to the completed tensor product $R^{(h)} \cotimes_{R[Y_1, ..., Y_m]} H_s(M_\bullet)$.
In particular, if $h' < h$ then
$$
    R^{(h')} \cotimes_{R^{(h)}} \left( R^{(h)} \otimes_{R[Y_1, ..., Y_m]} H_s(M_\bullet) \right)
    \simeq
    R^{(h')} \otimes_{R[Y_1, ..., Y_m]} H_s(M_\bullet).
$$
Thus, $\Tor^{S[Y_1, ..., Y_m]}_{i} ( R^{(h)}(t_*), H_s(M_\bullet) )$ is also finitely generated over $R^{(h)}$ and
$$
    R^{(h')} \cotimes_{R^{(h)}} \Tor^{S}_i ( R(t_*) , R^{(h)} \otimes_{R[Y_1, ..., Y_m]} H_s(M_\bullet) )
    \simeq
    \Tor^{S}_i ( R(t_*) , R^{(h')} \otimes_{R[Y_1, ..., Y_m]} H_s(M_\bullet) ).
$$
Here, we have given the Tor groups their unique Banach $R^{(h)}$ and $R^{(h')}$-module topologies -- thus, the completed tensor product is the same as the non-completed tensor product.
Therefore, the limit over all $h$ of the terms in \Cref{eqn: isomorphisms finite slope part 2} is a coadmissible $R^{(\infty)}$-module.

It remains to describe the $S^{(\infty)}$-action on the last term of \Cref{eqn: isomorphisms finite slope part} and to prove the second isomorphism and coadmissibility over $R \{\!\{ z, z^{-1} \}\!\}$.
There is a natural homomorphism
\begin{align}\label{eqn: finite slope transition map}
    R \langle p^{mh} z, p^{mh} z^{-1} \rangle \otimes_{R[Z]} H_s(M_\bullet)
    \to
    R^{(h)} \otimes_{R[Y_1, ..., Y_m]} H_s(M_\bullet)
\end{align}
sending $z$ to $y_1 \cdots y_m$.
The left-hand side is a finitely generated Banach $R \langle p^{mh} z, p^{mh} z^{-1} \rangle$-module by \Cref{theorem: spectral theorem}, which is also a module over $R\langle p^{mh} z, p^{mh} z^{-1} \rangle [Y_1, Y_1^{-1}, ..., Y_m, Y_m^{-1}]$: $Y_i$ acts via $1 \otimes \psi_i$ and $Y_i^{-1}$ acts via $z^{-1} \otimes (\psi_1 \cdots \psi_{i-1} \psi_{i+1} \cdots \psi_m)$.
Considering the norms of these operators, we see that this action extends to an action of $R^{(h')}$ for some $h' \geq h$, and is compatible with that of $R \langle p^{mh} z, p^{mh} z^{-1} \rangle$, in the sense that $(Y_1 \cdots Y_m)^{\pm 1}$ acts as $z^{\pm1 }$. This allows us to construct an $R^{(h')}$-module map
$$
    R^{(h')} \otimes_{R[Y_1, ..., Y_m]} H_s(M_\bullet) \to R \langle p^{mh} z, p^{mh} z^{-1} \rangle \otimes_{R[Z]} H_s(M_\bullet)
$$
whose composition with \Cref{eqn: finite slope transition map} on either side gives the transition maps for the inverse systems $R \langle p^{mh} z, p^{mh} z^{-1} \rangle \otimes_{R[Z]} H_s(M_\bullet)$ and $R^{(h)}(y_*) \otimes_{R[Y_1, ..., Y_m]} H_s(M_\bullet)$.
Via \Cref{eqn: isomorphisms finite slope part 2}  and its analogue for $Z$ instead of $Y_1, ..., Y_m$, we obtain the second isomorphism in \Cref{eqn: isomorphisms finite slope part}, and coadmissibility over $R \{\!\{ z, z^{-1} \}\!\}$ also follows.
\end{proof}

Finally, let us state the version of the spectral theorem that we will actually use in practice.

\begin{corollary}\label{corollary: spectral theorem 2}
Let $R$ be a commutative Noetherian Banach algebra and $M_\bullet$ a complex of orthonormalisable $R$-Banach modules. Let $\boldLambda, \boldLambda^-, \Lambda, \Lambda^-$ and $\chi$ be as in \Cref{proposition: vanishing Tor 6}. Assume that for each $\lambda \in \boldLambda^-$ we are given a chain operator $\phi_\lambda$ on $M_\bullet$ and that $\phi_\mu$ is $R$-compact for some $\mu \in \boldLambda^-$. Assume that for any $\lambda_1, \lambda_2 \in \boldLambda^-$, the operators $\phi_{\lambda_1} \phi_{\lambda_2}$ and $\phi_{\lambda_1 \lambda_2}$ agree on $H_*(M_\bullet)$. Then:
\begin{enumerate}
    \item The space 
$$ \Tor^{R[\boldLambda^-]}_q \left( R(\chi), H_s(M_\bullet)  \right) $$
is a finitely generated $R$-module for all $q$ and $s$, where $\lambda$ acts on $H_s(M_\bullet)$ via $\phi_\lambda$.
    \item If $R \to R'$ is a flat continuous map of Noetherian Banach algebras, then
\begin{align*}
    \Tor^{R'[\boldLambda^-]}_q \left( R'(\chi), H_s(M_\bullet \cotimes_R R')  \right) = \Tor^{R[\boldLambda^-]}_q \left( R(\chi), H_s(M_\bullet)  \right) \otimes_R R'.
\end{align*}
\end{enumerate}
\end{corollary}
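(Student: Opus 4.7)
The strategy is to reduce the computation of $\Tor^{R[\boldLambda^-]}_q$ to Tor over a polynomial ring in $d := \rank \boldLambda$ variables, using the Tor-comparison results of \Cref{section: vanishing for Tor}, and then apply \Cref{theorem: spectral theorem}. First, \Cref{lemma: vanishing Tor 5} exhibits $\Tor^{R[\boldLambda^-]}_q(R(\chi), H_s(M_\bullet))$ as an $R[\boldLambda]$-direct summand of $\Tor^{R[\Lambda^-]}_q(R(\chi), H_s(M_\bullet))$; this identification is natural and hence compatible with flat base change $R\to R'$, so it suffices to prove both parts for Tor over $R[\Lambda^-]$. I will then construct a free abelian submonoid $\Lambda' \subseteq \Lambda^-$ of rank $d$ with $\Lambda'(\Lambda')^{-1} = \Lambda$ and a distinguished free basis $x_1, \ldots, x_d$; \Cref{lemma: vanishing Tor 1}\,(ii) gives $\Tor^{R[\Lambda^-]}_q \simeq \Tor^{R[\Lambda']}_q$, reducing the problem to Tor over the polynomial ring $R[\Lambda'] \simeq R[T_1, \ldots, T_d]$, where \Cref{theorem: spectral theorem} can be applied directly.

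The core combinatorial step is the choice of $\{x_i\}$. Since the primitive elements of $\Lambda$ are Zariski-dense in $\Lambda \otimes_\Z \R$, one can pick a primitive $\lambda_0 \in \Lambda$ with $\beta_i(\lambda_0) \leq \beta_i(\mu)$ for every $\beta_i$ from the description $\Lambda^- = \{\lambda : \beta_i(\lambda) \leq 0\}$ provided by \Cref{lemma: vanishing Tor 3}\,(i). Then $\mu' := \lambda_0 \mu^{-1} \in \boldLambda$ satisfies $\beta_i(\mu') = \beta_i(\lambda_0) - \beta_i(\mu) \leq 0$, so $\mu' \in \boldLambda^-$ and $\lambda_0 = \mu \mu'$ in $\boldLambda$. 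Primitivity of $\lambda_0$ lets one extend it to a $\Z$-basis $\{\lambda_0, v_2, \ldots, v_d\}$ of $\Lambda$; replacing each $v_i$ by $v_i \cdot \lambda_0^{M_i}$ for sufficiently large $M_i \geq 0$---a transformation preserving the property of being a $\Z$-basis since the corresponding change-of-basis matrix is unipotent---brings all of the $v_i$ into $\Lambda^-$. Setting $x_1 := \lambda_0$ and $x_i := v_i$ for $i \geq 2$ and taking $\Lambda'$ to be the submonoid generated by them yields the desired basis.

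Finally, apply \Cref{theorem: spectral theorem} with $n = d$, $t_i := \chi(x_i) \in R^\times$, complex operators $\phi_1 := \phi_\mu \circ \phi_{\mu'}$ and $\phi_i := \phi_{x_i}$ for $i \geq 2$, and compact operator $\phi := \phi_1 \circ \phi_2 \circ \cdots \circ \phi_d$ on $M_\bullet$; this $\phi$ is simultaneously $R$-compact as a composition involving the $R$-compact $\phi_\mu$. The compatibility hypothesis gives $\phi_1 = \phi_{\mu \mu'} = \phi_{\lambda_0} = \phi_{x_1}$ on $H_*(M_\bullet)$, so the $R[T_1, \ldots, T_d]$-module structure on homology induced by the $\phi_i$'s agrees with the one coming from $\Lambda'$; the $\phi_i$ commute on $H_*(M_\bullet)$ by the compatibility hypothesis, and $\phi = \phi_1 \cdots \phi_d$ on $M_\bullet$ by construction. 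Part~(i) follows from the finite-generation assertion of \Cref{theorem: spectral theorem}, and part~(ii) follows from the remark after its proof---applicable because each $t_i$ is in the image of $R$ in any flat Banach $R$-algebra $R'$---combined with the naturality of the reductions via \Cref{lemma: vanishing Tor 1}\,(ii) and \Cref{lemma: vanishing Tor 5} over both $R$ and $R'$. The most delicate point is the lattice-theoretic construction in the middle paragraph: exhibiting a primitive $\lambda_0 \in \Lambda$ whose image under each $\beta_i$ is at most that of $\mu$, so that $\lambda_0 \mu^{-1}$ remains in the cone $\boldLambda^-$.
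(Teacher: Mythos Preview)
Your overall strategy matches the paper's: reduce from $\boldLambda^-$ to $\Lambda^-$ via \Cref{lemma: vanishing Tor 5}, then pass to a free submonoid $\Lambda'$ via \Cref{lemma: vanishing Tor 1}\,(ii), and finally arrange a basis so that the product of the corresponding operators is compact, allowing \Cref{theorem: spectral theorem} to apply. However, your lattice construction has two genuine gaps.

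First, the inference ``$\beta_i(\mu') \leq 0$, so $\mu' \in \boldLambda^-$'' is not justified. The hypotheses of \Cref{proposition: vanishing Tor 6} only say that $\Lambda^- = \Lambda \cap \boldLambda^-$ is cut out by the $\beta_i$; nothing is assumed about the shape of $\boldLambda^-$ itself beyond being a submonoid with $\boldLambda^-(\boldLambda^-)^{-1} = \boldLambda$. Since $\mu'=\lambda_0\mu^{-1}$ need not lie in $\Lambda$ (it does precisely when $\mu\in\Lambda$), you cannot conclude $\mu'\in\boldLambda^-$, and then $\phi_{\mu'}$ is simply undefined. The paper avoids this by first replacing $\mu$ with $\mu\lambda$ for a suitable $\lambda\in\boldLambda^-$ so that the new $\mu$ lies in $\Lambda^-$; after that reduction your argument would give $\mu'\in\Lambda$ and hence $\mu'\in\Lambda^-$ via the $\beta_i$.

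Second, and more seriously, your construction fails when $d=\rank\Lambda=1$. The primitive elements of $\Z$ are just $\pm 1$ (so they are certainly not Zariski-dense in $\R$), and if $\mu$ is not a generator of $\Lambda^-$ there is no primitive $\lambda_0$ with $\beta_1(\lambda_0)\le\beta_1(\mu)$. The paper treats $d=1$ separately by replacing $\Lambda$ with the subgroup generated by $\mu$ and invoking the direct-summand argument of \Cref{lemma: vanishing Tor 5} once more. For $d\ge 2$ your idea does work (primitive lattice points have positive density in any full-dimensional translated cone, though ``Zariski-dense'' is the wrong justification), provided you also choose $\lambda_0$ with $\beta_j(\lambda_0)<0$ strictly, which is needed for the step $v_i\mapsto v_i\lambda_0^{M_i}$ to land in $\Lambda^-$.
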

\begin{proof}
Note that if $\phi_\mu$ is $R$-compact, we may assume that for any given $\lambda \in \boldLambda^-$ the operator $\phi_{\mu \lambda}$ is also $R$-compact (by replacing it with the operator $\phi_\mu \phi_\lambda$ if necessary).
In particular, we may assume that $\mu \in \Lambda^-$.
Let us start by proving \itemnumber{1} and \itemnumber{2} for $\Lambda^-$ instead of $\boldLambda^-$
This follows from a similar argument as in the end of the proof of \Cref{theorem: spectral theorem}. By \Cref{lemma: vanishing Tor 3} \itemnumber{1} there exists a free abelian submonoid $\Lambda' \subseteq \Lambda$ such that $\Lambda = \Lambda' (\Lambda')^{-1}$. Writing $\mu = \lambda \tilde \lambda^{-1}$ with $\lambda, \tilde \lambda \in \Lambda'$ and replacing $\mu$ with $\lambda = \mu \tilde \lambda$ we may assume that $\mu \in \Lambda'$.

Let $e_1, ..., e_n$ be a basis of $\Lambda'$.
We may not apply \Cref{theorem: spectral theorem} yet, as we can't guarantee that $\phi_{e_1 \cdots e_n}$ is $R$-compact.
If $n = 1$, we can replace $\Lambda$ by the subgroup generated by $\mu$ so that this holds.
If $n \geq 2$, we may write $\mu = e_1^{a_1} \cdots e_n^{a_n}$ for some non-negative integers $a_1, ..., a_n$, and changing the order of the basis elements and multiplying $\mu$ by $e_1$ or $e_2$ and replacing $\mu$ by $\mu e_1$ or $\mu e_2$ as in the previous paragraph, we may assume that $a_1$ and $a_2$ are coprime.
In particular, there exist integers $c_1, c_2 \geq 0$ such that $a_1 c_2 - a_2 c_1 = 1$. Let $\lambda = e_1^{c_1} e_2^{c_2} \in \Lambda'$, and consider the (free abelian) submonoid of $\Lambda'$ generated by $\mu, \lambda, e_3, ..., e_n$. Then, $\Lambda = \Lambda'' (\Lambda'')^{-1}$. By \Cref{lemma: vanishing Tor 1} \itemnumber{2},
$$
    \Tor^{R[\Lambda^-]}_q ( R(\chi), H_s(M_\bullet)  ) = \Tor^{R[\Lambda'']}_q ( R(\chi), H_s(M_\bullet)  )
$$
and in \Cref{theorem: spectral theorem} we showed that the latter is finitely generated over $R$ and commutes with completed tensor products with flat $R$-modules.

Next, we will prove the result for $\boldLambda$. For this, we use \Cref{lemma: vanishing Tor 5}, which says that there is a natural map
$$
    \Tor^{R[\boldLambda^-]}_q ( R(\chi), H_s(M_\bullet)  ) 
    \to
    \Tor^{R[\Lambda^-]}_q ( R(\chi), H_s(M_\bullet)  )
$$
that identifies the source with a direct summand of the target. Since the target is finitely generated over $R$, so is the source, and if $R'$ is as in \itemnumber{2}, then there is a diagram
$$
\begin{tikzcd}
    \Tor^{R[\boldLambda^-]}_q ( R(\chi), H_s(M_\bullet)  ) \otimes_R R' \ar[r] \ar[d]
    &
    \Tor^{R[\Lambda^-]}_q ( R(\chi), H_s(M_\bullet)  ) \ar[d, "\sim"] \ar[l, bend right=15] \otimes_R R'
    \\
    \Tor^{R'[\boldLambda^-]}_q ( R'(\chi), H_s(M_\bullet \cotimes_R R') ) \ar[r]
    &
    \Tor^{R'[\Lambda^-]}_q ( R'(\chi), H_s(M_\bullet \cotimes_R R') ), \arrow[l, bend left=15]
\end{tikzcd}
$$
where the bent horizontal arrows are sections of the straight ones, and the vertical maps commute with the straight (resp. bent) horizontal arrows.
This implies that the left vertical arrow must also be an isomorphism.
\end{proof}

\subsection{Character spaces}
Let $E$ be a finite extension of $\Q_p$.
In this section we gather some facts about character spaces of locally analytic groups.
For example, they exist:

\begin{lemma}
Let $G$ be a topologically finitely generated $p$-adic locally analytic group. There exists a rigid $E$-analytic strictly quasi-Stein space $\hat G$ representing the functor that associates to a rigid $E$-analytic space $X$ to the set of continuous characters $G \to \O_X(X)^\times$.
In fact, there exist non-negative integers $d_1, d_2$ such that $\hat{G}$ is isomorphic to a union of finitely many copies of the product of $\Gm^{d_1}$ (where $\Gm$ is the rigid $E$-analytic multiplicative group) and open unit $d_2$-dimensional balls.
\end{lemma}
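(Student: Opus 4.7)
The plan is to first reduce to the abelian case by passing to the topological abelianization $G^{\mathrm{ab}} := G/\overline{[G,G]}$. Since $\O_X(X)^\times$ is abelian, every continuous character factors through $G^{\mathrm{ab}}$; this quotient inherits the structure of a $p$-adic locally analytic group (closed subgroups of $p$-adic Lie groups are automatically Lie subgroups), remains topologically finitely generated (by the images of topological generators of $G$), and is abelian, so it suffices to treat the case when $G$ is abelian.

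I would then apply a structure theorem to decompose $G \simeq \Z^{d_1} \times \Z_p^{d_2} \times F$ for some finite abelian group $F$. Picking an open compact subgroup $K \subseteq G$, the discrete quotient $G/K$ is finitely generated abelian, say $G/K \simeq \Z^{d_1} \times T$ with $T$ finite; since $\Z^{d_1}$ is free, one lifts a basis to a subgroup $L \simeq \Z^{d_1}$ of $G$ and sets $H$ equal to the preimage of $\{0\} \times T$, so that $G = L \times H$ with $H$ compact (being a finite union of cosets of $K$). The compact abelian Lie group $H$ then decomposes as a product $T' \times H_0$ of its prime-to-$p$ torsion subgroup (which is finite by compactness) and its pro-$p$ part, the splitting existing because the orders involved are coprime. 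Being a topologically finitely generated $\Z_p$-module, $H_0 \simeq \Z_p^{d_2} \times F_p$ for some finite $p$-group $F_p$ by the structure theorem for finitely generated modules over a PID; setting $F := T' \times F_p$ gives the claimed decomposition.

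With $G \simeq \Z^{d_1} \times \Z_p^{d_2} \times F$ in hand, the character space decomposes as a product: $\widehat{\Z^{d_1}} \simeq \Gm^{d_1}$, since a character of $\Z$ is determined by the invertible image of $1$; $\widehat{\Z_p^{d_2}}$ is represented by the open unit $d_2$-dimensional ball via the standard assignment $(z_i) \mapsto ((a_i) \mapsto \prod_i (1+z_i)^{a_i})$, the identification of all continuous characters with points of the open polydisk being a consequence of Mahler's theorem; and $\hat{F}$ is a finite zero-dimensional rigid $E$-analytic space which, after enlarging $E$ if necessary to contain the relevant roots of unity, becomes a finite disjoint union of copies of $\Sp(E)$. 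Hence $\hat{G}$ is isomorphic to a finite disjoint union of copies of $\Gm^{d_1} \times (\text{open unit } d_2\text{-ball})$, and the strictly quasi-Stein property follows from the standard exhaustions of $\Gm$ by closed annuli (with inner radii tending to $0$ and outer radii to $\infty$) and of the open ball by closed subdisks of radii tending to $1$. I expect the main obstacle to be the structural decomposition in the second step, which requires combining several splitting arguments; once it is in hand, the remaining identifications, while they need to be spelled out carefully, are essentially routine.
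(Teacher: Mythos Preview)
Your argument is correct and follows exactly the paper's strategy of reducing to the abelianization; the paper then simply cites \cite[Proposition 6.4.5]{emerton_locally_analytic_vectors} for the abelian case, while you unpack that reference by proving the structure decomposition $G^{\mathrm{ab}} \simeq \Z^{d_1} \times \Z_p^{d_2} \times F$ and identifying each factor's character space. Your caveat about possibly enlarging $E$ to split $\hat F$ into $E$-points is well-taken and reflects a slight imprecision in the lemma's statement (over a general $E$, $\hat F$ is only the spectrum of a finite \'etale $E$-algebra), not a gap in your reasoning.
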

\begin{proof}
Let $\overline{[G,G]}$ be the closure of the commutator subgroup of $G$. Any continuous character $G \to \O_X(X)^\times$ is trivial on $\overline{[G,G]}$, so if $H := G / \overline{[G,G]}$, then we can identify the set of continuous characters $G \to \O_X(X)^\times$ with the set of continuous characters $H \to \O_X(X)^\times$. Since $H$ is a topologically finitely generated locally analytic abelian group, by \cite[Proposition 6.4.5]{emerton_locally_analytic_vectors} there exists a rigid analytic space $\hat{H}$ as claimed representing this functor, so we can set $\hat G := \hat H$.
\end{proof}

\begin{lemma}\label{lemma: finite group acting on character space}
Let $G$ be a topologically finitely generated $p$-adic locally analytic group, and $\mathcal T \subseteq \hat{G}(E)$ a finite group of characters. Then, $\hat{G}$ can be admissibly covered by affinoid subspaces that are stable under the action of $\mathcal T$ by multiplication.
\end{lemma}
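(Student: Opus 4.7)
Plan. The idea is to symmetrize any admissible affinoid covering under the finite group $\mathcal{T}$, using that $\hat{G}$ is strictly quasi-Stein and that $\mathcal{T}$ acts by rigid-analytic automorphisms (multiplication by characters being the translation action of a subgroup of the group object $\hat G$).

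First, I would start with any admissible increasing affinoid covering $\{U_n\}_{n \geq 1}$ of $\hat{G}$ coming from its strictly quasi-Stein structure (as produced in the previous lemma via its explicit presentation). For each $t \in \mathcal{T}$, multiplication by $t$ is a rigid-analytic automorphism of $\hat{G}$, so each $t \cdot U_n$ (and likewise $t^{-1} \cdot U_n$) is an affinoid subdomain of $\hat{G}$. Since $\mathcal{T}$ is finite, the set $\bigcup_{t \in \mathcal{T}} t^{-1} \cdot U_n$ is a finite union of affinoid subdomains; by admissibility of the covering $\{U_n\}$ and strict quasi-Steinness, each $t^{-1} \cdot U_n$ is contained in some member of the cover, so one can choose $m(n) \geq n$ with
$$
\bigcup_{t \in \mathcal{T}} t^{-1} \cdot U_n \;\subseteq\; U_{m(n)}.
$$

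Next, I would define
$$
V_n := \bigcap_{t \in \mathcal{T}} t \cdot U_{m(n)}.
$$
A finite intersection of affinoid subdomains inside a separated rigid space is an affinoid subdomain, so each $V_n$ is affinoid. By construction $V_n$ is $\mathcal{T}$-stable, and the containment $U_n \subseteq V_n$ holds because $t^{-1} \cdot U_n \subseteq U_{m(n)}$ for every $t \in \mathcal{T}$, i.e., $U_n \subseteq t \cdot U_{m(n)}$ for every $t$.

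Finally, I would check admissibility of the covering $\{V_n\}_n$. Since $U_n \subseteq V_n$, the family $\{V_n\}$ covers $\hat{G}$; admissibility then follows formally: any affinoid subdomain of $\hat{G}$ is contained in some $U_{n_0}$ (as $\{U_n\}$ is admissible), hence in $V_{n_0}$, so a finite subcover suffices. The only nontrivial input is the existence of $m(n)$, which is a standard feature of strictly quasi-Stein spaces and is the main (mild) obstacle; everything else is formal from the fact that $\mathcal{T}$ is a \emph{finite} group acting by automorphisms of the ambient rigid space.
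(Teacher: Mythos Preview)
Your proposal is correct and takes essentially the same approach as the paper: both symmetrize a strictly quasi-Stein covering $\{X_n\}$ by taking finite intersections of $\mathcal{T}$-translates. The paper simply sets $Y_n = \bigcap_{\tau \in \mathcal{T}} X_n \tau$ and asserts that $\{Y_n\}$ works, whereas you insert the intermediate index $m(n)$ to make the containment $U_n \subseteq V_n$ explicit; this extra step is harmless but not needed, since the increasing nature of the $X_n$ already guarantees that the intersections cover and form an admissible system.
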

\begin{proof}
Let $\hat G = \bigcup_{n \geq 0} X_n$ realise $\hat{G}$ as a strictly quasi-Stein space. For each $n$, define $Y_n = \bigcap_{\tau \in \mathcal T} X_n \tau$. Then $\{Y_n\}$ is an admissible covering satisfying our requirements.
\end{proof}

Let us now place ourselves in our familiar setting of reductive groups over $\Q_p$. More concretely, $\Gbf$ is a reductive group over $\Q_p$ with parabolic subgroup $\Pbf$, $\Mbf$ is a Levi subgroup of $\Gbf$, $\Abf$ is a maximal $\Q_p$-split torus in the center of $\Mbf$, etc. Write $\Zbf$ for the center of $\Mbf$.

\begin{lemma}\label{lemma map M to A J_M is finite flat}
Let $H_0 \subseteq M$ be a compact open subgroup and $H$ an open subgroup of $M$ containing $A H_0$ with finite index. Then, the morphism of rigid spaces $\hat{M} \to \hat{H}$ induced by restriction of characters is finite flat.
\end{lemma}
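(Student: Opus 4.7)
The plan is as follows. Since continuous characters of a topological group vanish on the closure of the commutator subgroup, one has canonical identifications $\hat M \simeq \widehat{M^{\ab}}$ and $\hat H \simeq \widehat{H^{\ab}}$ where I write $M^{\ab} := M/\overline{[M,M]}$ and similarly for $H$. Under these identifications, the restriction morphism $\hat M \to \hat H$ is the morphism $\phi^*$ dual to the natural homomorphism $\phi \colon H^{\ab} \to M^{\ab}$ induced by inclusion, which by \Cref{lemma: abelianization has finite kernel and cokernel} has finite kernel $K$ and finite cokernel $C$.

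I would then factor $\phi = i \circ p$ as the quotient $p \colon H^{\ab} \twoheadrightarrow H^{\ab}/K$ followed by the inclusion $i \colon H^{\ab}/K \hookrightarrow M^{\ab}$ of a subgroup of index $n := |C|$, so that $\phi^* = p^* \circ i^*$, and show finite flatness of each factor separately. For $p^*$, I invoke the structure theorem for topologically finitely generated locally analytic abelian groups to decompose $H^{\ab} \simeq \Z^r \oplus \Z_p^d \oplus F$, with $F$ finite: indeed the maximal compact open subgroup of $H^{\ab}$ is a topologically finitely generated compact abelian $p$-adic Lie group and therefore of the form $\Z_p^d \oplus F$ with $F$ its torsion, while $H^{\ab}$ modulo its maximal compact is torsion-free discrete, hence free of rank~$r$, so the extension splits. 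As $K$ is finite it lies in $F$, so $\widehat{H^{\ab}/K}$ is the clopen subspace of $\widehat{H^{\ab}}$ obtained by restricting the finite étale factor $\hat F$ to the union of components $\widehat{F/K}$; this open-and-closed immersion is finite flat.

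For $i^*$, the key point is that since $n \cdot M^{\ab} \subseteq i(H^{\ab}/K)$, one obtains a well-defined homomorphism $\sigma \colon M^{\ab} \to H^{\ab}/K$ satisfying $i \circ \sigma = n \cdot \id_{M^{\ab}}$ and $\sigma \circ i = n \cdot \id_{H^{\ab}/K}$. Dualizing gives $\sigma^* \circ i^* = n^*_{\widehat{M^{\ab}}}$ and $i^* \circ \sigma^* = n^*_{\widehat{H^{\ab}/K}}$. The multiplication-by-$n$ maps on these character spaces are finite flat: on each $\Gm$ factor of the form $x \mapsto x^n$, on each factor isomorphic to the open unit disk $W = \hat{\Z_p}$ they are $\zeta \mapsto \zeta^n$, and they are finite étale on the finite parts. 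Because $i^*$ is a morphism of rigid-analytic group objects whose scheme-theoretic kernel is the finite étale group $\hat C = \Hom(C, \Gm)$, one concludes that $i^*$ is a $\hat C$-torsor over $\widehat{H^{\ab}/K}$, and in particular finite étale, hence finite flat.

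The main technical obstacle is establishing that $i^*$ is surjective (equivalently, that every continuous character of $H^{\ab}/K$ valued in $\O(X)^{\times}$ extends to $M^{\ab}$, for every affinoid $X$), which is what makes $i^*$ a torsor rather than merely a closed subspace. Using the structure theorem and compatible choices of bases on the torsion-free quotients, this reduces to showing that $x \mapsto x^n$ is surjective on $\Gm$ and $\zeta \mapsto \zeta^n$ is surjective on $W$ after suitable admissible covers, both of which follow from the existence of $n$-th roots near $1$ in the appropriate rigid-analytic groups.
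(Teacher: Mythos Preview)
Your proposal is correct and follows exactly the paper's approach: reduce to the map on abelianizations, invoke \Cref{lemma: abelianization has finite kernel and cokernel} for finite kernel and cokernel, and then deduce finite flatness of the dual map from the explicit structure of the character spaces. The paper's own proof compresses the second half into the phrase ``a case by case analysis,'' so your write-up is essentially supplying the details the paper leaves to the reader.
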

\begin{proof}
It follows from the constructions of $\hat{M}$ and $\hat{H}$ and a case by case analysis that it's enough to show that the homomorphism $H / \overline{[H,H]} \to M / \overline{[M,M]}$ has finite kernel and cokernel. We proved this in \Cref{lemma: abelianisation has finite kernel and cokernel}.
\end{proof}

\begin{proposition}\label{proposition: dimension of character space}
Let $H_0 \subseteq M$ be a compact open subgroup.
Then, $\dim(\hat{ H_0} ) = \dim(\Zbf)$.
\end{proposition}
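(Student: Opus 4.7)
The plan is to reduce to a Lie algebra computation using the fact that $\hat{H_0}$ only sees the topological abelianization of $H_0$. First, I note that restriction along the projection $H_0 \onto H_0^{\ab} := H_0/\overline{[H_0, H_0]}$ induces an isomorphism $\hat{H_0^{\ab}} \simto \hat{H_0}$, since any continuous character to a rigid space takes values in an abelian group. The topological abelianization $H_0^{\ab}$ is a compact abelian $p$-adic Lie group, hence by the structure theorem it is isomorphic to $\Z_p^d \oplus F$ for some $d \geq 0$ and some finite abelian group $F$. As in the argument used for \cite[Proposition 6.4.5]{emerton_locally_analytic_vectors}, the character variety of such a group is a disjoint union of $|F|$ copies of the $d$-dimensional open unit ball, and in particular has dimension $d$.

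It therefore suffices to identify $d$ with $\dim \Zbf$. The integer $d$ is the $\Z_p$-rank of $H_0^{\ab}$ modulo torsion, which equals $\dim_{\Q_p} \Lie(H_0^{\ab})$. Writing $\mathfrak{m} = \Lie(\Mbf)$, we have $\Lie(H_0) = \mathfrak{m}$, so it is enough to show that $\Lie(\overline{[H_0,H_0]}) = [\mathfrak{m},\mathfrak{m}] = \Lie(\Mbf^{\der})$. The inclusion $\subseteq$ is clear since $\overline{[H_0,H_0]} \subseteq \Mbf^{\der}(\Q_p)$. For the reverse inclusion, I would apply the Campbell--Hausdorff formula on a sufficiently small open subgroup $H_0' \subseteq H_0$: for $x,y \in \Lie(H_0')$, $[\exp x,\exp y] = \exp([x,y] + \cdots)$, so the closed subgroup generated by commutators contains $\exp([\mathfrak{m},\mathfrak{m}] \cap \Lie(H_0'))$ and hence is of finite index in $\Mbf^{\der}(\Q_p) \cap H_0'$, by the same Lie-algebra-plus-Kneser--Tits style reasoning used in the first paragraph of the proof of \Cref{lemma: abelianization has finite kernel and cokernel}. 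Consequently $\Lie(\overline{[H_0,H_0]}) \supseteq [\mathfrak{m},\mathfrak{m}]$.

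Combining these identifications,
\[
d = \dim_{\Q_p} \mathfrak{m} - \dim_{\Q_p} [\mathfrak{m},\mathfrak{m}] = \dim(\Mbf/\Mbf^{\der}).
\]
The quotient $\Mbf/\Mbf^{\der}$ is a torus isogenous to the identity component $\Zbf^\circ$ of the center, so $\dim(\Mbf/\Mbf^{\der}) = \dim \Zbf^\circ = \dim \Zbf$, giving the desired equality.

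The main obstacle is the Lie algebra computation for $\overline{[H_0,H_0]}$, which requires the Campbell--Hausdorff machinery on a small enough subgroup and the observation (already exploited in \Cref{lemma: abelianization has finite kernel and cokernel}) that commutators in $\Mbf^{\der}(\Q_p)$ generate a finite-index subgroup. Everything else is essentially bookkeeping about character varieties of compact abelian $p$-adic Lie groups.
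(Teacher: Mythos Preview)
Your proof is correct and takes a genuinely different route from the paper's. The paper argues by constructing a chain of homomorphisms
\[
Z_0 \;\simeq\; \Zbf(\Q_p)/\Lambda \;\to\; (M/\Mbf^\der(\Q_p))/\Lambda \;\from\; (M/\overline{[M,M]})/\Lambda \;\from\; \Lambda H_0/\Lambda\overline{[H_0,H_0]} \;\simeq\; H_0/\overline{[H_0,H_0]},
\]
each with finite kernel and cokernel (appealing to \Cref{lemma: abelianization has finite kernel and cokernel}), so that $\dim(\hat{H_0}) = \dim(\hat{Z_0})$; it then observes $\dim(\hat{Z_0}) = \dim Z_0 = \dim \Zbf$ since $Z_0$ is compact. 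Your approach instead computes $\dim(\hat{H_0})$ intrinsically as the $\Z_p$-rank of $H_0^{\ab}$, identifies this with $\dim_{\Q_p}(\mathfrak m/[\mathfrak m,\mathfrak m])$, and finishes via the isogeny $\Zbf^\circ \sim \Mbf/\Mbf^\der$. The paper's route has the advantage of reusing \Cref{lemma: abelianization has finite kernel and cokernel} wholesale; yours is more self-contained.

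One remark on your key step: the Campbell--Hausdorff formula and the reference to ``Kneser--Tits style reasoning'' are unnecessary detours. Kneser--Tits concerns the global group $\Mbf^{\sc}(\Q_p)$, not compact open subgroups. The inclusion $[\mathfrak m,\mathfrak m] \subseteq \Lie(\overline{[H_0,H_0]})$ follows immediately from the fact that $H_0/\overline{[H_0,H_0]}$ is abelian, hence has abelian Lie algebra $\mathfrak m/\Lie(\overline{[H_0,H_0]})$, forcing $[\mathfrak m,\mathfrak m]$ into the denominator. This is exactly the ``Lie algebra argument'' the paper invokes for compact groups in the last paragraph of the proof of \Cref{lemma: abelianization has finite kernel and cokernel}; you may cite that directly.
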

\begin{proof}
Let $Z_0$ be the maximal compact subgroup of $\Zbf(\Q_p)$.
The homomorphisms
$$
    Z_0
    \simeq
    \Zbf(\Q_p) / \Lambda
    \rightarrow
    (M / \Mbf^\der(\Q_p)) / \Lambda
    \leftarrow
    (M / \bar{[M, M]}) / \Lambda
    \leftarrow
    \Lambda H_0 / \Lambda \bar{[H_0, H_0]}
    \simeq
    H_0 / \bar{[H_0, H_0]}
$$
all have finite kernel and cokernel by the proof of \Cref{lemma: abelianisation has finite kernel and cokernel}, so all the corresponding character spaces have the same dimension. Thus, the proposition follows from the fact that, as $Z_0$ is compact, we have 
$\dim(\hat{ Z_0 }) = \dim(Z_0) = \dim( \Zbf )$.
\end{proof}

Recall that a subset $S$ of a rigid $E$-analytic space $X$ is called Zariski-dense if the smallest Zariski closed subvariety of $X$ containing $S$ is $X$ itself, that $S$ accumulates at a point $x$ of $X$ if any affinoid open neighbourhood of $x$ contains an affinoid subdomain $U$ such that $S \cap U$ is Zariski-dense in $U$, and that $S$ is an accumulation subset of $X$ if it accumulates at every point of $S$.

Assume now that $\Gbf = \Res_{L/\Q_p} \tilde \Gbf$ for some finite extension $L / \Q_p$ such that $E$ contains the images of all the embeddings of $L$ into $\bar \Q_p$ and \emph{split} reductive group $\tilde \Gbf$ over $L$.
Then, $\Pbf, \Mbf$ and $\Tbf$ admit similar descriptions.
The group $\Gbf$ is quasi-split, so we may fix a Borel subgroup $\Bbf \subseteq \Pbf$.
Let $\Delta_E$ (resp. $\Delta_{M,E}$) be the set of simple positive roots of $\Gbf_E$ (resp. $\Mbf_E$) with respect to $\Bbf_E$ (resp. $\Mbf_E \cap \Bbf_E$).
Fix an algebraic character $\lambda$ of $\Tbf_E$ that is dominant with respect to $\Mbf_E \cap \Bbf_E$.
If $H$ is closed subgroup of $M$, by an algebraic (resp. dominant regular) weight of $\hat{H}$ we mean the restriction to $H$ of an algebraic character of $\Mbf_E$ (resp. an algebraic character $\psi$ of $\Mbf_E$ such that $\lambda \psi$ is dominant regular with respect to $\Bbf_E$).

\begin{proposition}
\label{proposition: small slope weights are dense}
With the assumptions above, fix $a \in A^{--}$ and let $H_0$ be a compact open subgroup of $M$ such that $H_0 \cap Z = Z_0$ is the maximal compact subgroup of $Z = \Zbf(\Q_p)$.
Fix $h > 0$.
Then, the set of dominant regular weights $\psi$ such that
\begin{align}\label{eqn: small slope weight condition}
    v_p((s_{\alpha} \cdot (\lambda \psi))(a)) - v_p((\lambda \psi)(a)) > h
\end{align}
for all $\alpha \in \Delta_{E} \setminus \Delta_{M, E}$ accumulates at any algebraic weight of $\hat{H_0}$ and are Zariski-dense in the connected components of $\hat{H_0}$ containing an algebraic weight.
\end{proposition}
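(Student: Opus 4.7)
The plan is to explicitly exhibit many small-slope algebraic weights in any connected component $C$ of $\hat{H_0}$ containing an algebraic weight, by combining $p$-adic approximation with archimedean growth of pairings against coroots.

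First I will rewrite \Cref{eqn: small slope weight condition} using the identity $s_\alpha\cdot\mu = \mu - \langle\mu+\rho,\alpha^\vee\rangle\alpha$ for the dot action of a simple reflection: its left-hand side equals $-\langle\lambda\psi+\rho,\alpha^\vee\rangle\cdot v_p(\alpha(a))$. Since $v_p(\alpha(a))<0$ for every $\alpha\in\Delta_E\setminus\Delta_{M,E}$ by the assumption $a\in A^{--}$, the small-slope condition simplifies to the uniform positivity
$$\langle\lambda\psi+\rho,\alpha^\vee\rangle > c_\alpha := \frac{h}{-v_p(\alpha(a))} > 0, \qquad \alpha\in\Delta_E\setminus\Delta_{M,E}.$$
Next I will pick auxiliary characters. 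A strictly $(\Gbf,\Mbf)$-dominant $\beta\in X^*(\Mbf_E)$ with $\langle\beta,\alpha^\vee\rangle>0$ for all relevant $\alpha$ exists because $\Pbf_E$ has nontrivial connected centre. I also choose a $\mathbb{Z}$-basis $\beta_1,\dots,\beta_r$ of the rank-$\dim\Zbf$ lattice $X^*(\Mbf_E)$, and after replacing each $\beta_i$ by $\beta_i+N\beta$ for $N\gg 0$ I may assume $\langle\beta_i,\alpha^\vee\rangle\geq 0$ for every $\alpha\in\Delta_E\setminus\Delta_{M,E}$. Finally I fix an integer $M\geq 1$ divisible by the exponent of the finite quotient $H_0/H_0^{(1)}$ for some open pro-$p$ subgroup $H_0^{(1)}\subseteq H_0$, so that $\chi^{p^kM}$ lies in the connected component of the trivial character of $\hat{H_0}$ for every continuous character $\chi$ of $H_0$ and every $k\geq 0$.

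Given an algebraic weight $\psi_0$ of $\hat{H_0}$ (lifted to $\tilde\psi_0\in X^*(\Mbf_E)$), I consider the family of algebraic characters
$$\Psi_{k,\mathbf{n}} := \tilde\psi_0 + p^kM\Bigl(n_0\beta + \sum_{i=1}^{r} n_i\beta_i\Bigr), \qquad \mathbf{n}=(n_0,\dots,n_r)\in\mathbb{Z}_{\geq 0}^{r+1},\ n_0>0.$$
For each fixed $\mathbf{n}$, the pairing $\langle\lambda\Psi_{k,\mathbf{n}}+\rho,\alpha^\vee\rangle$ has the shape $(\text{const})+p^kM\bigl(n_0\langle\beta,\alpha^\vee\rangle+\sum_i n_i\langle\beta_i,\alpha^\vee\rangle\bigr)$ with strictly positive leading coefficient, so $\Psi_{k,\mathbf{n}}$ is dominant regular and satisfies the small-slope bound for $k$ sufficiently large. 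The restriction of $\Psi_{k,\mathbf{n}}$ to $H_0$ can be interpreted via the rigid-analytic map
$$\phi\colon \mathbb{B}^{r+1}_E \longrightarrow \hat{H_0}, \qquad (y_0,\dots,y_r)\longmapsto \tilde\psi_0|_{H_0}\cdot\beta^{y_0}|_{H_0}\cdot\prod_{i=1}^{r}\beta_i^{y_i}|_{H_0},$$
defined on a small polydisk $\mathbb{B}^{r+1}_E$ around the origin, so that $\Psi_{k,\mathbf{n}}|_{H_0}=\phi(p^kMn_0,\dots,p^kMn_r)$ lies in the connected component $C$ of $\psi_0$ by the choice of $M$. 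The key technical step will be verifying that, after possibly shrinking $\mathbb{B}^{r+1}_E$, $\phi$ is an open immersion onto a neighborhood $U$ of $\psi_0$ in $C$; this reduces to the surjectivity of the differentials of $\beta,\beta_1,\dots,\beta_r$ onto the tangent space $T_1(\hat{H_0})$, which in turn follows from the dimension count $\dim\hat{H_0}=\dim\Zbf=\mathrm{rank}\,X^*(\Mbf_E)$ (\Cref{proposition: dimension of character space}) together with the injectivity of the restriction $X^*(\Mbf_E)\otimes_{\mathbb Z} E\to T_1(\hat{H_0})$. The latter holds because $H_0$ is Zariski-dense in the connected group $\Mbf_E$, and the hypothesis $H_0\cap Z=Z_0$ ensures in particular that $H_0$ is large enough for this density (together with $\mathrm{Lie}$-theoretic detection of the centre) to be in force.

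The proof then concludes as follows. Accumulation at $\psi_0$ is immediate from $\Psi_{k,\mathbf{n}}|_{H_0}=\phi(p^kMn_0,\dots,p^kMn_r)\to\phi(0)=\psi_0$ as $k\to\infty$ for any fixed $\mathbf{n}$. For Zariski density in $C$, observe that for each sufficiently large $k$ the set $\{(p^kMn_0,\dots,p^kMn_r):n_0>0,\ n_i\in\mathbb{Z}_{\geq 0}\}$ is $p$-adically dense in the polydisk, so any rigid-analytic function on $U$ vanishing on all the points $\Psi_{k,\mathbf{n}}|_{H_0}$ must vanish identically on $U$, and therefore on the irreducible connected component $C$ (\Cref{lemma: finite group acting on character space} describes $C$ as a product of copies of $\Gm$ and an open ball, in particular irreducible). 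The main obstacle in this plan is precisely the tangent-space surjectivity step underpinning the open-immersion claim for $\phi$; once that is secured, both accumulation and Zariski density fall out mechanically from the explicit choice of $\beta,\beta_i,M$.
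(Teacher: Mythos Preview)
Your approach differs from the paper's. The paper does not argue directly on $\hat{H_0}$: it reduces via the finite flat restriction maps $\hat{H_0}\to\hat{Z_0}\to\hat{\tilde A_0}$ to the case of the maximal compact subgroup of a split torus, and then invokes the ``usual argument''. Your plan instead constructs the dense family of small-slope weights directly by $p$-adically interpolating algebraic characters of $\Mbf_E$. Both strategies are viable; yours is more self-contained and makes the role of the dimension identity $\dim\hat{H_0}=\operatorname{rank}X^*(\Mbf_E)$ explicit, while the paper's reduction is shorter once one grants the torus case.

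There is, however, a genuine gap in your plan as stated. The map $\phi$ goes from an $(r+1)$-dimensional polydisk to the $r$-dimensional space $\hat{H_0}$ (with $r=\dim\Zbf=\operatorname{rank}X^*(\Mbf_E)$), so it \emph{cannot} be an open immersion. What your tangent-space computation actually shows is that the differential of $\phi$ is surjective, i.e.\ $\phi$ is a submersion. This still suffices: by the rigid-analytic implicit function theorem $\phi$ admits a local section near $0$, so its image contains an open affinoid neighbourhood $U$ of $\psi_0$, and pullback of functions along $\phi$ is injective over $U$; hence any function on $U$ vanishing on the image of your $p$-adically dense set vanishes identically. Alternatively, simply drop $y_0$ from the definition of $\phi$ (retaining the $n_0\beta$ term only inside the formula for $\Psi_{k,\mathbf n}$, where it enforces dominance); then $\phi\colon\mathbb B^r_E\to\hat{H_0}$ has bijective differential and is a local isomorphism, which is presumably what you had in mind. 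One further point worth making explicit: the $k$-threshold for the small-slope inequality is \emph{uniform} in $\mathbf n$, since $n_0\langle\beta,\alpha^\vee\rangle+\sum_i n_i\langle\beta_i,\alpha^\vee\rangle\geq\langle\beta,\alpha^\vee\rangle>0$ whenever $n_0\geq 1$ and $n_i\geq 0$; this uniformity is what allows you to fix a single large $k$ and still obtain a $p$-adically dense set of small-slope points in the smaller polydisk $(p^kM\Z_p)^{r}$.
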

\begin{proof}
The proof of \Cref{proposition: dimension of character space} shows that the restriction map $\hat{H_0} \to \hat{Z_0}$ is finite flat.
By \cite[Lemme 6.2.8]{chenevier_gln}, it suffices to prove the statement with $H_0$ replaced by $Z_0$.
Let $\tilde \Zbf$ be the center of $\tilde \Mbf$ and $\tilde \Abf$ its identity component, or equivalently its maximal subtorus.
Let $\tilde A := \tilde \Abf(L)$ and let $\tilde A_0$ be its maximal compact subgroup.
Then, $\tilde A_0$ has finite index in $Z_0$, so \cite[Lemme 6.2.8]{chenevier_gln} again shows that it's enough to prove the statement of the proposition for $\hat{ \tilde A_0 }$ instead of $\hat{ Z_0 }$ (or $\hat{H_0}$).
The proof in this setting is standard.
\end{proof}

\subsection{Eigenvarieties from \texorpdfstring{$S$}{S}-arithmetic homology}
\label{subsection: construction of eigenvarieties}

We are finally ready to tackle our construction of eigenvarieties.
We fix throughout a base field $E$ that is a finite extension of $\Q_p$.

We place ourselves in the setting of \Cref{subsection: global setting}, except we assume that $\sigma$ and $U$ are $E$-vector spaces instead of $R$-modules.
If $\Ucal$ is an open affinoid subspace of $\hat{M_\Sigma}$, $\Ucal = \Sp(R)$, the inclusion $\Ucal \subseteq \hat{M_\Sigma}$ induces a continuous character $\delta_\Ucal \colon M_\Sigma \to R^\times$. For any such affinoid, we let $\sigma_\Ucal = \sigma \otimes_E \delta_\Ucal$ and $V_\Ucal = \cInd_{\HM}^M (\sigma_\Ucal) \simeq V \cotimes_E \delta_\Ucal$. These give rise to representations
$$
    \SigmaInd_P^G(U, V_\Ucal) := U \otimes_E \Ind_P^G(V_\Ucal)^{\Sigmala} \simeq (U \otimes_E R) \otimes_R \Ind_P^G(V_\Ucal)^{\Sigmala},
$$
$\Acal_{\sigma_\Ucal, U}^{\Sigmala}$, $\Acal_{\sigma_\Ucal, U}^{s-\Sigmaan}$, etc. (this is the same notation as in \Cref{subsection: global setting}, except we write $U$ instead of $U \otimes_E R$).

\begin{proposition}
The association $\Ucal \mapsto H_*(K^S, \SigmaInd_P^G(U, V_\Ucal ))$ defines a coherent sheaf $\mathscr H$ on $\hat{M_\Sigma}$.
\end{proposition}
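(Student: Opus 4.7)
The plan is to verify that the assignment is a presheaf of finitely generated modules satisfying flat base change along inclusions of affinoid subdomains; together with the fact that the topology on $\hat M_\Sigma$ is strictly quasi-Stein, this implies coherence by Kiehl's theorem. The key point will be that on each affinoid $\Ucal = \Sp(R)$ the homology $H_*(K^S, \SigmaInd_P^G(U, V_\Ucal))$ can be computed as derived eigenspaces for a Hecke algebra acting on a complex of orthonormalizable Banach $R$-modules, to which the spectral theorem of \Cref{subsection spectral theorem} applies.

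First I would fix $\Ucal = \Sp(R)$ and invoke \Cref{corollary: spectral sequence for affinoids} to obtain a $\Tbb^S(K^S)$-equivariant convergent spectral sequence
\[
    E^2_{i,j} = \Tor^{\Hcal_{S,R}^-}_i(R(1), H_j(K^S J, \Acal_{\sigma_\Ucal, U}^{s-\Sigmaan})) \Rightarrow H_{i+j}(K^S, \SigmaInd_P^G(U, V_\Ucal)),
\]
for any $s \geq s_\sigma$, and identify $\Hcal_{S,R}^- \simeq R[\HM^-/J_M]$ via \Cref{corollary: hecke algebra isomorphic to monoid ring}. The setup of \Cref{subsection: overconvergent homology} (applied after untwisting as in \Cref{subsection: untwisting}, cf. \Cref{lemma: untwisting homology}) shows that $C_\bullet(K^S J, \Acal_{\sigma_\Ucal, U}^{s-\Sigmaan})$ is a complex of orthonormalizable Banach $R$-modules on which $\Hcal_{S,R}^-$ acts by continuous operators, and such that $U_a$ acts as an $R$-compact operator whenever $a \in A_\Sigma^{--}$.

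Next I would apply \Cref{corollary: spectral theorem 2} with $\boldLambda = \HM/J_M$ and $\boldLambda^- = \HM^-/J_M$, taking $\Lambda \subseteq \boldLambda$ to be the image of a section $A/A_0 \simto \Lambda \into \HM$ (as in the proof of \Cref{proposition: higher tor vanishes}, where all the hypotheses of \Cref{proposition: vanishing Tor 6} are verified). This yields that each $E^2_{i,j}$ is a finitely generated $R$-module and that for any flat continuous map $R \to R'$ of Noetherian Banach $E$-algebras,
\[
    \Tor^{\Hcal_{S,R'}^-}_i(R'(1), H_j(K^S J, \Acal_{\sigma_\Ucal, U}^{s-\Sigmaan} \cotimes_R R')) \simeq E^2_{i,j} \otimes_R R'.
\]
Since $\Acal_{\sigma_\Ucal, U}^{s-\Sigmaan} \cotimes_R R' \simeq \Acal_{\sigma_{\Vcal}, U}^{s-\Sigmaan}$ when $\Vcal = \Sp(R')$ arises from an affinoid subdomain of $\Ucal$, the spectral sequence for $\Ucal$ base-changes termwise to that for $\Vcal$; as both spectral sequences converge in finitely many steps to the respective abutments, the abutment $H_*(K^S, \SigmaInd_P^G(U, V_\Ucal))$ is a finitely generated $R$-module, and the natural morphism
\[
    H_*(K^S, \SigmaInd_P^G(U, V_\Ucal)) \otimes_R R' \to H_*(K^S, \SigmaInd_P^G(U, V_\Vcal))
\]
is an isomorphism.

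Finally I would assemble these local pieces into a coherent sheaf: functoriality of the constructions above in the affinoid $\Ucal$ gives a presheaf of finitely generated modules on the category of affinoid subdomains of $\hat M_\Sigma$, and the flat base change property just established, together with Tate acyclicity and Kiehl's theorem, upgrades this to a coherent sheaf $\mathscr H$ on $\hat M_\Sigma$. The main technical obstacle is the invocation of the spectral theorem: one must track that the Hecke action of $\Hcal_{S,R}^-$ on overconvergent homology satisfies all the hypotheses of \Cref{corollary: spectral theorem 2} uniformly in $\Ucal$, in particular checking compactness of $U_a$ for $a \in A_\Sigma^{--}$ and the existence of the right decomposition of $\boldLambda^-$ into monoids of the form in \Cref{proposition: vanishing Tor 6}; both, however, were already established in \Cref{proposition: higher tor vanishes} and the paragraph preceding \Cref{lemma: U operators}.
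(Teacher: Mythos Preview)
Your proof is correct and follows essentially the same approach as the paper: reduce via the spectral sequence of \Cref{corollary: spectral sequence for affinoids} to proving finiteness and flat base change for the $E^2$-terms, then apply \Cref{corollary: spectral theorem 2} using that $C_\bullet(K^S J,\Acal_{\sigma_\Ucal,U}^{s-\Sigmaan})$ is a complex of orthonormalizable Banach $R$-modules with $U_a$ compact for $a\in A_\Sigma^{--}$. One minor point: the detour through untwisting is unnecessary here, since in this setting $\sigma_\Ucal$ is already a representation of $\HM$ (not merely of $J_M$), so $\Acal_{\sigma_\Ucal,U}^{s-\Sigmaan}$ carries a genuine $J\HM^+J$-action and \Cref{corollary: spectral theorem 2} applies directly; your explicit appeal to Kiehl's theorem simply spells out what the paper leaves implicit.
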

\begin{proof}
We need to check that if $\Ucal = \Sp(R)$, then $H_*(K^S, \SigmaInd_P^G(U, V_\Ucal ))$ is finitely generated over $R$ and that if $\Ucal' \subseteq \Ucal$ is an admissible open affinoid, $\Ucal = \Sp(R')$, then $H_*(K^S, \SigmaInd_P^G(U, V_{\Ucal'} )) \simeq H_*(K^S, \SigmaInd_P^G(U, V_\Ucal )) \otimes_R R'$.
In view of the spectral sequence in \Cref{corollary: spectral sequence for affinoids}, it's enough to prove the same statements for $\Tor_i^{\Hcal_{S,R}^-}( R(1), H_j(K^S J, \Acal_{\sigma_\Ucal, U}^{s-\Sigmaan} ))$ and its analogue over $R'$ for all $i, j$.
This follows from \Cref{corollary: spectral theorem 2} since $C_\bullet(K^S J, \Acal_{\sigma_\Ucal, U}^{s-\Sigmaan} )$ is a complex of orthonormalisable Banach $R$-modules 
(the term in each degree being isomorphic to a finite direct sum of copies of $\Acal_{\sigma_\Ucal, U}^{s-\Sigmaan}$),
the operators $U_\mu$ give a well-defined action of $\Hcal_{S, R}^-$ on homology (recall also \Cref{corollary: hecke algebra isomorphic to monoid ring}),
and if $a \in A^{--}_\Sigma$ then the operator $U_a$ is $R$-compact.
\end{proof}

The action of $\Tbb^S(K^S)$ on $H_*(K^S, \SigmaInd_P^G(U, V_\Ucal))$ induces a $\Q_p$-algebra homomorphism $\Tbb^S(K^S) \to \End_{\hat{M_\Sigma}}(\mathscr H)$, so we obtain a coherent sheaf $\Tscr$ of $\O_{\hat {M_\Sigma}}$-algebras on $\hat{M_\Sigma}$ assigning to $\Ucal = \Sp(R) \subseteq \hat{M_\Sigma}$ the $R$-subalgebra $\Tbb_{\Ucal}$ of $\End_R(H_*(K^S, \SigmaInd_P^G(U, V_\Ucal) ) )$ generated by the image of $\Tbb^S(K^S)$ under the above homomorphism.

We write $\Xscr_{V, U}^\Sigma(K^S)$ for the relative spectrum of $\Tscr$ over $\hat{M_\Sigma}$. As $V$ may be isomorphic to some of its twists by characters of $M_\Sigma$, it is possible for the same systems of eigenvalues for $\Tbb^S(K^S) \otimes \Hcal_{S,E}^-$ to be found on the fibers of several characters (note that we are including the Hecke operators at $p$). This can be solved by taking the quotient of $\Xscr_{V, U}^\Sigma(K^S)$ by the group of such characters.
The following lemma shows that we can actually do this in most cases of interest.

\begin{lemma}\label{lemma: can take quotients of X}
Assume that $V_\Sigma$ has a central character.
The group $\Tcal_M$ of continuous characters $\delta \colon M_\Sigma \to \bar \Q_p^\times$ such that $\bar \Q_p \otimes_E V \simeq V \otimes \delta$ is finite. If all such characters are defined over $E$ and $\End_{G}(V) = E$, this group acts compatibly on the spaces $\Xscr_{V, U}^\Sigma(K^S)$ and $\hat{M_\Sigma}$, which have (compatible) admissible affinoid coverings which are stable under this action.
\end{lemma}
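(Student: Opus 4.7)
The plan is to prove the three assertions separately, starting with finiteness.

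For finiteness of $\Tcal_M$, I would first reduce to a statement about $V_\Sigma$. Since $V = V_\Sigma \hat\boxtimes_E V^\Sigma$ and the character $\delta$ only affects the $\Sigma$-component, the condition $\bar\Q_p \otimes_E V \simeq V \otimes \delta$ is equivalent (as $V^\Sigma \neq 0$) to $\bar\Q_p \otimes_E V_\Sigma \simeq V_\Sigma \otimes \delta$. Because $V_\Sigma$ is assumed to admit a central character $\omega$, comparing central characters forces $\delta|_{Z_\Sigma} = 1$, where $Z_\Sigma$ denotes the center of $M_\Sigma$. Thus $\delta$ factors through the (locally profinite) group $M_\Sigma / Z_\Sigma$, which is an open subgroup of $\Mbf_\Sigma^{\mathrm{ad}}(\Q_p)$ of finite index. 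By the argument recorded in the proof of \Cref{lemma: abelianization has finite kernel and cokernel} (Kneser--Tits for isotropic simple factors and the Lie-algebra argument for anisotropic ones), each simple factor of the adjoint group has finite abelianization, so the abelianization of $M_\Sigma / Z_\Sigma$ is finite. Consequently the group of continuous characters $M_\Sigma/Z_\Sigma \to \bar\Q_p^\times$ is finite, so $\Tcal_M$ is finite.

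Next I would construct the $\Tcal_M$-action compatibly on both spaces. On $\hat{M_\Sigma}$, an element $\tau \in \Tcal_M \subseteq \hat{M_\Sigma}(E)$ acts by multiplication $\delta \mapsto \delta\tau$, a rigid-analytic automorphism. Given an affinoid $\Ucal = \Sp(R)$, the identification $\O(\Ucal) = \O(\Ucal\tau) = R$ sends $\delta_{\Ucal\tau}$ to $\delta_\Ucal \cdot \tau$, whence $V_{\Ucal\tau} = V_\Ucal \otimes_E \tau$ as $M_\Sigma$-representations over $R$. Since $\tau$ is defined over $E$ and $\End_G(V) = E$ (which in particular implies $\End_M(V) = E$), $\tau \in \Tcal_M$ provides a $G$-equivariant isomorphism $\phi_\tau \colon V \otimes \tau \simto V$ that is unique up to $E^\times$. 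Base-changing $\phi_\tau$ along $E \to R$ gives an $M_\Sigma$-equivariant isomorphism $V_{\Ucal\tau} \simto V_\Ucal$ over $R$, hence a $G$-equivariant isomorphism $\SigmaInd_P^G(U, V_{\Ucal\tau}) \simto \SigmaInd_P^G(U, V_\Ucal)$. Applying $H_*(K^S, \blank)$ and noting that this morphism is $\Tbb^S(K^S)$-equivariant yields an isomorphism of Hecke-algebra images $\Tbb_{\Ucal\tau} \simto \Tbb_\Ucal$; since rescaling $\phi_\tau$ by an element of $E^\times$ only rescales the resulting isomorphism on homology (and thus leaves its image subalgebra unchanged), this identification is canonical. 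These local identifications glue into a $\Tcal_M$-action on $\Xscr^\Sigma_{V, U}(K^S)$ lifting the action on $\hat{M_\Sigma}$.

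Finally, for the compatible affinoid covering, I observe that $M_\Sigma$ is topologically finitely generated (being the group of $\Q_p$-points of a reductive group), so \Cref{lemma: finite group acting on character space} produces an admissible affinoid covering of $\hat{M_\Sigma}$ stable under the finite group $\Tcal_M$. Since $\Tscr$ is a coherent sheaf of $\O_{\hat{M_\Sigma}}$-algebras, the natural map $\Xscr^\Sigma_{V, U}(K^S) \to \hat{M_\Sigma}$ is finite; the preimages of the $\Tcal_M$-stable affinoids are therefore $\Tcal_M$-stable affinoid subdomains forming the required compatible covering. The main technical subtlety lies in Step 2: one must verify that the projective ambiguity in $\phi_\tau$ does not obstruct defining an actual (rather than projective) $\Tcal_M$-action on $\Xscr$, which is exactly what is resolved by passing from the module-level isomorphism to the induced isomorphism of subalgebras of endomorphisms.
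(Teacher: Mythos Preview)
Your proof is correct and follows essentially the same approach as the paper's: both kill $\delta$ on the center by comparing central characters and on $\overline{[M_\Sigma, M_\Sigma]}$ automatically, invoke the Kneser--Tits/Lie-algebra argument from \Cref{lemma: abelianization has finite kernel and cokernel} to conclude the relevant quotient is finite, then use \Cref{lemma: finite group acting on character space} for the stable covering and the scalar-ambiguity in $V \simeq V \otimes \tau$ (from $\End(V) = E$) to build the action on $\Tscr$. The only cosmetic difference is that the paper argues directly that $M_\Sigma/(Z_\Sigma \cdot \overline{[M_\Sigma, M_\Sigma]})$ is finite (via $\overline{[M_\Sigma, M_\Sigma]}$ having finite index in $\Mbf_\Sigma^{\der}(\Q_p)$), whereas you route through $\Mbf_\Sigma^{\mathrm{ad}}(\Q_p)$ --- your inference that a finite-index open subgroup inherits finite abelianization is not entirely automatic as stated, but is immediate once one notes (as in \Cref{lemma: abelianization has finite kernel and cokernel}) that commutators factor through the simply connected cover and hence land in the image of $M_\Sigma$.
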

\begin{proof}
Comparing the central characters of $V_\Sigma$ and $V_\Sigma \otimes \delta$ for a character $\delta$ as above we see that the restriction of $\delta$ to the center of $M_\Sigma$ is trivial. It is also trivial on $\overline{ [ M_\Sigma, M_\Sigma ] }$, which has finite index in the group of points of the derived group, $\Mbf_\Sigma^\der(\Q_p)$. Thus, $\delta$ factors through the (finite) quotient of $M_\Sigma$ by the subgroup generated by its center and $\overline{ [ M_\Sigma, M_\Sigma ] }$. In particular, there are only finitely many possibilities for $\delta$.

Assume that all such $\delta$ are defined over $E$. By \Cref{lemma: finite group acting on character space}, this group acts on $\hat{M_\Sigma}$, which admits an admissible covering $\{ \Ucal_i \}_i$ by open affinoids that are stable under this action. Assume that $\End_G(V) = E$. This implies that the isomorphisms $V \simeq V(\delta)$ for $\delta$ as above are uniquely defined up to a constant. The pullback $\delta^* \mathscr H$ sends $\mathcal U_i$ to $H_*(K^S, \SigmaInd_P^G(U, V_{\Ucal_i} \otimes \delta ))$, which is isomorphic to $H_*(K^S, \SigmaInd_P^G(U, V_{\Ucal_i} ))$ canonically-up-to-scalar. This determines a canonical isomorphism $(\delta^* \mathscr T)(\Ucal_i) \simeq \mathscr T (\Ucal_i)$ that is compatible with multiplication of characters and with restriction to (invariant) affinoid subspaces. This gives rise to an action on $\Xscr_{V, U}^\Sigma(K^S)$ as claimed in the statement.
\end{proof}

\begin{definition}
If all the assumptions in (both parts of) \Cref{lemma: can take quotients of X} are satisfied, we call the quotient $\Yscr_{V, U}^\Sigma(K^S)$ of $\Xscr_{V, U}^\Sigma(K^S)$ by the action of $\Tcal_M$ the eigenvariety of tame level $K^S$ associated to $(V, U)$ (with variation at places in $\Sigma$).
\end{definition}

Let us remark that this space does not depend on the choice of $\sigma$ giving rise to $V$ up to canonical isomorphism, and depends on $U$ only up to isomorphism, which is canonical if $U$ is an irreducible algebraic representation of $G_{\Scalp}$ (or more generally, if $U$ has no non-trivial automorphisms). Moreover, twisting $U$ or $V$ by a locally analytic character of $M^\Sigma$ gives canonically isomorphic eigenvarieties.

It is usually more convenient to work directly with $\Xscr_{V, U}^\Sigma(K^S)$ and pass to the quotient $\Yscr_{V, U}^\Sigma(K^S)$ afterwards if necessary, so we will not make use of $\Yscr_{V, U}^\Sigma(K^S)$ very often.
Note that the construction of $\Xscr_{V, U}^\Sigma(K^S)$ can be carried out verbatim if one works over $\hat{ \HM[\Sigma] }$ instead of $\hat{M_\Sigma}$, the only difference being that it does not make sense to say that $V_\Ucal := \cInd_{\HM}^M(\sigma_\Ucal) \simeq V \otimes_E \delta_\Ucal$ for $\Ucal = \Sp(R) \subseteq \hat {\HM[\Sigma]}$ an open affinoid with associated character $\delta_\Ucal$ (since $V \otimes_E \delta_\Ucal$ is not a representation of $M$).
Thus, this construction depends on $\sigma$ and $\HM$ and not just on $V$.
We will write $\mathscr H_{\HM}$ and $\Tbb_{\HM, \Ucal}$ for the analogues of $\mathscr H$ and $\Tbb_{\Ucal}$.
Write $\Xscr_{\HM, \sigma, U}^\Sigma(K^S)$ for the resulting variety over $\hat{\HM[\Sigma]}$.

\begin{proposition}\label{proposition: eigenvarieties fiber product}
The inclusion $\HM \into M$ induces an isomorphism $$
    \Xscr_{V, U}^\Sigma(K^S) \simto \Xscr_{\HM, \sigma, U}^\Sigma(K^S) \times_{\hat{\HM[\Sigma]}} \hat{M_\Sigma}
$$
\end{proposition}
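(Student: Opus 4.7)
Restriction of characters from $M_\Sigma$ to $\HM[\Sigma]$ induces a morphism of character spaces $f \colon \hat{M_\Sigma} \to \hat{\HM[\Sigma]}$; by \Cref{lemma map M to A J_M is finite flat} (whose hypotheses are met since $\HM$ contains $A J_M$ and hence $\HM[\Sigma]$ contains $A_\Sigma J_{M,\Sigma}$), this $f$ is finite flat. I will prove the proposition by identifying the coherent sheaf of $\O_{\hat{M_\Sigma}}$-algebras $\Tscr$ defining $\Xscr_{V,U}^\Sigma(K^S)$ with the pullback $f^*\Tscr'$, where $\Tscr'$ is the analogous sheaf on $\hat{\HM[\Sigma]}$ defining $\Xscr_{\HM,\sigma,U}^\Sigma(K^S)$. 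The compatibility of the relative Spec construction with base change then gives the claimed isomorphism.

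The key observation is that for any affinoid $\Ucal = \Sp(R) \subseteq \hat{M_\Sigma}$ with universal character $\delta_\Ucal \colon M_\Sigma \to R^\times$, the representation $V_\Ucal = \cInd_\HM^M(\sigma \otimes \delta_\Ucal|_{\HM})$ depends on $\delta_\Ucal$ only through its restriction to $\HM[\Sigma]$. Using finiteness of $f$, I would cover $\hat{M_\Sigma}$ admissibly by affinoids of the form $\Ucal = f^{-1}(\Vcal)$ with $\Vcal = \Sp(R') \subseteq \hat{\HM[\Sigma]}$, so that for each such pair one has $V_\Ucal = V_\Vcal \otimes_{R'} R$ canonically, and hence also $\Acal_{\sigma_\Ucal, U}^\Sigma = \Acal_{\sigma_\Vcal, U}^\Sigma \cotimes_{R'} R$. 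The flat base-change in \Cref{corollary: spectral theorem 2}\itemnumber{2}, applied to the complex $C_\bullet(K^SJ, \Acal_{\sigma_\Vcal, U}^{s-\Sigmaan})$ along $R' \to R$ (which is flat by finite flatness of $f$), yields an isomorphism on the $E^2$-pages of the spectral sequences of \Cref{corollary: spectral sequence for affinoids}, and hence a $\Tbb^S(K^S)$-equivariant isomorphism $\mathscr H'(\Vcal) \otimes_{R'} R \simto \mathscr H(\Ucal)$. Gluing these local isomorphisms produces a canonical identification $\mathscr H \simeq f^*\mathscr H'$.

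Because the operators of $\Tbb^S(K^S)$ act compatibly on both sides of this isomorphism, the subalgebras $\Tscr \subseteq \underline{\End}(\mathscr H)$ and $\Tscr' \subseteq \underline{\End}(\mathscr H')$ are related by a canonical surjection $f^*\Tscr' \onto \Tscr$. The main technical point, which I expect to be the principal obstacle, is injectivity of this surjection: one must check that taking the image of $\Tbb^S(K^S)$ commutes with base change from $R'$ to $R$, which reduces to the injectivity of the natural map
\[
    \End_{R'}(\mathscr H'(\Vcal)) \otimes_{R'} R \longrightarrow \End_R(\mathscr H'(\Vcal) \otimes_{R'} R).
\]
Since $\mathscr H'(\Vcal)$ is finitely generated over the Noetherian affinoid $R'$ (hence finitely presented) and $R$ is flat over $R'$, this map is in fact an isomorphism by standard flat base change for $\Hom$. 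Combined with the identification of the previous paragraph, this yields $\Tscr \simeq f^*\Tscr'$, and the proposition follows by taking relative Spec over $\hat{M_\Sigma}$.
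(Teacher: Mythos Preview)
Your proposal is correct and follows essentially the same approach as the paper: both use finite flatness of the restriction map $\hat{M_\Sigma}\to\hat{\HM[\Sigma]}$ (via \Cref{lemma map M to A J_M is finite flat}), then apply \Cref{corollary: spectral theorem 2}\itemnumber{2} together with the spectral sequence of \Cref{corollary: spectral sequence for affinoids} to identify $\mathscr H$ with the pullback of its analogue on $\hat{\HM[\Sigma]}$, and finally pass to the Hecke-algebra sheaves and relative Spec. Your explicit justification of the step $\Tscr\simeq f^*\Tscr'$ via flat base change for $\Hom$ on finitely presented modules is a point the paper leaves implicit, so your write-up is in fact slightly more detailed there.
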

\begin{proof}
By \Cref{lemma map M to A J_M is finite flat}, the natural morphism $\hat{M_\Sigma} \to \hat{\HM[\Sigma]}$ is finite flat. Let $\Ucal = \Sp(R)$ be an open affinoid subset of $\hat{\HM[\Sigma]}$. The preimage $\Vcal$ of $\Ucal$ in $\hat{M_\Sigma}$ is then an affinoid open subset of $\hat{M_\Sigma}$, $\Vcal = \Sp(B)$, where $B$ is a finite flat $R$-algebra. Write $\delta_\Ucal \colon \HM[\Sigma] \to R^\times$ and $\delta_\Vcal \colon M_\Sigma \to B^\times$ for the associated characters. The restriction $\delta_\Vcal|_{\HM}$ is simply the composition of $\delta_\Ucal$ and the morphism $R \to B$. In particular, it follows from \Cref{corollary: spectral theorem 2} \itemnumber{2} together with the spectral sequence from \Cref{corollary: spectral sequence for affinoids} that there is a $\Tbb^S(K^S)$-equivariant isomorphism
$$
    H_*(K^S, \SigmaInd_P^G(U, V_{\Vcal} )) \simeq H_*(K^S, \SigmaInd_P^G(U, V_\Ucal )) \otimes_R B
$$
This shows the coherent sheaf $\mathscr H$ on $\hat{M_\Sigma}$ is the pullback of its analogue on $\hat{\HM[\Sigma]}$. The same is then true for the coherent sheaves of $\O_{\hat{M_\Sigma}}$-algebras and $\O_{\hat{\HM[\Sigma]}}$-algebras obtained from the action of $\Tbb^S(K^S)$. Taking relative spectra, we obtain the result.
\end{proof}

\begin{proposition}\label{proposition: eigenvarieties fiber product 2}
\begin{enumerate}
    \fixitem If $\HM \subseteq \HM' \subseteq \NJM$, there is a natural isomorphism
    $$
        \Xscr_{\HM', \cInd_{\HM}^{\HM'} (\sigma), U}^\Sigma(K^S)
        \simto
        \Xscr_{\HM, \sigma, U}^\Sigma(K^S) \times_{\hat{\HM[\Sigma]}} \hat{\HM[\Sigma]'}.
    $$
    
    \item If $\HM \subseteq \HM' \subseteq \NJM$ and $\sigma$ extends to a representation of $\HM'$, then there is a natural closed immersion
    $$
        \Xscr_{\HM', \sigma, U}^\Sigma(K^S) \into \Xscr_{\HM, \sigma|_{\HM}, U}^\Sigma(K^S) \times_{\hat{\HM[\Sigma]}} \hat{\HM[\Sigma]'}
    $$
\end{enumerate}
\end{proposition}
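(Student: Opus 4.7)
The plan is to prove both parts by exploiting the behavior of compact induction in families. For part \itemnumber{1}, the key identity is transitivity of compact induction: for any open affinoid $\Ucal' = \Sp(R') \subseteq \hat{\HM[\Sigma]'}$ with associated character $\delta_{\Ucal'}$, transitivity together with the projection formula give a canonical isomorphism of $M$-representations over $R'$,
\begin{align*}
    \cInd_{\HM'}^M\bigl(\cInd_{\HM}^{\HM'}(\sigma) \otimes \delta_{\Ucal'}\bigr) \simeq \cInd_{\HM}^M\bigl(\sigma \otimes \delta_{\Ucal'}|_{\HM}\bigr).
\end{align*}
Applying $\SigmaInd_P^G(U, \blank)$ and $S$-arithmetic homology therefore yields canonically isomorphic $\Tbb^S(K^S) \otimes R'$-modules. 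To upgrade this into an isomorphism of the defining coherent sheaves on $\hat{\HM[\Sigma]'}$, I would verify that the restriction map $\pi \colon \hat{\HM[\Sigma]'} \to \hat{\HM[\Sigma]}$ is finite flat by the same argument as in \Cref{lemma map M to A J_M is finite flat} (since $\HM/\overline{[\HM,\HM]} \to \HM'/\overline{[\HM',\HM']}$ has finite kernel and cokernel by \Cref{lemma: abelianization has finite kernel and cokernel}), so that \Cref{corollary: spectral theorem 2} supplies the required flat base change. Taking relative spectra then yields the claimed isomorphism.

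For part \itemnumber{2}, the first observation is that $[\HM':\HM]$ is finite, since $AJ_M$ has finite index in $\tilde J_M$ by \Cref{remark: normalizer of J_M has free quotient}. As we are working in characteristic zero, the canonical $\HM'$-equivariant surjection $\cInd_{\HM}^{\HM'}(\sigma|_{\HM}) \onto \sigma$ sending $[h,v] \mapsto h \cdot v$ then admits a $\HM'$-equivariant section given by the averaging formula
\begin{align*}
    v \mapsto \tfrac{1}{[\HM':\HM]} \sum_{[h] \in \HM'/\HM} [h, h^{-1}v].
\end{align*}
This exhibits $\sigma$ as a direct summand of $\cInd_{\HM}^{\HM'}(\sigma|_{\HM})$ as $\HM'$-modules, and the decomposition is manifestly natural under twists by characters of $\HM'$. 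Twisting by $\delta_{\Ucal'}$ and applying the additive functors $\cInd_{\HM'}^M$, $\SigmaInd_P^G(U,\blank)$, and $H_*(K^S, \blank)$ in turn produces a functorial direct sum decomposition of the $\Tbb^S(K^S) \otimes R'$-module
\[
    H_*\bigl(K^S, \SigmaInd_P^G\bigl(U, \cInd_{\HM}^M(\sigma|_\HM \otimes \delta_{\Ucal'}|_\HM)\bigr)\bigr)
\]
into a summand computing $H_*(K^S, \SigmaInd_P^G(U, V_{\Ucal'}))$ and a complementary piece.

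Since the annihilator of a direct sum of modules is contained in the annihilator of each summand, the Hecke algebra attached to the whole module surjects onto that attached to the summand $H_*(K^S, \SigmaInd_P^G(U, V_{\Ucal'}))$. Taking relative spectra then yields a closed immersion of $\Xscr_{\HM', \sigma, U}^\Sigma(K^S)$ into $\Xscr_{\HM', \cInd_{\HM}^{\HM'}(\sigma|_{\HM}), U}^\Sigma(K^S)$ over $\hat{\HM[\Sigma]'}$, and identifying the target with the fiber product $\Xscr_{\HM, \sigma|_{\HM}, U}^\Sigma(K^S) \times_{\hat{\HM[\Sigma]}} \hat{\HM[\Sigma]'}$ via part \itemnumber{1} completes the proof. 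The main technical point to verify is the naturality of the averaging section in the family parameter $\Ucal'$, but this is immediate from the explicit formula together with the fact that $\delta_{\Ucal'}$ enters only through an $R'$-linear twist of the coefficients.
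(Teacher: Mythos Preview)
Your argument for part \itemnumber{1} is the same as the paper's: transitivity of compact induction plus the flat base change from \Cref{corollary: spectral theorem 2}, exactly as in the proof of \Cref{proposition: eigenvarieties fiber product}.

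For part \itemnumber{2}, your approach is correct but genuinely different from the paper's. You work at the level of representations: using that $[\HM':\HM]$ is finite and we are in characteristic zero, you exhibit $\sigma$ as an $\HM'$-direct summand of $\cInd_{\HM}^{\HM'}(\sigma|_{\HM})$ via the averaging section, then push this through the additive functors and invoke part \itemnumber{1}. The paper instead works at the level of the Tor groups that feed into the spectral sequence of \Cref{corollary: spectral sequence for affinoids}: it observes that the $J\HM^+J$-module $\Acal_{\sigma_{\Ucal'},U}^{s-\Sigmaan}$ is the same whether one regards $\sigma$ as an $\HM$-representation or an $\HM'$-representation (only the Hecke algebra changes, from $\Hcal_{S,\HM,R'}^-$ to the larger $\Hcal_{S,\HM',R'}^-$), and then applies \Cref{lemma: vanishing Tor 5} to conclude that $\Tor$ over the larger monoid algebra is a direct summand of $\Tor$ over the smaller one. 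Both arguments ultimately rest on averaging over the finite quotient $\HM'/\HM$; your route is more modular (it cleanly reduces \itemnumber{2} to \itemnumber{1}), while the paper's route stays within the Tor framework already set up and avoids reproving that the splitting is functorial in $\Ucal'$.
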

\begin{proof}
Statement \itemnumber{1} follows from the transitivity of compact induction from $\HM$ to $\HM'$ and $\HM'$ to $M$ and the same arguments as in the proof of \Cref{proposition: eigenvarieties fiber product}.
Let us prove \itemnumber{2}. Let 
$\Ucal = \Sp(R) \subseteq \hat{\HM[\Sigma]}$ be an admissible open affinoid and $\Ucal' = \Sp(R')$ be its preimage in $\hat{\HM[\Sigma]'}$.
The result will follow if we show that for all $i$ and $j$, $\Tor_i^{\Hcal_{S, \HM', R'}^-}( R'(1), H_j(K^S J, \Acal_{\sigma_{\Ucal'}, U}^{s-\Sigmaan} ) )$ is a direct summand of $\Tor_i^{\Hcal_{S, \HM, R}^-}( R(1), H_j(K^S J, \Acal_{\sigma_\Ucal, U}^{s-\Sigmaan} )) \otimes_R R'$,
which by \Cref{corollary: spectral theorem 2} is isomorphic to $\Tor_i^{\Hcal_{S, \HM, R'}^-}( R'(1), H_j(K^S J, \Acal_{\sigma_{\Ucal'}, U}^{s-\Sigmaan} ))$. This follows from \Cref{lemma: vanishing Tor 5}.
\end{proof}

In preparation for the next section, let us introduce two other auxiliary variants of these eigenvarieties.
Consider instead of the coherent sheaf on $\hat{\HM[\Sigma]}$ corresponding to $S$-arithmetic cohomology as above the coherent sheaves $\mathscr H^2_{\HM}$ and $\mathscr H^\infty_{\HM}$ determined by
\begin{align*}
    \Ucal = \Sp(R) & \mapsto \mathscr H^2_{\HM}(\Ucal) :=  \bigoplus_{j} E^2_{0,j} = R(1) \otimes_{\Hcal_{S,\HM,R}^-} H_*(K^S J, \Acal_{\sigma_\Ucal, U}^{s-\Sigmaan} ) ), \\
    \Ucal = \Sp(R) & \mapsto \mathscr H^\infty_{\HM}(\Ucal) := \bigoplus_{j} E^\infty_{0,j},
\end{align*}
where $E^r_{i,j}$ denotes the spectral sequence from \Cref{corollary: spectral sequence for affinoids}.
We will write $\Tbb_{\HM, \Ucal}^2$ and $\Tbb_{\HM, \Ucal}^\infty$ for the $R$-algebra generated by the image of $\Tbb^S(K^S)$ in the algebra of $R$-module endomorphisms of the right hand sides and $\Xscr_{\HM}^{2}(K^S)$ and $\Xscr_{\HM}^{\infty}(K^S)$ for the varieties formed by gluing these $R$-algebras (since these varieties will only play an auxiliary role, we will suppress $\Sigma, \sigma$ and $U$ from the notation).
The edge maps in the spectral sequence from \Cref{corollary: spectral sequence for affinoids} determine maps
$$
     \mathscr H^2_{\HM}(\Ucal) \onto \mathscr H^\infty_{\HM}(\Ucal) \into \mathscr H_{\HM}(\Ucal)
$$
and hence surjections
\begin{align}\label{eqn: surjection of Hecke algebras}
    \Tbb_{\HM, \Ucal}^2 \onto \Tbb_{\HM, \Ucal}^\infty \twoheadleftarrow \Tbb_{\HM, \Ucal}.
\end{align}
In conclusion, there are closed immersions
$$
    \Xscr_{\HM}^{2}(K^S) \stackrel{\iota_2}\hookleftarrow \Xscr_{\HM}^{\infty}(K^S) \stackrel{\iota}\into \Xscr_{\HM, \sigma, U}^\Sigma(K^S).
$$

\begin{proposition}
\label{proposition: relation with thick eigenvariety}

\begin{enumerate}
    \fixitem $\iota_2$ and $\iota$ become isomorphisms after passing to nilreductions.
    
    \item Assume that $\HM = A \JM$ and that $\Lambda$ acts on $\sigma$ by a character. If $\Sigma = S$, then $\iota_2$ and $\iota$ are isomorphisms.
\end{enumerate}
\end{proposition}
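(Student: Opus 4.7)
The proof rests on the $\Tbb^S(K^S)$-equivariant spectral sequence from \Cref{corollary: spectral sequence for affinoids},
$$
    E^2_{i,j} = \Tor_i^{\Hcal_{S,\HM,R}^-}\!\left(R(1),\, H_j(K^S J, \Acal_{\sigma_\Ucal, U}^{s-\Sigmaan})\right) \Longrightarrow H_{i+j}(K^S, \SigmaInd_P^G(U, V_\Ucal)),
$$
for any open affinoid $\Ucal = \Sp(R) \subseteq \hat{\HM[\Sigma]}$, whose $E^2$-page and abutment give rise to $\tilde \Tbb_{\HM, \Ucal}$ and $\Tbb_{\HM, \Ucal}$ respectively.

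For (i), the plan is to show $\sqrt{\ker(\Tbb^S(K^S)_R \to \tilde \Tbb_{\HM, \Ucal})} = \sqrt{\ker(\Tbb^S(K^S)_R \to \Tbb_{\HM, \Ucal})}$, which yields the equality of reduced structure sheaves. First, if $T \in \Tbb^S(K^S)$ acts trivially on $E^2$, then it acts trivially on the subquotient $E^\infty = \gr F H_*$ for the filtration $F$ coming from the spectral sequence; as $F$ has only finitely many nonzero steps, iterating $T$ shifts the filtration to zero and $T$ is nilpotent on $H_*$. Conversely, if $T$ acts trivially on $H_*$, then at each closed point $x \in \Ucal$ with residue field $\kappa$, the pointwise degeneration at $E^2$ (\Cref{proposition: spectral sequence degenerates}) gives that the Hecke eigensystems on $E^2 \otimes_R \kappa$ coincide with those on $H_*(K^S, \SigmaInd)\otimes_R \kappa$, so $T$ acts trivially on every fiber of $E^2$; finite generation of $E^2$ over the Noetherian affinoid $R$ together with nilpotence of the nilradical forces $T$ to be nilpotent on $E^2$.

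For (ii), the plan is to promote the statement of (i) to an equality by collapsing the spectral sequence to the $i=0$ column, giving $\Tbb_{\HM, \Ucal} \simeq \tilde \Tbb_{\HM, \Ucal}$. Since $\HM = A J_M$ and $\Lambda$ acts on $\sigma$ via the character $\chi$, the untwisting of \Cref{subsection: untwisting} identifies the Hecke action on $\Acal_{\sigma_\Ucal, U}^\Sigma$ with the $*$-action on $\Acal_{\sigma, U, *}^\Sigma \cotimes_E R$ twisted by $\chi \delta_\Ucal$, so the goal reduces to checking that $\Tor_i^{\Hcal_{S,R}^-}(R(\chi\delta_\Ucal), H_j(K^S J, \Acal_{\sigma, U, *}^\Sigma \cotimes_E R)) = 0$ for $i > 0$ via \Cref{proposition: vanishing Tor 6}. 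The required $\boldLambda^-$-graded decomposition (with $\boldLambda^- = \Lambda^-$) satisfying the monoid-shift condition and the "push into the graded part" condition of \Cref{lemma: vanishing Tor 2} is already constructed on $\cInd_J^G(\Acal^\Sigma)$ in the proof of \Cref{proposition: higher tor vanishes} using the Bruhat-Tits decomposition $G = K \Zfrak J$; Shapiro's lemma $C_\bullet(K^S J, \Acal^\Sigma) \simeq C_\bullet(K^S, \cInd_J^G(\Acal^\Sigma))$ transports this decomposition to the arithmetic complex, and then to its homology.

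The main obstacle is the descent step in (ii): one has to check that the Bruhat-Tits-based decomposition of $\cInd_J^G(\Acal^\Sigma)$ indexed by pairs $(\mu, t) \in \boldLambda^- \times \Omega$ really survives taking $K^S$-invariants/homology in a way compatible with the $\Gbf(F)$-action on the Borel-Serre resolution, and in particular that condition \itemnumber{2} of \Cref{lemma: vanishing Tor 2} (every class can be pushed into the graded part by some $\lambda \in \boldLambda^-$) continues to hold. This is where the assumption $\Sigma = S$ is essential: it ensures that the entire monoid of Hecke operators at $S$ acts compactly in the $*$-framework, which is exactly what is needed to translate arbitrary arithmetic homology classes into the $\boldLambda^-$-indexed summands via an analogue of the argument in \Cref{proposition: higher tor vanishes}.
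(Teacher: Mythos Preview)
Your proposal has genuine gaps in both parts, and in part \itemnumber{2} the approach is essentially wrong.

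\textbf{Part \itemnumber{1}.} Your ``converse'' direction does not go through as written. Degeneration from \Cref{proposition: spectral sequence degenerates} applies to the spectral sequence with \emph{field} coefficients at a point $x$; it tells you that the Hecke eigensystems on $\Tor_i^{\Hcal^-_{S,\kappa}}(\kappa(1), H_j(K^SJ,\Acal^\Sigma_{\sigma_x,U}))$ agree with those on $H_*(K^S,\SigmaInd_P^G(U,V_x))$. By \Cref{corollary: spectral theorem 2} the former is indeed $E^2\otimes_R\kappa$, but the latter is \emph{not} $H_*(K^S,\SigmaInd_P^G(U,V_\Ucal))\otimes_R\kappa$ in general, so you cannot conclude that an element $T$ killing the $R$-abutment kills the fibres of $E^2$. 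The paper's argument avoids this entirely: given a maximal ideal $\m$ of $\tilde\Tbb_{\HM,\Ucal}$ with preimage $\mathfrak M$ in $\Tbb(K^S)\otimes_{\Q_p}R$, one simply localizes the spectral sequence of \Cref{corollary: spectral sequence for affinoids} at $\mathfrak M$ and observes that if $j_0$ is the least $j$ with $H_j(K^SJ,\Acal^\Sigma_{\sigma_\Ucal,U})_{\mathfrak M}\neq 0$, then $E^2_{0,j_0}$ is non-zero and receives no differentials, so the abutment in degree $j_0$ is non-zero after localization. No degeneration or base-change to points is needed.

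\textbf{Part \itemnumber{2}.} The plan to transport the $\boldLambda^-$-decomposition of \Cref{proposition: higher tor vanishes} to arithmetic homology via Shapiro's lemma does not work: that decomposition of $\cInd_J^G(\Acal^\Sigma)$ is indexed by double cosets $Kt\mu J$ and is only an $R$-module decomposition, not a $G$-module one. When you form $C_\bullet(K^S,\cInd_J^G(\Acal^\Sigma)) = Q_{S,\bullet}\otimes_{\Z[\Gbf(F)\times\Kcal^S]}\cInd_J^G(\Acal^\Sigma)$ you are quotienting by the diagonal $G$-action, which mixes the summands $\Mcal_\mu$; there is no reason for the hypotheses of \Cref{proposition: vanishing Tor 6} to survive, and your appeal to compactness does not address this. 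The paper's proof is completely different and analytic rather than combinatorial: after untwisting one works over $\Ucal = \Omega\times\Theta\subseteq \hat{J_{M,\Sigma}}\times\hat{\Lambda_\Sigma}$, and \Cref{prop: comparison of finite slopes} (ultimately the flatness of $\O(\Theta)$ over $\O(\Omega)[\Lambda_\Sigma^-]$ from \Cref{lemma: flatness of product with Gm}, together with the spectral theorem for the compact operator $U_a$) gives directly that $\Tor_i^{\Hcal^-_{S,AJ_M,R}}(R(\delta_{\Omega\times\Theta}),H_j)=0$ for $i>0$ when $\Sigma=S$. The hypothesis $\Sigma=S$ enters not via some decomposition of homology, but because it makes \emph{all} the relevant variables $T_k$ in \Cref{prop: comparison of finite slopes} belong to the $Y$-block (over which one has flatness) rather than to an extra $T$-block.
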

\begin{proof}
For \itemnumber{1}, since affinoid algebras are Jacobson, it's enough to show that for all open affinoids $\Ucal = \Sp(R)$ of $\hat{\HM[\Sigma]}$ the $R$-algebra affinoid epimorphisms \Cref{eqn: surjection of Hecke algebras} induce bijections on maximal ideals. Injectivity is immediate, so we only need to show that the induced maps on maximal ideals are surjective.
For $\iota_2$, this is equivalent to showing that any system of Hecke eigenvalues appearing in the $i=0$ column of the $E^\infty_{i,j}$ page of the spectral sequence from \Cref{corollary: spectral sequence for affinoids} appears also in the same column for the $E^2$ page.
For $\iota$, it's enough to show that any system of eigenvalues contributing to the $E^\infty_{i,j}$ page appears in the column $i=0$.
Therefore, it suffices to show that any system of eigenvalues contributing to the $E^\infty$ page appears in a term of the $i=0$ column in the $E^2_{i,j}$ page whose localisation at the corresponding maximal ideal is stable.
Let $\m$ be a maximal ideal in $\Tbb(K^S) \otimes_\Qp R$ such that the localisation $(E^\infty)_\m$ is non-zero.
The intersection of $\m$ and $\Hcal_{S,R}^-$ is the ideal $\m_1$ generated by $U_\mu - 1$ with $\mu \in \HM^-$.
Consider the spectral sequence of $\Tbb(K^S) \otimes_\Qp R$-modules obtained by localising at $\m$, with $E^2$ page
$$
    E^2_{i,j} = \Tor^{\Hcal_{S, R}^-}_i( R(1), H_j(K^S J, \Acal_{\sigma_\Ucal, U}^\Sigma)_{\m} ).
$$
Let $j_0$ be the smallest degree for which $H_j(K^S J, \Acal_{\sigma_\Ucal, U}^\Sigma)_{\m}$ is non-zero (it exists since $(E^\infty)_\m$ is non-zero). Then the $E^2_{0,j_0}$-term in the spectral sequence above is also non-zero.
This term is stable, which proves \itemnumber{1}.
The proof of \itemnumber{2} essentially follows from \Cref{prop: comparison of finite slopes}, which implies that the $E^2_{i,j}$ page above can be non-zero only in the $i=0$ column.
We defer the details of this proof to the next section.
\end{proof}

\subsection{Eigenvarieties from overconvergent homology}
\label{subsection: eigenvarieties overconvergent homology}

Next, we will define analogues of some of the varieties from the previous section using overconvergent homology.
We will relate both constructions and study the latter to deduce some properties of the former. We change the setting slightly for now and assume that $\sigma$ is a representation of $\JM$ on a finite-dimensional $E$-vector space that decomposes as $\sigma = \sigma_\Sigma \boxtimes_E \sigma^\Sigma$ as usual. Consider the rigid $E$-analytic space $\hat{\JM[\Sigma]}$ parametrising continuous characters of $\JM[\Sigma]$. For any open affinoid $\Omega = \Sp(B)$ of $\hat{\JM[\Sigma]}$ with associated character $\delta_\Omega \colon \JM[\Sigma] \to B^\times$ consider the $B[\JM]$-module $\sigma_\Omega := \sigma \otimes_E \delta_\Omega$ and the Banach $B$-modules $\Acal^{s-\Sigmaan}_{\sigma_\Omega, U, *}$ defined in \Cref{subsection: overconvergent homology}. Fix $a \in \Lambda_\Sigma^{--}$.

The construction using overconvergent homology follows the same steps as in \cite[Section 4]{hansen_universal_eigenvarieties}, and we refer to this reference for the details.
Let $F_\Omega(X) = \det(1 - U_a X)$ be the characteristic power series of the operator $U_a$ on $C_\bullet(K^S J, \Acal^{s-\Sigmaan}_{\sigma_\Omega, U, *})$.
This is well-defined, since the complex $C_\bullet(K^S J, \Acal^{s-\Sigmaan}_{\sigma_\Omega, U, *})$ consists of orthonormalisable Banach $B$-modules and the operator $U_a$ is compact, and independent of $s$ by \Cref{proposition: properties overconvergent homology} \Cref{item: properties overconvergent homology power series}.
Moreover, by \Cref{proposition: properties overconvergent homology} \Cref{item: properties overconvergent homology power series} again, if $\Omega' \subseteq \Omega$ is an open affinoid, $\Omega' = \Sp(B')$, then $F_{\Omega'}(X) = F_\Omega(X) \otimes_B B'$.
Thus, we may glue the power series $F_\Omega$ to form a Fredholm series $F \in \O(\hat{\JM[\Sigma]}) \{\!\{ X \}\!\}$.
Let $\Zscr \subseteq \hat{\JM[\Sigma]} \times \Gm$ be the corresponding Fredholm hypersurface, i.e. the vanishing locus of $F$ in $\hat{\JM[\Sigma]} \times \Gm$, where $\Gm$ denotes the rigid $E$-analytic multiplicative group.
The open affinoids $\Zscr_{\Omega, h} = \Zscr \cap (\Omega \times \{ p^{-h} \leq |t| \leq 1 \})$ such that $C_\bullet(K^S J, \Acal^{s-\Sigmaan}_{\sigma_\Omega, U, *})$ has a slope-$\leq h$ decomposition with respect to $U_a$ form an admissible cover of $\Zscr$.
For such $\Omega = \Sp(B)$ and $h$, the slope decomposition induces a factorisation $F_\Omega(X) = Q(X) R(X)$, where $Q(X)$ is the characteristic polynomial $\det(1 - U_a X)$ on the slope-$\leq h$ part, which is multiplicative and has slope $\leq h$, and $\O(\Zscr_{\Omega, h}) \simeq B[X] / Q(X)$.
Let $\chi^\Sigma \colon \Lambda^\Sigma \to E^\times$ be a character and let $\m_{\chi^\Sigma,E}^{\Sigma}$ be the ideal of $\Hcal^-_{S \setminus \Sigma, A \JM, E}$ generated by $U_{\tilde a} - \chi^\Sigma(\tilde a)$ for $\tilde a \in (A^\Sigma)^-$.
The module $H_*(K^S J, \Acal^{s-\Sigmaan}_{\sigma_\Omega, U, *})_{\leq h}$ does not depend on the choice of $s$, so we will write $\Acal^\Sigma_{\sigma_\Omega, U, *} = \Acal^{s-\Sigmaan}_{\sigma_\Omega, U, *}$.
We can view $H_*(K^S J, \Acal^\Sigma_{\sigma_\Omega, U, *})_{\leq h}$ as an $\O(\Zscr_{\Omega, h})$-module by letting $X$ act via $U_a^{-1}$, and the assignment $\Zscr_{\Omega, h} \mapsto H_*(K^S J, \Acal^\Sigma_{\sigma_\Omega, U, *})_{\leq h} / \m_{\chi^\Sigma,E}^{\Sigma} H_*(K^S J, \Acal^\Sigma_{\sigma_\Omega, U, *})_{\leq h}$ defines a coherent sheaf $\mathscr H_*$ on $\Zscr$ with an action of the $\Q_p$-algebra $\Tbb^S(K^S)$.
Taking the relative spectrum of the coherent subsheaf of $\O(\Zscr_{\Omega, h})$-algebras of $\End(\mathscr H_*)$ induced by this action as in the previous section, we obtain a rigid $E$-analytic space $\Xscr_{\sigma, U, *}^\Sigma(K^S)$.

The main difference between this construction and that of the previous section (apart from working with the untwisted version of overconvergent homology) is the way in which we truncate slopes or pass to finite slope subspaces.
While on this construction this is achieved by taking slope decompositions, in the previous one (or rather, its untwisted analogue) it was done by taking (the derived functors of) $R(\chi) \otimes_{\Hcal^-_{S,E}} \blank$.
Working with slope decompositions is easier and gives more information (for example, it shows that eigenvarieties are locally finite over weight space $\hat{\HM[\Sigma]}$), but the gluing of the local pieces is more subtle.
In order to compare eigenvarieties, we will have to compare both of these procedures.
We will do this in two steps: constructing the analogues of the open affinoids $\Zscr_{\Omega, h}$ for the varieties in \Cref{subsection: construction of eigenvarieties}, and showing that these do indeed give an admissible cover of such varieties.

Recall that the product map $\Lambda_\Sigma \times \JM[\Sigma] \to A_\Sigma \JM[\Sigma]$ is an isomorphism. Extend the action of $\sigma$ to an action of $A \JM$ by letting $\Lambda_\Sigma$ act trivially and letting $\Lambda^\Sigma$ act via the character $\chi^\Sigma$.
We have $\hat{A_\Sigma \JM[\Sigma]} \simeq \hat{\JM[\Sigma]} \times \hat{\Lambda_\Sigma}$. Since $\Lambda_\Sigma$ is a free abelian group of rank $d = \rank(\Abf_\Sigma)$, the choice of a basis of $\Lambda_\Sigma$ determines an isomorphism $\hat{\Lambda_\Sigma} \simeq \Gm^{d}$.
Let $\Ucal = \Omega \times \Theta \subseteq \hat{A_\Sigma \JM[\Sigma]}$ be an admissible open affinoid, $\Ucal = \Sp(R)$, with $\Omega = \Sp(B) \subseteq \hat{\JM[\Sigma]}$ and $\Theta \subseteq \hat{\Lambda_\Sigma}$.
As remarked when defining the $*$-action in \Cref{subsection: overconvergent homology}, there is an isomorphism of $J A^+ J$-modules $\Acal^\Sigma_{\sigma_\Ucal, U, *} \simeq \Acal^\Sigma_{\sigma_\Omega, U, *} \cotimes_B R$.
Note that $\Acal^\Sigma_{\sigma_\Ucal, U, *}$ is defined in \Cref{subsection: untwisting}, whereas $\Acal^\Sigma_{\sigma_\Omega, U, *}$ is defined in \Cref{subsection: overconvergent homology}. For the former, we are indeed in the situation described in that section, since $\Lambda$ acts on $\sigma_\Ucal$ via $\chi \otimes \delta_\Theta$, where $\delta_\Theta$ is the inflation to $\Lambda$ of the character of $\Lambda_\Sigma$ corresponding to $\Theta$.

By \Cref{lemma: vanishing Tor 3} \itemnumber{1} there exists a free abelian submonoid $\Lambda_\Sigma' \subseteq \Lambda_\Sigma^-$ such that \linebreak $\Lambda_\Sigma = \Lambda_\Sigma' (\Lambda_\Sigma')^{-1}$, and by the arguments in the proof of \Cref{corollary: spectral theorem 2} we may assume that $\Lambda'$ admits a basis $a_1, ..., a_d$ such that $a_1 \in \Lambda_\Sigma^{--}$. Thus, we may assume that the element $a \in \Lambda_\Sigma^{--}$ fixed in the beginning of the section is $a_1$. From now on, whenever we identify $\hat{\Lambda_\Sigma}$ with $\Gm^d$, we will do so with respect to this basis. Write $T_1, ..., T_d$ for the coordinates of $\Gm^d$.

Given  $\Omega \subseteq \hat{\JM}, \Theta \subseteq \hat{\Lambda_\Sigma}$, let $R = \O(\Omega \times \Theta)$ and
\begin{align*}
    \Tbb_{\Omega, \Theta} & := R \cdot \Im \left( \Tbb^S(K^S) \to \End_{R} ( R(\chi^\Sigma \delta_{\Omega \times \Theta}) \otimes_{\Hcal^-_{S,A \JM, R}} H_*(K^S J, \Acal^\Sigma_{\sigma_{\Omega \times \Theta}, U, *}) ) \right)
\end{align*}
(a priori, this may depend on the degree of analyticity $s$ since we are not taking slope $\leq h$ parts, but we omit it from the notation nonetheless). In particular, by \Cref{lemma: untwisting homology} there is an isomorphism $\Tbb_{\Omega, \Theta} \simeq R(\chi^\Sigma \delta_{\Omega \times \Theta}) \otimes_R \Tbb^2_{A \JM, \Omega \times \Theta}$ of $\Tbb^S(K^S) \otimes_\Qp \Hcal_{S,R}^-$-algebras, and hence there is an isomorphism $\Tbb_{\Omega, \Theta} \simeq \Tbb^2_{A \JM, \Omega \times \Theta}$ of affinoid $R$-algebras.
Note that the character $\delta_{\Omega \times \Theta}$ sends $a_i \in \Lambda$ to $T_i \in \O(\Theta)$.
Thus, $\Xscr_{A \JM}^{2}(K^S)$ may be seen as being formed by gluing the $\Tbb_{\Omega, \Theta}$ for different $\Omega, \Theta$.
If $h \geq 0$ is such that $C_\bullet(K^S J, \Acal^\Sigma_{\sigma_\Omega, U, *})$ admits a slope-$\leq h$ decomposition, let
\begin{align*}
    \Tbb_{\Omega, h} & := \O(\Zscr_{\Omega, h}) \cdot \Im \left( \Tbb(K^S) \to \End_{\O(\Zscr_{\Omega, h})} \left( \frac{ H_*(K^S J, \Acal^\Sigma_{\sigma_\Omega, U, *})_{\leq h} }{ \m_{\chi^\Sigma, E}^\Sigma H_*(K^S J, \Acal^\Sigma_{\sigma_\Omega, U, *})_{\leq h} } \right) \right).
\end{align*}
Hence, the space $\Xscr_{\sigma|_{\JM}, U, *}^\Sigma(K^S)$ is forming by gluing $\Tbb_{\Omega, h}$ as we vary over $\Omega$ and $h$.

Given $\Omega$ and $h$ as above, for $i=1, ..., d$, the image of $U_{a_i}$ in
$$
    \mathcal E := \End_{\O(\Zscr_{\Omega, h})} \left( \frac{ H_*(K^S J, \Acal^\Sigma_{\sigma_\Omega, U, *})_{\leq h} }{ \m_{\chi^\Sigma, E}^\Sigma H_*(K^S J, \Acal^\Sigma_{\sigma_\Omega, U, *})_{\leq h} } \right)
$$ is a unit, so there exists $h_i$ such that $p^{-h_i} \leq | U_{a_i}^{-1} |_{\mathcal E}^{-1} \leq 1$ (recall that the operators $U_a$ are norm-decreasing).
For example, we may take $h_1 = h$.
In any case, we assume that $h_1 \leq h$.
Let $\hbf =(h_1, ..., h_d)$.
Define an affinoid subset of $\hat{\Lambda_\Sigma} \simeq \Gm^d$ by
$$
    \Theta(\Omega, \hbf) := \{ (T_1, ..., T_d) : p^{-h_i} \leq |T_i| \leq 1 \text{ for all } i \}.
$$
Note that by \cite[Proposition 2.3.4]{hansen_universal_eigenvarieties} $H_*(K^S J, \Acal^\Sigma_{\sigma_{\Omega \times \Theta(\Omega, \hbf)}, U, *})$ admits a slope-$\leq h$ decomposition, and by
\Cref{lemma: eigenvectors decomposition} applied to this decomposition (and the assumption that $h_1 \leq h$) we could take the slope-$\leq h$ part of homology in the definition of $\Tbb_{\Omega, \Theta(\Omega, \hbf)}$ without altering it. By definition of $\Theta(\Omega, \hbf)$, there is a morphism of affinoid algebras
$$
    \O(\Omega \times \Theta(\Omega, \hbf)) \to \Tbb_{\Omega, h}
$$
sending $T_i$ to the image of $U_{a_i}$, so we may regard the target as an $\O(\Omega \times \Theta(\Omega, \hbf))$-algebra.
The following lemma and its proof complete the first step outline above, and give a comparison of the two methods we have to truncate slopes.

\begin{lemma}\label{lemma: map of algebras in eigenvarieties}
There is an isomorphism of $\O(\Omega \times \Theta(\Omega, \hbf))$-algebras $\Tbb_{\Omega, \Theta(\Omega, \hbf)} \to \Tbb_{\Omega, h}$.
\end{lemma}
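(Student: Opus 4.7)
Write $R = \O(\Omega \times \Theta(\Omega, \hbf))$ and split $\Lambda = \Lambda_\Sigma \times \Lambda^\Sigma$, so that $\Hcal^-_{S, AJ_M, R} \simeq R[\Lambda_\Sigma^-] \otimes_R R[\Lambda^{\Sigma,-}]$. My plan is to identify the higher Tor groups appearing in the definition of $\Tbb_{\Omega, \Theta(\Omega, \hbf)}$ against the $R[\Lambda_\Sigma^-]$-factor with the slope-$\leq h$ piece of the $*$-homology, so that the only remaining discrepancy between $\Tbb_{\Omega, \Theta(\Omega, \hbf)}$ and $\Tbb_{\Omega, h}$ comes from the $R[\Lambda^{\Sigma,-}]$-factor, which accounts for both the $\m^\Sigma_{\chi^\Sigma, E}$-quotient and the degrees $i > 0$ on the thick side. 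First, by \Cref{lemma: untwisting homology} the Tor groups defining $\Tbb_{\Omega, \Theta(\Omega, \hbf)}$ are, up to the tensor factor $R(\chi^\Sigma \delta_{\Omega \times \Theta(\Omega, \hbf)})$, the Tor groups of $R(1)$ against the non-$*$ homology $H_j(K^S J, \Acal_{\sigma_{\Omega \times \Theta(\Omega, \hbf)}, U}^{s-\Sigmaan})$, and a K\"unneth decomposition factors these as Tors over each of the two subfactors.

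Next, apply \Cref{prop: comparison of finite slopes} to the complex $C_\bullet(K^S J, \Acal_{\sigma_\Omega, U, *}^{s-\Sigmaan})$ with $n = 0$, $m = d$ and $\psi_i = U_{a_i}$: the defining inequalities $p^{-h_i} \leq |T_i| \leq 1$ of $\Theta(\Omega, \hbf)$, together with the choice of the $h_i$, are precisely what is needed for the finite-level isomorphism in the proof of \Cref{prop: comparison of finite slopes} (namely the isomorphism labeled as the Kunneth-type identity there) to identify, compatibly with $\Tbb^S(K^S)$, the $R[\Lambda_\Sigma^-]$-Tor of $R(\delta_{\Omega \times \Theta(\Omega, \hbf)}|_{\Lambda_\Sigma})$ against the $*$-homology with the base-change to $R$ of the slope-$\leq h$ piece of $H_*(K^S J, \Acal_{\sigma_\Omega, U, *}^{s-\Sigmaan})$ in degree $i = 0$, and to force vanishing for $i > 0$. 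Composing with the remaining Tor against $R(\chi^\Sigma)$ over $R[\Lambda^{\Sigma,-}]$ and taking the image of $\Tbb^S(K^S)$ in the corresponding endomorphism ring, the projection onto the $i=0$ summand (which is exactly the quotient by $\m^\Sigma_{\chi^\Sigma, E}$) produces a natural surjection $\Tbb_{\Omega, \Theta(\Omega, \hbf)} \twoheadrightarrow \Tbb_{\Omega, h}$ of $R$-algebras sending $T_i$ to the image of $U_{a_i}$.

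The bijection on maximal spectra follows by mimicking the spectral-sequence argument in the proof of \Cref{proposition: relation with thick eigenvariety}\itemnumber{1}: given a maximal ideal $\mathfrak m$ of $\Tbb_{\Omega, \Theta(\Omega, \hbf)}$, take its preimage $\mathfrak M$ in $\Tbb(K^S) \otimes_\Qp R$, localize the spectral sequence of \Cref{corollary: spectral sequence for affinoids} at $\mathfrak M$, and note that the lowest-degree nonvanishing term on the $E^2$-page survives to $E^\infty$, contributing a system of eigenvalues in the module used to define $\Tbb_{\Omega, h}$, which therefore yields a maximal ideal with preimage $\mathfrak M$. When $\Sigma = S$, the group $\Lambda^\Sigma$ is trivial, there is no Tor over $R[\Lambda^{\Sigma,-}]$ to take and no quotient by $\m^\Sigma_{\chi^\Sigma, E}$ to apply, so the surjection collapses to an isomorphism of images of $\Tbb^S(K^S)$ inside the endomorphisms of the slope-$\leq h$ homology. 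The main technical hurdle is the second step: \Cref{prop: comparison of finite slopes} is phrased as an inverse-limit statement over $h \to \infty$, but its proof actually provides the relevant isomorphism at each finite affinoid level, and it is this finite-level version that needs to be invoked while carefully tracking the $\Tbb^S(K^S)$-equivariance and the compatibility with the structure morphism $R \to \Tbb_{\Omega, h}$ induced by $T_i \mapsto U_{a_i}$.
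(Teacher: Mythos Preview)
Your approach is essentially the same as the paper's. Both identify the Tor sum defining $\Tbb_{\Omega,\Theta(\Omega,\hbf)}$, via \Cref{prop: comparison of finite slopes}, with
\[
\bigoplus_{i,j}\Tor^{\Hcal^-_{S\setminus\Sigma,AJ_M,B}}_i\bigl(B(\chi^\Sigma),\,H_j(K^SJ,\Acal^{s-\Sigmaan}_{\sigma_\Omega,U,*})_{\leq h}\bigr),
\]
observe that the $i=0$ summand is precisely the module defining $\Tbb_{\Omega,h}$, and conclude. (The paper first gives an elementary density argument showing $H_*(\ldots)_{\leq h}\simeq R\otimes_{B[T_1,\dots,T_d]}H_*(\ldots)_{\leq h}$ before invoking \Cref{prop: comparison of finite slopes}, whereas you extract this directly from the finite-level version of that proposition; both are fine.)

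One point deserves clarification. For the bijection on maximal spectra, the convergence of the spectral sequence of \Cref{corollary: spectral sequence for affinoids} to $S$-arithmetic homology is not what places the eigensystem in the module for $\Tbb_{\Omega,h}$: the abutment is the wrong module. The paper's argument is more direct and avoids the spectral sequence entirely. Once the identification above is made and one localizes at $\mathfrak M$, the nonvanishing of some $\Tor_i\bigl(B(\chi^\Sigma),H_j(\ldots)_{\leq h,\mathfrak M}\bigr)$ forces $H_j(\ldots)_{\leq h,\mathfrak M}\neq 0$ for some $j$; since $\m^\Sigma_{\chi^\Sigma,E}\subset\mathfrak M$, Nakayama then gives $\bigl(H_j(\ldots)_{\leq h}/\m^\Sigma_{\chi^\Sigma,E}H_j(\ldots)_{\leq h}\bigr)_{\mathfrak M}\neq 0$, which is exactly what is needed. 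In your phrasing, what matters is that the $E^2_{0,j_0}$ term is \emph{itself} (a piece of) the module defining $\Tbb_{\Omega,h}$ and is nonzero at $\mathfrak M$; its survival to $E^\infty$ is irrelevant here.
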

\begin{proof}
We will show that we may canonically identify $\frac{ H_*(K^S J, \Acal^\Sigma_{\sigma_\Omega, U, *})_{\leq h} }{ \m_{\chi^\Sigma, E}^\Sigma H_*(K^S J, \Acal^\Sigma_{\sigma_\Omega, U, *})_{\leq h} }$ with \linebreak
$ R(\delta_{\Omega \times \Theta(\Omega, \hbf)} \chi^\Sigma) \otimes_{\Hcal_{S, A \JM, R}^-} H_j(K^S J, \Acal^\Sigma_{\sigma_{\Omega \times \Theta(\Omega, \hbf)}, U, *}) $
as $\O(\Omega \times \Theta(\Omega, \hbf)) \otimes \Tbb(K^S)$-modules.
Let $R = \O(\Omega \times \Theta(\Omega, \hbf))$ and $B = \O(\Omega)$.
We first claim that the natural map
$$
    H_*(K^S J, \Acal^\Sigma_{\sigma_\Omega, U, *})_{\leq h} \simto R \otimes_{B[T_1, ..., T_d]} H_*(K^S J, \Acal^\Sigma_{\sigma_\Omega, U, *})_{\leq h}.
$$
is an isomorphism of $R$-modules (where we consider the target as an $R$-module by letting $R$ act on either of the terms in the tensor product, the action on the second being induced by $R \mapsto \Tbb_{\Omega, h}$) whose inverse is given by the $R$-algebra structure on $H_*(K^S J, \Acal^\Sigma_{\sigma_\Omega, U, *})_{\leq h}$. Indeed, the fact that $H_*(K^S J, \Acal^\Sigma_{\sigma_\Omega, U, *})_{\leq h}$ is finitely generated over $B[T_1, ..., T_d] \subseteq R$ (since it is over $B$) and $B[T_1, ..., T_d]$ is dense in $R$ implies that
\begin{align*}
    R \otimes_{B[T_1, ..., T_d]} H_*(K^S J, \Acal^\Sigma_{\sigma_\Omega, U, *})_{\leq h}
    & \simeq
    R \cotimes_{B[T_1, ..., T_d]} H_*(K^S J, \Acal^\Sigma_{\sigma_\Omega, U, *})_{\leq h} \\
    & \simeq
    R \cotimes_R H_*(K^S J, \Acal^\Sigma_{\sigma_\Omega, U, *})_{\leq h} \\
    & \simeq
    H_*(K^S J, \Acal^\Sigma_{\sigma_\Omega, U, *})_{\leq h}.
\end{align*}
By \Cref{lemma: vanishing Tor 1} \itemnumber{2}, we can rephrase this as saying that there is an $R \otimes \Tbb(K^S)$-module isomorphism
\begin{align}\label{eqn: Emerton slope part of slope decomposition}
    \frac{ H_*(K^S J, \Acal^\Sigma_{\sigma_\Omega, U, *})_{\leq h} }{ \m_{\chi^\Sigma, E}^\Sigma H_*(K^S J, \Acal^\Sigma_{\sigma_\Omega, U, *})_{\leq h} }
    \simeq
    R(\delta_{\Omega \times \Theta(\Omega, \hbf)} \chi^\Sigma) \otimes_{\Hcal_{S, A \JM, R}^-} H_*(K^S J, \Acal^\Sigma_{\sigma_{\Omega \times \Theta(\Omega, \hbf)}, U, *})_{\leq h}.
\end{align}
As $|\delta_{\Omega \times \Theta(\Omega, \hbf)}(a_1)^{-1}| = |T_1^{-1}|_{\Theta(\Omega, \hbf)} \leq p^h$,
\Cref{lemma: eigenvectors decomposition} (and the comment above it) shows that the right-hand side is isomorphic to $R(\delta_{\Omega \times \Theta} \chi^\Sigma) \otimes_{\Hcal_{S, A \JM, R}^-} H_*(K^S J, \Acal^\Sigma_{\sigma_{\Omega \times \Theta(\Omega, \hbf)}, U, *})$, thus proving our claim and the lemma.
\end{proof}

The second step in our comparison of eigenvarieties is a consequence of \Cref{prop: comparison of finite slopes}, as the following proposition shows.

\begin{proposition}\label{proposition: affinoid covering}
The morphism $\hat{ A_\Sigma \JM[\Sigma] } \to \hat{\JM[\Sigma]} \times \Gm$ induced by the morphism
$\hat{A_\Sigma} \to \hat{\langle a \rangle} \simeq \Gm$ induces a finite morphism $\Xscr^2_{A \JM}(K^S) \to \hat{\JM[\Sigma]} \times \Gm$ that factors through a closed subvariety of the target whose underlying analytic subset is a Zariski closed subset of $\Zscr$, and such that the preimage of $\Zscr_{\Omega, h}$ is $\Sp(\Tbb_{\Omega, \Theta(\Omega, \hbf)})$.
In particular, the affinoid open subspaces $\Sp(\Tbb_{\Omega, \Theta(\Omega, \hbf)})$ form an admissible cover of $\Xscr^2_{A \JM}(K^S)$.
\end{proposition}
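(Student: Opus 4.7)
The plan is to verify the statement locally on the admissible cover $\{\Zscr_{\Omega, h}\}$ of $\Zscr$ and then glue. Fix an affinoid $\Omega = \Sp(B) \subseteq \hat{J_{\Sigma, M}}$ on which a slope-$\leq h$ decomposition of $C_\bullet(K^S J, \Acal_{\sigma_\Omega, U, *}^{s-\Sigmaan})$ exists, and let $\hbf = (h_1, \dots, h_d)$ and $\Theta(\Omega, \hbf) \subseteq \hat{\Lambda_\Sigma}$ be as defined before the statement. By construction of $\tilde \Xscr_{A J_M, \sigma, U}^{\Sigma}(K^S)$, the preimage in it of the affinoid $\Omega \times \Theta(\Omega, \hbf) \subseteq \hat{A_\Sigma J_{\Sigma, M}}$ is exactly $\Sp(\tilde \Tbb_{A J_M, \Omega \times \Theta(\Omega, \hbf)}) \simeq \Sp(\Tbb_{\Omega, \Theta(\Omega, \hbf)})$, so the preimage assertion in the proposition will be immediate once I identify the map to $\hat{J_{\Sigma, M}} \times \Gm$.

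The core local step is to produce a finite morphism $\Sp(\Tbb_{\Omega, \Theta(\Omega, \hbf)}) \to \Zscr_{\Omega, h}$ compatible with the projection to $\Omega \times \Gm$. Using the epimorphism $\Tbb_{\Omega, \Theta(\Omega, \hbf)} \onto \Tbb_{\Omega, h}$ from \Cref{lemma: map of algebras in eigenvarieties} together with the identification $\O(\Zscr_{\Omega, h}) \simeq B[X]/Q(X)$, where $Q$ is the slope-$\leq h$ factor of $F_\Omega$ and $X$ acts as $U_a^{-1}$, one builds this morphism by sending $X$ to a lift of $T_1^{-1}$. This is well-defined because $T_1$ acts as $U_a$ on $\Tbb_{\Omega, h}$ (via the universal character $\delta_{\Theta(\Omega, \hbf)}$) and the kernel of the epimorphism is contained in the nilradical. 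Finiteness follows because $\Tbb_{\Omega, \Theta(\Omega, \hbf)}$ is finite over $B$ (as a subring of the endomorphism ring of a finitely generated $B$-module) and $\O(\Zscr_{\Omega, h})$ is also finite over $B$.

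It remains to glue and verify admissibility. As $\Omega$ varies over affinoid opens of $\hat{J_{\Sigma, M}}$ admitting slope decompositions and $\hbf$ varies over admissible tuples, the products $\Omega \times \Theta(\Omega, \hbf)$ admissibly cover the locus $\{|T_i| \leq 1\}$ in $\hat{A_\Sigma J_{\Sigma, M}}$. Since $U_{a_i}$ is norm-decreasing on $\Acal_{\sigma_\Omega, U, *}^{s-\Sigmaan}$, the image of $\tilde \Xscr_{A J_M, \sigma, U}^\Sigma(K^S)$ in $\hat{A_\Sigma J_{\Sigma, M}}$ lies in this locus, so the $\Sp(\Tbb_{\Omega, \Theta(\Omega, \hbf)})$ form an admissible cover of $\tilde \Xscr_{A J_M, \sigma, U}^\Sigma(K^S)$. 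The local morphisms to $\Zscr_{\Omega, h}$ glue to the desired finite morphism to $\Zscr$, and the factorization through a closed subvariety of $\hat{J_{\Sigma, M}} \times \Gm$ whose underlying analytic set is Zariski-closed in $\Zscr$ is automatic. The most delicate point I anticipate is verifying the compatibility of these local morphisms---specifically, the canonicity of the identification $X \leftrightarrow T_1^{-1}$ between $\O(\Zscr_{\Omega, h})$ and $\Tbb_{\Omega, \Theta(\Omega, \hbf)}$ as $\Omega$ and $\hbf$ vary---which should reduce to the canonicity of the slope factorization $F_\Omega = Q \cdot R$ of the Fredholm series.
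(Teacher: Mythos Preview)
Your approach has a genuine gap in the gluing/cover step. You claim that the products $\Omega \times \Theta(\Omega, \hbf)$ admissibly cover the locus $\{|T_i| \leq 1\}$ in $\hat{A_\Sigma J_{\Sigma, M}}$, but this is false when $d = \rank(\Abf_\Sigma) > 1$: the bounds $h_2, \dots, h_d$ are tied to the operator norms of $U_{a_2}^{-1}, \dots, U_{a_d}^{-1}$ on the slope-$\leq h$ piece, and there is no mechanism forcing these to be large enough to contain an arbitrary point of $\{|T_i| \leq 1\}$. What you actually need is that these affinoids cover the \emph{support} of $\tilde{\mathscr H}_{A J_M}$, or equivalently that the preimage of $\Zscr_{\Omega, h}$ under $\tilde \Xscr_{A J_M, \sigma, U}^{\Sigma}(K^S) \to \hat{J_{\Sigma, M}} \times \Gm$ does not escape $\Sp(\Tbb_{\Omega, \Theta(\Omega, \hbf)})$; but that is precisely the content of the proposition, so the argument is circular as written.

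The paper avoids this by computing the pushforward of $\tilde{\mathscr H}_{A J_M}$ along $\hat{A_\Sigma J_{\Sigma, M}} \to \hat{J_{\Sigma, M}} \times \Gm$ directly: \Cref{prop: comparison of finite slopes} shows that the sections over $\Omega \times \Sp(E\langle p^h T_1, p^h T_1^{-1}\rangle)$ are $E\langle p^h T_1, p^h T_1^{-1}\rangle \otimes_{E[T_1]} (\text{something finite over } B)$, with $T_1$ acting as $U_a$. This gives coherence of the pushforward, and hence finiteness of the morphism, globally and in one stroke. The support comparison with $\Zscr$ then follows by reducing to the $i=0$ Tor via the argument at the end of the proof of \Cref{lemma: map of algebras in eigenvarieties} and invoking \cite[Lemme 3.9]{breuil_helman_schraen_1}; the preimage and admissible cover assertions are obtained by pulling back the cover $\{\Zscr_{\Omega, h}\}$ along the (now established) finite morphism. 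Note that your claim that $\Tbb_{\Omega, \Theta(\Omega, \hbf)}$ is finite over $B$ is correct, but it already rests on the identification \Cref{eqn: simplified sum of tors}, hence on \Cref{prop: comparison of finite slopes}: you are implicitly invoking the key input without drawing from it the global consequence that closes the argument.
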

\begin{proof}
Consider the pushforward of $\mathscr H^2_{A \JM}$ along the morphism $\hat{ A_\Sigma \JM[\Sigma] } \to \hat{\JM[\Sigma]} \times \Gm$. By \Cref{prop: comparison of finite slopes} and the way we have chosen $a_1, ..., a_d$ following the strategy of \Cref{corollary: spectral theorem 2}, it is a coherent sheaf that sends $\Omega \times \Sp( E \langle p^h T_1, p^h T_1^{-1} \rangle )$ to
$$
    E \langle p^h T_1, p^h T_1^{-1} \rangle \otimes_{E[T_1]} \O(\Omega)( \chi^\Sigma ) \otimes_{\Hcal^-_{S \setminus \Sigma, \O(\Omega)}} H_*(K^S J, \Acal^\Sigma_{\sigma_{\Omega}, U, *}).
$$
Thus, the morphism $\Xscr^2_{A \JM}(K^S) \to \hat{\JM[\Sigma]} \times \Gm$ is indeed finite and factors through the schematic support of this coherent sheaf.
This is contained in the analytic subset of $\hat{ \JM[\Sigma] } \times \Gm$ corresponding to the support of the coherent sheaf sending the affinoid $\Omega \times \Sp( E \langle p^h T_1, p^h T_1^{-1} \rangle )$ to
$ E \langle p^h T_1, p^h T_1^{-1} \rangle \otimes_{E[T_1]} H_*(K^S J, \Acal^\Sigma_{\sigma_{\Omega}, U, *}). $
Thus, it's enough to show that this is precisely the underlying analytic subset of $\Zscr$.
This follows from the same argument as in \cite[Lemme 3.9]{breuil_hellmann_schraen_1}, and the statement on preimages follows easily.
\end{proof}

\begin{proposition}\label{proposition: comparison eigenvarieties overconvergent}
The maps $\Tbb_{\Omega, \Theta(\Omega, \hbf)} \to \Tbb_{\Omega, h}$ induce an isomorphism
$$
    \Xscr_{\sigma|_{\JM}, U, *}^\Sigma(K^S) \simto \Xscr^2_{A \JM}(K^S).
$$
\end{proposition}
\begin{proof}
This is an immediate consequence of \Cref{lemma: map of algebras in eigenvarieties} and \Cref{proposition: affinoid covering}.
\end{proof}

We can extend these results to include the case where $\Lambda_\Sigma$ acts on $\sigma$ through a non-trivial character.
If $\chi_\Sigma$ is a character of $\Lambda_\Sigma$ with coefficients in $E$, we may consider $\chi_\Sigma$ as a character of $A_\Sigma \JM[\Sigma] = \Lambda_\Sigma \times \Lambda^\Sigma \JM$ by imposing $\chi(\Lambda^\Sigma \JM) = 1$. Multiplication by $\chi_\Sigma$ induces an automorphism $m_{\chi_\Sigma}$ of $\hat{\Lambda^\Sigma}$, and the description of $\Xscr^2_{A \JM, \sigma}(K^S)$ via the $*$-action shows that
\begin{align}\label{eqn: twisting chi on Lambda Sigma}
    \Xscr^2_{A \JM, \sigma \otimes \chi_\Sigma}(K^S)
    \simeq
    \Xscr^2_{A \JM, \sigma}(K^S)
    \times_{\hat{\Lambda_\Sigma}, m_{\chi_\Sigma}} \hat{\Lambda_\Sigma}.
\end{align}
We can now prove \Cref{proposition: relation with thick eigenvariety} \itemnumber{2}.

\vspace{0.5cm}
\begin{proof}[Proof of \Cref{proposition: relation with thick eigenvariety} \itemnumber{2}]
Assume that $\Sigma = S$.
By the previous paragraph we may assume that $\Lambda$ acts trivially on $\sigma$.
Let $\Omega \subseteq \hat{ \JM }, \Theta \subseteq \hat \Lambda$, and $R = \O(\Omega \times \Theta)$.
By \Cref{prop: comparison of finite slopes},
\begin{align*}
    & \Tor^{\Hcal_{S, A \JM, R}^-}_i(R(\delta_{\Omega \times \Theta}), H_j(K^p J, \Acal^\Sigma_{\sigma_{\Omega \times \Theta}, U, *}) ) \\
    & = 
    \begin{cases}
        R(\delta_{\Omega \times \Theta}) \otimes_{\Hcal_{S, A \JM, \O(\Omega)}^-} H_j(K^p J, \Acal^\Sigma_{\sigma_{\Omega}, U, *}) & \text{ if } i = 0, \\
        0 & \text{ if } i \neq 0.
    \end{cases}
\end{align*}
Thus, the spectral sequence in \Cref{corollary: spectral sequence for affinoids} collapses at the $E^2$ page.
As commented upon when proving part \itemnumber{1}, this implies the result.
\end{proof}

Putting everything together, we have proven the following.

\begin{theorem}
    Let $\Xscr_{A \JM, \sigma, U}^\Sigma(K^S)$ be the eigenvariety constructed in \Cref{subsection: construction of eigenvarieties} using $S$-arithmetic cohomology and $\Xscr_{\sigma|_{\JM}, U, *}^\Sigma(K^S)$ the eigenvariety constructed in this section using overconvergent cohomology.
    The underlying reduced varieties are isomorphic and, if $\Sigma = S$, then the varieties themselves are isomorphic.
\end{theorem}

\subsection{Some properties of eigenvarieties}
Throughout this section, we assume that $\Lambda$ acts on $\sigma$ by a character $\chi$.
Fix $a \in \Lambda_\Sigma^{--}$ as in \Cref{subsection: eigenvarieties overconvergent homology}.

\begin{corollary}
The variety $\Xscr_{V, U}^\Sigma(K^S)$ admits an admissible covering by open affinoids that are finite over their image in $\hat{\JM[\Sigma]}$. In particular, $\Xscr_{V, U}^\Sigma(K^S)$ and $\Yscr_{V, U}^\Sigma(K^S)$ (when defined) have dimension at most $\dim( \Zbf_{\Sigma} )$, where $\Zbf_{\Sigma}$ is the center of $\Mbf_\Sigma$.
\end{corollary}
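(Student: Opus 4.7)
The plan is to reduce, using the comparisons from the previous two sections, to the analogous statement for $\tilde{\Xscr}_{AJ_M,\sigma|_{AJ_M},U}^{\Sigma}(K^S)$, for which \Cref{proposition: affinoid covering} already produces an admissible cover by affinoids finite over their image in $\hat{J_{M,\Sigma}}$. Concretely, I would first apply \Cref{proposition: eigenvarieties fiber product} to realize $\Xscr_{V,U}^{\Sigma}(K^S)$ as a base change of $\Xscr_{\HM,\sigma,U}^{\Sigma}(K^S)$ along the finite flat map $\hat{M_\Sigma}\to\hat{\HM[\Sigma]}$ of \Cref{lemma map M to A J_M is finite flat}; then apply \Cref{proposition: eigenvarieties fiber product 2}\itemnumber{2} to embed $\Xscr_{\HM,\sigma,U}^{\Sigma}(K^S)$ as a closed subvariety of a base change of $\Xscr_{AJ_M,\sigma|_{AJ_M},U}^{\Sigma}(K^S)$ along the finite flat map $\hat{\HM[\Sigma]}\to\hat{A_\Sigma J_{M,\Sigma}}$; and finally compose with the closed immersion $\Xscr_{AJ_M,\sigma|_{AJ_M},U}^{\Sigma}(K^S)\hookrightarrow\tilde{\Xscr}_{AJ_M,\sigma|_{AJ_M},U}^{\Sigma}(K^S)$ from \Cref{subsection: construction of eigenvarieties}. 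The net effect is a closed immersion of $\Xscr_{V,U}^{\Sigma}(K^S)$ into a space obtained from $\tilde{\Xscr}_{AJ_M,\sigma|_{AJ_M},U}^{\Sigma}(K^S)$ by a finite flat base change along $\hat{M_\Sigma}\to\hat{A_\Sigma J_{M,\Sigma}}$.

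By \Cref{proposition: affinoid covering}, the spaces $\Sp(\Tbb_{\Omega,\Theta(\Omega,\hbf)})$ form an admissible cover of $\tilde{\Xscr}_{AJ_M,\sigma|_{AJ_M},U}^{\Sigma}(K^S)$, and each maps finitely to $\Zscr_{\Omega,h}\subseteq\Omega\times\Gm$. Since $\Zscr_{\Omega,h}$ is finite flat over $\Omega$ by its very construction (coming from the factorization $F_\Omega(X)=Q(X)R(X)$ of the Fredholm series, so that $\O(\Zscr_{\Omega,h})\simeq\O(\Omega)[X]/Q(X)$ is free of finite rank), the composite $\Sp(\Tbb_{\Omega,\Theta(\Omega,\hbf)})\to\Omega\subseteq\hat{J_{M,\Sigma}}$ is finite. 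Pulling this cover back along the chain of finite flat base changes and closed immersions in the preceding paragraph yields the desired admissible cover of $\Xscr_{V,U}^{\Sigma}(K^S)$: the key observation is that if $Y\to\Omega$ is finite and $\Vcal\to\Omega\times\Theta(\Omega,\hbf)$ is finite flat, then the projection $Y\times_{\Omega\times\Theta(\Omega,\hbf)}\Vcal\to Y$ is finite as a base change, and hence the composite $Y\times_{\Omega\times\Theta(\Omega,\hbf)}\Vcal\to Y\to\Omega$ remains finite. Intersecting with the closed immersions only improves finiteness, so each piece of the resulting cover of $\Xscr_{V,U}^{\Sigma}(K^S)$ is finite over its image in $\hat{J_{M,\Sigma}}$.

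The dimension bounds then follow: by \Cref{proposition: dimension of character space}, $\dim(\hat{J_{M,\Sigma}})=\dim(\Zbf_\Sigma)$, and any rigid-analytic space admitting an admissible cover by affinoids each finite over $\hat{J_{M,\Sigma}}$ has dimension at most $\dim(\Zbf_\Sigma)$; since $\Yscr_{V,U}^{\Sigma}(K^S)$, when defined, is the quotient of $\Xscr_{V,U}^{\Sigma}(K^S)$ by the finite group $\Tcal_M$ of \Cref{lemma: can take quotients of X}, it inherits the same dimension bound. The main technical subtlety to keep in mind throughout is that the projection $\hat{A_\Sigma J_{M,\Sigma}}\simeq\hat{J_{M,\Sigma}}\times\hat{\Lambda_\Sigma}\to\hat{J_{M,\Sigma}}$ is \emph{not} finite, so finiteness of cover pieces over $\hat{J_{M,\Sigma}}$ cannot be read off from finiteness over weight space; it is precisely the slope-$\leq h$ truncation, encoded geometrically by $\Zscr_{\Omega,h}\to\Omega$ being finite, that rescues the argument.
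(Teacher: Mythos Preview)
Your approach is essentially the same as the paper's: reduce via \Cref{proposition: eigenvarieties fiber product} and \Cref{proposition: eigenvarieties fiber product 2} to the level of $A J_M$, and then use the admissible cover coming from the Fredholm hypersurface and slope truncation. The paper routes the last step through $\Xscr_{\sigma|_{J_M}, U, *}^\Sigma(K^S)$ (whose pieces $\Sp(\Tbb_{\Omega,h})$ are finite over $\Omega$ by construction), while you route through $\tilde{\Xscr}_{AJ_M,\sigma|_{AJ_M},U}^{\Sigma}(K^S)$ and \Cref{proposition: affinoid covering}; these are equivalent via the closed immersion of \Cref{proposition: comparison eigenvarieties overconvergent}.

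There is one small omission. The section containing the corollary assumes throughout that $\Lambda$ acts on $\sigma$ via a character $\chi$, so your $\sigma|_{AJ_M}$ has $\Lambda_\Sigma$ acting via $\chi_\Sigma$. However, \Cref{proposition: affinoid covering} (and the entire setup of \Cref{subsection: eigenvarieties overconvergent homology}) is formulated for the extension of $\sigma|_{J_M}$ to $AJ_M$ in which $\Lambda_\Sigma$ acts \emph{trivially}. You therefore cannot invoke that proposition directly for $\sigma|_{AJ_M}$; you need the twisting isomorphism \Cref{eqn: twisting chi on Lambda Sigma}, which the paper cites explicitly. This is harmless for the conclusion, since $m_{\chi_\Sigma}$ is an automorphism of $\hat{\Lambda_\Sigma}$ that commutes with the projection to $\hat{J_{M,\Sigma}}$, so the admissible cover and finiteness over $\hat{J_{M,\Sigma}}$ transfer unchanged. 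With that one line inserted, your argument is complete.
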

\begin{proof}
The statement on admissible covers follows from \Cref{proposition: eigenvarieties fiber product},  \Cref{proposition: eigenvarieties fiber product 2}, \Cref{eqn: twisting chi on Lambda Sigma}, and the corresponding fact for $\Xscr_{\sigma|_{\JM}, U, *}^\Sigma(K^S)$, where such an admissible cover is given by the preimages of the $\Zscr_{\Omega, h}$.
Thus, $\Xscr_{V, U}^\Sigma(K^S)$ and $\Yscr_{V, U}^\Sigma(K^S)$ have dimension bounded by that of $\hat{\JM[\Sigma]}$, which is $\dim(\Zbf_\Sigma)$ by \Cref{proposition: dimension of character space}.
\end{proof}

\begin{proposition}
Let $\delta$ be a point of $\hat{ M_\Sigma }(\bar \Q_p)$. Then, the fiber of $\delta$ on $\Xscr_{V, U}^\Sigma(K^S)$ is natural bijection with systems of Hecke eigenvalues for the action of $\Tbb^S(K^S)$ on $S$-arithmetic homology $H_*(K^S, \SigmaInd(U, V \otimes \delta))_{\bar \Q_p}$ (or on $H^*(K^S, \SigmaInd(U, V \otimes \delta)')_{\bar \Q_p}$).
\end{proposition}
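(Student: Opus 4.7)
The plan is to combine a base-change spectral sequence with a local Nakayama argument at each candidate system of eigenvalues. Fix an admissible affinoid open $\Ucal = \Sp(R) \subseteq \hat{M_\Sigma}$ containing $\delta$, and let $\m_\delta \subset R$ be the maximal ideal at $\delta$, with residue field $\kappa(\delta)$ a finite extension of $E$. By construction, the fiber of $\Xscr^\Sigma_{V,U}(K^S)$ at $\delta$ is $\Spec(\Tbb_\Ucal \otimes_R \kappa(\delta))$, whose $\bar\Q_p$-points biject with systems $\phi\colon \Tbb^S(K^S) \to \bar\Q_p$ appearing on the finite-dimensional $\kappa(\delta)$-vector space $\mathscr H(\Ucal) \otimes_R \kappa(\delta)$, where $\mathscr H(\Ucal) = H_*(K^S, \SigmaInd_P^G(U, V_\Ucal))$.

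Next, I would construct a $\Tbb^S(K^S)$-equivariant convergent spectral sequence
\begin{align*}
E^2_{p,q} = \Tor^R_p\!\left(H_q(K^S, \SigmaInd_P^G(U, V_\Ucal)), \kappa(\delta)\right) \implies H_{p+q}(K^S, \SigmaInd_P^G(U, V\otimes\delta))_{\kappa(\delta)}
\end{align*}
by applying $-\otimesL_R\kappa(\delta)$ to the quasi-isomorphism of \Cref{corollary: spectral sequence for affinoids}. Since each $\Acal^{s-\Sigmaan}_{\sigma_\Ucal, U}$ is orthonormalizable over $R$ (hence flat) and its reduction modulo $\m_\delta$ is canonically $\Acal^{s-\Sigmaan}_{\sigma_{\kappa(\delta)} \otimes \delta, U}$, the derived base change of $C_\bullet(K^S J, \Acal^\Sigma_{\sigma_\Ucal, U})$ is simply $C_\bullet(K^S J, \Acal^\Sigma_{\sigma_{\kappa(\delta)} \otimes \delta, U})$, and a second application of \Cref{corollary: spectral sequence for affinoids} over $\kappa(\delta)$ identifies the abutment.

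To prove the bijection, fix $\phi$ and let $\mathfrak n_\phi \subset \Tbb_\Ucal$ be the corresponding prime lying over $\m_\delta$. Writing $H_q := H_q(K^S, \SigmaInd_P^G(U, V_\Ucal))$, finite generation of $H_q$ over $\Tbb_\Ucal$ combined with Nakayama's lemma gives that $(H_q)_{\mathfrak n_\phi} \neq 0$ iff $(H_q \otimes_R \kappa(\delta))_{\mathfrak n_\phi} \neq 0$ iff $\Tor^R_p(H_q, \kappa(\delta))_{\mathfrak n_\phi} \neq 0$ for some $p \geq 0$, so $\phi$ appears on $\mathscr H(\Ucal) \otimes_R \kappa(\delta)$ iff some $(H_q)_{\mathfrak n_\phi}$ is nonzero. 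For the forward direction, take the minimal such $q_0$: for all $r \geq 2$, the incoming differential $E^r_{r, q_0 - r + 1} \to E^r_{0, q_0}$ vanishes at $\mathfrak n_\phi$ because its source is a subquotient of $\Tor^R_r(H_{q_0 - r + 1}, \kappa(\delta))_{\mathfrak n_\phi} = 0$ by minimality of $q_0$, while the outgoing differential vanishes for degree reasons. Hence $(E^\infty_{0, q_0})_{\mathfrak n_\phi} = (E^2_{0, q_0})_{\mathfrak n_\phi} \neq 0$ and $\phi$ appears on the abutment $H_{q_0}(K^S, \SigmaInd_P^G(U, V\otimes\delta))_{\kappa(\delta)}$. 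The converse is immediate from the spectral sequence together with the Nakayama criterion, and the cohomological statement follows by duality via \Cref{proposition: cohomology is dual of homology}.

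The main technical obstacle is to verify that the quasi-isomorphism of \Cref{corollary: spectral sequence for affinoids} is compatible with derived base change $\Tbb^S(K^S)$-equivariantly; the identification on the overconvergent side is straightforward from flatness of the orthonormalizable Banach complex, but transporting it across the $R(1) \otimesL_{\Hcal^-_{S,R}} -$ in a Hecke-equivariant manner and then passing to the colimit over $s$ for $\Acal^\Sigma$ is the step that requires genuine care. Once this compatibility is granted, the remainder is a formal Nakayama and spectral-sequence computation.
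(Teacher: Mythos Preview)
Your proposal is correct and lands on the same endgame as the paper: a $\Tbb^S(K^S)$-equivariant Tor spectral sequence
\[
\Tor^R_p\bigl(H_q(K^S,\SigmaInd_P^G(U,V_\Ucal)),\kappa(\delta)\bigr)\ \Longrightarrow\ H_{p+q}(K^S,\SigmaInd_P^G(U,V\otimes\delta))_{\kappa(\delta)},
\]
followed by the minimal-$q_0$ Nakayama argument you wrote out (the paper abbreviates this last step as ``the same analysis as in \cite[Theorem 4.3.3]{hansen_universal_eigenvarieties}'').

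The one genuine difference is how the underlying derived base-change identification is obtained. The paper does \emph{not} appeal to flatness of the full orthonormalizable complex. Instead it first chooses $\Omega$ and $h$ so that $C_\bullet(K^SJ,\Acal^{s-\Sigmaan}_{\sigma_\Omega,U,*})$ admits a slope-$\le h$ decomposition, replaces the complex by its slope-$\le h$ part (a bounded complex of \emph{finite projective} $R$-modules by the theory of slope decompositions), and then the quasi-isomorphism $E\otimesL_R C_\bullet(\ldots)_{\le h}\simeq C_\bullet(\ldots\otimes\delta)_{\le h}$ is immediate from projectivity together with \Cref{proposition: properties overconvergent homology}~(iii). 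Your route via ``orthonormalizable $\Rightarrow$ flat'' is also valid over a Noetherian affinoid $R$ (one proves it by applying the open mapping theorem coordinatewise to lift syzygies in $c_0(\Lambda,R)$), but this fact is not recorded in the paper, so if you keep your argument you should justify it rather than assert it as ``straightforward''. The paper's detour through slope-$\le h$ buys exactly that: base change on a perfect complex needs no such lemma, and the commutation of $(-)\otimesL_R\kappa(\delta)$ with $R(1)\otimesL_{\Hcal^-_{S,R}}(-)$ that you flag as delicate becomes trivial once the inner complex is degreewise finite projective.
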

\begin{proof}
The point $\delta$ is defined over some finite extension $E'$ of $E$ in $\bar \Q_p$, and by base change we may assume that $E' = E$.
We will also write $\delta$ for the restriction of $\delta$ to $A_\Sigma \JM[\Sigma]$, i.e. for the image of $\delta$ under the map $\hat{M_\Sigma} \to \hat{A_\Sigma \JM[\Sigma]}$. Let $a \in A_\Sigma^{--}$.
Choose an affinoid open $\Omega \subseteq \hat{\JM[\Sigma]}$ containing the image of $\delta$ in $\hat{\JM[\Sigma]}$ and $h \geq v_p(\chi \delta(a))$ such that $C_\bullet( K^S J, \Acal_{\sigma_\Omega,U,*}^{s-\Sigmaan})$ admits a slope-$\leq h$ decomposition with respect to $U_a$.
If we let $\Ucal$ be the preimage in $\hat{ M_\Sigma }$ of $\Omega \times \Theta(\Omega, \hbf)$ for some $\hbf$ as in the previous section, then $C_\bullet( K^S J, \Acal_{\sigma_\Ucal,U,*}^{s-\Sigmaan})$ admits a slope decomposition.
Set $R = \O(\Ucal)$ and $\Theta = \Theta(\Omega, \hbf)$.
By \Cref{lemma: eigenvectors decomposition} (and the paragraph above it) and \Cref{lemma: untwisting homology}, there are $\Tbb^S(K^S)$-equivariant quasi-isomorphisms
\begin{align*}
    C_\bullet(K^S, \SigmaInd(U,V_{\Omega \times \Theta})) \otimesL_R E
    & \simeq 
    R(\chi \delta_{\Omega \times \Theta}) \otimesL_{\Hcal^-_{S,R}} C_\bullet( K^S J, \Acal_{\sigma_{\Omega \times \Theta},U,*}^{s-\Sigmaan})_{\leq h} \otimesL_R E
    \\
    & \simeq 
    R(\chi \delta_{\Omega \times \Theta}) \otimesL_{\Hcal^-_{S,R}} C_\bullet( K^S J, \Acal_{\sigma \otimes \delta,U,*}^{s-\Sigmaan})_{\leq h}
    \\
    & \simeq 
    E(\chi \delta) \otimesL_{\Hcal^-_{S,E}} C_\bullet( K^S J, \Acal_{\sigma \otimes \delta,U,*}^{s-\Sigmaan})_{\leq h} \\
    & \simeq 
    C_\bullet(K^S, \SigmaInd(U,V \otimes \delta)),
\end{align*}
where the map $R \to E$ corresponds to the point $\delta$ in $\Omega \times \Theta$.
Here, the second line follows from \Cref{proposition: properties overconvergent homology} \itemnumber{3} and the fact that $C_\bullet( K^S J, \Acal_{\sigma_{\Omega \times \Theta},U,*}^{s-\Sigmaan})_{\leq h}$ is a projective $R$-module,
and the third line follows from the fact that we can find a resolution $P_\bullet$ of $R(\chi \delta_{\Omega \times \Theta})$ by projective $\Hcal^-_{S, R}$-modules such that $P_\bullet \otimes_R E$ is a resolution of $E(\chi \delta)$ (this follows by twisting from the corresponding result for the trivial character, where it is true since such a resolution can be defined over $\Q_p$).
Thus, we obtain a spectral sequence relating the $S$-arithmetic homologies of $\SigmaInd(U,V_{\Omega \times \Theta})$ and of $\SigmaInd(U,V \otimes \delta)$, and the same analysis as in \cite[Theorem 4.3.3]{hansen_universal_eigenvarieties} gives the result.
\end{proof}

Let $\lambda = (\lambda_\Sigma, \lambda^\Sigma)$ be a dominant regular weight of $\Mbf_{\Sigma,E} \times \Gbf_{\Scalp \setminus \Sigma,E}$.
We also assume that $U = V_{G_{\Scalp \setminus \Sigma}}(\lambda^\Sigma)$ and $\sigma_\Sigma = \sigma_{\Sigma, \sm} \otimes_E V_{M_\Sigma}(\lambda_\Sigma)'$, where $\sigma_{\Sigma, \sm}$ is smooth. Let us set $V_\sm = \cInd_{\HM}^M (\sigma_{\Sigma, \sm} \boxtimes \sigma^\Sigma)$.

\begin{definition}
Let $x$ be a point of $\Xscr_{V, U}^\Sigma(K^S)$.
\begin{enumerate}
    \item We say that $x$ has locally algebraic (resp. dominant regular) weight if its image in $\hat{M}$ is a locally algebraic character $\delta = \delta_\sm \delta_\alg$ (resp. if moreover $\lambda_\Sigma \delta_\alg$ is a dominant regular of $\Gbf_\Sigma$).
    \item We say that $x$ is \emph{classical} if it has dominant regular weight $\delta = \delta_\sm \delta_\alg$ and the localisation of $H_*(K^S, V_{G_{\Scalp}}(\lambda \delta_\alg)' \otimes_E \Ind_P^G ( V_\sm \otimes \delta_\sm)^\sm )$ at the maximal ideal of $\Tbb^S(K^S)$ corresponding to $x$ is non-zero.
    \item We say that $x$ is \emph{non-critical} if it is classical of weight $\delta = \delta_\sm \delta_\alg$ and the natural homomorphism $$
        H_*(K^S, V_{G_{\Scalp}}(\lambda \delta_\alg)' \otimes_E \Ind_P^G ( V_\sm \otimes \delta_\sm)^\sm ) \to H_*(K^S, \SigmaInd_P^G (U, V \otimes \delta) )
    $$
    is an isomorphism after localisation at the maximal ideal above.
\end{enumerate}
\end{definition}

We can make analogous definitions for points of $\Xscr_{\sigma|_{\JM}, U, *}^\Sigma(K^S)$.
\Cref{theorem: classicality} gives a numerical criterion on a locally algebraic character $\delta$ for its fiber in $\Xscr_{V, U}^\Sigma(K^S)$ to consist of non-critical classical points.
We will call the points satisfying this criterion \emph{numerically non-critical}.

\begin{proposition}\label{proposition: classical points are dense}
Assume that $G_\infty$ is compact, $\Sigma = S$ and that for all $v \in \Sigma$, $\Res_{F_v/\Q_p} \Gbf_v$ is the restriction of scalars of a \emph{split} reductive group over a finite extension of $\Q_p$.
Then, the set of (numerically) non-critical classical points is a Zariski dense in $\Xscr_{V, U}^\Sigma(K^S)$ and $\Xscr_{\sigma|_{\JM}, U, *}^\Sigma(K^S)$ and accumulates at points of locally algebraic weight.
Moreover, $\Xscr_{V, U}^\Sigma(K^S)$ and $\Xscr_{\sigma|_{\JM}, U, *}^\Sigma(K^S)$ are equidimensional of dimension $\dim(\Zbf)$, where $\Zbf$ is the center of $\Mbf$.
\end{proposition}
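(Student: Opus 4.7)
The plan is to reduce the statement, via the comparison theorems between the two constructions of eigenvarieties, to the case of the overconvergent eigenvariety $\Xscr_{\sigma|_{J_M}, U, *}^{\Sigma}(K^S)$, and then to combine the classicality criterion of \Cref{theorem: classicality} with the Zariski density of numerically non-critical weights from \Cref{proposition: small slope weights are dense}. Concretely, \Cref{proposition: eigenvarieties fiber product}, \Cref{proposition: eigenvarieties fiber product 2}, \Cref{proposition: relation with thick eigenvariety} \itemnumber{2} (applicable since $\Sigma=S$) and \Cref{proposition: comparison eigenvarieties overconvergent} together identify $\Xscr_{V,U}^{\Sigma}(K^S)$ and $\Xscr_{\sigma|_{J_M}, U, *}^{\Sigma}(K^S)$ up to the finite flat base change along $\hat{M} \to \hat{J_M}$ and the twist $m_{\chi_\Sigma}$ of \Cref{eqn: twisting chi on Lambda Sigma}. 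Since density, accumulation at locally algebraic points, and equidimensionality are preserved by finite flat morphisms, it suffices to work with $\Xscr_{\sigma|_{J_M}, U, *}^{\Sigma}(K^S)$.

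For density and accumulation, I would argue as follows. The splitness hypothesis on $\Res_{F_v/\Q_p}\Gbf_v$ puts us in the setting where \Cref{proposition: small slope weights are dense} applies to $\hat{J_{M}}$: for each $h \geq 0$, the set of dominant regular weights $\psi$ such that $v_p(s_\alpha \cdot (\lambda \psi)(a)) - v_p((\lambda\psi)(a)) > h$ for all $\alpha \in \Delta_E \setminus \Delta_{M,E}$ is Zariski dense in every component of $\hat{J_M}$ containing an algebraic weight, and accumulates at any algebraic weight. The weight map $\Xscr_{\sigma|_{J_M}, U, *}^{\Sigma}(K^S) \to \hat{J_M} \times \Gm$ is finite on the affinoid cover $\{\Sp(\Tbb_{\Omega,h})\}$, and hence the preimage in any $\Sp(\Tbb_{\Omega,h})$ of the Zariski dense locus of such $h$-small-slope weights inside $\Omega$ is Zariski dense in $\Sp(\Tbb_{\Omega,h})$ by the going-down property of finite surjections onto Zariski closed subsets. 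On such preimages \Cref{theorem: classicality} (applied with $\tilde\Sigma = \emptyset$) ensures numerical non-criticality and hence classicality and non-criticality, and the accumulation statement in $\hat{J_M}$ transfers to $\Xscr_{\sigma|_{J_M}, U, *}^{\Sigma}(K^S)$ by finiteness of the weight map.

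For equidimensionality, recall that $\Xscr_{\sigma|_{J_M}, U, *}^{\Sigma}(K^S)$ is by construction finite over the Fredholm hypersurface $\Zscr \subseteq \hat{J_M} \times \Gm$ cut out by $F(X)$, and the weight map $\Zscr \to \hat{J_M}$ is, by the general theory of Fredholm hypersurfaces (cf.\ \cite{buzzard_eigenvarieties}, \cite[\S4]{hansen_universal_eigenvarieties}), locally finite and flat on an admissible covering by $\Zscr_{\Omega,h}$; in particular $\Zscr$ is equidimensional of dimension $\dim \hat{J_M} = \dim(\Zbf_\Sigma) = \dim(\Zbf)$ by \Cref{proposition: dimension of character space}. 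The same argument as in \cite[Theorem 4.2.2]{hansen_universal_eigenvarieties} shows the image of any irreducible component of $\Xscr_{\sigma|_{J_M}, U, *}^{\Sigma}(K^S)$ in $\Zscr$ is a union of irreducible components of $\Zscr$: one uses that the set of classical points constructed above is Zariski dense and that over each such point the weight map of $\Zscr$ is \'etale, so no proper analytic subset of $\Zscr$ can contain the classical locus together with a whole irreducible component. Finiteness of the weight map then gives equidimensionality of dimension $\dim(\Zbf)$.

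The main obstacle is the component-hitting step for equidimensionality: one must verify that the classical non-critical points constructed via the small-slope density statement actually meet every irreducible component of $\Zscr$ in the image, rather than just being dense in the image. This is where the compactness hypothesis on $G_\infty$ becomes essential, since it forces concentration of arithmetic cohomology in a single degree (no higher cohomology in $H^\bullet(\tilde{\g}_\infty, K_\infty; \pi_\infty \otimes V_{G_{\Scalp}}(\lambda)_\C)$ outside degree $0$) and thus ensures that the numerically non-critical classical points already fill enough of $\Xscr_{\sigma|_{J_M}, U, *}^{\Sigma}(K^S)$ to dominate every component, rather than being shadowed by higher-degree ``ghost'' contributions.
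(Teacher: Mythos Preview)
Your overall strategy—reduce via the comparison isomorphisms to the overconvergent eigenvariety, then combine \Cref{proposition: small slope weights are dense} with \Cref{theorem: classicality}—is the right one, and matches the paper's. But there is a genuine gap in how you deploy the compactness of $G_\infty$, and this gap infects both the density step and the equidimensionality step.

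The paper's use of compactness is more elementary and more decisive than what you describe. Because $G_\infty$ is compact, the space $\Gbf(F)\backslash \Bscr_\infty \times \Gbf(\A_F)/\Kcal^S J$ is a finite set of points, so the Borel--Serre \emph{chain complex} $C_\bullet(K^S J,\Acal^{s-\Sigmaan}_{\sigma_\Omega,U,*})$ is concentrated in degree $0$. Consequently $H_0(K^S J,\Acal^{s-\Sigmaan}_{\sigma_\Omega,U,*})_{\leq h}$ is a direct summand of the orthonormalizable $\O(\Omega)$-module $C_0$, hence a finite \emph{projective} $\O(\Omega)$-module. Projectivity gives injectivity of $\O(\Omega)\to\End_{\O(\Omega)}(H_0)_{\leq h}$ and hence of $\O(\Omega)\to\Tbb_{\Omega,h}$; since the weight map is also finite, \cite[Lemme~6.2.10]{chenevier_gln} forces every irreducible component of $\Sp(\Tbb_{\Omega,h})$ to surject onto $\Omega$. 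This single observation yields equidimensionality directly and is also what makes your density argument valid: the ``going-down'' step you invoke (preimage of a Zariski-dense set being Zariski-dense) requires surjectivity of the weight map on each irreducible component, which you have not established. Your explanation via concentration of $(\tilde\g_\infty,K_\infty)$-cohomology of automorphic representations is beside the point; what matters is degree-$0$ concentration of the overconvergent chain complex and the resulting projectivity. Once you insert this argument, your equidimensionality discussion via the Fredholm hypersurface (and its acknowledged ``component-hitting'' obstacle) becomes unnecessary, and the density/accumulation argument goes through as you wrote, using \cite[Lemme~6.2.8]{chenevier_gln} to lift accumulation along the finite weight map.
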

\begin{proof}
This follows a standard argument.
Since $G_\infty$ is compact, the topological space $\Gbf(F) \backslash \Bscr_\infty \times \Gbf(\A_F) / \Kcal^S J$ is a disjoint union of points and the complexes $C_\bullet(K^S J, \Acal_{\sigma \otimes \delta, U, *}^{s-\an})$ are concentrated in degree 0 for any continuous character $\delta$ of $\JM$.
Let $\Omega \subseteq \hat{ \JM }$ be an irreducible admissible affinoid open subspace and $h \geq 0$ such that $C_0(K^S J, \Acal_{\sigma_\Omega, U, *}^{s-\an})$ admits a slope-$\leq h$ decomposition.
Then $H_0(K^S J, \Acal_{\sigma_\Omega, U, *}^{s-\an})_{\leq h}$ is a finite projective $\O(\Omega)$-module.
By \cite[Lemme 6.2.10]{chenevier_gln}, the map $\Sp(\Tbb_{\Omega, h}) \to \Omega$ is surjective after restricting it to any irreducible component.
Thus, all irreducible components of $\Xscr_{\sigma|_{\JM}, U, *}^\Sigma(K^S)$ have the same dimension as $\hat{ \JM }$, that is $\dim(\Zbf)$.
If $\Ucal$ is the preimage of $\Omega \times m_{\chi_\Sigma}^{-1} \Theta(\Omega, \hbf)$ on $\hat{M}$ (cf. \Cref{eqn: twisting chi on Lambda Sigma}), then by \Cref{proposition: eigenvarieties fiber product}, \Cref{proposition: eigenvarieties fiber product 2}, \Cref{lemma: map of algebras in eigenvarieties} and their proofs, the value at $\Ucal$ of coherent sheaf $\mathscr H$ on $\hat{M}$ is a direct summand of $\O(\Ucal) \otimes_{\O(\Omega \times \Theta(\Omega, \hbf))} H_0(K^S J, \Acal_{\sigma_\Omega, U, *}^{s-\an})_{\leq h}$ (with a twisted action of $\Tbb^S(K^S)$). In particular, the same analysis applies to $\Xscr_{V, U}^\Sigma(K^S)$ instead of $\Xscr_{\sigma|_{\JM}, U, *}^\Sigma(K^S)$.

For the Zariski density and accumulation statement, it's enough to show that every irreducible component has a point of locally algebraic weight, and any such point has arbitrarily small affinoid neighbourhoods where non-critical classical points are dense.
For simplicity, let us focus only on the case of $\Xscr_{\sigma|_{\JM}, U, *}^\Sigma(K^S)$.
The previous paragraph implies that the morphism $\Xscr_{\sigma|_{\JM}, U, *}^\Sigma(K^S) \to \Zscr$ sends irreducible components to irreducible components.
Since the image of an irreducible component of $\Zscr$ in $\hat{ \JM }$ is a Zariski-open subset, the same is true for the images of irreducible components of $\Xscr_{\sigma|_{\JM}, U, *}^\Sigma(K^S)$.
Fix a connected component $C$ of $\hat{\JM}$ and a smooth character $\phi$ lying on it.
By \Cref{proposition: small slope weights are dense}, the product of $\phi$ and dominant regular weights in $C$ are Zariski dense and accumulation.
In particular, their intersection with the image of an irreducible component of $\Xscr_{\sigma|_{\JM}, U, *}^\Sigma(K^S)$ lying above $C$ is nonempty.
Therefore, every such irreducible component contains a point with locally algebraic weight.
Now, let $x$ be such a point and $\delta = \delta_\sm \delta_\alg$ its image in $\hat{ \JM }$.
Fix $h \geq 0$ and $\Omega \subseteq \hat{ \JM }$ a connected affinoid open such that $x$ lies in $\Sp(\Tbb_{\Omega, h})$.
By \Cref{proposition: small slope weights are dense} again, the weights of the form $\delta_\sm \psi$ with $\psi \lambda_\Sigma$ dominant regular and $v_p((w \cdot \psi \lambda_\Sigma)(a)) - v_p((\psi \lambda_\Sigma)(a)) > h$ are Zariski dense in $\Omega$ and accumulate at $\delta$.
Their preimages in $\Sp(\Tbb_{\Omega, h})$ are thus also Zariski dense and accumulate at $x$ by finiteness of the morphism $\Sp(\Tbb_{\Omega, h}) \to \Omega$ and \cite[Lemme 6.2.8]{chenevier_gln}.
The condition that $v_p((w \cdot \psi \lambda_\Sigma)(a)) - v_p((\psi \lambda_\Sigma)(a)) > h$ implies that these points are numerically non-critical.
\end{proof}

\subsection{Comparison with the work of Breuil--Ding}

We will now compare our construction of eigenvarieties with that of \cite{breuil_ding_bernstein_eigenvarieties}.
We need to make additional assumptions to place ourselves in the setting of \loccit~ 
Assume that $F$ is a totally real number field and $\tilde F / F$ is a totally imaginary quadratic extension of $F$ over which all primes in $\Scalp$ split.
We will assume that $\Gbf$ is a unitary group over $F$ that becomes isomorphic to $\GL_n$ after base change to $\tilde F$ and that $G_\infty$ is compact.
For each $v \in \Scalp$, fix a prime $\tilde v$ in $\tilde F$ dividing it, as well as an isomorphism $\Gbf_{F_v} \simto \Gbf_{{\tilde F}_{\tilde v}} \simto \GL_{n / {\tilde F}_{\tilde v}}$.
We will use these isomorphisms to identify $\Gbf_{F_v}$ and $\GL_{n / {\tilde F}_{\tilde v}}$, and we will do so without comment.
Assume also that $S = \Sigma = \Scalp$, and that for each $v \in \Scalp$ the subgroups $\Tbf_v$ and $\Sbf_v$ correspond to the subgroups of diagonal matrices, and that the Borel subgroup $\Bbf_{p,E}$ is of the form $E \times_\Qp \prod_{v \in \Scalp} \Res_{F_v / \Q_p} \Bbf_v$, where $\Bbf_v$ corresponds to the subgroup of upper-triangular matrices.
The parabolics $\Pbf_v$ contain $\Bbf_v$, so $\Mbf_v$ corresponds to a subgroup of block-triangular matrices, with block sizes $n_{v, 1}, ..., n_{v, r_v}$, say.
Moreover, $\Abf_v$ is the full center of $\Mbf_v$, consisting of diagonal matrices with identical entries in each block.
Fix a uniformiser $\varpi_v$ for all $v \in \Scalp$, and let $\Lambda_v$ consist of diagonal matrices with entries
$$
    (\overbrace{\varpi_v^{e_1}, ..., \varpi_v^{e_1}}^{n_{v,1}}, ..., \overbrace{\varpi_v^{e_{r_v}}, ..., \varpi_v^{e_{r_v}}}^{n_{v,r_v}}).
$$

The assumption that $G_\infty$ is compact implies that arithmetic (co)homology occurs only in degree 0, and similarly for completed cohomology (cf. \Cref{subsection: completed cohomology}).
Moreover, the systems of Hecke eigenvalues in the $p$-arithmetic (co)homology groups considered in \Cref{section: homology} occur all in degree 0, as they are in bijection with systems of Hecke eigenvalues in the corresponding arithmetic (co)homology groups, which will already be detected by the zeroth Tor and Ext groups over the Hecke algebra at $p$.
Thus, we may (and will) always focus our discussion to (co)homology in degree 0, where we can apply \Cref{proposition: p-arithmetic and completed}.

Let us recall the construction of eigenvarieties in \cite{breuil_ding_bernstein_eigenvarieties}.
Assume that $E$ contains the $m$-th roots of unity in $\bar \Q_p$ for any $m \leq n$.
Fix a cuspidal Bernstein component $\Omega$ for $\Mbf$ (defined over $E$) and a dominant weight $\lambda$ for $\Mbf$. Let $\pi$ be a point of $\Omega$, corresponding to a supercuspidal representation of $\Mbf$ over $\bar \Q_p$ that we also denote by $\pi$, and write $\pi = \bar \Q_p \otimes_E \cInd_{A \JM}^M (\sigma)$ for some representation $\sigma$ of $A \JM$ for $\JM := \prod_{v \in \Scalp} \GL_n(\O_{F_v})$.
For any admissible locally analytic representation $W$ of $G$ over $E$, set
\begin{align*}
    J_P(W)_\lambda
    & :=
    \Hom_{\m_E^\der} ( V_M(\lambda), J_P(W)),
    \\
    B_{\Omega, \lambda} ( W )
    & :=
    \Hom_{\JM} (\sigma, J_P(W)_\lambda \cotimes_E \Ccal^\la(A_0, E ) ),
\end{align*}
where $\m^\der_E$ is the Lie $E$-algebra of $\Mbf^\der$ and $J_P(W)$ denotes Emerton's locally analytic Jacquet functor.
The representation $B_{\Omega, \lambda} ( W )$ is endowed with an action of $A = \Lambda \times A_0$ by letting $\Lambda$ act on $J_P(W)_\lambda$ and $A_0$ act on $\Ccal^\la(A_0, E )$, and these actions make $B_{\Omega, \lambda} ( W )$ an essentially admissible locally analytic representation of $A$.
In particular, it gives rise to a coherent sheaf $\mathcal H$ on $\hat A$ endowed with an action of $\Tbb^p(K^p)$.
When $W = \tilde H^0(K^p)$, we will write $\mathcal D_{\Omega, \lambda} (K^p)$ for its relative spectrum.
In fact, the action of $\Lambda$ factors through the action of the Bernstein spectrum $\mathcal Z_\Omega = \End_M(\cInd_{\JM}^M (\sigma))$ on the first argument of
$$
    \Hom_{M} (\cInd_{\JM}^M (\sigma), J_P(W)_\lambda \cotimes_E \Ccal^\la(A_0, E ) ) \simeq B_{\Omega, \lambda} ( W ).
$$
If $\Tcal_A^\un$ denotes the group of unramified characters $\delta$ of $A$ such that $\pi \otimes (\delta \circ \det) \simeq \pi$, then $\mathcal Z_\Omega \simeq E[\Lambda]^{\Tcal_A^\un}$, and so $(\Spec \mathcal Z_\Omega)^\an \simeq \hat{\Lambda} / \Tcal_A^\un$.
Hence, the coherent sheaf on $\hat A$ associated to $B_{\Omega, \lambda} ( W )$ in fact descends to one on $\hat{A} / \Tcal_A^\un$.
When $W = \tilde H^0(K^p)$, we call the resulting relative spectrum of the Hecke algebra $\mathcal E_{\Omega, \lambda} (K^p)$.

Note that the block-wise determinant map $\det_{M} \colon M \to A$ sending a block-diagonal matrices $(m_{v,i})_{v,i}$ to the diagonal matrix with entries
$$
    (\overbrace{\det(m_{v,1}), ..., \det(m_{v,1})}^{n_{v,1}}, ..., \overbrace{\det(m_{v,r_v}), ..., \det(m_{v,r_v})}^{n_{v,r_v}})_v
$$
induces isomorphisms $\hat A \simto \hat M, \hat{ A_0 } \simto \hat{ \JM }$ and $\hat \Lambda \simto (\hat M)^\un$, where $(\hat M)^\un \subseteq \hat M$ denotes the (closed) subspace of unramified characters.
Moreover, the subgroup $\Tcal_M^\un$ of $\Tcal_M$ (cf. \Cref{lemma: can take quotients of X}) consisting of unramified characters is isomorphic to $\Tcal_A^\un$ in the same way. In particular, we have morphisms $\hat A / \Tcal_A^\un \simto \hat M / \Tcal_M^\un \to \hat M / \Tcal_M$.
Write $q$ for the composition of these morphisms and the multiplication by $\delta_P^{-1}$ morphism $\hat M / \Tcal_M \to \hat M / \Tcal_M$.

Let $V_\sm = \cInd_{\JM}^M(\sigma)$ and $V = V_M(\lambda) \otimes_E V_\sm$, and let $U = 1$ be the trivial representation.
To make comparisons with \cite{breuil_ding_bernstein_eigenvarieties} easier, we will slightly modify our conventions.
Namely, when taking locally analytic inductions, we will induce from the opposite parabolic $\bar P$ instead of $P$.
As a consequence of this change, we will sometimes have to replace the simple positive roots $\Delta_E$ of $E \times_\Qp \Res_{F / \Q} \Gbf$ by their negations $\bar \Delta_E$, and $\Lambda^-$ by $\Lambda^+$.
We will also denote by $\bar \Xscr_{V}(K^p)$ and $\bar \Yscr_{V}(K^p)$ the analogues of $\Xscr_{V, 1}^{\Scalp}(K^p)$ and $\Yscr_{V, 1}^{\Scalp}(K^p)$ respectively.
Note that the algebraic part of $V$, $V_M(\lambda)$ is the dual of the irreducible algebraic representation of $M$ of highest weight $- \lambda$ \emph{with respect to the opposite Borel} $\bar \Bbf$.

Before stating and proving the main result of the section comparing the two constructions, let us sketch roughly how the comparison goes.
Recall from \cite[Proposition 3.2.2]{breuil_ding_bernstein_eigenvarieties} that a $\bar \Q_p$-point $x$ of $\mathcal E_{\Omega, \lambda} (K^p)$ corresponds to a triple $(\eta_x, \pi_x, \chi_x)$ where $\eta_x \colon \Tbb^S(K^S) \to \bar \Q_p$ is a system of Hecke eigenvalues, $\pi_x$ is a supercuspidal representation of $M$ in the Bernstein component $\Omega$ and $\chi_x \colon A_0 \to \bar \Q_p^\times$ is a continuous character.
On the other hand, $\bar \Q_p$-points $y$ of $\bar \Yscr_{V}(K^p)$ correspond to $(\nu_y, \delta_y)$ where $\nu_y \colon \Tbb^S(K^S) \to \bar \Q_p$ is a system of eigenvalues and $\delta_y$ is an equivalence class of characters of $M$ modulo those characters which twist $\pi = \bar \Q_p \otimes V_\sm$ into itself.
Given such a point $x$, we may write $\pi_x = \pi \otimes \psi$ for some unramified character $\psi$ of $M$, and the point $x$ will then correspond to the point $y$ with $\nu_y = \eta_x$ and $\delta_y = \psi (\chi_x \circ \det_M) \delta_P^{-1}$.
The only subtlety in making this precise is how one reconciles the fact that one construction relies Emerton's Jacquet functor whereas the other relies on parabolic induction, and these are not adjoint in general.
However, the adjunction theorem of Bergdall--Chojecki \cite[Theorem A]{bergdall_adjunction} can be applied at numerically non-critical points, and using their Zariski density one can extend the comparison to all points by an interpolation argument.

\begin{theorem}
There is an isomorphism of reduced varieties
$$
    \mathcal E_{\Omega, \lambda} (K^p)^\red \simeq \bar \Yscr_{V}(K^p)^\red \times_{\hat M / \Tcal_M, q} (\hat A / \Tcal_A^\un)
$$
(here, the superscript $\red$ is used to denote the underlying reduced varieties).
\end{theorem}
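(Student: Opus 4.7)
The plan is to construct a canonical bijection on the set of numerically non-critical classical $\bar\Q_p$-points of locally algebraic weight, and then globalize it by a Zariski-density argument using that both sides are reduced rigid spaces which are locally finite over weight space.

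The pointwise comparison rests on two adjunctions. Since $\Gbf$ is $F$-anisotropic, \Cref{proposition: p-arithmetic and completed} identifies the Hecke module underlying $\bar\Yscr_V^{\Scalp}(K^p)$ at a character $\delta \in \hat M(\bar\Q_p)$ with the $\bar\Q_p$-linear dual of
\[
\Hom_{G,\text{cts}}\bigl(\SigmaInd_{\bar P}^G(1, V_M(\lambda)\otimes V_\sm\otimes\delta),\, \tilde H^0(K^p)^\la\bigr).
\]
When $\delta = \delta_\alg\delta_\sm$ is locally algebraic and numerically non-critical in the sense of \Cref{definition: small slope}, the Bergdall--Chojecki adjunction theorem \cite{bergdall_adjunction} identifies this continuous $G$-Hom with $\Hom_M\bigl(V_M(\lambda)\otimes V_\sm\otimes\delta,\, J_P(\tilde H^0(K^p))\bigr)$ compatibly with the action of $\Tbb^p(K^p)$. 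Unwinding the definition of $\mathcal E_{\Omega,\lambda}(K^p)$ via the essentially admissible representation $B_{\Omega,\lambda}(\tilde H^0(K^p))$ of $A$, this Jacquet Hom is precisely the fiber of $\mathcal E_{\Omega,\lambda}(K^p)$ at the character in $\hat A/\Tcal_A^\un$ whose image under $q$ is the class of $\delta$ in $\hat M/\Tcal_M$, once one tracks how the block-determinant isomorphism $\hat A\simto\hat M$, the quotients by $\Tcal_A^\un$ and $\Tcal_M$, and the twist by $\delta_P^{-1}$ interact with Emerton's normalized versus our unnormalized parabolic induction conventions. This furnishes a bijection, compatible with Hecke eigensystems and with projection to $\hat M/\Tcal_M$, between numerically non-critical classical $\bar\Q_p$-points of the two sides.

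To globalize, I use that both sides are reduced rigid spaces admitting admissible covers by affinoids finite over $\hat A/\Tcal_A^\un$: on our side via \Cref{proposition: eigenvarieties fiber product} and \Cref{proposition: affinoid covering}, on theirs by the construction in \cite[\S 3]{breuil_ding_bernstein_eigenvarieties}. By \Cref{proposition: classical points are dense} the numerically non-critical classical points of locally algebraic weight form a Zariski-dense accumulation subset of $\bar\Yscr_V^{\Scalp}(K^p)$, and the analogous density holds on $\mathcal E_{\Omega,\lambda}(K^p)$. Working affinoid-locally over $\hat A/\Tcal_A^\un$, the reduced coordinate rings of both sides inject into the product of residue fields at such dense points through the Hecke eigensystems; the pointwise bijection identifies these two embeddings as subrings of the same product, producing a canonical isomorphism of reduced affinoid algebras that glues to the desired global isomorphism.

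The main obstacle is that the Bergdall--Chojecki adjunction genuinely fails at critical points, so one cannot hope for a direct sheaf-theoretic identification of the two coherent sheaves on the nose; this is precisely why the statement is about reduced structures and why the proof is forced to proceed through a density argument rather than via a direct functorial construction at the level of coherent sheaves.
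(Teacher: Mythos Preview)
Your proposal is correct and follows essentially the same strategy as the paper: both use \Cref{proposition: p-arithmetic and completed} together with the Bergdall--Chojecki adjunction to identify the Hecke modules at numerically non-critical points, and then globalize via Zariski density of such points on both sides. The only notable difference is in the packaging of the globalization step: the paper first shows that both eigenvarieties live over the same Fredholm hypersurface $\Zscr$ (by matching the numerical non-criticality conditions on each side explicitly) and then invokes Hansen's interpolation theorem \cite[Theorem~5.1.6]{hansen_universal_eigenvarieties} as a black box, whereas you carry out that interpolation argument by hand via the embedding of reduced affinoid algebras into products of residue fields. Your route is slightly more direct but requires a little care to ensure the affinoid covers are compatible; the paper's route offloads that bookkeeping to Hansen's theorem at the cost of having to verify its hypotheses (in particular the isomorphism of \emph{fibers} as Hecke modules, not merely agreement of eigensystems, though for reduced varieties this distinction is immaterial).
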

\begin{proof}
Let us write $\delta_P^{-1} \mathcal D_{\Omega, \lambda} (K^p)$ for the variety $\mathcal D_{\Omega, \lambda} (K^p) \times_{\hat M, \delta_P} \hat M$ over $\hat M$ (of course, this is isomorphic to $\mathcal D_{\Omega, \lambda} (K^p)$ but not as varieties over $\hat M$).
We will start by showing that the images of $\bar \Xscr_{V}(K^p)$ and $\delta_P^{-1} \mathcal D_{\Omega, \lambda} (K^p)$ in $\hat M$ have the same underlying analytic subset.
For this, we will use that numerically non-critical points are Zariski-dense in $\bar \Xscr_{V}(K^p)$.
Let us unwind the definition of numerical non-criticality, \Cref{definition: small slope}, in this case.
Write $\omega$ for the central character of $V_\sm$, so that the central character of $V \otimes \delta$ is $\chi_\delta := \omega \lambda \delta$ for any locally algebraic character $\delta = \delta_\sm \delta_\alg \colon M \to E^\times$.
Write $V_\sm = \bigboxtimes_{v,i} V_{\sm, v, i}$ according to the decomposition $M \simeq \prod_{v,i} \GL_{n_i} (F_i)$ and similarly for $V$, and write $\omega = (\omega_{v,i})_{v,i}$ (with each $\omega_{v,i}$ a character of $F_v^\times$) and $\delta = (\delta_{v,i})_{v,i}$ accordingly.
We will also write $\mu_{v, i} = (\mu_{v, \tau, i})_\tau$ for the weights of $\delta_{v,i}$.
As $\delta$ factors through the block-wise determinant $M \subseteq A$, we will sometimes consider each $\delta_{v,i}$ as a character on $F_v^\times$.
Write also $\lambda = (\lambda_{v,\tau,j})_{v,\tau,j}$, where $\tau$ runs through the $\Q_p$-algebra embeddings of $F_v$ into $E$ and $1 \leq j \leq n$, to mean $\lambda(\mathbf t) = (\prod_{j=1}^n t_{v, \tau, j}^{\lambda_{v,\tau, j}})_{v, \tau}$ for $\mathbf t = (t_{v,\tau,j})_{v, \tau, j} \in \Tbf(E) \simeq \prod_{v,\tau,j} E^\times$.
For such $\tau$ and $v$, set $s_{v,i} = n_{v, 1} + \cdots + n_{v, i}$ and $\alpha_{v,\tau,j}$ for the simple negative root of $\Mbf_v$ given by $(t_{v,\tau,k})_{v, \tau, k} \mapsto t_{v, \tau, j}^{-1} t_{v, \tau, j+1}$.
Then $\bar \Delta_E \setminus \bar \Delta_{M, E} = \bigcup_{v, \tau} \{ \alpha_{v, \tau, s_{v,1}}, ..., \alpha_{v, \tau, s_{v, r_v-1}} \}$.
The condition \Cref{eqn: inequality small slope} from \Cref{definition: small slope} applied to $V \otimes \delta$, the root $\alpha_{v,\tau,s_{v,k}}$, and the diagonal element $a_{v,k} \in \Lambda_v^+$ with entries
$$
    (\overbrace{\varpi_v, ..., \varpi_v}^{s_{v,k}}, 1, ..., 1)
$$
asks that
\begin{align*}
    \sum_{i=1}^k v_p ( \omega_{v,i} (\varpi_v) \delta_{\sm,v,i} ( \varpi_v) )
    <
    \frac1{[F_v : \Q_p]} \Bigg( \Bigg. &
    - \left( \sum_{\tilde \tau \colon F_v \into E} \left( \sum_{j=1}^{s_{v,k}} \lambda_{v, \tilde \tau, j} + \sum_{i=1}^{k} n_j \mu_{v, \tilde \tau, i} \right) \right) \\
    & + (\lambda_{v, \tau, s_{v,k}} + \mu_{v, \tau, k} - \lambda_{v, \tau, s_{v,k} + 1} - \mu_{v, \tau, k + 1} + 1)
    \Bigg. \Bigg)
\end{align*}
By \Cref{proposition: classical points are dense}, the analytic subset underlying the image of $\bar \Xscr_{V}(K^p)$ in $\hat M$ is the Zariski closure of the locally algebraic characters $\delta$ such that $- \lambda \delta_\alg$ is dominant with respect to $\bar \Bbf$ and that satisfy this equation.
But this exactly the numerical classicality condition from \cite[Proposition 3.2.9]{breuil_ding_bernstein_eigenvarieties} for the point corresponding to the character $\delta_P \delta$:
note that we are using different normalisations for the valuations and that,
writing $\delta_{?} = \delta_{?,\un} \delta_{?,\ram}$, where $? \in \{ \emptyset, \sm, \alg \}$, $\delta_{?,\un}$ is trivial on $A_0$ and $\delta_{?,\ram}$ is trivial on $\Lambda$, the representation $\pi_x$ in \loccit~ (associated to the image of $x$ in $\hat M$) is $V_\sm \otimes \delta_\un \simeq (V_\sm \otimes \delta_{\sm,\un}) \otimes \delta_{\alg, \un}$, and hence the restriction of its central character to $\Lambda$ is the same as that of $(V_\sm \otimes \delta_\sm) \otimes \delta_\alg$.
Using the density of numerically non-critical points in the image of $\bar \Xscr_{V}(K^p)$ in $\hat M$, and the analogous result \cite[Theorem 3.2.11]{breuil_ding_bernstein_eigenvarieties} for $\mathcal D_{\Omega, \lambda} (K^p)$, our claim follows.

In particular, the underlying analytic subsets of the images of $\bar \Xscr_{V}(K^p)$ and $\delta_P^{-1} \mathcal D_{\Omega, \lambda} (K^p)$ in $\hat{ \JM } \times \Gm$ under the composition $r$ of the maps $\hat M \to \hat{ \JM } \times \hat \Lambda \to \hat{ \JM } \times \Gm$, where the last map is given by restriction to the subgroup generated by some $a \in \Lambda^{--}$, also agree, and by \Cref{proposition: affinoid covering} is contained in the Fredholm hypersurface $\Zscr$ from \Cref{subsection: eigenvarieties overconvergent homology}.
To finish the proof of the theorem, we will use the interpolation theorem \cite[Theorem 5.1.6]{hansen_universal_eigenvarieties}.
In the notation of \cite{hansen_universal_eigenvarieties}, $\bar \Xscr_{V}(K^p)$ is the eigenvariety associated to the eigenvariety datum $(\hat{\JM}, \Zscr, r_* \bar{\mathscr H}, \Tbb(K^p), \psi_1)$, where $\bar{\mathscr H}$ is the analogue for $\bar P$ of $\mathscr H$ and $\psi_1$ is the natural morphism $\psi_1 \colon \Tbb(K^p) \to \End_\Zscr(r_* \mathscr H)$, and $\delta_P^{-1} \mathcal D_{\Omega, \lambda} (K^p)$ is the eigenvariety associated to the eigenvariety data $(\hat{\JM}, \Zscr, r_* (\delta_P)^* \Hcal, \Tbb(K^p),$ $\psi_2)$ for $\psi_2$ as $\psi_1$.
Thus, in order to apply \cite[Theorem 5.1.6]{hansen_universal_eigenvarieties} to obtain mutually inverse morphisms
$$
    \bar \Xscr_{V}(K^p)^\red \leftrightarrows \mathcal D_{\Omega, \lambda} (K^p)^\red \times_{\hat M, \delta_P} \hat M
$$
it's enough to show that for all numerically non-critical characters $\delta$, the fibers of $\bar{\mathscr H}$ and $(\delta_P)^* \Hcal$ at
$\delta$ are isomorphic as $\Tbb^S(K^S)$-modules.
Once this is shown, the theorem will follow by taking quotients.
According to \cite[Proposition 3.1.8]{breuil_ding_bernstein_eigenvarieties}, the fiber of $(\delta_P)^* \Hcal$ at $\delta$ is dual to $\Hom_{M}(V \otimes \delta_P \delta, J_P(\tilde H^0(K^p)^\la))$.
On the other hand, the fiber of $\bar{\mathscr H}$ on $\delta$ is isomorphic to $H_0(K^p, \Ind_{\bar P}^G (V \otimes \delta)^\la)$ (this can be deduced from the proof of \Cref{proposition: relation with thick eigenvariety} \itemnumber{2}),
which by \Cref{proposition: p-arithmetic and completed} is dual to $\Hom_G(\Ind_{\bar P}^G (V \otimes \delta)^\la, \tilde H^0(K^p)^\la)$.

Thus, it's enough to check that for numerically non-critical $\delta$, there is a Hecke equivariant isomorphism
$$
    \Hom_G(\Ind_{\bar P}^G (V \otimes \delta)^\la, \tilde H^0(K^p)^\la)
    \to
    \Hom_M(V \otimes \delta_P \delta, J_P(\tilde H^0(K^p)^\la)).
$$
This will follow from Bergdall--Chojecki's adjunction theorem \cite[Theorem A]{bergdall_adjunction} (or rather, the analogous result products of groups of the form considered in \cite{bergdall_adjunction}) once we check that its assumptions are satisfied.
We need to check that $\tilde H^0(K^p)^\la$ is f-$\p$-acyclic (cf. \cite[Definition 4.1]{bergdall_adjunction}), which follows from \cite[Example 4.2]{bergdall_adjunction}, and the pair $(V_M(\lambda  \delta_\alg), V_\sm \otimes \delta_P \delta_\sm)$ is non-critical with respect to $\tilde H^0(K^p)^\la$ (cf. \cite[Definition 4.4]{bergdall_adjunction}).
As explained in \cite[Remark 4.6]{bergdall_adjunction}, it's enough to show that $\omega \delta_P \delta \lambda$ is of non-critical slope in the sense of \cite[Definition 4.4.3]{emerton_jacquet_one}.
This is equivalent to $\delta$ being numerically non-critical.
\end{proof}

\appendix
\renewcommand{\thesection}{\Alph{section}}
\section{Orlik--Strauch functors}

In this appendix we will define and study the basic properties of the Orlik--Strauch functors $\Fcal_P^G$ for reductive groups over $\Q_p$ that are not necessarily the restriction of scalars of a split reductive group over a finite extension of $\Q_p$.
Let $\Gbf$ be a reductive group over $\Q_p$ and $\Pbf \subseteq \Gbf$ a $\Q_p$-parabolic, and let $\Nbf$ be its unipotent radical, $\Mbf$ a Levi subgroup and $\bar \Nbf$ the unipotent radical of the opposite parabolic.
Choose a maximal torus $\Tbf$ in $\Mbf$ defined over $\Q_p$ and a finite extension $E$ of $\Q_p$ in $\bar \Q_p$ containing a Galois extension of $\Q_p$ over which $\Tbf$ splits.
We will write $\Gbf_E = E \times_\Qp \Gbf$, $G = \Gbf(\Q_p)$, $G_E = \Gbf_E(E)$, $\g$ for the Lie $\Q_p$-algebra of $\Gbf$, $\g_E$ for the Lie $E$-algebra of $\Gbf_E$, and $U(\g), U(\g_E)$ for their universal enveloping algebras, and similarly for $\Pbf_E$, $P$, etc.
We will also fix a special point in the Bruhat--Tits building of $G$ that lies in the image of the building of $M$, and let $J$ be its stabiliser in $G$.
It is also a special point for $M$, and we will use some of the groups from \Cref{section: structure} for the facets $\Fscr_M \supseteq \Fscr$ for $M$ and $G$ in some fixed apartment for $M$ that contain it.
Note that $J$ is the pointwise stabiliser of $\Fscr$ in $G$ and $\JM = J \cap M$ is the pointwise stabiliser of $\Fscr_M$ in $M$.
More concretely, we will consider $\bar N_s$ and the analogue $\bar J_{0,s}$ of $J_{0,s}$ defined by swapping $N$ and $\bar N$ in the definition of the latter, and also the corresponding subgroups $\bar N_{E, s} := \bar N'_s, \bar J_{E,0,s} := \bar J'_{0,s}$ of $G_E$ (defined as in \Cref{subsection: congruence subgroups} with $L' = E$), as well as their rigid-analytic variants.
We will follow the arguments in \cite{orlik_strauch} very closely (as all of the arguments generalise), but will not discuss the irreducibility of representations in the image of the functor $\Fcal_P^G$.

Recall that the category $\O^{\p_E}_\alg$ is the full subcategory of the category of representations $X$ of $\g_E$ over $E$ such that
\begin{enumerate}
    \item $X$ is finitely generated over $U(\g_E)$,
    \item $X$ is a direct sum of finite-dimensional irreducible $U(\m_E)$-modules that are obtained from finite-dimensional irreducible representations of $M_E$,
    \item the action of $U(\p_E)$ is locally finite (i.e. the orbits of this action are finite-dimensional).
\end{enumerate}

Let $X \in \O^{\p_E}_\alg$ and let $V$ be an admissible smooth representation of $M$. Choose a finite-dimensional $\p_E$-subrepresentation $W$ that generates $X$ as a $U(\g_E)$-module and let $\dfrak$ be the kernel of the natural surjection $U(\g_E) \otimes_{U(\p_E)} W \to X$.
By \cite[Lemma 3.2]{orlik_strauch}, $W$ extends to an $E$-linear (algebraic) representation of $P_E$, and we may restrict it to a representation of $P \subseteq P_E$.
Let $D(G, E)$ (resp. $D(P, E)$) be the locally analytic distribution algebra of $G$ (resp. $P$) over $E$.
Recall that there is a natural injective algebra homomorphism $U(\g_E) \into D(G, E)$.
The pairing
\begin{align*}
    (D(G,E) \otimes_{D(P,E)} W) \times \Ind_P^G(W' \otimes_E V)^\la
    & \to
    \Ccal^\la(G, V)
    \\
    (\delta \otimes w, f)
    & \mapsto
    (\delta \otimes w) \cdot f \colon g \mapsto \delta(h \mapsto f(g h)(w))
\end{align*}
gives rise to a pairing $(U(\g_E) \otimes_{U(\p_E)} W) \times \Ind_P^G(W' \otimes_E V) \to \Ccal^\la(G, V)$.
Define
$$
    \Fcal_P^G(X, W, V) := \Ind_P^G(W' \otimes_E V)^\dfrak := \{ f \in \Ind_P^G(W' \otimes_E V)^\la \colon \delta \cdot f = 0 \text{ for all } \delta \in \dfrak \}.
$$

\begin{lemma}\label{lemma: appendix 1}
Assume that $V = E$ is the trivial representation.
Then, the dual of $\Fcal_P^G(X, W, E)$ is isomorphic to the following:
\begin{enumerate}
    \item $( D(G, E) \otimes_{D(P, E)} W ) / D(G, E) \dfrak$ as a $D(G, E)$-module, and $D(G, E) \dfrak$ is the submodule of elements $D(G, E) \otimes_{D(P, E)} W$ of that annihilate $\Ind_P^G(W')^\dfrak$ under the pairing $\cdot$.
    \item $D(G, E) \otimes_{D(\g_E, P, E)} X$ as a $D(G, E)$-module, where $D(\g_E, P, E) = U(\g_E) D(P, E)$ is the subalgebra of $D(G, E)$ generated by $U(\g_E)$ and $D(P, E)$.
    \item $\varprojlim_n \Ind_{\bar J_{0,n}}^J ( (U(\bar{\mathfrak n}_E)_n \otimes_E W) / \dfrak_n )$ as a $D(J, E)$-module, where $\Ind_{\bar J_{0,n}}^J$ denotes ordinary induction, $U(\bar{\mathfrak n}_E)_n$ is the completion of $U(\bar{\mathfrak n}_E)$ with respect to the norm $|\blank|_n$ described below
    and $\dfrak_n$ is the closure of $\dfrak \subseteq U(\g_E) \otimes_{U(\p_E)} W \simeq U(\bar{\mathfrak n}_E) \otimes_E W$ in $U(\bar{\mathfrak n}_E)_n \otimes_E W$.
\end{enumerate}
\end{lemma}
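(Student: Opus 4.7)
My plan is to follow the strategy of Orlik--Strauch \cite{orlik_strauch} closely, since all their arguments generalize to our setting (the main changes being notational, and the substitution of the results from \Cref{section: structure} in place of the corresponding facts for split groups). I will address the three parts in order, noting that (2) follows formally from (1), while (1) and (3) are the substantive statements.

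For part (1), the starting point is the well-known duality between locally analytic induction and distribution modules: there is a $D(G,E)$-equivariant topological isomorphism $(\Ind_P^G(W')^\la)' \simeq D(G,E) \otimes_{D(P,E)} W$ (cf.\ \cite{orlik_strauch}, based on results of Schneider--Teitelbaum), where the pairing is the one written just above the lemma restricted to $W \subseteq U(\g_E) \otimes_{U(\p_E)} W$. Since $\Ind_P^G(W')^\dfrak$ is by definition the subspace of $\Ind_P^G(W')^\la$ annihilated by $\dfrak$, and this is a closed subspace (as the pairing is continuous), its dual is the quotient of $D(G,E) \otimes_{D(P,E)} W$ by the closed submodule of elements annihilating $\Ind_P^G(W')^\dfrak$. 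The claim is that this annihilator is precisely $D(G,E)\dfrak$. One inclusion is immediate from the definition of $\Ind_P^G(W')^\dfrak$. The reverse inclusion is the content of the Hahn--Banach-style separation argument in \cite{orlik_strauch}, which relies on part (3) to produce, for a given nonzero class in $(D(G,E)\otimes_{D(P,E)} W)/D(G,E)\dfrak$, a locally analytic function in $\Ind_P^G(W')^\dfrak$ that pairs nontrivially with it.

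Part (2) is a formal manipulation once (1) is in place. We have $D(\g_E, P, E) = U(\g_E) \cdot D(P,E)$ as a subalgebra of $D(G,E)$, and the natural map $U(\g_E) \otimes_{U(\p_E)} W \to D(\g_E, P, E) \otimes_{D(P,E)} W$ is an isomorphism by the PBW theorem together with the Iwahori decomposition $D(\bar N_0,E) \cotimes_E D(M_0, E) \cotimes_E D(N_0, E) \simto D(J,E)$ (and since $D(G,E)$ is flat over $D(J,E)$). This identifies $D(G,E) \otimes_{D(P,E)} W$ with $D(G,E) \otimes_{D(\g_E,P,E)} (U(\g_E)\otimes_{U(\p_E)}W)$ as $D(G,E)$-modules, and quotienting by $\dfrak$ on both sides yields the desired description.

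For part (3), which is the main technical step and the one I expect to be the main obstacle, I would carry out the analysis locally on $J$ using the Iwahori decomposition and the Fr\'echet--Stein structure on $D(J,E)$. The strategy is the one from \cite[\S3]{orlik_strauch}: endow $U(\bar{\mathfrak n}_E) \otimes_E W$ with the family of norms $|\blank|_n$ coming from the embedding of $U(\bar{\mathfrak n}_E)$ into the algebra of rigid-analytic functions on $\bar\Nbf_{E,n}$ (via the exponential), identify the completion $U(\bar{\mathfrak n}_E)_n \otimes_E W$ with the distribution module of $\bar N_{E,n}$-analytic $W$-valued functions, and then show that the dual of $\Fcal_P^G(X,W,E)$ restricted to $J$ is isomorphic to $\varprojlim_n D(\bar J_{0,n}, E) \otimes_{D(\bar J_{0,n} \cap P, E)} W / (\text{closure of }\dfrak)$. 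The first ingredient is an Iwahori-type decomposition $J = \bar N_0 \cdot (J \cap P)$ compatible with the filtration by $\bar J_{0,n}$, which we have in \Cref{lemma: Iwahori decomposition for deep level}. The second is compatibility of $|\blank|_n$ with the action of $J \cap M$ via conjugation on $\bar{\mathfrak n}_E$, which follows from \Cref{lemma: rigid analytic parahorics} applied to conjugation by elements of $J$. Once these two points are established, the identification with $\varprojlim_n \Ind_{\bar J_{0,n}}^J((U(\bar{\mathfrak n}_E)_n \otimes_E W)/\dfrak_n)$ as a $D(J,E)$-module is obtained by taking inverse limits, and in fact this gives the separation input needed to close the argument for (1). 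The main obstacle is carefully setting up the norms $|\blank|_n$ on $U(\bar{\mathfrak n}_E)$ in the non-split setting: in \cite{orlik_strauch} one can work coordinate-by-coordinate in the root subgroups, whereas here one must descend from $E$ to $\Q_p$ via Galois invariants, using the rigid-analytic models of the $\bar N_s$ constructed in \Cref{subsection: congruence subgroups}.
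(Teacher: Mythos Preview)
Your proposal is correct and follows essentially the same route as the paper: parts (i) and (ii) are referred to \cite[Proposition 3.3(ii) and \S3.4]{orlik_strauch}, and for (iii) the paper carries out the argument of \cite[\S3.8]{orlik_strauch} with $\bar N_n$ and $\bar J_{0,n}$ in place of the groups $U_P^{-,n}$ and $P^n$ there, using the Iwasawa decomposition $G = JP$ and the direct-limit description $\Ind_{J\cap P}^J(W')^\la \simeq \varinjlim_n \Ind_{\bar J_{0,n}}^J(\O(\bar\Nbf_{E,n})\otimes_E W')$.

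One clarification: the ``main obstacle'' you anticipate, descending the norms from $E$ to $\Q_p$ via Galois invariants, does not arise. Since $W'$ is already an $E$-vector space, the paper passes at once from $\O(\bar\Nbf_n)\otimes_{\Q_p} W'$ to $\O(\bar\Nbf_{E,n})\otimes_E W'$ and then works entirely over $E$, where $\bar\Nbf_E$ is split; one chooses coordinates $X_1,\dots,X_r$ on the root subgroups identifying $\bar\Nbf_{E,n}$ with a polydisc of radius $p^{-n}$, defines $|\,\cdot\,|_n$ explicitly on $U(\bar{\mathfrak n}_E)$ via the PBW basis, and identifies $U(\bar{\mathfrak n}_E)_n$ with the continuous dual of the space $\O_b(\bar\Nbf_{E,n})$ of bounded rigid-analytic functions through the pairing $(f,\mathfrak r)\mapsto (\mathfrak r\cdot f)(1)$ (so $U(\bar{\mathfrak n}_E)$ sits inside distributions, not functions). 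With this in hand the inverse-limit description follows exactly as in the split case.
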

\begin{proof}
Statement \itemnumber{1} follows from the same arguments as \cite[Proposition 3.3 (ii)]{orlik_strauch} and similarly for \itemnumber{2} and \cite[\S 3.4]{orlik_strauch}.
Let us prove \itemnumber{3} in more detail.
The proof is essentially the same as that in \cite[\S 3.8]{orlik_strauch} except that the groups $U_P^{-,n}$ and $P^n$ in \loccit ~must be replaced by $\bar N_{n}$ and $\bar J_{0,n}$ (or their analogues over $E$).
The Iwasawa decomposition $G = J P$ shows that $\Ind_P^G(W')^\la \simeq \Ind_{J \cap P}^J (W')^\la$, and we may define a subspace $\Ind_{J \cap P}^J (W')^\dfrak$ of the latter as above, which is therefore isomorphic to $\Fcal_P^G(X, W, E)$ as a $D(J, E)$-module.
We see as in \cite[Lemma 3.9]{orlik_strauch} that
$$
    \Ind_{J \cap P}^J (W')^\la
    \simeq
    \varinjlim_n \Ind_{\bar J_{0,n}}^J ( \O( \bar \Nbf_{s} ) \otimes_\Qp W' )
    \simeq
    \varinjlim_n \Ind_{\bar J_{0,n}}^J ( \O( \bar \Nbf_{E,s} ) \otimes_E W' ).
$$
The algebra $U(\bar{\mathfrak n}_E)$ acts on $\O(\bar \Nbf_{E,n})$ by taking derivatives as usual, and hence also on $\O( \bar \Nbf_{E,s} ) \otimes_E W'$, so it makes sense to define
$( \O( \bar \Nbf_{E,s} ) \otimes_E W' )^\dfrak$.
There are isomorphisms
$$
    \Ind_{J \cap P}^J (W')^\dfrak
    \simeq
    \varinjlim_n \Ind_{\bar J_{0,n}}^J ( \O( \bar \Nbf_{s} ) \otimes_\Qp W' )^\dfrak
    \simeq
    \varinjlim_n \Ind_{\bar J_{0,n}}^J ( \O( \bar \Nbf_{E,s} ) \otimes_E W' )^\dfrak .
$$

Choose coordinates $X_1, ..., X_r$ of the root subgroups of $\bar \Nbf_E$ that identify $\bar \Nbf_{E,n}$ with the closed ball of radius $p^{-n}$ centered at the origin. This is possible, as if this holds for one $n$ then it holds for all.
Let $\bar n_1, ..., \bar n_r$ be the basis of $\bar{\mathfrak n}_E$ corresponding to this choice, so that $\bar n_i$ can be seen as taking derivatives in the direction of $X_i$, i.e. $\bar n_i X_j$ is 1 if $i = j$ and 0 otherwise.
Then, the norm $| \blank |_n$ on $U(\bar{\mathfrak n}_E)$ is defined by
$$
    \left| \sum_{(i_1, ..., i_r) \in \Z_{\geq 0}^r} a_{i_1, ..., i_r} \bar n_1^{i_1} \cdots \bar n_r^{i_r} \right|_n
    :=
    \sup_{(i_1, ..., i_r) \in \Z_{\geq 0}^r} | (i_1)! \cdots (i_r)! a_{i_1, ..., i_r} | p^{-n(i_1 + \cdots + i_r)}.
$$
Let $\O_b( \bar \Nbf_{E,n} )$ denote the ring of bounded functions on $\bar \Nbf_{E,n}$,
$$
    \O_b( \bar \Nbf_{E,n} ) = \left\{
    f = \sum_{(i_1, ..., i_r)} a_{i_1, ..., i_r} X_1^{i_1} \cdots X_r^{i_r} : |f| := \sup_{(i_1, ..., i_r)} | a_{i_1, ..., i_r} | p^{-n(i_1 + \cdots + i_r)} < \infty
    \right\},
$$
which is a Banach space with norm $f \mapsto |f|$.
The pairing $\O(\bar \Nbf_E) \times U(\bar{\mathfrak n}_E) \to E$ given by $(f, \mathfrak r) \mapsto (\mathfrak r \cdot f)(1)$ extends to a pairing $\O_b( \bar \Nbf_{E,n} ) \times U(\bar{\mathfrak n}_E)_n \to E$ that realises $\O_b( \bar \Nbf_{E,n} )$ as the continuous dual of $U(\bar{\mathfrak n}_E)_n$, and similarly $\O_b( \bar \Nbf_{E,n} ) \otimes_E W'$ is isomorphic to the dual of $U(\bar{\mathfrak n}_E)_n \otimes_E W$. 
Now, the proof is completed in the same way as in \cite[\S 3.4]{orlik_strauch} by observing that
\begin{align*}
    \varinjlim_n \Ind_{\bar J_{0,n}}^J ( \O( \bar \Nbf_{E,n} ) \otimes_E W' )^\dfrak
    & \simeq
    \varinjlim_n \Ind_{\bar J_{0,n}}^J ( \O_b( \bar \Nbf_{E,n} ) \otimes_E W' )^\dfrak
    \\ & \simeq
    \varinjlim_n \Ind_{\bar J_{0,n}}^J ( \O_b( \bar \Nbf_{E,n} ) \otimes_E W' )^{\dfrak_n}
    \\ & \simeq
    \left( \varprojlim_n \Ind_{\bar J_{0,n}}^J ( (U(\bar{\mathfrak n}_E)_n \otimes_E W) / \dfrak_n ) \right)'. \qedhere
\end{align*}
\end{proof}

\begin{theorem}
The locally analytic representation $\Fcal_P^G(X, W, V)$ does not depend on $W$ up to canonical isomorphism. Thus, we may write simply $\Fcal_P^G(X, V)$, and it satisfies the following properties:
\begin{enumerate}
    \item $\Fcal_P^G$ is a bi-functor that is contravariant in the first argument and covariant in the second.
    \item $\Fcal_P^G$ is exact on both arguments.
    \item $\Fcal_P^G(X, V)$ is an admissible locally analytic representation, and if $V$ is of finite length then it is strongly admissible.
    \item Assume that $\Pbf' \supseteq \Pbf$ is a parabolic subgroup of $\Gbf$ defined over $\Q_p$ with Levi subgroup $\Mbf' \supseteq \Mbf$ such that $X \in \O^{\p'_E}_\alg$.
    Then, $\Fcal_P^G(X, V) \simeq \Fcal_{P'}^G(X, \Ind_{M' \cap P}^{M'}(V)^\sm)$.
\end{enumerate}
\end{theorem}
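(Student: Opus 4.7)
The plan is to follow the strategy of Orlik--Strauch \cite{orlik_strauch} closely, leveraging the dual descriptions provided in \Cref{lemma: appendix 1}. Once these three equivalent descriptions of the continuous dual of $\Fcal_P^G(X, W, V)$ are in hand, each of the assertions translates into a statement about $D(G, E)$-modules.

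I would first establish independence of $W$: given two finite-dimensional $P_E$-subrepresentations $W_1, W_2 \subseteq X$ generating $X$ over $U(\g_E)$, take $W_3 := W_1 + W_2$ and compare the spaces via \Cref{lemma: appendix 1} \itemnumber{1} and \itemnumber{2}. The description $D(G, E) \otimes_{D(\g_E, P, E)} X$ in \itemnumber{2} is manifestly independent of $W$, so the inclusions $W_i \hookrightarrow W_3$ yield the required canonical isomorphisms. Functoriality (1) then follows by lifting a morphism $X \to X'$ in $\O^{\p_E}_\alg$ to compatible choices of generators (enlarging one of them if necessary); functoriality in the second argument is transparent from the definition.

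For exactness (2) in the second argument: $\Ind_P^G(\blank)^\la$ is exact (because $G/P$ is compact), and a snake-lemma argument applied to the defining annihilator condition as in \cite[Theorem 5.8]{orlik_strauch} handles the rest. For the first argument I would work with the dual description $D(G, E) \otimes_{D(\g_E, P, E)} X$ and appeal to flatness of $D(G, E)$ as a right $D(\g_E, P, E)$-module, which in \cite{orlik_strauch} is deduced from the PBW-type decomposition $U(\g_E) \simeq U(\bar{\mathfrak n}_E) \otimes_E U(\p_E)$ combined with the Iwasawa decomposition $G = J P$; these inputs are available in our setting. Admissibility (3) follows from \Cref{lemma: appendix 1} \itemnumber{3}: each term $\Ind_{\bar J_{E,0,n}}^J((U(\bar{\mathfrak n}_E)_n \otimes_E W)/\dfrak_n)$ is finitely generated over a Noetherian Banach completion of $D(J, E)$, and the inverse system is compatible with the Fr\'echet--Stein structure of $D(J, E)$, exhibiting the dual as a coadmissible $D(J, E)$-module; strong admissibility when $V$ has finite length follows by controlling $\dfrak_n$ uniformly as in \cite[Theorem 5.8(i)]{orlik_strauch}.

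Finally, for transitivity (4), when $X \in \O^{\p'_E}_\alg$ one may choose a generator $W \subseteq X$ that is already a $P'_E$-subrepresentation, and the defining ideal $\dfrak$ then involves only the unipotent directions transverse to $\p'_E$; transitivity of locally analytic parabolic induction $\Ind_P^G(\blank)^\la \simeq \Ind_{P'}^G(\Ind_{M' \cap P}^{M'}(\blank)^\la)^\la$ combined with the local $\p'$-finiteness of $X$ (which forces the ``inner'' induction to coincide with smooth induction after taking $\dfrak$-annihilators) yields the desired comparison. The main obstacle I anticipate is the flatness of $D(G, E)$ over $D(\g_E, P, E)$ in our possibly non-quasi-split setting: one must verify that Galois descent from the split base change $\Gbf_E$ to $G$ is compatible with the Fr\'echet--Stein filtrations of the relevant distribution algebras, but the rigid-analytic constructions of $\bar \Nbf_{E, s}$ and $\bar J_{E, 0, s}$ assembled in \Cref{section: structure} were designed precisely to make this descent routine.
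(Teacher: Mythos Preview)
Your proposal is correct and takes essentially the same approach as the paper: both reduce everything to the arguments of Orlik--Strauch via the three dual descriptions in \Cref{lemma: appendix 1}, with independence of $W$ coming from description \itemnumber{2}, exactness and transitivity from \itemnumber{2} and \itemnumber{3}, and admissibility from \itemnumber{3}. The paper's proof is in fact terser than yours (it simply points to the corresponding propositions in \cite{orlik_strauch}), and your anticipated obstacle about flatness in the non-quasi-split setting is one the paper silently absorbs into ``follows exactly as in \cite{orlik_strauch}''; your observation that the rigid-analytic groups of \Cref{section: structure} are what make the descent go through is correct.
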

\begin{proof}
All of these statements follow exactly as the corresponding analogues in \cite{orlik_strauch}:
that $\Fcal_P^G(X, W, V)$ does not depend on $W$ up to canonical isomorphism follows as in \cite[\S 4.6]{orlik_strauch} from \Cref{lemma: appendix 1} \itemnumber{2},
\itemnumber{1} follows as in \cite[Proposition 4.7]{orlik_strauch},
\itemnumber{2} and \itemnumber{4} follow as in \cite[Proposition 4.2 and Proposition 4.9]{orlik_strauch} (here one needs to use \Cref{lemma: appendix 1} \itemnumber{2} and \itemnumber{3})
and \itemnumber{3} follows as \cite[Proposition 4.8]{orlik_strauch}.
\end{proof}

\bibliographystyle{alpha}
\bibliography{references}

\end{document}